\def\bb#1\eb{{#1}} %
\def\br#1\er{{#1}} %
\def\bw#1\ew{{#1}} %
\def\soutE#1{}
\newcommand{\df}{{\rm d}}
\newcommand{\R}{\mathds R}
\newcommand{\N}{\mathds N}
\newcommand{\LL}{\mathds L}
\newcommand{\sstk}{\ensuremath{\rm SSTK}\xspace}
\newcommand{\eps}{\varepsilon}
\newcommand{\gam}[1]{\ensuremath{\tensor{\Gamma}{^k_i_j}(#1)}}
\newcommand{\de}{\mathrm d}
\newcommand{\MC}{M_{crit}} %{M_{\hbox{\small{crit}}}}
\newcommand{\MM}{M_{mild}} %{M_{\hbox{\small{mild}}}}
\newtheorem{thm}{Theorem}[section]
\newtheorem{prop}[thm]{Proposition}
\newtheorem{lemma}[thm]{Lemma}
\newtheorem{cor}[thm]{Corollary}
\theoremstyle{definition}
\newtheorem{defi}[thm]{Definition}
\newtheorem{exe}[thm]{Example}
\newtheorem{rem}[thm]{Remark}
\newtheorem{convention}[thm]{Convention}
\title[Wind Finslerian structures and spacetimes]{Wind Finslerian structures:\\ from Zermelo's navigation to \\  the causality of spacetimes}
\author[E. Caponio]{Erasmo Caponio}
\address{Dipartimento di Meccanica, Matematica e Management, \hfill\break\indent
	Politecnico di Bari, Via Orabona 4,\hfill\break\indent
	70125, Bari, Italy}
\email{erasmo.caponio@poliba.it}
\thanks{EC is a member of and has been partially supported during this research  by the ``Gruppo Nazionale per l'Analisi Matematica, la Probabilit\`a e le loro Applicazioni'' (GNAMPA) of the ``Istituto Nazionale di Alta Matematica (INdAM)''. \bw Partial supports by  PRIN 2017JPCAPN {\em Qualitative and quantitative aspects of nonlinear PDEs} and \ew \br  by the project MTM2013-47828-C2-1-P (Spanish Ministry of Economy
	and Competitiveness and European Regional Development Fund, ERDF)  \bw are  \ew acknowledged. \er}
\author[M. A. Javaloyes]{Miguel Angel Javaloyes}
\address{Departamento de Matem\'aticas, \hfill\break\indent
	Universidad de Murcia, \hfill\break\indent
	Campus de Espinardo,\hfill\break\indent
	30100 Espinardo, Murcia, Spain}
\email{majava@um.es}
\thanks{\br MAJ was partially supported by MICINN/FEDER project PGC2018-097046-B-I00 funded by MCIN/ AEI /10.13039/501100011033/ FEDER ``Una manera de hacer Europa'' and Fundaci\'on S\'eneca project with reference 19901/GERM/15. \er This work is a result of the activity developed within the framework of the Programme in
 	Support of Excellence Groups of the Regi\'on de Murcia, Spain, by Fundaci\'on S\'eneca, Science and Technology Agency of the Regi\'on de Murcia.}
\author[M. S\'anchez]{Miguel S\'anchez}
\address{Departamento de Geometr\'{\i}a y Topolog\'{\i}a, Facultad de Ciencias,
	\hfill\break\indent \br \& IMAG
	(Centro de Excelencia Mar\'\i a de Maeztu) \er \hfill\break\indent
	Universidad de Granada,\hfill\break\indent
	Campus Fuentenueva s/n,
	\hfill\break\indent 18071 Granada, Spain}
\email{sanchezm@ugr.es}
\thanks{MS has been partially supported by the  project \br  MTM2016-78807-C2-1-P funded by MCIN/ AEI /10.13039/501100011033/ FEDER ``Una manera de hacer Europa'' and by the framework of IMAG-Mar\'{\i}a de Maeztu grant CEX2020-001105-M funded by MCIN/AEI/ 10.13039/50110001103. \er} 
\keywords{Finsler spaces and generalizations, Killing vector field, Zermelo navigation problem, Kropina metric, spacetime}
\subjclass[2010]{53B40,  53C50, 53C60, 53C22}
\begin{document}

	\begin{abstract}
		The notion of {\em wind Finslerian structure} $\Sigma$ is developed;
		this is a generalization of Finsler metrics   (and Kropina ones)   where the indicatrices at
		the tangent spaces may not contain the zero vector. In the
		particular case that these indicatrices are ellipsoids, called
		here {\em wind Riemannian structures}, they admit a double
		interpretation which provides: (a) a model for classical {\em Zermelo's
			navigation problem} even when the trajectories of  the moving objects (planes, ships) are
		influenced by  {\em strong} winds or streams, and (b) a natural
		description of the {\em causal structure} of relativistic
		spacetimes endowed with a non-vanishing Killing vector field $K$
		({\em SSTK splittings}), in terms of Finslerian elements. These
		elements can be regarded as conformally invariant Killing initial
		data on a  partial  Cauchy hypersurface. The links between both
		interpretations as well as the possibility to improve the results
		on one of them using the other viewpoint are stressed.
		
		The wind Finslerian structure $\Sigma$ is described in terms of two
		(conic, pseudo) Finsler metrics, $F$ and $F_l$, the former  with a
		convex indicatrix and the  latter  with a concave one. Notions such as balls and geodesics are extended to $\Sigma$.
		Among the applications, we obtain the solution of Zermelo's navigation  with arbitrary  time-independent   wind,
		metric-type properties for $\Sigma$ (distance-type arrival function, completeness, existence of minimizing,  maximizing or closed geodesics), as well as description of spacetime elements (Cauchy developments,  black hole horizons) in terms of Finslerian elements in  Killing initial data.  A general Fermat's principle of independent interest for arbitrary spacetimes, as well as its applications to \sstk spacetimes and Zermelo's navigation, are  also provided. 
	\end{abstract}
	
	% \begin{keyword}{Generalized Finsler spaces, Killing vector field, Zermelo navigation problem, \sout{Kropina metric}, spacetime, \br Fermat's principle }
	% 
	% \MSC[2010]{53B40,  53C50, 53C60, 53C22}
	% \end{keyword}
	% 
	% \end{frontmatter}
	
	\maketitle
	
	\newpage
	
	\tableofcontents
	
	\newpage
	
	\section{Introduction}
	Among the classic and recent applications of Finsler metrics,
	Randers ones can be linked to two quite different problems. The first one is Zermelo navigation problem 
	that  was considered for the first time in \cite{Ze31}.
	It consists in determining the trajectories which minimize the flight time of an airship
	(or of any other object capable of a certain maximum speed and moving in a wind or a current). 
	%see e.g. \href{http://www.math.duke.edu/lectures/bryant2/RiverExample.html}{http://www.math.duke.edu/lectures/bryant2/RiverExample.html}). 
	Zermelo determined the differential equations of the optimal trajectories in  dimensions  $2$ and $3$ (the so-called {\em navigation equations}). 
	The problem was then considered  by Levi-Civita, Von Mises, Caratheodory, Mani\`a \cite{Levi-C31, Mises31, Carath67, Mania37} becoming a classical problem in optimal control theory. 
	Randers metrics turned then out  to appear naturally in the problem of navigation under a mild  time-independent    wind  \cite{Sh03, BaRoSh04}. 
	
	The second one is the description of the  conformal  geometry  of spacetimes  $(\R\times M,g)$ endowed with a timelike Killing vector field $K$ (the so-called {\em standard stationary spacetimes}). This is an important class of spacetimes: for example, the region outside the ergosphere in   Kerr's solution  to   Einstein's equations  is of this type  and, more generally, the region outside the horizon of any  black hole should be so,  at sufficiently late times  (see \cite[\S 14.4]{ludvig}).   
	Also in this case, Randers metrics  arise naturally on $M$,
	encoding the  causality  of the  spacetime
	\cite{CapJavSan10}.

	In both cases, the interpretation of a
	Randers metric as a Riemannian one $(M,g_R)$ ``with a displaced
	unit ball'' becomes apparent: the displacement is caused by the
	vector field $W$ which represents the wind  in  the case of Zermelo's
	problem, and which is constructed in a conformally invariant way from 
	the {\em
		lapse} $\Lambda=-g(K,K)$ and the {\em shift}
	$\omega=g(K,\cdot)|_M$ in the case of spacetimes. It is
	remarkable that   Randers metrics  provide a natural way to go
	from the navigation problem to spacetimes, and vice versa. 
	
	In both
	problems, however, there is a neat restriction: the wind must be
	mild ($g_R(W,W)<1$) and, accordingly, the lapse of the spacetime
	must be positive ($\Lambda>0$); otherwise, the displaced unit
	ball  would not contain the zero vector, making  to collapse  the classical
	Finslerian description. Nevertheless, both problems are
	natural without such restrictions and, in fact, they become even
	more geometrically interesting then. Under a  {\em strong }  wind or
	current, the moving object (a Zeppelin or   a   plane in the air, a ship
	in  the ocean,   
	or even sound rays in the presence of a wind \cite{GW10,GW11}) may   face 
	both, regions which cannot be reached and others that can be
	reached but must be abandoned by, say, the compelling wind.
	Analogously, the change in the sign of the lapse $\Lambda$ means
	that the causal character of the Killing vector field $K$ changes
	from timelike to spacelike and, so, one might find a {\em Killing
		horizon}, which is an especially interesting type of relativistic
	hypersurface \cite{ChCoHe12, MarsReiris}. The correspondence between
	navigation and spacetimes becomes now even more appealing:
	although the description of the movement of the  navigating  object is
	non-relativistic, the set of points that can be reached at each
	instant of time becomes naturally described by the causal future
	of an event in
	the spacetime, and the latter may exhibit some of the known subtle
	possibilities in relativistic fauna: horizons, no-escape  regions (black
	holes) and so on.
	
	Our   aim   here is to show that  both Zermelo navigation  in the air or the sea, represented by a Riemannian manifold $(M,g_R)$,  with  time-independent  wind  $W$,  and the geometry of  a spacetime  $(\R\times M,g)$,  with a non-vanishing Killing vector field  $K$,   can still be described by a generalized Finsler structure  $(M,\Sigma)$,  that we call  {\em wind Riemannian}.    Roughly, $\Sigma$ is the hypersurface of the tangent bundle $TM$ which contains the maximum velocities of the \bw \soutE{mobile} \br moving \er object \ew in all the points and all directions, i.e., each $\Sigma_p\subset T_pM$ is obtained by adding the wind $W_p$ to the $g_R$-unit sphere at $p$, the latter representing the maximum possible velocities developed by the engine 
	of the \bw \soutE{mobile} \br moving \er object \ew at $p$ with respect 
	to the air or sea. %(Definition~\ref{defiwindriem}).  
	
	By using this structure, we  can  interpret Zermelo navigation    as a  problem about geodesics  whatever the  strength of the   wind is  and 
	we give  sufficient conditions   for the existence of   a solution minimizing or maximizing  travel time   (Theorem~\ref{compactcase}). These are based on an assumption, called {\em w-convexity} 
	%that for a Riemannian metric corresponds to the fact that any two points can be joined by a geodesic of minimal length, 
	which is satisfied if the wind Riemannian structure  $\Sigma$  is geodesically complete. Clearly, this might hold  also when $M$ is not compact, a case in that the so-called {\em common compact support} hypothesis in Filippov's theorem, applied  to the time-optimal control problem describing Zermelo navigation, does not hold (see \cite[Th. 10.1]{agrachev} and \cite[p. 52]{userres}). For example,  our techniques can also be used to prove existence of a solution in a (possibly unbounded) open subset of a manifold $M$, provided that the wind is mild in   its boundary and the boundary is convex  (Theorem~\ref{lake} and Remark~\ref{unboundomain}).
	%in particular,  abnormal extremals  can be interpreted, up to a finite number of instants where the velocity vanishes, as lightlike geodesics of a Lorentzian metric associated with the navigation data $g_R$, $W$ (compare with \cite[p.54-55]{userres}, in the two-dimensional case).  

	As mentioned above, wind Riemannian structures allow us to describe also the causal structure  of a spacetime $(\R\times M,g)$ endowed with a non-vanishing Killing vector field \br $\partial_t$ \er which is everywhere transverse to the spacelike hypersurfaces $S_t=\{t\}\times M$. We name this type of spacetimes  {\em standard  with a space-transverse
		Killing vector field},  abbreviated in {\it \sstk splitting}.   They are endowed with a $t$-independent metric
	\begin{equation*}
		g=-\Lambda dt^2+2\omega dt + g_0
	\end{equation*}
	(see Definition~\ref{dsstk} and Proposition~\ref{psstk} for accurate
	details), so the Killing vector field is $K=\partial_t$. Even though  \sstk splittings are commonly used
	in General Relativity (see for example \cite{MarsReiris} and
	references therein),     we do not know any previous  systematic study of their causal structure,  so, this is carried out here with full depth.   Of course, \sstk splittings  include standard stationary spacetimes (i.e. the case in that $K$ is timelike or, equivalently, $\Lambda=-g(\partial_t,\partial_t)$ is positive) and also  asymptotically flat spacetimes admitting a Killing vector field which is only asymptotically timelike (which,  sometimes in the literature on Mathematical Relativity,  are also called stationary spacetimes, see for example \cite[Definition 12.2]{ludvig}).  The spacetime viewpoint will be crucial to solve technical problems about wind Riemannian structures. 
	
	The point at which Zermelo navigation and the causal geometry of an \sstk splitting are more closely related is  Fermat's principle. We prove here a Fermat's principle in a very general setting which is then refined when the ambient spacetime is  an  \sstk splitting. 
	Classical Fermat's principle, as established by Kovner \cite{Kovner90} and Perlick \cite{Pe90}, characterizes lightlike %geodesics (or, more precisely, 
	pregeodesics 
	%(admittting a reparametrization as smooth geodesics) 
	as the critical points of the arrival functional for smooth lightlike curves joining a prescribed point $z_0$ and a {\em timelike} curve $\alpha$. 
	However, the case when $\alpha$ is not timelike becomes also very interesting for different purposes. First, of course, this completes the mathematical development of the problem. 
	In particular, the proof of the result here, Theorem~\ref{lema:fermatprinc}  (plus %Remark \ref{rnotimeorient}
	further extensions there),  refines all previous approaches. However, this result and its strengthening to \sstk spacetimes (Theorem~\ref{fp},  Corollary~\ref{fptimelike}),  admit interpretations for Zermelo's navigation, as well as for spacetimes (arrival at a Killing horizon) and even for the classical Riemannian viewpoint (Remark~\ref{eriemann}).   
	\br Specifically, \er \soutE{Concretely,} about Zermelo's  navigation, the case when the arrival curve $\alpha$ is not timelike corresponds  to a target point which lies in a zone of critical or strong wind ($g_R(W,W)\geq 1$). Thus, Fermat's principle 
	can be interpreted  as a variational principle for a generalized Zermelo's navigation problem, in the sense that navigation paths are  the  critical  (rather than only local minimum) points of the time of navigation. 
	
	About the technical framework of variational calculus, we would like to emphasize   that the travel time minimizing paths  between two given points $x_0, y_0\in M$  are the curves $\sigma$ connecting $x_0$ to $y_0$ which minimize the functional 
	$$\sigma\mapsto \int_{\sigma} \frac{g_R(\dot\sigma,\dot\sigma)}{g_R(\dot\sigma, W) + \sqrt{h (\dot\sigma,\dot\sigma)}},$$
	where 
	\begin{equation}\label{hagain}
		h(v,v):=(1-g_R(W,W))g_R(v,v)+g_R(v,W)^2 
	\end{equation}
	is a signature changing tensor on $M$ which is Riemannian on  the region of mild wind, Lorentzian  on the region of strong wind, while in the region of critical wind (i.e., at the points $p\in M$ where $g_R(W_p,W_p)=1$), it is degenerate.  On the region of critical or strong wind, this functional  is defined (and positive) only for curves whose velocities belong to a conic sub-bundle of $TM$ (see Proposition~\ref{Riemannclass} and  Proposition~\ref{randerskropinagen}).  
	This  constraint on the admissible velocities  plus the signature changing characteristic of $h$ make it difficult the use of  direct methods. Actually, we are able to prove the existence of a minimum by using  Lorentzian  results  about the existence of {\em limit curves} (see Definition~\ref{limitcurvedef}, Lemma~\ref{limitcurve}) in the \sstk splitting that can be associated  with  a data set $(M,g_R,W)$ for Zermelo navigation (Theorem~\ref{tfermatSSTK}).  What is more,  focusing only on the minimizing problem (or the optimal time control problem) is, in our opinion,  somehow reductive of the rich geometrical features  of  Zermelo navigation.   For example, Caratheodory abnormal geodesics \cite[\S 282]{Carath67} (see Section~\ref{ss6.3}) are interpreted here as both, 
	lightlike pregeodesics of  $h$ (up to a finite number of instants where the velocity vanishes) or  exceptional geodesics of the wind Riemannian structure 
	%\enote{added also the exceptional geodesics, last case at page 45 of \cite{userres} corresponding to a critical  wind and  angle of navigation equal to $\pi+$ the angle of the wind formed with the versor $e_1$, so the relative velocity direction of navigation is opposed to the wind and then the absolute velocity is $0$}  
	(Definition~\ref{windgeodesic}).

	In our study, we will proceed even from a more general viewpoint. We will move the indicatrix of any Finsler metric by using an arbitrary vector field $W$ and call the so-obtained hypersurface $\Sigma$ a 
	{\em wind Finslerian structure}.  We provide a thorough study of such a structure, which is then strengthened for wind Riemannian structures    thanks to the correspondence with conformal classes of \sstk splittings.    Of course,  wind Finslerian structures   
	generalize the class of all Finsler manifolds     because    the zero vector is allowed to belong to or to be outside  each hypersurface  $\Sigma_p=T_pM\cap \Sigma$.   Remarkably,   the correspondence   between \sstk splittings and wind Riemannian structures allows us to study the latter, including some``singular''  Finslerian geometries (such as
	the well-known Kropina metrics, 
	where the $0$ vector belongs to the indicatrix   $\Sigma_p$) in terms of the
	corresponding (non-singular) \sstk splitting.

	\smallskip
	
	\noindent    Next, we give a brief description of each section, which may serve as a guide for the reader.    In Section~\ref{section:windFinsler}, we start by
	introducing   wind Finslerian
	structures on a manifold. These will be defined in terms of a
	hypersurface $\Sigma$ of $TM$, satisfying a transversality condition  which
	provides a strongly convex compact hypersurface $\Sigma_p$ at each point $p\in M$,  called {\em wind Minkowskian structure}. This structure  plays the role of
	indicatrix,  although it might not surround the origin $0_p\in T_pM$.   An obvious example appears  when  the
	indicatrix  bundle  of a Finsler manifold $F_0$ is displaced along a vector
	field $W$  and any such $\Sigma$ can be constructed from some $F_0, W$ (clearly not univocally determined, even though a natural choice can be done), see Proposition~\ref{pzerm}.    The intrinsic analysis of $\Sigma$ shows: 
	\begin{quote}
		{\em      Any wind Finslerian structure $\Sigma$ can be described in
			terms of two {\em conic pseudo-Finsler} metrics $F$ and $F_l$, the
			former $F$ (resp. the latter $F_l$) defined on all $M$ (resp. in
			the region $M_l$ of {\em strong wind}, i.e., whenever the zero
			vector is not enclosed by $\Sigma$) with:
			
			{\em (i)} domain $A_p\subset T_pM$ at each $p\in M$ (resp. each
			$p\in M_l)$ equal to the interior of the conic region of $T_pM$
			determined by the half lines from the origin to $\Sigma_p$, and
			
			{\em (ii) } indicatrix the part of $\Sigma_p$ that is convex  $\Sigma_p^+$ 
			(resp. concave  $\Sigma_p^-$)  with respect to the position vector ---so that $F$ becomes a {\em conic Finsler metric} and
			$F_l$  a {\em Lorentzian Finsler metric} {\em (Proposition~\ref{possibwind},  Figure~\ref{dis0})}.
			
			\smallskip
			
			\noindent Moreover, $\Sigma$ admits general notions of lengths and
			balls {\em (Definitions~\ref{sigmadmissible}, \ref{sigmaballs})}, which
			allows us to define geodesics {\em (Definitions~\ref{extremizing}, \ref{windgeodesic})}, recovering the usual
			geodesics for  $F$ and $F_l$ } (%Proposition~\ref{casesextrgeo},
		Theorem~\ref{extregeo}). 
	\end{quote}
	Remarkably, we introduce the notion of {\em c-ball} in order to
	define geodesics  directly for $\Sigma$.  These balls  are intermediate  between open and
	closed balls. They make sense even in the Riemannian
	case  (Example~\ref{ex:riemcballs}),
	allowing a well motivated notion of convexity, namely, 
	w-convexity (Proposition~\ref{pgc}, Definition~\ref{strongconvex}).
	
	Especially, we focus  on the case when $\Sigma$ is a {\em wind
		Riemannian structure} (Section~\ref{windRiemann}).   The link with Zermelo's problem becomes
	apparent: $F$ describes the maximum velocity that the ship can
	reach in each direction and $F_l$ the minimum one. In this case,
	the conic pseudo-Finsler metrics $F, F_l$ can be described
	naturally in terms of  the data $g_R$ and $W$  (Proposition~\ref{Riemannclass}),
	%,\ref{pformalreverse}
	and a generalization of the Zermelo/Randers
	correspondence is carried out: now Randers metrics appear for mild
	wind ($g_R(W,W)<1$), the pair $(F, F_l)$ for strong wind  ($ g_R(W,W)>1$), and Kropina metrics for
	the case of critical wind ($g_R(W,W)=1$).  In particular, $F$ becomes
	a Randers-Kropina metric in the region of non-strong wind (Definition~\ref{dranderskropina}, Proposition~\ref{randerskropinagen}).
	
	\smallskip
	
	\noindent In Section~\ref{windFermat}, our aim is to describe the
	correspondence between the wind Riemannian structures and the
	(conformal classes)  of \sstk splittings. 
	The existence of a unique {\em Fermat structure}, i.e., a wind
	Riemannian structure $\Sigma$ naturally associated  with the
	conformal class $[(\Lambda, \omega, g_0)]$ of an \sstk
	splitting, 
	is characterized in
	Theorem~\ref{tfermatSSTK}.
	Moreover, the equivalence between  these conformal \sstk -classes,
	and the description of a wind Riemannian structure either with
	Zermelo-type elements (i.e., in terms of a Randers-Kropina metric
	or a pair   of metrics $(F,F_l)$) or with its explicit Riemannian
	metric and wind (i.e., the pair $(g_R,W)$) is analyzed  in detail,
	see the summary in Fig.~\ref{rela}.
	In Subsection~\ref{sh} we identify and interpret the 
	(signature-changing) metric $h$ in \eqref{hagain}, 
	which becomes
	Riemannian when $\Lambda>0$, Lorentzian of coindex $1$  when
	$\Lambda<0$ and degenerate otherwise.  In particular, on its causal  (timelike or lightlike)  vectors in $TM_l$,
	it holds
	\begin{equation}\label{ehffl}
		h(v,v)=\frac{1}{4}  (1-g_0(W,W))^2(F-F_l)^2 (v),
	\end{equation}
	(see \eqref{eh}, Corollary~\ref{hcases}). As mentioned above, the metric $h$ will turn out essential  for describing  certain solutions of the Zermelo navigation problem.    We emphasize that, even though $h$  has a
	natural interpretation from the spacetime viewpoint
	(Proposition~\ref{gh}), 
	its importance would be difficult to discover from the purely
	Finslerian viewpoint (that is, from an expression such as
	\eqref{ehffl}). Summing up:
	\begin{quote} {\em  Any wind Riemannian structure $(g_R,W)$ becomes equivalent to an \sstk conformal class $[(g_0, \omega, \Lambda)]$.
			The spacetime interpretation allows us  to  reveal  elements (as the
			metric $h$ in \eqref{hagain},  \eqref{ehffl},  \eqref{eh})  and to find illuminating interpretations which will become essential for the analysis of Finslerian properties as well as for the solution of technical problems there. }
	\end{quote}
	We end with a subsection where
	the  fundamental tensors of $F$ and $F_l$ are computed
	explicitly  and discussed
	---in particular, \br  this makes it \er possible  to check the Finslerian character of
	the former and the Lorentzian Finsler one of the latter.

	\smallskip
	
	\noindent  About Sections~\ref{kroran} and \ref{generalcase}, recall first that the  main theorems of this paper deal with an exhaustive
	correspondence between the causal properties of an \sstk splitting
	and the metric-type properties of wind Riemannian structures.
	These theorems will become fundamental from both, the spacetime viewpoint (as important relativistic properties are characterized) and the viewpoint of navigation and wind Riemannian structures (as sharp characterizations on the existence of critical points/ geodesics are derived by applying the spacetime machinery).
	For the convenience of the reader, they are obtained gradually in
	Sections~\ref{kroran} and \ref{generalcase}.
	
	In Section
	\ref{kroran}, we consider the case when the Killing  field  $K$  of the $\sstk$ spacetime  is causal
	or, consistently, when the Fermat
	structure  has  (pointwise)  either mild or critical 
	wind. In this case, the Lorentzian Finsler metric $F_l$ is not
	defined, and the conic Finsler metric $F$ becomes a
	Randers-Kropina one. We introduce the $F$-separation  $d_F$  in a way
	formally analogous to the (non-necessarily symmetric) distance of
	a Finsler manifold. But, as the curves connecting each pair of
	points must be admissible now (in the Kropina region, the
	velocity of the curves must be included in the open half tangent
	spaces where $F$ can be applied), one may have, for example,
	$d_F(x,x)=+\infty$ for some $x\in M$.  In any case,  the
	chronological relation $\ll$ of the \sstk splitting can be
	characterized in terms of $d_F$ (Proposition~\ref{bolas}), and this
	allows us to prove that $d_F$  is still continuous outside the
	diagonal (Theorem~\ref{tcontdf}). The main result, Theorem~\ref{kropinaLadder}, yields a full characterization of the
	possible positions of the \sstk splitting in the so-called {\em
		causal ladder of spacetimes} in terms of the properties of $d_F$.
	This extends the results for stationary spacetimes in
	\cite{CapJavSan10}, and they  are applicable to relativistic spacetimes
	as the pp-waves (Example~\ref{ex_ppwave}). A nice  straightforward
	consequence  is a version of Hopf-Rinow  Theorem for the $F$-separation of any Randers-Kropina metric 
	%$d_F$ 
	(Corollary~\ref{cRandersKropinaHopfRinow}).
	
	In Section ~\ref{generalcase} the general case when there is no restriction on $K$ (i.e., a strong wind is permitted) is
	considered.  
	\soutE{Now,  there is definitively no any element
		similar to a distance. However} \bw In this case, \ew  our definitions of balls and
	geodesics are enough for a full description of the causal ladder  of the spacetime. 
	In fact, the chronological and causal futures, $I^+(z_0),
	J^+(z_0)$, of any \sstk -point $z_0\in \R\times M$ can be described
	in terms of the $\Sigma$-balls and c-balls in $M$  (Proposition~\ref{bolas2}).  Moreover, the
	horismotically related points (those in $J^+(z_0)\setminus
	I^+(z_0)$) are characterized by the existence of extremizing
	geodesics (Corollary~\ref{horismos}).  This  leads to  a complete description of the geodesics of an \sstk splitting in terms of the geodesics of its Fermat structure (Theorem~\ref{existenceofbolasNO}, Corollary~\ref{lightgeo2}, see also Fig.~\ref{geos}).  
	In order to characterize the
	closedness of $J^+(z_0)$ (Proposition~\ref{lreflect}), as well as to
	carry out some other technical steps, we require a result of
	independent interest about 
	limit curves (Lemma~\ref{limitcurve}). 
	%$\gamma$ of a sequence of
	%causal curves $\{\gamma_n\}$. Namely, considering a level
	%$S_{t_0}$ of a time function $t$,   we characterize  the relation between the  fact
	%that the curves $\{\gamma_n\}$ cross $S_{t_0}$ and the fact that
	%the  limit $\gamma$ crosses it (Lemma
	%\ref{limitcurve}). 
	This machinery allows us to prove our structural
	Theorem~\ref{generalK} which, roughly, means:
	\begin{quote} {\em
			Any \sstk splitting $(\R\times M,g)$ is stably causal and it will
			have further causality properties when some appropriate properties
			of the balls or geodesics of the corresponding Fermat structure
			$(M,\Sigma)$ hold. In particular, $(\R\times M,g)$ is causally
			continuous iff a natural property of symmetry holds for the closed
			balls of $(M,\Sigma)$, it is causally simple iff $(M,\Sigma)$ is
			w-convex and it is globally hyperbolic iff the intersections
			between the forward and backward closed $\Sigma$-balls are
			compact. Moreover, the fact that the slices $S_t=\{t\}\times M$  are
			Cauchy hypersurfaces is equivalent to the (forward and backward)
			geodesic completeness of $(M,\Sigma)$.}\end{quote}
	
	\noindent Section~\ref{causaltoRiemann} is devoted to the applications of the SSTK viewpoint to the geometry
of wind Riemannian structures. This follows the spirit of \cite{CapJavSan10, FlHeSa13}  in the stationary case, where the spacetime viewpoint allows one to find properties for Randers  metrics (see \cite[Section 5]{CapJavSan10}  or, for example, the section 5.3.4 in \cite{FlHeSa13}). In the case of wind Riemannian structures this viewpoint  becomes crucial due to
the appearance of certain singularities in the Finslerian elements. Indeed, it  offers  neat interpretations which permit to solve technical problems and serves as a guide for different developments. 
	Subsection~\ref{ss6.1} develops direct consequences of the previous results:  (1) a full characterization of the
	$\Sigma$-geodesics  as either (a) geodesics for $F$ or $F_l$, or (b) lightlike pregeodesics of $-h$ in the region of strong wind, up to isolated points of vanishing velocity  (Theorem~\ref{minimizers};
	the last possibility refines the result for any wind Finslerian
	structure in Theorem~\ref{extregeo})  and (2) a characterization of
	completeness and w-convexity in the spirit of Hopf-Rinow theorem
	(Proposition~\ref{c63}). However, in Subsection~\ref{ss6.2} a subtler
	application on $(M,\Sigma)$ is developed. Indeed, the same
	spacetime may split as an \sstk in two different ways (Lemma~
	\ref{fsplitting}), yielding
	two different Fermat structures (Proposition~\ref{changedf}). These
	structures share some properties  intrinsic to the \sstk spacetime and their consequences for the wind Riemannian structures associated with each splitting are analyzed.  
	In  Subsection~\ref{ss6.3}  we introduce  
	a relation of {\em weak precedence} $\preceq$ (resp. {\em precedence} $\prec$) between pairs of points in $(M, \Sigma)$ defined by the existence of a connecting {\em wind curve} (resp. an {\em $F$-wind curve}), namely, 
	a curve with velocity included in the region
	(resp. the interior of the region) allowed by $\Sigma$. Such a relation can be  characterized as the projection of the causal (resp. chronological) relation
	on the corresponding \sstk  (Proposition~\ref{esalva}).  This  allows us  to prove results on existence of
	minimizing  and maximizing  connecting geodesics    (Theorems~\ref{compactcase} and  \ref{lake},     Theorem~\ref{maxZermelo}) and of closed geodesics for
	$(M,\Sigma)$ (Theorem~\ref{prop:closedgeo}). In particular,  Theorems~\ref{compactcase},~\ref{maxZermelo} and Corollaries~\ref{compactcase2},~\ref{rsummaryZ}
	provide the full  solution to Zermelo
	navigation problem:
	\begin{quote}
		{\em For any wind Riemannian structure,  the solutions of Zermelo problem are pregeodesics of $\Sigma$. 
			The metric $-h$ in \eqref{eh} defines a natural relation of weak precedence  $\preceq$ (resp. precedence $\prec$) which determines if a point $x_0$ can be connected with a
			second one $y_0$ by means of a  wind 
			(resp. $F$-wind) curve; when the wind is strong,  i.e. $M=M_l$, $-h$ becomes Lorentzian on all $M$ and the relation of weak precedence (resp. precedence) coincides with the natural causal (resp. chronological) 
			of $-h$. Then:

			(a) if
			$x_0\preceq y_0$,  $x_0\neq y_0$,   and the c-balls are closed (i.e. $(M,\Sigma)$ is w-convex) then there exists a geodesic  of $(M,\Sigma)$ of minimum $F$-length from $x_0$ to $y_0$
			(which is also a lightlike pregeodesic of $-h$  when $x_0\not \prec   y_0 $);
			
			(b) if  $x_0\preceq y_0$, $x_0\neq y_0$,  the wind is strong  and $-h$ is globally hyperbolic on all $M$  then there exists   a  geodesic  of $(M,\Sigma)$ of maximum $F_l$-length
			from $x_0$ to $y_0$ (which  is also a lightlike pregeodesic of $-h$  if $x_0\not \prec y_0$).}
	\end{quote}
	The possibility of the existence of maximal solutions as well as of solutions which are limits of minimal and maximal ones was pointed out by Caratheodory in \cite{Carath67}  (see  the discussion at  part  (2) below Corollary~\ref{rsummaryZ}). 
	We stress that our result interprets geometrically all of them as geodesics. In particular, the limits of minimal and  maximal ones correspond (up to isolated points) to the lightlike  pregeodesics  of $-h$. 
	%\smallskip 
	We would like to emphasize that the  accuracy of
	most of our results for wind Riemannian structures relies on
	their correspondence  to   \sstk splittings  (see, e.g. Proposition~\ref{rappendix}).    Nevertheless, some
	of these results might be  extendible to general wind Finslerian 
	ones.\footnote{ Indeed, in the case
		of the correspondence of Randers metrics with stationary
		spacetimes  already developed in \cite{CapJavSan10},  some of the properties obtained by using the spacetime viewpoint  could be extended to any
		Finslerian manifold (see for example \cite{Matvee13} or compare \cite[Th. 4,10]{CapJavSan10} with \cite[Theorem A]{TanSab12}).  Thus, the results for the wind Riemannian case might serve as a guide for a further development of  wind  Finslerian  structures \bw as explained in Section~\ref{conclusions}. \ew}
	
	\smallskip
	
	\noindent  In Section~\ref{further1},  we discuss  Fermat's principle, which constitutes a topic of  interest in its own right.  After an introductory motivation in Subsection~\ref{Fermatprinciple1}, in Subsection~\ref{Fermatprinciple2} we prove our Generalized Fermat's principle valid for causally arbitrary arrival curves (Theorem~\ref{lema:fermatprinc}). Moreover, we also develop an extension to the case when the trial curves are timelike with a prescribed proper time (instead of lightlike with necessarily 0 proper time, Corollary~\ref{timelikefermat}) as well as a first application to two purely Riemannian variational problems (Corollary~\ref{cor:appRieFermat}).  In Subsection~\ref{Fermatprinciple3} Generalized Fermat's principle is refined for Zermelo trajectories in SSTK spacetimes, providing a variational interpretation of the geodesics of any wind Riemannian structure (Theorem~\ref{fp}, Corollary~\ref{fptimelike}).

	\smallskip
	
	\noindent  In Section~\ref{further2}, we go further
	in the description of causal elements of \sstk splittings.
	%\ref{develops} and \ref{ss6.5}  
	Indeed,  in Subsection~\ref{develops}, 
	Cauchy developments and horizons of subsets included in a slice
	$S_t$ are described accurately in terms of the Fermat structure
	(Proposition~\ref{cauchydevhor}). As a nice consequence,   in Subsection~\ref{s8.2}  the results on
	differentiability of horizons for spacetimes can be now applied to
	obtain results on  smoothability of the Randers-Kropina separation
	$d_F$ to a subset (Proposition~\ref{smoothdF}), so extending results  in \cite{ChFuGH02}  for the Riemannian case and in \cite[\S
	5.4]{CapJavSan10} for the Randers one.
	In the last part (Subsection~\ref{ss6.5}), 
	we also introduce and develop the
	notion of $K$-horizon for any wind Finslerian structure. In
	particular, such horizons  allow us  to describe  the regions where the
	ship in Zermelo's navigation cannot enter (or from where it cannot
	escape). Accordingly, from the spacetime viewpoint, it provides  a
	description of black hole regions from the {\em Killing initial
		data} (KID)   on a Riemannian manifold
	$(M,g_0)$ for any \sstk splitting (see  \cite{CaMars, BeCh,Mae, MarsReiris}). Notice that these data appear naturally  in the initial
	value problem for  the Einstein equation, and include our $\Lambda$ and $
	\omega$ (usually denoted $N$  and $Y$ in Physics literature,
	the latter regarded eventually as a vector field). Given the initial data, the \sstk
	splitting is called its {\em (infinite) Killing development} \cite[Definition
	2]{MarsReiris}. When the  initial data are  well posed (namely, they satisfy conditions of compatibility with matter in the sense of \cite[Definition 2]{CaMars}), the
	Cauchy development of $S$ will include the unique maximal globally
	hyperbolic spacetime obtained as a solution of the Einstein
	equation.  Our results on Cauchy
	developments  make it possible  to determine these regions,  as well as possible black hole horizons,   in terms of the
	Fermat structure.
	
	\smallskip
	
	\noindent   Finally,  in Section~\ref{conclusions} some conclusions \soutE{and prospects} are summarized. \bw  Moreover, we survey a few  recent works, related to the topics  of the present paper, trying to provide to the interested reader a brief guide about developments and further investigations. \ew    
	
	 Due to the big number of notions here introduced,  an appendix containing  a list of symbols and definitions used throughout the paper is given for the  reader's convenience.

	\section{Wind Finslerian structures}\label{section:windFinsler}
	\subsection{Wind Minkowskian structures on vector spaces} Let us begin by  recalling the classical notion of
	Minkowski norm.
	\begin{defi}\label{min}
		Let $V$ be a   real  vector space of finite dimension $m\geq
		1$.  We say that a continuous non-negative function
		$F:V\rightarrow [0,+\infty)$ is a Minkowski norm \index{Minkowski norm $F$} if
		\begin{enumerate}[(i)]
			\item it is positive and smooth away from the zero vector,
			\item it is positively homogeneous, namely, $F(\lambda v)=\lambda F(v)$ for every $\lambda> 0$  and $v\in V$,
			\item for any $v\in V\setminus  \{0\}$, its {\it fundamental tensor} $g_v$, defined as
			\begin{equation}\label{fundten}
				g_v(u,w)= \frac{1}{2}\frac{\partial^2}{\partial s\partial t}F(v+tu+sw)^2|_{t=s=0}
			\end{equation}
			for any $u,w\in V$, is positive definite.
		\end{enumerate}
	\end{defi}
	The {\it  indicatrix} of $F$ is defined as the subset $\Sigma_F=\{v\in V: F(v)=1\}$. Observe that $\Sigma_F$ is a  {\em strongly convex} smooth  hypersurface embedded in $V$, in the sense that its   second fundamental form  $I\!I$ 
	with respect to one (and then all) transversal vector field is definite ---in the remainder, we choose the orientation of the transverse so that $I\!I$ will be positive definite, as usual.
	Notice that, in general,  any (connected) compact,  strongly convex hypersurface $\Sigma$ embedded in $V$ must be a topological sphere (the Gauss map with respect to any auxiliary scalar product would yield a diffeomorphism) and both, 
	$\Sigma$ and the bounded region $B$ determined by $\Sigma$, are {\em strictly convex} in the usual sense
	(i.e. $\Sigma$ touches every hyperplane tangent to it only at the tangency point and lies in one of the two half-spaces determined by the hyperplane,
	and $B$ satisfies that
	the segment between any two points in $\bar B=B\cup \Sigma$ is contained in $B$, except at most its endpoints). When $0\in B$, a Minkowski norm is
	uniquely determined having $\Sigma$ as indicatrix just by putting
	$F(v)=1/\lambda(v) $
	for all $v\in V\setminus \{0\}$,
	where $\lambda(v)\in \R$ is the unique positive number such that $\lambda(v)v\in \Sigma$  (see for example \cite[Prop. 2.3]{JavSan11}).

	If the indicatrix $\Sigma_0$ of a given Minkowski norm is translated, one obtains another strongly convex smooth hypersurface $\Sigma$ that determines a new Minkowski norm whenever $0$ still belongs to the new bounded region $B$. As explained in the Introduction, 
	this process of generating Minkowski norms is used pointwise  in  Zermelo's navigation problem  and one obtains (see Fig.~\ref{dis0}): 
	\begin{prop}
		Let $\Sigma_0$ be the indicatrix of a Minkowski norm. The translated indicatrix $\Sigma=\Sigma_0+W$ defines  a Minkowski norm if and only   if  $ F_0(-W)<1$.  
	\end{prop}
	This is a restriction of ``mild wind'' in Zermelo's problem; 
	so, let us consider now
	the case in that
	$ F_0(-W) \geq 1$.
	In this case, the zero vector is not contained in the  open bounded region $B$ delimited  by the translated indicatrix $\Sigma$  and, as a consequence,  $\Sigma$ does  not define a classical Finsler metric.
	Indeed, not all the rays departing from the zero  vector  must intersect  $\Sigma$ and, among the intersecting ones, those intersecting transversely will cross  $\Sigma$ twice, and those intersecting non-transversely 
	will intersect  only once, see Fig.~\ref{dis0}.
	\begin{figure}[h]
		\includegraphics[scale=1,center]{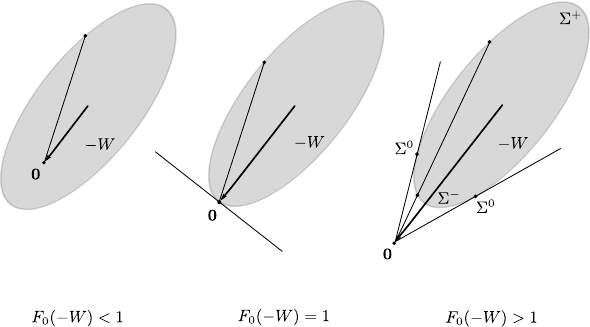}
		\caption{Wind Minkowskian structures}\label{dis0}
	\end{figure}
	The above discussion motivates the following definition.
	\begin{defi}\label{min2}
		A \emph{wind Minkowskian structure} on a real vector space $V$
		of dimension $m\geq 2$ (resp. $m=1$)  is a   compact,
		connected,  strongly convex,
		smooth hypersurface $\Sigma$
		embedded in $V$  (resp. a set of two points $\Sigma =
		\{\lambda_- v_0, \lambda_+ v_0\}$, $ \lambda_- < \lambda_+$, for
		some $v_0\in V \setminus \{0\}$). The bounded open domain $B$ (resp.
		the open segment $\{t\lambda_- v_0 + (1-t) \lambda_+ v_0: t\in
		(0,1)\}$) enclosed by $\Sigma$ will be called the {\em unit ball}
		of the wind Minkowskian structure.
	\end{defi}
	As an abuse of language, $\Sigma$   may also be  said  the {\em unit sphere} or the {\em indicatrix} of the wind Minkowskian structure.
	In order to study wind Minkowskian structures, it is convenient to
	consider the following generalization of Minkowski norms (see
	\cite{JavSan11} for a detailed study). 
	\begin{defi}\label{defA}
		Let $A\subset V
		\setminus \{0\}$ be  an  open {\it conic subset}, in the sense
		that if $v\in A$, then $\lambda v\in A$ for every $\lambda>0$.\footnote{Notice that, if an open conic subset $A$ contains the zero vector then   $A=V$. 
			As we will be especially  interested in the case $A\neq V$, in the remainder the $0$ vector will \bw always be \ew removed from $A$ for convenience. For comparison with the results in \cite{JavSan11}, notice that $A\cup \{0\}$ 
			will \bw always be \ew  convex in the following sections, even though one does not need to assume this a priori.}
		We say that a function $F:A\rightarrow [0,+\infty)$ is
		a {\it conic pseudo-Minkowski norm} if it satisfies $(i)$ and
		$(ii)$ in Definition~\ref{min}  (see \cite[Definition
		2.4]{JavSan11}). Moreover,  if \soutE{for any $v\in A$,} the fundamental tensor $g_v$ defined in \eqref{fundten} 
		is positive definite \bw for any $v\in A$, \ew  then $F$ is said a {\it conic Minkowski norm} while  if it 
		has coindex $1$  then $F$ is said a {\it  Lorentzian norm}.
	\end{defi}
	Of course, any conic pseudo-Minkowski norm  can be extended continuously to $0$ whenever $0$ does not lie in the closure in $V$ of the indicatrix and this is natural in the case  $A=V\setminus \{0\}$; in particular, Minkowski norms can be seen as  conic pseudo-Minkowski norms.

	According to these definitions, there are three different possibilities  for a wind Minkowskian structure.
	\begin{prop}\label{possibwind}
		Let $\Sigma$ be a wind Minkowskian structure in $V$ and $B$ its unit ball. %Then:
		\begin{enumerate}[(i)]
			\item If $0\in B$, then $\Sigma$ is the indicatrix of a Minkowski
			norm. \item If $0\in \Sigma$, then $\Sigma$ is the indicatrix of a
			conic Minkowski norm with domain $A$ equal to an (open) half
			vector space. \item If $0\notin \bar B$, then define $A\subset
			V\backslash \{0\}$ as the interior of the set which includes all
			the rays starting at 0 and crossing $\Sigma$;  then $A$ is a
			(convex) conic open set and, when $m\geq 2$,  two conic
			pseudo-Minkowski norms $F, F_l$ with domain $A$ can be
			characterized as follows:
			\begin{enumerate}
				\item[(a)] each one of their  indicatrices is a connected part of $A\cap \Sigma$,  and
				\item[(b)] $F$ is a conic Minkowski norm and $F_l$, a  Lorentzian norm.
			\end{enumerate}
			Moreover, $F<F_l$ on all $A$,  both  pseudo-Minkowski  norms  can be extended  continuously  to  the closure $A_E$ of $A$ in $V\setminus \{0\}$  and both extensions coincide on the boundary of $A_E$.
		\end{enumerate}
		We will say that $\Sigma$ in each one of the previous cases is,   respectively,  a Minkowski norm, a  Kropina type norm or a strong (or proper)  wind Minkowskian  structure.
	\end{prop}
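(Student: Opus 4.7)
The strategy is to handle the three cases separately, in each case building the relevant (conic pseudo-)Minkowski norms by ``radial intersection with $\Sigma$'' and reading the signature of the fundamental tensor from the geometry of $\Sigma$ relative to the position vector.

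Cases (i) and (ii) are variants of the classical Minkowski construction. In (i), every ray $\R^+ v$ meets $\Sigma$ in a unique point $\lambda(v)v$, and $F(v)=1/\lambda(v)$ yields the Minkowski norm already recalled before Definition~\ref{min2}. In (ii), strict convexity of $\bar B$ together with $0\in\Sigma$ forces $\Sigma\setminus\{0\}$ to lie in one of the two open half-spaces determined by $T_0\Sigma$; calling this half-space $A$, the same construction $F(v)=1/\lambda(v)$ works on $A$, since any ray in $A$ enters $B$ near $0$ and must exit through a unique point of $\Sigma\setminus\{0\}$. Smoothness in both cases follows from the implicit function theorem (exits are transverse), positive homogeneity is built in, and positive definiteness of the fundamental tensor is forced by strong convexity of $\Sigma$ with respect to the outward (radial) direction.

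For (iii), I would define $A$ as the interior of the set of $v\in V\setminus\{0\}$ whose positive ray meets $\Sigma$. The set $A$ is open and conic by construction; convexity of $A\cup\{0\}$ follows from convexity of $\bar B$ together with $0\notin\bar B$ (the ``visual cone'' of $\bar B$ from $0$ is convex). By strict convexity each ray $\R^+ v$ with $v\in A$ meets $\Sigma$ transversally in exactly two points $0<\lambda_-(v)<\lambda_+(v)$, while the rays in the boundary $\partial A$ are tangent to $\Sigma$ with $\lambda_-(v)=\lambda_+(v)$. Both $\lambda_\pm$ are smooth and positively homogeneous of degree $-1$ on $A$ by the implicit function theorem at the transverse intersections, so $F(v)=1/\lambda_+(v)$ and $F_l(v)=1/\lambda_-(v)$ are positively homogeneous smooth functions on $A$ with $F<F_l$; their indicatrices are the two connected components of $\Sigma$ cut by the codimension-one ``equator'' of rays from $0$ tangent to $\Sigma$, namely the ``far'' sheet $\Sigma_+=\{F=1\}$ and the ``near'' sheet $\Sigma_-=\{F_l=1\}$. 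The continuous extensions of $F$ and $F_l$ to the boundary of $A_E$ coincide because $\lambda_-$ and $\lambda_+$ merge there into a common tangency value.

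The main obstacle is identifying the signatures of the fundamental tensors of $F$ and $F_l$. I would deduce them from the Euler-homogeneity identities $g_v(v,v)=F(v)^2>0$ and $g_v(v,w)=0$ for $w\in T_v\Sigma$ (when $v$ lies on the indicatrix), combined with the classical Finslerian identity (see \cite{JavSan11}) expressing $g_v|_{T_v\Sigma}$ as $F(v)$ times the second fundamental form of the indicatrix at $v$ with respect to the normal pointing away from $0$. On the far sheet $\Sigma_+$ this normal agrees with the outward normal of the strongly convex $\Sigma$, so the relevant second fundamental form is positive definite and $g_v$ is positive definite; hence $F$ is a conic Minkowski norm. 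On the near sheet $\Sigma_-$, by contrast, the normal pointing away from $0$ is the \emph{inward} normal of $\Sigma$ (because $\Sigma_-$ faces the origin, so the outward normal of $\Sigma$ there points toward $0$), which reverses the sign of the second fundamental form; thus $g_v|_{T_v\Sigma}$ becomes negative definite and, combined with the positive $g_v(v,v)$ along the transverse radial direction, $g_v$ acquires the Lorentzian signature (coindex $1$ in the convention of \cite{JavSan11}), so that $F_l$ is a Lorentzian norm. The case $m=1$ is trivial, and the remaining terminology (``Minkowski, Kropina type, strong wind'') is read off directly from the construction.
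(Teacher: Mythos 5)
Your proposal is correct and follows essentially the same route as the paper: define $F$ and $F_l$ from the two radial intersection parameters $\lambda_\pm$, split $\Sigma$ into the near sheet, the far sheet and the tangency locus (which gives the common continuous extension to $\partial A_E$), and read off the signatures of the fundamental tensors from the sign of the second fundamental form of each sheet relative to the position vector, exactly as in the paper's appeal to \cite[Prop.~2.2, Eq.~(2.5)]{JavSan11}. The only difference is cosmetic: you unpack cases (i)--(ii) and the transversality/convexity facts that the paper simply delegates to \cite[Theorem 2.14]{JavSan11}.
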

	\begin{proof}
		Parts $(i)$ and $(ii)$ are an easy consequence of \cite[Theorem 2.14]{JavSan11}. For part $(iii)$,
		if a ray  from zero meets $\Sigma$ transversely, it will cut $\Sigma$ in two points whereas if it is tangent to $\Sigma$ there will be a unique cut point. Then we can divide $\Sigma$ in three disjoint regions $\Sigma=\Sigma^-\cup \Sigma^0\cup \Sigma^+$, 
		where $\Sigma^-$ and $\Sigma^+$ are the sets of
		the points where the rays departing from $0$ cut $\Sigma$ transversely, first in $\Sigma^-$ and then in $\Sigma^+$, and $\Sigma^0$  is the set of  points where the rays from zero are tangent to  $\Sigma$   (see Fig.~\ref{dis0}).  
		The rays cutting $\Sigma^-\cup \Sigma^+$ generate the open subset $A\subset V$;
		recall that the  compactness and  strong convexity of $\Sigma$ imply both, the arc-connectedness of $\Sigma^-$  and $\Sigma^+$,  and the  convexity of $A$, ensuring $\it (a)$.
		Moreover, $\Sigma^-$ defines a Lorentzian norm $F_l$, since the restriction of its fundamental tensor  $g_v$ to the tangent hypersurface to $\Sigma^-$ is negative definite and $g_v$-orthogonal to $v$ \cite[Prop. 2.2]{JavSan11}
		(recall that this restriction coincides,  up to a negative constant,  with the second fundamental form of $\Sigma^-$ with respect to the opposite to the position vector,  \cite[Eq.  (2.5)]{JavSan11}).
		Analogously,
		$\Sigma^+$ defines a conic norm $F$ (thus completing $\it (b)$) and, by the choice of $\Sigma^+$, one has  $F<F_l$. Finally, observe that the points of $\Sigma^0$ lie necessarily in the boundary of $A_E$  
		since the rays from zero are tangent to $\Sigma$ (which is strictly convex, in particular); moreover, $\Sigma^0$ lies in the boundary of both $\Sigma^+$ and $\Sigma^-$, which ensures the properties of the extension.
	\end{proof}
	\begin{rem}
		Observe that, in general, a converse of Proposition~\ref{possibwind}  (namely,  whether  a wind Minkowski norm is determined by a conic Minkowski  norm   $F$ and a Lorentzian norm $F_l$ defined both in an open conic subset $A\subset V$, 
		such that $F$ and $F_l$ can be continuously extended to  $A_E$ and the extensions coincide)  would require further hypotheses in order to ensure  that
		the closures in $A_E$ of the indicatrices of $F$ and $F_l$   glue  smoothly at  their intersection with  the boundary of  $A_E$.
	\end{rem}
	\begin{convention}
		As a limit case  $m=1$ of Proposition
		\ref{possibwind}
		and, thus, $\Sigma=\{ \lambda_-v_0, \lambda_+ v_0\}$, one
		has naturally a Minkowski norm or a Kropina norm  (the latter identifiable to a norm with domain only a half line)   when $0\in B$ or
		$0\in \Sigma$, resp. When $0\not \in \bar B$, choose $v_0\in B$ and assume $(0<)\lambda_- (<1) < \lambda_+$.  Then, define $\Sigma^+ = \{ \lambda_+
		v_0 \} $ (resp. $\Sigma^- = \{ \lambda_- v_0 \}$), as the
		indicatrix of a conic  Minkowski  norm, which will also be regarded as
		Lorentzian  norm  in the case of $\Sigma^-$ ($\Sigma^+, \Sigma^-$
		are clearly independent of the chosen vector $v_0$).
	\end{convention}

	\subsection{Notions  on manifolds  and characterizations} Let $M$ be a  smooth $m$-dimensional manifold\footnote{ Manifolds are always assumed to be Hausdorff and paracompact. 
		However, the latter can be deduced from the existence of a Finsler metric (as then the manifold $M$ will admit a reversible one, and $M$ will be metrizable) as well as from the existence of a wind Finsler structure 
		(as in this case the centroid vector field is univocally defined, and $M$ will admit a Finsler metric, see Proposition~\ref{pzerm}  below). },  $TM$ its tangent bundle and
	$\pi:TM\rightarrow M$ the natural projection. Let us recall that a
	{\it Finsler metric} in $M$ is a  continuous  function
	$F:TM\rightarrow [0,+\infty)$ smooth away from the zero section
	and  such that $F_p=F|_{T_pM}$ is a Minkowski norm for every $p\in
	M$. Analogously, a {\it  conic Finsler metric, conic
		pseudo-Finsler metric or a Lorentzian Finsler metric} is a smooth
	function $F:A\rightarrow [0,+\infty)$, where $A$ is a {\em conic}
	open subset of $TM \setminus\mathbf{0}$ (i.e., each $A\cap T_pM$ is a
	conic subset) such that $F_p=F|_{A\cap T_pM}$ is,
	respectively, a conic Minkowski norm,  a conic pseudo-Minkowski
	norm  or a Lorentzian norm.
	\begin{defi}\label{windStruct}
		A smooth  (embedded)  hypersurface $\Sigma \subset TM$ is a
		\emph{wind Finslerian structure} on the manifold $M$ if, for every
		$p\in M$: (a) $\Sigma_p:=\Sigma\cap T_pM$  defines  a wind
		Minkowskian structure in $T_pM$,
		and (b) for each $v\in \Sigma_p$, $\Sigma$ is
		transversal to the vertical space $\mathcal V(v) \equiv T_v(T_pM)$
		in $TM$.  In this case, the pair $(M,\Sigma)$ is a {\em wind Finslerian manifold}.    Moreover, we will  denote by  $B_p$  the unit ball of each $\Sigma_p$;
		while  the {\em (open) domain} $A$ of the wind Finslerian structure
		will be  the union  of the sets $A_p\subset T_pM, p\in M$,
		where $A_p$ is defined as $A_p=T_pM \setminus \{0\}$ if
		$0\in B_p$  and  by parts $(ii)$ and $(iii)$ of Proposition
		\ref{possibwind} otherwise.
		\begin{rem}\label{r2.7}
			For a standard Finsler structure  $F\colon TM\to [0,+\infty)$, the
			indicatrix $\Sigma_F= \{v\in TM: F(v)=1\}$ is a wind Finslerian
			structure. In fact, (a) follows trivially, and (b) holds because,
			otherwise,  being $F$ smooth on $TM\setminus\mathbf{0}$,  $\mathcal V(v)$ would lie in the kernel of $dF_v$, in
			contradiction with the homogeneity of $F$ in the
			direction
			$v$. Notice that this property of transversality (b) also holds
			for the indicatrix of any conic Finsler or Lorentzian Finsler metric
			defined on $A\subset TM$ (while (a) does not).
			
			R.L. Bryant  \cite{Bryant02} defined a generalization of Finsler metrics also as a hypersurface. The proof of Proposition~\ref{windConseq} below shows that this notion is clearly related to the notion of {\em conic Finsler metric} 
			used here (even though, among other differences, in his definition $\Sigma$ must be radially transverse and it may be non-embedded and non-compact). 
		\end{rem}
		
		\begin{prop} 
			The wind Finslerian structure $\Sigma$ is  closed as a subset of $TM$, 
			and foliated by spheres.  Moreover,  
			the union of all
			the unit balls $B_p, \, p\in M$, as well as $A$, are open in $TM$. If
			$M$ is connected and $m\geq 2$ (resp. $m=1$), then $\Sigma$ is
			connected (resp. $\Sigma$ has two connected parts, each one
			naturally diffeomorphic  to $M$).
		\end{prop}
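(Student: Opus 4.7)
\textbf{Proof plan for Proposition~\ref{p2.8}.} The three assertions all follow from a single local-structure lemma about $\Sigma$ near a fibre $\Sigma_{p_0}$ that I would establish first. Fix $p_0 \in M$ and a chart of $M$ around $p_0$ trivializing $TM$ as $U \times \R^m$. At each $v_0 \in \Sigma_{p_0}$ the transversality condition (b) in Definition~\ref{windStruct} says that a local defining submersion $\Phi$ of $\Sigma$ has non-trivial vertical derivative at $v_0$; the implicit function theorem then realises $\Sigma$ near $v_0$ as a smooth graph over a product of $U$ with a disk in a hyperplane of $\R^m$. Compactness of $\Sigma_{p_0}$ allows me to cover it by finitely many such graph neighbourhoods; shrinking $U$, their union provides an open tubular neighbourhood $N \subset TM|_U$ of $\Sigma_{p_0}$ inside which $\Sigma \cap \pi^{-1}(U)$ is a finite collection of smooth graphs.

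The main obstacle is to show that, for $p$ close to $p_0$, the whole of $\Sigma_p$ lies in $N$ --- a priori there could be stray pieces of $\Sigma_p$ lying far from $\Sigma_{p_0}$. For $m\geq 2$, strong convexity and connectedness force $\Sigma_p$ to be a topological $(m-1)$-sphere; the union of graphs inside $N$, intersected with $\{p\}\times \R^m$, is a compact boundaryless $(m-1)$-submanifold of $\Sigma_p$ diffeomorphic to $\Sigma_{p_0}$, therefore both open and closed in $\Sigma_p$, and by connectedness it equals all of $\Sigma_p$. Hence $\Sigma_p \subset N$ for $p$ sufficiently close to $p_0$. For $m=1$ the two points of $\Sigma_{p_0}$ yield two disjoint smooth local sections of $TM|_U$ directly. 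In either case $\pi|_\Sigma\colon \Sigma \to M$ is a locally trivial fibration, with fibre $\Sigma_{p_0}$ when $m\geq 2$ and a trivial two-sheeted cover when $m=1$.

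From this local structure the four claims are short deductions. If $v_n \in \Sigma$ and $v_n \to v_\infty$, applying the local structure at $p_\infty=\pi(v_\infty)$: eventually $v_n$ lies on one of the graphs, and its continuity gives $v_\infty \in \Sigma$, proving closedness. For $v_0 \in B_{p_0}$, a small Euclidean ball around $v_0$ stays inside the bounded side of $\Sigma_p$ for $p$ near $p_0$ since $\Sigma_p \subset N$, proving openness of $\bigcup_p B_p$. For $A$, a unified description valid across the three cases of Proposition~\ref{possibwind} reads: $v\in A_p$ iff the half-line $\{\lambda v:\lambda>0\}$ meets $\Sigma_p$ transversally at some point distinct from $0$; transversality is open in $(p,v)$ via the smooth dependence of the graphs, so $A$ is open. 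Finally, connectedness of $\Sigma$ when $m\geq 2$ follows from local triviality plus connectedness of $M$ and of each spherical fibre, while for $m=1$ the two sheets are distinguished globally by the ordering $\lambda_-(p)<\lambda_+(p)$ on the oriented $1$-manifold $M$, and each is a smooth global section of $TM$, hence diffeomorphic to $M$.
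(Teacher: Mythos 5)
Your proof is correct and follows essentially the same route as the paper's: the published argument simply asserts that each $p$ admits a neighbourhood $U$ with $\Sigma\cap\pi^{-1}(U)$ compact and homeomorphic to $U\times S^{m-1}$, which is exactly the local triviality you derive from transversality, compactness and connectedness of the fibres, so your write-up is in fact more detailed than the original. The one step you assert rather than justify is the compactness of $\Sigma_p\cap N$ for $p$ near $p_0$; as stated this is nearly equivalent to the conclusion that $\Sigma_p$ cannot leak out of $N$, so add the standard collar: pick $N'$ open with $\Sigma_{p_0}\subset N'\subset\overline{N'}\subset N$ and $\overline{N'}$ compact, note that $\Sigma\cap N$ (the union of the graphs) is closed in $N$ so that $\Sigma\cap\overline{N'}$ is compact, and use a sequence argument to get $\Sigma_p\cap(\overline{N'}\setminus N')=\emptyset$ for $p$ close to $p_0$, whence $\Sigma_p\cap N'=\Sigma_p\cap\overline{N'}$ is compact and open in $\Sigma_p$ and your connectedness step finishes. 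The same shrinking also tidies the closedness argument (it places the limit vector inside $N$, where $\Sigma$ is closed); the unified transversal-ray description of $A$, the separation argument for the balls, and the $m=1$ case via the ordered sections $\lambda_\pm(p)V_p$ all check out against the three cases of Proposition~\ref{possibwind}.
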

		\begin{proof}  For the  first sentence, recall that the property (a) of Definition~\ref{windStruct}
			implies that $\Sigma$ is foliated by topological spheres $S^{m-1}$
			and each $p\in M$ admits a  neighborhood $U$ such that $\Sigma
			\cap \pi^{-1}(U)$ is compact and homeomorphic to $U\times
			S^{m-1}$. 
			Indeed, for each chart $(U,\phi)$ around some $p\in M$, one can take the natural bundle chart $\phi^U:TU\  \rightarrow \phi(U)\times \R^n$ and choose a vector $o_p\in T_pM$ inside the inner domain of $\Sigma_p$.  We can assume by taking $U$ smaller if necessary that $(\phi^U)^{-1}(x,o^*_p)$ is in the inner domain of $\Sigma_{\phi^{-1}(x)}$ for all $x\in \phi(U)$, where the superscript $^*$ means the associated linear coordinates on  $T_pM$. Then the one-to-one map:
			\begin{align*}
				\Psi\colon& \phi(U)\times \Sigma_p\times \R^+\rightarrow TU\setminus \{(\phi^U)^{-1}((\phi(U),o_p^*))\}, \\ 
				&(x,v_p,\lambda)\mapsto (\phi^U)^{-1}(x, o^*_p + \lambda [(v_p-o_p)]^*)
			\end{align*}
			is a homeomorphism because of the invariance of domain theorem.
			Now, for each $(x,v_p)$ there exists a unique $\lambda(x,v_p)\in\R^+$ such that $(x,v_p, \lambda(x,v_p))\in \Psi^{-1}(\Sigma)$ and $\lambda(x,v_p)$ varies continuously with $x$ and $v_p$. Thus, as $\Sigma_p$ is a topological sphere, the required foliation of $\Sigma\cap TU$ is obtained. 
			For the last assertion, notice that, otherwise, any two
			non-empty disjoint open subsets that  covered  $\Sigma$  would 
			project onto open subsets of $M$ with a non-empty intersection
			$W$, in contradiction with the connectedness of $\Sigma_p$ at each
			$p\in W$ (for $m=1$, $M$ admits a non-vanishing vector field $V$,
			so that each two points in $\Sigma_p$ can be written now as $\lambda_-(p)V_p,\lambda_+(p)V_p$,
			with  $\lambda_-<\lambda_+$ on all $M$, thus  $p\rightarrow \lambda_-(p) V_p, \lambda_+(p) V_p$  yield the required diffeomorphisms with $M$).
		\end{proof}
	\end{defi}
	\begin{defi}\label{ae} 
		Let $(M,\Sigma)$ be a wind Finslerian manifold.
		The {\em region of critical wind} (resp.  {\em mild wind}) 
		is $$\MC =\{p\in M: 0_p \in \Sigma_p \} \quad  (\hbox{resp. } \MM = \{p\in M: 0_p\in B_p\}),$$ 
		and the {\em properly wind Finslerian region} or  {\em region of strong wind} is   
		$$M_l:=\{p\in M: 0_p\notin \bar{B}_p\}.$$ 
		The {\em (open) conic domain}  of the associated Lorentzian Finsler metric  $F_l$  is $$A_l:=\pi^{-1}(M_l)\cap A.$$ 
		Let $\mathbf{0}$ be the 0-section of $TM$.  The   {\em extended domain} of $F_l$ is 
		$$
		A_E:= \left( \hbox{Closure of} \; A_l \; \hbox{in} \; TM_l\setminus \mathbf{0} \right) 
		\cup \{0_p \in T_pM: %0_p\in \Sigma_p, 
		p \in \MC 
		\}.
		$$
	\end{defi}
	The zero vectors $0_p$  (with $p\in \MC$)  are included in $A_E$ for convenience (see Convention~\ref{caestar}). 
	In the region of strong wind, the convention on $A_E$ is consistent with  Proposition~\ref{possibwind}-(iii); moreover,    $A_l\subset TM\setminus  \mathbf{0}$, $A_l\subseteq A$ and,  whenever $p\in M_l$,
	$A\cap T_pM=A_l\cap T_pM$.
	
	\begin{prop}\label{windConseq}
		Any wind Finslerian structure $\Sigma$ in $M$ determines the
		conic pseudo-Finsler metrics $F:A\rightarrow [0,+\infty)$
		and $F_l:A_l\rightarrow [0,+\infty)$ in $M$ and $M_l$ respectively  (the latter when $M_l\neq \emptyset$)  characterized by the properties:
		\begin{enumerate}[(i)]
			\item $F$ is a conic Finsler metric with indicatrix included in $\Sigma \cap A$,
			\item $F_l$ is a Lorentzian Finsler metric with indicatrix included in $\Sigma \cap A_l$
		\end{enumerate}
		Moreover,  $F< F_l$ on $A_l$,   both  $F_l$ and $F$ can be extended  continuously   to the boundary of $A_l$ in $TM_l\setminus \mathbf{0}$ (i.e., $A_E\setminus \mathbf{0}$),
		and both extensions coincide in this boundary. 
	\end{prop}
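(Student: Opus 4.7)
The plan is threefold: construct $F$ and $F_l$ fiberwise from Proposition~\ref{possibwind}, upgrade pointwise smoothness to global smoothness by invoking the transversality clause (b) of Definition~\ref{windStruct}, and finally analyse the behaviour on the boundary of $A_l$.

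First, I would fix $p\in M$ and apply Proposition~\ref{possibwind} to the wind Minkowskian structure $\Sigma_p\subset T_pM$. Depending on whether $0_p$ lies in $B_p$, on $\Sigma_p$, or outside $\bar B_p$, this furnishes either a single conic Minkowski norm $F_p$ on $A_p$ or a pair $(F_p,(F_l)_p)$, the first conic Minkowski and the second Lorentzian, with indicatrices $\Sigma_p^+$ and $\Sigma_p^-$ respectively. Setting $F(v):=1/\lambda_+(v)$ and, where applicable, $F_l(v):=1/\lambda_-(v)$, with $\lambda_\pm(v)\,v\in\Sigma_p^\pm$, defines the required candidate maps on $A$ and $A_l$. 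The inclusions of the indicatrices in $\Sigma\cap A$ and $\Sigma\cap A_l$ are built in at this stage, and the positive definiteness of the fundamental tensor of $F$ and the coindex-one property of that of $F_l$ transfer directly from the pointwise statement.

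The next step is smoothness. Fix $v\in A$ with $p=\pi(v)$ and set $w:=\lambda_+(v)\,v\in\Sigma$. Condition (b) of Definition~\ref{windStruct} allows us to realise $\Sigma$ locally as the zero set of a smooth function $\Phi\colon TM\to\R$ whose vertical derivative is nonzero at $w$. Because $\Sigma_p^+$ is strongly convex, the radial direction is transverse to $\Sigma_p^+$ at $w$, so the implicit function theorem applied to $\Phi(\lambda v)=0$ yields a smooth function $\lambda_+$ on a neighbourhood of $v$ in $A$, and hence smoothness of $F$ on $A$. The same argument, applied to $\Sigma_p^-$, gives the smoothness of $F_l$ on $A_l$. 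The strict inequality $F<F_l$ on $A_l$ is immediate from $\lambda_-(v)<\lambda_+(v)$, which is part of Proposition~\ref{possibwind}(iii).

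For the continuous extension to the boundary of $A_l$ in $TM_l\setminus\mathbf{0}$, a boundary point corresponds fiberwise to a ray from $0_p$ that touches $\Sigma_p$ tangentially, i.e.\ to a positive multiple of an element of $\Sigma_p^0$; there $\lambda_+(v)$ and $\lambda_-(v)$ coalesce to a common value, so the pointwise continuous extensions from Proposition~\ref{possibwind}(iii) coincide on $\partial A_l$. The main obstacle is promoting this pointwise coincidence to joint continuity on $TM_l\setminus\mathbf{0}$, since $\Sigma_p^0$ may degenerate in dimension as $p$ varies and $\partial A_l$ need not be a submanifold of $TM_l$. I would handle this by writing $\Phi(\lambda v)=0$ in a chart adapted to $\Sigma$: since $\Sigma$ meets the vertical spaces transversely and the radial tangency at $\Sigma_p^0$ is a generic fold of the restriction of $\Phi(\,\cdot\,v)$ along the ray direction, $\lambda_+(v)$ and $\lambda_-(v)$ appear as the two continuous branches of roots of a smooth equation whose discriminant vanishes precisely on $\partial A_l$. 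A standard continuity-of-roots argument then yields joint continuity of $\lambda_\pm$, hence of $F$ and $F_l$, up to this boundary, with the two limits coinciding.
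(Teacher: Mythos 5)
Your proposal is correct and follows essentially the same route as the paper: the fiberwise data come from Proposition~\ref{possibwind}, and smoothness on $A$ (resp.\ $A_l$) is obtained from the radial transversality of $\Sigma$ in $TM$ guaranteed by condition (b) of Definition~\ref{windStruct} --- your implicit-function-theorem argument for $\Phi(\lambda v)=0$ is just a reformulation of the paper's inverse-function-theorem argument for the map $\psi(t,w)=tw$. The only genuine addition is your fold/continuity-of-roots treatment of joint continuity up to $\partial A_l$, a point the paper settles by appealing to the fiberwise extension in Proposition~\ref{possibwind} (where a simple compactness argument on $\Sigma$ would also suffice); your version is more explicit but not a different method.
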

	\begin{proof}
		From Proposition~\ref{possibwind}, we have to prove just the smoothability of $F, F_l$ in $A$, by using both,
		the smoothness of $\Sigma$ and its transversality. Let $v\in A_p
		\cap \Sigma$, and consider  the ray
		$\{\lambda v: \lambda >0\}$ (recall that $v\neq 0$). This ray is transversal
		to $\Sigma_p$ and,  because of the property of transversality of
		$\Sigma$, it is transversal to $\Sigma$ in $TM$ too. This property
		holds also for some open connected neighborhood $U^\Sigma$ of $v$
		in $A\cap \Sigma$, where  $U^\Sigma_{\pi(v')}(:=U^\Sigma\cap T_{\pi(v')}M)$ will  be either  strongly  convex (thus
		defining $F$) or  strongly  concave (defining $F_l$)   towards $0_{\pi(v')}$, for all $v'\in U^{\Sigma}$.   Moreover,  the map:
		$$
		\psi:  (0,+\infty) \times U^\Sigma \rightarrow TM \quad \quad
		(t,w)\mapsto tw
		$$
		is injective and smooth.  Even
		more, $d\psi$ is bijective at each point $(1,w), w\in U^\Sigma$,
		because of transversality, and it is also bijective at any
		$(\lambda, w), \lambda >0,$ because the homothety $H_\lambda:
		TM\rightarrow TM$ maps $U^\Sigma$ in the hypersurface $\lambda
		U^\Sigma$ which is also transversal to the radial direction.
		Summing up, $\psi$ is a diffeomorphism onto its image $U^{TM}\subset
		TM$, and the  inverse
		$$
		\psi^{-1}:  U^{TM}  \rightarrow (0,+\infty) \times U^\Sigma
		$$
		maps each $v$ in either $(F(v),v/F(v))$ or in
		$(F_l(v),v/F_l(v))$, depending on the convexity or concaveness of
		$U^\Sigma_{\pi(v)}$, $v\in U^\Sigma$,  proving consistently the smoothness of $F$ or $F_l$.
	\end{proof}
	\begin{prop}\label{ptf}
		Let $\Sigma$ and $W$ be, resp., a wind Finslerian structure and a
		(smooth) vector field on $M$. Then,  $\Sigma + W := \{ v+W_{\pi(v)}: v\in \Sigma \}$ is a wind Finslerian structure on $M$.
	\end{prop}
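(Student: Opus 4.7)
The plan is to realize $\Sigma+W$ as the image of $\Sigma$ under the fiber-preserving translation map $\tau_W : TM\to TM$, $v\mapsto v+W_{\pi(v)}$, and then transport the two defining conditions of Definition~\ref{windStruct} along $\tau_W$. Because $W$ is smooth, $\tau_W$ is a diffeomorphism with inverse $\tau_{-W}$; because $\pi\circ \tau_W=\pi$, it restricts on each fiber $T_pM$ to the affine translation by $W_p$, and its differential preserves the vertical distribution, sending $\mathcal V(v)=\ker d\pi_v$ isomorphically onto $\mathcal V(v+W_{\pi(v)})$.

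First I would note that, as $\Sigma$ is a smooth embedded hypersurface of $TM$ and $\tau_W$ is a global diffeomorphism, $\Sigma+W=\tau_W(\Sigma)$ is automatically a smooth embedded hypersurface of $TM$. For condition (a), the fiberwise description $(\Sigma+W)_p=\Sigma_p+W_p$ shows that each slice is an affine translate of the wind Minkowskian structure $\Sigma_p$ inside $T_pM$. When $m\ge 2$, compactness, connectedness, smoothness, embeddedness, and strong convexity of the second fundamental form are all preserved by affine translations (one can, for instance, transport a transverse vector field by $d\tau_W|_{T_pM}=\mathrm{Id}$, so the second fundamental form is literally unchanged), and hence $\Sigma_p+W_p$ is again a wind Minkowskian structure. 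In the case $m=1$, the translated set is still a two-point subset of $T_pM$, and after choosing an appropriate nonzero generator $v_0'\in T_pM$ it is expressible as $\{\lambda_-'v_0',\lambda_+'v_0'\}$ with $\lambda_-'<\lambda_+'$, as required.

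For condition (b), given $v'=v+W_{\pi(v)}\in(\Sigma+W)_{\pi(v)}$, I would use the chain rule together with the fact that $d\tau_W$ is a linear isomorphism $T_v(TM)\to T_{v'}(TM)$ carrying $\mathcal V(v)$ onto $\mathcal V(v')$, to write
\begin{equation*}
T_{v'}(\Sigma+W)+\mathcal V(v')
= d\tau_W\bigl(T_v\Sigma+\mathcal V(v)\bigr)
= d\tau_W\bigl(T_v(TM)\bigr)
= T_{v'}(TM),
\end{equation*}
where the middle equality is precisely the transversality of $\Sigma$ at $v$ given by Definition~\ref{windStruct}(b). Thus $\Sigma+W$ is transversal to $\mathcal V(v')$, completing the verification.

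There is no real obstacle in this argument: the content is essentially the observation that any fiber-preserving diffeomorphism of $TM$ which acts affinely on each fiber preserves the class of wind Finslerian structures, and $\tau_W$ is of this form. The only mildly delicate point is to handle the one-dimensional case separately, since then the definition of a wind Minkowskian structure is combinatorial rather than convex-geometric.
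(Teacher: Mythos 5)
Your proof is correct and follows essentially the same route as the paper, which simply observes that the translation $T_W:v\mapsto v+W_{\pi(v)}$ is a bundle isomorphism of $TM$ preserving smoothness and transversality. You have merely spelled out in more detail the fiberwise verification of conditions (a) and (b) that the paper leaves implicit.
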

	\begin{proof}  The translation $T_W: TM
		\rightarrow TM, v\mapsto v +W_{\pi (v)}$, is a bundle isomorphism of
		$TM$; so, it preserves the properties of smoothness and
		transversality of $\Sigma$.
	\end{proof}
	In particular, the
	translation of the indicatrix $\Sigma_{F_0}$ of any standard Finsler metric  $F_0$   along   $W$
	is a wind Finslerian structure $\Sigma$.  In this case, the associated conic pseudo-Finsler metrics $F$ and $F_l$ can be determined as follows. 
	\begin{prop} Let $F_0$ be a Finsler metric  and $W$  be a smooth vector field on $M$. Then the translation of the indicatrix of $F_0$ by $W$ is a wind Finslerian structure whose conic pseudo-Finsler metrics  are determined as the solutions $Z(v)$ of the equation
		\begin{equation}\label{zerm}
			F_0  \left(\frac{v}{Z(v)}-W\right)=1.
		\end{equation}
	\end{prop}
	\begin{proof} 
		Clearly equation \eqref{zerm} corresponds to a translation by $W$ of the indicatrix of $F_0$ (see also the definition of the Zermelo metric $Z$ in
		\cite{Sh03}).
		The convexity of the indicatrix of $F_0$ implies that this equation will
		have a unique positive solution $Z(v)$ for any $v\in TM\setminus\mathbf{0}$ if $F_0(-W)<1$,  no solution  or only a positive  one if $F_0(-W)=1$, no solution or two positive ones if $F_0(-W)>1$.
	\end{proof}
	Conversely:
	\begin{prop}\label{pzerm}  Any wind Finslerian structure $\Sigma$ can be obtained as the displacement $\Sigma_{F_0} + W$ of the
		indicatrix $\Sigma_{F_0}$ of a Finsler metric $F_0$ along some
		vector field $W$. Moreover, $W$ can be chosen such that each $W_p$
		is the centroid  of $\Sigma_p$.
	\end{prop}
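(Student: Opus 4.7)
The plan is to build $W$ as the canonical centroid section of $\Sigma$, show it is smooth and lies fiberwise in the open region enclosed by $\Sigma$, and then invoke Proposition \ref{ptf} together with Proposition \ref{possibwind}(i) to conclude that the translated structure $\Sigma - W$ is the indicatrix of a genuine Finsler metric $F_0$, so that $\Sigma = \Sigma_{F_0} + W$.

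\smallskip

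First, for each $p\in M$ the bounded region $B_p\subset T_pM$ enclosed by $\Sigma_p$ is strictly convex, as noted in Section 2.1. Fixing any translation-invariant volume form $d\mu_p$ on $T_pM$, I would define
$$
W_p := \frac{1}{\mu_p(B_p)} \int_{B_p} v \, d\mu_p(v).
$$
Since any two translation-invariant volume forms on $T_pM$ differ by a positive constant, the ratio is independent of the choice and thus $W_p$ is an affine-geometric invariant of $B_p$ (with the obvious adaptation $W_p = \tfrac{1}{2}(\lambda_- + \lambda_+)\,v_0$ when $m=1$). A standard convex-geometric fact then yields that the centroid of a bounded convex body lies in its interior, so $W_p\in B_p$.

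\smallskip

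Next, I would verify that $W$ is a smooth vector field on $M$. Working in a bundle chart of $TM$ over a neighborhood of an arbitrary $p_0$, the transversality condition (b) of Definition \ref{windStruct} combined with the compactness and smoothness of $\Sigma$ implies that the hypersurfaces $\Sigma_p$ form a smooth family of strongly convex embedded hypersurfaces depending smoothly on the base point $p$ (this is essentially the content of the diffeomorphism argument in the proof of Proposition \ref{windConseq}). The volume $\mu_p(B_p)$ and the vector-valued integral $\int_{B_p} v\,d\mu_p$ then depend smoothly on $p$ by standard differentiation-under-the-integral arguments, and so does their quotient $W_p$.

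\smallskip

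Finally, by Proposition \ref{ptf} the translated hypersurface $\Sigma_0 := \Sigma - W$ is again a wind Finslerian structure. Since $W_p\in B_p$, the origin $0_p\in T_pM$ lies in the open region $B_p - W_p$ enclosed by $(\Sigma_0)_p$, so by case (i) of Proposition \ref{possibwind} each fiber $(\Sigma_0)_p$ is the indicatrix of a Minkowski norm $F_{0,p}$ on $T_pM$. The global smoothness of $\Sigma_0$ as a wind Finslerian structure, together with Remark \ref{r2.7}, then promotes the family $\{F_{0,p}\}_{p\in M}$ to a genuine Finsler metric $F_0$ on $M$ with $\Sigma_{F_0}=\Sigma_0$, yielding the desired decomposition $\Sigma=\Sigma_{F_0}+W$. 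The only delicate point in this argument is the smooth dependence of the centroid on $p$; this is a routine application of parameter-dependent integration, so no serious obstacle is expected.
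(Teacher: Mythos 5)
Your proposal is correct and follows essentially the same route as the paper: define $W_p$ as the centroid of $B_p$, check it lies in $B_p$ and varies smoothly, and then translate via Proposition \ref{ptf} and apply Proposition \ref{possibwind}(i) together with Proposition \ref{windConseq} to recover a genuine Finsler metric $F_0$. The only step you leave implicit — integrating over the varying domain $B_p$ — is handled in the paper by first translating locally so that $0_p\in B_p$ and then writing the centroid in polar coordinates as integrals over the fixed sphere $S^{m-1}$ with radial cutoff $1/F_p(u)$, which is exactly the precise form of your appeal to the diffeomorphism argument of Proposition \ref{windConseq} and parameter-dependent integration.
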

	\begin{proof}
		Even if this proof can be carried out by choosing a family of vector fields $W_i$ defined in some open subset with this property, whose  existence is trivial, and then doing a convex sum in all 
		the manifold with the help of a partition of unity, we will prove in fact that the vector field provided by the centroid is smooth. 
		For this
		aim, we can actually assume that $\Sigma$ is the indicatrix of a
		standard Finsler metric $F$ defined on some open subset $U$ of
		$\R^m$ (notice that (i) the smoothability of $W$ is a local
		property, (ii) if a vector $w_p$ belongs to the open ball $B_p$
		enclosed by $\Sigma$, this property will hold for any vector field
		$W$ extending $w_p$ in some neighborhood of $p$, so that
		Propositions~\ref{ptf} and \ref{windConseq} can be
		claimed, and (iii) the translation $T_W$ also translates the
		centroids). Let $S^{m-1}$ be the canonical unit sphere in $\R^m$
		with volume element $d\Omega$.
		So, the natural $x^i$-coordinate of the centroid
		$W_p$ is computed as:
		\begin{align}
			x^i(W_p)=&\int_{S^{m-1}}\int_0^{1/F_p(u)}r x^i(u) r^{m-1}\df r\df \Omega\Big/ \int_{S^{m-1}}\int_0^{1/F_p(u)} r^{m-1}\df r\df \Omega\nonumber \\
			=&\frac{m}{m+1}\int_{S^{m-1}}\frac{x^i(u)}{F_p(u)^{m+1}}\df \Omega\Big/ \int_{S^{m-1}} \frac{\df \Omega}{F_p(u)^{m}}\label{ec}
		\end{align}
		and its smoothness follows from the smooth variation of the
		integrands with $p$.
	\end{proof}
	\begin{exe}[Role of transversality]\label{rematrans}
		The smoothness of $W$ relies on the smoothness of $F$
		in \eqref{ec} and, thus, the transversality of $\Sigma$ imposed in
		the assumption (b) of Definition~\ref{windStruct} becomes
		essential. Figure~\ref{nontransv} shows a $1$-dimensional counterexample if the transversality
		condition is not imposed.
		Notice also that, as the absence of transversality would lead to non-smooth metrics, then this would lead to non-smooth
		\sstk splittings in the next Section~\ref{windFermat}. The well-known exotic properties of the chronological and causal
		futures and pasts of spacetimes with non-smooth metrics (see for example  \cite{ChrGra12}) would be related to exotic properties of  $\Sigma$. 
		\begin{figure}[h]
			\includegraphics[scale=1,center]{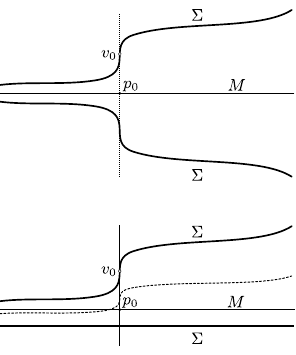}
			\caption{In the top figure, $\Sigma$ is a smooth
				hypersurface of $TM\equiv \R^2$ consisting of two curves which
				intersect the vertical space at $(p_0,v_0)$  (depicted as a vertical line)
				non-transversely.   So $\Sigma$ satisfies the property (a) in
				Definition~\ref{windStruct} and (as the curves are symmetric with
				respect to the zero section of $TM$) it determines
				continuously a scalar product in the tangent space at each
				$p\in M\equiv\R$. Nevertheless, the failure of (b) implies that
				this product does not vary smoothly with respect to $p$ and, so, $\Sigma$
				does not determine a (smooth) Riemannian metric on $M$. In the
				second figure, changing the lower
				curve  by a horizontal line, one
				obtains at each tangent space a wind Minkowskian structure varying
				continuously (but not smoothly) with the point. Moreover, the vector
				field determined by the centroids (the dashed curve)  is not differentiable at  $p_0$.}\label{nontransv}
		\end{figure}
	\end{exe}
	\begin{defi}\label{dreversewind}
		Let $\Sigma$ be a wind Finslerian structure  on $M$. Then,
		\[\tilde{\Sigma}:=-\Sigma:=\{v\in TM: -v\in\Sigma\}\]
		is   the {\it reverse wind Finslerian structure} of $\Sigma$.
	\end{defi}
	Obviously,  $\tilde \Sigma$ is a wind Finslerian structure too and, from the definition, one gets easily the following.
	\begin{prop}\label{reversewind}
		Given a wind Finslerian structure $\Sigma$, the conic Finsler  metric  $\tilde{F}$ and the Lorentzian Finsler one $\tilde{F}_l$ associated with the reverse wind Finslerian structure $-\Sigma$ are the (natural) reverse conic 
		pseudo-Finsler metrics of $F$ and $F_l$, that is,  the domains of $\tilde{F}$ and  $\tilde{F}_l$ are, respectively,
		$\tilde{A}=-A=\{v\in TM: -v\in A\}$ and $\tilde{A}_l=-A_l=\{v\in TM: -v\in A_l\}$ and they are defined as $\tilde{F}(v)=F(-v)$ for every $v\in \tilde{A}$ and $\tilde{F}_l(v)=F_l(-v)$ for every $v\in \tilde{A}_l$.
	\end{prop}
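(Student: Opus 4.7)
The proof should be a straightforward verification, essentially unpacking the definitions, so the plan is to identify the pieces of $-\Sigma$ that correspond to the convex/concave parts of $\Sigma$ under reflection through the zero section, and then read off the formulas.

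First, I would observe that the antipodal map $\iota : TM \to TM$, $v \mapsto -v$, is a smooth diffeomorphism preserving fibers, vertical subspaces, and the zero section. Hence if $\Sigma$ satisfies the two defining conditions of Definition \ref{windStruct} then so does $\iota(\Sigma) = -\Sigma$: at each $p \in M$, $(-\Sigma)_p = -\Sigma_p$ is a compact, connected, strongly convex hypersurface of $T_pM$ (strong convexity is preserved by $-\mathrm{id}$), and transversality to vertical fibers is preserved by the bundle isomorphism $\iota$. The enclosed unit ball of $-\Sigma_p$ is $-B_p$, so in particular $0_p\in (-B_p)$ iff $0_p\in B_p$, $0_p\in -\Sigma_p$ iff $0_p\in \Sigma_p$, and $0_p\notin \overline{-B_p}$ iff $0_p\notin \overline{B_p}$. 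Therefore the three regions $\MM$, $\MC$, $M_l$ of Definition \ref{ae} are the same for $\Sigma$ and for $-\Sigma$.

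Next, I would describe the domains. By definition, the conic open set $A_p$ is the union of those rays from $0$ that meet $\Sigma_p$, and $(A_l)_p$ coincides with $A_p$ for $p\in M_l$. A ray $\{tv : t>0\}$ meets $-\Sigma_p$ iff the opposite ray $\{-tv : t>0\}$ meets $\Sigma_p$, which gives $\tilde A_p = -A_p$ and $(\tilde A_l)_p = -(A_l)_p$, hence $\tilde A = -A$ and $\tilde A_l = -A_l$.

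Finally, I would identify the two pieces of the indicatrix of $-\Sigma$ in the strong-wind case. Using the decomposition $\Sigma_p = \Sigma_p^- \cup \Sigma_p^0 \cup \Sigma_p^+$ of Proposition \ref{possibwind}, a ray from $0$ hits $\Sigma_p$ first at $\Sigma_p^-$ (the concave part, defining $F_l$) and then at $\Sigma_p^+$ (the convex part, defining $F$). Reflecting, the same ray reversed hits $-\Sigma_p$ first at $-\Sigma_p^-$ and then at $-\Sigma_p^+$, so by the characterization in Proposition \ref{possibwind} the conic Finsler indicatrix of $-\Sigma$ is $-\Sigma_p^+$ and the Lorentzian Finsler indicatrix is $-\Sigma_p^-$. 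Since the indicatrix condition is $F(v)=1$ iff $v\in \Sigma_p^+$, this immediately yields $\tilde F(w)=1$ iff $-w\in \Sigma_p^+$, i.e., $\tilde F(w)=F(-w)$, and positive homogeneity extends this to all $w\in \tilde A$; the argument is identical for $\tilde F_l(w)=F_l(-w)$ on $\tilde A_l$. In the mild or critical cases $0\in \overline{B_p}$, the unique Minkowski/Kropina-type norm with indicatrix $-\Sigma_p$ satisfies the same relation by the uniqueness statement recalled after Definition \ref{min}. No step presents any real obstacle; the only point requiring attention is making sure the ``first vs.\ second intersection'' interpretation of $\Sigma^\pm$ is stated precisely so that the reflection exchanges the roles correctly.
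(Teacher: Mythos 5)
Your proposal is correct and follows exactly the route the paper intends: the paper states this result without proof ("from the definition, one gets easily the following"), and your verification — that $-\mathrm{id}$ preserves the defining properties of a wind Finslerian structure, carries $A$ and $A_l$ to $-A$ and $-A_l$, and sends $\Sigma^\pm$ to $(-\Sigma)^\pm$ so that the convex/concave pieces of the indicatrix correspond under reflection — is precisely the definitional unpacking being left to the reader. The one point you rightly flag, that the "first/second intersection" labels are preserved (not exchanged) by the reflection so that $-\Sigma^+$ remains the indicatrix of the conic Finsler metric, is handled correctly.
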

	\subsection{Wind lengths and balls} In order to deal with curves, the following conventions will be useful.
	\begin{convention}\label{caestar}  For any wind Finslerian structure $\Sigma$ we
		extend  $F$ and $F_l$ to $A\cup A_E$ as follows. 
		First, consistently with  Proposition~\ref{windConseq}, 
		$F$ and $F_l$ are regarded as continuously extended to the boundary of $A_l$ in $TM_l\setminus \mathbf{0}$. $F_l$ is extended as equal to $+\infty$ on $A$ in the regions of  mild  and critical wind  i.e. on the set 
		$\{v\in A_p: 0_p\in \bar B_p\}$
		(that is, $F_l$ is equal to $+\infty$ on  the vectors where $F$ has been  defined and $F_l$  has not). Finally, we define $F_l$  and $F$  as equal to $1$  on the set of critical wind zeroes
		(i.e., the set $\{0_p: 0_p\in \Sigma_p\}$, 
		which was included in the definition of $A_E$, Definition~\ref{ae}). Notice that neither this choice of  $F_l$ and $F$   on the critical wind region nor any other can ensure  their   continuity; however, $F_l$ and $F$ are continuous 
		on $A\cup (A_E\setminus\mathbf{0})$. 
		We also use natural notation such as
		$ (A_l)_p=  A_l \cap T_pM$,  $(A_E)_p=  A_E\cap T_pM$.
		
		To  understand this choice, recall first that the necessity to extend $A$ to $A_E$ in the critical and strong wind regions comes from the fact that all the indicatrices $\Sigma_p$ should be contained in $A_E$. 
		In the critical region,  $\Sigma_p\setminus \{0_p\}$ lies in $A$ and, so, in the domain of $F$. Therefore, it is not strange to include  $0_p$  in $A_E$ so that $F_l$ is defined on this vector and, 
		obviously, the choice  $F_l(0_p)=F(0_p)=1$   comes from the fact that $0_p$ lies in the indicatrix  and in the boundary of $A_E$. 
		A further support for these choices will come from the viewpoint of spacetimes, as the vectors in $A\cup A_E$ are those which can be obtained as the projection of a lightlike vector in the  spacetime. 
		%  M !! THE CHOICE WILL BE CLEARER IN REMARK \ref{remlength}, where I would also argument in favour of leaving $0_p$ outside $A$ for $M\setminus \overline{M}_l$ (nevertheless, this could be done with no harm).  

		As usual, a {\em piecewise smooth curve} $\gamma$ will be defined in a compact interval $I=[a,b]$, and it will be smooth except in a finite number of breaks $t_i\in I$, $i\in\{1,\dots, k\}$, where it is continuous and its 
		one-sided  derivatives are well defined\footnote{Even though typically, all the curves will be defined on a compact interval $I$, when necessary all the following notions can be used for non-compact $I$.
			In this case, one assumes that  the restriction of $\gamma$ to compact subintervals of $I$ satisfies the stated property, and it is natural to impose additionally that the images of the breaks $\{\gamma(t_i)\}$ do not accumulate.};
		its reparametrizations will be assumed  also piecewise smooth   and with positive  one-sided  derivatives  (so that, for example a piecewise smooth geodesic with proportional one-sided derivatives at each break pointing in the same direction can be reparametrized as smooth geodesics),  unless otherwise   specified. 
		
	\end{convention}
	
	\begin{defi}\label{sigmadmissible}
		Let $\Sigma$ be a wind Finslerian structure with associated  pseudo-Finsler metrics  $F$ and $F_l$   and consider a  piecewise smooth  curve  $\gamma:[a,b]\subset\R\rightarrow M$, $a<b$. 
		
		(i) $\gamma$ is {\em $\Sigma$-admissible}  if its left and right derivatives $ \dot\gamma(s^-), \dot\gamma(s^+)$ belong to  $A\cup A_E$  at every  $s\in [a,b]$.  
		Analogously, $\gamma$ is {\em  $F$-admissible} if  $\dot\gamma(s^{\pm})\in A$,  for each $s\in [a,b]$.  Accordingly, a vector field $V$ on $M$ is $\Sigma$-admissible (resp. $F$-admissible) if $V_p\in A\cup A_E$  for each $p\in M$,  
		(resp. $V_p\in A$  for each $p\in M$).
		
		(ii) A $\Sigma$-admissible curve $\gamma$ is a {\em wind curve} if 
		\begin{equation} \label{ewindcurve} 
			F(\dot\gamma(s))\leq 1 \leq F_l(\dot\gamma(s)) \quad \quad \forall s\in[a,b],  \end{equation} 
		% and such that $F(\dot\gamma)$ is continuous when restricted at each interval where $\gamma$ is smooth.  
		and an $F$-admissible wind curve will be called just {\em $F$-wind curve}.

		(iii)  A $\Sigma$-admissible curve $\gamma$ is a {\em regular  curve} if its  one-sided  derivatives
		can vanish only at isolated points (which can be   regarded as 
		break points, even though the curve may be smooth there), and it is
		a {\em strictly regular  curve} if its  one-sided  derivatives (and, thus, its velocity outside the breaks)
		%$\dot\gamma(s)^-, \dot\gamma(s)^+$
		cannot vanish at any point. 
		
		(iv)  The {\em wind lengths} of a 
		%piecewise  smooth  
		$\Sigma$-admissible  curve $\gamma$  (not necessarily a wind curve)   are defined as
		\begin{align*}
			\ell_F(\gamma) & =\int_a^b F(\dot\gamma)ds\  \big(\!\!\in  (0,   +\infty] \big), & \ell_{F_l}(\gamma)&=\int_a^b F_l(\dot\gamma)ds\  \big(\!\!\in (0,+\infty]\big).
		\end{align*}
	\end{defi}
	Obviously, from  \eqref{ewindcurve} we get:
	\begin{prop}
		If  $\gamma$ is a wind curve then 
		\begin{equation}\label{elength}
			\ell_F(\gamma|_{[a',b']})\leq b'-a' \leq \ell_{F_l}(\gamma|_{[a',b']}), \quad\text{for all $a\leq a'<b'\leq b$.}
		\end{equation}
	\end{prop}
	We will use this and other natural properties (as the fact that the concatenation of two wind curves $\gamma_1, \gamma_2$ such that $\gamma_1(b_1)=\gamma_2(a_2)$ is another wind curve) with no further mention.
	\begin{rem} \label{remlength} 
		Wind curves   collect the intuitive idea of Zermelo's navigation problem, namely: the possible velocities  attained  by the moving object are those satisfying
		the inequalities in \eqref{ewindcurve}  (observe that in the region $M\setminus M_l$, the inequalities in \eqref{ewindcurve} reduce to $F(\dot\gamma(s))\leq 1$). 
		These  velocities never include $0_p$ if $p\in M_l$ and must include $0_p$ if $0_p\in\overline{B}_p$,  which happens iff $p\in M\setminus M_l$, even though, by convenience, we have excluded  $0_p$ from $A_p$ if $p\in\MM$ and included it in the extended domain $A_E$ when $p\in \MC$.  
		The reason to exclude  $0_p$ from $A_p$  when $p\in \MM$ is just to emphasize the different role of the zero vector in this region and in $\MC$  (as well as avoiding problems of differentiability 
		with $F$).\footnote{If the reader felt more comfortable, he/she  could redefine $A$ by adding $\{0_p:p\in \MM\}$ with no harm. In the part of spacetimes, the so redefined subset $A$ could be interpreted as the set which contains the
			projections of all the (future-pointing) timelike vectors, and $A\cup A_E$ as the set which contains the projections of the causal vectors.  However,  the reader should take into account that the fundamental tensor of a pseudo-Finsler metric
			is not well-defined in the zero section.}  
		In fact, in order to connect points  by means of curves included in $\MM$, one can avoid to use  velocities that vanish (and this may be convenient for purposes such as reparametrizing the curve at constant speed; 
		such an assumption is frequent in Riemannian Geometry too). However,  as in the case of Riemannian Geometry, the vanishing of the velocity in subsets with accumulation points leads to bothering problems about its reparametrizations. So, we will consider the solutions of Zermelo's problem as regular wind curves (allowing the velocity to vanish in isolated points), and we will ensure the existence of such solutions  (see  Corollary~\ref{rsummaryZ}). 
		Observe also that  the continuity of $F\circ \dot\gamma$ and $F_l\circ\dot\gamma$ has to be checked only when $\dot\gamma(s)$ is equal to a zero of the critical region 
		(see Proposition~\ref{pleche}-(ii))  and, in this case, $F$ and $F_l$ are defined as equal to $1$ there. A further explanation of this 
		choice is provided in Example~\ref{exazero} below, where two 
		paradigmatic examples of curves with Kropina's zeroes in the derivatives are given.

	\end{rem}
	\begin{exe}\label{exazero}
		Let $\R^2$ be endowed with the Kropina norm $F(x,y)=(x^2+y^2)/x$ defined in $A=\{(x,y)\in \R^2: x>0\}$. Then the curve $\alpha:[0,1]\rightarrow \R^2$, $\alpha(t)=(t^2,t^3)$, satisfies that 
		$\dot\alpha(0)=(0,0)$ and $\lim_{t\rightarrow 0}F(\dot\alpha(t))=0$.  Clearly, the reparametrization of this curve as an $F$-unit curve is not differentiable at  $t=0$.   In fact, this kind of curves   was excluded in the mild region. 
		However, consider the indicatrix of $F$ as a curve, take   the part which is $\Sigma$-admissible  and reparametrize it as an $F$-unit curve. In such a way, \soutE{you} \bw we \ew get a curve $\gamma:[a,b]\rightarrow \R^2$ whose derivative is zero in 
		the two end-points,   but with $F\circ \dot\gamma$  constantly equal to $1$. This second kind of curves is the main reason for including the zero in the Kropina region in the domains of $F$ and $F_l$. 
		Observe that if \soutE{you want}  \bw one wants \ew to exclude the first kind of curves, it is enough to require the continuity of $F\circ\dot\gamma$ in the definition of 
		wind curves in every 
		smooth piece. 
	\end{exe}
	
	Let $x_0,x_1 \in M$ and let us denote by $C^{\Sigma}_{x_0,x_1}$  (resp. $C^{A}_{x_0,x_1}$,   $\Omega^A_{x_0,x_1}$)  the set of the  wind curves (resp. $F$-wind curves, $F$-admissible curves)  between $x_0$ and $x_1$ (each curve $\gamma$ defined in a possibly different interval  $[a_\gamma,b_\gamma]$). 
	
	Following \cite{JavSan11}, we introduce the following notions. 
	\begin{defi}\label{from41}
		Given  a conic pseudo-Finsler  metric $F\colon A\subset TM\to [0,+\infty)$, the {\em Finslerian separation},  also called  {\em $F$-separation},  $d_F\colon M\times M\to [0,+\infty]$  is defined as $d_F(p,q)=\inf_{\gamma\in \Omega^A_{x_0,x_1}}\ell_F(\gamma)$ if $\Omega^A_{x_0,x_1}\neq \emptyset$ otherwise $d_F(p,q)=+\infty$.  
		By using the Finslerian separation two families of subsets of $M$   can be introduced: for any $x_0\in M$ and $r\in (0,+\infty)$, set $B_F^+(x_0, r)=\{y\in M: d_F(x_0,y)<r\}$ 
		and $B^-_F(x_0, r)=\{y\in M:d_F(y,x_0)<r\}$. Moreover, 
		a conic pseudo-Finsler metric is said {\em  Riemannianly lower bounded} on an open subset $D$ of $M$ if 
		there  exists a  Riemannian metric $g_0$ on $M$ such that $F(v)\geq \sqrt{g_0(v,v)}$, for all $v\in TD\cap A$.
	\end{defi}
	As $F$ and $F_l$ are continuously extendible to $A_E\setminus\mathbf{0}$, we immediately get, by homogeneity, that they are Riemannianly lower bounded on, respectively, $M$ and $M_l$. By \cite[Proposition 3.13]{JavSan11}, the collections $B_F^\pm(x_0, r)$ of a Riemannianly  lower bounded conic pseudo-Finsler $F$ constitute a basis for the topology of $D$,  thus we have:  
	\begin{prop}\label{riemlowerbound}
		The collections of $B_F^\pm(x_0, r)$ (resp $B_{F_l}^\pm(x_0, r)$) constitute a basis for the topology of $M$ (resp. $M_l$). \end{prop} 
	Some cautions, however, must be taken. For example, the Finslerian separation  of the conic Finsler metric $F$  may be discontinuous; in fact, the conic Finsler metric in 
	\cite[Example 3.18]{JavSan11} exhibits this property  (see also Section~\ref{kroran} below).    We refer to
	\cite[Section 3.5]{JavSan11} for a summary of  the  properties satisfied by the Fnslerian separation. 
	
	In order to work with the full geometry associated with  $\Sigma$ we also introduce the following new collections of subsets of $M$.
	\begin{defi}\label{sigmaballs}
		Let $x_0\in M$ and $r >  0$. The {\em forward} (resp. {\em backward}) {\em wind balls}  of center $x_0$ and radius $r$ 
		associated with the wind Finslerian structure $\Sigma$ are:
		\begin{align*}
			&B^+_{\Sigma}(x_0,r)=\{x\in M:\ \exists\   \gamma\in C^{\Sigma}_{x_0, x}, \text{ s.t. }   r=b_\gamma-a_\gamma \, \text{and} \;  \ell_F(\gamma)<r<\ell_{F_l}(\gamma)\},\\
			&B^-_{\Sigma}(x_0,r)=\{x\in M:\ \exists\   \gamma\in C^{\Sigma}_{x, x_0}, \text{ s.t. }  r=b_\gamma-a_\gamma \, \text{and} \;  \ell_F(\gamma)<r<\ell_{F_l}(\gamma)\}.\\
			\intertext{ being  {\em the closed balls}  $\bar{B}^\pm_{\Sigma}(x_0,r)$ their closures.  Moreover, the (forward, backward) {\em c-balls} are defined as:} 
			&\hat{B}^+_{\Sigma}(x_0,r)=\{x\in M:\ \exists\  \gamma\in C^{\Sigma}_{x_0, x},\text{ s.t. } r=b_\gamma-a_\gamma \, \text{(so,} \;  \ell_F(\gamma)\leq r\leq\ell_{F_l}(\gamma) )  \}
			%\cup\{x_0:\text{ if $0_{x_0}\in \Sigma_{x_0}$}\}
			,\\
			&\hat{B}^-_{\Sigma}(x_0,r)=\{x\in M:\ \exists\ \gamma\in C^{\Sigma}_{x, x_0},\text{ s.t. } r=b_\gamma-a_\gamma \, \text{(so,} \; \ell_F(\gamma)\leq r\leq\ell_{F_l}(\gamma))  \} 
			%\cup\{x_0:\text{ if $0_{x_0}\in \Sigma_{x_0}$}\}, 
		\end{align*}
		for  $r> 0$ and, by convention for $r=0$, $\hat{B}^\pm_{\Sigma}(x_0,0)=x_0$.
	\end{defi}
	Recall that, consistently with our conventions, if $0_{x_0}\in \Sigma_{x_0}$ then $x_0 \in \hat B^\pm_\Sigma(x_0,r)$ for all $r\geq 0$ 
	(this will be interpreted naturally in  the description of the causal future of a point in an \sstk see, e.g.
	Proposition~\ref{bolas2}).   
	\begin{prop}
		If a wind Finslerian
		structure comes from a Finsler one  then  the
		sets $B^+_{\Sigma}(x_0,r)$ and $B^-_{\Sigma}(x_0,r)$,  $r>0$, 
		coincide with the standard forward and backward  open  balls
		centred at $x_0$.  
	\end{prop}
	\begin{proof}
		Just take into account that the assumption is equivalent to $0\in B_p$, for all $p\in
		M$ and, according to Convention~\ref{caestar}, $F_l(v)=+\infty$, for all $v\in  A=  TM \setminus  \mathbf{0}$. 
	\end{proof}
	\begin{exe}\label{ex:riemcballs}
		$\hat B^+_{\Sigma}(x_0,r)$ and $\hat B^-_{\Sigma}(x_0,r)$  do not coincide   in general with the closures  $\bar{B}_{\Sigma}^+(x_0,r)$ and $\bar{B}_{\Sigma}^-(x_0,r)$. This may occur even when $\Sigma$ comes from a Riemannian metric
		(in $\R^2\setminus \{(1,0)\}$, $\hat{B}^+_{\Sigma}(0,2)$ is not closed);   another simple example (using a strong wind Minkowskian structure) can be seen in Fig.~\ref{eee}. 
		In fact, as we will see, the closedness of the c-balls will be related with the convexity of the manifold.
		\begin{figure}[h]
			\includegraphics[scale=0.4, center]{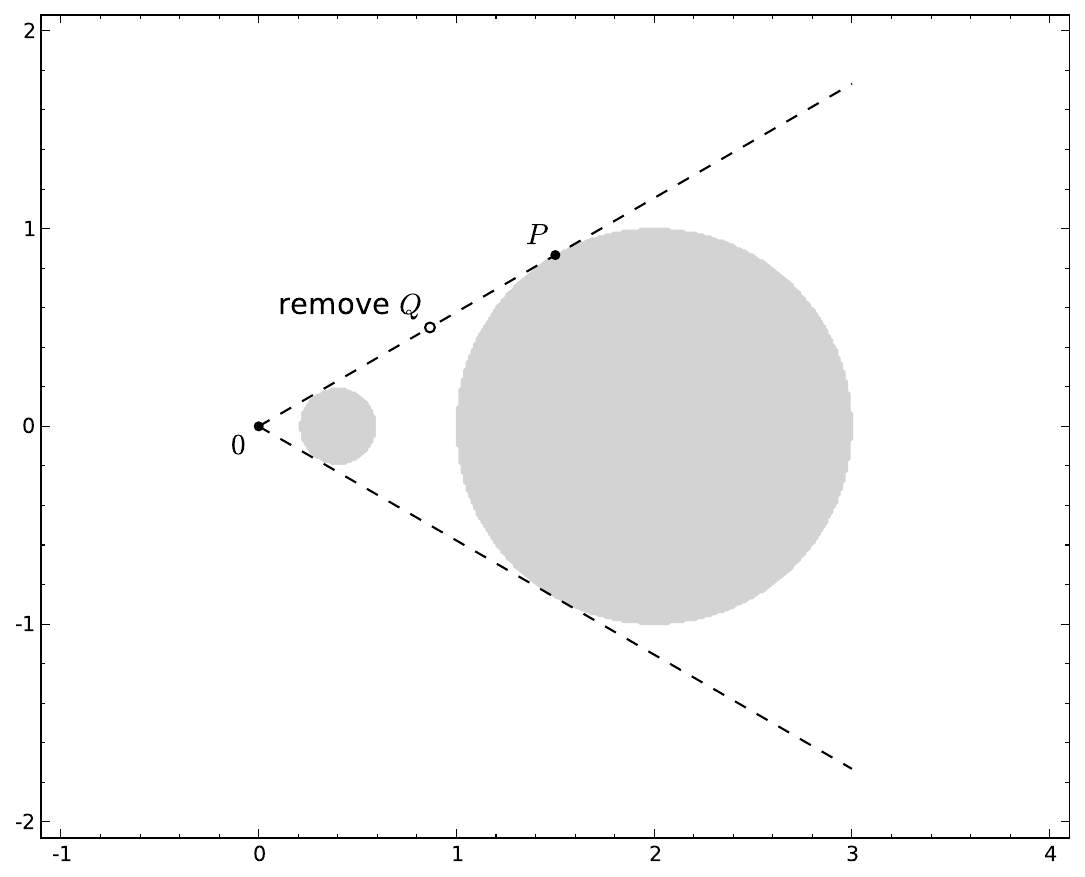}
			\caption{A wind Minkowskian structure $\Sigma$  in $\R^2\setminus\{Q\}$. The shaded regions represent the wind balls $B^+_{\Sigma}(0,1/5)$ and $B^+_{\Sigma}(0,1)$ which satisfy  $\hat B^+_{\Sigma}(0,1/5)=\bar B^+_{\Sigma}(0,1/5)$ but
				$ P\in  \bar B^+_{\Sigma}(0,1)\setminus \hat B^+_{\Sigma}(0,1)$.}\label{eee}
		\end{figure}
	\end{exe}
	The next  three  propositions provide a better understanding of
	$\hat{B}^\pm_{\Sigma}(x_0,r)$.  Before them, we will prove a
	technical lemma, which stresses the importance of transversality (recall  Example~\ref{rematrans}).  
	\begin{lemma}\label{lleche}
		Let $\Sigma$ be a wind Finslerian structure on $M$ and $p\in M$
		such that $0_p\in \Sigma_p$, and let
		$\gamma: (-\varepsilon,\varepsilon)\rightarrow M$ be a smooth  $F$-admissible curve such that $\gamma(0)=p$. Then,  reducing $\varepsilon$ if necessary,  the surface
		$$
		T_\gamma=\{\lambda \dot\gamma(s): \lambda\in\R,
		s\in(-\varepsilon,\varepsilon)\}
		$$
		is embedded in $TM$ 
		and it is transverse to $\Sigma$. 
		Moreover, 
		if $\varepsilon$ is small enough, 
		a smooth function $(-\varepsilon,\varepsilon)\ni s \mapsto \lambda (s)\in
		\R$ is obtained by requiring that each $\lambda(s)\dot\gamma(s)$  be 
		the point in $\Sigma_{\gamma(s)}\cap T_\gamma =
		\{\lambda_1\dot\gamma(s), \lambda_2 \dot\gamma(s)\}$ with smaller $|
		\lambda_i |, i=1,2$.
	\end{lemma}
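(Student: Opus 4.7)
First, I would shrink $\varepsilon$ so that $\gamma$ is a smooth embedding with nowhere vanishing velocity. This is possible because $F$-admissibility of $\gamma$ at the critical point $p$ forces $\dot\gamma(0)\in A_p$, which, by Proposition~\ref{possibwind}(ii), equals the open half-space of $T_pM$ bounded by the supporting hyperplane $H_0:=T_{0_p}\Sigma_p$ of the strongly convex $\Sigma_p$ at $0_p$; in particular $\dot\gamma(0)\neq 0_p$. The map $\phi:(-\varepsilon,\varepsilon)\times\R\to TM$, $\phi(s,\lambda):=\lambda\dot\gamma(s)$, is then an injective immersion: $d\phi(\partial_s)$ projects by $d\pi$ to $\dot\gamma(s)\neq 0$, while $d\phi(\partial_\lambda)=\dot\gamma(s)$ is a vertical vector in $\mathcal V(\phi(s,\lambda))$, so they are linearly independent; injectivity follows by projecting with $\pi$ to reduce to equality of $s$'s, and then using $\dot\gamma(s)\neq 0$ to equate the $\lambda$'s. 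A further reduction of $\varepsilon$ then secures $T_\gamma$ as an embedded $2$-surface in $TM$.

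Next, to establish transversality of $T_\gamma$ and $\Sigma$ at an intersection $v=\lambda\dot\gamma(s)$, note that $d\phi(\partial_\lambda)=\dot\gamma(s)$ lies in $T_vT_\gamma\cap\mathcal V(v)$, so transversality reduces to $\dot\gamma(s)\notin T_v\Sigma_{\gamma(s)}$, i.e.\ the ray $t\mapsto t\dot\gamma(s)$ meets $\Sigma_{\gamma(s)}$ transversely at $v$. At $s=0$ there are exactly two intersection points: $v=0_p$ and $v'=\lambda_+(0)\dot\gamma(0)$ with $\lambda_+(0)>0$, whose existence and uniqueness follow from strict convexity of $\Sigma_p$ and $\dot\gamma(0)\in A_p$ (the latter ruling out any intersection with $\lambda<0$, since $\Sigma_p$ lies in $A_p\cup\{0_p\}$). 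At $v=0_p$, $T_{0_p}\Sigma_p=H_0$, and $A_p\cap H_0=\emptyset$ forces $\dot\gamma(0)\notin T_{0_p}\Sigma_p$. At $v'$, were $\dot\gamma(0)\in T_{v'}\Sigma_p$, the whole line $\R\dot\gamma(0)$ would lie in the affine tangent hyperplane $v'+T_{v'}\Sigma_p$, which by strict convexity of $\Sigma_p$ meets $\Sigma_p$ only at $v'$; but $0_p\neq v'$ lies on both the line and $\Sigma_p$, a contradiction. Openness of transversality then extends the conclusion to all nearby intersection points for $s$ small enough.

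Finally, picking a smooth local defining function $G$ of $\Sigma$ near $0_p$, the implicit function theorem applied to $H(s,\lambda):=G(\lambda\dot\gamma(s))$ (which satisfies $H(0,0)=0$ and $\partial_\lambda H(0,0)=dG_{0_p}(\dot\gamma(0))\neq 0$ by the transversality just established) produces a unique smooth $\lambda(s)$ with $\lambda(0)=0$ and $\lambda(s)\dot\gamma(s)\in\Sigma_{\gamma(s)}$. By continuous dependence, the second intersection point $\lambda_i(s)\dot\gamma(s)$ stays close to $v'=\lambda_+(0)\dot\gamma(0)$, so $|\lambda_i(s)|$ is bounded away from $0$ while $|\lambda(s)|\to 0$; this identifies $\lambda(s)$ as the branch with smaller $|\lambda_i|$. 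The main obstacle of the whole argument is the transversality check at $v=0_p$, where the standard position-vector argument collapses because the position vector vanishes; it is circumvented by identifying $T_{0_p}\Sigma_p$ with the supporting hyperplane $H_0$ that defines $A_p$, and invoking $F$-admissibility to exclude $\dot\gamma(0)\in H_0$.
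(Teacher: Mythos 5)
Your argument is correct and follows essentially the same route as the paper's: transversality of $T_\gamma$ with $\Sigma$, checked fiberwise and in particular at the delicate point $0_p$ (where the usual position-vector argument degenerates) via the half-space description of $A_p$, followed by the implicit function theorem to extract the smooth branch $\lambda(s)$ through $0$ — the paper instead observes that $\Sigma\cap T_\gamma$ is a smooth one-manifold with two components and parametrizes the component through $0_p$ by $s$, which amounts to the same thing. The only step worth making explicit is that your reduction of transversality in $TM$ to the fiberwise statement $\dot\gamma(s)\notin T_v\Sigma_{\gamma(s)}$ relies on condition (b) of Definition~\ref{windStruct} (transversality of $\Sigma$ to the vertical spaces), which is what guarantees $T_v\Sigma\cap\mathcal V(v)=T_v\Sigma_{\gamma(s)}$ and hence that a vertical vector outside $T_v\Sigma_{\gamma(s)}$ is outside $T_v\Sigma$.
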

	\begin{proof}  Clearly, $T_\gamma$ is embedded and  it cuts $\Sigma$
		transversely  in two points because $\dot\gamma(s)\in A$ for every $s\in (-\varepsilon,\varepsilon)$  (with $\varepsilon>0$ small enough).   So, $T_\gamma$ fulfils the required
		property of transversality and, moreover, $\Sigma\cap T_\gamma$ is
		composed by two connected one-dimensional  smooth  submanifolds $\rho_1$,
		which contains $0_p$, and $\rho_2$. 
		%By assumption, $\lambda(0)=0$ and 
		The parameter  $s$ of $\gamma$ can be chosen as a natural
		coordinate for $\rho_1$. In this coordinate, the inclusion of
		$\rho_1$ in $TM$ is the smooth map $s\mapsto \lambda(s)
		\dot\gamma(s)$, so that the map $\lambda$ is  smooth.
	\end{proof}
	
	\begin{prop} \label{pleche}   With the above notation: 
		\begin{enumerate}[(i)] 
			\item  Let $(M,\Sigma)$, $p\in M$, $0_p\in\Sigma_p$ and 
			$\gamma: (-\varepsilon,\varepsilon)\rightarrow M$, $\gamma(0)=p$, smooth and $F$-admissible, as in the previous lemma. 
			Then, 
			$\ell_{F_l}(\gamma|_{[0,\varepsilon']})=+\infty$ for all $0<\varepsilon' \leq \varepsilon$.  As a consequence, for each $r_0>0$ there exists $\varepsilon_0\leq \varepsilon$ such that
			$\gamma((0,\varepsilon_0])\subset B^+_{\Sigma}( p ,r)$ for all $r\geq r_0$. 
			
			\item If a smooth curve $\gamma\colon[a,b]\to M$ is $\Sigma$-admissible  and strictly regular,  then  $F\circ \dot\gamma$ and  $F_l\circ \dot\gamma$ are continuous (the latter as a map from $[a,b]$ to $(0,+\infty]$). 
			
			\item    A $\Sigma$-admissible curve $\gamma: [a,b]\rightarrow M$ satisfies $\ell_{F_l}(\gamma)= +\infty$ if  $F_l(\dot\gamma(s_0))=+\infty$ at some $s_0\in [a,b]$. The converse holds when $\gamma$ is strictly regular.  
			\item  For any $\Sigma$-admissible curve, 
			\begin{equation}\label{elength2}
				\ell_F(\gamma)\leq \ell_{F_l}(\gamma) 
			\end{equation}
			with equality iff   $\dot\gamma(s) \in A_E\setminus A$.    Moreover, for a wind curve satisfying the equality in \eqref{elength2},  $F_l(\dot\gamma)=F(\dot\gamma) \equiv 1$  everywhere.  
		\end{enumerate}
	\end{prop}
	\begin{proof}
		$(i)$ Choose any sequence $\varepsilon_k\searrow 0$ in
		$(0,\varepsilon)$. Clearly, we have 
		$\gamma(\varepsilon_k)\rightarrow  p$ and
		$\ell_F(\gamma|_{[0,\varepsilon_k]})\searrow 0$; so, it is enough to
		prove that $\ell_{F_l}(\gamma|_{[0,\varepsilon_k]})=+\infty$ for all $k$.
		From the definition of $\lambda (s)$ in Lemma~\ref{lleche} and $F_l$, we have
		\begin{equation}\label{contFl}
			F_l(\dot\gamma(s)) = \left\{ \begin{array}{lcrl} 1/\lambda(s) & &
				\hbox{if} & \lambda(s) >0 \\
				+\infty & & \hbox{if} & \lambda(s) \leq 0 \\ \end{array} \right.
		\end{equation}
		As $\lambda (0)=0$ and $\lambda$ is smooth around $0$
		$$\int_0^{\varepsilon_k}\frac{ds}{\lambda(s)}=+\infty ,$$
		and all the assertions follow directly.
		
		$(ii)$ Observe that $F\circ\dot \gamma$ is always continuous in this case and $F_l\circ \dot\gamma$ can be discontinuous in $s_0\in [a,b]$, only when $\gamma(s_0)$ belongs to $\MC$. Moreover, in this case, $\gamma$ has to be $F$-admissible 
		in a neighborhood of $s_0$ because it is smooth and strictly regular. Then applying  Lemma~\ref{lleche}  in order to get \eqref{contFl} to the reparametrization $\tilde{\gamma}(s)=\gamma(s-s_0)$, we conclude. 
		
		$(iii)$ Necessarily, $\gamma(s_0)$ must belong either to $\MC$, and the part $(i)$ applies  (recall that, being $F_l(\dot\gamma(s_0))=+\infty$, $\dot\gamma(s_0)\in A_{\gamma(s_0)}$ and $\gamma$ must be $F$-admissible and smooth in a right or  a left 
		neighborhood of $s_0$),  or to $\MM$ and $F_l(\dot\gamma)=+\infty$  in  some neighborhood of $s_0$. For the converse, notice that at least one   of the smooth pieces of $\gamma$ has to be of infinite $F_l$-length 
		then, necessarily, $F_l(\dot\gamma(s_0))=+\infty$ for at least one point $s_0\in [a,b]$ otherwise the $F_l$-length of such a piece would be finite by part $(ii)$.  
		
		$(iv)$ Apply Proposition~\ref{windConseq} and Convention~\ref{caestar}.
	\end{proof}
	\begin{rem}\label{remlengthbis}
		$F$-admissible curves  are always strictly regular 
		$\Sigma$-admissible ones. For these curves,
		$\ell_{F_l}(\gamma)$ may be  infinite  even in the case of an $F$-admissible curve contained in $M_l$ except at one endpoint, see Proposition~\ref{pleche}. The role of strict regularity becomes apparent from the discussion in Convention~\ref{caestar} 
		(see also Proposition~\ref{prop220} below).
	\end{rem}
	
	Notice that wind  curves  depend on reparametrizations. However, the following result suggests that this is not a relevant  restriction, at least when the velocities do not vanish; it also provides a control on the possible reparametrizations.

	\begin{prop} \label{prop220}
		Let $\gamma$ be a   piecewise  smooth $\Sigma$-admissible curve   such that, in each interval where $\gamma$ is smooth,    $F\circ \dot\gamma$ is continuous and $F_l\circ \dot\gamma$   is  either  infinite at some point or continuous.  Then, $\gamma$ admits a  (piecewise smooth)  reparametrization $\tilde \gamma : [0,r_0] \rightarrow M$ 
		as a wind curve and, necessarily then, $\ell_F(\gamma)\leq r_0 \leq \ell_{F_l}(\gamma)$. 
		Moreover, $r_0$ can be chosen equal  to any value of
		$[\ell_F(\gamma), \ell_{F_l}(\gamma)]$ if 
		$\ell_{F_l}(\gamma)<+\infty$,  and,  any value of $[\ell_F(\gamma),  +\infty)$ otherwise.  In particular, this applies for any strictly regular $\Sigma$-admissible curve and, therefore, for any $F$-admissible curve. 
	\end{prop}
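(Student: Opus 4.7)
My plan is to look for the reparametrization in the form $\tilde\gamma=\gamma\circ\phi^{-1}$, where $\phi\colon[a,b]\to[0,r_0]$ is a smooth strictly increasing diffeomorphism with $\phi(a)=0$ and $\phi(b)=r_0$. Writing $t=\phi^{-1}(s)$, the positive $1$-homogeneity of $F$ and $F_l$ (together with their natural extensions from Convention~\ref{caestar}) gives
$$F(\dot{\tilde\gamma}(s))=F(\dot\gamma(t))/\phi'(t),\qquad F_l(\dot{\tilde\gamma}(s))=F_l(\dot\gamma(t))/\phi'(t),$$
so the wind-curve condition \eqref{ewindcurve} is equivalent to $F(\dot\gamma(t))\le\phi'(t)\le F_l(\dot\gamma(t))$ for every $t\in[a,b]$. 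Integrating, $r_0=\int_a^b\phi'(t)\,dt$, which forces the stated necessary bounds $\ell_F(\gamma)\le r_0\le\ell_{F_l}(\gamma)$. The problem thus reduces to producing a positive continuous function $\phi'$ lying pointwise between $F\circ\dot\gamma$ and $F_l\circ\dot\gamma$ whose integral equals the prescribed $r_0$.

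When $\ell_{F_l}(\gamma)<+\infty$, the hypothesis forces $F_l\circ\dot\gamma$ to be finite and continuous on the compact interval $[a,b]$, while $F\circ\dot\gamma$ is continuous and strictly positive everywhere (positivity at possible Kropina-type zero velocities uses the convention value $1$ fixed in Convention~\ref{caestar}). For any $r_0$ in the desired range, write $r_0=(1-\alpha)\ell_F(\gamma)+\alpha\ell_{F_l}(\gamma)$ with $\alpha\in[0,1]$ and set
$$\phi'(t)=(1-\alpha)F(\dot\gamma(t))+\alpha F_l(\dot\gamma(t)).$$
This convex combination is continuous, positive, satisfies the pointwise bounds by construction, and integrates to exactly $r_0$; as $\alpha$ runs over $[0,1]$ every admissible value of $r_0$ is attained.

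The case $\ell_{F_l}(\gamma)=+\infty$ forces, by the given hypothesis, the existence of some $t_0\in[a,b]$ with $F_l(\dot\gamma(t_0))=+\infty$; the relevant continuity (explicit for strictly regular curves by Proposition~\ref{pleche}(ii), implicit in the general case via the extended-value interpretation of $F_l\circ\dot\gamma$) allows one to assume that $F_l(\dot\gamma(t))-F(\dot\gamma(t))$ can be made arbitrarily large on a sufficiently small (possibly one-sided) neighborhood $U$ of $t_0$. Given $r_0\in[\ell_F(\gamma),+\infty)$, set $\delta:=r_0-\ell_F(\gamma)\ge 0$; if $\delta=0$ take $\phi'=F\circ\dot\gamma$, and otherwise pick $U$ so small that $F_l\circ\dot\gamma-F\circ\dot\gamma\ge N$ on $U$ for a sufficiently large constant $N$, then choose a continuous bump $\beta\ge 0$ supported in $U$ with $\beta\le N$ and $\int_a^b\beta\,dt=\delta$. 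Then $\phi'=F\circ\dot\gamma+\beta$ is continuous, positive, satisfies the required pointwise bounds, and integrates to $r_0$.

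Finally, the ``in particular'' clause follows at once: Proposition~\ref{pleche}(ii)--(iii) shows that any strictly regular $\Sigma$-admissible curve automatically fulfils the continuity/infinity dichotomy assumed in the hypothesis, and every $F$-admissible curve is strictly regular by Remark~\ref{remlengthbis}(1). The only genuine technical point is the infinite case: one must use (extended) continuity of $F_l\circ\dot\gamma$ near $t_0$ to secure a neighborhood $U$ on which the bump function of prescribed mass $\delta$ can be inscribed between $F\circ\dot\gamma$ and $F_l\circ\dot\gamma$. The finite case is essentially routine via convex combinations.
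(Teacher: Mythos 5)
Your reduction of the problem to finding a continuous positive function $\phi'$ satisfying $F(\dot\gamma)\le\phi'\le F_l(\dot\gamma)$ pointwise with $\int_a^b\phi'=r_0$, and your treatment of the finite case by the convex combinations $(1-\alpha)F(\dot\gamma)+\alpha F_l(\dot\gamma)$, coincide with the paper's argument, and the ``in particular'' clause is handled correctly. The gap is in the case $\ell_{F_l}(\gamma)=+\infty$: you cap the bump $\beta$ by a \emph{constant} $N$ with $N\le\inf_U\big(F_l(\dot\gamma)-F(\dot\gamma)\big)$, so that $\int\beta\le N\,|U|$, and you claim $N$ can be made large enough by shrinking $U$. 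But the relevant quantity is the product $N\,|U|$, and it does not grow as $U$ shrinks. Indeed, by Lemma~\ref{lleche} and \eqref{contFl}, near a point with $F_l(\dot\gamma(t_0))=+\infty$ one has $F_l(\dot\gamma(t))=1/\lambda(t)$ with $\lambda$ smooth and $\lambda(t_0)=0$, so generically $F_l(\dot\gamma(t))\sim c/(t-t_0)$. For any interval $U$ with endpoint at distance $\epsilon$ from $t_0$ the best constant is $N\approx c/\epsilon$, whence $N\,|U|\lesssim c$ \emph{for every choice of $U$}. Since $r_0$ is allowed to be any value in $[\ell_F(\gamma),+\infty)$, the required mass $\delta=r_0-\ell_F(\gamma)$ can exceed $c$, and then no pair $(U,N)$ admits a bump of integral $\delta$: the constant cap discards almost all of the infinite mass $\int_U(F_l(\dot\gamma)-F(\dot\gamma))=+\infty$ concentrated at $t_0$.

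The repair is to let the bump follow the graph of $F_l(\dot\gamma)-F(\dot\gamma)$ rather than its infimum: for instance $\beta_M:=\min\big(F_l(\dot\gamma)-F(\dot\gamma),\,M\big)$, suitably tapered near the ends of $U$, is continuous, satisfies the pointwise constraint, and $\int_U\beta_M\to+\infty$ as $M\to+\infty$ by monotone convergence (using Proposition~\ref{pleche}), so an intermediate value argument in $M$ yields any prescribed $\delta$. This truncation at a variable level is exactly what the paper's cutoff $\varphi_\lambda$ implements. You should also make explicit the paper's case distinction according to whether $F_l\circ\dot\gamma$ is continuous everywhere as a map into $(0,+\infty]$ or only near the point where it is infinite; in the latter situation the modification of the parametrization must be confined to a subinterval on which $\gamma$ is strictly regular, where Proposition~\ref{pleche}-(ii) provides the continuity you invoke.
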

	
	\begin{proof} 
		We can assume that $\gamma$ is smooth because the piecewise smooth case trivially follows from this.  Put $\tilde \gamma(r)=\gamma(s(r))$. The reparametrization $s(r)$ as a wind curve is characterized by 
		$$
		F(\dot\gamma(s(r))) \dot s(r) \leq 1 \leq F_l(\dot\gamma(s(r))) \dot s(r).
		$$
		As  $F\circ \dot \gamma$ is continuous, we can first reparametrize $\gamma$ with $F(\dot\gamma)\equiv 1$. Clearly, this  gives also a parametrization of $\gamma$ as a wind curve. In order to prove the last part of the proposition let  
		us distinguish three  cases: 
		
		(a)  If  $F_l(\dot\gamma)<+\infty$ at all the points  then, by assumptions, $F_l\circ \dot \gamma$ is  continuous and the family of reparametrizations, defined by 
		$\dot r_\lambda(s)= \lambda F_l(\dot\gamma(s))+ (1-\lambda) F(\dot\gamma(s)), \lambda\in [0,1]$, is enough to obtain  all the required values of $r_0$. 
		
		(b)  If $F_l(\dot\gamma(\bar s))=+\infty$, for some $\bar s \in [a,b]$, and $F_l\circ\dot\gamma$ is continuous (as a map assuming values in $(0,+\infty]$) everywhere, then, by Proposition~\ref{pleche}-(iii), $\ell_{F_l}(\gamma)=+\infty$ and the  conclusion  follows modifying  the  expression of $\dot r_\lambda$ in case (a) 
		by substituting $F_l(\dot\gamma(s))$ with  $\varphi_\lambda (F_l(\dot\gamma(s)))$,  $\lambda \in [0,1)]$,  where:
		$$
		\varphi_\lambda (t)= \left\{ 
		\begin{array}{ll}
			t & \hbox{if} \; t\leq 1/(1-\lambda)\\
			\phi_0(t-1/(1-\lambda)) + 1/(1-\lambda) & \hbox{if} \; t\in \left(1/(1-\lambda), 2+1/(1-\lambda)\right)\\
			1+1/(1-\lambda) & \hbox{if} \; t \geq 2+1/(1-\lambda)
		\end{array}
		\right. $$
		being $\phi_0:[0,2]\rightarrow [0,1]$  any  curve with $\phi_0(t)\leq t , t\in[0,2]$ that connects smoothly the graphs of $t\mapsto t$ for $t\leq 0$ and of $t\mapsto 1$ for $t\geq 2$,  and recalling that we have assumed  $F(\dot\gamma)=1$. 
		% for $\ell_{F_l}(\gamma)=+\infty$ only $\lambda\in [0,1)$ is taken. 
		
		(c) Finally, if  $F_l(\dot\gamma(\bar s))=+\infty$, for some $\bar s \in [a,b]$, 
		then $\gamma$ must be strictly regular  in a neighbourhood $[a',b']$ of $\bar s$ and then, by Proposition~\ref{pleche}-(ii), $F_l\circ \dot\gamma$ must be continuous in $[a',b']$. 
		Therefore,  as in case (b), we can change the parametrization of   $\gamma$ only on the interval $[a',b']$  to get all the  values $r_0\in [\ell_F(\gamma), +\infty)$ also in this case. 
		% $2\dot r(s)= F_l(\dot\gamma(s))+ F(\dot\gamma(s))$ , or in general,
		%$$2\dot r(s)= F(\dot\gamma(s)) \left(1+\frac{4}{\pi} \arctan\left(\frac{F_l(\dot\gamma(s))}%{F(\dot\gamma(s))}\right)\right).$$  
		
	\end{proof}
	\begin{prop}\label{pclosurecballs} For any wind Finslerian structure $\Sigma$ and $r>0$:
		\begin{align*}
			&B^+_{\Sigma}(x_0,r)\subset \hat B^+_{\Sigma}(x_0,r)\subset \bar{B}_{\Sigma}^+(x_0,r),\\
			&B^-_{\Sigma}(x_0,r)\subset\hat B^-_{\Sigma}(x_0,r)\subset \bar{B}_{\Sigma}^-(x_0,r).
		\end{align*}
		Thus, the closures of
		${B}_{\Sigma}^+(x,r)$ and $\hat {B}_{\Sigma}^+(x,r)$ are equal.
	\end{prop}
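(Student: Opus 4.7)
The first inclusion $B^+_\Sigma(x_0,r)\subset \hat B^+_\Sigma(x_0,r)$ (and its backward analogue) is immediate from Definition~\ref{sigmaballs}, since a strict pair $\ell_F(\gamma)<r<\ell_{F_l}(\gamma)$ is in particular a weak pair $\ell_F(\gamma)\leq r\leq \ell_{F_l}(\gamma)$. The only nontrivial task is therefore the middle inclusion $\hat B^+_\Sigma(x_0,r)\subset \bar B^+_\Sigma(x_0,r)$; the backward counterpart will follow by passing to the reverse wind Finslerian structure $\tilde\Sigma$ of Definition~\ref{dreversewind} via Proposition~\ref{reversewind}.

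My plan is to take $x\in \hat B^+_\Sigma(x_0,r)$ and fix a wind curve $\gamma\in C^\Sigma_{x_0,x}$ reparametrized so that $\gamma:[0,r]\to M$, $\gamma(0)=x_0$, $\gamma(r)=x$. For each small $\epsilon>0$ set $p_\epsilon:=\gamma(r-\epsilon)\to x$, and in a coordinate chart around $x$ adjoin to $\gamma|_{[0,r-\epsilon]}$ a short linear piece $\eta_\epsilon:[r-\epsilon,r]\to M$ with $\eta_\epsilon(r-\epsilon)=p_\epsilon$ and constant coordinate velocity $v_\epsilon\in A_{p_\epsilon}$, chosen bounded and such that $F(v_\epsilon)<1<F_l(v_\epsilon)$. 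The concatenation $\gamma_\epsilon:=\gamma|_{[0,r-\epsilon]}*\eta_\epsilon$ is then a wind curve on $[0,r]$ from $x_0$ to $x_\epsilon:=\eta_\epsilon(r)$; using \eqref{elength} on the $\gamma$-portion (so $\ell_F\leq r-\epsilon\leq \ell_{F_l}$ there) together with the strict inequalities on the $\eta_\epsilon$-portion (which persist after a small continuity adjustment of $F$ and $F_l$ along $\eta_\epsilon$ since $F$, $F_l$ vary smoothly and the piece is short), one obtains $\ell_F(\gamma_\epsilon)<r<\ell_{F_l}(\gamma_\epsilon)$. Hence $x_\epsilon\in B^+_\Sigma(x_0,r)$, and since $v_\epsilon$ is bounded, $x_\epsilon=p_\epsilon+\epsilon v_\epsilon\to x$ as $\epsilon\to 0$, yielding $x\in \bar B^+_\Sigma(x_0,r)$.

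The main (and really only) obstacle is to exhibit a suitable $v_\epsilon$ at each $p_\epsilon$, and I would handle it by splitting into the three regions of Definition~\ref{ae}. If $p_\epsilon\in \MM$, then $A_{p_\epsilon}=T_{p_\epsilon}M\setminus\{0\}$, $F$ is a genuine Minkowski norm and $F_l=+\infty$ on $A$ by Convention~\ref{caestar}, so any small nonzero $v_\epsilon$ works. If $p_\epsilon\in \MC$, then $A_{p_\epsilon}$ is a nonempty open half-space on which $F$ is a conic Minkowski norm and $F_l=+\infty$, so any small $v_\epsilon\in A_{p_\epsilon}$ suffices. If $p_\epsilon\in M_l$, Proposition~\ref{possibwind}(iii) gives $F<F_l$ strictly on $A$, so choosing $u\in A_{p_\epsilon}$ with $F(u)=1$ (hence $F_l(u)>1$) and setting $v_\epsilon=(1-\delta)u$ for $\delta$ small produces the required strict inequalities.

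The final assertion $\overline{B^+_\Sigma(x,r)}=\overline{\hat B^+_\Sigma(x,r)}$ then follows immediately from the sandwich $B^+_\Sigma\subset \hat B^+_\Sigma\subset \bar B^+_\Sigma$ by taking closures on all three terms.
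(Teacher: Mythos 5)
Your proof is correct, and it takes a genuinely more uniform route than the paper's. The paper splits into three cases according to which of the inequalities $\ell_F(\gamma)\leq r\leq \ell_{F_l}(\gamma)$ are equalities: in the boundary case $\ell_F(\gamma)=r=\ell_{F_l}(\gamma)$ it concatenates with an integral curve of an $F$-admissible field $V$ normalized by $F(V)\equiv 1$ and then shortens the appended piece, while in the other two cases it merely takes earlier points of $\gamma$ or extends $\gamma$ by an $F$-admissible arc (steps which implicitly invoke the reparametrization result of Proposition~\ref{prop220} to land on a domain of length exactly $r$). You instead run a single construction — truncate the last $\epsilon$ of the curve and append a segment whose velocity satisfies the \emph{strict} inequalities $F(v)<1<F_l(v)$ — which produces a wind curve on $[0,r]$ with $\ell_F<r<\ell_{F_l}$ in one stroke, with no case distinction on $\gamma$ and no reparametrization. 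The price is the pointwise existence of such $v$, which your three-region analysis via Definition~\ref{ae}, Convention~\ref{caestar} and Proposition~\ref{possibwind}(iii) settles correctly (geometrically, these are the vectors in $B_p\cap A_p$, i.e.\ projections of future timelike vectors in the associated \sstk picture). One small streamlining I would suggest: rather than choosing $v_\epsilon$ at each $p_\epsilon$ and then appealing to boundedness plus "shortness" of the piece, fix a single $w\in A_x$ with $F_x(w)<1<F_{l,x}(w)$ and use the openness of $A$ together with the continuity of $F$ and of $F_l$ (the latter as a map into $(0,+\infty]$, as guaranteed by Convention~\ref{caestar}) to get a neighborhood $U$ of $x$ on which the constant coordinate field $w$ satisfies both strict inequalities; then the uniformity in $\epsilon$ is automatic and the "continuity adjustment" needs no further comment.
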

	\begin{proof}
		The first inclusions follow trivially from  the definitions.  Let $x\in \hat{B}^+_{\Sigma}(x_0,r)$ and consider  a  wind  curve
		$\gamma\colon [a,b]\to M$ from $x_0$ to $x$ such that
		$\ell_F(\gamma)\leq r=b-a\leq \ell_{F_l}(\gamma)$. If the two inequalities held strictly, there would be nothing to prove. Otherwise, consider the following cases:
		
		(a) $\ell_F(\gamma) = r = \ell_{F_l}(\gamma)$  (in particular, $\dot\gamma(s)\in A_E\setminus
		A$ for all $s$ and $F(\dot\gamma)\equiv1$,  recall Remark~\ref{remlengthbis}(2)). 
		%Let $C$ be an upper bound to $F(\dot\gamma)$ and 
		Choose any $F$-admissible
		vector field $V$  such that $F(V)\equiv 1$  defined in some
		neighborhood $U$ of $x$;  notice that the integral curves of $V$ are wind curves.  Take a smaller
		neighborhood $U'$ and some $\varepsilon>0$ so that the flow of $V$ is
		defined in $[0,\varepsilon]\times U'$ and $\gamma([b-\varepsilon,
		b])\subset U'$. Choose $\{s_n\}\nearrow b$ and consider the curve
		$\gamma_n$ obtained by concatenating $\gamma|_{[a,s_n]}$ and the
		integral curve $\rho_n: [0,\varepsilon_n]\rightarrow M$ of $V$
		starting at $\gamma(s_n)$, 
		where $\varepsilon_n :=b-s_n>0$. 
		%s chosen  so that
		%$\ell_F(\rho_n) = \ell_F(\gamma|_{[b -s_n,b]})$  
		%(necessarily 
		%$\varepsilon_n \leq  \ell_F(\gamma|_{[b -s_n,b]})$). 
		%, as $F(\dot \rho_n)\geq C$).  
		By construction, $\ell_F(\rho_n) = \varepsilon_n= \ell_F(\gamma|_{[b -s_n,b]})$ and $\ell_F(\gamma_n) =r<\ell_{F_l}(\gamma_n)$.  So, choosing some close $\varepsilon'_n<\varepsilon_n$, the lengths of the corresponding restriction of 
		$\gamma_n$ allow us to write $\rho_n(\varepsilon'_n)\in
		B^+_{\Sigma}(x_0,r)$ and  $\rho_n(\varepsilon'_n)\to x$,  as
		required.

		%(b)  $\gamma$ is constantly equal to $x_0$ (so that $0_{x_0}\in \Sigma_{x_0}$ and $0=\ell_F(\gamma) < r = \ell_{F_l}(\gamma)=b-a$).  Take  a curve $ \gamma_0: 
		%(-\varepsilon,\varepsilon) \rightarrow M$ as in  Lemma \ref{lleche}
		%and choose any sequence $\varepsilon_k\searrow 0$ in
		%$(0,\varepsilon)$. Clearly, 
		%$\gamma_0(\varepsilon_k)\rightarrow x_0$ and
		%$\ell_F(\gamma_0|_{[0,\varepsilon_k]}) \searrow 0$; so, it is enough to
		%prove $\ell_{F_l}(\gamma_0|_{[0,\varepsilon_k]})=\infty$ for all $k$.
		%From the definition of $\lambda (s)$ and $F_l$ we have
		%$$
		%F_l(\dot\gamma_0(s)) = \left\{ \begin{array}{lcrl} 1/\lambda(s) & &
		%\hbox{if} & \lambda(s) >0 \\
		%\infty & & \hbox{if} & \lambda(s) \leq 0 \\ \end{array} \right.
		%$$
		%As $\lambda (0)=0$ and $\lambda$ is smooth around $0$
		%$$\int_0^{\varepsilon_k}\frac{ds}{\lambda(s)}=\infty ,$$
		%and the result follows.

		(b)  $\ell_F(\gamma) = r < \ell_{F_l}(\gamma)$. 
		%Choose the biggest $\varepsilon_0 \in [0,b-a]$ such that $\gamma$ is constant on  $[b-\varepsilon_0,b]$ and  \begin{equation}\label{evar0} r\leq  \ell_{F_l}(\gamma|_{[a, b-\varepsilon_0]}) \end{equation}  (notice that  
		%$b-\varepsilon_0>a$, as $0<r= \ell_F(\gamma)$).
		%If the equality holds in \eqref{evar0} then the result follows by applying the case (a) to $\gamma|_{[a, b-\varepsilon_0]}$. If the equality is strict (possibly with $\ell_{F_l}(\gamma|_{[a, b-\varepsilon_0]})=+\infty$), 
		Just notice that 
		the points $\gamma(b-\varepsilon)$ will
		belong to $B^+_{\Sigma}(x_0,r)$ for small $\varepsilon$.

		(c) $\ell_F(\gamma) < r \leq \ell_{F_l}(\gamma)$.
		Extending  $\gamma$ beyond $b$ by concatenating an
		$F$-admissible piece, the points in the extension close to $x$
		will belong to $B^+_{\Sigma}(x_0,r)$. 
		
	\end{proof}
	
	%\begin{remark}
	%It is worth emphasizing the role of the assumption of
	%transversality in the definition of $\Sigma$ for the proof %of the
	%previous result. In fact, this
	%property is used implicitly in the case (b), where Lemma
	%\ref{lleche} is claimed. If such an assumption were %dropped, this
	%case could not be solved even in simpler cases. In %particular, the
	%absence of transversality would lead to  non-smooth %metrics, and
	%then to non-smooth \sstk splittings in the next sections. %The exotic properties, which the balls would exhibit here, %would be related to well-known exotic properties of the %causal futures and past of spacetimes (see for example %\cite{ChrGra12}).
	%\end{remark}
	
	Finally, an interpretation of the c-balls is provided for the classical Finsler case. Notice that, in this case, the restriction for a piecewise smooth  curve  to be  ``wind'' is just to assume that its speed is not bigger 
	than 1 (in order to travel not faster than the maximum allowed speed) and the velocity not to be 0 (by convenience, see Remark~\ref{remlength} (3)); so, there are no relevant  restrictions from a practical viewpoint. 
	
	\begin{prop} \label{pgc}
		Let $(M,F)$ be a connected Finsler manifold and  $\Sigma$ its indicatrix, regarded as a wind Finslerian structure with  forward and backward balls
		$B^+_{F}(x_0,r) (=B^+_{\Sigma}(x_0,r))$ and $B^-_{F}(x_0,r) (= B^-_{\Sigma}(x_0,r))$. The following assertions are equivalent:
		
		\begin{enumerate}[(i)]
			\item  $\hat B^+_{\Sigma}(x_0,r) = \bar B^+_{F}(x_0,r)$ for all $x_0\in M, r>0$.
			
			\item  $\hat B^-_{\Sigma}(x_0,r) = \bar B^-_{F}(x_0,r)$ for all $x_0\in M, r>0$.
			
			\item $(M,F)$ is (geodesically) convex, i.e., any pair of  points $(p,q)$ can be connected by a geodesic of length equal to the Finsler distance $d_F(p,q)$.
		\end{enumerate}
	\end{prop}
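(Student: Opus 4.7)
My plan is to establish (i)$\Leftrightarrow$(iii) directly and then deduce (ii)$\Leftrightarrow$(iii) by applying (i)$\Leftrightarrow$(iii) to the reverse wind Finslerian structure $-\Sigma$ of Definition~\ref{dreversewind}: by Proposition~\ref{reversewind}, $-\Sigma$ is the indicatrix of the reverse Finsler metric $\tilde F$, and one checks that backward $\Sigma$-balls and c-balls of $(M,F)$ coincide with the corresponding forward objects of $(M,\tilde F)$, while $(M,F)$ is geodesically convex iff $(M,\tilde F)$ is (reversed minimizers are still minimizers).

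For (iii)$\Rightarrow$(i), Proposition~\ref{pclosurecballs} already gives $\hat B^+_\Sigma(x_0,r)\subseteq \bar B^+_F(x_0,r)$, so only the reverse inclusion needs argument. Given $x\in\bar B^+_F(x_0,r)$, continuity of $d_F(x_0,\cdot)$ yields $d_F(x_0,x)\leq r$; geodesic convexity then provides a minimizing geodesic $\gamma$ from $x_0$ to $x$ with $\ell_F(\gamma)=d_F(x_0,x)\leq r$. Since $F_l\equiv+\infty$ in the classical Finsler case, Proposition~\ref{prop220} permits reparametrizing $\gamma$ as a wind curve with any total duration in $[\ell_F(\gamma),+\infty)$; taking the duration equal to $r$ shows that $x\in\hat B^+_\Sigma(x_0,r)$.

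For (i)$\Rightarrow$(iii), fix $p,q\in M$ and set $r:=d_F(p,q)$. The case $r=0$ is trivial, so assume $r>0$. The crucial step is to produce a sequence $\{y_n\}\subset B^+_F(p,r)$ converging to $q$ in $M$; once this is done, $q\in\bar B^+_F(p,r)=\hat B^+_\Sigma(p,r)$ by hypothesis, and the definition of c-ball furnishes a wind curve $\gamma$ from $p$ to $q$ with $\ell_F(\gamma)\leq r$, which must actually satisfy $\ell_F(\gamma)=r$ since $d_F$ is the infimum of connecting lengths; arc-length reparametrization then turns $\gamma$ into a minimizing geodesic. To construct $\{y_n\}$, I would take piecewise smooth curves $\gamma_n\colon[0,1]\to M$ from $p$ to $q$ with constant-speed parametrization $F(\dot\gamma_n)\equiv r+1/n$ and set $y_n:=\gamma_n(1-1/\sqrt n)$. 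Elementary bounds give $d_F(p,y_n)\leq (1-1/\sqrt n)(r+1/n)<r$ for large $n$ and $d_F(y_n,q)\leq (r+1/n)/\sqrt n\to 0$, whence $y_n\to q$ topologically.

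The main obstacle is precisely this last step: when $(M,F)$ is not convex the infimum $d_F(p,q)$ may fail to be attained, and it is not automatic that $q$ lies in $\bar B^+_F(p,r)$ (compare the punctured-plane example mentioned in Remark~\ref{contained}). The ``$1-1/\sqrt n$'' truncation trick is the device that manufactures points of $F$-distance strictly less than $r$ converging to $q$, thereby bridging this gap; the remaining steps are routine once Propositions~\ref{pclosurecballs} and \ref{prop220} are invoked.
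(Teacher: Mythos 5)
Your proof is correct and follows essentially the same route as the paper's: reduction of (ii) to (i) via the reverse metric $\tilde F$, Proposition~\ref{pclosurecballs} plus the reparametrization of Proposition~\ref{prop220} for (iii)$\Rightarrow$(i), and the length-space approximation argument for (i)$\Rightarrow$(iii). The paper compresses the latter to ``straightforward from the definitions'' and runs (iii)$\Rightarrow$(i) as a contradiction that pins down $d_F(x_0,x_1)=r$, but the content is identical; your $1-1/\sqrt{n}$ truncation is exactly the implicit step showing $q\in\bar B^+_F(p,d_F(p,q))$.
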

	
	\begin{proof}
		We will consider only the equivalence between $(i)$ and $(iii)$, as the  convexity of $F$ is equivalent to the convexity of its reverse metric $\tilde{F}$.
		
		$(iii)\Rightarrow (i)$. Otherwise, there exists some $x_1\in
		\bar B^+_{F}(x_0,r)\setminus \hat B^+_{\Sigma}(x_0,r)$ and, by the continuity of the distance, $d_F(x_0,x_1)=r$. But no curve of length equal to $r$ can  join these points, which contradicts geodesic convexity.
		
		$(i)\Rightarrow (iii)$. Straightforward from the definitions (recall that when the wind Finslerian structure is Finsler, $F_l=+\infty$  and a minimizing curve must be a geodesic).
	\end{proof}

	\subsection{Geodesics}
	We aim now to introduce a notion of geodesic for a wind Finslerian structure which recovers the standard one for $F$ and $F_l$. As the radius   corresponding to each $v\in \Sigma\cap (A_E\setminus A)$ is not transversal
	%\footnote{M!! CHECK!}
	to $\Sigma$, $\Sigma$ does not carry a globally defined smooth contact form such that the flow of its associated  Reeb vector field is compatible with the geodesic flow of both $F$ and $F_l$ (compare with \cite[Section 2]{Bryant02}). 
	Thus, we start by defining extremizing geodesics of $\Sigma$ by unifying  local extremizing properties of both type of geodesics as follows. 
	\begin{defi}\label{extremizing}
		Let $(M,\Sigma)$ be a wind Finslerian manifold.  A   wind  curve $\gamma: [a,b]\to M$,  $a<b$,   is called a
		{\em unit extremizing geodesic} if
		\begin{equation}\label{eunitgeodesic}
			\gamma(b)\in \hat{B}_\Sigma^+(\gamma(a),b-a)\setminus B_\Sigma^+(\gamma(a),b-a) .  \end{equation}
		%for every $t\in (a,b]$. 
		We will say that $\gamma$ is an {\em extremizing geodesic (resp. pregeodesic)} if it is an affine  (resp. arbitrary,  according to the end  of Convention~\ref{caestar})  %increasing 
		reparametrization  of a unit extremizing geodesic. 
		% In this case, it is a {\em wind extremizing geodesic} (resp. {\em pregeodesic}) if it also a wind curve, i.e., it satisfies \eqref{ewindcurve}. 
		
	\end{defi}
	
	Some elementary properties of these geodesics are the following.  %(for simplicity, we do not state trivial analogous conclusions for wind geodesics or pregeodesics, which can be obtained by using that the wind lengths are invariant under reparametrizations).
	
	\begin{prop} \label{disjunction} 
		Let $(M,\Sigma)$ be a wind Finsler structure. 
		\begin{enumerate}[(i)]
			\item  If $\gamma$ is a unit extremizing geodesic of $(M,\Sigma)$, then: 
			
			\begin{enumerate}[(a)]
				\item   its restriction $\gamma_{|[a',b']}$ to any subinterval $[a',b'], a\leq a'<b'\leq b$ is also a  unit  extremizing geodesic. In particular,
				\[\gamma(t)\in \hat{B}_\Sigma^+(\gamma(a),t-a)\setminus B_\Sigma^+(\gamma(a),t-a) \]
				for every $t\in [a,b]$;

				\item   at least one of the following two properties holds: 
				\begin{equation}\label{mini}
					\gamma(b)\notin B^+_\Sigma(\gamma(a),\ell_F(\gamma)),
				\end{equation}
				\begin{equation}\label{maxi}
					\gamma(b)\notin B^+_\Sigma(\gamma(a),\ell_{F_l}(\gamma)) \quad \quad (\hbox{with } \quad \ell_{F_l}(\gamma)<+\infty).
				\end{equation}
				Moreover, in the first case, $F(\dot\gamma)\equiv 1$  everywhere  and in the second  one  $F_l(\dot\gamma)\equiv 1$ everywhere. 
			\end{enumerate}
			
			%  If a $\Sigma$-admissible curve $\gamma$ satisfies \eqref{mini} (resp. \eqref{maxi}), then  this same property holds for the restriction $\gamma_{[a',b']}$. 

			%\smallskip
			
			\item    If a wind curve $\gamma$ satisfies \eqref{mini} (resp. \eqref{maxi}), then  this same property holds for the restriction $\gamma_{|[a',b']}$. 
			Moreover, if $\gamma$ is also strictly regular  or, more generally, it satisfies the hypotheses in Proposition~\ref{prop220},  then it is an extremizing pregeodesic. 
			
			% (3) The restriction of a unit extremizing geodesic $\gamma%$ to a subinterval $[a',b']$ is also a unit extremizing geodesic. 
			%In particular,
			%\begin{equation}\label{eunitgeodesicBIS}\gamma(t)\in \hat{B}_%\Sigma^+(\gamma(a),t-a)\setminus B_\Sigma^+(\gamma(a),t-a) %\end{equation}
			%for every $t\in (a,b]$.
			% 
			
			%  (b) If a strictly regular $\Sigma$-admissible curve satisfies    \eqref{mini} or \eqref{maxi}, then it is an extremizing pregeodesic. 
			
			\smallskip
			
			\item   If a constant curve $\gamma_{x_0}(t)=x_0$ for all $t\in [a,b]$ is a (unit) extremizing geodesic then $x_0\in \MC$. In this case, $\gamma_{x_0}$ will be called an {\em extremizing exceptional} geodesic. 
		\end{enumerate}
	\end{prop}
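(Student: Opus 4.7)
A common concatenation strategy will handle all three parts, exploiting the strict inequalities hidden in the definition of $B_\Sigma^+$. For \emph{Part (i)(a)}, I would argue by contradiction: if $\gamma|_{[a',b']}$ failed to be unit extremizing there would be a wind curve $\rho$ from $\gamma(a')$ to $\gamma(b')$ of parameter length $b'-a'$ with $\ell_F(\rho)<b'-a'<\ell_{F_l}(\rho)$. Then $\tilde\gamma:=\gamma|_{[a,a']}\ast\rho\ast\gamma|_{[b',b]}$ is a wind curve of parameter length $b-a$ joining $\gamma(a)$ to $\gamma(b)$, and combining the wind-curve bounds $\ell_F(\gamma|_{[a,a']})\leq a'-a\leq \ell_{F_l}(\gamma|_{[a,a']})$ (and the analogous ones on $[b',b]$) with the strict inequalities from $\rho$ yields $\ell_F(\tilde\gamma)<b-a<\ell_{F_l}(\tilde\gamma)$, contradicting $\gamma(b)\notin B_\Sigma^+(\gamma(a),b-a)$. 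The ``in particular'' clause follows by applying this statement to the subinterval $[a,t]$.

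\emph{Part (i)(b)} is a direct consequence of testing the definition of $B_\Sigma^+(\gamma(a),b-a)$ against $\gamma$ itself: the wind-curve bounds force $\ell_F(\gamma)\leq b-a\leq \ell_{F_l}(\gamma)$, and the hypothesis $\gamma(b)\notin B_\Sigma^+(\gamma(a),b-a)$ forbids both inequalities from being simultaneously strict. Hence either $\ell_F(\gamma)=b-a$, in which case \eqref{mini} reduces to the standing hypothesis and $\int_a^b(1-F(\dot\gamma))\,ds=0$ together with $F(\dot\gamma)\leq 1$ forces $F(\dot\gamma)\equiv 1$; or symmetrically $\ell_{F_l}(\gamma)=b-a<+\infty$, yielding \eqref{maxi} and $F_l(\dot\gamma)\equiv 1$. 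The case $\ell_{F_l}(\gamma)=+\infty$ lands automatically in the first alternative.

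\emph{Part (ii)(a)} uses the same concatenation $\eta:=\gamma|_{[a,a']}\ast\rho\ast\gamma|_{[b',b]}$, with $\rho$ now witnessing $\gamma(b')\in B_\Sigma^+(\gamma(a'),\ell_F(\gamma|_{[a',b']}))$. The same length computation gives $\ell_F(\eta)<\ell_F(\gamma)<\ell_{F_l}(\eta)$, so $\ell_F(\gamma)$ lies strictly inside $[\ell_F(\eta),\ell_{F_l}(\eta)]$. Invoking the convex-combination reparametrization underlying Proposition~\ref{prop220} (performed piecewise on the smooth branches of $\eta$) produces a wind-curve reparametrization of $\eta$ of parameter length exactly $\ell_F(\gamma)$, contradicting \eqref{mini} for $\gamma$; the \eqref{maxi} case is entirely parallel. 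For the ``moreover'' clause, applying Proposition~\ref{prop220} to $\gamma$ itself gives a reparametrization $\gamma^F$ of parameter length $\ell_F(\gamma)$, and \eqref{mini} says precisely that $\gamma^F$ is a unit extremizing geodesic, so $\gamma$ is an extremizing pregeodesic. \emph{Part (iii)} is immediate: an extremizing geodesic is by definition a wind curve, hence $\Sigma$-admissible, so $\dot\gamma_{x_0}=0_{x_0}\in A\cup A_E$; by Proposition~\ref{possibwind} the conic open set $A$ never contains a zero vector, and Definition~\ref{ae} places a zero $0_p$ in $A_E$ only when $p\in\MC$, whence $x_0\in\MC$.

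The \textbf{main obstacle} is the reparametrization step in (ii)(a): the concatenation $\eta$ is only piecewise smooth and its smooth pieces need not satisfy the regularity hypotheses of Proposition~\ref{prop220}, since $F\circ\dot\eta$ may vanish or $F_l\circ\dot\eta$ may be discontinuous where $\eta$ crosses $\MC$. One has to verify that the mechanism $\sigma'\in[1/F_l(\dot\eta),1/F(\dot\eta)]$ still produces a valid wind-curve reparametrization of any prescribed length in the open interval $(\ell_F(\eta),\ell_{F_l}(\eta))$, possibly after a preliminary smoothing of $\eta$ near its breakpoints and across any transversal crossings of the critical-wind region.
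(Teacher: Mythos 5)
Your proof follows essentially the same route as the paper's: contradiction by concatenation of a hypothetical better competitor for (i)(a) and the first claim of (ii), forcing one of the two inequalities $\ell_F(\gamma)\leq b-a\leq \ell_{F_l}(\gamma)$ to be an equality for (i)(b), the reparametrization of Proposition \ref{prop220} for the pregeodesic claim in (ii), and the $\Sigma$-admissibility convention for (iii). The reparametrization obstacle you flag in (ii) is genuine but is equally glossed over by the paper's own one-line treatment (which invokes Proposition \ref{prop220} only under the extra hypotheses of the ``moreover'' clause), and the only detail you omit is the paper's remark that $F\circ\dot\gamma$ and $F_l\circ\dot\gamma$ are continuous away from finitely many points, which is what upgrades $F(\dot\gamma)=1$ (resp. $F_l(\dot\gamma)=1$) from almost everywhere to everywhere in (i)(b).
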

	\begin{proof} $(i)$ 
		For $(a)$, assume by contradiction  that \eqref{eunitgeodesic} is violated in a subinterval so that  $\gamma(b') \in B_\Sigma^+(\gamma(a'),b'-a')$  (recall that  \eqref{elength} holds). 
		So, there will exist a wind curve $\tilde\gamma: [a',b']\rightarrow M$ satisfying both strict inequalities in \eqref{elength}, and so will do the concatenation $\gamma_1\colon [a,b]\to M$ of 
		$\gamma|_{[a,a']}, \tilde\gamma$ and $\gamma|_{[b,b']}$ defined as
		
		\[\gamma_{1}(t)=\begin{cases}\gamma(t)&\text{if $t\in [a,b]\setminus[a',b']$,}\\
			\tilde\gamma(t)&\text{if $t\in [a',b']$,}
		\end{cases}
		\]
		in contradiction with \eqref{eunitgeodesic} (for all the interval $[a,b]$).
		
		%For (iii), analogously, the curve $\tilde\gamma$ which showed that \eqref{mini} (resp. \eqref{maxi}) does not hold for $\gamma_{[a',b']}$, could be concatenated to show that it neither holds for $\gamma$.
		
		For $(b)$, notice that by the assumptions,
		\begin{equation}\label{tont}\gamma(b)\not\in  B_\Sigma^+(\gamma(a),b-a)\end{equation} 
		and 
		$\ell_F(\gamma)\leq b-a\leq \ell_{F_l}(\gamma)$. Moreover, \eqref{tont} implies that at least one of the inequalities must be an equality; so,  replace  $b-a$  with  $\ell_F(\gamma)$ or $ \ell_{F_l}(\gamma)$  in \eqref{tont}.
		% \eqref{mini} or \eqref{maxi} (or both, see Remark \ref{both}) %holds according to $\ell_F(\gamma)= r$ or $\ell_{F_l}(\gamma)= %r$ (or $\ell_F(\gamma)= r=\ell_{F_l}(\gamma)$).
		For the last assertion,  observe that, among the points where $F\circ \dot\gamma$ and $F_l\circ \dot\gamma$ can be different from $1$, they are continuous except in the (finite set of) breaks.  
		
		$(ii)$ The curve $\tilde\gamma$ which shows that \eqref{mini} (resp. \eqref{maxi}) does not hold for $\gamma_{|[a',b']}$, can be concatenated (as in $(a)$ above)  to obtain the contradiction that neither this property could hold for $\gamma$. 
		
		For the last assertion, reparametrize $\gamma$ as a wind curve with domain $[0,\ell_F(\gamma)]$ or $[0, \ell_{F_l}(\gamma)]$, (see Proposition~\ref{prop220}), and \eqref{eunitgeodesic} must hold. 
		
		% (3) If $\gamma$ satisfies \eqref{mini} (the case \eqref{maxi} is analogous) then $F(\dot\gamma)\equiv$ a.e. and $b'-a'=\l_F(\gamma|_{[a',b']}$. So, the proven property yields
		%$\gamma(b')\notin B^+_\Sigma(\gamma(a'), b'-a')$, as required.

		$(iii)$ By our conventions, $\gamma_{x_0}$ is  $\Sigma$- admissible only when $x_0\in \MC$.
	\end{proof}
	Proposition ~\ref{disjunction} $(i)$-$(b)$ suggests that  extremizing pregeodesics satisfy minimization or maximization properties. Let us  introduce  a natural variational setting. 
	\begin{defi}\label{variations} 
		Let $\gamma\colon [a,b]\to M$ be a   wind  curve between $x_0$ and $x_1$,  and assume that $\{a=t_0\leq  \ldots\leq t_n=b\}$ is a subset of the interval $[a,b]$ such that $\gamma_{|[t_{i-1}, t_i]}$ is smooth, for each $i\in\{1,\ldots, n\}$.  Let  
		$C^{\Sigma}_{x_0,x_1}[a,b]:= \{\rho\in C^{\Sigma}_{x_0,x_1}:\text{ $\rho$  is defined on $[a,b]$}\}$, and analogously let
		$C^A_{x_0,x_1}[a,b]=\{\rho\in C^{A}_{x_0,x_1}:\text{ $\rho$  is defined }$ $\text{on $[a,b]$}\}$.
		A {\em  (proper)    wind  variation} of $\gamma$ is a continuous map $\psi\colon (-\eps,\eps)\times[a,b]\to M$,  such that $\psi=\psi(s,t)$ is a $C^2$ map on $(-\eps,\eps)\times [t_{i-1},t_i]$, $\psi(0,\cdot)=\gamma$ and for each 
		$s\in (-\eps,\eps)$, $\psi_s\colon=\psi(s,\cdot)\in C^{\Sigma}_{x_0,x_1}[a,b]$. A    wind  variation will be said an   {\em $F$-wind variation} if  $\psi_s\in C^{A}_{x_0,x_1}[a,b]$, for each $s\in (-\eps,\eps)\setminus\{0\}$.  
	\end{defi}
	Observe that,  according to Definition~\ref{variations},  any wind variation of an $F$-wind curve must be $F$-wind  (reducing $\eps$ if necessary). 
	\begin{exe}
		The wind restriction for a variation may be somewhat subtle.  Consider, for example, the case in that the wind Finsler manifold is just a Riemannian one, and one is looking for wind variations $\psi$ of a unit extremizing geodesic 
		$\gamma:[a,b]\rightarrow M$. Of course, such  geodesics are just the minimizing geodesics for the Riemannian manifold parametrized by arc length. For the variation $\psi$ we must impose
		$F(\dot \psi_s)\leq 1$, and $\ell_F(\psi_s)\leq \ell_F(\gamma)$.   So, a non-trivial wind variation can exist only when $\gamma(b)$ is the first conjugate point of $\gamma$. The non-existence of such a variation before the first conjugate point 
		means implicitly that $\gamma$ minimizes strictly among nearby  curves. Clearly, one can consider also geodesics parametrized at a different speed $c$: in the case $c<1$ wind variations are equal to classical variations %(around $\epsilon=0$), 
		but, in the case $c>1$, the geodesic is not a wind curve and, so, no wind variation is defined. 
	\end{exe}  
	
	The following result suggests that the question of maximization / minimization becomes somewhat subtle.
	\begin{lemma} \label{lpanocha}
		Let $\gamma\in C^{\Sigma}_{x_0,x_1}[a,b]$ be a unit extremizing geodesic
		satisfying \eqref{mini},  hence 
		$
		\ell_F(\gamma) = b-a \leq \ell_{F_l}(\gamma).
		$ 
		If there exists a wind curve 
		$\alpha \in C^{\Sigma}_{x_0,x_1}[a,b]$ s.t. 
		$\ell_F(\alpha)<\ell_F(\gamma)$ then $\alpha$ is a unit extremizing geodesic satisfying \eqref{maxi} but not \eqref{mini}, i.e.:  
		$$
		\ell_F(\alpha)<\ell_F(\gamma) = b-a =\ell_{F_l}(\alpha)\leq \ell_{F_l}(\gamma).
		$$
	\end{lemma}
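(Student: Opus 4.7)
The plan is to deduce the single equation $\ell_{F_l}(\alpha)=b-a$ from the unit extremizing property of $\gamma$, and then read off all the asserted properties of $\alpha$ by mechanically unwinding Definitions~\ref{sigmaballs} and \ref{extremizing} together with Proposition~\ref{disjunction}~$(i)$-$(b)$.

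First I would record the easy bounds. Because $\alpha \in C^{\Sigma}_{x_0,x_1}[a,b]$ is a wind curve, integrating $F(\dot\alpha) \leq 1 \leq F_l(\dot\alpha)$ over $[a,b]$ yields $\ell_F(\alpha) \leq b-a \leq \ell_{F_l}(\alpha)$. The hypothesis $\ell_F(\alpha)<\ell_F(\gamma)=b-a$ already sharpens the first of these into a strict inequality, which is the first strict inequality in the statement.

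The main step is to show $\ell_{F_l}(\alpha)=b-a$, and I would argue by contradiction. If $\ell_{F_l}(\alpha) > b-a$, then $\alpha$ itself would realize the chain $\ell_F(\alpha) < b-a < \ell_{F_l}(\alpha)$, and by the very definition of the wind ball in Definition~\ref{sigmaballs} this would force $\alpha(b)=\gamma(b)\in B^+_{\Sigma}(\gamma(a),b-a)=B^+_{\Sigma}(\gamma(a),\ell_F(\gamma))$, contradicting the hypothesis \eqref{mini} on $\gamma$. Hence $\ell_{F_l}(\alpha)=b-a$; combined with the standing bound $b-a\leq \ell_{F_l}(\gamma)$, this yields the last displayed inequality $\ell_{F_l}(\alpha)\leq \ell_{F_l}(\gamma)$.

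All the remaining conclusions are now immediate. On the one hand $\alpha(b)\in \hat B^+_{\Sigma}(\alpha(a),b-a)$ holds tautologically from $\alpha \in C^{\Sigma}_{x_0,x_1}[a,b]$, while $\alpha(b)=\gamma(b)\notin B^+_{\Sigma}(\gamma(a),b-a)$ is inherited from $\gamma$ being a unit extremizing geodesic over $[a,b]$; together these certify that $\alpha$ itself is a unit extremizing geodesic. The identity $\ell_{F_l}(\alpha)=b-a<+\infty$ combined with $\alpha(b)\notin B^+_{\Sigma}(\alpha(a),b-a)$ is exactly \eqref{maxi} for $\alpha$. Finally, \eqref{mini} cannot hold for $\alpha$: by the last sentence of Proposition~\ref{disjunction}~$(i)$-$(b)$, that would force $F(\dot\alpha)\equiv 1$, hence $\ell_F(\alpha)=b-a$, contradicting the hypothesis $\ell_F(\alpha)<b-a$.

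I do not expect any real obstacle; the proof is a careful bookkeeping with the strict-versus-non-strict ball definition. The only point deserving attention is that $\alpha$ and $\gamma$ share the parameter interval $[a,b]$, which is built into the hypothesis $\alpha \in C^{\Sigma}_{x_0,x_1}[a,b]$ and is precisely what allows $b-a$ to serve simultaneously as the radius of the wind ball and as the domain length to be compared with $\ell_F$ and $\ell_{F_l}$.
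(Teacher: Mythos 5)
Your proof is correct and follows essentially the same route as the paper: the paper's one-line argument is exactly your key step, namely that $\ell_F(\alpha)\le b-a\le\ell_{F_l}(\alpha)$ holds since $\alpha$ is a wind curve, and the second inequality must be an equality because otherwise $\alpha$ would witness $x_1\in B^+_\Sigma(x_0,b-a)$, contradicting \eqref{mini} for $\gamma$. Your additional bookkeeping (verifying \eqref{eunitgeodesic} for $\alpha$ and ruling out \eqref{mini} via Proposition~\ref{disjunction}$(i)$-$(b)$) just makes explicit what the paper leaves implicit.
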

	\begin{proof}
		Being $\alpha$ a wind curve, \eqref{elength} holds and, so,  $b-a =\ell_{F_l}(\alpha)$ because, otherwise, 
		$x_1\in B_\Sigma^+(x_0,b-a)$.
	\end{proof}
	Of course, a dual version of the result  holds 
	%(interchanging the roles of $\gamma$ and $\alpha$) 
	for the case that $\gamma$ satisfies \eqref{maxi}.
	\begin{exe}   We emphasize that such an $\alpha$ can exist in some particular cases. In fact, notice first that, for any wind Minkowskian norm $\Sigma$ on $\R^n$, the unit extremizing geodesics are the  straight lines with velocity constantly equal 
		to any vector of $\Sigma$. Now,
		consider a strong wind Minkowskian example $(\R^2, \Sigma)$ obtained by the displacement of the usual unit sphere by the (constant) vector $(2,0)$, and construct a wind Finslerian cylinder $(S^1\times \R, \Sigma)$ by identifying each $(x,y)$ with $(x+1,y)$,
		see Fig.~\ref{cylinder2}. Choose  the  Minkowskian wind c-ball $\hat B_\Sigma^+((0,0),r_0) \subset \R^2$ with radius $r_0=1/2$. As the natural Euclidean diameter (as a subset of the Euclidean space $\R^2$) of  $\hat B_\Sigma^+((0,0),r_0)$ is 1, its projection  in $S^1\times \R$ identifies the points $p_1=(1/2,0), p_2=(3/2,0)$ in a single one $p_C$. Then, the univocally determined unit extremizing geodesics $\beta_1, \beta_2: [0,r_0]\rightarrow \R^2$ from $(0,0)$ to $p_1, p_2$ (resp.),
		project onto geodesics of $S^1\times \R$ which play the role of $\alpha$ and $\gamma$ in Lemma~\ref{lpanocha}.
		\begin{figure}[h]
			\includegraphics[scale=0.7, center]{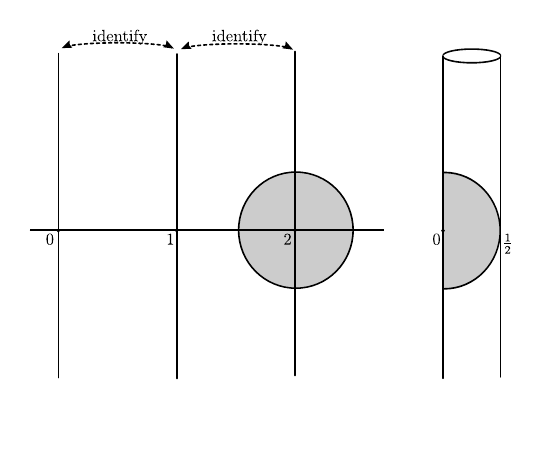}
			\caption{A wind Finslerian cylinder $(S^1\times \R, \Sigma)$. The shaded region represents the c-ball $\hat B_\Sigma^+((0,0),1/2)$}\label{cylinder2}
		\end{figure}
		
	\end{exe}

	That is, extremizing geodesics are either global minimizers of $\ell_F$ or global maximizers of $\ell_{F_l}$ on $C^{\Sigma}_{x_0,x_1}[a,b]$ except when a curve $\alpha$ as above appears. However, one can check that such a curve cannot appear among nearby
	geodesics in the following sense. 
	
	\begin{prop}\label{casesextrgeo}
		Let $ \gamma\colon[a,b]\to M$ be a unit extremizing geodesic between $x_0$ and $x_1$. 
		%and $\gamma_0\in C^{\Sigma}_{x_0,x_1}[a,b]$ its reparametrization as a unit extremizing geodesic.  
		Then, one of the following exclusive alternatives holds:
		\begin{enumerate}[(i)]
			\item    $\ell_F(\gamma)<\ell_{F_l}(\gamma)$ and $x_1\not\in B^+_{\Sigma}(x_0, \ell_F(\gamma))$ or, equally, $\ell_F(\gamma)=b-a<\ell_{F_l}(\gamma)$.  Then $\gamma$
			minimizes the length functional of $F$ between the curves $\psi_s$ defined by  any     wind  variation $\psi: (-\epsilon, \epsilon)\times [a,b]\rightarrow M$  of $\gamma$ for  $|s|$ sufficiently small ($0\leq |s|<\epsilon'$ for some 
			$\epsilon'\leq \epsilon$).  In this case $\gamma$ will be called a {\em minimizing unit geodesic}.
			\item  $\ell_F(\gamma)<\ell_{F_l}(\gamma)<+\infty$ and $x_1\not\in B^+_{\Sigma}(x_0, \ell_{F_l}(\gamma))$ or, equally, $\ell_F(\gamma)<b-a=\ell_{F_l}(\gamma)$. Then, 
			$\gamma$ maximizes  the length functional of $F_l$ in a sense analogous to (i) above. In this case, $\gamma$
			will be called a {\em  maximizing unit geodesic}.
			\item  $\ell_F(\gamma)=\ell_{F_l}(\gamma)$ (necessarily equal to $b-a$). Then, both \eqref{mini} and \eqref{maxi} hold, and  the velocity of $\gamma$ lies in  $A_E\setminus A_l$. 
			%\footnote{M. !! I would ike to add: ``and all the curves in any wind variation of $\gamma$ must have $F_l$-length (and, if $\gamma$ is regular, $F$-length) equal to $b-a$'' but I do not find the way to prove this.}
			In this case, $\gamma$    will be called a {\em  boundary   unit geodesic}.
		\end{enumerate}
		Moreover,   the restriction of $\gamma$ to  any  closed  subinterval $[a',b']$ of $[a,b]$  also satisfies the same type of extremizing property (i), (ii), (iii) as above 
		(minimization, maximization or velocity in $A\setminus A_l$ as the original $\gamma$). 
		%pregeodesic  satisfying the same type of condition \eqref{mini}, \eqref{maxi} or both, as $\gamma$.
	\end{prop}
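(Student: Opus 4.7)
First I would verify that cases (i)--(iii) are exclusive and exhaustive. Since $\gamma$ is a unit extremizing geodesic, it is a wind curve (so $\ell_F(\gamma)\le b-a\le \ell_{F_l}(\gamma)$) whose endpoint does not lie in $B^+_{\Sigma}(\gamma(a),b-a)$; therefore at least one of these two inequalities must in fact be an equality. The three cases are then classified by which equalities hold. By Remark~\ref{remlengthbis}(2), the simultaneous equality $\ell_F(\gamma)=\ell_{F_l}(\gamma)$ occurs iff $\dot\gamma(s)\in A_E\setminus A$ for every $s$, which is case (iii); in that situation both \eqref{mini} and \eqref{maxi} follow at once from the defining property of a unit extremizing geodesic.

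For case (i), I would fix a wind variation $\psi:(-\varepsilon,\varepsilon)\times[a,b]\to M$ of $\gamma$. Since each $\psi_s$ is itself a wind curve on $[a,b]$, one automatically has $\ell_F(\psi_s)\le b-a=\ell_F(\gamma)$, so the minimization claim reduces to ruling out the strict inequality $\ell_F(\psi_s)<\ell_F(\gamma)$ for small $|s|$. Assuming by contradiction a sequence $s_n\to 0$ with $\ell_F(\psi_{s_n})<\ell_F(\gamma)$, the fact that each $\psi_{s_n}$ joins $x_0$ to $x_1$ on the same parameter interval together with $\gamma(b)\notin B^+_\Sigma(x_0,b-a)$ force $\ell_{F_l}(\psi_{s_n})=b-a$ (see also Lemma~\ref{lpanocha}). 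I would then reach a contradiction by showing $\ell_{F_l}(\psi_s)>b-a$ (strictly, or $+\infty$) for $|s|$ small: if $\ell_{F_l}(\gamma)<+\infty$ then $\gamma$ lies in $M_l$ with $\dot\gamma\in A_l$ where $F_l$ is smooth, and $C^1$-continuity of $\psi$ yields $\ell_{F_l}(\psi_s)\to\ell_{F_l}(\gamma)>b-a$; if instead $\ell_{F_l}(\gamma)=+\infty$, Proposition~\ref{pleche}(iii) locates some $s_0$ with $F_l(\dot\gamma(s_0))=+\infty$, and the openness of $\MM$ and of $A$ guarantees that $F_l(\dot\psi_s(s_0))=+\infty$, hence $\ell_{F_l}(\psi_s)=+\infty$, for small $|s|$.

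Case (ii) is entirely dual: the wind inequality $\ell_{F_l}(\psi_s)\ge b-a=\ell_{F_l}(\gamma)$ is automatic, and a strict inequality would force $\ell_F(\psi_s)=b-a$ to avoid contradicting the unit extremizing property. But the strict hypothesis $\ell_F(\gamma)<b-a$ from case (ii), together with the continuity of $\ell_F$ (and hence of $\ell_F\circ\psi$) around $\gamma$ guaranteed by Proposition~\ref{pleche}(ii), implies $\ell_F(\psi_s)<b-a$ for small $|s|$, ruling out the strict inequality and proving the maximization property. Finally, the statement on restrictions follows from Proposition~\ref{disjunction}(i)(a): the restriction $\gamma|_{[a',b']}$ is again a unit extremizing geodesic, and the relevant defining equality $F(\dot\gamma)\equiv 1$ (resp.\ $F_l(\dot\gamma)\equiv 1$, resp.\ $\dot\gamma\in A_E\setminus A$) is inherited, placing $\gamma|_{[a',b']}$ in the same case, with the caveat that a case (i) or (ii) can degenerate into case (iii) if the restriction happens to lie entirely in $A_E\setminus A$.

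The main obstacle I foresee is the behaviour of $\ell_{F_l}$ near the critical-wind region $\MC$ in the argument for case (i): there $F_l$ may jump between finite and $+\infty$ values, so one must rely carefully on transversality (Lemma~\ref{lleche}), openness of $\MM$ and $A$, and the continuity/blow-up dichotomy encoded in Proposition~\ref{pleche} to conclude that the relevant length functional behaves consistently along nearby variations. The dual obstacle appears in case (ii) if $\gamma$ meets $\MC$ tangentially, and the same technical ingredients resolve it.
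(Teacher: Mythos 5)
Your overall architecture — reducing (i) to excluding a sequence $s_n\to 0$ with $\ell_F(\psi_{s_n})<\ell_F(\gamma)$, invoking Lemma~\ref{lpanocha} to force $\ell_{F_l}(\psi_{s_n})=b-a$, handling (iii) via Remark~\ref{remlengthbis}(2) and the restrictions via Proposition~\ref{disjunction} — coincides with the paper's. The gap is in how you then derive the contradiction. In case (i) you try to show $\ell_{F_l}(\psi_s)>b-a$ for small $|s|$ by a dichotomy on $\ell_{F_l}(\gamma)$, and both branches are flawed. If $\ell_{F_l}(\gamma)<+\infty$ it does \emph{not} follow that $\gamma$ lies in $M_l$ with $\dot\gamma\in A_l$: the velocity may lie in $A_E\setminus A$ or be a critical-wind zero, where $F_l$ is only the conventional value $1$ and is not continuous (arbitrarily small vectors in $(A_E)_q$, for $q\in M_l$ close to $\MC$, carry arbitrary positive $F_l$-values), so the claimed $C^1$-convergence $\ell_{F_l}(\psi_s)\to\ell_{F_l}(\gamma)$ is unjustified. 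If $\ell_{F_l}(\gamma)=+\infty$, the converse implication of Proposition~\ref{pleche}(iii) that you invoke requires $\gamma$ to be strictly regular, which a unit extremizing geodesic need not be: $F_l\circ\dot\gamma$ can be finite everywhere yet non-integrable near a critical zero, and then no $s_0$ with $F_l(\dot\gamma(s_0))=+\infty$ exists. (The strategy is repairable: since $F_l(\dot\psi_s)\ge 1$ on wind curves and $\{t:F_l(\dot\gamma(t))>1\}$ has positive measure with $\dot\gamma(t)\in A\setminus\mathbf{0}$ there, Fatou's lemma gives $\liminf_s\ell_{F_l}(\psi_s)>b-a$ — but that is not the argument you wrote.)

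The paper runs the limit in the opposite, and much safer, direction: from $\ell_{F_l}(\psi_{s_n})=b-a$ and $F_l\ge 1$ one gets $F_l(\dot\psi_{s_n})\equiv 1$ (Proposition~\ref{disjunction}(i)-(b)); then, for each fixed $t$, $\dot\psi_{s_n}(t)\to\dot\gamma(t)$ and the continuity of $F_l$ away from $\mathbf{0}$, together with the convention $F_l=1$ at critical zeroes, yield $F_l(\dot\gamma)\equiv 1$, hence $\ell_{F_l}(\gamma)=b-a=\ell_F(\gamma)$, contradicting the hypothesis of case (i). This uses only pointwise continuity of $F_l$ at the velocities of the fixed limit curve and needs no semicontinuity of the length functional under variations. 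Case (ii) is dual, and there your appeal to Proposition~\ref{pleche}(ii) for ``continuity of $\ell_F$ around $\gamma$'' is likewise a misquotation: that item concerns continuity of $F\circ\dot\gamma$ along a single strictly regular curve, not stability of $\ell_F$ under variations. Rewrite the key step of (i) and (ii) along the paper's lines; your remaining observations (exhaustiveness/exclusivity of the cases, the treatment of (iii), and the caveat that a restriction can degenerate to the boundary case) are sound.
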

	\begin{proof}  The distinction of cases comes from Proposition~\ref{disjunction}.  
		%and the wind lengths are invariant under %reparameterizations. So, we can work directly %with $\gamma_0$. 
		
		{\it (i)} 
		%(the proof of {\it (ii)} is completely analogous\footnote{It is worth pointing out that  a  reverse triangle inequality holds for $F_l$ in a similar way than for a Lorentzian metric. However, it is not necessary in this point, even though it would come into play to prove the upper semi-continuity of the length functional $\ell_{F_l}$ in the $C^0$ topology of curves.}).
		Assume, by contradiction,  that there exists a  wind  variation of $\gamma$ such that for some sequence $s_n\to 0$, $\ell_F(\psi_{s_n})<\ell_F(\gamma)$,
		for each $n\in\N$. By Lemma~\ref{lpanocha}, $\ell_{F_l}(\psi_{s_n})=b-a$ and, thus, 
		$F_l(\dot\psi_{s_n})\equiv 1$.  As 
		$\{\dot\psi_{s_n}(t)\}_n \rightarrow \dot\gamma(t)$ for all $t$, and $F_l$ is continuous away from $\mathbf{0}$ (but, there, its value is equal to $1$ too) then $F_l(\dot\gamma)\equiv 1$. 
		As a consequence, a contradiction with the inequality in the lengths  of $\gamma$ appears. 
		
		{\it (ii)}  Analogous to part $(i)$.

		For {\it (iii)} use part (2) of Remark~\ref{remlengthbis}. For the last assertion, recall  the parts $(i)$ and $(ii)$ of Proposition~\ref{disjunction}.

	\end{proof}

	\begin{defi}\label{ex3}
		We  say  that an extremizing geodesic (or more generally,  pregeodesic) is {\em minimizing}, {\em maximizing} or {\em boundary} if it can be reparametrized as a unit extremizing geodesic satisfying respectively
		$(i)$, $(ii)$ or $(iii)$ in Proposition~\ref{casesextrgeo}.
	\end{defi}
	
	\begin{exe}\label{exboth3}
		Notice that the same geodesic can admit two reparametrizations, one as a  minimizing unit geodesic and the other as a maximizing one, so that the possibilities 
		$(i)$ and $(ii)$ are not  exclusive. 
		For example, this will happen for all the   $\Sigma$- admissible  straight lines of a  strong  wind Minkowskian structure  regarded as a wind Finslerian structure.
		In fact, the straight lines starting at the origin and tangent to the indicatrix determine the boundary geodesics, where both equalities \eqref{mini} and \eqref{maxi} hold, as the $F$- and $F_l$-lengths coincide for each one of them. 
		The straight lines inside this cone also satisfy \eqref{mini} and \eqref{maxi}, even though these lengths are now different, and they consequently admit two different parametrizations as unit geodesics,  one with $F(\dot\gamma)\equiv 1$ 
		(minimizing) and the other one with $F_l(\dot\gamma)\equiv 1$ (maximizing). Notice also that a more classical approach also shows that such
		lines minimize locally for {\em any} conic Finsler norm $F$ (see \cite[Section 3.4]{JavSan11}), and 
		an analogous reasoning shows  that they maximize locally for  {\em any}  Lorentzian norm.
	\end{exe}
	
	Finally, we arrive at the following definition of geodesic.

	\begin{defi}\label{windgeodesic}
		Let $I\subset \R$ be an interval.  We say that a curve $\gamma:I\rightarrow M$ is {\em a unit geodesic  of the wind Finslerian structure $(M,\Sigma)$} if, locally, it is a unit extremizing geodesic, namely, for every $t\in I$
		there exists $\varepsilon>0$   such  that    $\gamma|_{[t-\varepsilon,t+\varepsilon]\cap I}$ is a unit extremizing geodesic. We will say that $\gamma$ is a {\em geodesic (resp. pregeodesic)} of the wind Finslerian  manifold  
		$(M,\Sigma)$ if it is an affine (resp. arbitrary)  reparametrization of a unit geodesic. An {\em exceptional geodesic} is a   constant curve $\gamma_{x_0}$ which is locally an extremizing exceptional geodesic
		% (according to Convention \ref{caestar}). 
		(according to Proposition~\ref{disjunction}$(iii)$). \end{defi}
	%\begin{rem}
	Notice that when the interval $I$ is open,  $\epsilon>0$  can  be chosen such that $[t-\varepsilon,t+\varepsilon] \subset I$ (in agreement with Definition~\ref{extremizing}), while if $I$ is compact, the intersection with $I$ must 
	be taken properly in the endpoints.

	Example~\ref{exboth3}  stresses that
	a  (non-boundary extremizing) geodesic  can satisfy simultaneously 
	both \eqref{mini} and \eqref{maxi} 
	%(at all the subintervals of $I$) 
	for different ``radii'' 
	\[r_1(t,\varepsilon) = \ell_F(\gamma|_{[t-\varepsilon,t+\varepsilon]\cap I})\text{ and } r_2(t,\varepsilon)=\ell_{F_l}(\gamma|_{[t-\varepsilon,t+\varepsilon]\cap I}).\] Thus, 
	%the last assertion of Proposition~\ref{casesextrgeo} ensures that, when $\gamma$ is extremizing in some interval, then it is extremizing in every subinterval too, 
	% WELL IT COULD BE BOUNDARY AND WE ARE NOT SURE IN GENERAL IF THESE GEODESICS ARE MINI/MAXIMIZING (EVEN THOUGH ONE OBTAINED AS THE RESTRICTION OF A MINI/MAXIMIZING MUST BE TOO)
	the names {\em boundary}, and {\em locally minimizing} or {\em maximizing} can be used only as non-exclusive possibilities.

	%An {\em $F$-admissible geodesic} 
	% (i.e., with  velocity  in $A$, consistently with our nomenclature) admits an affine reparametrization as a wind curve \bv (recall last statement in Proposition~\ref{prop220}). \ev

	%(in other words,  it can be simultaneously maximizing and minimizing in the sense of $(i)$ and $(ii)$ of Proposition~\ref{casesextrgeo}).  For example, this will happen for all the  admissible  straight lines for a  strong  wind Minkowskian structure  regarded as a wind Finslerian structure.
	%In fact, the straight lines starting at the origin and tangent to the indicatrix determine the boundary geodesics, where both equalities \eqref{mini} and \eqref{maxi} hold, as the $F$- and $F_l$-lengths coincide for each one of them. The straight lines inside this cone also satisfy \eqref{mini} and \eqref{maxi}, even though these lengths are now different, and they consequently admit two different parametrizations as unit geodesics, one with $F(\dot\gamma)\equiv 1$ and the other one with $F_l(\dot\gamma)\equiv 1$.   Notice also that a more claasical approach also shows that such
	%lines minimize locally for {\em any} conic Finsler norm $F$ (see \cite[Section 3.4]{JavSan11}), and 
	%analogous reasonings show  that they maximize locally for  {\em any}  Lorentzian norm.
	%\end{rem}
	
	%In spite of this, the fact that, given two points, different types of extremizing pregeodesics may connect them, 
	Proposition~\ref{pgc} suggests the following general definition of convexity.
	\begin{defi}\label{strongconvex}
		A wind Finslerian structure $(M,\Sigma)$ is  \emph{w-convex}
		if for  any $x_0\in M$ and $r>0$,  both  $\hat{B}_\Sigma^+(x_0,r)$ and $\hat{B}_\Sigma^-(x_0,r)$ are closed.  Moreover, we  say that a wind Finslerian structure is \emph{forward (resp. backward) complete} if the domain of every inextendible  geodesic is an interval of the type $(a,+\infty)$ with $a\geq -\infty$ (resp. $(-\infty,b)$ with $b\leq +\infty)$).
	\end{defi}
	\begin{prop}
		The reverse Finsler structure $\tilde\Sigma=-\Sigma$ satisfies:
		$$ B^+_{\Sigma}(x_0,r) = B^-_{-\Sigma}(x_0,r),
		\qquad \qquad \hat B^+_{\Sigma}(x_0,r) = \hat B^-_{-\Sigma}(x_0,r). $$
		So, it is w-convex iff so is $\Sigma$ and it is  forward complete iff  $\tilde\Sigma$ is backward complete.
	\end{prop}

	\subsection{Link with geodesics of  conic pseudo-Finsler  metrics }
	We will say that a conic pseudo-Finsler metric is {\em non-degenerate} when the fundamental tensor defined in \eqref{fundten} is non-degenerate. In particular, by Proposition~\ref{possibwind}, the conic pseudo-Finsler metrics $F$ and $F_l$ 
	associated with a wind Finslerian structure are non-degenerate.
	Our aim will be to justify that the non-boundary geodesics  coincide with the geodesics for $F$ or $F_l$. Now, on the one hand, the fundamental tensor of $F_l$ is not positive definite and, on the other, the domains of $F$ and $F_l$ are only conic. 
	So, we will make a brief study before arriving at Theorem~\ref{extregeo}.

	\begin{defi}
		Let $F$ be a non-degenerate conic pseudo-Finsler metric on $M$ with conic domain $A$. The Cartan tensor of $F$ is defined as   	
		\[
		C_v(w_1,w_2,w_3)=\frac 14\left. \frac{\partial^3}{\partial s_3\partial s_2\partial s_1}F^2\left(v+\sum_{i=1}^3 s_iw_i\right)\right|_{s_1=s_2=s_3=0}
		\]
		for  $v\in A$  and $w_1,w_2,w_3\in T_{\pi(v)} M$.
	\end{defi}
	Because of the non-degeneracy of $g_v$, it makes sense to consider the Chern connection and, thus,  the formal Christoffel symbols that yield the geodesic equations.
	However, following \cite{Matthi80},  it is especially convenient to study it as a family of affine connections associated with $F$-admissible vector fields  (recall Definition~\ref{sigmadmissible}-(i)):
	\begin{defi}
		Let $F$ be a \bw non-degenerate \ew conic pseudo-Finsler. Given an $F$-admissible vector field $V$ on an open subset $\Omega\subset M$, we define  $\nabla^V$ as the unique  affine connection on  $\Omega$ such that it is
		\begin{enumerate}
			\item {\it torsion-free}, namely,
			\[\nabla^V_XY-\nabla^V_YX=[X,Y]\]
			for every  smooth  vector fields $X$ and $Y$ on $\Omega$,
			\item and  {\it almost $g$-compatible}, namely,
			\[X( g_V(Y,Z))=g_V(\nabla^V_XY,Z)+g_V(Y,\nabla^V_XZ)+2 C_V(\nabla^V_XV,Y,Z),\]
			where $X$, $Y$ and $Z$ are  smooth  vector fields on $\Omega$.
		\end{enumerate}
	\end{defi}
	This approach to Chern connection is very suitable to compute the variations of the length and the energy functional as it was shown in \cite{Jav13,Rad04b}.
	Let us describe it.    
	\begin{defi}
		Given  a chart  $\varphi:\Omega\subset M\rightarrow \varphi(\Omega)\subset\R^m$,
		$\Omega$ open, $\varphi(p)=(x^1(p),x^2(p),\ldots,x^m(p))$,
		we define the Christoffel symbols associated with $\varphi$ and to  the $F$-admissible  vector field $V$, $\gam{V}$, by means of the equation
		\[
		\nabla^V_{\frac{\partial}{\partial x^i}}\left(\frac{\partial}{\partial x^j}\right)=\sum_{k=1}^ m\gam{V} \frac{\partial}{\partial x^k},
		\]
		for $i,j=1,\ldots,m$.
	\end{defi} 
	
	Observe that $\gam{V}$ in $p\in \Omega$ depends only on $V_p$ and not on the extension $V$ (see for example \cite[Proposition 2.6]{Jav13}) and therefore $\Gamma^k_{\,\,ij}$ is a real function defined  on $T\Omega\cap A$.  Moreover,  for any positive function $\lambda$  on $[a,b]$ we have $\gam{V}=\gam{\lambda V}$,
	(see for example \cite[Remark 2.4]{Jav13}).  
	So the following definition becomes consistent:
	\begin{defi}
		Let $\gamma:[a,b]\rightarrow M$ be a curve and $V$ be an $F$-admissible vector field along $\gamma$. The {\em covariant derivative of a vector field $X$ along $\gamma$  with reference $V$} is defined,  (a) when the curve is contained in the domain of a  coordinate chart $(\Omega,\varphi)$,  as
		\begin{equation}\label{DV}
			D^V_\gamma X:= \sum_{i=1}^m\frac{{\rm d}X^i}{{\rm d}t}\frac{\partial}{\partial x^i}+ \sum_{i,j,k=1}^m X^i\dot\gamma^j\gam{V}\frac{\partial}{\partial x^k},
		\end{equation}
		where $(X^1,\ldots,X^m)$ and $(\dot\gamma^1,\ldots,
		\dot\gamma^m)$ are respectively the coordinates of $X$ and $\dot\gamma$ in the coordinate basis of $\varphi$,   (b) in the general case, cover the curve $\gamma$ with a finite number of coordinate charts and define $D^V_{\gamma}X$ in every interval contained in one of these charts as in \eqref{DV}  (the fact that $D^V_\gamma X$ in \eqref{DV} does not depend on the chart used to compute it guarantees that the covariant derivative is well-defined).  Moreover, $\gamma$ is a {\em geodesic} of $(M,F)$ if it is a  (smooth)  $F$-admissible  curve satisfying the equation
		\begin{equation}
			\label{geoF}
			D_\gamma^{\dot\gamma}\dot\gamma=0.
		\end{equation}
	\end{defi}
	As in the standard Finsler case, geodesics (resp. pregeodesics)  are always critical points of the energy  (resp. length)  functional. Nevertheless, in order to ensure that a  piecewise smooth curve which is a critical point of the energy  
	(resp. length)  functional becomes a geodesic  (resp. pregeodesic),  one should require that the Legendre transform is injective (the non-degeneracy of $g_v$ implies that the Lagrangian $L=F^2/2$ is regular and thus,  its Legendre transform  
	is locally  injective \cite[Definition 3.5.8 and Proposition 3.5.10]{AbrMar78}), but global injectivity is naturally required to avoid problems in the breaks, see \cite{JavSoa13}). \soutE{Nevertheless} \bw Anyway, \ew this  always holds  in
	our case, as the following refinement of \cite[Lemma 3.1.1]{Shen01} shows.
	Recall that the Legendre transform of $F$ is defined as the fibre derivative of
	$L$. By homogeneity, it is shown that it coincides with the map
	$\mathscr{L}_F:A\to T^\ast M,$ such that for every $v\in A$,
	$\mathscr{L}_F(v)$ is given by $\mathscr{L}_F(v)(u)=g_v(v,u)$, $u\in T_{\pi(v)}M$.  %(see, e.g. \cite[Prop. 2.4]{JavSoa13}).
	\begin{prop}%\label{conicorlor}
		Let $F: A\rightarrow \R$ be a conic Finsler or a Lorentzian Finsler metric on a manifold $M$ such that $A_p\cup \{0\}$ is a convex set for all $p\in M$. Then, its Legendre transform is injective (and, thus, a diffeomorphism onto its image).
		
		In particular, this happens for the conic Finsler metric and the Lorentzian Finsler metric associated with any wind Finslerian structure $\Sigma$.
	\end{prop}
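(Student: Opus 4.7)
The plan is to adapt the strategy of \cite[Lemma 3.1.1]{Shen01}, which handles the standard Finsler case, by establishing an appropriate fundamental (Cauchy--Schwarz type) inequality in each of the two subcases and then reducing injectivity to its equality case. If $\mathscr{L}_F(v)=\mathscr{L}_F(w)$ for $v,w\in A_p$, evaluating the equality on $v$ and on $w$ (and using Euler's identity $g_v(v,v)=F(v)^2$) gives
\[F(v)^2 = g_w(w,v), \qquad F(w)^2 = g_v(v,w),\]
so the problem reduces to controlling the pairing $g_v(v,w)$ in terms of $F(v)F(w)$.

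The key technical step is, therefore, the fundamental inequality, which I would prove separately in each case. In the conic Finsler case one establishes the forward inequality $g_v(v,w)\le F(v)F(w)$, with equality iff $w=\lambda v$ for some $\lambda>0$: the convexity of $A_p\cup\{0\}$ ensures the segment $\sigma(t)=v+t(w-v)$, $t\in[0,1]$, stays in $A_p\cup\{0\}$, and in fact in $A_p$ (in the proper conic case $A_p$ lies inside an open half-space; the case $A_p=T_pM\setminus\{0\}$ is Shen's original setting). The positive-definiteness of $g$ makes $\phi(t)=\tfrac{1}{2}F(\sigma(t))^2$ strictly convex, and comparing $\phi(0)$, $\phi(1)$, $\phi'(0)=g_v(v,w-v)$, $\phi'(1)=g_w(w,w-v)$ yields the inequality together with its equality case. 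In the Lorentzian Finsler case I would prove the reverse inequality $g_v(v,w)\ge F(v)F(w)$ with equality iff $w=\lambda v$, $\lambda>0$. Here Proposition~\ref{possibwind}(iii) guarantees that $A_p$ is an open convex cone inside a half-space, so the segment $\sigma(t)$ still lies inside $A_p$; the strong concavity of the indicatrix $\Sigma^-$ of $F_l$ toward the origin makes $F$ strictly concave along $\sigma$, yielding the reverse triangle inequality $F(v+w)\ge F(v)+F(w)$ and hence reverse Cauchy--Schwarz by a standard scaling argument.

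Plugging either inequality into $F(v)^2=g_w(w,v)$ and $F(w)^2=g_v(v,w)$ forces $F(v)=F(w)$ and equality in the Cauchy--Schwarz inequality; the equality case gives $w=\lambda v$ with $\lambda>0$, and homogeneity of $F$ then imposes $\lambda=1$, i.e.\ $v=w$. The application to the wind Finslerian structure $\Sigma$ is immediate from Proposition~\ref{possibwind}: the fibres $A_p$ of $F$ are, pointwise, either $T_pM\setminus\{0\}$, an open half-space, or the interior of a strictly convex cone, and the fibres of $F_l$ belong to the last case; in every case $A_p\cup\{0\}$ is convex, so the preceding argument applies. I expect the main obstacle to be the reverse Cauchy--Schwarz inequality in the Lorentzian case, since the indefinite signature of $g_v$ prevents any direct convexity argument on $F^2$ and one must instead leverage the strong concavity of the indicatrix (equivalently, the sign of the second fundamental form of $\Sigma^-$ with respect to the position vector, as recorded in Proposition~\ref{possibwind}) to obtain the required reversed triangle inequality.
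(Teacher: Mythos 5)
Your proof is correct, but it takes a genuinely different route from the paper's. The paper argues directly with the convex body $C_p=\{F\leq 1\}$ (resp.\ $\{F\geq 1\}$ in the Lorentzian case): if $\mathscr{L}_F(v_1)=\mathscr{L}_F(v_2)$ with $v_1,v_2$ non-collinear, the chord joining $v_1/F(v_1)$ to $v_2/F(v_2)$ lies in the interior of $C_p$ away from its endpoints, so its direction $d$ points into $C_p$ at one endpoint and out of it at the other; since $\mathscr{L}_F(u)(d)=g_u(u,d)=F(u)\,dF_u(d)$ detects exactly this sign, the assumed common value $\mathscr{L}_F(v_1)(d)=\mathscr{L}_F(v_2)(d)$ would have to carry both signs at once --- a contradiction that treats both causal characters of the fundamental tensor uniformly and never needs the full Cauchy--Schwarz inequality. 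Your argument instead follows the classical route of Shen's Lemma 3.1.1: establish the fundamental inequality $g_v(v,w)\leq F(v)F(w)$ (resp.\ its reverse for $F_l$) together with its equality case, and squeeze $F(v)=F(w)$ out of the identities $F(v)^2=g_w(w,v)$ and $F(w)^2=g_v(v,w)$. This costs a case distinction --- for $F_l$ the convexity of $t\mapsto\tfrac12 F(\sigma(t))^2$ fails because $g$ is indefinite, and you correctly replace it by concavity of $F$ itself, i.e.\ the reverse triangle inequality coming from convexity of the superlevel set $\{F_l\geq 1\}$ --- but it buys, as a by-product, the forward and reverse Cauchy--Schwarz inequalities for conic and Lorentzian Finsler metrics, which are of independent interest (cf.\ the triangle inequalities invoked in Remark \ref{both}). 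Both proofs ultimately rest on the same local-to-global step, namely that strong convexity (resp.\ concavity) of the connected indicatrix over the convex cone $A_p$ yields convexity of the sublevel (resp.\ superlevel) set, and the paper takes this for granted just as you do. One point worth a line of justification in your write-up: in the proper conic case the segment from $v$ to $w$ avoids the origin because an open convex cone $A_p\neq T_pM\setminus\{0\}$ cannot contain an antipodal pair (else, by convexity of $A_p\cup\{0\}$, it would contain a punctured ball around $0$ and hence everything) and is therefore contained in an open half-space; you assert this correctly but without proof.
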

	\begin{proof}
		Recall that by the hypotheses on $A$,  the indicatrix of $F$ at $p$, $(\Sigma_F)_p=\{v\in T_p M: F(v)=1\}$, is a strongly convex hypersurface in $T_pM$ (when $F$ is a Lorentzian Finsler metric, this is understood in the sense that the opposite 
		normal direction has been chosen in the computation of the second fundamental form).  Hence, if $F$ is conic Finsler (resp. Lorentzian Finsler), the  set $C_p=\{v\in A_p: F(v)\leq 1\} \cup \{0_p: \text{if }A_p=T_pM\setminus \{0\}\}$   
		(resp.  $C_p=\{v\in A_p:F(v)\geq 1\}$) is convex.
		Assume  by contradiction  that there exist two  different  vectors $v_1,v_2\in A$,  such that  $\mathscr{L}_F(v_1)=\mathscr{L}_F(v_2)$.  Clearly, by homogeneity, $v_1$ and $v_2$ cannot be collinear.   Then,  the non-extreme points 
		of the segment  joining $v_1/F(v_1)$ and $v_2/F(v_2)$ are contained in  the interior of $C_p$ and the vector $v_1/F(v_1)-v_2/F(v_2)$  points  outwards  $C_p$ in
		$v_1/F(v_1)$ but  inwards  in $v_2/F(v_2)$.  This implies that $\mathscr{L}_F(v_1/F(v_1))(v_2/F(v_2)-v_1/F(v_1))$ and
		$\mathscr{L}_F(v_2/F(v_2))(v_2/F(v_2)-v_1/F(v_1))$ have different signs and so is, by homogeneity,  for $\mathscr{L}_F(v_1)(v_2/F(v_2)-v_1/F(v_1))$ and
		$\mathscr{L}_F(v_2)(v_2/F(v_2)-v_1/F(v_1))$, a contradiction.
	\end{proof}
	
	\begin{lemma}\label{criticallength}
		Assume that $(M,F)$ is a non-degenerate pseudo-Finsler manifold, such that its  Legendre transform $\mathscr{L}_F$ is one-to-one. Then a  curve
		$\gamma\in  \Omega^A_{x_0,x_1}$ is a geodesic of $(M,F)$ if and only if it is a critical point of the length  functional  and  $F(\dot\gamma)$ is constant.
	\end{lemma}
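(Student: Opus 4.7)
The plan is to establish each direction using the first variation formula adapted to the family of affine connections $\nabla^V$ introduced earlier.

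First I would handle the ``only if'' direction, which is essentially a direct computation. Assume $\gamma$ satisfies $D^{\dot\gamma}_\gamma \dot\gamma = 0$ on each smooth piece. Applying the almost $g$-compatibility with $X=Y=Z=\dot\gamma$ (taking $V=\dot\gamma$), the Cartan term $2C_{\dot\gamma}(D^{\dot\gamma}_\gamma\dot\gamma,\dot\gamma,\dot\gamma)$ vanishes because $C$ is symmetric and $C_v(v,\cdot,\cdot)=0$ follows from the homogeneity of $F^2$; what remains yields $\dot\gamma(F(\dot\gamma)^2) = 2g_{\dot\gamma}(D^{\dot\gamma}_\gamma\dot\gamma,\dot\gamma)=0$, so $F\circ\dot\gamma$ is constant on each smooth piece. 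Since $\gamma\in\Omega^A_{x_0,x_1}$ and $F$ is continuous, one concludes constancy globally (this actually requires an additional argument at breaks, handled below). Then the standard first variation formula computed with $\nabla^{\dot\gamma}$ (see \cite{Jav13,Rad04b}) gives, for any admissible proper variation with variation field $U$,
\[
\frac{d}{ds}\Big|_{s=0}\ell_F(\psi_s) \;=\; -\int_a^b \frac{g_{\dot\gamma}(D^{\dot\gamma}_\gamma\dot\gamma,U)}{F(\dot\gamma)}\,dt,
\]
up to boundary terms that vanish on a proper variation; this is zero.

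For the ``if'' direction, assume $F(\dot\gamma)$ is constant and $\gamma$ is a critical point of $\ell_F$. By the constancy of $F\circ\dot\gamma$, critical points of $\ell_F$ among admissible curves parametrized with this same normalization coincide with critical points of the energy functional $E_F=\frac{1}{2}\int F(\dot\gamma)^2 dt$. On each smooth piece $[t_{i-1},t_i]$, standard variational arguments (taking variation fields $U$ compactly supported inside the open piece, which are automatically admissible since $A$ is open and conic) give the Euler--Lagrange equation, i.e.\ the geodesic equation \eqref{geoF}. What remains is to rule out broken behaviour at each $t_i$.

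The main obstacle, and the place where the injectivity of the Legendre transform $\mathscr{L}_F$ (just proved in the preceding proposition) is used decisively, is the analysis at the breaks. Choosing variation fields $U$ that are piecewise smooth with support in a neighborhood of $t_i$ and with $U(t_i)$ an arbitrary vector of $T_{\gamma(t_i)}M$ (again permissible because admissibility is an open condition on velocities, not on variation fields), the boundary contribution in the first variation formula yields
\[
g_{\dot\gamma(t_i^-)}(\dot\gamma(t_i^-),U(t_i)) \;=\; g_{\dot\gamma(t_i^+)}(\dot\gamma(t_i^+),U(t_i))
\]
for every $U(t_i)\in T_{\gamma(t_i)}M$, that is, $\mathscr{L}_F(\dot\gamma(t_i^-))=\mathscr{L}_F(\dot\gamma(t_i^+))$. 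By the injectivity of $\mathscr{L}_F$ established in the previous proposition, $\dot\gamma(t_i^-)=\dot\gamma(t_i^+)$, so $\gamma$ is actually $C^1$ at each break; combined with the smooth geodesic equation on each piece, a standard ODE argument promotes $\gamma$ to a smooth solution of \eqref{geoF} on $[a,b]$, hence a geodesic. Finally, the constancy of $F(\dot\gamma)$ through the (now absent) breaks follows from $\dot\gamma$ being continuous together with the piecewise constancy already noted, closing the argument.
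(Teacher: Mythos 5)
Your proposal is correct and follows essentially the same route as the paper: the paper's proof simply invokes \cite[Proposition 3.1]{JavSoa13} for the fact that critical points of $\ell_F$ satisfy \eqref{geoeq}, and that reference's argument is precisely the first-variation computation with the $\nabla^{\dot\gamma}$ connection together with the injectivity of $\mathscr{L}_F$ to match the one-sided velocities at the breaks, which is what you spell out. The only cosmetic remark is that in the ``only if'' direction no break analysis is needed at all, since by Definition a geodesic of $(M,F)$ is smooth by fiat.
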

	\begin{proof}
		Following the same lines as in \cite[Proposition 3.1]{JavSoa13}  we can deduce that a curve  $\gamma \in \Omega^A_{x_0,x_1}$ is a critical point of the length functional  if  and only if it satisfies the equation
		\begin{equation}\label{geoeq}
			D_\gamma^{\dot\gamma}\left(\frac{\dot\gamma}{F(\dot\gamma)}\right)=0,
		\end{equation}
		on the interval $[a,b]$. Then any reparametrization $\sigma$ of $\gamma$ such that $F(\dot \sigma)=\mathrm{const.}$ must be smooth and satisfy  equation  \eqref{geoF}.
	\end{proof}
	\begin{thm}\label{extregeo}
		Let $(M,\Sigma)$ be a wind Finslerian  manifold  and $\gamma:[a,b]\rightarrow M$ be    an   $F$-admissible curve.  If  $\gamma$  is a  unit geodesic of $(M,\Sigma)$ then it is a unit  geodesic  of   one  of the two  
		conic pseudo-Finsler metrics associated with $\Sigma$.
	\end{thm}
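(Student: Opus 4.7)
The plan is to reduce the global claim to a local extremality property and then invoke the variational characterization of geodesics for the corresponding conic pseudo-Finsler metric. Since $\gamma$ is a unit geodesic, for every $t\in[a,b]$ there exists $\varepsilon>0$ such that $\gamma|_{[t-\varepsilon,t+\varepsilon]\cap[a,b]}$ is a unit extremizing geodesic. Proposition~\ref{casesextrgeo} then assigns to each such piece one of three exclusive types (minimizing, maximizing, or boundary). The boundary type (iii) can be ruled out globally for an $F$-admissible $\gamma$: by Remark~\ref{remlengthbis}(2), the equality $\ell_F=\ell_{F_l}$ on any subinterval would force $\dot\gamma(s)\in A_E\setminus A$ there, contradicting $F$-admissibility (which requires $\dot\gamma\in A$ at every point).

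Next I show that only one of the two remaining types occurs along the whole of $[a,b]$. Let $U_1\subset[a,b]$ be the set of those $t$ on a neighborhood of which $\gamma$ is of the minimizing type (so $F\circ\dot\gamma\equiv 1$ there by the last assertion of Proposition~\ref{disjunction}(i)(b)), and let $U_2$ be the analogous set for the maximizing type (so $F_l\circ\dot\gamma\equiv 1$). Both $U_1$ and $U_2$ are open by the last assertion of Proposition~\ref{casesextrgeo}, and together they cover $[a,b]$. They are disjoint for two reasons: on $M\setminus M_l$ one has $F_l=+\infty$ on $A$ by Convention~\ref{caestar}, so $U_2$ cannot meet $\pi^{-1}(M\setminus M_l)$; on $M_l$ the strict inequality $F<F_l$ of Proposition~\ref{windConseq} prevents $F=F_l=1$ simultaneously. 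Since $[a,b]$ is connected, either $U_1=[a,b]$ or $U_2=[a,b]$.

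Assume $U_1=[a,b]$; the other case is symmetric. Then Proposition~\ref{casesextrgeo}(i) says that $\gamma$ locally minimizes $\ell_F$ among the curves of any wind variation. By Remark~\ref{both}(1), since $\gamma$ is $F$-admissible, any proper variation of $\gamma$ within $\Omega^A_{x_0,x_1}$ can be reparametrized as a wind variation on the same parameter interval, and such reparametrization leaves the $F$-length unchanged. Hence $\gamma$ is a local minimizer of the $F$-length on $\Omega^A_{x_0,x_1}$ and, in particular, a critical point of the $F$-length functional. Because $F(\dot\gamma)\equiv 1$ is constant and the Legendre transform of the conic Finsler metric $F$ is injective (by the proposition preceding Lemma~\ref{criticallength}), Lemma~\ref{criticallength} yields that $\gamma$ satisfies the geodesic equation \eqref{geoF} for $F$. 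If instead $U_2=[a,b]$, the same reasoning applied to the Lorentzian Finsler metric $F_l$ (whose Legendre transform is also injective, and whose admissible curves coincide on $M_l$ with the $F$-admissible ones) identifies $\gamma$ as a geodesic of $F_l$.

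The main obstacle is establishing the clean dichotomy of the middle paragraph: without the strict inequality $F<F_l$ on $A_l$ combined with $F_l=+\infty$ outside $A_l$ and the continuity properties of Proposition~\ref{pleche}(ii), one could imagine a putative unit geodesic that alternates between arcs of minimizing and maximizing character, which would fail to be a geodesic of either pseudo-Finsler metric. Once this dichotomy is in place, the remainder is a direct application of Remark~\ref{both}(1) together with Lemma~\ref{criticallength}.
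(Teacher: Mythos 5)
Your proposal is correct and follows essentially the same route as the paper: rule out the boundary alternative of Proposition~\ref{casesextrgeo} via $F$-admissibility, use the strict inequality $F<F_l$ on $A$ together with a connectedness argument on $[a,b]$ to show that only one of the minimizing/maximizing alternatives occurs globally, and conclude with Remark~\ref{both}-(1) and Lemma~\ref{criticallength}. The only cosmetic difference is that you run the connectedness argument with the open sets $U_1,U_2$ of local types, whereas the paper uses the closed, disjoint sets $\{s:F(\dot\gamma(s))=1\}$ and $\{s:F_l(\dot\gamma(s))=1\}$; both are equivalent formulations of the same dichotomy.
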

	\begin{proof}  
		Let us show that $\gamma$  either  minimizes $\ell_F$ or maximizes $\ell_{F_l}$ locally. Being $\gamma$ $F$-admissible and   a  unit  geodesic of $(M,\Sigma)$,  either $(i)$ or  $(ii)$   
		of Proposition~\ref{casesextrgeo}   holds locally.  Then for each $t\in [a,b]$, there exists $\varepsilon$, depending on $t$, such that $\gamma|_{[a,b]\cap[t-\varepsilon,t+\varepsilon]}$   either  minimizes  $\ell_F$ or  
		maximizes $\ell_{F_l}$,    for any  fixed endpoint variation  {\em wind} variation of  $\gamma|_{[\tilde a,\tilde b]}$, where $\tilde a$, $\tilde b$ are the endpoints of the interval $[a,b]\cap[t-\varepsilon,t+\varepsilon]$.   In the first case (the reasoning in the second case is analogous), assume by contradiction  that there exists a  variation (non necessarily a wind one) $\psi \colon (-\epsilon,\epsilon)\times [\tilde a, \tilde b]\to M$, such that $\ell_F(\psi_{s_n})<\ell_F(\gamma)$, for some sequence $s_n\to 0$.   Being $\gamma$ $F$-admissible, also $\psi_{s_n}$ are so  and, then, by Proposition~\ref{prop220} they can be reparametrized as  wind curves on the interval $[\tilde a, \tilde b]$. Moreover, $\ell_F(\gamma|_{[\tilde a,\tilde b]})=\tilde b-\tilde a<\ell_{F_l}(\gamma|_{[\tilde a, \tilde b]})$. Arguing as in the proof of Proposition~\ref{casesextrgeo}-$(i)$, we then get $\ell_{F_l}(\gamma|_{[\tilde a, \tilde b]})=\tilde b-\tilde a$, a contradiction. Therefore, $\gamma|_{[\tilde a, \tilde b]}$ must minimize $\ell_F$ for any variation $\psi$ and by 
		Lemma~\ref{criticallength}, this  implies that $\gamma$ satisfies \eqref{geoeq} for $F$ or $F_l$ on $[a,b]\cap[t-\varepsilon,t+\varepsilon]$ and therefore, being $t$ arbitrary, on all $[a,b]$.
		Indeed, observe that by  Proposition~\ref{disjunction} $(i)$  (case $(b)$),  when $\gamma$ minimizes $\ell_F$, then $F(\dot\gamma)\equiv 1$ and when $\gamma$ maximizes $\ell_{F_l}$,  $F_l(\dot\gamma)\equiv 1$. As both subsets, $\{s\in [a,b]:F(\dot\gamma(s))=1\}$ and $\{s\in [a,b]:F_l(\dot\gamma(s))=1\}$, are closed and disjoint (because $\dot\gamma$ belongs to $A$) then one of them coincides with $[a,b]$ and the other is empty.  
	\end{proof}
	\subsection{Wind Riemannian structures}\label{windRiemann}
	Let us focus now on a particularly important case of wind Finslerian structures.
	\begin{defi}  A {\it wind Riemannian structure} is
		a wind Finslerian structure $\Sigma$ in $TM$ such that
		$\Sigma_p=\Sigma\cap T_pM$ is a (real non-degenerate) ellipsoid for every $p\in M$.
	\end{defi}
	\begin{prop}\label{windyfermat}
		Any  wind  Riemannian structure $\Sigma$ can be constructed univocally
		as the displacement of the indicatrix of a smooth Riemannian metric $g_R$  along a vector field $W$.\end{prop}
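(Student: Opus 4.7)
The plan is to define $W$ pointwise as the geometric center of the ellipsoid $\Sigma_p$ and then set $\Sigma_p-W_p$ as the indicatrix of a centered ellipsoid, which uniquely determines an inner product $g_{R,p}$. The key will be to verify that this $W$ is smooth (reusing Proposition \ref{pzerm}) and that the resulting $g_R$ is smooth, and to check uniqueness of the decomposition.

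First I would address uniqueness. Suppose $\Sigma_p = \Sigma_{g_R,p}+W_p = \Sigma_{g_R',p}+W_p'$, where $\Sigma_{g_R,p}$ denotes the $g_R$-unit sphere of $T_pM$. Then $\Sigma_{g_R,p}$ and $\Sigma_{g_R',p}$ are ellipsoids centered at $0_p$ that differ by the translation $W_p'-W_p$. A non-degenerate ellipsoid is invariant under the reflection $v\mapsto -v$ through its geometric center, and the set of centers of symmetry of an ellipsoid is a single point; hence a non-trivial translation of a centered ellipsoid cannot remain centered at $0_p$. This forces $W_p=W_p'$ and therefore $g_{R,p}=g_{R,p}'$, giving pointwise uniqueness.

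Next I would construct the pair. Define $W_p$ to be the geometric center of $\Sigma_p$, i.e.\ the unique point $c\in T_pM$ such that $\Sigma_p-c$ is invariant under $v\mapsto -v$. For a non-degenerate ellipsoid the geometric center coincides with the centroid computed as in equation \eqref{ec}: indeed, the centroid of any centrally symmetric body equals its center of symmetry, by an obvious change of variables in the defining integrals. Consequently $W_p$ is precisely the vector field produced by Proposition \ref{pzerm} (applied to any auxiliary $F_0$ whose indicatrix differs from $\Sigma$ by a vector field), and hence $W$ is smooth. Having shifted back, each $\Sigma_p-W_p$ is a centered non-degenerate ellipsoid in $T_pM$ and therefore determines a unique positive-definite inner product $g_{R,p}$ on $T_pM$ having it as unit sphere.

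Finally I would verify the smoothness of $g_R$. Since $\Sigma$ is a smooth transverse hypersurface of $TM$ and $W$ is a smooth section of $TM\to M$, the translation $T_{-W}\colon v\mapsto v-W_{\pi(v)}$ is a smooth bundle isomorphism, so $\Sigma-W$ is again a smooth hypersurface of $TM$ with strongly convex fibres containing $0_p$; by Remark \ref{r2.7} and Proposition \ref{windConseq} it is the indicatrix of a smooth Finsler metric $F_0$, which at each $p$ is a Euclidean norm. Thus $F_0^2$ is a smooth fibrewise quadratic form, i.e.\ $g_R$, and $\Sigma=\Sigma_{g_R}+W$ as required. The only delicate point in the argument is identifying the centroid with the center of symmetry of the ellipsoid, which allows us to borrow the smoothness statement already proved in Proposition \ref{pzerm} instead of redoing a separate smoothness argument for the center.
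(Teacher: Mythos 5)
Your proof is correct and follows essentially the same route as the paper: smoothness of $W$ via the centroid field of Proposition \ref{pzerm}, translation via Proposition \ref{ptf} (or the bundle isomorphism $T_{-W}$) to reduce to a centered structure handled by Proposition \ref{windConseq}, and uniqueness from the fact that an ellipsoid has a unique center. Your explicit identification of the centroid with the center of symmetry is a small clarification the paper leaves implicit, but it does not change the argument.
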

	\begin{proof}
		From Proposition~\ref{pzerm}, the field $W$ of the centers of the ellipsoids $\Sigma_p$ is smooth and, by Proposition~\ref{ptf}, the translated hypersurface $\Sigma-W$ is a wind Riemannian structure with centers at $0_p$, for each $p\in M$.
		By  Proposition~\ref{windConseq}, it defines a smooth Riemannian metric $g_R$  on $M$. Hence, $\Sigma$ is defined by the equation $g_R(v-W,v-W)=1$ in $TM$. Clearly, if for any other Riemannian metric $h_0$ and vector field $V$, 
		$\Sigma$ is defined by the equation $h_0(v-V,v-V)=1$ then, necessarily, $V$ must be the field of the centers of the ellipsoids $\Sigma_p$ and then equal to $W$, so that $h_0$ must be equal to  $g_R$. 
	\end{proof}
	In addition to the previous characterization, the definition of a wind Riemannian structure as  a structure of ellipsoids suggests a second characterization in terms of the zeroes of a pointwise polynomial of degree two (defined up to a
	pointwise smooth non-vanishing factor). This second viewpoint will be interpreted in the next section in terms of the conformal class
	of an \sstk splitting,  which will allow us to obtain a powerful characterization of the geometry of wind Riemannian structures.
	
	The following elements equivalent to $g_R, W$ will be used in the remainder and  will be well  adapted to the case of \sstk splittings.
	
	\begin{defi}\label{ctriple}
		Given a  wind Riemannian structure determined by a  Riemannian metric $g_R$  and a vector field $W$   the {\em associated  triple}  $(g_0,\omega,\Lambda)$ is the triple composed by  $g_0=g_R$ and  $\omega, \Lambda$ are the one-form and the function defined as $\omega=-g_0(\,\cdot\, ,W), \Lambda=1-g_0(W,W)$.
	\end{defi}
	Thanks to Proposition~\ref{windyfermat}, we  will also \bw simply say \ew  that $\Sigma$ is the translation of a Riemannian metric,  as in the case of  Zermelo's navigation problem. In fact, in the case  $g_0(W, W)<1$, the so-obtained $\Sigma$ yields 
	a Randers metric $Z$, that is,  $Z(v)=\alpha(v)+ \beta(v)$ for every $v\in TM$,  where $\alpha(v)=\sqrt{\tilde h(v,v)}$,
	being $\tilde h$ a Riemannian metric on $M$ and $\beta$ a one-form such that  its norm with respect to $\tilde h$ satisfies  $\|\beta\|_{\tilde h}<1$ at every point. Indeed, Randers metrics are characterized by this property 
	(see \cite[Section 1.3]{BaRoSh04},  \cite[\S 2.2]{CheShe12} or the computations below). Let us determine all the cases that appear when considering wind  Riemannian  structures, refining Proposition~\ref{possibwind}.
	\begin{prop}\label{Riemannclass}
		Let $(M,\Sigma)$ be a wind Riemannian structure. At each point $p\in M$, one of the following three exclusive cases holds for some one-form $\beta$,  some  scalar product $\tilde h$ of index $0$ or $m-1$, and $\alpha(v)=\sqrt{|\tilde h(v,v)|}$:
		\begin{enumerate}[(i)]
			\item  if the zero vector $0_p$ belongs to the  open unit ball   $B_p$, then $\Sigma_p$ determines a  {\em Randers norm,}   i.e.,
			$F(v)=\alpha(v)+\beta(v)$, where $\tilde h$ is positive  definite  and $\|\beta\|_{\tilde h}<1$ on $A_p=T_pM$;
			\item if $0_p$ lies in $\Sigma_p$, then $\Sigma_p$ determines a
			{\em Kropina norm}, i.e.,
			\[F(v)=\frac{\alpha(v)^2}{\beta(v)},\]
			for  a nowhere vanishing   $\beta$ and  $\tilde h$   positive definite, defined on  $A_p=\{v\in T_pM:  \beta(v)>0\}$;
			\item if $0_p$ does not lie in $B_p\cup \Sigma_p$, then $\Sigma_p$ determines   a {\em proper  wind Riemannian  structure}, i.e., a pair of conic  pseudo-Minkowski norms:
			\begin{align*}
				F(v)&=-\alpha(v)+\beta(v),\\
				F_l(v)&=\alpha(v)+\beta(v),
			\end{align*}
			defined on
			\[A_p=\{v\in T_pM:  \tilde h(v,v)>0\text{ and } \beta(v)>0\},\]
			where $\tilde h$ has index $m-1$ and $\beta$ satisfies $\beta(v)^2>\tilde h(v,v)$,  for  all  $v\in T_pM\setminus \{0\}$, that is $\beta\otimes\beta-\tilde h$ is positive definite.
		\end{enumerate}
		Moreover, in all the three cases the converse holds.
	\end{prop}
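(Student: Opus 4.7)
The plan is to use the canonical representation given by Proposition~\ref{windyfermat}: every wind Riemannian structure is of the form
\[
\Sigma_p = \{u \in T_pM : g_R(u - W_p, u - W_p) = 1\}
\]
for a uniquely determined Riemannian metric $g_R$ and vector field $W$. The three cases of the proposition correspond exactly to the three possible positions of $0_p$ relative to $\Sigma_p$, detected by $g_R(W_p, W_p)$: the origin lies in $B_p$, on $\Sigma_p$, or outside $\bar B_p$ according as $g_R(W_p, W_p)$ is $<1$, $=1$, or $>1$, equivalently as $\Lambda := 1 - g_R(W,W)$ is positive, zero, or negative.

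For the direct direction, I would fix $p$ and intersect the ray $\{tv : t > 0\}$ with $\Sigma_p$ by solving the scalar quadratic
\[
t^2\, g_R(v,v) - 2 t\, g_R(v,W) + (g_R(W,W) - 1) = 0.
\]
Writing $\beta_0(v) := g_R(v,W)$, the roots are $t_{\pm}(v) = (\beta_0 \pm \sqrt{\beta_0^2 + \Lambda g_R(v,v)})/g_R(v,v)$. By Proposition~\ref{possibwind}, $F(v) = 1/t_+$ and, when applicable, $F_l(v) = 1/t_-$. Rationalizing, with $\tilde h := (\beta_0 \otimes \beta_0 + \Lambda g_R)/\Lambda^2$ and $\beta := \beta_0/(-\Lambda)$ in cases (i) and (iii), one obtains $F = \pm \alpha + \beta$ (sign depending on the case) and $F_l = \alpha + \beta$ with $\alpha = \sqrt{|\tilde h(v,v)|}$. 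In case (ii) the quadratic degenerates to the linear equation $t(t g_R(v,v) - 2\beta_0(v)) = 0$, giving at once $F = g_R(v,v)/(2\beta_0)$, which is of Kropina form with, e.g., $\tilde h = g_R$ and $\beta = 2\beta_0$.

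Next I would verify the signature and domain conditions. In case (i), $\tilde h$ is a positive combination of positive (semi)definite tensors, hence Riemannian, and $A_p = T_pM$; the Randers requirement $\|\beta\|_{\tilde h} < 1$ is equivalent to $g_R(W,W) < 1$ by the standard Zermelo--Randers computation (which dualizes $\tilde h$ via a Sherman--Morrison identity). In case (ii), the positivity of $t$ forces $A_p = \{\beta_0 > 0\}$. In case (iii), diagonalizing $\tilde h$ in a $g_R$-orthonormal basis with $e_1 = W/|W|_{g_R}$ shows that $\tilde h$ has signature $(1, m-1)$, hence index $m-1$; the requirement that both roots $t_{\pm}$ be real and positive pins down the conic domain $A_p = \{\tilde h(v,v) > 0,\ \beta(v) > 0\}$; finally, a direct computation yields
\[
(\beta \otimes \beta - \tilde h)(v,v) \;=\; -\,g_R(v,v)/\Lambda \;>\; 0,
\]
so $\beta \otimes \beta - \tilde h$ is positive definite.

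The converse is then handled by inverting the above formulas case by case: given $(\tilde h, \beta)$ satisfying the stated conditions, one reconstructs $(g_R, W)$ explicitly and checks that the resulting displaced ellipsoid reproduces the prescribed indicatrix (and, in case (iii), the prescribed pair $(F, F_l)$). The main obstacle will be the bookkeeping of signs and normalizations across the three cases --- particularly choosing the correct branch of the square root in case (iii) so that $F$ corresponds to the convex part $\Sigma^+$ and $F_l$ to the concave part $\Sigma^-$ of $\Sigma_p$ --- and carrying out the inversion needed for the Randers condition $\|\beta\|_{\tilde h} < 1$ in case (i), which requires some care with the dual norm of $\tilde h$.
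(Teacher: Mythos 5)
Your proposal follows essentially the same route as the paper: both reduce to the Zermelo data $(g_R,W)$ of Proposition~\ref{windyfermat}, intersect each ray with the displaced sphere via the same quadratic (your parameter $t$ is just the reciprocal of the paper's $Z$), and extract the identical $\tilde h=(\Lambda g_R+\beta_0\otimes\beta_0)/\Lambda^2$ and $\beta=-\beta_0/\Lambda$, with the same signature and domain checks (your identity $\beta\otimes\beta-\tilde h=-g_R/\Lambda$ in fact gives the Randers condition $\|\beta\|_{\tilde h}<1$ in case (i) more directly than the Sherman--Morrison dualization you invoke). The one place where your converse is lighter than the paper's is case (iii): before reconstructing $(g_R,W)$ one must rule out $\tilde h(B,B)<0$ for the $\tilde h$-dual $B$ of $\beta$ (the paper does this by choosing a $-\tilde h$-timelike vector orthogonal to $B$ and contradicting the positive definiteness of $\beta\otimes\beta-\tilde h$), so as to conclude $\tilde h(B,B)>1$ and hence $\Lambda<0$; this is a small but genuine argument rather than mere sign bookkeeping.
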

	\begin{proof}
		Let $(g_0,\omega,\Lambda)$ be the triple associated with $\Sigma$ according to Proposition~\ref{windyfermat} and Definition~\ref{ctriple}.
		The conic pseudo-Finsler metrics $F$ and $F_l$ associated with $\Sigma$ (see Proposition
		\ref{windConseq})  are both determined by the equation
		\[g_0\left(\frac{v}{Z(v)}-W,\frac{v}{Z(v)}-W\right)=1\]
		(recall \eqref{zerm}) which is equivalent to
		\begin{equation}\label{eq:segundograu}
			g_0(v,v)+2\omega(v) Z(v)-\Lambda Z(v)^2=0,
		\end{equation}
		and, whenever $\Lambda\neq 0$,
		\[Z(v)=\frac{\omega(v)\mp \sqrt{\Lambda g_0(v,v)+\omega^2(v)}}{\Lambda}.\]
		We are interested only in the solutions that make $Z(v)$ positive.
		
		Case $(i)$. If $\Lambda(p)>0$  ($0_p\in B_p$), then the unique positive value of $Z(v)$ is:
		\begin{equation}\label{randersone}
			F(v)=\frac{\omega(v)+\sqrt{\Lambda g_0(v,v)+\omega^2(v)}}{\Lambda}
		\end{equation}
		and the required $\tilde h, \beta$ are then:
		\begin{equation}\label{eab}\tilde h= \frac{g_0}{\Lambda} +  \frac{\omega}{\Lambda}\otimes \frac{\omega}{\Lambda}, \qquad \quad \beta =\frac{\omega}{\Lambda}.\end{equation}
		Conversely,  if $F=\alpha+\beta$, with $\alpha$ the norm of a Riemannian metric $\tilde h$ and $\|\beta\|_{\tilde h}<1$, we can reconstruct $g_0$, $\omega$ and $\Lambda$  from $\tilde h, \beta$,  just by using \eqref{eab} and defining
		\begin{equation}\label{elambda}
			\Lambda = \frac{1}{1+\Delta} , \qquad \qquad \hbox{where} \quad \Delta:= (\tilde h-\beta\otimes\beta) (\beta^\sharp,\beta^\sharp )
		\end{equation}
		being $\beta^\sharp$  the vector metrically equivalent to $\beta$ for the metric $\tilde h-\beta\otimes\beta$.  The restriction $\|\beta\|_{\tilde h}<1$ forces $ \Delta \geq 0 $, i.e., $0< \Lambda \leq 1$, which  ensures the 
		consistency of the reconstruction of $g_0, W$ and $\Lambda$ from  $\alpha$ and $\beta$ (in fact, a posteriori, $\beta^\sharp=-W$ and $\Lambda = 1-g_0(W,W)$).
		
		However, in order to understand better this reconstruction   for later referencing, notice also that the vector $B$ which is $\tilde h$-equivalent to $\beta$ is  proportional to $\beta^\sharp$ (if $\tilde h(B,v)=\beta(v)=0$ then 
		$(\tilde h-\beta^2)(B,v)=0$). More precisely, $\beta^\sharp= aB$ with $a=1/(1-\tilde h(B,B))$ and  $1-\tilde h(B,B)>0$ (this follows equating the expressions $\Delta = \beta(\beta^\sharp)= a\beta(B)=a \tilde h(B,B)$ and  
		$\Delta=a^2(\tilde h-\beta^2)(B,B)$, the latter greater than $0$ whenever $B\neq 0$ and equal to $a^2\tilde h(B,B)(1-\tilde h(B,B))$). Then, putting $\Delta = a\tilde h(B,B)$ in \eqref{elambda} one also has:
		\begin{equation}
			\label{el}
			\Lambda=1-\tilde h(B,B)
		\end{equation}
		(compare with \cite[\S 1.3]{BaRoSh04} and also \cite[Proposition 3.1]{BiJa11}).
		
		Case $(iii)$.
		If $\Lambda(p) <0$, at $p\in M$, then there are two  solutions,
		one given by \eqref{randersone} and the other by
		\begin{equation}\label{randersone2}
			F_l(v)=\frac{\omega(v)- \sqrt{\Lambda g_0(v,v)+\omega^2(v)}}{\Lambda}
		\end{equation}
		both defined in the open domain
		\[A_p=\{v\in T_pM: -\omega (v) >0, \ \Lambda g_0(v,v)+\omega^2(v)>0\}.\] The required $\tilde h, \beta$ are obtained by using again  the  expressions in \eqref{eab}.  Observe that in this case, $\tilde h$ is negative definite
		in the kernel of $\omega$  and  $\tilde h(W,W)=\frac{g_0(W,W)}{\Lambda^ 2}>0$,  which  implies  that $\tilde h$ has index $m-1$.

		For the converse, recall first that, if $B$ is the vector $h$-metrically related to $\beta$, then by the hypotheses, $B$ cannot be 0 and $(\tilde h-\beta^2)(B,B)<0$, i.e.:
		\begin{equation}
			\label{ebb} \tilde h(B,B)(1-\tilde h(B,B)) < 0. 
		\end{equation}
		Moreover, $\tilde h(B,B)$ cannot be negative. Indeed, otherwise, $B$ would be a spacelike vector for the Lorentzian scalar product $-\tilde h$. So, we could take a timelike vector $v$ for $-\tilde h$ orthogonal to $B$ and we would 
		have $(\tilde h-\beta^2)(v,v)=\tilde h(v,v)>0$, in contradiction with the hypotheses on $\tilde h, \beta$. Therefore, \eqref{ebb} forces $\tilde h(B,B)>1$. This ensures that $\Lambda, \omega, g_0$ can be reconstructed from \eqref{el}
		and \eqref{eab} with $\Lambda(p)<0$.

		Case $(ii)$. Now $\Lambda(p)=0$ and from \eqref{eq:segundograu},  we obtain only one metric
		\begin{equation}\label{krop}
			F(v)=-\frac{g_0(v,v)}{2\omega(v)},
		\end{equation}
		which is of the type in $(ii)$  with  $\alpha$ the norm associated with $g_0$ and $\beta=-2\omega$.
		For the converse, choose   $g_0=\frac{\tilde h}{4 \tilde{h}(B,B)}$ and $W=\frac 12 B$, and recall that  $\tilde{h}(B,v)=\beta(v)$ for every $v\in TM$. 
	\end{proof}
	Observe that  the  analysis in Proposition~\ref{Riemannclass} was accomplished in each single tangent space,  while  a wind Riemannian structure in a manifold $M$ can attain all  the three possible types. The standard expressions of 
	the metrics given in the proposition do not allow us to give a  unified  expression on $TM\setminus\mathbf{0}$ of the metric $F$ which can be achieved instead as follows:
	\begin{prop}\label{randerskropinagen}
		Let $(M, \Sigma)$ be a wind Riemannian structure with associated triple $(g_0,\omega, \Lambda)$ (according to Definition~\ref{ctriple}). Then, the conic  Finsler metric pointwise determined by Proposition~\ref{Riemannclass} is equal to
		\begin{equation}\label{randerskropina}
			F(v)=\frac{g_0(v,v)}{-\omega(v)+ \sqrt{\Lambda g_0(v,v)+\omega^2(v)}},
		\end{equation}
		defined, up to the zero section, in the {\em interior} of
		$$\{v\in TM\setminus\mathbf{0} :-\omega(v)+ \sqrt{\Lambda g_0(v,v)+\omega^2(v)}>0\},$$
		on all $TM$. Moreover, $F_l$ on $A_l$ is equal to  the expression above with a minus sign before the root.
	\end{prop}
	\begin{proof}
		Observe that the expressions in \eqref{randersone} and \eqref{krop} coincide with \eqref{randerskropina}.
		In fact, the last inequality is fulfilled for all $v\in T_pM\setminus \{0\}$ whenever $\Lambda(p)>0$, it reduces to $\{v\in T_pM: -\omega(v)>0\}$ whenever $\Lambda(p)=0$, and it includes implicitly the restrictions 
		$\Lambda(p)g_0(v,v)+\omega^2(v)>0$ plus $-\omega(v)>0$ when $\Lambda(p)<0$.   For $F_l$, it is enough to notice that  \eqref{randersone2} is equal to
		\[-\frac{g_0(v,v)}{\omega(v)+ \sqrt{\Lambda g_0(v,v)+\omega^2(v)}}\]
		for  any $v\in A_l$.
	\end{proof}
	The  case $\Lambda\geq 0$ ($g_0(W,W)\leq 1$)
	makes possible a simple description of the wind Riemannian structure  $\Sigma$, as it
	determines a unique conic Finsler metric $F$, which adopts either the Randers or the Kropina form in Proposition~\ref{Riemannclass}.
	\begin{defi}\label{dranderskropina}
		A {\em Randers-Kropina} metric on a manifold $M$ is any wind Riemannian structure  $\Sigma$ such that $0_p\in \bar B_p$ for all $p\in M$ so that
		$\Lambda\geq 0$ and the associated conic Finsler metric $F$ is given by \eqref{randerskropina} with domain
		\[A=\bigcup_{p\in M}A_p, \text{ where } A_p=
		\begin{cases}T_pM\setminus\{0\}, &\text{ if $g_0(W_p,W_p)<1$,}\\
			\{v\in T_pM\colon g_0(W_p,v)>0\}, &\text{ if $g_0(W_p,W_p)=1$.}
		\end{cases}
		\]
	\end{defi}
	When the wind is strong $\Lambda<0$ ($g_0(W,W)> 1$) or, simply, when one restricts to the region  $M_l$, a specific property of the wind Riemannian case holds, namely, the Lorentzian Finsler metric $F_l$ can be described formally 
	in terms of $F$. In fact, notice that the expression \eqref{randersone2} can be obtained from \eqref{randersone} just by applying $F$ to $-v$ and reversing the sign, and analogously this happens with the expression of $F_l$ and $F$ 
	in part $(iii)$ of Proposition~\ref{Riemannclass}. Summing up, we have:
	
	\begin{prop}\label{pformalreverse}
		Let $M_l$ be the strong wind region of a wind Riemannian structure, and $F, F_l$ its associated conic pseudo-Finsler metrics. Then there exist a one-form $\beta$ and a Lorentzian metric $-\tilde h$ such that $\beta\otimes \beta-\tilde h$ 
		is Riemannian satisfying:
		\begin{enumerate}[(i)]
			\item the domain of $F$ and $F_l$ is
			\[A_l=\{v\in TM_l: \tilde h(v,v)>0 \text{ and }  \beta(v)>0 \}.\]
			
			\item $F=-\alpha+\beta$.
			
			\item  $F_l=-F^{\hbox{{\tiny frev}}}$, where $F^{\hbox{{\tiny frev}}}$ is the {\em formal reverse of $F$}, defined by:
			$$F^{\hbox{{\tiny frev}}}(v)=F(-v) , \qquad\qquad\qquad \forall v\in A_l ,$$
			and $F(-v)$ is obtained by applying the expression (ii) to the vectors of $-A_l$.
		\end{enumerate}
	\end{prop}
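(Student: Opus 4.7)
The plan is to leverage Proposition~\ref{Riemannclass}(iii) pointwise and then globalise the construction on $M_l$. Indeed, at each $p\in M_l$ we have $g_0(W_p,W_p)>1$, so $\Lambda(p)=1-g_0(W_p,W_p)<0$, and Proposition~\ref{Riemannclass}(iii) supplies a scalar product $\tilde h_p$ of index $m-1$ and a one-form $\beta_p$ such that $\beta_p\otimes\beta_p-\tilde h_p$ is positive definite and
\[
F(v)=-\sqrt{\tilde h_p(v,v)}+\beta_p(v),\qquad F_l(v)=\sqrt{\tilde h_p(v,v)}+\beta_p(v)
\]
on the conic subset of $T_pM$ identified in that proof. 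The explicit formulas
\[
\tilde h=\frac{g_0}{\Lambda}+\frac{\omega}{\Lambda}\otimes\frac{\omega}{\Lambda},\qquad \beta=\frac{\omega}{\Lambda}
\]
(given in \eqref{eab} of the proof of Proposition~\ref{Riemannclass}) are manifestly smooth on $M_l$ since $\Lambda$ is smooth and nowhere zero there; hence $\tilde h$ and $\beta$ globalise to a smooth symmetric $(0,2)$-tensor of index $m-1$ and a smooth one-form on $M_l$. In particular $-\tilde h$ is a Lorentzian metric and $\beta\otimes\beta-\tilde h$ is a Riemannian metric on $M_l$.

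For part (i), the domain condition is a direct translation: in Proposition~\ref{Riemannclass}(iii) the conic set was described as $\{v:\tilde h(v,v)>0,\ \beta(v)>0\}$ (using that $\omega=-g_0(\cdot,W)$ and the sign of $\Lambda$ to rewrite the inequalities $-\omega(v)>0$ and $\Lambda g_0(v,v)+\omega^2(v)>0$ in terms of $\beta$ and $\tilde h$). Part (ii) is then just a rewriting of the expression for $F$ in Proposition~\ref{Riemannclass}(iii) with $\alpha(v):=\sqrt{\tilde h(v,v)}$.

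For part (iii), the verification is a one-line computation. Given $v\in A_l$, the vector $-v$ satisfies $\tilde h(-v,-v)=\tilde h(v,v)>0$, so the algebraic expression $-\alpha+\beta$ applies (even though $\beta(-v)<0$ means $-v\notin A_l$, which is exactly why this is only a \emph{formal} reverse). Hence
\[
F^{\hbox{{\tiny frev}}}(v)=F(-v)=-\sqrt{\tilde h(-v,-v)}+\beta(-v)=-\alpha(v)-\beta(v),
\]
so $-F^{\hbox{{\tiny frev}}}(v)=\alpha(v)+\beta(v)=F_l(v)$, as required.

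There is no real obstacle here: the proposition is essentially a repackaging of Proposition~\ref{Riemannclass}(iii) made global on $M_l$, together with the elementary algebraic observation that flipping the sign of $v$ in $-\alpha+\beta$ swaps the sign of the radical and thus produces $F_l$ up to an overall minus sign. The only point requiring a moment of care is recognising that the extended expression $F(-v)$ is to be understood through the \emph{algebraic} formula $-\alpha+\beta$ rather than through the original domain of definition of $F$, which is precisely the content of the definition of $F^{\hbox{{\tiny frev}}}$ given in the statement.
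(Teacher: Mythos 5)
Your proof is correct and follows essentially the same route as the paper, which states this proposition as a "summing up" of the one-line observation that the expression \eqref{randersone2} for $F_l$ is obtained from \eqref{randersone} by applying $F$ to $-v$ and reversing the sign, i.e.\ exactly your computation $-F(-v)=\alpha(v)+\beta(v)=F_l(v)$ via Proposition~\ref{Riemannclass}(iii). Your added remarks on globalising $\tilde h=\frac{g_0}{\Lambda}+\frac{\omega}{\Lambda}\otimes\frac{\omega}{\Lambda}$ and $\beta=\frac{\omega}{\Lambda}$ over $M_l$ (where $\Lambda$ is smooth and nowhere vanishing) and on reading $F(-v)$ through the algebraic formula rather than the original domain are exactly the right points of care and consistent with the paper's intent.
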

	
	\begin{rem}\label{rformalreverse}
		Taking into account also the expressions for the conic pseudo-Finsler metrics $\tilde F, \tilde F_l$ with domains associated with the reverse  wind  Riemannian   structure $\tilde \Sigma$ we can  write:
		$$ \tilde F(v)=F(-v),\qquad \qquad \tilde F_l(v)= F_l(-v)=-F(v), \qquad \qquad\forall v\in -A,$$
		where  $F(v)$ is computed  by applying the expression $(ii)$ in the previous proposition. So, the formal expression of $F$ allows us to write easily  $\tilde F$, $F_l$ and $\tilde F_l$ ---these considerations can be extended naturally 
		to the bigger domains $A_E$ in  Convention~\ref{caestar}. This simplifies notations and makes clear that a piece of the indicatrix determines  all of them. Even though we will \bw usually work \ew  with $F$ and $F_l$, some formulae  will be written  
		conveniently by using above expressions.
	\end{rem}
	\section{Fermat structures for \sstk splittings}\label{windFermat}
	In a series of papers \cite{CaJaMa, GHWW, CaJaMa10, CapJavSan10, CapGerSan12,
		FlHeSa13},  it has been developed a detailed correspondence between the geometric properties of
	Randers spaces and the conformal structure of stationary
	spacetimes, including: variational principles for geodesics of a Finsler metric vs.  Fermat's principle for lightlike and timelike geodesics  \cite{CaJaMa}, links between the curvatures of Randers and stationary spaces \cite{GHWW}, 
	Morse theory for Finsler geodesics vs. Morse theory for lightlike and timelike geodesics \cite{CaJaMa10, CaJaMa13},   Finslerian distances and
	geodesics vs. causal structure \cite{CapJavSan10}, convexity of
	hypersurfaces vs. visibility and gravitational lensing
	\cite{CapGerSan12},  Busemann plus Gromov boundaries vs. causal
	boundaries \cite{FlHeSa13}  and almost isometries vs. conformal maps \cite{JaLiPi11}.  
	As pointed out in \cite{CapJavSan10}, such a correspondence would be extendible to obtain further properties of general Finslerian manifolds suggested by the spacetime viewpoint, yielding so a broader
	relation between Lorentzian and Finslerian geometries. 
	In the next section we consider the class of spacetimes that allows us to extend this relation to wind Riemannian structures.
	% 
	% A first possibility would be to consider non-stationary spacetimes allowing the Lorentzian metric to vary with the time. This would permit to consider Finsler metrics depending on a parameter and, then, time-dependent wind for Zermelo's navigation. However, we will be interested here in a more radical change: to assume that the timelike Killing vector field characteristic of the stationary spacetimes is allowed to be also non-timelike. This will lead directly to wind Riemannian metrics and Zermelo's navigation under arbitrary wind. In the following subsection, this class of spacetimes is introduced.
	\subsection{Spacetimes with a space-transverse Killing field}\label{prelimin}
	We will follow \cite{BeEhEa96} and \cite{MinSan08}
	for the general background on spacetimes and  causality. In
	particular,  if $(L,g)$ is  an $(m+1)$-dimensional Lorentzian manifold (with signature
	$(-,+,\dots,+)$) we say, following \cite{MinSan08}, that a tangent
	vector $v\in TL$ is {\em timelike} (resp. lightlike; causal;
	spacelike; non-spacelike) if $g(v,v)<0$ (resp. $g(v,v)=0$ and
	$v\neq 0$; $v$ is either timelike or lightlike; $g(v,v)>0$;
	$g(v,v)\leq 0$).  A {\em spacetime} is a connected time-oriented
	Lorentzian manifold, which  is also denoted $(L,g)$; the time
	orientation  continuously selects a  causal cone at each tangent space and it
	makes possible  to distinguish between {\em future-pointing} causal vectors
	(namely, those in the selected cone) and {\em past-pointing} ones.
	We say that two points $p,q\in L$ are {\it chronologically
		related} ($p$ is chronologically related to $q$ or $p$ lies in the
	chronological past  of  $q$), denoted $p\ll q$, if there exists a
	future-pointing timelike curve (i.e. its tangent vectors are always causal future-pointing) from $p$ to $q$. {\it The
		chronological future} of $p$ is defined as the subset
	$I^+(p)=\{q\in  L:  p\ll q\}$ and analogously the chronological past
	as $I^-(p)=\{q\in  L:  q\ll p\}$. Moreover, we say that $p,q$ are
	{\it strictly causally related} (resp. {\it causally related}),
	denoted $p< q$ (resp. $p\leq q$), if there exists a
	future-pointing causal curve from $p$ to $q$ (resp. either $p<q$
	or $p=q$).  The causal {\it future and past} of $p$ are defined
	respectively as $J^+(p)=\{q\in   L:  p\leq q\}$ and $J^-(p)=\{q\in  L: 
	q\leq p\}$.
	
	\begin{rem} \label{rll}  A well-known property to be used later is that, whenever $p\leq q\leq r$ ($p, q,r\in L$), either $p\ll r$ or the unique non-spacelike curves from $p$ to $r$ are null pregeodesics (with no conjugate points except,
		at most, the endpoints); other  properties  such as $p\leq q\ll r \Rightarrow p\ll r$ and the fact that the relation $\ll$ is open  are also well known.  \end{rem}
	Now, let us focus on the class of spacetimes relevant
	for our approach.
	\begin{defi} \label{dsstk} A spacetime $(L,g)$ is {\em standard
			with a space-transverse Killing vector field (\sstk )} if it
		admits a (necessarily non-vanishing)  complete Killing vector
		field $K$ and a spacelike hypersurface $S$ transverse to $K$  which
		is crossed exactly once by all the integral curves of $K$. % In this case we will assume that such a vector field $K$ has been prescribed. 
	\end{defi}
	\begin{prop}\label{psstk} A spacetime is \sstk
		if and only if it is isometric to a product manifold $\R\times M$
		endowed with a Lorentzian metric $g$ of the form
		\begin{equation}\label{lorentz}
			g=- (\Lambda \circ \pi) \df t^2+\pi^*\omega\otimes \df t+\df
			t\otimes \pi^*\omega+\pi^*g_0,
		\end{equation}
		where $\Lambda$, $\omega$ and $g_0$ are, respectively, a smooth
		real function, a one form and a Riemannian metric  on $M$,
		$\pi:\R\times M\rightarrow M$ is the natural projection, and
		$\pi^*$ the pullback operator, satisfying the following relation:
		\begin{equation}
			\label{lorentzian} \Lambda +\|\omega\|^2_{0}>0,
		\end{equation}
		being $\|\omega\|_0$ the pointwise $g_0$ norm of $\omega$.
		In this case,   the projection  $t:\R\times M\rightarrow \R$ satisfies that  $-\nabla t$ is a timelike vector field, which can be assumed future-pointing (i.e. time-orientating the spacetime) with no loss of generality.
		
	\end{prop}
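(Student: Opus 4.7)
The plan is to build the isometry $\R\times M \cong L$ directly from the flow of $K$. Setting $M := S$ and letting $\phi_t$ denote the flow of $K$ at time $t$, define
\[
\Phi\colon \R\times S \to L, \qquad \Phi(t,q) := \phi_t(q).
\]
Completeness of $K$ ensures that $\Phi$ is defined (and smooth) on all of $\R\times S$. The ``crossed exactly once'' hypothesis gives bijectivity: each $p\in L$ lies on a unique maximal integral curve of $K$, which by hypothesis meets $S$ at a single parameter value $t_0$, so $p=\phi_{t_0}(q)$ for the unique $q\in S$ on that curve, and the same uniqueness excludes collisions $\Phi(t_1,q_1)=\Phi(t_2,q_2)$ with $(t_1,q_1)\neq(t_2,q_2)$. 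For the differential, $d\Phi_{(t,q)}(\partial_t)=K_{\phi_t(q)}$ and $d\Phi_{(t,q)}|_{T_qS}=d\phi_t|_{T_qS}$; since $\phi_t$ is a diffeomorphism and transversality of $K$ to $S$ is preserved by $\phi_t$, the image splits as $\R K_{\phi_t(q)} \oplus d\phi_t(T_qS) = T_{\phi_t(q)}L$, so $d\Phi$ is an isomorphism. Hence $\Phi$ is a global diffeomorphism. Pulling $g$ back by $\Phi$ and identifying via $\Phi$, the Killing vector field $K$ becomes $\partial_t$.

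\textbf{Form of the metric and converse.} Since $\partial_t$ is Killing, $\mathcal{L}_{\partial_t} g = 0$, so in local coordinates $(t,x^1,\dots,x^m)$ on $\R\times M$ every component $g_{\mu\nu}$ is independent of $t$. Defining the $t$-independent tensors
\[
\Lambda := -g(\partial_t,\partial_t), \qquad \omega := g(\partial_t,\cdot)|_{TM}, \qquad g_0 := g|_{TM\times TM}
\]
on $M$ gives precisely the form \eqref{lorentz}. Because $S=\{0\}\times M$ is spacelike, $g_0$ is Riemannian. For the converse, given $\R\times M$ with a metric \eqref{lorentz} satisfying \eqref{lorentzian}, the vector field $\partial_t$ is Killing (its Lie derivative vanishes on the $t$-independent components), complete (integral curves are the translations $s\mapsto(s+s_0,p_0)$), transverse to $\{0\}\times M$, which each integral curve crosses exactly once; moreover $g|_{T(\{0\}\times M)}=g_0$ is positive definite, so that slice is spacelike.

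\textbf{The inequality and time orientation.} The step where care is needed is the identification of \eqref{lorentzian} with Lorentzian signature and with the timelike character of $-\nabla t$. Writing the matrix of $g$ block-wise with respect to the splitting $T_pL = \R\partial_t \oplus T_pM$, the Schur complement yields
\[
\det g = -\bigl(\Lambda + \|\omega\|_0^2\bigr)\det g_0, \qquad g^{tt} = -\frac{1}{\Lambda + \|\omega\|_0^2}.
\]
As $g_0$ is positive definite, $g$ already has $m$ positive directions on $T_pM$, so non-degeneracy of $g$ together with Lorentzian signature $(-,+,\dots,+)$ is equivalent to $\det g<0$, i.e.\ to \eqref{lorentzian}. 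Simultaneously $g(\nabla t,\nabla t)=g^{tt}<0$, so $-\nabla t$ is a nowhere-vanishing timelike vector field, inducing a time orientation on $L$; reversing the time orientation (or, equivalently, sending $t\mapsto -t$) if necessary, we may arrange $-\nabla t$ to be future-pointing. The only genuinely delicate point in the whole argument is this Schur-complement/signature identification; everything else is a routine flow-box construction.
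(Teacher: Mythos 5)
Your proof is correct and follows essentially the same route as the paper: the flow of $K$ globalizes the splitting, the tensors $\Lambda,\omega,g_0$ are read off from $\partial_t$ and the slice, and the condition \eqref{lorentzian} is identified with the Lorentzian signature via a determinant computation in the splitting $\R\partial_t\oplus T_pM$. The only (harmless) variations are that you compute the determinant by the Schur complement rather than in an adapted $g_0$-orthonormal basis — which has the small bonus of giving $g(\nabla t,\nabla t)=g^{tt}=-(\Lambda+\|\omega\|_0^2)^{-1}$ directly, where the paper instead argues that $\nabla t$ is timelike as a non-vanishing vector orthogonal to the spacelike slices — and that the paper additionally records that the replacement $t\mapsto -t$ preserves the form \eqref{lorentz} with $\omega$ replaced by $-\omega$.
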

	\begin{proof}
		Notice first that the bilinear form $g$ given in \eqref{lorentz}
		is a Lorentzian metric if and only if \eqref{lorentzian} is
		fulfilled at each $x\in M$. In fact, let $e_1,e_2,\ldots,e_m$ be
		an orthonormal basis for $(T_xM,g_0)$ such that
		$\omega(e_1)=\|\omega\|_{0}$ and $\omega(e_i)=0$, for each
		$i=2,\ldots, m$. Let
		$B=\{(1,0),(0,e_1),(0,e_2),\ldots,(0,e_m)\}$  be  the corresponding
		basis of $(\R\times T_xM,g)$ and $M_B(g)$ the matrix
		representation of $g$ in $B$. The only non-diagonal elements
		different  from $0$ in this matrix come from the product of the first
		two elements of $B$ and, thus
		\begin{equation}
			\label{elor}  {\rm det}\,  M_B(g)=-\Lambda-
			\omega(e_1)^2=-\Lambda-\|\omega\|^2_{0}, 
		\end{equation} which must be negative
		to ensure the Lorentzian signature.
		Clearly, \eqref{lorentz} defines an \sstk with $K=\partial_t$ and
		$S$ equal to any slice  $S_{t_0}:= t^{-1}(t_0)$.
		
		Conversely, given any non-vanishing
		Killing vector field  $K$  on a Lorentzian manifold  $(L,g)$, and any
		choice of a spacelike hypersurface $S$ transverse to $K$, a local
		expression of the metric as in \eqref{lorentz} holds on some
		neighborhood  $U=(a,b)\times U_0\subset L$, $(a,b)\subset \R$, $U_0\subset S$, with
		$K$ identifiable to $\partial_t$. However, the global assumption
		on $S$ plus the completeness of $K$ ensure that the local
		expression can be obtained globally just by moving $S$ (which
		would be identified to the slice $\{0\}\times M$) with the flow of
		$K$.
		
		For the last assertion,  observe that $\nabla t$ is timelike
		because it is orthogonal to the spacelike slices $\{t_0\}\times M$
		and it does not vanish, as $g(\nabla t,\partial_t)=1$. Finally,
		it is not a restriction that $-\nabla t$ time-orientates the
		spacetime as, otherwise, the change of $K$ by $-K$ (or $t$ by
		$-t$) yields the expression \eqref{lorentz} with $\omega$ changed
		by $-\omega$.
	\end{proof}
	
	\begin{rem}\label{increasing} (1)
		A {\em temporal
			function} on a spacetime is a smooth function $t$ with
		past-pointing timelike gradient $\nabla t$, so that $t$  is in particular a
		{\em time function}, i.e. a continuous function that increases on any
		future-pointing causal curve  (see, e.g. \cite{BeEhEa96} and \cite{MinSan08}). 
		The existence of the latter for a
		spacetime can be chosen as a definition of the step {\em stable
			causality} in the so-called  causal ladder or  hierarchy  of
		spacetimes \cite{MinSan08} (in particular, these spacetimes are {\em strongly causal}, i.e. all the causal curves that leave a fixed  neighbourhood of a point cannot return arbitrarily near the same point). 
		
		The previous proposition shows that the constructed function $t$
		is a temporal one and, so,  \sstk  spacetimes are always stably
		causal. In Theorem~\ref{kropinaLadder} we will see that, whenever
		$\Lambda\geq 0$, they are also {\em causally continuous} (the
		subsequent  step in the causal ladder which holds intuitively when,
		additionally, the chronological future $I^+(p)$ and past $I^-(p)$
		of any point $p\in  L$ vary continuously with $p$).
		
		(2) The previous characterization of \sstk spacetimes can be
		refined for the case of {\em stationary} spacetimes, i.e. those
		spacetimes which admit a {\em timelike} Killing vector field $K$.
		It is known \cite{JaSan08} that such a spacetime is {\em standard
			stationary} (i.e., an \sstk splitting with $\Lambda>0$) iff $K$ is a
		complete vector field and the spacetime is distinguishing (i.e.,
		$p\neq q$ implies  $I^+(p)\neq I^+(q)$ and $I^-(p)\neq I^-(q)$).
		The reader can check that all our approach is widely simplified
		for standard stationary spacetimes and agrees with
		\cite{CapJavSan10}.
	\end{rem}
	
	\begin{convention}  \label{convention4}  (1) 
		Except if otherwise specified, in what follows  we will assume
		that the Killing vector field $K$ and the spacelike hypersurface
		$S$ of an \sstk spacetime are prescribed and, so, an {\em \sstk splitting}
		will mean the product manifold $\R\times M$ endowed with the
		metric $g$ in \eqref{lorentz} and the (future) time-orientation
		provided by $-\nabla t$. %Nevertheless, sometimes  may be useful
		%to consider 
		When different  splittings obtained by changing  the
		hypersurface $S$ will be  taken into account (as in Subsection~\ref{ss6.2}), we will point it out explicitly. 
		
		(2)  When there is no possibility of confusion,  we will
		write a tangent vector to a point $(t_0,x_0)\in\R\times M$  simply as $(\tau,v)\in \R\times TM$,  since the metric $g$ is independent of the time coordinate $t$. 
	\end{convention}
	\subsection{Associated wind Riemannian structure} Next, our goal is to associate a natural wind Riemannian structure  with any \sstk as in previous convention.
	\begin{prop}\label{ppol}
		Let $\Lambda, \omega$ and $g_0$ be a function, a one-form, and a Riemannian metric on $M$. Then, the set $\Sigma\subset TM$  of solutions of
		\begin{equation}-\Lambda+2\omega(v)+g_0(v,v)=0\label{convex}\end{equation}
		constitutes a wind Riemannian structure if and only if the inequality \eqref{lorentzian} holds. In this case, putting $\Omega= \left(\Lambda + \| \omega \|^2_0 \right)^{-1}$, this wind Riemannian structure is the displacement of the 
		indicatrix of the Riemannian metric $g_R=\Omega g_0$ along the vector field $W$ which is $g_0$-metrically equivalent to the one-form $-\omega$.
	\end{prop}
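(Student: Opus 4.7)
The plan is to recognize the defining equation of $\Sigma_p$ as that of a translated ellipsoid in $T_pM$ by completing the square. Let $W$ be the vector field on $M$ that is $g_0$-metrically equivalent to $-\omega$, so that $g_0(W,\cdot)=-\omega$ and $g_0(W,W)=\|\omega\|^2_0$ pointwise. Substituting into \eqref{convex} rewrites it as
\[
g_0(v-W,v-W)=\Lambda+\|\omega\|^2_0.
\]
From this rewriting the rest is almost immediate. At each $p\in M$, one obtains a real non-degenerate ellipsoid in $T_pM$ iff the right-hand side is strictly positive, i.e., iff \eqref{lorentzian} holds at $p$. Under \eqref{lorentzian}, $\Omega=(\Lambda+\|\omega\|^2_0)^{-1}$ is a smooth positive function on $M$ and the equation becomes $g_R(v-W,v-W)=1$ with $g_R=\Omega g_0$; this identifies $\Sigma_p$ with the translate by $W_p$ of the unit $g_R$-sphere in $T_pM$, which is precisely the last assertion of the proposition, and by Proposition~\ref{windyfermat} the pair $(g_R,W)$ is uniquely characterized by this property.

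Next I would check the two requirements of Definition~\ref{windStruct}. For (a), in dimension $m\geq 2$ the translate of a $g_R$-unit sphere is automatically compact, connected, strongly convex and smoothly embedded in $T_pM$; in dimension $m=1$ the quadratic \eqref{convex} has exactly two distinct real roots under \eqref{lorentzian}, supplying the required pair $\{\lambda_- v_0,\lambda_+ v_0\}$. For (b), I would exhibit $\Sigma$ as a regular level set of the smooth function $\Phi:TM\to\R$, $\Phi(v)=g_0(v,v)+2\omega(v)-\Lambda$, so that $\Sigma=\Phi^{-1}(0)$. The differential of $\Phi$ at $v\in T_pM$, restricted to the vertical space $\mathcal V(v)\equiv T_pM$, is the linear functional $u\mapsto 2g_0(v-W_p,u)$, which vanishes only at $v=W_p$. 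But $\Phi(W_p)=-(\Lambda+\|\omega\|^2_0)<0$ under \eqref{lorentzian}, so no $v\in\Sigma$ equals $W_{\pi(v)}$; hence $d\Phi_v|_{\mathcal V(v)}\neq 0$ at every $v\in\Sigma$. This single computation simultaneously shows that $\Sigma$ is a smooth embedded hypersurface of $TM$ and that it meets each vertical fiber transversely, giving (b).

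For the converse, if $\Sigma$ is a wind Riemannian structure then each $\Sigma_p$ must be a non-degenerate ellipsoid (Definition~\ref{defiwindriem}); inspection of the completed-square form shows that when \eqref{lorentzian} fails at $p$, $\Sigma_p$ is either empty (if $\Lambda+\|\omega\|^2_0<0$) or reduces to the single point $\{W_p\}$ (if equality holds), neither of which qualifies. Hence \eqref{lorentzian} is necessary.

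I do not anticipate any serious obstacle: the proof reduces essentially to the square-completion identity, and the transversality is just the fact that the Hessian of $\Phi$ along the fibers is $2g_0$, non-degenerate. The only mildly delicate point is the uniform treatment of the dimensions $m\geq 2$ and $m=1$ (which Definition~\ref{min2} treats separately), but both cases are handled by the same quadratic having the correct number of solutions precisely under \eqref{lorentzian}.
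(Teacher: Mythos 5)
Your proof is correct and follows essentially the same route as the paper's: complete the square in \eqref{convex} to recognize $\Sigma_p$ as the $g_R$-unit sphere translated by $W_p$, with non-degeneracy (hence an ellipsoid rather than a point or the empty set) holding exactly when \eqref{lorentzian} does. The only difference is that you verify the transversality condition (b) of Definition~\ref{windStruct} directly, by exhibiting $\Sigma$ as a regular level set of $\Phi(v)=g_0(v,v)+2\omega(v)-\Lambda$ and checking that $d\Phi_v|_{\mathcal V(v)}$, namely $u\mapsto 2g_0(v-W_{\pi(v)},u)$, is nonzero on $\Sigma$ because $W_{\pi(v)}\notin\Sigma$; the paper instead obtains both smoothness and transversality by citing Proposition~\ref{ptf} (translation of the $g_R$-indicatrix by $W$ preserves the wind Finslerian structure), so your computation is just a self-contained substitute for that citation.
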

	\begin{proof}
		Notice first that, as $g_0$ is Riemannian, at each $p\in M$, $\Sigma\cap T_pM$ must be
		either the empty set or a point or an ellipsoid, and the last possibility holds if and only if \eqref{lorentzian} holds at $p$.
		In this case, the transversality of $\Sigma$ is automatically satisfied. To check this plus the last assertion, multiply \eqref{convex} by $\Omega$  and observe that $\Sigma$ is  the displacement of the indicatrix of $g_R$  by $W$ 
		(in fact,  $\Omega \Lambda=1-g_R(W,W)$ so that \eqref{convex} becomes equivalent to $g_R(v-W,v-W)=1$); in particular,  Proposition~\ref{ptf} applies.
	\end{proof}
	\begin{lemma}\label{prepwf}
		If a tangent vector $(a,v)\in \R\times TM$ is lightlike, then
		$a\neq 0$ and, in this case, it is future-pointing iff $a>0$.
	\end{lemma}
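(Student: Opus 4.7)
The plan is straightforward: expand the lightlike condition using the explicit form \eqref{lorentz}, and then use the time orientation provided by $-\nabla t$ (Proposition~\ref{psstk}).

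\textbf{Step 1 (the $a\neq 0$ part).} First I would write out the lightlike condition $g((a,v),(a,v))=0$ at a point $(t_0,x_0)\in \R\times M$ by plugging $(a,v)\equiv a\partial_t+v$ into \eqref{lorentz}, obtaining the pointwise quadratic relation
\begin{equation}\label{eprepwf}
-\Lambda(x_0)\,a^2+2a\,\omega_{x_0}(v)+g_0(v,v)=0.
\end{equation}
If $a=0$, this reduces to $g_0(v,v)=0$, which, since $g_0$ is Riemannian, forces $v=0$. But then $(a,v)=0$, contradicting the assumption that $(a,v)$ is lightlike (hence nonzero by definition, as recalled in Subsection~\ref{prelimin}).

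\textbf{Step 2 (characterization of future-pointing via the sign of $a$).} By Proposition~\ref{psstk}, the spacetime is time-oriented so that $-\nabla t$ is a (future-pointing) timelike vector field. Since $g(\nabla t, X)=dt(X)$ for every vector $X$ and $dt((a,v))=a$, we get
\[g(-\nabla t,(a,v))=-a.\]
A causal nonzero vector $X$ in a time-oriented Lorentzian manifold is future-pointing iff $g(Y,X)<0$ for some (equivalently, any) future-pointing timelike $Y$; applying this with $Y=-\nabla t$ yields that $(a,v)$ is future-pointing iff $-a<0$, i.e. $a>0$. (In particular, $a\neq 0$ is also seen here via $g$-orthogonality: the $g$-orthogonal complement of the timelike $\nabla t$ is spacelike and cannot contain the lightlike $(a,v)$.)

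\textbf{Main obstacle.} There is no real obstacle: everything reduces to substituting in \eqref{lorentz} and applying the standard fact that, in a time-oriented spacetime, a causal vector lies in the future cone exactly when its $g$-product with any future-pointing timelike vector is negative. The only mild care is to make sure the time orientation convention of Proposition~\ref{psstk} (namely that $-\nabla t$, not $\nabla t$, is future-pointing) is used correctly, which is why the sign flips and yields $a>0$ rather than $a<0$.
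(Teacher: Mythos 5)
Your proof is correct and follows essentially the same route as the paper, whose one-line argument invokes exactly the two facts you verify: Step 1 is the statement that the slices $t=\mathrm{const.}$ are spacelike (so a nonzero vector tangent to a slice, i.e.\ with $a=0$, cannot be lightlike), and Step 2 is the statement that $t$ is a temporal function with $-\nabla t$ future-pointing. The sign bookkeeping with $g(-\nabla t,(a,v))=-a$ is handled correctly.
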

	\begin{proof}
		Straightforward from the fact that the slices $t=\mathrm{const.}$ are
		spacelike and $t$ is a temporal function.
	\end{proof}
	\begin{prop}\label{pwf}
		The set $\Sigma$ of all the vectors $v \in TM$ such that $(1,v)$
		is a future-pointing lightlike vector in  $\R\times TM$ becomes
		a wind  Riemannian  structure on $M$. Moreover,  the reverse wind Riemannian structure  $\tilde \Sigma =-\Sigma$
		contains all  the vectors $w\in TM$ such that $(-1,w)$ is  a
		past-pointing lightlike vector in $\R\times TM$.
	\end{prop}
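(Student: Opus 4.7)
The plan is to reduce the statement to the already-established Proposition \ref{ppol} via a direct computation of the lightlike condition in the \sstk metric \eqref{lorentz}, and then use Lemma \ref{prepwf} to handle the time-orientation automatically.

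First, I would unfold the definition of $\Sigma$. By \eqref{lorentz},
\begin{equation*}
g\bigl((1,v),(1,v)\bigr) = -\Lambda(\pi(v)) + 2\,\omega(v) + g_0(v,v),
\end{equation*}
so the condition that $(1,v)$ be lightlike is exactly equation \eqref{convex}. Moreover, \eqref{lorentzian} holds on $M$ by Proposition~\ref{psstk}. Thus Proposition~\ref{ppol} applies and tells us that the set of solutions of \eqref{convex} is a wind Riemannian structure on $M$ --- specifically, the displacement by $W=-\omega^{\sharp}$ of the indicatrix of $g_R = \Omega g_0$ with $\Omega = (\Lambda+\|\omega\|_0^2)^{-1}$.

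Next, I would use Lemma~\ref{prepwf} to see that the future-pointing requirement is automatic: for any $v$ satisfying \eqref{convex}, the vector $(1,v)$ is lightlike with first component $1>0$, so it is future-pointing. Conversely, every future-pointing lightlike vector $(a,u)$ with $a=1$ trivially has $u$ satisfying \eqref{convex}. This gives the identification claimed for $\Sigma$.

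For the second assertion, the same computation with $(-1,w)$ gives
\begin{equation*}
g\bigl((-1,w),(-1,w)\bigr) = -\Lambda - 2\,\omega(w) + g_0(w,w) = -\Lambda + 2\,\omega(-w) + g_0(-w,-w),
\end{equation*}
so $(-1,w)$ is lightlike iff $-w$ satisfies \eqref{convex}, i.e.\ iff $-w\in\Sigma$, i.e.\ iff $w\in -\Sigma = \tilde\Sigma$; and again Lemma~\ref{prepwf} forces such a vector to be past-pointing since its first component is $-1<0$. There is no real obstacle here --- the whole argument is essentially an unpacking of definitions combined with Proposition~\ref{ppol} and Lemma~\ref{prepwf}; the only point worth stressing is that the Lorentzian signature condition \eqref{lorentzian} needed to invoke Proposition~\ref{ppol} comes for free from the \sstk hypothesis via Proposition~\ref{psstk}.
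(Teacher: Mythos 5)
Your proof is correct and follows essentially the same route as the paper's: identify the lightlike condition on $(1,v)$ with equation \eqref{convex}, invoke Proposition~\ref{ppol} together with the causal-character consistency from Lemma~\ref{prepwf}, and obtain the statement about $\tilde\Sigma$ from the observation that $(-1,w)$ is lightlike iff $(1,-w)$ is. The only difference is that you spell out the signature condition \eqref{lorentzian} and the sign computation explicitly, which the paper leaves implicit.
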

	\begin{proof} From the expression of the metric, $(1,v)$ is a lightlike vector iff $v$ satisfies \eqref{convex}.
		Proposition~\ref{ppol}  and
		the consistency of the causal characters in Lemma~\ref{prepwf} yield
		the first assertion; the last  one  follows from Definition~\ref{dreversewind} and the fact that $(-1,-v)$ is lightlike iff so
		is $(1,v)$.
	\end{proof}
	Notice that lightlike vectors are preserved by  all the metrics
	pointwise conformal to $g$.
	When a conformal factor $\Omega >0$ is invariant by the flow of $K=\partial_t$, it induces naturally a function also denoted by $\Omega$ which multiplies the three elements $\Lambda,  \omega,  g_0$.
	\begin{defi}\label{dfermatstructure}
		The {\em Fermat structure} associated with (the conformal class of)
		an \sstk splitting is
		the wind Riemannian structure $\Sigma$
		obtained in Proposition~\ref{pwf}.
	\end{defi}
	\begin{thm}\label{tfermatSSTK}The following statements hold:
		\begin{itemize}
			\item[(i)]
			Any wind Riemannian structure $\Sigma$
			is the
			Fermat structure associated with the  conformal class of an \sstk
			spacetime with a representative $(g_0,\omega, \Lambda)$,
			$\Lambda=1-g_0(W,W)$, $W$ the vector field $g_0$-metrically equivalent to $-\omega$.
			
			Conversely, given the Fermat structure $\Sigma$ associated with the
			conformal class of an \sstk splitting,  there exists a unique
			representative $(g_0,\omega,\Lambda)$ of the class such that the
			vector field $W$, $g_0$-associated with $-\omega$ satisfies
			$\Lambda=1-g_0(W,W)$ and $\Sigma$ is the wind Riemannian  structure
			defined by $g_0$ and the displacement $W$.
			\item[(ii)]
			Two Fermat
			structures $\Sigma, \Sigma'$ associated with two \sstk splittings
			determined by  the  data $(g_0,\omega,\Lambda)$,
			$(g'_0,\omega',\Lambda')$ on $M$ are equal if and only if the two
			spacetimes are pointwise conformal, i.e., there exists some
			function $\Omega>0$  on $M$  such that $(g'_0,\omega',\Lambda')=(\Omega
			g_0,\Omega\omega,\Omega\Lambda)$.
		\end{itemize}
	\end{thm}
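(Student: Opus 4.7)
The plan is to prove both halves of (i) by direct construction (using Propositions~\ref{windyfermat}, \ref{ppol}, \ref{pwf}) and then deduce (ii) by combining conformal invariance of the lightlike cone with the uniqueness already obtained in (i).

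For the forward direction of (i), start from a wind Riemannian structure $\Sigma$. By Proposition~\ref{windyfermat}, $\Sigma$ determines a \emph{unique} Riemannian metric $g_R$ and vector field $W$ whose indicatrix-displacement gives $\Sigma$. Set $g_0 := g_R$, let $\omega$ be the one-form $g_0$-metrically equivalent to $-W$, and $\Lambda := 1 - g_0(W,W)$. A direct computation yields $\Lambda + \|\omega\|_0^2 = 1 > 0$, so the admissibility condition~\eqref{lorentzian} holds and \eqref{lorentz} defines an \sstk splitting. Applying Proposition~\ref{ppol} to this triple with conformal factor $\Omega = 1/(\Lambda+\|\omega\|_0^2) = 1$, the solution set of \eqref{convex} is exactly the displacement of the indicatrix of $g_0$ by $W$, i.e.\ $\Sigma$ itself; Proposition~\ref{pwf} identifies this with the Fermat structure of the constructed \sstk.

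For the converse direction of (i), start with an \sstk splitting $(g_0',\omega',\Lambda')$ and its Fermat structure $\Sigma$, and seek a unique conformal factor $\Omega>0$ such that the rescaled data $(g_0,\omega,\Lambda):=(\Omega g_0',\Omega\omega',\Omega\Lambda')$ satisfies the normalization $\Lambda = 1 - g_0(W,W)$. The key observation is that the vector field $W$ defined by $g_0(\cdot,W)=-\omega$ is \emph{conformally invariant}: $\Omega g_0'(\cdot,W)=-\Omega\omega'$ reduces to $g_0'(\cdot,W)=-\omega'$, so $W$ is independent of $\Omega$. Using $\|\omega'\|_{0'}^2 = g_0'(W,W)$, the normalization becomes
\[
\Omega\Lambda' = 1 - \Omega\,g_0'(W,W) = 1 - \Omega\|\omega'\|_{0'}^2,
\]
which forces $\Omega = (\Lambda' + \|\omega'\|_{0'}^2)^{-1}$. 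This is positive and smooth by \eqref{lorentzian}, hence uniquely determined. Finally, multiplying \eqref{convex} by any positive factor does not alter its solution set, so every representative of the conformal class has the same Fermat structure $\Sigma$, and in particular our normalized triple produces $\Sigma$.

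For (ii), one direction is immediate from the conformal invariance just noted: if two \sstk splittings are pointwise conformal, they share the same lightlike cone and hence the same Fermat structure (Proposition~\ref{pwf}). For the converse, suppose two \sstk splittings share the same Fermat structure $\Sigma$. By (i), each conformal class admits a unique normalized representative satisfying $\Lambda=1-g_0(W,W)$. Since Proposition~\ref{windyfermat} shows that $\Sigma$ determines $(g_R,W)$ uniquely, and these in turn determine the normalized $(g_0,\omega,\Lambda)$, both splittings have the \emph{same} normalized representative, and are therefore pointwise conformal to it and to each other.

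The main point to watch is verifying that the vector field $W$ is conformally invariant (which reduces the unknown $\Omega$ to a pointwise linear equation) and that this $\Omega$ is automatically positive thanks to \eqref{lorentzian}; once these are isolated, everything else is a short algebraic verification combined with the uniqueness statement of Proposition~\ref{windyfermat}.
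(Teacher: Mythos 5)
Your proof is correct. Part (i) is essentially an unpacking of Proposition~\ref{ppol} (which is all the paper's proof invokes): your normalization computation $\Lambda+\|\omega\|_0^2=1$, the identification of the solution set of \eqref{convex} with $\{v: g_0(v-W,v-W)=1\}$, and the derivation $\Omega=(\Lambda'+\|\omega'\|_{0'}^2)^{-1}$ from the conformal invariance of $W$ reproduce exactly the content of that proposition. The only place you genuinely diverge is the ``only if'' direction of (ii): the paper simply appeals to the standard Lorentzian fact that two metrics with the same future lightlike cones are pointwise conformal (plus the observation that a conformal factor between two $t$-independent metrics is itself $t$-independent, hence a function on $M$), whereas you route the argument through the unique normalized representative supplied by (i) and the uniqueness in Proposition~\ref{windyfermat}. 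Both are valid; the paper's version is shorter because it borrows a general fact, while yours has the modest advantage of staying entirely inside the explicit triple formalism and of directly producing the conformal factor $\Omega_1/\Omega_2$ relating the two given representatives.
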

	\begin{proof}
		$(i)$  It is an immediate consequence of Proposition~\ref{ppol}.
		
		$(ii)$ Recall first that two pointwise conformal \sstk splittings as in \eqref{lorentz} must differ in a conformal factor invariant by the flow of $\partial_t$ and, so,  they will induce a positive function $\Omega$ on $M$. 
		So, use simply that two spacetimes are pointwise conformal iff they have the same lightlike vectors with the same time-orientations.
	\end{proof}
	
	The regions of strong and weak wind can be easily determined
	(see Fig.~\ref{dis1}).
	\begin{figure}[h]
		\includegraphics[scale=0.7, center]{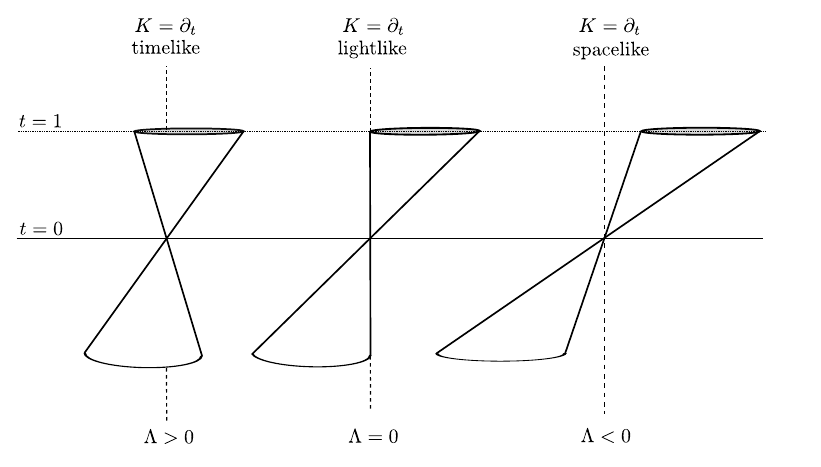}
		\caption{The time cone in an \sstk  splitting}\label{dis1}
	\end{figure}
	\begin{prop}
		Let $(M,\Sigma)$ be a  Fermat structure, $p\in M$ and $0_p\in T_pM$
		the zero vector:
		
		(i) $0_p\in B_p$ iff $K_p$ is timelike ($\Lambda(p)>0$). In this case, $\Sigma_p$
		determines a Randers norm.
		
		(ii) $0_p\in\Sigma_p$ iff $K_p$ is lightlike ($\Lambda(p)=0$). In this case, $\Sigma_p$
		determines a Kropina norm.
		
		(iii) $0_p\not\in \bar B_p$ iff $K_p$ is spacelike ($\Lambda(p) < 0$). In this case,
		$\Sigma_p$ defines a strong  wind     Minkowskian structure.
	\end{prop}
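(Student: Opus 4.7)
My plan is to derive the proposition as an immediate corollary of Proposition \ref{ppol} (which identifies $\Sigma_p$ as a level set of a concrete quadratic polynomial), the metric formula \eqref{lorentz} (which relates $g(K,K)$ to $\Lambda$), and Proposition \ref{Riemannclass} (which gives the trichotomy Randers/Kropina/strong wind in the wind Riemannian setting).

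First, I would read off from \eqref{lorentz} that $g_p(\partial_t,\partial_t) = -\Lambda(p)$. Since $K = \partial_t$, the causal character of $K_p$ is thus governed directly by the sign of $\Lambda(p)$: timelike iff $\Lambda(p)>0$, lightlike iff $\Lambda(p)=0$, and spacelike iff $\Lambda(p)<0$. So the three ``iff'' clauses reduce to locating $0_p$ relative to $\Sigma_p$ and $B_p$ in terms of the sign of $\Lambda(p)$.

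Next, by Proposition \ref{ppol}, $\Sigma_p$ is the zero locus in $T_pM$ of the polynomial
\[
P(v) := -\Lambda(p) + 2\omega_p(v) + g_0(v,v).
\]
Because $g_0$ is positive definite and \eqref{lorentzian} ensures that $\Sigma_p$ is a genuine (compact) ellipsoid, $P(v)\to +\infty$ as $\|v\|\to\infty$; hence the bounded open region $B_p$ enclosed by $\Sigma_p$ is exactly $\{v\in T_pM : P(v)<0\}$, while its exterior is $\{P(v)>0\}$. Evaluating at the origin yields $P(0_p) = -\Lambda(p)$, and therefore
\[
0_p\in B_p \iff \Lambda(p)>0,\qquad 0_p\in\Sigma_p \iff \Lambda(p)=0,\qquad 0_p\notin\bar B_p \iff \Lambda(p)<0,
\]
which matches precisely the three causal characters of $K_p$ identified above.

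Finally, the second sentence in each case is simply the content of Proposition \ref{Riemannclass}: once the location of $0_p$ relative to $B_p$ is known, the induced wind Minkowskian norm on $T_pM$ is, respectively, Randers, Kropina, or a proper strong wind Minkowskian structure. There is no real obstacle here, only a bookkeeping check that the same triple $(g_0,\omega,\Lambda)$ is being used both in \eqref{lorentz} and in Proposition \ref{ppol}; this is guaranteed by the construction of the Fermat structure in Proposition \ref{pwf} together with the uniqueness clause of Theorem \ref{tfermatSSTK}(i), so the sign conventions align automatically and the proof concludes.
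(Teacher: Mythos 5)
Your proof is correct and follows essentially the same route as the paper's (one-line) argument: evaluate the quadratic $-\Lambda(p)+2\omega_p(v)+g_0(v,v)$ defining $\Sigma_p$ at $v=0_p$ to get $-\Lambda(p)$, note that $B_p$ is the sublevel set $\{<0\}$, and compare with $g(K_p,K_p)=-\Lambda(p)$. The extra details you supply (coercivity of the quadratic identifying the bounded region with the sublevel set, and the appeal to Proposition~\ref{Riemannclass} for the Randers/Kropina/strong-wind labels) are exactly the bookkeeping the paper leaves implicit.
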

	
	\begin{proof}
		It is straightforward from the facts that $0_p$ satisfies
		\eqref{convex} iff $\Lambda(p)=0$ and    the unit ball $B_p$ defined by $\Sigma_p$  is obtained  by  replacing the equality in
		\eqref{convex}   with  the inequality $<$. 
	\end{proof}
	\subsection{Lightlike vectors and link with Zermelo metrics}
	Next, let us describe in a precise way the lightlike vectors of an
	\sstk splitting and write the Finslerian elements of the Fermat
	structure  in terms of $\Lambda, \omega, g_0$.
	\begin{prop}\label{plightvectorsSSTK}
		Given an \sstk splitting determined by $\Lambda, \omega, g_0$ as in
		\eqref{lorentz}, define, for each $x\in M$:
		\[A_{x}=\begin{cases}
			T_{x}M \setminus \{0\} &\text{if $\Lambda(x)>0$},\\
			\{v\in T_{x}M\colon -\omega(v)>0,\
			\Lambda(x)g_0(v,v)+\omega(v)^2>0\}
			&\text{if
				$\Lambda(x)\leq 0$}.
		\end{cases}
		\]
		Let $M_l=\{x\in M: \Lambda (x)< 0\}$ and put
		$$
		A=\bigcup_{x\in M}A_{x}, \qquad \qquad A_l=\bigcup_{x\in
			M_l}A_{x},
		$$
		as well as $A_E$
		(as defined in Definition~\ref{ae}). Define $F$ and $F_l$ as
		\begin{equation}\label{randers-kropina}
			F(v) = \frac{g_0(v,v)}{-\omega(v)+\sqrt{\Lambda
					g_0(v,v)+\omega(v)^2}},\quad\quad \quad \forall v\in  A,
		\end{equation}
		where, when $\Lambda(x)=0$, the previous expression is understood
		as
		\begin{equation}\label{fermat-kropina}
			F(v)= -\frac{g_0(v,v)}{2\omega(v)}, \qquad \qquad \forall v\in
			\{w\in T_{x}M\colon -\omega(w)>0\},
		\end{equation}
		and
		\begin{equation}\label{rk2}
			F_l(v) =(-F^{\hbox{{\tiny frev}}}(v):=)
			-\frac{g_0(v,v)}{\omega(v)+\sqrt{\Lambda
					g_0(v,v)+\omega(v)^2}},\quad \quad \forall v\in  A_l,
		\end{equation}
		and extend them to
		$A\cup  A_E$ as  in Convention~\ref{caestar}.
		
		A tangent vector $(\tau,v)\in \R\times TM$ is a
		future-pointing lightlike vector if and only if  $\tau>0, v\in 
		A\cup A_E  $  %\cup \{0_x\}_{x\in \{y\in M:\Lambda(y)=0\}}$
		and one of the three following cases holds:
		\begin{enumerate}
			\item[(i)] When $\Lambda(x)>0$,
			then $(\tau,v)=(F(v),v)$.
			
			\item[(ii)] When $\Lambda(x)=0$, then
			\begin{itemize}
				\item $(\tau,v)=(\tau,0_{x})$, or
				
				\item $v\in A_{x}$ and $(\tau,v)=(F(v),v)$.
				
			\end{itemize}
			
			\item[(iii)] When $\Lambda(x)<0$, (necessarily,
			$A_x\subsetneqq (A_E)_x$), then
			\begin{itemize}
				\item $(\tau,v)=(F(v),v)$,   iff $\tau\Lambda(x)-\omega(v)\geq 0$,
				
				\item $(\tau,v)=(F_l(v),v)$, iff $\tau\Lambda(x)-\omega(v)\leq 0$,
				
				\item $(\tau,v)=(F(v),v)=(F_l(v),v)$ iff
				$\tau\Lambda(x)-\omega(v)=0$.
				
			\end{itemize}
			Moreover, $0<F(v)\leq F_l(v)$ and the equality holds 
			iff $v\in (A_E)_x\setminus A_x$.
		\end{enumerate}
	\end{prop}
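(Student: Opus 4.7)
The plan is to translate the lightlike condition into a quadratic equation in $\tau$ and analyze its positive roots in the three regimes for $\Lambda$. Concretely, from \eqref{lorentz} a vector $(\tau,v)$ is lightlike iff
\begin{equation*}
g((\tau,v),(\tau,v)) = -\Lambda\tau^{2}+2\tau\,\omega(v)+g_{0}(v,v)=0,
\end{equation*}
and Lemma~\ref{prepwf} tells us that ``future-pointing lightlike'' is equivalent to imposing additionally $\tau>0$ (in particular, $\tau\neq 0$, so $(\tau,v)\neq 0$). Rewriting, we must study positive solutions of the quadratic
\begin{equation*}
P(\tau):=\Lambda(x)\,\tau^{2}-2\omega(v)\,\tau-g_{0}(v,v)=0,
\end{equation*}
whose discriminant is $4\bigl(\omega(v)^{2}+\Lambda(x)\,g_{0}(v,v)\bigr)$.

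In Case (i) ($\Lambda(x)>0$) the discriminant is at least $4\Lambda g_0(v,v)\geq 0$, and Vieta gives product of roots $=-g_{0}(v,v)/\Lambda\leq 0$, hence at most one positive root. For $v\neq 0$ that unique positive root is $\tau_{+}=(\omega(v)+\sqrt{\omega^{2}+\Lambda g_{0}(v,v)})/\Lambda$; multiplying numerator and denominator by $-\omega(v)+\sqrt{\omega^{2}+\Lambda g_{0}(v,v)}$ and using $(-\omega)^{2}-(\omega^{2}+\Lambda g_{0}(v,v))=-\Lambda g_{0}(v,v)$ identifies it with $F(v)$ from \eqref{randers-kropina}. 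For $v=0$ the equation forces $\tau=0$, excluded. Case (ii) ($\Lambda(x)=0$) is immediate: the equation degenerates to $-2\omega(v)\tau=g_{0}(v,v)$, giving either $(\tau,0_{x})$ for any $\tau>0$, or $v\neq 0$ with $\omega(v)\neq 0$ and $\tau=-g_{0}(v,v)/(2\omega(v))>0$ iff $-\omega(v)>0$, which matches \eqref{fermat-kropina}.

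Case (iii) ($\Lambda(x)<0$) is the main one. The discriminant being $\geq 0$ requires $\omega(v)^{2}+\Lambda g_{0}(v,v)\geq 0$. Vieta now gives product $=-g_{0}(v,v)/\Lambda\geq 0$ (so roots have the same sign), and sum $=2\omega(v)/\Lambda$; for both roots to be positive we need $\omega(v)/\Lambda>0$, i.e. $-\omega(v)>0$. Together these two conditions are exactly $v\in (A_E)_{x}$, with strict inequality in the discriminant giving $v\in A_{x}$. The two roots are
\begin{equation*}
\tau_{\pm}=\frac{\omega(v)\pm\sqrt{\omega^{2}+\Lambda g_{0}(v,v)}}{\Lambda},
\end{equation*}
and the same rationalisation as above (now with $\Lambda<0$, so the sign of $-\Lambda g_{0}(v,v)$ is positive) yields $\tau_{+}=F(v)$ and $\tau_{-}=-g_{0}(v,v)/(\omega+\sqrt{\omega^{2}+\Lambda g_{0}(v,v)})=F_{l}(v)$ by \eqref{rk2}. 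Finally, evaluating
\begin{equation*}
\Lambda\tau-\omega(v)=\pm\sqrt{\omega(v)^{2}+\Lambda g_{0}(v,v)}
\end{equation*}
reads off exactly the sign criterion stated: $\geq 0$ singles out $\tau=F(v)$ and $\leq 0$ singles out $\tau=F_{l}(v)$, with equality on both sides (hence $F(v)=F_{l}(v)=\omega(v)/\Lambda$) precisely when the discriminant vanishes, i.e. when $v\in(A_{E})_{x}\setminus A_{x}$.

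The calculation is essentially routine; the main points to handle carefully are (a) matching the algebraic roots $\tau_{\pm}$ to the closed-form expressions \eqref{randers-kropina}, \eqref{rk2} after rationalisation (keeping track of signs since $\Lambda$ changes sign across the cases), and (b) showing that the combined positivity/discriminant conditions in Case~(iii) are equivalent to $v\in(A_{E})_{x}$, with the interior $A_{x}$ corresponding to two distinct positive roots and the boundary to a double root. These verifications also give the inequality $0<F(v)\leq F_{l}(v)$ and the equality case, closing the proof.
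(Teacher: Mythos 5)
Your proposal is correct and follows essentially the same route as the paper: the paper's proof likewise imposes the lightlike condition, solves the resulting quadratic in $\tau$ via the two equivalent root formulas in \eqref{tau}, and leaves the rest to a "straightforward discussion of cases." Your write-up simply makes that case discussion explicit (Vieta signs, rationalisation, and the $\Lambda\tau-\omega(v)=\pm\sqrt{\omega(v)^2+\Lambda g_0(v,v)}$ identity), all of which checks out.
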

	\begin{proof}
		This can be computed directly by imposing that $(\tau,v)$ must be
		lightlike, i.e., $-\Lambda\tau^2+2\omega(v)\tau+g_0(v,v)=0$ and
		thus
		\begin{equation}\label{tau}
			\tau=\frac{\omega(v)\pm\sqrt{\Lambda
					g_0(v,v)+\omega(v)^2}}{\Lambda}  =
			\frac{g_0(v,v)}{-\omega(v)\pm\sqrt{\Lambda g_0(v,v)+\omega(v)^2}},
		\end{equation}
		the first equality whenever  $\Lambda\not=0$ and the last one
		valid even if $\Lambda$ vanishes whenever $v\neq 0$.  So, the
		result follows from a straightforward discussion of
		cases.
	\end{proof}
	
	The last part of this proposition characterizes precisely all the lightlike  vectors of the spacetime. However, it will be useful to know exactly which are all the causal vectors that project on a given tangent vector to $M$ (recall Fig.~\ref{dis2}). A straightforward discussion of cases yields the following possibilities.

	\begin{cor}   [Future-pointing causal vectors looked from $M$]\label{raclarations} Let
		$(t_0,x_0)\in \R\times M$ and  $v\in  T_{x_0}M\setminus\{0\}$.   Then,  the
		following cases  can occur:
		\begin{enumerate}[(a)]
			\item Case $\Lambda(x_0)>0$. The vector  $(F(v),v)$ tangent at $(t_0,x_0)$  (with $F$ computed indistinctly
			either from \eqref{randers-kropina} or from the first expression in \eqref{tau}  with the positive sign) is a future-pointing lightlike vector; moreover, 
			all future-pointing lightlike vectors in $\R\times T_{x_0}M$  can be written in
			this way. 
			The vector $(\tau, v)$ tangent at $(t_0,x_0)$ is  future-pointing timelike  iff $F(v)<\tau$; moreover, 
			all future-pointing timelike vectors in $\R\times T_{x_0}M$ can be written either in this way or as $(\tau, 0)$ with $\tau>0$.
			\item Case $\Lambda(x_0)=0$. When $v\in A_{x_0}$, the
			vector  $(F(v),v)$ (with $F$ computed from
			\eqref{fermat-kropina}) is lightlike and future-pointing; moreover, all  future-pointing lightlike vectors  can be written either in this way or as $(\tau, 0)$ with $\tau>0$. 
			The vector $(\tau, v)$ is a future-pointing timelike vector iff $F(v)<\tau$; moreover, all  future-pointing timelike vectors can be written in this way.
			\item Case $\Lambda(x_0)<0$. One of the following
			exclusive alternatives occurs:
			\begin{enumerate}[(a1)]
				\item[(c1)] $v\in A_{x_0}$. Then, there are exactly two
				future-pointing lightlike vectors $(F(v),v)$, $(F_l(v),v),
				F(v)<F_l(v)$, in $\R\times T_{x_0}M$ (computed from
				\eqref{randers-kropina} and \eqref{rk2}) that project onto $v$. The tangent vector $(\tau, v)$ is a future-pointing timelike vector iff $F(v)<\tau<F_l(v)$; moreover, all the future-pointing timelike vectors in the case (c) can be written in this way.
				\item[(c2)] $v$ belongs to  $(A_E)_{x_0}\setminus A_{x_0}$. Then,
				there is exactly one future-pointing lightlike vector in
				$\R\times T_{x_0}M$  and no timelike vector  that projects onto $v$.  The first component of this  lightlike
				vector can be computed by using formally any of the two
				expressions \eqref{randers-kropina} and \eqref{rk2}, as they agree
				when computed on such a $v$ (recall also that, as in the previous sub-case,
				$-\omega(v)>0$ necessarily).
				\item[(c3)] $v$ does not belong to $(A_E)_{x_0}$.
				Then, no future-pointing lightlike  nor timelike vector  in $\R\times M$
				projects onto $v$.
			\end{enumerate}
		\end{enumerate} 
	\end{cor}

	\begin{rem}
		In the standard stationary case, $A=TM\setminus\mathbf{0}, A_l=\emptyset$ and $F$ can
		be safely computed from any of the expressions in \eqref{tau} just
		by choosing the sign $+$. So, $F$ becomes a  classical Finsler metric,
		the {\em Fermat metric} of the standard stationary spacetime, and
		the corresponding results can be checked in \cite{CapJavSan10}.
	\end{rem}
	\begin{prop}\label{fermatzermelo}
		Let $\Sigma$ be  the Fermat structure associated with an
		\sstk splitting. Then  the conic Finsler metric $F$ and the Lorentzian
		Finsler metric $F_l$ associated with $\Sigma$ are those determined
		in Proposition~\ref{plightvectorsSSTK}.
	\end{prop}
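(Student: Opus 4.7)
The strategy is to match, at each tangent space, the positive scalings of a vector $v$ landing on the wind Riemannian indicatrix $\Sigma_p$ with the positive roots of the lightcone quadratic. From the definition of the Fermat structure one has $v/\lambda\in\Sigma_p$ (for $\lambda>0$) iff $(1,v/\lambda)$ is a future-pointing lightlike vector of $(\R\times M,g)$; by the $2$-homogeneity of $g(\,\cdot\,,\,\cdot\,)$ and Lemma~\ref{prepwf}, this is equivalent to $(\lambda,v)$ being future-pointing lightlike, i.e., to $\lambda$ being a positive root of
\[
-\Lambda\lambda^{2}+2\omega(v)\lambda+g_0(v,v)=0.
\]
The two candidate roots are precisely the two expressions in \eqref{tau}, which by Proposition~\ref{plightvectorsSSTK} are $F(v)$ and (when real and positive) $F_l(v)$. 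Hence every positive scaling of $v$ landing on $\Sigma_p$ equals $F(v)$ or $F_l(v)$.

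I would then invoke Proposition~\ref{windConseq}: the wind Riemannian structure $\Sigma$ determines a unique conic Finsler metric $\hat F$, whose indicatrix is the convex part $\Sigma^+$, and, on the strong wind region $M_l$, a unique Lorentzian Finsler metric $\hat F_l$, whose indicatrix is the concave part $\Sigma^-$. Since $v/F(v)$ and $v/F_l(v)$ are the only positive multiples of $v$ in $\Sigma_p$, one lies in $\Sigma^+$ and the other in $\Sigma^-$, and by uniqueness $\{\hat F,\hat F_l\}=\{F,F_l\}$ as functions. The correct assignment is forced by the inequality $F\leq F_l$ (last clause of Proposition~\ref{plightvectorsSSTK}): $v/F(v)$ is further from $0_p$ than $v/F_l(v)$, and in the strong wind region ($0_p\notin\bar B_p$) the far branch of $\Sigma_p$ is precisely the convex $\Sigma^+$, while the near branch is the concave $\Sigma^-$; hence $\hat F=F$ and $\hat F_l=F_l$.

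It remains to check that the domains agree, and this is a direct verification: the sets $A$, $A_l$, $A_E$ constructed in Proposition~\ref{plightvectorsSSTK} coincide with those of Definition~\ref{ae}. Indeed, the joint conditions $-\omega(v)>0$ and $\Lambda g_0(v,v)+\omega(v)^2>0$ pick out exactly those directions whose ray from $0_p$ meets $\Sigma_p$ transversally at two points; the discriminant locus $\Lambda g_0(v,v)+\omega(v)^2=0$ corresponds to rays tangent to $\Sigma_p$, where $F=F_l$, in accordance with the continuous extensions of Proposition~\ref{windConseq} and Convention~\ref{caestar}. The mild wind case ($\Lambda>0$) is immediate, since the quadratic has a single positive root and $F_l$ is undefined; the critical case ($\Lambda=0$) is absorbed by the convention $F_l(0_p)=F(0_p)=1$ for $0_p\in\Sigma_p$.

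The main obstacle is really just the case bookkeeping: identifying, for each sign of $\Lambda$ and each part of $A_E$, which root of the quadratic corresponds to the convex versus the concave branch of $\Sigma_p$. Once the geometric picture of Figures~\ref{dis0} and \ref{dis1} is unpacked, the proposition reduces to the quadratic formula together with Propositions~\ref{plightvectorsSSTK} and \ref{windConseq}.
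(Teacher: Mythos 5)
Your proof is correct and follows essentially the same route as the paper: observe that $(\tau,v)$ future-pointing lightlike forces $\tau>0$ and $v/\tau\in\Sigma_x$, then read off $F$ and $F_l$ as the positive roots of the quadratic via Proposition~\ref{plightvectorsSSTK}. The paper's proof is terser (it leaves the convex/concave branch identification implicit, having already settled it in Propositions~\ref{possibwind} and \ref{Riemannclass}, and only adds a remark on conformal invariance of the formulas); your explicit matching of the far/near branches of $\Sigma_p$ with $\Sigma^+/\Sigma^-$ via $F\leq F_l$ is a correct filling-in of that step.
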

	\begin{proof}
		Taking into account the definition of the Fermat structure, if $(\tau,v)\in$ $\R\times TM$ is a future-pointing lightlike vector, then $\tau>0$ and $v/\tau\in \Sigma_x$.  So, it is enough to use Proposition~\ref{plightvectorsSSTK} with $\tau=1$ (notice that
		the expressions for the conic pseudo-Finsler metrics $F$ and $F_l$ in \eqref{randers-kropina} and \eqref{rk2} are invariant under the conformal change $(g_0,\omega,\Lambda)\mapsto (\Omega g_0, \Omega\omega,\Omega\Lambda)$, $\Omega\colon M\to (0,+\infty)$).
	\end{proof}
	The equivalences in Theorem~\ref{tfermatSSTK} plus  Propositions~\ref{Riemannclass}, \ref{ppol} and \ref{fermatzermelo} 
	extend the well-known ones existing between Randers, Zermelo and stationary metrics, \cite[Proposition 3.1]{BiJa11},
	and they are summarized in Fig.~\ref{rela}.
	\begin{figure}[t]
		\includegraphics[scale=0.61, center]{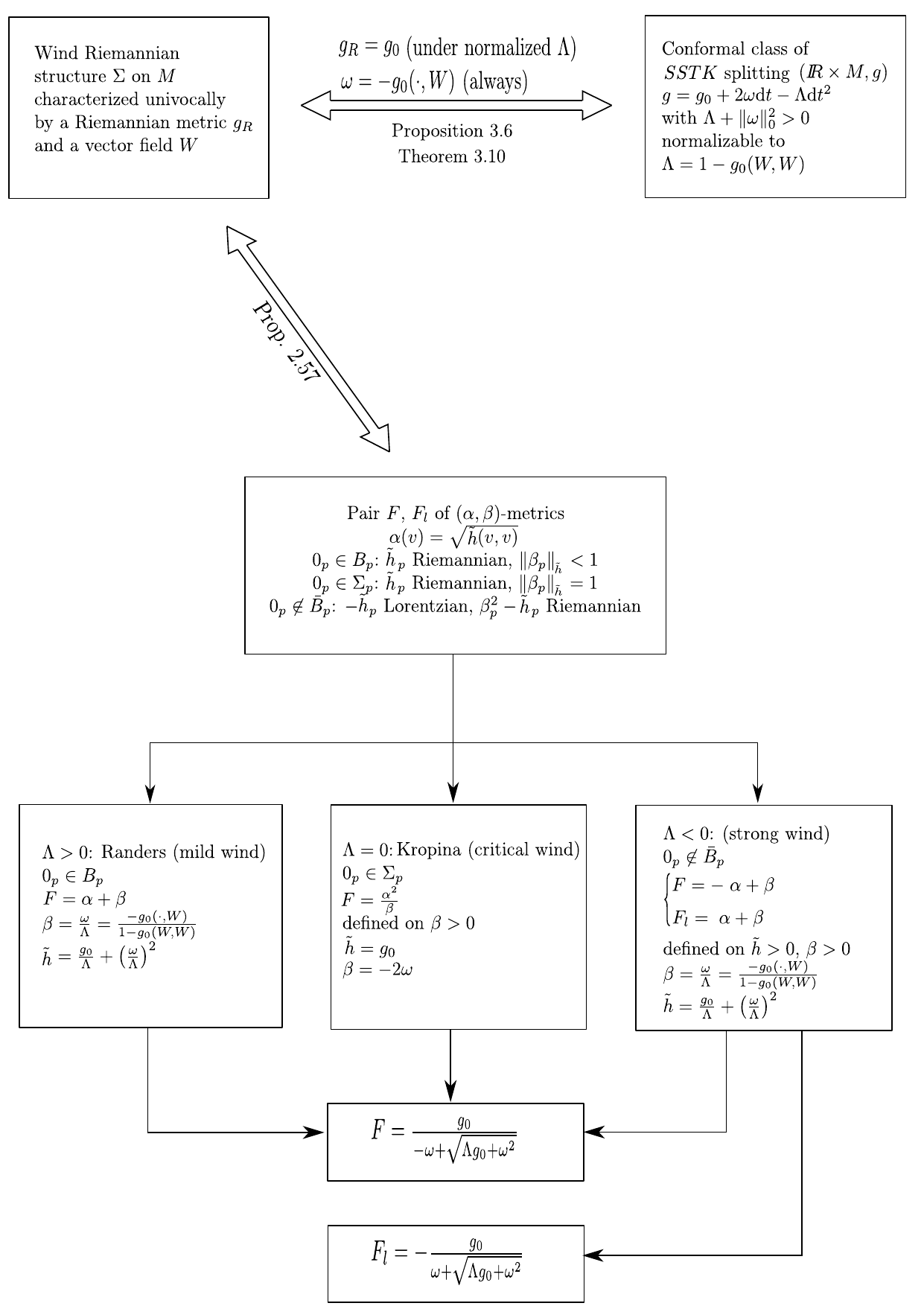}
		\caption{The equivalence between \sstk  splittings, wind Riemannian and Zermelo structures}
		\label{rela}\end{figure}
	\begin{figure}[t]
		\includegraphics[scale=1,center]{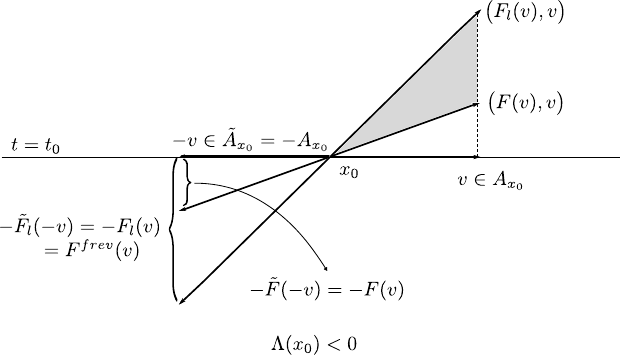}
		\caption{The shaded region represents, in the case where $\Lambda(x_0)<0$, all the future-pointing timelike vectors $(\tau, v)$, for $v\in A_{x_0}$. The lightlike vectors $(F(v),v)$ and $(F_l(v), v)$ yield the boundary of such a region.}
		\label{dis2}
	\end{figure}

	\bigskip
	
	%\begin{rem}\label{rtimereversal} {\em (Time reversal and past directions, Fig.~\ref{dis2})}.
	\noindent  The case of past-pointing causal vectors and its relation with the reverse wind Finslerian structure (see Fig.~\ref{dis2}) can be summarized as follows. 
	Recall that in Theorem~\ref{tfermatSSTK}, one assumes implicitly  that $-\nabla t$ is future-pointing (Convention~\ref{convention4}). If we consider an \sstk splitting determined by the triple $(g_0,\omega,\Lambda)$  
	in \eqref{lorentz}, and construct a new spacetime just reversing the time-orientation, the transformation $t\mapsto -t$ would allow
	one  to express this second spacetime as an \sstk with data  $(g_0,-\omega,\Lambda)$. Clearly, the Fermat structure of the latter will be the reverse $\tilde \Sigma$ of the original one.
	%This suggests that $\tilde \Sigma$ must be used to compute the past-pointing lightlike vectors, which can be done  as follows. 
	Then, the possibilities analogous to  Corollary~\ref{raclarations}  for  lightlike vectors can be summarized as follows.

	\begin{cor}[Past lightlike vectors and time reversal]\label{rtimereversal} Let
		$(t_0,x_0)\in \R\times M$ and  $v\in  T_{x_0}M\setminus\{0\}$.   Then,  the
		following cases  can occur:
		\begin{enumerate}[(a)]
			\item Case $\Lambda(x_0)>0$. The  tangent at $(t_0,x_0)$ vector  $(-F(-v),v)$ (recall $\tilde F(v)=F(-v)$)   is a past-pointing lightlike vector; moreover, 
			all past-pointing lightlike vectors in $\R\times T_{x_0}M$  can be written in
			this way. 
			\item Case $\Lambda(x_0)=0$. When $v\in -A_{x_0}$, the
			vector  $(-F(-v),v)$,
			is lightlike and past-pointing; moreover, all past-pointing lighlike vectors   can be written either in
			this way or as $(-\tau, v)$ with $\tau>0$.
			\item Case $\Lambda(x_0)<0$. One of the following exclusive
			alternatives occurs:
			\begin{enumerate}[(a1)]
				\item[(c1)] $v\in -A_{x_0}$. Then, there are exactly two
				past-pointing lightlike vectors $(-F(-v),v)$, $(-F_l(-v),v),
				-F_l(-v)<-F(-v)$, in $\R\times T_{x_0}M$  (recall that $F(-v)=\tilde F(v)$ and, formally,
				$F_l(-v)=\tilde F_l(v)=-F^{\hbox{\tiny frev}}(-v) =-F(v)$)  that project onto $v$.
				\item[(c2)] $v$ belongs to $-((A_E)_{x_0}\setminus A_{x_0})$. Then,
				there is exactly one past-pointing lightlike vector in
				$\R\times T_{x_0}M$, namely,  $(-F(-v),v)$ ($F(-v) = F_l(-v)$)  that projects onto $v$.
				\item[(c3)] $v$ does not belong to $-(A_E)_{x_0}$.
				Then,  no past-pointing lightlike vector in $\R\times M$
				projects onto $v$.
			\end{enumerate}
		\end{enumerate}
	\end{cor} 
	
	\begin{proof} 
		A vector $(-\tau,v)$ is past-pointing and lightlike if and only if $\tau>0$ and $(\tau,-v)$ is 
		future-pointing and lightlike. So, one should apply Corollary~\ref{raclarations} (or the last part of   Proposition~\ref{plightvectorsSSTK})  replacing $v$ with  $-v$. This change of sign transforms the assertions on $F$ and $F_l$ in 
		assertions on their reverse metrics $\tilde F$ and $\tilde F_l$ defined on $\tilde A=-A$ and $\tilde A_l=-A_l$ (and extendible to
		$\tilde A_E=-A_E$) as asserted in Proposition~\ref{reversewind}; moreover, notice that 
		the
		metric $F$ determines  $\tilde F$, 
		which can be used to give expressions only in terms of $F$ (instead of
		the quadruple $F, F_l, \tilde F, \tilde F_l$, see Proposition~\ref{pformalreverse} and Remark~\ref{rformalreverse}). 
	\end{proof}
	
	Finally, using the SSTK viewpoint, we will characterize the vectors in the indicatrix of $\Sigma$ that correspond to abnormal geodesics. Recall that, at each point of strong wind, the indicatrices $S_F^{m-1}$ and $S_{F_l}^{m-1}$ of   $F$
	and $F_l$ cover all $\Sigma$ but the abnormal ones
	(see Fig.~\ref{dis4}).  
	
	\begin{prop}[The common boundary of the indicatrices of $F$ and $F_l$ on $M_l$] Let $\Sigma$ be a wind Riemannian structure with associated triple $(g_0,\omega,\Lambda)$ and $x_0\in M$ such that $\Lambda(x_0)<0$. The intersection $S_0^{m-2}$ between the indicatrix $\Sigma_{x_0}$ and the boundary of the conic domain $(A_l)_{x_0}$ is  characterized by the equations: 
		\begin{align*}
			g_0(v,v)&=-\Lambda(x_0) , & \omega(v)&=\Lambda(x_0), & v\in T_{x_0}M,
		\end{align*}
		which define a  $(m-2)$-dimensional sphere obtained as the transversal  intersection of  a round
		$g_0$-sphere and  a hyperplane.
	\end{prop} 
	\begin{proof}
		The boundary of
		$(A_l)_{x_0}$ in $TM\setminus\mathbf{0}$ is given by the vectors
		$v\in T_{x_0}M$ such that the expressions  for $F$ and $F_l$ agree (i.e. the square root in \eqref{randers-kropina} and \eqref{rk2} vanishes)  
		%$\Lambda(x_0) g_0(v,v)+\omega(v)^2=0$
		and $-\omega(v)>0$. Moreover, a tangent vector $v$ belongs to the indicatrix iff the vector $(1,v)\in
		\R\times T_{x_0}M$ in the associated SSTK splitting  is lightlike (Fig.~\ref{dis4}). These two conditions yield: 
		\begin{align*}
			\Lambda(x_0)  g_0(v,v)+\omega(v)^2&=0,&
			g_0(v,v)+2\omega(v)-\Lambda(x_0) &=0
		\end{align*}
		which are equivalent to the required equations.  Transversality holds because of the Lorentzian restriction \eqref{lorentzian}.
		%$\Lambda+\| \omega \|_0^2>0$.  %yielding an
		%$(m-2)$-sphere $S_0^{m-2}$. 
	\end{proof}
	
	\begin{figure}[h]
		\includegraphics[scale=1,center]{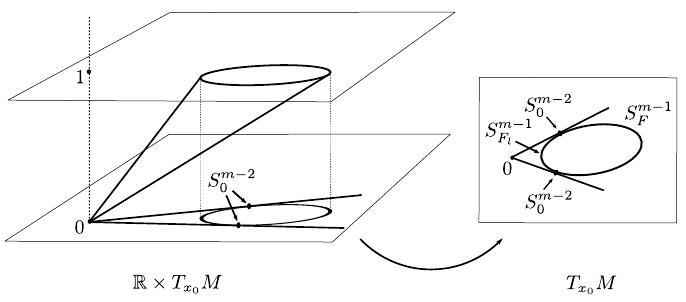}
		\caption{The intersection between the future-pointing light cone
			at $(t_0,x_0)$ and the slice $\{1\}\times T_{x_0}M$ projects onto
			the wind Finslerian structure $\Sigma_{x_0}=S_F^{m-1}\cup
			S_0^{m-2}\cup S_{F_l}^{m-1}$, ($m=2$)}\label{dis4}
	\end{figure}
	\subsection{Projection on  $\partial_t^\perp$ and interpretation of  $A_l$,
		$\tilde{A_l}$}\label{sh} The fact that the radicand in the
	expressions of $F$ and $F_l$ (formulas \eqref{randers-kropina} and
	\eqref{rk2}) may not be automatically positive  has  been
	interpreted above (possibilities $(c2)$  and $(c3)$  in  Corollaries 
	\ref{raclarations} and \ref{rtimereversal}). Let us go a step
	further by analysing the metric tensor in this radicand, that is,
	\begin{equation} \label{eh} h:=\Lambda
		g_0+\omega\otimes\omega,
	\end{equation}
	on $M$.
	The metric $h$ admits the following interpretation on the open
	subset  $M_{\Lambda\neq 0}=\{x\in M: \Lambda\neq 0\}$ where $\partial_t$ is not lightlike.
	\begin{prop}\label{gh} Let
		$p_\R^\perp : \R\times TM_{\Lambda\neq 0} \rightarrow \R\times
		TM_{\Lambda\neq 0}$ the natural projection on the bundle
		$\partial^\perp_t$, $g$-orthogonal  to $\partial_t$.
		Then, for any $v\in T_xM, x\in M_{\Lambda\neq 0}$:
		$$h(v,v)=\Lambda g(p_\R^\perp (0,v),p_\R^\perp (0,v)).$$
		Moreover, $h(v,v)/\Lambda^2= (F+F^{\hbox{\tiny frev}})^2(v)/4$ and, thus, $h/\Lambda^2$ is conformally invariant. 
	\end{prop}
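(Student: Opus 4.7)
The plan is a direct computation: decompose $(0,v)$ along $\partial_t$ and its $g$-orthogonal complement, and then evaluate $g$ on the orthogonal component. No conceptual obstacle is anticipated; the identity will drop out of the bilinear form once the projection is computed explicitly.

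First I would write the $g$-orthogonal splitting. Using \eqref{lorentz}, for any $(\tau_1,w_1),(\tau_2,w_2)\in \R\times T_xM$,
\begin{equation*}
g\bigl((\tau_1,w_1),(\tau_2,w_2)\bigr)=-\Lambda\tau_1\tau_2+\omega(w_1)\tau_2+\omega(w_2)\tau_1+g_0(w_1,w_2),
\end{equation*}
so in particular $g(\partial_t,\partial_t)=-\Lambda$ and $g((0,v),\partial_t)=\omega(v)$. On $M_{\Lambda\neq 0}$ the vector $\partial_t$ is non-null, hence
\begin{equation*}
(0,v)=\alpha\,\partial_t+p_\R^\perp(0,v),\qquad \alpha=\frac{g((0,v),\partial_t)}{g(\partial_t,\partial_t)}=-\frac{\omega(v)}{\Lambda}.
\end{equation*}
Therefore $p_\R^\perp(0,v)=\bigl(\omega(v)/\Lambda,\,v\bigr)$.

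Next I would evaluate $g$ on $p_\R^\perp(0,v)$ with itself. Plugging $\tau_1=\tau_2=\omega(v)/\Lambda$ and $w_1=w_2=v$ into the bilinear form,
\begin{equation*}
g\bigl(p_\R^\perp(0,v),p_\R^\perp(0,v)\bigr)
=-\Lambda\frac{\omega(v)^2}{\Lambda^2}+2\,\omega(v)\frac{\omega(v)}{\Lambda}+g_0(v,v)
=\frac{\omega(v)^2}{\Lambda}+g_0(v,v).
\end{equation*}
Multiplying by $\Lambda$ gives $\Lambda g_0(v,v)+\omega(v)^2=h(v,v)$ by the very definition \eqref{eh}, which is the claimed identity.

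The only point that requires any care is that $\Lambda(x)\neq 0$ is needed exactly to make the orthogonal projection $p_\R^\perp$ well defined (otherwise $\partial_t$ is lightlike and $\partial_t^\perp$ fails to be a complement). Since this hypothesis is built into the statement through $M_{\Lambda\neq 0}$, no obstacle remains, and the formula is established pointwise on $M_{\Lambda\neq 0}$ for every $v\in T_xM$.
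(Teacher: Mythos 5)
Your proof is correct and follows exactly the route of the paper's own argument: the paper's proof consists of the single line ``just apply $g$ to $p_\R^\perp(0,v)=(\omega(v)/\Lambda,v)$,'' and you have simply supplied the (correct) derivation of that projection formula together with the straightforward evaluation of the quadratic form. Nothing to add or correct.
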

	\begin{proof}  For the first assertion, apply $g$ to  $p_\R^\perp
		(0,v)=(\omega(v)/\Lambda,v)$. For the last one, use \eqref{randers-kropina} and  \eqref{rk2} (or \eqref{tau}) and recall that Fermat metrics are conformally invariant, Theorem~\ref{tfermatSSTK}. 
	\end{proof}
	That is, the metric $h$ on $M$ can be identified with a metric conformal
	to the original one $g$ restricted to
	$\partial_t^\perp$, being the conformal factor $\Lambda$ such that
	$h$ makes sense even in the limit case when $\partial_t$ is
	lightlike.  Recall that $\Lambda$ and $\omega$ satisfy \eqref{lorentzian},  then   we immediately get: 
	\begin{cor}\label{hcases} The metric
		$h$ on $M$ is Riemannian when $\Lambda>0$, degenerate when
		$\Lambda=0$ and it has coindex 1 (i.e., $-h$ is Lorentzian) when
		$\Lambda<0$.
	\end{cor}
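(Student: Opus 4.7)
The plan is to verify each of the three cases by pointwise diagonalization in a basis adapted to $\omega$, exactly as in the proof of Proposition~\ref{psstk}, and to use the key inequality \eqref{lorentzian}: $\Lambda+\|\omega\|_0^2>0$.

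First I would fix $p\in M$ and pick a $g_0$-orthonormal basis $e_1,\ldots,e_m$ of $T_pM$ such that $\omega(e_1)=\|\omega\|_0$ and $\omega(e_i)=0$ for $i\geq 2$. Then the matrix of $h$ in this basis is diagonal with entries
\[
h(e_1,e_1)=\Lambda+\|\omega\|_0^2,\qquad h(e_i,e_i)=\Lambda\ \ (i\geq 2).
\]
The case $\Lambda(p)>0$ is immediate, since then $h(v,v)=\Lambda g_0(v,v)+\omega(v)^2\geq \Lambda g_0(v,v)>0$ for every non-zero $v$; so $h$ is Riemannian on $\{\Lambda>0\}$. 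The case $\Lambda(p)<0$ follows from the diagonal form: the first entry is positive by \eqref{lorentzian} while the remaining $m-1$ entries equal $\Lambda<0$, so the signature is $(+,-,\ldots,-)$, which is exactly to say that $-h$ is Lorentzian of signature $(-,+,\ldots,+)$, i.e. $h$ has coindex one.

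The degenerate case $\Lambda(p)=0$ requires only the observation that \eqref{lorentzian} then forces $\omega_p\neq 0$, so $h_p=\omega_p\otimes\omega_p$ has rank one and is therefore degenerate (with a one-dimensional image and $(m-1)$-dimensional radical equal to $\ker\omega_p$).

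No step is a real obstacle; the only point to keep in mind is that the inequality \eqref{lorentzian} is precisely what rules out $h$ becoming ``too degenerate'': it prevents the first diagonal entry from vanishing when $\Lambda<0$ (so the coindex is exactly $1$, not larger) and it prevents $\omega$ from vanishing when $\Lambda=0$ (so $h$ is rank one rather than zero). Thus the conclusion follows directly, consistently with the assertion that it is trivial once $h$ is interpreted via Proposition~\ref{gh}.
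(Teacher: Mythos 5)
Your proof is correct and takes essentially the same route as the paper, which simply observes that the conclusion follows ``trivially'' from the expression $h=\Lambda g_0+\omega\otimes\omega$ together with the constraint \eqref{lorentzian}; your adapted-basis diagonalization just makes explicit the same computation already carried out in the proof of Proposition~\ref{psstk}.
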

	%\begin{rem}
	In the region $M_l$ where $\Lambda <0$, the expressions of $F$ and
	$F_l$ (resp. $\tilde F$ and $\tilde F_l$)  have been  well-defined on
	all  $A_E$ (resp. $\tilde A_E=- A_E$)  in Convention~\ref{caestar}.
	Then, one has directly the following  characterizations of the lightlike
	vectors for $-h$ on $M_l$. 
	
	\begin{prop}\label{htimeoriented} 
		For any $v \in
		TM_l\setminus\mathbf{0}$,  the following properties are equivalent: 
		
		(i) $h(v,v)=0$, i.e., $v$ is lightlike for the Lorentzian metric $-h$.
		
		(ii) $v$ belongs to the boundary of $A_E\cup \tilde A_E$ in $TM_l\setminus\mathbf{0}$,
		
		(iii) either $v\in A_E$ and $F(v)=F_l(v)$ or $v\in \tilde A_E$ and
		$\tilde F(v)=\tilde F_l(v)$, and
		
		(iv) either $v\in A_E$ and $F(v)=-\frac{g_0(v,v)}{\omega (v)}$ or
		$v\in \tilde A_E$ and $\tilde F(v)=\frac{g_0(v,v)}{\omega (v)}$.
		
		Consistently, the Lorentzian metric $-h$ is time-oriented so that a lightlike vector
		$v$ for $-h$ will be defined as future-pointing %(resp. past-pointing) 
		if it
		belongs to the boundary of $A_E$. % (resp. $\tilde A_E$). 
	\end{prop}
	Therefore, $A_l$ (resp. $\tilde A_l$) can be interpreted as the set of
	all the future-pointing (resp. past-pointing) timelike vectors for
	$-h$; analogously, $A_E$ (resp. $\tilde A_E$) is the set of all
	the future-pointing (resp. past-pointing) causal vectors.
	
	%\end{rem}

	The following property of lightlike geodesics of $(\R\times M,g)$ in terms
	of $h$ will be useful later. 
	Notice first that 
	if $\gamma=(\zeta,\sigma)$ is a lightlike curve in $(\R\times M, g)$ then 
	\begin{equation}\label{eextra}
		g(\dot\gamma,\partial_t)=-\Lambda \dot \zeta +\omega(\dot\sigma)=\left\{
		\begin{array}{lll}
			\omega(\dot \sigma) & & \hbox{if} \, \Lambda=0 \\
			\mp \sqrt{\Lambda g_0(\dot \sigma,\dot \sigma)+\omega(\dot \sigma)^2} & &
			\hbox{if} \, \Lambda\neq 0,
		\end{array}
		\right.
	\end{equation}
	(take into account \eqref{tau}). In particular, the lightlike vectors for $-h$ in the region $M_l$ can be  interpreted as the projection of lightlike vectors  of the spacetime orthogonal to $\partial_t$, in addition to the characterizations in  Proposition ~\ref{htimeoriented}. 
	Now, recall that if $\gamma$ is a geodesic
	of $(\R\times M,g)$, then $g(\dot\gamma,\partial_t)$ is constant along
	$\gamma$ (as $\partial_t$ is Killing).  
	
	\begin{lemma}\label{lightgeo}
		For any lightlike geodesic $\gamma=(\zeta,\sigma)$ of $(\R\times M,g)$,
		the constant $C=g(\dot\gamma,\partial_t)$ satisfies $C^2=h(\dot
		\sigma,\dot \sigma)$. Moreover, if $C=0$ either (i) $\sigma$ is constant, $\sigma\equiv x_0\in M$, and the integral curve of $K$ that projects onto $x_0$ is a lightlike pregeodesic (that is, $d\Lambda$ vanishes on the kernel of $\omega_{x_0}$)
		or (ii)  $\sigma$ remains in the closure $\bar M_l$ of $M_l$ (in particular, if $M_l=\emptyset$ this case cannot hold), $\sigma$ can reach the boundary $\partial M_l$ only at isolated points  
		(where the $g_0$-acceleration $D^{g_0}\dot \sigma/ ds$ does not vanish) and,  whenever $\sigma$ remains in $M_l$ ($-h$ is Lorentzian on  $\sigma$), $\sigma$ is a lightlike geodesic of
		$h/\Lambda$. 
	\end{lemma}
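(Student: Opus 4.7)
The plan is as follows. The identity $C^2=h(\dot\sigma,\dot\sigma)$ follows from \eqref{eextra} by squaring when $\Lambda\neq 0$, while for $\Lambda=0$ one has directly $C=\omega(\dot\sigma)$ and $h=\omega\otimes\omega$. I then assume $C=0$ and split into the two cases of the statement.

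For alternative (i), if $\sigma\equiv x_0$, non-triviality of $\dot\gamma=(\dot\zeta,0)$ forces $\dot\zeta\not\equiv 0$, and the lightlike condition $-\Lambda(x_0)\dot\zeta^2=0$ then gives $x_0\in\MC$; so $\gamma$ is a reparametrization of the integral curve of $K$ through $x_0$ and is a pregeodesic. To see that the integral curve of $K$ through a point of $\MC$ is a pregeodesic iff $d\Lambda$ vanishes on $\ker\omega$ there, I use the Killing identity $2g(\nabla_K K,\,\cdot\,)=d\Lambda$ (obtained from $g(K,K)=-\Lambda$ together with the antisymmetry of $\nabla_\cdot K$), combined with the fact that at $x_0\in\MC$ the form $g(K,\,\cdot\,)$ restricts to $\omega$ on $T_{x_0}M$: then $\nabla_K K\parallel K$ at $x_0$ iff $d\Lambda|_{x_0}\parallel\omega|_{x_0}$.

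For alternative (ii), $\sigma$ non-constant, I first show $\sigma\subset\bar M_l$: at any $s_1$ with $\Lambda(\sigma(s_1))>0$, $h$ is Riemannian on a neighbourhood of $s_1$ (Cor.~\ref{hcases}), so $h(\dot\sigma,\dot\sigma)=0$ forces $\dot\sigma\equiv 0$ there, and then the lightlike condition forces $\dot\zeta\equiv 0$ too, contradicting non-triviality (the edge sub-case $\sigma(s_1)\in(M\setminus\bar M_l)\cap\MC$ is handled by the same uniqueness-of-geodesics argument used in the boundary analysis below). Wherever $\sigma\in M_l$, $K$ is spacelike and $\dot\gamma\in K^\perp$ (since $C=0$), so by O'Neill's theorem for the semi-Riemannian Killing submersion $\pi:L\to L/\langle K\rangle\simeq M$ --- whose quotient metric is computed to be $h/\Lambda$ by pushing forward the horizontal lift $v\mapsto(\omega(v)/\Lambda,v)$ --- $\sigma$ is a geodesic of $(M_l,h/\Lambda)$, and $g(\dot\gamma,\dot\gamma)=h(\dot\sigma,\dot\sigma)/\Lambda=0$ makes it lightlike for $h/\Lambda$.

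The main obstacle is the analysis of the boundary contacts. At any $s_0$ with $\sigma(s_0)\in\partial M_l$, $\Lambda\circ\sigma(s_0)=0$, and combining $h(\dot\sigma,\dot\sigma)=\omega(\dot\sigma)^2=0$ with $g_0(\dot\sigma,\dot\sigma)=g(\dot\gamma,\dot\gamma)=0$ forces $\dot\sigma(s_0)=0$, whence $\dot\zeta(s_0)\neq 0$. The spatial components of the geodesic equation at $s_0$ then read
\[
D^{g_0}\dot\sigma/ds\,(s_0)=\ddot\sigma(s_0)=-\dot\zeta(s_0)^2\,\Gamma^i_{tt}(g)\,\partial_i\bigr|_{\sigma(s_0)},
\]
and an elementary inversion of the matrix of $g$ yields $\Gamma^i_{tt}(g)|_{\sigma(s_0)}=\tfrac12((g_0)^{ij}-W^iW^j/\|\omega\|_0^2)\partial_j\Lambda$, with $W$ as in Convention~\ref{ctriple}. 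This vector vanishes iff $d\Lambda\propto\omega$ at $\sigma(s_0)$; such proportionality would, by alternative (i) and uniqueness of geodesics applied to $\dot\gamma(s_0)\parallel K(\sigma(s_0))$, force $\sigma$ to be constant, contradicting alternative (ii). Hence $D^{g_0}\dot\sigma/ds\,(s_0)\neq 0$. The same formula gives $d\Lambda(\ddot\sigma(s_0))=-\tfrac12\dot\zeta(s_0)^2\bigl(|\nabla^{g_0}\Lambda|_0^2-d\Lambda(W)^2/\|\omega\|_0^2\bigr)$, which is strictly negative by the strict Cauchy--Schwarz inequality for $\nabla^{g_0}\Lambda$ and $W$ in $g_0$ (strict precisely because $d\Lambda\not\propto\omega$ in alternative (ii)); Taylor-expanding $\Lambda\circ\sigma$ around $s_0$ then shows $\Lambda\circ\sigma(s)<0$ for $s\neq s_0$ close to $s_0$, so $s_0$ is isolated. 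The delicate feature of the argument is that the exclusion of the proportionality $d\Lambda\propto\omega$ simultaneously produces the non-vanishing of the $g_0$-acceleration and the strict sign that yields isolation.
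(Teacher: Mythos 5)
Your proof is correct and follows essentially the same route as the paper's: squaring \eqref{eextra} for the identity $C^2=h(\dot\sigma,\dot\sigma)$, the Killing identity $\nabla^g(g(K,K))=-2\nabla_KK$ for the constant case, the semi-Riemannian submersion $\pi\colon(\R\times M_l,g)\to(M_l,h/\Lambda)$ for the interior of $M_l$, and, at boundary contacts, the vanishing of $\dot\sigma$ together with the non-vanishing of the acceleration obtained by excluding $d\Lambda\propto\omega$ via uniqueness of geodesics. Your explicit inversion giving $\Gamma^i_{tt}=\tfrac12((g_0)^{ij}-W^iW^j/\|\omega\|_0^2)\partial_j\Lambda$ and the resulting strict inequality $d\Lambda(\ddot\sigma(s_0))<0$ are a correct and slightly more quantitative variant of the paper's appeal to the geodesic system, with the bonus that the isolation of the contacts with $\partial M_l$ (and the exclusion of degenerate points outside $\bar M_l$) follows from the strict local maximum of $\Lambda\circ\sigma$ rather than being left implicit in the non-vanishing of the acceleration.
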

	\begin{proof}  For the first assertion,  recall
		that  the expression \eqref{eextra} is equal to $\sqrt{h(\dot\sigma,\dot
			\sigma)}$ up to a sign, i.e. $h(\dot \sigma,\dot \sigma)=C^2$.  
		
		For the last
		assertion, if $\sigma$ is  constantly equal to $x_0$,  then $\gamma$ can be a geodesic
		if and  only if the corresponding orbit of $\partial_t$ can be reparametrized as a geodesic. This happens if and only if the  gradient $\nabla^g\Lambda$ of $\Lambda$ with respect to the metric $g$, 
		when projected on $TM$ by using the differential of $\pi: \R\times M \rightarrow M$, is  $0$.  In fact,  since for  a  Killing vector field   $K$, $\nabla^g (g(K,K))=-2\nabla_K K$, an   orbit $\gamma(t)=(t,x_0)$ 
		of $\partial_t$ is a pregeodesic if and only if there exists a function $\lambda$ such that $\nabla^g \big(g(\partial_t,\partial_t)\big )|_{(t,x_0)}= \lambda(t) \partial_t|_{(t,x_0)}$, for all $t\in \R$. 
		Recalling that $ -  g(\partial_t,\partial_t)(t,x_0)=\Lambda(x_0)$, for all $t\in\R$, and that $\omega_{x_0}$ is the one-form on $M$ $g_0$-equivalent to the $g$-orthogonal projection of $\partial_t$ on $TM$, this equation
		is equivalent to the condition that $\de\Lambda(x_0)$ is proportional to $\omega_{x_0}$, as required.

		Finally,
		observe that   a lightlike geodesic $\gamma=(\zeta,\sigma)$ with
		$\sigma$ non-constant can have $C=0$ 
		only in $\bar M_l$ and, if there exists some $s_0$ such that $\sigma(s_0):=x_0\in \partial M_l$, then: (i) $\dot \sigma(s_0)=0$ (otherwise, $\gamma$ cannot be both, lightlike and orthogonal to $\partial_t$), (ii) 
		$d\Lambda_{x_0}$ cannot be proportional to $\omega_{x_0}$ (otherwise, by uniqueness of geodesics $\gamma$ would be \bw a reparametrization of \ew an integral curve of $\partial_t$), and (iii) $\left(D^{g_0}\dot \sigma/ds)\right)(s_0)\neq 0$.
		%(this is a straightforward computation in coordinates, see [IJTP02, formula (13)] and notice that all 
		To see this, observe that $\gamma$ is a geodesic of $(\R\times M,g)$ if and only if
		\begin{align*}
			&\frac{D^{g_0}}{ds}\dot \sigma= -\ddot \zeta\,\omega^\sharp+\dot \zeta\,\hat\Omega(\dot \sigma)-\frac 12 \dot \zeta^2\,\nabla^{g_0} \Lambda,\\
			&\omega(\dot \sigma)-\Lambda\, \dot \zeta= {\rm const.}\nonumber 
		\end{align*}
		where $D^{g_0}/ds$ and $\nabla^{g_0}$ are respectively the covariant derivative of $(M,g_0)$ along $\sigma$ and the gradient with respect to $g_0$, $\omega^\sharp$ is the vector field $g_0$-equivalent to 
		$\omega$ and $g_0(w,\hat\Omega(v))={\rm d}\omega(v,w)$ for every $v,w\in TM$. The above equations can be obtained for example using that the geodesics are the critical points of the energy functional  (or using the explicit formulas  (13) and (14) in \cite{ijgmmp}, which are valid for arbitrary $\Lambda$).  
		Then $\left(\frac{D^{g_0}}{ds}\dot \sigma\right)(s_0)\not=0$ because otherwise $\ddot \zeta(s_0)\,\omega^\sharp_{x_0}=-\frac 12 \dot \zeta^2(s_0)\,\nabla^{g_0} \Lambda(x_0)$.  As $\dot \zeta^2(s_0)\neq 0$, because $\gamma$ is lightlike, we conclude that  $\sigma$ is constant by  $(i)$.  
		%the terms on the right-hand side vanish except the first one, which is non-zero because of (ii) and, even more, it is transversal to $\Lambda=0$).
		Now, in the region $M_l$,  
		the map
		$\pi:(\R\times M_l,g)\rightarrow (M_l,h/\Lambda)$ is a
		semi-Riemannian submersion (see Proposition~\ref{gh}) and therefore,   lightlike  geodesics orthogonal  to
		the fibers project into  (and  are  all the lifts  of)    lightlike  geodesics of $(M_l,h/\Lambda)$.  
	\end{proof}
	Stationary spacetimes have been studied      in many mathematically oriented     papers, see e.g. \cite{FoGiMa95, MasPic98, JaSan08, ijgmmp, CaJaMa, GHWW, CapJavSan10, CapGerSan12}, while the case $\Lambda \equiv 0$, which includes global Brinkmann decompositions \cite{BlSaSe13}, has been considered recently in \cite{BaCaFl}, where the authors study geodesic connectedness.
	
	\subsection{Fundamental tensors for $F, F_l$}
	Recall that when $\Lambda>0$, $F$ is a Randers metric with a
	well-known positive definite fundamental tensor (see for example
	\cite[Corollary 4.17]{JavSan11}) and, when $\Lambda=0$, then
	$F$ is a  Kropina metric  with also a
	positive-definite fundamental tensor (see \cite[Corollary
	4.12]{JavSan11}). Next, we focus on the region $\Lambda<0$ and the
	domain $A_l$ of $TM$.
	
	The
	fundamental  tensors  of $F$ and $F_l$ can be
	computed explicitly from the expressions  \eqref{randers-kropina} and \eqref{rk2} by
	taking into account that these metrics can be regarded as
	canonical $(F_0,\omega)$-ones, i.e. they can be written as
	$F_0\cdot \phi(\omega/F_0)$ being $F_0$ the root of $g_0$ and
	$$\phi(z)=\frac{1}{-z \pm\sqrt{z^2+\Lambda}},$$ see
	\cite[\S 4.2.1]{JavSan11}. In any case, a simplified
	computation can be accomplished.
	In fact, it is enough  to study the fundamental
	tensor of the Randers type metrics:
	$$
	F^\epsilon =   -\epsilon \sqrt{\tilde h} + \beta \quad \quad
	\hbox{with} \; \tilde h = \beta \otimes \beta -\bar g_0 ,
	$$
	where $\beta$ is the one-form  $\omega/\Lambda$,
	and $\bar g_0$ the  Riemannian metric $g_0/|\Lambda|$,   
	so that $\tilde h$
	is Lorentzian with coindex $1$ and $\beta(v)>0$ on $A_l$.  The
	value of $\epsilon$ is $1$ for $F$ and $-1$ for $F_l$.
	\begin{prop} 
		Let  $G^\epsilon, \epsilon=\pm 1$,  be the
		fundamental tensor of $F^\epsilon$, i.e.,  of $F$ for $\epsilon=1$
		and $F_l$ for $\epsilon=-1$. Then, with the above notation for
		$\tilde h$ and  $\beta$:
		\begin{equation}\label{G}
			G^\epsilon_v(w,w)= -\epsilon F^\epsilon(\tilde{v}) \left(
			\tilde h(w,w) -\tilde h(\tilde{v},w)^2 \right) +\left( -\epsilon \tilde h(\tilde{v},w)+\beta(w)\right)^2
		\end{equation}
		for all $v\in  A_l$ and $w\in T_{x_0}M$, where
		$\tilde{v}=v/\sqrt{\tilde h (v,v)}$ on $A_l$.
	\end{prop}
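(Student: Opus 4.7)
The plan is to compute $G^\epsilon_v$ directly from its definition as the Hessian of $\tfrac12 (F^\epsilon)^2$, exploiting the simple factorization $F^\epsilon = -\epsilon\alpha + \beta$ with $\alpha(v):=\sqrt{\tilde h(v,v)}$. On $A_l$ one has $\tilde h(v,v)>0$, so $\alpha$ is smooth and
\[
\partial_u \alpha\big|_v \;=\; \frac{\tilde h(v,u)}{\alpha(v)} \;=\; \tilde h(\tilde v, u),
\qquad \tilde v := v/\alpha(v).
\]
Consequently, the first derivative of $F^\epsilon$ is
\[
dF^\epsilon_v(u) \;=\; -\epsilon\,\tilde h(\tilde v, u) + \beta(u).
\]

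Next I would use $\partial_w(F^\epsilon)^2 = 2 F^\epsilon\, dF^\epsilon(w)$ and differentiate once more in direction $u$ to get
\[
G^\epsilon_v(u,w) \;=\; dF^\epsilon_v(u)\,dF^\epsilon_v(w) \;+\; F^\epsilon(v)\,\partial_u\bigl(dF^\epsilon_v(w)\bigr).
\]
The second derivative term reduces to computing $\partial_u\!\bigl(\tilde h(v,w)/\alpha(v)\bigr)$, which by the chain rule and $\partial_u \tilde h(v,w)=\tilde h(u,w)$ equals
\[
\frac{\tilde h(u,w)\,\alpha(v)^2 - \tilde h(v,w)\,\tilde h(v,u)}{\alpha(v)^3} \;=\; \frac{\tilde h(u,w) - \tilde h(\tilde v,u)\,\tilde h(\tilde v,w)}{\alpha(v)}.
\]
Multiplying by $-\epsilon$ and substituting, together with the homogeneity identity $F^\epsilon(v)/\alpha(v) = F^\epsilon(\tilde v)$, yields
\[
G^\epsilon_v(u,w) \;=\; \bigl(-\epsilon\tilde h(\tilde v,u)+\beta(u)\bigr)\bigl(-\epsilon\tilde h(\tilde v,w)+\beta(w)\bigr) \;-\; \epsilon\, F^\epsilon(\tilde v)\bigl(\tilde h(u,w) - \tilde h(\tilde v,u)\,\tilde h(\tilde v,w)\bigr).
\]

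Setting $u=w$ then gives exactly \eqref{G}. There is no genuine obstacle—the computation is mechanical—but the only delicate point is sign bookkeeping: the sign of $\epsilon$ appears both in the radial derivative of $\alpha$ and inside $F^\epsilon$, and the clean form of \eqref{G} arises only after one recognises the factor $F^\epsilon(v)/\alpha(v)$ and rewrites it via homogeneity as $F^\epsilon(\tilde v)$, which absorbs the remaining dependence on $\alpha(v)$ into the projective direction $\tilde v$.
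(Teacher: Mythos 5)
Your computation is correct and yields exactly \eqref{G}; the sign bookkeeping and the use of homogeneity to replace $F^\epsilon(v)/\alpha(v)$ by $F^\epsilon(\tilde v)$ are both handled properly, and the whole argument is legitimate on $A_l$ precisely because $\tilde h(v,v)>0$ there, so $\alpha=\sqrt{\tilde h(\cdot,\cdot)}$ is smooth along the directions involved. The paper, by contrast, does not compute anything: it simply observes that $F^\epsilon=\alpha\,\phi(\beta/\alpha)$ with $\phi(s)=-\epsilon+s$ and invokes the general formula for the fundamental tensor of $(F_0,\omega)$-metrics from \cite[Prop.~4.10]{JavSan11}, noting that that formula ``holds also in this case,'' i.e.\ when the underlying quadratic form $\tilde h$ is Lorentzian of coindex $m-1$ rather than positive definite. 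So the two routes differ genuinely in presentation: the paper's is shorter but leaves the reader to check that the cited $(\alpha,\beta)$-metric identity survives the change of signature, whereas your direct Hessian computation is self-contained and makes that point transparent, since nothing in the chain rule ever uses positivity of $\tilde h$ beyond $\tilde h(v,v)>0$ on the conic domain. Either is acceptable; yours buys independence from the external reference at the cost of a few more lines.
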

	
	\begin{proof}
		It is enough to observe that  \cite[Prop. 4.10]{JavSan11}, with $\phi(s)=-\epsilon+s$,  holds
		also in this case.
	\end{proof}
	\begin{rem}
		Observe that \eqref{G} can be used to prove directly that $F$ and
		$F_l$ are, respectively, conic Finsler and Lorentzian Finsler in
		their  domains $A$ and $A_l$.  Focusing on
		$A_l$,  clearly $G^\epsilon_v(v,v) =F^\epsilon(v)^2>0$; %(recall that the first
		%term on the right-hand side of \eqref{G} vanishes, and $\bar
		%\omega(v),\bar h(v,v)>0$ for all $v\in A_l$)
		therefore, the
		space of the vectors $u\in T_{x_0}M$ which are $G^\epsilon_v$-orthogonal to
		$v$ is transversal to $v$ and has dimension $m-1$.  Moreover,
		setting $v\in A_l$:
		$$
		G^\epsilon_v(v,u)=0 \quad \Leftrightarrow \quad \beta(u)= \epsilon
		\tilde h(\tilde{v},u) \quad \quad \quad \left(\Leftrightarrow \beta(u)=- \frac{ \epsilon\bar g_0(\tilde{v},u)}{1 -\epsilon\beta (\tilde{v})}\right).
		$$
		By using repeatedly this equivalence,  if $u$ is $G_v$-orthogonal
		to $v$, \eqref{G} becomes:
		\[G^\epsilon_v(u,u)= -\epsilon F^\epsilon(\tilde{v})\left(
		\tilde h(u,u) -\tilde h(\tilde{v},u)^2 \right)
		=  \epsilon F^\epsilon(\tilde{v})\bar g_0(u,u),
		\]
		and the result follows as  $F^\epsilon(\tilde v)>0$.
	\end{rem}
	\section{The case of causal $K$: Randers-Kropina metrics}\label{kroran}
	Next,  we focus on the
	case of an \sstk splitting when $K$ is causal (i.e., $\Lambda\geq  0$), so that its Fermat structure becomes a Randers-Kropina
	metric $F$ according to Definition~\ref{dranderskropina}. 
	In particular,  
	$A_l=\emptyset$,  
	and $A_x=T_xM\setminus\{0\}$ iff $\Lambda(x)>0$ while $A_x$ is an open half-space in $T_xM$ iff $\Lambda(x)=0$ (recall  Definition~\ref{ae} and Proposition~\ref{plightvectorsSSTK}).  
	Therefore, $F_l$ and $A_E$ will not be used  and we will consider  $F$-admissibility rather than  notions as wind curves ---the wind balls will be also treated in a way similar to the classical Finslerian one.  
	Our aim is to show that, on the one hand, $F$ can be used to describe the causality of
	the spacetime $(\R\times M,g)$ and, on the other hand, the known properties on causality of spacetimes allow  us to obtain properties of the associated Finslerian separation $d_F:M\times M\rightarrow [0,+\infty]$.
	
	\subsection{Characterization of the chronological relation}\label{Fsepa}  Following \cite[Definitions 3.6, 3.8]{JavSan11},  for any conic pseudo-Finsler metric  $F\colon A\subseteq TM\to [0,+\infty)$  on $M$ and any $x,y\in M$, 
	one says that {\em $x$ $F$-precedes $y$}, written $x\prec y$,
	if there is an $F$-admissible curve from $x$ to $y$.
	Here,  a curve  is said {\em $F$-admissible} consistently with   Definition~\ref{sigmadmissible}, i.e., if
	its velocity lies in the  domain $A$ of $F$, so that $x\prec y$  iff $\Omega^A_{x,y}\neq \emptyset$.
	%The  {\em Finslerian separation}, or just {\em $F$-separation},   {\em from $x$ to $y$},  denoted $d_F(x,y)$, is
	%\[d_F(x,y)=\begin{cases}\displaystyle\inf_{\gamma\in \Omega^A_{x,y}}\ell_F(\gamma)&\text{if $x\prec y,$}\\
	%             +\infty&\text{otherwise.}
	%          \end{cases}\]
	We recall  that the Finslerian separation $d_F$ (Definition~\ref{from41})  is non-negative  and  satisfies a triangle inequality, namely,  $d(x,z)\leq d(x,y)+d(y,z)$ for all $x,y,z\in M$, but it is non-symmetric  and $d_F(x,x)$ can be positive  \cite[Proposition 3.9]{JavSan11};    
	in the case that a standard Finsler metric is regarded as the conic Finsler metric of  a wind  Finslerian  structure $\Sigma$,
	the balls  $B_F^+(x,r)$ and $B_F^-(x,r)$  introduced in Definition~\ref{from41}  agree
	with the wind balls $B_{\Sigma}^+(x,r)$ and $B_{\Sigma}^-(x,r)$ of
	$\Sigma$ (see  Definition~\ref{sigmaballs}  and notice that $F_l=+\infty$ in this case).    Both of them are open subsets and constitute a basis for the topology  of $M$   in the standard Finsler case and  these properties are  
	generalizable to the wind  Finslerian  case (recall  Proposition~\ref{riemlowerbound}). 
	However,  the closures $\bar B_F^\pm(x,r)$ of these balls cannot be obtained merely replacing the strict equalities by \bw non-strict \ew ones (see  Corollary~\ref{cclballs} below). 
	
	Now, let us focus on the   $F$-separation
	$d_F$ of the conic Finsler metric $F$ associated with an \sstk splitting with $K$ causal.
	The chronological relation can be characterized in a simple way.
	\begin{prop}\label{bolas} For any \sstk splitting with causal $K$:
		$$(t_0,x_0)\ll (t_1,x_1) \quad  \Leftrightarrow \quad d_F(x_0,x_1)<t_1-t_0 ,$$
		for every $x_0,x_1\in M$ and $t_0,t_1\in\R$. Therefore:
		\begin{equation}\begin{array}{l}
				\label{bolas0} I^+(t_0,x_0)= \{(t,y): d_F(x_0,y)<t-t_0\}, \\
				I^-(t_0,x_0)= \{(t,y): d_F(y,x_0)<t_0-t\}.
		\end{array}\end{equation}
		Equivalently, considering $d_F$-forward and backward balls
		\begin{equation*}I^+(t_0,x_0)= \cup_{s> 0}\{t_0+s\}\times B_F^+(x_0,s), \quad
			I^-(t_0,x_0)= \cup_{s> 0}\{t_0-s\}\times B_F^-(x_0,s).
		\end{equation*}
	\end{prop}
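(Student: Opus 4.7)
\emph{Plan.} The two displays for $I^\pm$ in \eqref{bolas0} are direct rewrites of the claimed equivalence
\[(t_0,x_0)\ll(t_1,x_1)\ \Longleftrightarrow\ d_F(x_0,x_1)<t_1-t_0,\]
and the ball formulation is just the definition of $B^\pm_F$, so the whole proposition reduces to this equivalence. The pointwise input is the characterization given in Proposition~\ref{plightvectorsSSTK} and Remark~\ref{raclarations}: since $\Lambda\geq 0$, a vector $(\tau,v)\in\R\times T_xM$ is future-pointing timelike iff either $\Lambda(x)>0$ and $\tau>F(v)$ (the value $v=0_x$ allowed), or $\Lambda(x)=0$, $v\in A_x$ and $\tau>F(v)$.

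\emph{($\Leftarrow$).} Given $d_F(x_0,x_1)<t_1-t_0$, pick an $F$-admissible curve $x\colon[a,b]\to M$ from $x_0$ to $x_1$ with $\ell_F(x)<t_1-t_0$, set $c:=(t_1-t_0-\ell_F(x))/(b-a)>0$, and define
\[t(s):=t_0+\int_a^s\!\bigl(F(\dot x(\sigma))+c\bigr)\,d\sigma.\]
Then $t(a)=t_0$, $t(b)=t_1$, and $\dot t(s)=F(\dot x(s))+c>F(\dot x(s))$ at every $s$, so by the pointwise characterization the curve $(t,x)\colon[a,b]\to\R\times M$ is future-pointing timelike joining $(t_0,x_0)$ to $(t_1,x_1)$.

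\emph{($\Rightarrow$).} Fix a future-pointing timelike curve $\gamma$ from $(t_0,x_0)$ to $(t_1,x_1)$. As $t$ is a temporal function, $\dot t>0$, so reparametrize by $t$ to write $\gamma(t)=(t,x(t))$, $t\in[t_0,t_1]$, $x(t_0)=x_0$, $x(t_1)=x_1$. At every $t$ the characterization forces either (i) $x'(t)\in A_{x(t)}$ with $F(x'(t))<1$, or (ii) $x'(t)=0$ and $\Lambda(x(t))>0$; case (ii) is forbidden in the Kropina region $\{\Lambda=0\}$. The closed set $Z:=\{t:x'(t)=0\}$ is therefore contained in $x^{-1}(\{\Lambda>0\})$, and $x$ is locally constant on $Z$. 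If $Z=\emptyset$, then $x$ itself is $F$-admissible and $\ell_F(x)=\int_{t_0}^{t_1} F(x'(t))\,dt<t_1-t_0$, which already gives $d_F(x_0,x_1)<t_1-t_0$. Otherwise, replace $x$ by an $F$-admissible curve $\tilde x$ with the same endpoints, coinciding with $x$ outside an arbitrarily small neighborhood $V\subset x^{-1}(\{\Lambda>0\})$ of $Z$, and with $\dot{\tilde x}\neq 0$ throughout. Inside $V$ one has $A=TM\setminus\mathbf 0$ and $F$ extends continuously by $F(0)=0$, so a sufficiently $C^0$-small perturbation keeps $F(\dot{\tilde x}(t))<1$ pointwise; then $\ell_F(\tilde x)<t_1-t_0$ and $d_F(x_0,x_1)\leq\ell_F(\tilde x)<t_1-t_0$.

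\emph{Main obstacle.} The delicate step is constructing $\tilde x$ when $Z$ is topologically complicated (e.g., a Cantor set of positive measure). Locally in a chart around $Z\cap\{\Lambda>0\}$ with $m\geq 2$, one adds a correction of the form $\varepsilon\bigl(\phi_1(t)V_1+\phi_2(t)V_2\bigr)$ with $V_1,V_2$ pointwise independent, $\phi_j$ vanishing at the chart's $t$-endpoints, and $(\phi_1',\phi_2')$ never simultaneously zero (e.g.\ a tuned sine/cosine pair); this forces $\dot{\tilde x}\neq 0$ inside the bump while $\ell_F$ changes by $O(\varepsilon)$. Patching finitely many such local perturbations via a partition of unity on a finite open cover of the compact set $x(Z)\subset\{\Lambda>0\}$ yields the global $\tilde x$. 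The one-dimensional case is trivial, as the Kropina region precludes timelike loops and the $\{\Lambda>0\}$ region is essentially Riemannian.
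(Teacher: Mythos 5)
Your argument is correct and follows essentially the same route as the paper: the ($\Leftarrow$) direction lifts an $F$-admissible curve by exactly the paper's choice of time component ($t(s)=t_0+\ell_F(\sigma|_{[0,s]})+\varepsilon s$, which is your $\dot t=F(\dot x)+c$), and the ($\Rightarrow$) direction projects a timelike curve parametrized by $t$ and removes the zeros of the velocity by a small perturbation supported where $\Lambda>0$. The only divergence is that you spell out the perturbation which the paper delegates to a footnote (citing \cite{FlSan08} for $m\geq 2$ and piecewise smooth curves for $m=1$); two small points there: the perturbation must be $C^1$-small rather than $C^0$-small (your explicit $O(\varepsilon)$ bump is, so this is only a slip of terminology), and the condition that $(\phi_1',\phi_2')$ never vanish simultaneously only kills zeros of $\dot{\tilde x}$ at points where $x'$ itself vanishes, so one still needs a generic choice of $\varepsilon$ (or the Sard-type argument of the cited reference) to exclude accidental cancellations at points where $x'$ is small but nonzero.
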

	\begin{proof}
		Recall that a vector $(\tau,v)\in\R\times TM$ is timelike and future-pointing if and only if
		$\tau>F(v)$ (see Corollary~\ref{raclarations} cases (a) and (b)).  
		
		If $(t_0,x_0)\ll (t_1,x_1)$, then there exists a future-pointing timelike curve $\gamma=(t,\sigma):[0,1]\rightarrow \R\times M$ from $(t_0,x_0)$ to $(t_1,x_1)$ such that $\dot t>F(\dot x)$  and, perturbing the curve when needed,
		we can assume that $\dot x(s)\not=0$ for every $s\in [0,1]$\footnote{\label{foot4.1}This is necessary as we are assuming here that the speed of an $F$-admissible curve does not vanish (this is somewhat different to our approach 
			in \cite{CapJavSan10}). However, it is easy to check when the dimension $m+1$ of the spacetime is $\geq 3$ that $\sigma$ can be chosen with always non-vanishing speed (for ex., see the proof of \cite[Prop. 3.2]{FlSan08})). 
			For the case $m=1$, this is obvious as piecewise smooth curves can be used here.}  Then by integration, we get
		$t_1-t_0> \ell_F(\sigma)$, i.e.  $d_F(x_0,x_1)<t_1-t_0$.
		
		Conversely, if $d_F(x_0,x_1)<t_1-t_0$, choose an $F$-admissible   curve $\sigma:[0,1]\rightarrow M$ from $x_0$ to $x_1$ such that $d_F(x_0,x_1)\leq \ell_F(\sigma)<t_1-t_0$. Then the curve $(t,\sigma):[0,1]\rightarrow \R\times M$,
		where $t(s)=t_0 + \ell_F(\sigma|_{[0,s]})+\varepsilon s$ and $\varepsilon=t_1-t_0 -\ell_F(\sigma ) $,  is a timelike future-pointing  curve from $(t_0,x_0)$ to $(t_1,x_1)$.
		
		The remainder is then straightforward.
	\end{proof}

	\subsection{Continuity of the Finslerian separation for Randers-Kropina  spaces}
	Let us start with a general result.
	
	\begin{prop} \label{pls} The $F$-separation  associated with any conic
		pseudo-Finsler metric $F$ is upper semi-continuous, i.e., if $
		x_n\rightarrow x$, $ y_n\rightarrow y$, then $$\limsup_n
		d_F(x_n,y_n) \leq d_F(x,y).$$ In particular, if $x\prec y$, then $x_n
		\prec y_n$ for large $n$.
	\end{prop}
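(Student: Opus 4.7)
The plan is to show that for every $\epsilon>0$, $d_F(x_n,y_n)<d_F(x,y)+\epsilon$ for all large $n$. If $d_F(x,y)=+\infty$ the inequality is vacuous, so I may suppose $x\prec y$ and pick an $F$-admissible curve $\gamma\colon[0,1]\to M$ from $x$ to $y$ with $\ell_F(\gamma)<d_F(x,y)+\epsilon/3$; by subdividing at break points I can work piecewise, so assume $\gamma$ is smooth. The strategy is to build an $F$-admissible curve $\sigma_n$ from $x_n$ to $y_n$ of length close to $\ell_F(\gamma)$ by concatenating a short ``head'' $\alpha_n$ from $x_n$ to $\gamma(\tau)$, the middle piece $\gamma|_{[\tau,1-\tau']}$, and a short ``tail'' $\beta_n$ from $\gamma(1-\tau')$ to $y_n$, for suitably small $\tau,\tau'>0$. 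The ``in particular'' clause is then immediate: if $x\prec y$, the $\limsup$ is finite, forcing $x_n\prec y_n$ eventually.

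The crucial step is a local connection lemma near $p=\gamma(0)$: put $v=\dot\gamma(0)\in A_p$. Since $A$ is open in $TM\setminus\mathbf{0}$, I choose a chart $(U,\phi)$ around $p$ with a trivialization of $TU$ and $\eta>0$ so that every tangent vector whose components differ from those of $v$ by less than $\eta$ is contained in $A$ over $U$. For fixed small $\tau>0$, write $\gamma(\tau)-p=\tau v+O(\tau^2)$ in the chart. For all $x_n$ sufficiently close to $p$, the affine segment $\alpha_n^\tau(s):=x_n+s(\gamma(\tau)-x_n)$, $s\in[0,1]$, has the constant coordinate-velocity $\gamma(\tau)-x_n$, which tends to $\tau v$ as $n\to\infty$; hence for $n$ large this velocity lies in the $\eta$-tube around $\tau v$ and $\alpha_n^\tau$ is $F$-admissible. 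By homogeneity and continuity of $F$ on $A$,
\[
\ell_F(\alpha_n^\tau)=F(\gamma(\tau)-x_n)\;\longrightarrow\;F(\gamma(\tau)-p)=\tau F(v)+O(\tau^2)=\ell_F(\gamma|_{[0,\tau]})+O(\tau^2),
\]
as $n\to\infty$. An analogous construction near $\gamma(1)=y$, using $\dot\gamma(1)\in A_y$, yields the tail $\beta_n^{\tau'}$ with $\ell_F(\beta_n^{\tau'})\to \ell_F(\gamma|_{[1-\tau',1]})+O(\tau'^2)$.

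Putting the three pieces together gives an $F$-admissible $\sigma_n\in\Omega^A_{x_n,y_n}$ with
\[
\ell_F(\sigma_n)=\ell_F(\alpha_n^\tau)+\ell_F(\gamma|_{[\tau,1-\tau']})+\ell_F(\beta_n^{\tau'})\xrightarrow[n\to\infty]{}\ell_F(\gamma)+O(\tau^2)+O(\tau'^2),
\]
so choosing $\tau,\tau'$ small enough and then $n$ large enough yields $d_F(x_n,y_n)\leq\ell_F(\sigma_n)<\ell_F(\gamma)+\epsilon/3<d_F(x,y)+\epsilon$, as required. The principal technical obstacle is guaranteeing $F$-admissibility of the connectors when $A_p$ is only an open cone (so arbitrary short segments between nearby points need \emph{not} lie in $A$, in particular ``backward'' directions may be forbidden); this is precisely where Remark~\ref{riemlowerbound}'s picture breaks down for the conic case and where the Finslerian separation can fail to be lower semicontinuous. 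The resolution is that one never connects $x_n$ directly to $x$, but only to a point $\gamma(\tau)$ lying \emph{ahead} of $x$ along the admissible direction $v$: the constant coordinate-velocity of the connector is close to a positive multiple of $v\in A_x$, and the openness of $A$ in $TM$ keeps it in $A$ along the entire segment.
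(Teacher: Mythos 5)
Your proof is correct and follows essentially the same route as the paper's: you attach $x_n$ to a point of $\gamma$ strictly ahead of $x$ and a point strictly before $y$ to $y_n$ and concatenate, exactly as the paper does with its interior points $\bar x,\bar y$, the only difference being that you build the short admissible connectors explicitly in a chart where the paper simply invokes the openness of the forward and backward balls $B_F^{\pm}$ from \cite[Prop. 3.9]{JavSan11}. One small notational repair: you fix the $\eta$-tube around $v$ but then test the connector's velocity, which approaches $\tau v$, against it; since $A$ is conic you should rescale to the $\tau\eta$-tube around $\tau v$ (or state the openness in terms of directions), after which the choice ``first $\tau$ small, then $n$ large'' goes through verbatim.
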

	\begin{proof}Assume that $d_F(x,y)<+\infty$ and, by  contradiction,
		$$
		d_F(x_n,y_n) > d_F(x,y) + 3\varepsilon$$ for some $\varepsilon>0$ and  for some subsequences still denoted by $x_n$ and $y_n$. 
		Choose a curve $\gamma$ from $x$ to $y$ with
		$\ell_F(\gamma)<d_F(x,y)+\varepsilon$ and choose $\bar x, \bar y$ on
		$\gamma$ such that $x\in B_F^-(\bar x, \varepsilon)$ and $y\in B_F^+(\bar y, \varepsilon)$.
		By \cite[Prop. 3.9]{JavSan11} these two balls are
		open and so, for large $n$, they contain all $x_n$ and $y_n$. So,
		the curve $\rho$ obtained by concatenating an $F$-admissible curve of
		length smaller than $\varepsilon$ from $x_n$  to $\bar x$ with the piece
		of $\gamma$ from $\bar x$ to $\bar y$ and with another $F$-admissible
		curve of length smaller than $\varepsilon$ from $\bar y$ to  $y_n$
		yields the required contradiction $d_F(x_n,y_n)<d_F(x,y)+3\varepsilon$.
	\end{proof}
	Although lower semi-continuity may not hold even  in the conic Finsler case
	(and even with points $x,y$ at a finite  $F$-separation, see
	\cite[Example 3.18]{JavSan11}), we will check that this semi-continuity does hold in the Randers-Kropina case.
	
	Notice first  the following straightforward consequence
	of Proposition~\ref{bolas}.
	
	\begin{prop}\label{ctf}
		For any \sstk  splitting with causal $K$,  the function  
		$$
		\tau_F:
		M\times M \rightarrow [0,+\infty], \quad \quad \tau_F(x,y):=
		\inf\{t\in\R : \, (0,x)\ll (t,y)\}$$ is equal to the $F$-separation
		function $d_F$.
	\end{prop}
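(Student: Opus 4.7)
The plan is to observe that Proposition~\ref{bolas} essentially gives the result for free, so the proof reduces to a careful bookkeeping of infima and the conventions $d_F = +\infty$, $\inf \emptyset = +\infty$.

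More precisely, I would first apply Proposition~\ref{bolas} with $t_0=0$, $x_0=x$, $t_1=t$, $x_1=y$ to obtain the pointwise equivalence
\[
(0,x)\ll (t,y)\ \Longleftrightarrow\ d_F(x,y)<t.
\]
Consequently, the set whose infimum defines $\tau_F(x,y)$ is exactly $\{t\in\R: d_F(x,y)<t\}$, and I would then split into cases according to whether $d_F(x,y)$ is finite or not.

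If $d_F(x,y)<+\infty$, then $\{t\in\R: d_F(x,y)<t\}=(d_F(x,y),+\infty)$, whose infimum is $d_F(x,y)$, so $\tau_F(x,y)=d_F(x,y)$. If $d_F(x,y)=+\infty$ (which, by the definition of $d_F$ recalled just before Proposition~\ref{bolas}, occurs precisely when $x\not\prec y$), then no real $t$ satisfies the inequality; the infimum is taken over the empty set and equals $+\infty$ by convention, again matching $d_F(x,y)$. The only minor subtlety worth flagging is that $\tau_F$ takes values in $[0,+\infty]$ while $d_F$ is a priori valued in $[0,+\infty]$ as well; non-negativity of $\tau_F$ is automatic, since $(0,x)\ll(t,y)$ forces $t>0$ by the fact that the temporal function from Remark~\ref{increasing} strictly increases along future-directed timelike curves (or, equivalently, from $d_F\geq 0$ combined with the first case).

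There is no real obstacle here beyond verifying the conventions about infima on empty sets and the identification $d_F(x,y)=+\infty\Leftrightarrow x\not\prec y$; the content is entirely packaged into Proposition~\ref{bolas}.
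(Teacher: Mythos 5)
Your proposal is correct and is exactly the argument the paper has in mind: the statement is introduced there as a ``straightforward consequence of Proposition~\ref{bolas}'', and your case split on $d_F(x,y)<+\infty$ versus $d_F(x,y)=+\infty$ (with $\inf\emptyset=+\infty$) is the intended bookkeeping. Nothing is missing.
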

	The function $\tau_F$ will be called the  {\em (future) arrival time function} and its definition on $M$ instead of $\R\times M$ uses implicitly the invariance of the metric with $t$. 
	
	Next, we will prove the  lower semi-continuity  of $d_F$  by using results of spacetimes (which, in particular,
	extend those in \cite{sanche99}).  To this aim, we will use a well-known result on limit curves. The latter are defined as follows (see \cite[Definition 3.28]{BeEhEa96}).
	\begin{defi}\label{limitcurvedef}
		A curve $\gamma$ in a spacetime $(L,g)$ is a limit curve of a sequence of curves $\{\gamma_k\}$, if there exists a subsequence $\{\gamma_m\}$ such that for all $p$ in the image of $\gamma $, any neighborhood of $p$ intersects all but a finite number of the curves in $\{\gamma_m\}$. 
	\end{defi}
	A standard result says that any sequence $\{\gamma_k\}$ of causal, future-pointing, future-inextendible causal curves whose images \soutE{has} \bw have \ew an accumulation point $p$ admits a limit curve through $p$ which is also causal, future-pointing and future-inextendible (see \cite[Proposition 3.31]{BeEhEa96}; the same  holds replacing  ``future'' with ``past'' in the previous statement).  Let us remark that a  limit curve is not necessarily piecewise smooth, but causal continuous (for the definition of a causal continuous curve see 
	%\cite[Beginning of \S 3.2]{BeEhEa96} 
	\bw the beginning of \S 3.2 in \cite{BeEhEa96}\ew). Moreover, observe that to be causal continuous on an interval $I$ is equivalent to be locally absolutely continuous with future-pointing causal derivative  a. e. in  $I$ (see \cite[Theorem A.1]{CFS08}).
	\begin{thm}\label{tcontdf}
		The $F$-separation $d_F: M\times M \rightarrow [0,+\infty]$
		associated with any Randers-Kropina metric is continuous away from the
		diagonal $D=\{(x,x): x\in M\}\subset M\times M$.
	\end{thm}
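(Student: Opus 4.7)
The plan is to complement the upper semi-continuity already established in Proposition~\ref{pls} with lower semi-continuity outside the diagonal, using the spacetime interpretation of $d_F$ as the arrival time (Proposition~\ref{ctf}). I would argue by contradiction: assume $x_n\to x$, $y_n\to y$ with $x\neq y$ and $L:=\liminf_n d_F(x_n,y_n)<d_F(x,y)$, which in particular forces $L<+\infty$. Passing to a subsequence, one can assume $d_F(x_n,y_n)\to L$, and for each $n$ pick a real number $L_n$ slightly larger than $d_F(x_n,y_n)$ with $L_n\to L$. By Proposition~\ref{ctf}, $(0,x_n)\ll(L_n,y_n)$, so there exists a future-directed timelike curve $\gamma_n$ from $(0,x_n)$ to $(L_n,y_n)$ in the \sstk splitting $(\R\times M,g)$.

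The next step is a limit-curve argument in $(\R\times M,g)$. Since $g$ is stably causal (Remark~\ref{increasing}), hence strongly causal, and the endpoints $\{(0,x_n)\}$, $\{(L_n,y_n)\}$ converge to $(0,x)$ and $(L,y)$ respectively, the standard limit-curve machinery for spacetimes yields a future-directed causal curve $\gamma$ from $(0,x)$ to $(L,y)$; concretely, parametrizing each $\gamma_n$ by the temporal coordinate $t$ as $\gamma_n(t)=(t,\sigma_n(t))$ on $[0,L_n]$ (allowed because $t$ is a temporal function) and using $\ell_F(\sigma_n)\leq L_n\to L$, one extracts a locally uniformly convergent subsequence whose limit $\sigma:[0,L]\to M$ satisfies $\sigma(0)=x$, $\sigma(L)=y$, and makes the lift $(t,\sigma(t))$ a causal future-directed curve. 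Thus $(0,x)\leq(L,y)$.

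It remains to promote this causal relation into a chronological one, so as to invoke Proposition~\ref{ctf} again. For any $\epsilon>0$ the integral curve of $\partial_t=K$ from $(L,y)$ to $(L+\epsilon,y)$ is future-directed and causal (because $\Lambda\geq 0$ in the Randers--Kropina regime), so concatenating with the curve above gives a causal curve from $(0,x)$ to $(L+\epsilon,y)$. By Remark~\ref{rll}, either $(0,x)\ll(L+\epsilon,y)$ or the concatenation is a smooth null pregeodesic. In the latter case the tangent of the causal part must match $\partial_t$ at $(L,y)$, which forces $\Lambda(y)=0$ and, because the concatenation is a lightlike pregeodesic, the constant $C=g(\dot\gamma,\partial_t)$ vanishes; applying Lemma~\ref{lightgeo}, since $M_l=\emptyset$ throughout the Randers--Kropina case, only alternative $(i)$ can occur, i.e.\ the whole projection is constant. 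But this forces $x=y$, contradicting our hypothesis. Hence $(0,x)\ll(L+\epsilon,y)$ for every $\epsilon>0$, and Proposition~\ref{ctf} gives $d_F(x,y)\leq L+\epsilon$, so $d_F(x,y)\leq L$, contradicting $L<d_F(x,y)$.

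The main obstacle I anticipate is the limit-curve step: one needs the sequence of timelike curves with endpoints converging to $(0,x)$ and $(L,y)$ to yield an honest causal curve between those points, rather than merely an inextendible limit curve through $(0,x)$ that ``misses'' $(L,y)$. Dealing with this cleanly will require a careful use of the temporal parametrization, of the bound $\ell_F(\sigma_n)\leq L_n$, and of strong causality to control the images of the $\sigma_n$; everything else (extension by $K$ and the dichotomy via Remark~\ref{rll} and Lemma~\ref{lightgeo}) is then essentially routine in the Randers--Kropina setting thanks to $M_l=\emptyset$.
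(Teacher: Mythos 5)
There is a genuine gap, and it sits exactly where you anticipated: the claim that the timelike curves $\gamma_n$ from $(0,x_n)$ to $(L_n,y_n)$ produce a causal curve \emph{from $(0,x)$ to $(L,y)$}. The limit curve theorem (\cite[Prop.~3.31]{BeEhEa96}) only yields an inextendible causal limit curve through \emph{one} accumulation point; to make it arrive at the second endpoint one needs the images of the $\gamma_n$ to stay in a compact region (this is precisely the compactness hypothesis in Lemma~\ref{limitcurve}(ii)). The bound $\ell_F(\sigma_n)\leq L_n$ does not provide this: compactness of the closed balls $\bar B_F^{+}(x,r)$ is equivalent to forward completeness (Corollary~\ref{cRandersKropinaHopfRinow}) and is \emph{not} assumed here — indeed Theorem~\ref{tcontdf} is one of the tools used to establish that equivalence. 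Strong causality does not help either. Concretely, in $M=\R^2\setminus\{(1,0)\}$ with the Euclidean metric, almost-minimizing curves between $(0,0)$ and $(2,0)$ have no connecting limit curve in $M$, and the relation $(0,x)\leq(L,y)$ you want to extract is simply false there; the theorem still holds because in that case $L=d_F(x,y)$, but it shows your intermediate claim is not derivable by this route. Saying the step ``will require a careful use'' of the temporal parametrization and the length bound does not close the gap, because no amount of care makes these ingredients sufficient.

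The paper's proof is built precisely to avoid ever connecting $(0,x)$ to $(L,y)$ by a single limit curve. It runs two separate limit-curve arguments, each anchored at only one endpoint: first, past-directed timelike curves from $q_n=(T_1,y_n)$ to $p_n=((T_1-T_0)/2,x_n)$, prolonged to be past-inextendible along $l_{x_n}$, yield a past-inextendible limit curve $\gamma$ from $(T_1,y)$; one shows $\gamma$ must leave the vertical line $l_y$ (otherwise its far past would accumulate on $l_x\neq l_y$), picks a point $Q'$ off $l_y$ with $Q'\ll(T_2,y)$ via Remark~\ref{rll}, and uses the openness of $\ll$ to transfer this relation to points $Q'_n$ lying on the \emph{actual} curves $\gamma_n$, obtaining $p_n\ll(T_2,y)$. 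A symmetric second limit-curve argument from the $p_n$ side then yields $(0,x)\ll(T_2,y)$ with $T_2<\tau_F(x,y)$, the desired contradiction. Your final step (extending by an integral curve of $K$ and ruling out the null-pregeodesic alternative via Lemma~\ref{lightgeo} and $M_l=\emptyset$) is fine, but it is downstream of the unestablished causal relation, so the proof as written does not go through.
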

	\begin{proof}
		From Proposition~\ref{ctf} and Proposition~\ref{pls}, it is enough to prove the
		lower semi-continuity  of $\tau_F$ for the corresponding \sstk  splitting.
		
		Let $\{x_n\}$ and $\{y_n\}$ be converging sequences,
		$x_n\to x, y_n\to y\neq x$,  and assume by
		contradiction that  there exist subsequences, denoted again by $\{x_n\}$, $\{y_n\}$, such that  $\{\tau_F(x_n,y_n)\}$ converges with
		$$
		T_0:=\lim_n \tau_F(x_n,y_n) < \tau_F(x,y).$$ Choose
		$T_1\in (T_0,\tau_F(x,y))$, and define $q_n:=(T_1,y_n)$, $p_n:=((T_1-T_0)/2,x_n)$.
		Each line $l_{y_n}=\{(s,y_n):s\in\R\} $ is causal and,  since  $\tau_F(x_n,y_n)$ is finite  for large $n$,  necessarily $p_n\ll
		q_n$ for $n$ big enough (recall $l_{y}\neq
		l_x$ and Remark~\ref{rll}).  Thus, we can take a
		sequence of past-pointing timelike curves $\{\gamma_n\}$
		connecting each $q_n$ with $p_n$.  Moreover,  these curves are assumed to be inextendible  to the past, by  prolonging them with the lines $l_{x_n}$.  As $\{q_n\}\rightarrow q_\infty :=(T_1,y)$,  the sequence $\{\gamma_n\}$ admits
		a causal, past-inextendible and past-pointing   limit curve $\gamma$ starting at $q_\infty$.   Necessarily,  $\gamma$ must leave  $l_y$  at
		some point $Q$; otherwise, as $\gamma$ is inextendible, it must
		run all $(t,q_\infty )$ when $t\rightarrow -\infty$ (but this is absurd
		because the points in  $\gamma$  lie in the closure of the set of
		images of all $\gamma_n$, and the piece of such curves with
		$t\circ\gamma_n\leq (T_1-T_0)/2$ lie in $l_{x_n}$,  which accumulate at  $l_x\neq l_y$).  Notice also that if $\gamma$ arrived at the limit $p_\infty:=((T_1-T_0)/2,x)$ of $\{p_n\}$, a contradiction with the definition of $\tau_F(x,y)$
		would be obtained. Now, choose $T_2\in
		(T_1,\tau_F(x,y))$ and
		any point $Q'$ on  $\gamma$  away from  $l_y$; necessarily $Q' < Q < (T_2,y)$
		and, then, $Q'\ll (T_2,y)$ (recall Remark~\ref{rll}). But $Q'$ lies in the closure of the
		images of the set of all $\gamma_n$ and, thus, up to a subsequence, some point $Q'_n$  on each $\gamma_n$
		satisfies $Q'_n \ll (T_2,y)$ for large $n$. Therefore, we can assume
		$$ p_n (\ll Q'_n) \ll (T_2,y),$$
		and choose  a future-inextendible timelike curve $\alpha_n$ from
		$p_n$ to $(T_2,y)$ and equal to $l_y$ beyond this point. Consider the limit curve $\alpha$ of the sequence $\alpha_n$ 
		departing from $p_\infty$. Reasoning as above, $\alpha$ leaves  $l_x$  at some point, and any point  $Q''$ on $\alpha$ away from  $l_x$  satisfies $p=(0,x)\ll Q''$. Choose a point $Q''_n$, in a curve $\alpha_n$, close enough to $Q''$ 
		such that $p\ll Q''_n$. This concludes that
		\[p\ll Q''_n \ll (T_2,y),
		\]
		in contradiction with the definition of $\tau_F(x,y)$.
	\end{proof}
	The necessity of the exception on the diagonal $D$ in the previous
	theorem comes from the following fact.  
	\begin{prop} 
		The $F$-separation $d_F$ associated with  a Randers-Kropina metric is discontinuous at $(x_0,x_0)$ if $d_F(x_0,x_0)>0$.  Moreover, 
		\begin{itemize}
			\item[(i)] the property $d_F(x_0,x_0)>0$ occurs if there exists a neighborhood $U$ of  
			$x_0$ such that no admissible loop contained in $U$ exists, i.e.  $y\not\prec_U y$, for all $y\in U$;
			in particular for any Kropina metric $F=\alpha^2/\beta$ such that  the kernel of $\beta$ is locally integrable,  i.e. $\beta\wedge d\beta=0$; 
			\item[(ii)]for any Kropina norm on a vector space, $d_F(x,x)=\infty$ for all $x\in V$.
		\end{itemize}
	\end{prop}
	\begin{proof} For the first assertion, choosing any $F$-admissible curve $\gamma$ starting at $x$, one constructs
		trivially a sequence  $\{x_n\}$, $x_n\rightarrow x$ of points on $\gamma$ with
		$d_F(x,x_n)\rightarrow 0$.
		For (i),  notice that any $F$-admissible loop starting at $x$ (and leaving necessarily $U$) will have a length greater than some $\varepsilon>0$; to check this,
		notice that
		one can always obtain   a Finsler metric $F_0$ in any compact neighborhood of $x$ such that $F_0$ is smaller than $1$ on the indicatrix of $F$.
		In the Kropina case, the assumption $\beta\wedge d\beta=0$ implies  that  $\beta|_U=\Omega df$ on a small  neighborhood $U$ of $x_0$,  for a positive function $\Omega$ and $f$ with no critical points on $U$ (as any Kropina metric is defined only on the open region of the manifold $M$ where $\beta$  is nowhere vanishing).  So $f$ is strictly increasing on any admissible curve on $U$  (recall that $\beta(v)>0$ for $v\in A$); thus,  $y\not\prec_U y$ for all $y\in U$. 
		Finally in the case of a Kropina norm,  being $\beta$ a constant one-form,  $\beta=df$ on $V$    and therefore $d_F(x,x)=\infty$ for all $x\in V$.
	\end{proof}

	\begin{rem}
		The previous proposition shows that explicit examples of discontinuous $d_F$ can be constructed easily. It also shows that the possible discontinuity on the diagonal would not be removed if $d_F(x,x)$ were redefined as $0$ (namely, regarding
		$\tau_F(x,y) $ as the infimum of the set $\{t\in \R : (0,x) \leq
		(t, y )\}$ and defining $d_F(x,y)$ as  the  new $\tau_F(x,y)$).
		On the other hand, it is trivial to check that for any Randers-Kropina
		metric $F$, if  $K$ is timelike
		at $x\in M$ then $d_F(x,x)=0$ and $d_F$ is continuous at $(x,x)$.
	\end{rem}
	Finally notice the  discontinuity of $d_F$ at the diagonal yields the following subtlety (consistent with Definition~\ref{sigmaballs} and Proposition~\ref{pclosurecballs}). 
	\begin{cor}\label{cclballs}
		The closed forward (resp. backward) $d_F$-balls, defined as the closures of the corresponding open balls, satisfy,  for $r>0$: 
		\[\bar B_F^+(x,r)= \{y\in M:
		d_F(x,y)\leq r\}\cup \{x\}\]
		\[\hbox{(resp.} \; \bar B_F^-(x,r)= \{y\in M: d_F(y,x)\leq r\}\cup
		\{x\}).
		\]
	\end{cor}
	\begin{proof}
		The proof of the first assertion in the previous proposition shows  that $\{x \}$ belongs to the closure of the ball. So, just apply the continuity of $d_F$ outside the diagonal. 
	\end{proof}
	\subsection{Ladder of causality and properties for Randers-Kropina separation}
	Next, we can go further into the causal structure of our class of
	spacetimes. 
	The following  relation between the position of the spacetime
	in the causal ladder and the properties of the Randers-Kropina
	metric appears.
	\begin{thm}\label{kropinaLadder} Consider an \sstk  splitting $(\R\times M,g)$ as
		in \eqref{lorentz} with $K$ causal and associated Randers-Kropina metric $F$ on $M$. Then, $(\R\times M,g)$  is causally
		continuous, and
		\begin{enumerate}[(i)]  \item the following assertions are
			equivalent:
			
			(i1) $(\R\times M,g)$ is causally simple i.e., it is causal  (which means that no closed smooth causal curve exists)  and  the sets $J^+(p)$, $J^-(p)$ are
			closed for all $p\in \R\times M$).
			
			(i2) $(M,F)$ is {\em convex}, in the sense that  for
			every $x,y\in M$, $x\neq y$,  with $d_F(x,y)<+\infty$, there exists a geodesic
			$\gamma$ from $x$ to $y$ such that $\ell_F(\gamma )=d_F(x,y)$.
			
			(i3) $J^+(p)$ is closed for all $p\in \R\times M$.
			
			(i4) $J^-(p)$ is closed for all $p\in \R\times M$.
			
			\item $(\R\times M,g)$ is globally hyperbolic (i.e. it is causal and all the intersections $J^+(p) \cap  J^-(q)$ are compact) if and only if  $\bar B_F^+(x,r_1)\cap \bar B_F^-(y,r_2)$ is compact (or empty) for every $x,y\in
			M$ and
			$r_1,r_2>0$.
			\item The following assertions are
			equivalent:
			
			(iii1) A slice $S_t=\{(t,x): x\in \R\times M\}$ (and, then all the slices) is a
			spacelike Cauchy hypersurface i.e., it is crossed exactly once
			by any
			inextendible timelike curve (and, then, also by any causal one).
			
			(iii2) the closures $\bar{B}_F^+(x,r)$, $\bar{B}_F^-(x,r)$ are compact
			for all $r>0$ and  $x\in M$.
			
			(iii3)   $F$ is forward and backward geodesically complete.
		\end{enumerate}
	\end{thm}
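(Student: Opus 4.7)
The plan is to translate every rung of the causal ladder into a metric statement about the Randers-Kropina separation $d_F$ via Proposition~\ref{bolas}, and combine this with the continuity of $d_F$ off the diagonal (Theorem~\ref{tcontdf}), upper semi-continuity everywhere (Proposition~\ref{pls}), and standard limit-curve arguments. Time reversal (Remark~\ref{rtimereversal}), which exchanges $(g_0,\omega,\Lambda)$ with $(g_0,-\omega,\Lambda)$, will systematically convert future statements into past ones and vice versa, so I can carry out each argument only once.

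For causal continuity, I would recall from Remark~\ref{increasing} that every \sstk splitting is stably causal, so only reflectivity, namely $q\in\overline{I^+(p)}\Leftrightarrow p\in\overline{I^-(q)}$, remains. Identifying $\overline{I^+(0,x)}$ as (essentially) a sublevel set of $d_F(x,\cdot)$ via \eqref{bolas0}, upper semi-continuity of $d_F$, and its continuity off the diagonal, yields the required symmetry against the analogous description of $\overline{I^-}$. The small subtlety caused by the Kropina region (where $d_F(x,x)>0$ is possible) is handled by including the integral curves of $\partial_t$ over $\{\Lambda=0\}$.

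For Part (i), (i1) $\Rightarrow$ (i3), (i4) are trivial, and (i3) $\Leftrightarrow$ (i4) follows from time reversal. For (i3) $\Rightarrow$ (i2), given $x\neq y$ with $r:=d_F(x,y)<+\infty$, nearly minimizing $F$-curves place $(r+1/n,y)\in I^+(0,x)$, hence $(r,y)\in \overline{I^+(0,x)}\subseteq J^+(0,x)$ by (i3); since $d_F(x,y)=r$ prevents $(0,x)\ll(r,y)$, any connecting causal curve is a null pregeodesic (Remark~\ref{rll}) whose projection is an $F$-geodesic realizing $d_F(x,y)$. Conversely, for (i2) $\Rightarrow$ (i1), given $p_n\leq q_n$ with $p_n\to p=(0,x)$ and $q_n\to q=(s,y)$, projecting the connecting causal curves and extracting a limit curve yields $d_F(x,y)\leq s$ (off the diagonal by Theorem~\ref{tcontdf}); either $d_F(x,y)<s$ gives $p\ll q$, or equality plus convexity produces a null geodesic from $p$ to $q$. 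The case $x=y$ is settled using the integral curve of $\partial_t$ through $x$.

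For Part (ii), I would use the standard equivalence \emph{global hyperbolicity = causal simplicity + compactness of every diamond $J^+(p)\cap J^-(q)$}. Granted Part (i), I project the diamond onto $M$: a point $z$ lies in $\pi(J^+(0,x)\cap J^-(r,y))$ iff $d_F(x,z)+d_F(z,y)\leq r$, hence in $\bar B^+_F(x,r)\cap \bar B^-_F(y,r)$, and $t$ is bounded on the diamond. This exhibits the equivalence with compactness of intersections of closed $F$-balls. Finally, Part (iii) follows because $S_r$ being Cauchy reduces, after projecting, to compactness of each $\bar B^{\pm}_F(x,r)$, and these closed balls being compact is equivalent to forward and backward geodesic completeness by a Hopf--Rinow argument for Randers-Kropina metrics. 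The main obstacle I anticipate is in the Kropina region, where $F$-admissibility is a strict open condition and $F$-geodesics might threaten to exit the admissible cone; this is circumvented by lifting $F$-geodesics to null geodesics of $(\R\times M,g)$, whose smoothness and future-causal character is preserved throughout, and then descending back.
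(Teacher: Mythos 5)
Your treatment of causal continuity and of parts (i)--(ii) follows essentially the same route as the paper (which itself defers these to the stationary-case arguments of \cite{CapJavSan10} and to the general Theorem~\ref{generalK}): stable causality plus reflectivity via the continuity of $d_F$ off the diagonal, horismotic relation $\leftrightarrow$ minimizing $F$-geodesics for (i), and limit curves plus containment of the diamonds in $[0,r]\times(\bar B^+_F(x,r)\cap\bar B^-_F(y,r))$ for (ii). Two of your steps are glossed but repairable: in (i), ``(i3) $\Leftrightarrow$ (i4) by time reversal'' only works because you also prove (i3) $\Rightarrow$ (i2) $\Rightarrow$ (i1) and convexity of $F$ coincides with convexity of its reverse metric, so the cycle closes; and in (ii) the converse direction genuinely needs the limit-curve Lemma~\ref{limitcurve}(ii) to get closedness of the diamond, not just boundedness.

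The genuine gap is in part (iii). You propose to prove (iii2) $\Leftrightarrow$ (iii3) ``by a Hopf--Rinow argument for Randers--Kropina metrics,'' but no such theorem is available a priori, and the paper explicitly flags this (``due to the lack of a Hopf--Rinow theorem for Randers--Kropina metrics''); indeed Corollary~\ref{cRandersKropinaHopfRinow} is deduced \emph{from} Theorem~\ref{kropinaLadder}, so your argument is circular. The classical continuation step of Hopf--Rinow fails here: on the Kropina region the geodesic equation lives only on the open half-space $A_p$, the indicatrix passes through $0_p$, so a unit-speed $F$-geodesic trapped in a compact ball can have velocities accumulating at the zero vector where the ODE degenerates, and there is no exponential map defined on a punctured neighbourhood of $0_p$ with which to extend it; the direction ``completeness $\Rightarrow$ compact balls'' fails for the same reason. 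The correct route is through the spacetime: by Theorem~\ref{existenceofbolasNO} (or Lemma~\ref{tparam}) every inextendible future-pointing lightlike pregeodesic not tangent to $K$ reparametrizes as $s\mapsto(s,x(s))$ with $x$ an inextendible unit $F$-geodesic and conversely (the exceptional vertical null lines cross every slice and are complete, so they are harmless); since the slices are closed, spacelike and acausal, $S_t$ is Cauchy iff every such pregeodesic is defined on all of $\R$, which is exactly forward and backward completeness of $F$. This gives (iii1) $\Leftrightarrow$ (iii3) directly, and (iii1) $\Rightarrow$ (iii2) $\Rightarrow$ (iii3) then closes the loop as in Theorem~\ref{generalK}(iv).
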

	\begin{proof}
		First observe that, as $t:\R\times M\rightarrow \R$, $(t,x)\mapsto
		t$, is a temporal function, then $(\R\times M, g)$ is stably causal
		and, in particular, distinguishing (for the elements of causality
		to be used here, see \cite{MinSan08} or \cite{BeEhEa96, BerSan08}). So, to
		prove causal continuity, it is enough to show that $(\R\times M,
		g)$ is future and past reflecting (see for example
		\cite[Definition 3.59, Lemma 3.46]{MinSan08} or \cite[Theorem
		3.25, Proposition 3.2]{BeEhEa96}). Let us see that it is past
		reflecting (the other case is analogous),  that is, $I^+(p)\supset
		I^+(q)$ implies $I^-(p)\subset I^-(q)$ for any $p=(t_0, x)$ and
		$q=(t_1,y)$. We can assume $x\neq y$ (otherwise it is obvious), and the inclusion $I^+(p)\supset I^+(q)$ implies that
		$d_F(x,y)\leq t_1-t_0$. This is  a consequence of the
		continuity  of $d_F$  away from $D$  proven in Theorem
		\ref{tcontdf}. In fact,
		consider a sequence $\{q_n=(t_1+\varepsilon_n,y_n)\}$ contained in
		$I^+(q)$ and converging to $q$ so that $\varepsilon_n \searrow 0,
		y_n\rightarrow y$.
		By \eqref{bolas0}, $d_F(x,y_n) <
		(t_1-t_0)+\varepsilon_n$, and, by the continuity of $d_F$, the required
		inequality holds. But $d_F(x,y)\leq t_1-t_0$ implies directly
		$I^-(p)\subset I^-(q)$ (use again \eqref{bolas0}  and the triangle inequality for $d_F$), as required.
		
		Equivalences in   $(i)$ and  $(ii)$ can be proved formally as in the  stationary  case
		\cite{CapJavSan10}.
		The proof of the equivalences in $(iii)$ has some  differences  with respect to the stationary case due to the lack of a Hopf-Rinow theorem for Randers-Kropina metrics.  The reader can check, however, that both, the equivalence 
		between {\em (iii1)} and {\em (iii3)}, and the implications {\em (iii1)} $\Rightarrow$ {\em (iii2)} $\Rightarrow$ {\em (iii3)} hold by means of simple modifications of the  arguments in \cite[Theorems 4.3 and 4.4]{CapJavSan10}. 
		In any case, a full proof can be obtained as a particular case of the most general Theorem~\ref{generalK} below.
	\end{proof}
	As a straightforward consequence of Theorem~\ref{kropinaLadder}  and the  implications from causality theory
	{\em (iii1)} $\Rightarrow$ global hyperbolicity $\Rightarrow$ {\em (i1)}, one has the following version of Hopf-Rinow theorem.
	\begin{cor}\label{cRandersKropinaHopfRinow}
		For any Randers-Kropina metric $F$ on a manifold $M$, the
		forward (resp. backward) geodesic completeness of $d_F$ is equivalent to the
		compactness of the forward closed balls $\bar B^+_F(x,r)$ (resp. backward closed balls $ \bar B^-_F(x,r)$)  for every $x\in M, r>0$.  Moreover,  any of these properties implies the compactness of the intersection between any pair 
		of forward and backward closed balls. Finally,  the  last property  implies the convexity of $(M,F)$, in the sense of Theorem~\ref{kropinaLadder}.
	\end{cor}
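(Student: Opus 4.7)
The plan is to exploit the correspondence with \sstk splittings established in Theorem \ref{tfermatSSTK}, combined with the characterizations already proved in Theorem \ref{kropinaLadder} and with standard facts from causality theory. Let $(\R\times M,g)$ be the \sstk splitting whose Fermat structure is the given Randers-Kropina metric $F$; reversing the time orientation yields another \sstk splitting whose Fermat structure is the reverse metric $\tilde{F}$, which is again Randers-Kropina. Thus any forward statement for $F$ corresponds to a backward statement for $\tilde{F}$, and it suffices to establish each claim on the forward side.

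For the one-sided Hopf--Rinow equivalence, I would adapt the argument of Theorems 4.3--4.4 in \cite{CapJavSan10}. The implication ``$\bar B^+_F(x,r)$ compact for every $x,r \Rightarrow F$ forward complete'' is the easier direction: any inextendible forward geodesic $\gamma\colon [0,b)\to M$ with $b<+\infty$, reparametrized so that $F(\dot\gamma)\equiv 1$, remains inside $\bar B^+_F(\gamma(0),b)$, so compactness produces an accumulation point $p_\infty$; the local existence of the geodesic flow of $F$ near $p_\infty$ (well-defined on the open domain $A$) then extends $\gamma$ past $b$, contradicting inextendibility. For the converse, starting from forward completeness one runs the classical cut-locus argument: every point at $F$-separation exactly $r$ from $x$ is reached by a minimizing $F$-geodesic of length $r$, so $\bar B^+_F(x,r)$ is the image under the $F$-exponential map of a closed ball of radius $r$ in $T_xM$, and compactness of the indicatrix over $x$ then gives compactness of $\bar B^+_F(x,r)$. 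The backward version follows by applying the same reasoning to $\tilde{F}$. The compactness of $\bar B^+_F(x,r_1)\cap\bar B^-_F(y,r_2)$ is then immediate from the compactness of either factor: by Theorem \ref{tcontdf}, $d_F$ is continuous away from the diagonal, hence the intersection is a closed subset of a compact set.

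Finally, for the implication ``intersection compactness $\Rightarrow$ convexity'', note that the compactness of $\bar B^+_F(x,r_1)\cap\bar B^-_F(y,r_2)$ for all $x,y,r_1,r_2$ is exactly the condition in Theorem \ref{kropinaLadder}(ii) equivalent to the global hyperbolicity of $(\R\times M,g)$. Since global hyperbolicity implies causal simplicity in every spacetime (a standard step of the causal ladder, see \cite{MinSan08}), Theorem \ref{kropinaLadder}(i) then yields the convexity of $(M,F)$. The main obstacle is the first step: the classical Hopf--Rinow argument must be adapted to the Kropina region, where the domain $A_p$ is only a half-space, where $d_F(x,x)$ may be strictly positive, and where $d_F$ may fail to be continuous on the diagonal; keeping minimizing sequences $F$-admissible and preventing the exponential map from exiting $A$ is the principal technical difficulty.
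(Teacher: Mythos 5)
Your handling of the second and third assertions is sound. For the intersection statement you give a more elementary argument than the paper (which routes through \emph{(iii1)} $\Rightarrow$ global hyperbolicity $\Rightarrow$ Theorem \ref{kropinaLadder}\emph{(ii)}): since $\bar B^-_F(y,r_2)$ is closed by definition, its intersection with a compact $\bar B^+_F(x,r_1)$ is compact. For the convexity statement you follow exactly the paper's route: global hyperbolicity $\Rightarrow$ causal simplicity $\Rightarrow$ Theorem \ref{kropinaLadder}\emph{(i2)}.

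The gap is in the first equivalence. The paper does \emph{not} prove ``forward completeness $\Leftrightarrow$ compactness of forward closed balls'' by a direct metric argument: it reads it off from the equivalence \emph{(iii2)} $\Leftrightarrow$ \emph{(iii3)} of Theorem \ref{kropinaLadder}, whose proof passes through the spacetime (the slices being Cauchy hypersurfaces, limit curves, Theorem \ref{generalK}). The paper explicitly flags ``the lack of a Hopf--Rinow theorem for Randers--Kropina metrics'' as the reason the stationary argument cannot be copied; the corollary \emph{is} that Hopf--Rinow theorem, obtained as output of the causal machinery. Your proposed direct proof of ``forward complete $\Rightarrow$ $\bar B^+_F(x,r)$ compact'' asserts that every point at separation $r$ from $x$ is reached by a minimizing geodesic, so that $\bar B^+_F(x,r)$ is the $\exp_x$-image of a compact set. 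But that is precisely the convexity property which is the \emph{final} conclusion of the corollary, and the classical cut-locus argument that would deliver it breaks down here: $\exp_x$ is defined only on the conic set $A_x$ (an open half-space at Kropina points), $d_F(x,x)$ may be positive or infinite, $d_F$ is discontinuous on the diagonal, and a candidate minimizing direction obtained as a limit on the indicatrix may be $0_x\in\Sigma_x$, where no $F$-geodesic exists. A similar issue affects the easier direction: the accumulating velocities of a unit $F$-geodesic may converge to a critical-region zero of $\Sigma$, where the geodesic flow of $F$ is undefined, so local extendibility is not automatic. You name these obstacles as ``the principal technical difficulty'' but do not resolve them, so the argument is incomplete as written. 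The repair is simply to quote Theorem \ref{kropinaLadder}\emph{(iii2)} $\Leftrightarrow$ \emph{(iii3)} (noting that the forward and backward halves of the arguments in Theorem \ref{generalK}\emph{(iv)} decouple, or passing to $\tilde F$ as you do for the backward case) rather than re-derive the equivalence from scratch.
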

	\begin{exe}\label{ex_ppwave}
		The so-called Brinkmann spaces are defined by the existence of a complete
		parallel lightlike vector field $K$, and they include many physical examples of interest, as wave-type spacetimes (plane waves, pp-waves etc.); a detailed study of these spaces is carried out in \cite{BlSaSe13}.  Under very general hypotheses, they are strongly causal \cite{FlSan08} and become an \sstk \cite[Th. V.11]{CosFlo};  so, they determine   a Kropina metric where all the previous results are applicable.
		We mention that  they have been considered recently in \cite{BaCaFl}, where their geodesic connectedness is studied.
	\end{exe}
	\section{The case of arbitrary $K$: general wind Riemannian structures}\label{generalcase}
	In this section we consider the general case of a Killing vector field $K$  with no restriction on its pointwise  causal character. 
	\subsection{Causal futures and lightlike geodesics} For the study of the causality of a general \sstk  splitting, we will use its  Fermat structure $\Sigma$ in Definition~\ref{dfermatstructure}  and  the notation for causal elements 
	in Section~\ref{prelimin}. 
	
	We start with a characterization of the chronological relation which generalizes  the one  obtained  in  Proposition~\ref{bolas} when $K$ is  causal;    notice that  there is  now  no  natural Finslerian separation \soutE{$d_F$} \bw that fully describes the causal properties of $(\R\times M, g)$. \ew  This problem will be circumvented by means of a description of the causal futures and pasts, which makes apparent an interpretation of the c-balls. 
	Recall that the time coordinate  of the \sstk splitting is a temporal function and every causal curve can be parametrized with the time.
	\begin{prop}\label{bolas2} Let $(\R\times M,g)$ be an \sstk  splitting. Then:
		\begin{align*}
			&I^+(t_0,x_0)= \cup_{s> 0}\{t_0+s\}\times B_{\Sigma}^+(x_0,s),\\
			&I^-(t_0,x_0)= \cup_{s> 0}\{t_0-s\}\times B_{\Sigma}^-(x_0,s),\\
			&J^+(t_0,x_0)= \cup_{s\geq  0}\{t_0+s\}\times \hat{B}_{\Sigma}^+(x_0,s),\\
			&J^-(t_0,x_0)= \cup_{s\geq  0}\{t_0-s\}\times \hat{B}_{\Sigma}^-(x_0,s).
		\end{align*}
	\end{prop}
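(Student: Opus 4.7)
The proof rests on the pointwise characterization of future-pointing timelike and causal vectors of an \sstk splitting given in Remarks~\ref{raclarations} and \ref{rtimereversal}, together with Convention~\ref{caestar}. Unifying the cases $x\in \MM$, $\MC$, $M_l$, a tangent vector $(\tau,v)\in T_{(t,x)}(\R\times M)$ with $\tau>0$ is future-pointing timelike iff $F(v)<\tau<F_l(v)$ (with $F_l\equiv+\infty$ outside $M_l$ and $(\tau,0_x)$ counted as timelike exactly when $\Lambda(x)>0$), and future-pointing causal iff $F(v)\le\tau\le F_l(v)$. Since $t$ is a temporal function, any causal curve can be reparametrized so that $t(\tau)=t_0+\tau$; this is the standing normalization below.

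For $I^+(t_0,x_0)=\bigcup_{s>0}\{t_0+s\}\times B^+_\Sigma(x_0,s)$, the forward direction takes a future-pointing timelike curve $\gamma=(t,\sigma):[0,s]\to\R\times M$ with $s=t_1-t_0$, normalized as above. The pointwise timelike condition on $(1,\dot\sigma)$ yields $F(\dot\sigma)<1<F_l(\dot\sigma)$ away from the set where $\dot\sigma=0_p$ at $p\in\MM$; a small piecewise smooth perturbation (possible in any dimension) eliminates this exceptional set while preserving the strict pointwise inequalities, making $\sigma$ a wind curve with $\ell_F(\sigma)<s<\ell_{F_l}(\sigma)$, so $x_1\in B^+_\Sigma(x_0,s)$. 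Conversely, given such a wind curve $\sigma:[a,b]\to M$, we look for a timelike lift $\gamma(\tau)=(t_0+\alpha(\tau),\sigma(\tau))$ with $\alpha(a)=0$, $\alpha(b)=s$ and $\dot\alpha\in(F\circ\dot\sigma,\,F_l\circ\dot\sigma)$ pointwise. Writing $\dot\alpha=1+\delta$, this reduces to finding $\delta$ with $\int_a^b\delta=0$ and $F(\dot\sigma)-1<\delta<F_l(\dot\sigma)-1$, which is possible whenever $F<F_l$ holds pointwise a.e., thanks to the strict integral gaps.

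The identity for $J^+$ is analogous but easier. The case $s=0$ follows from the conventions $\hat B^+_\Sigma(x_0,0)=\{x_0\}$ and $(t_0,x_0)\in J^+(t_0,x_0)$. For $s>0$, any wind curve $\sigma$ with $b-a=s$ and $\ell_F(\sigma)\le s\le\ell_{F_l}(\sigma)$ lifts directly to the causal curve $\gamma(\tau)=(t_0+(\tau-a),\sigma(\tau))$, since the wind condition is exactly the causality of $(1,\dot\sigma)$; conversely, a causal curve reparametrized with $\dot t\equiv1$ yields a wind curve after the same type of $\Sigma$-admissibility perturbation. The convention that $x_0\in\hat B^+_\Sigma(x_0,r)$ for all $r\ge0$ when $x_0\in\MC$ matches the fact that the integral curve of $K$ through $(t_0,x_0)$ is a lightlike pregeodesic in that case.

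Finally, the statements for $I^-$ and $J^-$ reduce to those for $I^+$ and $J^+$ under time reversal: the reverse \sstk splitting obtained by $\omega\mapsto-\omega$ has Fermat structure $\tilde\Sigma=-\Sigma$ (Remark~\ref{rtimereversal}, Proposition~\ref{reversewind}), with $B^-_\Sigma(x_0,s)=B^+_{\tilde\Sigma}(x_0,s)$ and $\hat B^-_\Sigma(x_0,s)=\hat B^+_{\tilde\Sigma}(x_0,s)$, while $I^-$ and $J^-$ of the original spacetime coincide with $I^+$ and $J^+$ of the time-reversed one. The principal technical hurdle throughout is the reverse direction for $I^+$ when $\dot\sigma$ lies in $A_E\setminus A_l$ on a set of positive measure, where $F(\dot\sigma)=F_l(\dot\sigma)$ forces no pointwise $\delta$ to exist: such a set contributes equally to both $F$- and $F_l$-integrals, so the strict gap $\ell_F(\sigma)<\ell_{F_l}(\sigma)$ bounds its measure sufficiently for a $C^0$-small perturbation of $\sigma$, pushing the velocity into the open set $A_l$, to remove it while keeping both strict integral inequalities.
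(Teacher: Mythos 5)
Your setup is the same as the paper's: the pointwise characterization of future causal/timelike vectors via $F(v)\leq\tau\leq F_l(v)$ (strict for timelike), reparametrization of causal curves by the temporal function $t$, the direct lift for $J^+$, the perturbation to non-vanishing velocity for the $\subset$ inclusions, and the time-reversal reduction for $I^-$, $J^-$. All of that is fine and matches the paper. The genuine gap is in the $\supset$ inclusion for $I^+$.

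There, you try to build a lift $\gamma(\tau)=(t_0+\alpha(\tau),\sigma(\tau))$ that is \emph{pointwise} timelike, i.e. $F(\dot\sigma)<\dot\alpha<F_l(\dot\sigma)$ everywhere. As you observe, no such $\dot\alpha$ exists at parameters where $\dot\sigma\in A_E\setminus A$, since $F=F_l$ there. Your remedy --- perturb $\sigma$ so its velocity lands in $A_l$ --- does not work as stated: (i) a $C^0$-small perturbation gives no control on velocities, and what you would need is a $C^1$-perturbation that keeps $\sigma$ a wind curve with the same endpoints and preserves both strict integral inequalities; (ii) the claim that the gap $\ell_F(\sigma)<s<\ell_{F_l}(\sigma)$ ``bounds the measure'' of the set where $F(\dot\sigma)=F_l(\dot\sigma)$ is false --- that set can have measure arbitrarily close to $b-a$ (the gap only forces its complement to contribute positively to $\ell_{F_l}-\ell_F$); (iii) at points of $\MC$ where $\dot\sigma=0$ the admissible cone $A_p$ is only an open half-space, so pushing the velocity into $A$ while fixing endpoints is itself delicate. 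The paper sidesteps all of this: it keeps $\sigma$ fixed, notes that $\tau\mapsto(\tau,\sigma(\tau))$ is already causal, and only needs it to be timelike at \emph{one} parameter. Either both strict inequalities $F(\dot\sigma)<1<F_l(\dot\sigma)$ hold at some common $\bar t$, or they hold on two disjoint intervals, in which case an explicit piecewise-linear bump $\rho$ added to the time component transfers slack from the interval where $1<F_l(\dot\sigma)$ to the one where $F(\dot\sigma)<1$, producing a causal curve that is timelike on the first interval. The standard push-up lemma (a causal curve which is somewhere timelike joins chronologically related points, \cite[Proposition 10.46]{O'neill}) then concludes. You should replace your pointwise-timelike-lift construction by this ``causal plus somewhere timelike'' argument; as written, the $\supset$ inclusion for $I^+$ is not established.
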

	\begin{proof}
		Taking into  account Corollary~\ref{raclarations} and Convention~\ref{caestar}, a vector $(\tau,v)\in  \R\times  (TM\setminus\mathbf{0})$    with $\tau>0$ 
		is causal and future-pointing if and only if
		$ v\in A_E \cup A \setminus\mathbf{0}$ and
		\begin{equation}\label{emenor}
			F(v)\leq \tau\leq F_l(v); \end{equation} moreover,  it is timelike
		and future-pointing if and only if  both inequalities
		hold  strictly in \eqref{emenor}  (and, thus, $v\in A$).  Accordingly,  a $t$-parametrized  piecewise smooth
		curve $(t,x):[t_0,t_1]\rightarrow \R\times M$   is causal (resp.
		timelike) and future-pointing  if and only if  $x$ is  $\Sigma$-admissible and 
		\begin{equation}\label{controlcausal}
			F(\dot x(t))\leq  \dot t\equiv 1  \leq  F_l(\dot x(t))
		\end{equation}
		(resp. $x$ is  $F$-admissible and
		\begin{equation}\label{controltime}
			F(\dot x(t))<  \dot t \equiv 1   <
			F_l(\dot x(t)))
		\end{equation}
		thus, in particular, $x$ is a wind curve.  Now, reasoning for the future and the inclusions $\subset$, observe that if $(t_1,x_1)\in J^+(t_0,x_0)$, then there exists a future-pointing causal curve $\gamma=(t,x)$ joining $(t_0,x_0)$ and $(t_1,x_1)$, 
		with $x$ being $\Sigma$-admissible  (recall footnote~\ref{foot4.1})  \footnote{  In the particular case when $x_0=x_1$,  $\dot x$ may be forced to vanish when the vertical line on $x_0$ is a  lightlike pregeodesic  
			but then  $\Lambda(x_0)=0$ (i.e. $0_{x_0}\in\Sigma_{x_0}$), and $x_0\in \hat B^+_\Sigma(x_0, r)$, for all $r\geq 0$,}.  So, integrating  in  \eqref{controlcausal}  
		one has:  if
		$(t_1,x_1)\in J^+(t_0,x_0)$ then $x_1\in \hat
		B^+_{\Sigma}(x_0,t_1-t_0)$ (resp. if $(t_1,x_1)\in I^+(t_0,x_0)$,
		then $x_1\in B^+_{\Sigma}(x_0,t_1-t_0)$), as required.
		
		For the converse $\supset$, in the case $J^+$,  %assume first that $\Lambda(x_0)\neq 0$ and 
		choose any   $x_1\in \hat
		B^+_{\Sigma}(x_0,s)$ and  a     wind 
		curve  $x:  [0,s]  \rightarrow M$ from $x_0$ to $x_1$ such that
		$\ell_F(x)\leq s\leq \ell_{F_l}(x)$ (which exists by
		definition of $\hat B_\Sigma(x_0,s)$).  From \eqref{controlcausal}, the curve $[t_0,t_0+s]\ni t \mapsto (t,x(t-t_0))$ is the required causal curve from $(t_0,x_0)$ to $(t_0+s,x_1)$.  Moreover, to check the inclusion $\supset$ for $I^+$, notice that if $x_1\in 
		B^+_{\Sigma}(x_0,s)$ then the inequalities \eqref{controlcausal} hold  strictly  
		at some point. If both of them  hold at some point $\bar t$, then 
		the causal curve becomes timelike at  $(\bar t,x(\bar t-t_0))$ ---so that the points $(t_0,x_0)$ and $(t_0+s,x_1)$ can be 
		connected by means of a timelike curve, see for example \cite[Proposition 10.46]{O'neill}.   Otherwise, there must exist two disjoint intervals $[\bar t_1,\bar t_2]$, $[\bar t_3,\bar t_4] \subset ]0,s] $ such that in $[\bar t_1,\bar t_2]$ does hold the second strict inequality and in $[\bar t_3,\bar t_4]$ it holds the first  one. Assume that $\bar t_2<\bar t_3$  (the other case is analogous)  and define the  function  
		\[\rho(\mu)=
		\begin{cases}
			0 & 0\leq \mu \leq\bar t_1\\
			\varepsilon (\mu-\bar t_1)  &  \bar t_1 \leq \mu \leq \bar t_2\\
			\varepsilon (\bar t_2-\bar t_1) & \bar t_2 \leq \mu \leq \bar t_3\\
			\varepsilon  \left(\bar t_2-\bar t_1+  \frac{\bar t_2-\bar t_1}{\bar t_4-\bar t_3}(\bar t_3-\mu)\right)  & \bar t_3 \leq \mu \leq \bar t_4\\
			0 & \bar t_4 \leq \mu \leq  s
		\end{cases}
		\]
		If $\varepsilon>0$ is small enough, the curve $[0,s]\ni \mu\mapsto (\mu+\rho(\mu), x(\mu))\in\R\times M$ is a causal curve from $(t_0,x_0)$ to $(t_0+s,x_1)$ which is timelike in some point. Then applying again \cite[Proposition 10.46]{O'neill} we conclude. 
	\end{proof}

	We recall that two points $p$ and $q$  in a spacetime are said {\em horismotically related} if $q\in J^+(p)\setminus I^+(p)$.   We will give a characterization of these points after the following lemma.

	\begin{lemma}\label{tparam}
		Let $I\subset \R$ be an interval and $\rho\colon I\rightarrow   \R  \times  M$ be a lightlike future-pointing pregeodesic of an \sstk splitting $(\R \times M,g)$. Then
		%\begin{enumerate}[(i)]
		%\item 
		$\rho$ can be reparametrized as  $s\mapsto(s,x(s))$  on  $[t(\rho(a)),t(\rho(b))]$ and
		%\item  
		the function $C_\rho(s):=g(\partial_t, \dot{\rho}(s))$ either has a definite sign  on $I$ or it vanishes  everywhere. 
		%\end{enumerate}
	\end{lemma}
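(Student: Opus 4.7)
The plan is to handle the two assertions separately, exploiting two well known facts: that lightlike vectors in an \sstk splitting have nonzero $t$-component (Lemma \ref{prepwf}) and that $\partial_t$ is a Killing vector field.

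For the first assertion, note that $\rho = (\zeta,\sigma)\colon I \to \R\times M$ is a future-pointing lightlike curve, so at every $s\in I$ the velocity $\dot\rho(s) = (\dot\zeta(s), \dot\sigma(s))$ is a future-pointing lightlike vector. By Lemma \ref{prepwf}, this forces $\dot\zeta(s) > 0$ throughout $I$. Therefore $\zeta = t\circ\rho \colon I\to [t(\rho(a)),t(\rho(b))]$ is a smooth, strictly increasing diffeomorphism onto its image, and composing $\rho$ with its inverse yields the reparametrization $s\mapsto (s,x(s))$ with $x = \sigma\circ\zeta^{-1}$, as desired.

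For the second assertion, choose an affine parameter $u$ for $\rho$, so that under the reparametrization $u\mapsto \tilde\rho(u) = \rho(s(u))$ the curve $\tilde\rho$ is a genuine geodesic of $(\R\times M,g)$. Since $\partial_t$ is Killing, the function $C_{\tilde\rho}(u) = g(\partial_t,\dot{\tilde\rho}(u))$ is constant along $\tilde\rho$; denote its constant value by $C$. Writing $\rho(s) = \tilde\rho(u(s))$ with $u'(s)>0$ (this sign is fixed because both parametrizations are future-directed, and the relation is smooth), one gets $\dot\rho(s) = u'(s)\,\dot{\tilde\rho}(u(s))$, and consequently
\begin{equation*}
C_\rho(s) = g(\partial_t,\dot\rho(s)) = u'(s)\, C_{\tilde\rho}(u(s)) = u'(s)\, C.
\end{equation*}
Since $u'(s)>0$ everywhere, the sign of $C_\rho(s)$ equals the sign of $C$, which is one of $+$, $0$ or $-$ independently of $s$. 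Hence $C_\rho$ has a definite sign on $I$ or vanishes identically, completing the proof.

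The only subtlety worth highlighting is the monotonicity of $u(s)$: one should observe that an orientation-preserving reparametrization of a future-pointing causal curve must preserve the time-orientation of the velocity, which together with Lemma \ref{prepwf} forces $u'(s)>0$. Everything else is a direct application of the Killing property of $\partial_t$.
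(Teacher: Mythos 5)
Your proof is correct and follows essentially the same route as the paper's: the reparametrization comes from the positivity of the $t$-component of future-pointing lightlike velocities (the paper phrases this via $t$ being a temporal function, you via Lemma~\ref{prepwf}, which amounts to the same thing), and the sign statement comes from the constancy of $g(\partial_t,\dot{\tilde\rho})$ along an affinely parametrized geodesic together with the positivity of the reparametrization factor $u'(s)$. Your explicit attention to the monotonicity of $u(s)$ is a reasonable elaboration of what the paper leaves implicit in its conventions on pregeodesics.
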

	\begin{proof}
		The possibility of the reparametrization  follows because the projection $t: \R\times M\rightarrow \R$ is a temporal function
		(see part (1) of Remark~\ref{increasing}). 
		Moreover, since $\partial_t$ is a Killing vector field, $g(\partial_t,\dot\gamma)$ is constant for any geodesic $\gamma$ of $(\R\times M,g)$, which implies that $g(\partial_t, \dot\rho(s))$ will preserve the sign in $s\in I$, as $\rho$ is  a 
		%smooth monotonously increasing  
		reparametrization  of  a  geodesic of $(\R\times M,g)$.
	\end{proof}
	
	\begin{cor}\label{horismos}
		Two distinct points $(t_0,x_0), (t_1,x_1)\in \R\times M$ are horismotically related
		if and only if
		$x_1 \in  \hat B^+_\Sigma(x_0,t_1-t_0)\setminus  B^+_\Sigma(x_0,t_1-t_0)$.
		
		In this case,  there exists a lightlike pregeodesic $\rho:[t_0,t_1]\rightarrow \R\times M$, $\rho(s)=(s,x(s))$ from $(t_0,x_0)$ to $(t_1,x_1)$ and 
		%either
		%\begin{enumerate}[(i)]
		%\item    
		such that $x$ is a  unit extremizing 
		geodesic   of $\Sigma$  from $x_0$ to $x_1$ with $\ell_F(x)=t_1-t_0$ or $\ell_{F_l}(x)=t_1-t_0$ (or both).
		%\end{enumerate}
		Moreover, when $x$ is a constant curve
		(i.e. an extremizing exceptional geodesic),  necessarily $\Lambda (x_0)=0$ 
		with $\de \Lambda(\mathrm{Ker}\, \omega_{x_0})\equiv 0$; when $x$ is not constant then it is   regular  (in the sense of Definition~\ref{sigmadmissible}-(iii)). 
		
	\end{cor}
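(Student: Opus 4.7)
The plan is to deduce the equivalence directly from Proposition~\ref{bolas2}: by that proposition, $(t_1,x_1)\in J^+(t_0,x_0)$ iff $x_1\in\hat B^+_\Sigma(x_0,t_1-t_0)$, and $(t_1,x_1)\in I^+(t_0,x_0)$ iff $x_1\in B^+_\Sigma(x_0,t_1-t_0)$; subtracting gives the stated characterization of horismotic relatedness.

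For the existence of the lightlike pregeodesic, I would invoke the standard causality-theoretic fact recalled in Remark~\ref{rll}: whenever two distinct points $p\leq q$ of a spacetime are not chronologically related, every future-directed causal curve from $p$ to $q$ must be (a reparametrization of) a lightlike pregeodesic (with no conjugate interior points). Since $(t_0,x_0)< (t_1,x_1)$ but not $\ll$, such a curve exists; by Lemma~\ref{tparam} it can be reparametrized as $\rho(s)=(s,x(s))$, $s\in[t_0,t_1]$. Thus $x$ is a wind curve on $[t_0,t_1]$ with $F(\dot x)\leq 1\leq F_l(\dot x)$ (recall \eqref{controlcausal} in the proof of Proposition~\ref{bolas2}), so in particular $\ell_F(x)\leq t_1-t_0\leq \ell_{F_l}(x)$. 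As $x_1\notin B^+_\Sigma(x_0,t_1-t_0)$, the defining condition~\eqref{eunitgeodesic} for a unit extremizing geodesic holds, so $x$ is a unit extremizing geodesic from $x_0$ to $x_1$; by Proposition~\ref{disjunction}$(i)$-$(b)$, at least one of $\ell_F(x)=t_1-t_0$ or $\ell_{F_l}(x)=t_1-t_0$ is realized.

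The final assertion about the constant case follows from Lemma~\ref{lightgeo}: if $x\equiv x_0$, then $\rho$ coincides (up to reparametrization) with the integral curve of $K=\partial_t$ through $x_0$, and its being a lightlike pregeodesic of $(\R\times M,g)$ forces $\Lambda(x_0)=0$ together with $d\Lambda_{x_0}$ vanishing on $\ker\omega_{x_0}$, exactly as recorded there.

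For the non-constant case, I would argue that $x$ is regular in the sense of Definition~\ref{sigmadmissible}-(iii), i.e., $\dot x$ may vanish only at isolated points. Indeed, where $\dot x(s)\neq 0$, the projected curve lies in $A\cup A_E$ and is smooth; at a point where $\dot x(s_0)=0$, the lightlike pregeodesic $\rho$ is tangent to $\partial_t$ and the analysis of Lemma~\ref{lightgeo} (second alternative) applies, showing that such zeroes are isolated, that $x(s_0)\in\overline{M_l}$ and that the $g_0$-acceleration does not vanish there. The main obstacle I anticipate is exactly this last point: controlling the zeros of $\dot x$ and matching the spacetime geodesic equation to the Finslerian extremizing-geodesic notion on each smooth piece; this requires combining the second-order ODE in Lemma~\ref{lightgeo} with the fact that $x$ is unit extremizing (so on each smooth piece where $\dot x\neq 0$ it satisfies, by Theorem~\ref{extregeo}, the geodesic equation of either $F$ or $F_l$), and then verifying by uniqueness of ODE solutions that zeros of $\dot x$ cannot accumulate.
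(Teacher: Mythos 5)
Your proposal is correct and follows essentially the same route as the paper: the equivalence is read off from Proposition~\ref{bolas2}, the connecting lightlike pregeodesic comes from the standard causality fact (Remark~\ref{rll} / \cite[Prop.~10.46]{O'neill}) reparametrized via Lemma~\ref{tparam}, the extremizing property follows from the definition together with Proposition~\ref{disjunction}, and the final assertions on the constant and regular cases are obtained from Lemma~\ref{lightgeo}. The only stylistic difference is that you re-derive the isolation of the zeros of $\dot x$ via an ODE-uniqueness argument on the smooth pieces, whereas the paper simply cites Lemma~\ref{lightgeo}; both are adequate.
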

	\begin{proof}
		The first equivalence is straightforward from Proposition~\ref{bolas2}.  More precisely, two horismotically related points are connected by a lightlike geodesic $\gamma$ (see e.g. \cite[Proposition 10.46]{O'neill}) and, 
		applying Lemma~\ref{tparam}, we can  reparametrize $\gamma$ as $\rho(s)=(s,x(s))$. 
		Now, $x$ is a  wind  curve connecting $x_0$ with $x_1$, 
		and horismoticity  
		implies
		$x(s)\in \hat B^+_{\Sigma}(x_0, s-t_0)\setminus  B^+_{\Sigma}(x_0, s-t_0)$, for all $s\in (t_0,t_1]$, so that $x$ is a unit extremizing geodesic (Definition~\ref{extremizing}). The last assertions follow from Lemmas~\ref{lightgeo} 
		and \ref{tparam}. 
	\end{proof}
	
	The next result  characterizes  the  lightlike  geodesics of an \sstk spacetime in terms of the Finslerian elements.
	But, first,  the following lemma points out some simple technical properties. Recall that a  Lorentzian manifold $(M,g)$ admits a convex neighborhood $U$ at every point $p\in M$ (i.e., $U$ is a normal neighborhood of all its points), \cite{Whiteh32}.
	\begin{lemma}\label{stronly}
		Given an \sstk splitting $(\R\times M,g)$ and $z_0=(t_0,x_0)\in\R\times M$ there exists a convex neighborhood $U$ of $z_0$,  a neighborhood $V$ of $z_0$ contained in $U$  and some small $\varepsilon>0$  such that
		$J^+(z)\cap \{(t,x)\in \R\times M:  t\in [t(z), t_0+\varepsilon) \}\subset U$ for every $z\in V$. 
	\end{lemma}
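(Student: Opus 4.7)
The plan is to exploit that $t$ is a temporal function (Proposition~\ref{psstk}) in order to bound the length, in some auxiliary Riemannian metric, of short future causal curves near $z_0$ linearly in their $t$-elapsed time. This linear bound will trap the short-time causal future of any nearby $z$ inside a preassigned convex neighborhood.

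First I would fix an auxiliary Riemannian metric $h_R$ on $\R\times M$, pick a convex neighborhood $U$ of $z_0$, and choose $r>0$ such that $\overline{B_{h_R}(z_0,r)}$ is compact and contained in $U$. The essential technical step is to produce a constant $C>0$ with
\[
\|v\|_{h_R}\leq C\, dt(v)
\]
for every future-directed causal vector $v$ based in $\overline{B_{h_R}(z_0,r)}$. By homogeneity it suffices to bound $\|v\|_{h_R}$ on the section $\{dt(v)=1\}$ of the future causal cone over this compact base; by Remark~\ref{raclarations} and Proposition~\ref{bolas2} each fibre of this section is the convex hull of $\{1\}\times\Sigma_x$ in $\R\times T_xM$, and $\Sigma$ is a closed embedded hypersurface, so the section is a closed and bounded (hence compact) subset of the ambient tangent bundle over $\overline{B_{h_R}(z_0,r)}$, and the bound follows by continuity.

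With the bound in hand, any future causal curve may be reparametrized by $t$: the proof of Lemma~\ref{tparam} uses only that $t$ is temporal and therefore applies to every causal curve, not only to lightlike geodesics. I would then choose a neighborhood $V\subset B_{h_R}(z_0,r/4)$ of $z_0$ and $\varepsilon>0$ so small that $|t(z)-t_0|<\varepsilon$ for every $z\in V$ and $2C\varepsilon<r/4$. Given $z\in V$ and $w\in J^+(z)$ with $t(w)<t_0+\varepsilon$, any $t$-parametrized future causal curve $\gamma$ from $z$ to $w$ has total $t$-lapse $t(w)-t(z)<2\varepsilon$. A standard supremum argument then shows that $\gamma$ cannot exit $B_{h_R}(z_0,r/2)$: at the first putative exit parameter the $h_R$-length would be at least $r/4$ (since $z\in B_{h_R}(z_0,r/4)$), while the uniform estimate, valid up to that parameter, gives at most $C\cdot 2\varepsilon<r/4$, a contradiction. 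Hence $\gamma\subset B_{h_R}(z_0,r/2)\subset U$, so in particular $w\in U$.

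The main, though mild, obstacle is the uniform estimate $\|v\|_{h_R}\leq C\, dt(v)$ over $\overline{B_{h_R}(z_0,r)}$: it is the place where both the temporal character of $t$ and the pointwise description of the causal cone in terms of the Fermat structure $\Sigma$ are needed in order to secure compactness of the normalized section of the cone. Once this is verified, the remainder of the argument is routine bookkeeping with the time coordinate.
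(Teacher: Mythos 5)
Your argument is correct and is essentially the paper's: the paper bounds the causal cones near $z_0$ by the wider cones of a flat Minkowski comparison metric in adapted coordinates, while you bound them via the uniform estimate $\|v\|_{h_R}\le C\,dt(v)$ obtained from compactness of the normalized cone section over a compact neighborhood; in both cases the point is that a future causal curve with $t$-lapse below $2\varepsilon$ has spatial displacement $O(\varepsilon)$ and so cannot escape a preassigned convex $U$. The only cosmetic difference is that your quantitative trapping argument makes the paper's preliminary appeal to strong causality (choosing $U$ so that no causal curve leaves and returns) unnecessary.
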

	\begin{proof}
		Consider a chart $(U, y^0, y^1,\ldots,y^m)$ around $z_0$ such that no causal curve starting at $U$ will leave and return to $U$ (this can be obtained as the \sstk spacetime is strongly causal,  Remark~\ref{increasing} (1)) and with the 
		coordinates adapted to the product structure ($y^0=t$, $y^1,...y^m$ coordinates on $M$). Choosing a smaller $U$,  such that it is convex in $(\R\times M,g)$,  define a (flat) Minkowski metric $g^{\text{\tiny flat}}$ in these coordinates such 
		that $\partial_i,\  i\in\{1,\ldots,m\},$  span a spacelike hyperplane and the timecones of $g^{\text{\tiny flat}}$ are wider than those of $g$ (this can be obtained obviously at the point $z_0$ and, by continuity, in some small neighborhood). 
		Then, the required property for $g$ holds as it does trivially for $g^{\text{\tiny flat}}$.
	\end{proof}
	\begin{thm}\label{existenceofbolasNO} 
		Let $I\subset \R$ be an interval and   $\rho(s)=(s,x(s)),\ s\in I$, be  a  (piecewise smooth)  curve  in an \sstk splitting $(\R\times M,g)$.  Then $\rho$  is a  future-pointing  lightlike pregeodesic of $(\R\times M,g)$ if and only if  
		its projection $I\ni s\to x(s)\in M$ is a  unit geodesic of $(M,\Sigma)$.  Moreover, in this case: 
		\begin{enumerate}[(i)]
			\item  $C_\rho<0$  iff  $x$ is a  unit geodesic of $F$ ($F(\dot x)\equiv 1$;   $x$ is $F$-admissible).  
			
			\item  $C_\rho>0$ iff  $x$ is a  unit geodesic of  $F_l$ ($F_l(\dot x)\equiv 1$;    $x$ is $F$-admissible). 
			
			\item $C_\rho=0$   iff one of the following two possibilities occurs: 
			
			(a) $\rho$ is an  integral curve of $K$ 
			which projects onto some $x_0$ with  $\Lambda(x_0)=0$ and  $\de \Lambda(\mathrm{Ker}\, \omega_{x_0})\equiv 0$ (so that the projection   is an exceptional geodesic,  Definition~\ref{windgeodesic}), 
			or 
			
			(b) $x$ is contained in $\overline{M}_l$; whenever it remains included in $M_l$, $x$ is 
			a lightlike  pregeodesic of $-h$  parametrized with $F(\dot x)\equiv  F_l(\dot x)\equiv 1$, and $x$ can reach $\partial M_l$ only at  isolated points $s_j\in I, j=1,2...$, where $\Lambda(x(s_j))=0$, 
			$\dot x(s_j)=0$ and $\left(D^{g_0}\dot x/ds \right)(s_j)\neq 0$. 
		\end{enumerate}
	\end{thm}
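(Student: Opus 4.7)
\medskip

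The plan rests on three earlier tools: Proposition \ref{bolas2} (causal futures in terms of $\Sigma$-balls), Corollary \ref{horismos} (horismotic pairs are joined by lightlike pregeodesics whose projections are unit extremizing $\Sigma$-geodesics) and Lemma \ref{lightgeo} (geometric meaning of the Killing constant $C_\rho=g(\partial_t,\dot\rho)$). For the forward direction, assume $\rho$ is a future-pointing lightlike pregeodesic. By Lemma \ref{tparam} I reparametrize it so that $\rho(s)=(s,x(s))$, and $C_\rho$ is constant (hence of definite sign or identically zero). Around each $s_0$, Lemma \ref{stronly} gives a convex neighborhood in which, for $s_1>s_0$ close to $s_0$, the points $\rho(s_0)$ and $\rho(s_1)$ are horismotically related, so Corollary \ref{horismos} forces $x|_{[s_0,s_1]}$ to be a unit extremizing $\Sigma$-geodesic; a standard covering argument then globalizes this to $x$ being a unit $\Sigma$-geodesic.

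For the case analysis, I use the identity $C_\rho=-\Lambda+\omega(\dot x)$ and the description of future lightlike vectors in Proposition \ref{plightvectorsSSTK}. Setting $\tau=1$ and $v=\dot x$, the dichotomy $\tau\Lambda-\omega(v)\gtreqless 0$ becomes $C_\rho\lesseqgtr 0$ and selects whether $(1,\dot x)=(F(\dot x),\dot x)$ or $(1,\dot x)=(F_l(\dot x),\dot x)$ (or both on the boundary). This immediately yields (i) $C_\rho<0\Leftrightarrow F(\dot x)\equiv1$ and (ii) $C_\rho>0\Leftrightarrow F_l(\dot x)\equiv1$; in both situations $\dot x\in A$, so Theorem \ref{extregeo} together with the $\Sigma$-geodesic property of $x$ (already established) identifies $x$ with a unit $F$- or $F_l$-geodesic. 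The case $C_\rho=0$ is precisely the hypothesis of the second part of Lemma \ref{lightgeo}, and that lemma is exactly what gives the alternatives (a) (exceptional curve on the critical set with $d\Lambda|_{\ker\omega_{x_0}}=0$) and (b) (projection lies in $\overline{M_l}$, is a lightlike pregeodesic of $-h$ on $M_l$ with $F(\dot x)\equiv F_l(\dot x)\equiv 1$, and can meet $\partial M_l$ only at isolated points where $\dot x=0$ but $D^{g_0}\dot x/ds\neq 0$).

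For the converse, assume $x$ is a unit geodesic of $(M,\Sigma)$. Pointwise, by Definition \ref{sigmadmissible} and Proposition \ref{plightvectorsSSTK} the lift $\dot\rho(s)=(1,\dot x(s))$ is lightlike (using $F(\dot x)\leq1\leq F_l(\dot x)$ combined with the unit-geodesic equality in the appropriate case of Proposition \ref{casesextrgeo}), so $\rho$ is at least a lightlike curve. To upgrade to pregeodesic I argue locally: fix $s_0$, take a convex neighborhood $U$ of $\rho(s_0)$ as in Lemma \ref{stronly}, and pick $s_1>s_0$ so close that $\rho([s_0,s_1])\subset U$. Definition \ref{extremizing} plus Proposition \ref{bolas2} give $(s_1,x(s_1))\in\hat B^+_\Sigma(x(s_0),s_1-s_0)\setminus B^+_\Sigma(x(s_0),s_1-s_0)$, i.e. $\rho(s_0)$ and $\rho(s_1)$ are horismotically related in $U$; convexity of $U$ ensures that the lightlike pregeodesic joining them is unique up to reparametrization, and the $t$-parametrization forces it to coincide with $\rho|_{[s_0,s_1]}$. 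Patching gives the global pregeodesic property.

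The main technical obstacle lies in the degenerate cases grouped under (iii). When $x$ is $F$-admissible Theorem \ref{extregeo} provides everything, but when $\dot x$ touches $A_E\setminus A_l$ (boundary geodesic of $\Sigma$) or when $x$ is constant (exceptional geodesic), Theorem \ref{extregeo} is silent and one must instead rely on the explicit geodesic equation for $(\R\times M,g)$ spelled out in the proof of Lemma \ref{lightgeo}. Concretely, the careful analysis of the reverse direction at points where $x$ reaches $\partial M_l$ (showing that $\rho$ is still a pregeodesic across those isolated instants where $\dot x=0$) and the characterization of when a vertical integral curve of $K$ is itself a null pregeodesic (the condition $d\Lambda|_{\ker\omega_{x_0}}\equiv0$) both hinge on that computation, and reconciling them with the $\Sigma$-side description of Definitions \ref{extremizing}--\ref{windgeodesic} is where the real work sits.
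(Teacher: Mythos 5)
Your proposal is correct and follows essentially the same route as the paper: the forward implication via local horismoticity plus Corollary~\ref{horismos}, the converse via Proposition~\ref{bolas2} together with the fact that causal curves joining horismotically related points are null pregeodesics, and the trichotomy (i)--(iii) via $C_\rho=-\Lambda+\omega(\dot x)$, Proposition~\ref{plightvectorsSSTK}(iii), Theorem~\ref{extregeo} and Lemma~\ref{lightgeo}. The only cosmetic difference is that in the converse you phrase the last step through uniqueness of the connecting geodesic in a convex neighbourhood, whereas the needed (and implicitly used) fact is simply that any causal curve between horismotically related points is a lightlike pregeodesic.
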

	\begin{proof}
		Assume that $\rho$ is a lightlike pregeodesic.
		For each $s_0\in I$ (different from its endpoints, and with straightforward modifications otherwise),  there exists $\varepsilon>0$ 
		%(depending on $s_0$) 
		such that $[s_0-\varepsilon,s_0+\varepsilon]\subset I$ and  $\rho(s)\in J^+(\rho(s_0-\varepsilon))\setminus I^+(\rho(s_0-\varepsilon))$, for all $s\in [s_0-\varepsilon,s_0+\varepsilon]$ (recall,  for example,   \cite[Proposition 5.34]{O'neill} 
		and use strong causality).  Thus, Corollary~\ref{horismos} can be applied locally (recall Lemma~\ref{stronly}), and
		%either $x$ is constant  on $[s_0-\varepsilon, s_0+\varepsilon]$ and therefore, by uniqueness %of  geodesics,  on all $I$ (so that  case (a) holds)  or 
		$x|_{[s_0-\varepsilon, s_0+\varepsilon]}$ is  a unit extremizing geodesic  of the Fermat structure $(M,\Sigma)$. 
		
		Conversely, if   $x: I\rightarrow M$ is a unit geodesic of $(M,\Sigma)$, then it is locally a unit extremizing geodesic (recall Definitions~\ref{windgeodesic}, \ref{extremizing}). So, by Proposition~\ref{bolas2},
		every $s_0$ (as above) admits  an $\varepsilon>0$ such that  $[s_0-\varepsilon,s_0+\varepsilon]\subset I$  and the curve
		$[s_0-\varepsilon,s_0+\varepsilon]\ni s\rightarrow (s,x(s))\in \R\times M$ is contained in $J^+( s_0-\varepsilon ,x(s_0-\varepsilon)) \setminus I^+( s_0-\varepsilon, x(s_0-\varepsilon))$; therefore,  it is a lightlike pregeodesic 
		(see \cite[Proposition 10.46]{O'neill}).
		
		For the last part, first notice that
		$C_\rho=-\Lambda(x(s))+\omega(\dot x(s))$ and
		$C^2_\rho=h(\dot x, \dot x)$ (the latter follows as in the first part of the proof of Lemma~\ref{lightgeo}). Thus, when $C_\rho \neq 0$,
		$\dot x(s)$ belongs to $A_{x(s)}$, for all $s\in I$. Hence,   $x$ is a unit and $F$-admissible geodesic of $(M, \Sigma)$ and, then, from Theorem~\ref{extregeo},  a geodesic of $F$ or $F_l$. Precisely, from part $(iii)$ of 
		Proposition~\ref{plightvectorsSSTK}, $F(\dot x)=1$ iff $-C_\rho=\Lambda(x(s))-\omega(\dot x(s))\geq 0$ and $F_l(\dot x)=1$ iff $-C_\rho=\Lambda(x(s))-\omega(\dot x(s))\leq 0$,  i.e. 
		%This implies,  using Proposition \ref{casesextrgeo} and 
		%Theorem \ref{extregeo},  that 
		$x$  is a unit $F$-geodesic iff $C_\rho<0$ and  a unit $F_l$-geodesic  iff $C_\rho>0$. Finally,  $(iii)$ follows from Lemma~\ref{lightgeo}.
	\end{proof}
	As a straightforward consequence of  Lemmas~\ref{lightgeo}, \ref{tparam} and  Theorem~\ref{existenceofbolasNO} we get:
	\begin{cor}\label{lightgeo2}
		Let $\gamma\colon I\to\R\times M$, $\gamma(s)=(\zeta(s),\sigma(s))$ be a  (piecewise smooth) curve  in an \sstk splitting $(\R\times M,g)$, with $\sigma$ non-constant.
		Then $\gamma$ is a future-pointing lightlike geodesic if and only if $\sigma$ is a pregeodesic of $(M,\Sigma)$ parametrized with $h(\dot\sigma,\dot\sigma)=\mathrm{const.}$,  and one of the following three exclusive possibilities holds
		\begin{enumerate}[(i)]
			\item  $C_\gamma<0$  and  $\sigma$ is a  pregeodesic of $F$ and  $\zeta(\bar s_0)-\zeta(s_0)=\ell_{F}(\sigma|_{[s_0,\bar s_0]})$ for any $s_0,\bar s_0\in I$, $s_0<\bar s_0$;
			\item  $C_\gamma>0$ and  $\sigma$ is a  pregeodesic of  $F_l$ and $\zeta(\bar s_0)-\zeta(s_0)=\ell_{F_l}(\sigma|_{[s_0,\bar s_0]})$  for any $s_0,\bar s_0\in I$, $s_0<\bar s_0$;
			\item $C_\gamma=0$,  $\sigma$  
			is  smooth\footnote{Notice that smoothness follows if we assume just that it is  twice differentiable  at the points where it touches $\partial M_l$.},  it 
			is included  in $\overline{M}_l$,   it  touches $\partial M_l$ at most at 
			isolated points  $s_j$, $j=1,2,\ldots,$ such that $\dot\sigma(s_j)=0$,  $\sigma$  is a lightlike  geodesic  of $h/\Lambda$ whenever it remains in $M_l$,  and also  $\zeta$ is determined by 
			\[\zeta(\bar s_0)-\zeta(s_0)=\ell_{F_l}(\sigma|_{[s_0,\bar s_0]})=\ell_{F}(\sigma|_{[s_0,\bar s_0]})\] %(which remains finite) 
			for any $s_0,\bar s_0\in I$, $s_0<\bar s_0$. 
		\end{enumerate}
		%   
		% \begin{rem}
		% Notice that in the cases $(i)$ and $(ii)$, being $\sigma$ an $F$-admissible curve, we get  immediately that  $\sigma$ and $\zeta$ are smooth. In case $(iii)$, $\zeta$ is still smooth. This follows by the fact that $\sigma$ and $\zeta$ satisfies  \eqref{lightgeoeq} in each open interval $J\subset I$ such that $\sigma(J)\subset M_l$ and the assumptions $\sigma$ non-constant and smooth.
		% \end{rem}
		%  
	\end{cor}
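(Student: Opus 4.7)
\medskip

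\noindent\textbf{Proof plan for Corollary \ref{lightgeo2}.}
The plan is to reduce the statement to the already-established Lemmas~\ref{lightgeo} and \ref{tparam} together with Theorem~\ref{existenceofbolasNO}. First I would show the equivalence of ``$\gamma$ is a future-pointing lightlike geodesic'' with the combined statement ``$\sigma$ is a pregeodesic of $(M,\Sigma)$ with $h(\dot\sigma,\dot\sigma)$ constant plus one of (i)--(iii)''. The forward direction uses that $\partial_t$ is Killing, so $C_\gamma=g(\partial_t,\dot\gamma)$ is a first integral; combined with Lemma~\ref{lightgeo}, which yields $C_\gamma^{\,2}=h(\dot\sigma,\dot\sigma)$ for any lightlike geodesic, this immediately gives constancy of $h(\dot\sigma,\dot\sigma)$. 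Applying Lemma~\ref{tparam} I can reparametrize $\gamma$ as $\rho(s)=(s,x(s))$ on $[t(\gamma(a)),t(\gamma(b))]$, and Theorem~\ref{existenceofbolasNO} then says that $x$ is a unit geodesic of $(M,\Sigma)$; since $\sigma$ is a reparametrization of $x$, $\sigma$ is a pregeodesic of $(M,\Sigma)$. Conversely, if $\sigma$ is a pregeodesic of $(M,\Sigma)$ with $h(\dot\sigma,\dot\sigma)$ constant and $\gamma$ is lightlike future-pointing, then reparametrizing $\gamma$ by $s\mapsto t\circ\gamma$ one obtains a curve $(s,x(s))$ with $x$ a unit geodesic of $(M,\Sigma)$ (up to affine adjustment forced by constancy of $h(\dot\sigma,\dot\sigma)$), and Theorem~\ref{existenceofbolasNO} promotes this to a lightlike pregeodesic; the constancy of $C_\gamma$ under the given parametrization upgrades ``pregeodesic'' to ``geodesic''.

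Next I would split into the three cases according to the sign of $C_\gamma$, which is preserved along $\gamma$. In case (i), $C_\gamma<0$: by Theorem~\ref{existenceofbolasNO}-(i), the time-parametrized projection $x$ is a unit $F$-geodesic; returning to the original parametrization $(\zeta,\sigma)$, positive homogeneity of $F$ together with the lightlike condition ``$(\dot\zeta,\dot\sigma)$ lies on the $F$-branch of the light cone'' (Proposition~\ref{plightvectorsSSTK}) forces $\dot\zeta=F(\dot\sigma)$ pointwise. Integrating over $[s_0,\bar s_0]$ yields $\zeta(\bar s_0)-\zeta(s_0)=\ell_F(\sigma|_{[s_0,\bar s_0]})$, and $\sigma$ is an $F$-pregeodesic. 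Case (ii) is entirely symmetric, using Theorem~\ref{existenceofbolasNO}-(ii) and the $F_l$-branch of the light cone.

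Case (iii) requires more care, and I expect it to be the main technical point. When $C_\gamma=0$, Lemma~\ref{lightgeo}-(ii) applies (since $\sigma$ is non-constant, we are not in subcase (i) of that lemma, ruling out the integral curve of $K$): $\sigma$ is contained in $\overline{M}_l$, can reach $\partial M_l$ only at isolated points $s_j$ with $\dot\sigma(s_j)=0$ and non-vanishing $g_0$-acceleration there, and on $M_l$ it is a lightlike geodesic of $h/\Lambda$. Smoothness at the $s_j$ follows because $\gamma$ is a spacetime geodesic and the geodesic equation is smooth in the $(t,x)$-coordinates, so $\sigma$ inherits $C^\infty$ regularity from $\gamma$ (only the hypothesis of twice-differentiability at $\partial M_l$ is needed, as flagged in the footnote). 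For the length identity, the lightlike condition with $C_\gamma=0$ places $\dot\sigma\in A_E\setminus A_l$ wherever $\dot\sigma\neq 0$, where $F$ and $F_l$ coincide by the boundary-extension statement in Proposition~\ref{windConseq}; hence $\dot\zeta=F(\dot\sigma)=F_l(\dot\sigma)$, and integration over $[s_0,\bar s_0]$ (including trivially the isolated zeroes of $\dot\sigma$, which do not contribute) gives the joint equality $\zeta(\bar s_0)-\zeta(s_0)=\ell_F(\sigma|_{[s_0,\bar s_0]})=\ell_{F_l}(\sigma|_{[s_0,\bar s_0]})$.

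The only real obstacle is the smoothness claim and the correct handling of the isolated boundary crossings in case (iii): one must check that passing from ``lightlike geodesic of $h/\Lambda$ on $M_l$'' to a smooth curve of $(M,\Sigma)$ through isolated points of $\partial M_l$ is unambiguous, and that the parametrization ``$h(\dot\sigma,\dot\sigma)=\mathrm{const.}$'' is compatible with $h/\Lambda$ being the metric for which $\sigma|_{M_l}$ is a null geodesic. Both points are already accounted for by Lemma~\ref{lightgeo}; no further calculation beyond unpacking those statements is needed.
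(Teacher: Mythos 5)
Your proposal is correct and follows essentially the same route as the paper: the corollary is presented there as a direct consequence of Lemmas~\ref{lightgeo} and \ref{tparam} together with Theorem~\ref{existenceofbolasNO}, which is exactly the reduction you carry out (constancy of $C_\gamma$ via the Killing field, $C_\gamma^2=h(\dot\sigma,\dot\sigma)$, $t$-reparametrization, and the three-way case split on the sign of $C_\gamma$ with the corresponding branch of the light cone giving $\dot\zeta=F(\dot\sigma)$ or $\dot\zeta=F_l(\dot\sigma)$). Your extra care in case (iii) about smoothness at the isolated touchings of $\partial M_l$ and the coincidence $F=F_l$ on $A_E\setminus A_l$ is consistent with what Lemma~\ref{lightgeo} and Convention~\ref{caestar} already provide.
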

	\subsection{Characterization of the causal ladder}
	The following technical property concerns limit curves  (recall Definition~\ref{limitcurvedef})
	in connection with time functions. The role of these functions in the limit process is not usually taken into account (indeed, it is rather trivial in the case of Cauchy temporal functions). So, we write it for any stably causal spacetime, which may have interest in its own right.
	%It has interest in its own right
%	and, so, we write it for any stably causal spacetime.
	
	\begin{lemma}\label{limitcurve} Let $(L,g)$ be a spacetime endowed
		with a time function $t:L\rightarrow \R$.
		\begin{enumerate}[(i)]
			\item   Consider a sequence  of inextendible causal curves $\{\gamma_n\}$ parametrized by the time $t$ and assume that there exists a convergent sequence $\{t_n\}$ such that $\gamma_n(t_n)$ converges to $z_0$. 
			Then there exists an (inextendible, causal) limit curve $\gamma$ 
			through $z_0$ parametrized by the time $t$,  and   a  subsequence  $\gamma_{n_k}$  
			such  that,  whenever    the intersection
			of $\gamma$ with the slice $S_{t_0}:=\{z\in L: t(z)=t_0\}$ is  not empty for   $t_0\in \R$,
			then all the curves $\gamma_{n_k}$ but a finite number intersect $S_{t_0}$ and  $\gamma(t_0)=\lim_k\gamma_{n_k}(t_0)$.
			\item  Let  $\gamma_n$ be a sequence of causal
			curves   and, for each $n\in\N$,  $z_n\leq w_n$ be two points on  $\gamma_n$.  If $z_n\to z$, $w_n\to w$,  $z\neq w$, and the    intersection of the slice
			$S_{t_0}$ with the images of all $\gamma_n$  lies in a
			compact subset for any $t_0 \in   (t(z),t(w)) $,  then any (inextendible) limit
			curve $\gamma$ of the sequence starting at   $z$   arrives at   $w$. 
		\end{enumerate}
	\end{lemma}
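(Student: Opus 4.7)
The plan is to reduce both parts to the standard limit curve theorem for future-inextendible causal curves (\cite[Proposition~3.31]{BeEhEa96}, already invoked just above in the paper), together with the observation that, since $t$ is a time function on the stably causal spacetime $(L,g)$, it is strictly increasing along every causal curve and hence provides a canonical parametrization in which convergence at fixed slices of $t$ can be monitored.

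For part (i), the hypothesis $\gamma_n(t_n)\to z_0$ ensures that $z_0$ is an accumulation point of the images of the $\gamma_n$, and the limit curve theorem yields a subsequence $\{\gamma_{n_k}\}$ together with an inextendible causal limit curve $\gamma$ through $z_0$, with convergence uniform on compact intervals for an auxiliary $h$-arc-length parametrization (for some complete Riemannian metric $h$ on $L$). Because $t$ is continuous and strictly monotone along each causal curve, one may reparametrize both $\gamma$ and the $\gamma_{n_k}$ by $t$, and the uniform convergence transfers. Consequently, for every $t_0$ in the open image of $t\circ\gamma$, the value $t_0$ eventually lies in the $t$-range of $\gamma_{n_k}$ and $\gamma_{n_k}(t_0)\to\gamma(t_0)$, as claimed.

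For part (ii), I would first extend each $\gamma_n$ to a future-inextendible causal curve $\tilde\gamma_n$ by a Zorn's-lemma argument, and then apply (i) with accumulation point $z$ (guaranteed by $z_n\to z$) to obtain an inextendible limit curve $\gamma$ through $z$ parametrized by $t$. Writing $T^+:=\sup(t\circ\gamma)$, the crucial step is to show $T^+>t(w)$; once this is established, part (i) applied at $t_0=t(w)$ yields $\gamma_{n_k}(t(w))\to\gamma(t(w))$, and combining this with $w_n=\gamma_n(t(w_n))\to w$, $t(w_n)\to t(w)$, via the local Lipschitz estimate for causal curves parametrized by a time function (with respect to any auxiliary Riemannian metric), gives $\gamma(t(w))=w$. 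To prove $T^+>t(w)$, I argue by contradiction: if $T^+\le t(w)$, pick $t_j\nearrow T^+$ in $(t(z),T^+)$; for $k$ large, $t_j<t(w_{n_k})$, so the hypothesis places each $\gamma_{n_k}(t_j)$ in a fixed compact $K_{t_j}\subset S_{t_j}$, and passing to the limit in $k$ yields $\gamma(t_j)\in K_{t_j}$. A diagonal extraction then produces a subsequence along which $\gamma(t_j)$ converges to some $p\in L$ with $t(p)=T^+$, which by the Lipschitz estimate furnishes a continuous extension of $\gamma$ through $p$, contradicting inextendibility.

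The main technical obstacle is the extension argument in (ii): the hypothesis only delivers compactness at each single slice $S_{t_0}$ separately, so a diagonal construction combined with the local Lipschitz control of causal curves parametrized by $t$ is needed to promote these slice-wise compactnesses into a genuine continuous endpoint of the limit curve at $t=T^+$, thereby contradicting its inextendibility.
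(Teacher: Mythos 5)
Your part (i) is correct and in fact takes a slightly different route from the paper: where the paper controls the crossing points $q_k=\gamma_{n_k}(t_0)$ by placing everything in a convex, globally hyperbolic neighborhood and invoking the acausality of $V\cap S_{t_0}$, you use the strict monotonicity of $t\circ\gamma$ together with the intermediate value theorem in the auxiliary arc-length parameter. That works (the $t$-image of an inextendible causal curve parametrized on all of $\R$ is an open interval, so the straddling inequalities $t(\gamma(s_0\pm\varepsilon))\lessgtr t_0$ pass to the $\gamma_{n_k}$ by uniform convergence), and it is arguably more elementary; the phrase ``the uniform convergence transfers'' is hiding exactly this IVT step, which you should spell out.

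Part (ii) has a genuine gap at the contradiction step. You place $\gamma(t_j)\in K_{t_j}$ for a sequence $t_j\nearrow T^+$, but the hypothesis only gives compactness of each slice intersection \emph{separately}: the sets $K_{t_j}$ are different compacta for different $j$, and $\bigcup_j K_{t_j}$ need not be compact. A ``diagonal extraction'' can make the inner limits in $k$ exist simultaneously for all $j$, but it cannot produce a convergent subsequence of the outer sequence $\{\gamma(t_j)\}_j$ out of slice-by-slice compactness; without that, you get no accumulation point $p$ and no contradiction with inextendibility. The paper avoids this by working with a \emph{single} slice: for each fixed $t^*\in(t(z),t(w))$ it uses compactness of $S_{t^*}$ once to extract $\gamma_{n_k}(t^*)\to z^*$, builds a past-directed limit curve $\rho$ from $z^*$, and uses part (i) to identify $\rho$ with $\gamma$ on $(t^*-\varepsilon,t^*)$, so that $\gamma$ extends through $z^*$; this shows $\gamma$ is defined on all of $[t(z),t(w))$, and the arrival at $w$ is then obtained from $w_n\to w$ (not from slice compactness at $t(w)$, which the hypothesis does not provide since the interval is open). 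Your argument also needs this last distinction: when $T^+=t(w)$ no slice in $(t(z),t(w))$ detects the failure, and you must fall back on the convergence $w_{n_k}\to w$. Finally, the ``local Lipschitz estimate for causal curves parametrized by a time function'' is not available for a merely continuous time function (e.g.\ $t=T^3$ on Minkowski space destroys any Lipschitz bound near $T=0$); what is true, and suffices, is a uniform modulus of continuity on compact sets, proved via the closedness of the causal relation in a convex neighborhood and the strict increase of $t$ on causal geodesics — essentially the same acausality argument the paper uses in part (i).
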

	\begin{proof}
		$(i)$  The existence of the  limit  curve follows from \cite[Proposition 3.31]{BeEhEa96}.  Let $\{\gamma_{n_k}\}$ be any  subsequence that converges to
		$\gamma$ uniformly on compact subsets for some auxiliary complete
		Riemannian metric (up to a reparametrization, according to
		\cite[Lemma 14.2]{BeEhEa96})  and, so,   such that some sequence
		$\{p_k:=\gamma_{n_k}(t_{n_k})\},   t_{n_k}\in \R$,
		converges to $p:=\gamma(t_0)$; in particular $t_{n_k}\rightarrow t_0$.
		Choose open neighborhoods $V, U$ of $p$, with $V\subset U$, $U$ convex
		and $V$  having compact closure included in $U$ and  being globally
		hyperbolic with Cauchy hypersurface $S_{t_0}$.\footnote{Such a neighborhood $V$
			can be constructed easily by taken the Cauchy development of a small neighborhood in $S_{t_0}$ of $p$, see \cite[Theorem 2.14]{MinSan08}.}
		Due to the convergence to $p$, all $\gamma_{n_k}$ but a finite number  will enter in $V$ and cross $V\cap S_{t_0}$ at a single point $q_k$.
		Reasoning by contradiction, if $\{q_k\}$ does not converge to $p$ then, up to a subsequence, $\{q_k\}$  converges to some $q\in U\setminus\{p\}$.
		Assume  that, up to a new subsequence,  $p_k\leq q_k$ (otherwise we could assume    $q_k\leq p_k$ up to a subsequence, and the reasoning would be analogous).  By the convexity of $U$,
		$p\leq q$ but, as $p$ and $q$ lie in the acausal set\footnote{\label{acausal} A subset $\mathcal A$ of a spacetime $V$ is said {\em acausal} if no $p, q\in \mathcal A$ are causally related in $V$.} $V\cap S_{t_0}$, one obtains the absurd $p=q$.
		
		$(ii)$   Let $t^*\in (t(z),t(w))$ and let us reparameterize   all the curves with $t$. Assume that $\gamma:
		[t(  z ),t^*)\rightarrow L$  cannot be extended to $t^*$. Let
		$\{\gamma_{n_k}\}$ be any  subsequence that converges to $\gamma$
		as  in part $(i)$  and such that $\{\gamma_{n_k}(t^*)\}$ converges to some
		point $z^*\in  S_{t^*}$, the latter property by the assumption on
		compactness. Up to a subsequence, $\{\gamma_{n_k}\}$ admits a
		limit curve  starting at $z^*$, say, $\rho: (t^*-\varepsilon,
		t^*]\rightarrow L$ for some $\varepsilon>0$. Now,
		by  part $(i)$,
		necessarily  $\{\gamma_{n_k}(t)\}$  converges  to both
		$\gamma(t)$ and $\rho(t)$,  for each $t\in (t^*-\varepsilon, t^*)$.  So,
		$\gamma$ admits  $z^*$ as a  future limit point and it is then extendible, a
		contradiction.   Thus, $\gamma$ is defined on $[t(z),t(w))$ and since $w_n\to w$ necessarily it arrives at $w$.  
	\end{proof}
	As a first consequence, we obtain characterizations of some causal
	properties, which will be related to the possible reflectivity and
	causal
	simplicity of the spacetime. 
	\begin{prop}\label{lreflect} For any $p=(t_0,x_0), q=(t_1, x_1)$ in an \sstk splitting:
		\begin{enumerate}[(i)]
			\item $I^+(p)\supset I^+(q)$ if and only if $x_1\in
			\bar{B}^+_{\Sigma}(x_0,t_1-t_0)$, and
			\item  $I^-(p)\subset I^-(q)$ if and only if $x_0\in
			\bar{B}^-_{\Sigma}(x_1,t_1-t_0)$.
		\end{enumerate}
		\smallskip
		\noindent Moreover,
		\begin{align*}
			&\bar J^+(t_0,x_0)= \left(\cup_{s> 0}\{t_0+s\}\times \bar{B}_{\Sigma}^+(x_0,s)\right)\,\cup\,\{(t_0,x_0)\}\\
			&\bar J^-(t_0,x_0)= \left(\cup_{s> 0}\{t_0-s\}\times \bar{B}_{\Sigma}^-(x_0,s)\right)\,\cup\,\{(t_0,x_0)\}
		\end{align*}
	\end{prop}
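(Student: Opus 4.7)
The plan is to reduce (i) to the slicewise description
\[
\overline{I^+(p)} \cap S_{t_1} = \{t_1\}\times \bar B^+_\Sigma(x_0,\,t_1-t_0) \qquad (t_1\geq t_0),
\]
by means of the standard spacetime equivalence $I^+(p)\supset I^+(q)\Longleftrightarrow q\in \overline{I^+(p)}$, valid in any time-oriented spacetime: the $\Leftarrow$ direction uses openness of $I^-(r)$ for $r\in I^+(q)$; the $\Rightarrow$ direction uses the existence of future-pointing timelike curves leaving $q$ (e.g.\ integral curves of the future-pointing timelike field $-\nabla t$), which produce points of $I^+(q)\subset I^+(p)$ accumulating at $q$. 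Combined with Proposition~\ref{bolas2} and the identity $\bar J^+(p)=\overline{I^+(p)}$ (itself a consequence of $I^+(p)\subset J^+(p)\subset \overline{I^+(p)}$), the ``moreover'' formulas follow at once from (i), the singleton $\{(t_0,x_0)\}$ covering the case $t_1=t_0$: in that case a sequence $t_n\to t_0^+$ with $y_n\in B^+_\Sigma(x_0, t_n-t_0)$ forces $x_1=x_0$, because wind curves are $1$-Lipschitz in the Riemannian metric $g_R$ provided by Remark~\ref{riemlowerbound}, so $d_{g_R}(x_0,y_n)\leq t_n-t_0\to 0$.

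The inclusion $\supset$ in the slicewise identity is immediate: if $y_n\to x_1$ with $y_n\in B^+_\Sigma(x_0, t_1-t_0)$, Proposition~\ref{bolas2} gives $(t_1,y_n)\in I^+(p)$, hence $(t_1,x_1)\in \overline{I^+(p)}$. For the hard inclusion $\subset$, take $(t_n,y_n)\in I^+(p)$ with $(t_n,y_n)\to (t_1,x_1)$ and $t_1>t_0$. By Proposition~\ref{bolas2}, each $y_n$ is the endpoint of a wind curve $\sigma_n:[0,t_n-t_0]\to M$ from $x_0$ with strict inequalities $\ell_F(\sigma_n)<t_n-t_0<\ell_{F_l}(\sigma_n)$. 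Pass to a subsequence realizing one of the two possibilities $t_n\geq t_1$ or $t_n<t_1$. In the first case, set $z_n:=\sigma_n(t_1-t_0)$: the restriction $\sigma_n|_{[0,t_1-t_0]}$ is still a wind curve of time-length $t_1-t_0$, so $z_n\in \hat B^+_\Sigma(x_0, t_1-t_0)\subset \bar B^+_\Sigma(x_0, t_1-t_0)$ by Proposition~\ref{pclosurecballs}. In the second case, extend $\sigma_n$ past $y_n$ by a short wind curve $\rho_n$ of time-length $t_1-t_n$ obtained from any vector $v_n\in T_{y_n}M$ with $F(v_n)<1<F_l(v_n)$; such $v_n$ exists at every point of the wind Riemannian manifold because the interior of $B_{y_n}$ is non-empty and consists precisely of vectors satisfying $F<1<F_l$ (using Convention~\ref{caestar} so that $F_l\equiv+\infty$ on $A$ in the mild and critical regions; in the strong region this is the open annular region between $\Sigma^-$ and $\Sigma^+$). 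The strictness of the inequalities for $\sigma_n$ then propagates to the concatenation, placing $z_n:=(\sigma_n\ast\rho_n)(t_1-t_0)\in B^+_\Sigma(x_0, t_1-t_0)$. In both subcases, wind curves are $1$-Lipschitz in $g_R$ (since $F\geq \sqrt{g_R}$ and $F(\dot\sigma)\leq 1$ along a wind curve), so $d_{g_R}(z_n,y_n)\leq |t_n-t_1|\to 0$, yielding $z_n\to x_1$ and therefore $x_1\in \bar B^+_\Sigma(x_0, t_1-t_0)$.

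Part (ii) is obtained by applying (i) to the reverse \sstk splitting obtained by flipping the time-orientation: its Fermat structure is $-\Sigma$, its chronological future becomes the original $I^-$, and its forward wind balls become $B^-_\Sigma$ (cf.\ Proposition~\ref{reversewind} and the remark following Definition~\ref{completeness}). The hardest step I anticipate is the subcase $t_n<t_1$ above, where one must produce a wind-curve extension whose endpoint stays arbitrarily close to $y_n$ while preserving both strict inequalities $\ell_F<t_1-t_0<\ell_{F_l}$; the ellipsoidal nature of the indicatrix in a wind Riemannian structure—guaranteeing a non-empty open region $\{F<1<F_l\}$ at every point—is what makes this construction available in all three cases (mild, critical, strong wind).
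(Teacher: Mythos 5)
Your proof is correct, but it takes a genuinely different route from the paper's in the key step. The paper proves the forward implication of (i) and the hard inclusion in the ``moreover'' formulas with Lorentzian limit-curve machinery: it takes inextendible timelike curves through $p$ and through points $q_n\to q$, extracts a limit curve via Lemma~\ref{limitcurve}(i), and reads off from its intersection with the slice $S_{t_1}$ a sequence of points of $B^+_{\Sigma}(x_0,t_1-t_0)$ converging to $x_1$. You instead reduce everything to the single slicewise identity $\overline{I^+(p)}\cap S_{t_1}=\{t_1\}\times\bar B^+_{\Sigma}(x_0,t_1-t_0)$ via the general spacetime equivalence $I^+(p)\supset I^+(q)\Leftrightarrow q\in\overline{I^+(p)}$, and then prove that identity by hand on $M$: truncating the projected wind curves when $t_n\geq t_1$, extending them by a short arc with velocity in the open set $\{F<1<F_l\}=B_{y_n}$ when $t_n<t_1$, and controlling the endpoint displacement by a Riemannian Lipschitz bound. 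This bypasses Lemma~\ref{limitcurve} entirely for this proposition (the $(\Leftarrow)$ direction via openness of $\ll$ is the same in both arguments) and makes the ``moreover'' formulas and the degenerate case $t_1=t_0$ drop out of one computation; the cost is the explicit extension construction, which you justify correctly by the non-emptiness of $B_{y_n}$ in all three wind regimes. Two minor touch-ups: the Lipschitz estimate should invoke the auxiliary metric $g_0$ of Remark~\ref{riemlowerbound} (or a local constant times $g_R$ on a compact neighbourhood of $x_1$), extended by continuity to $A_E$ and trivially at the critical-region zeroes, rather than the literal inequality $F\geq\sqrt{g_R}$; and in (ii) one should note that the reversal $t\mapsto -t$ turns the data $(g_0,\omega,\Lambda)$ into $(g_0,-\omega,\Lambda)$, whose Fermat structure is $\tilde\Sigma=-\Sigma$ (Remark~\ref{rtimereversal}), which is exactly what your reduction uses.
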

	\begin{proof} We consider the case $(i)$, being part $(ii)$ analogous.
		
		($\Rightarrow$) Choose $\{q_n\}\subset I^+(q)$, converging to $q$ and
		inextendible future-pointing timelike curves $\gamma_n$
		through $p$ and $q_n$. From part $(i)$ of Lemma~\ref{limitcurve}, there
		exists a subsequence $\gamma_{n_k}$ that  cuts the slice $S_{t_1}$
		in a sequence of points  $(t_1, y_{n_k})$, such that $y_{n_k}\to
		x_1$. By Proposition~\ref{bolas2},  $y_{n_k}\in B^+_{\Sigma}(x_0,
		t_1-t_0)$ and, then, $x_1\in \bar B^+_{\Sigma}(x_0,t_1-t_0)$.
		
		($\Leftarrow$) As $x_1\in \bar{B}^+_{\Sigma}(x_0,t_1-t_0)$, take a
		sequence $\{\tilde{y}_n\}$  in
		$B^+_{\Sigma}(x_0,t_1-t_0)$ converging to $x_1$.  The sequence
		$q_n=(t_1, \tilde{y}_n)$ converges to $q$ and, by  Proposition~\ref{bolas2}, is contained in $I^+(p)$. As the chronological relations
		are open, given $r\in I^+(q)$, then $r\in I^+(q_n)$ for $n$ big
		enough. This implies that $r\in I^+(p)$ and then $I^+(p)\supset
		I^+(q) $, as required.
		
		For the last assertion, recall first that,  in any spacetime $\bar J^\pm(p)=\bar I^\pm(p)$
		(see, e.g. \cite[Lemma 14.6]{O'neill})
		hence the  inclusions
		$\supset$ hold trivially from Proposition~\ref{bolas2}. For the
		converse in the case of $\bar J^+$,
		let $q=(t_1,x_1) \in \bar J^+(p)=\bar I^+(p)$, and take  $q_n\in I^+(p)$ such that $q_n\to q$.  Apply part $(i)$ of
		Lemma~\ref{limitcurve} to obtain a sequence $(t_1, y_{n_k})\in
		I^+(p)$ such that $y_{n_k}\to x_1$, and conclude again from
		Proposition~\ref{bolas2} that  $y_{n_k}\in B^+_{\Sigma}(x_0,
		t_1-t_0)$, so that $x_1\in \bar B^+_{\Sigma}(x_0,t_1-t_0)$.
	\end{proof}
	Now, we can study the causal ladder of any \sstk splitting,
	extending the Randers-Kropina case in Theorem~\ref{kropinaLadder}.
	\begin{thm}\label{generalK} Consider an \sstk  splitting $(\R\times M,g)$ as
		in \eqref{lorentz} with associated Fermat structure $\Sigma$ on $M$. Then, $(\R\times M,g)$  is stably causal and
		\begin{enumerate}[(i)]
			\item  $(\R\times M,g)$ is causally continuous if and only if
			$\Sigma$ satisfies the following property: given any pair  of points
			$x_0,x_1$ in $M$ and $r>0$, $x_1\in \bar{B}^+_{\Sigma}(x_0,r)$ if
			and only if $x_0\in \bar{B}^-_{\Sigma}(x_1,r)$.
			
			\item  $(\R\times M,g)$ is causally simple
			if and only if $(M,\Sigma)$ is  w-convex  (according to Definition~\ref{strongconvex}).
			\item  The following assertions are equivalent:
			\begin{enumerate}
				\item[(iii1)] $(\R\times M,g)$ is globally hyperbolic.
				\item[(iii2)]$\hat{ B}^+_{\Sigma}(x,r_1)\cap
				\hat{B}^-_{\Sigma}(y,r_2)$ is compact for every $x,y\in
				M$ and $r_1,r_2>0$.
				\item[(iii3)]  $\bar{ B}^+_{\Sigma}(x,r_1)\cap \bar{B}^-_{\Sigma}(y,r_2)$
				is compact for every $x,y\in M$ and
				$r_1,r_2>0$.
			\end{enumerate}
			\item The following assertions are
			equivalent:
			\begin{enumerate}
				\item[(iv1)] A slice $S_t$ (and, then every slice) is a
				spacelike Cauchy hypersurface.
				\item[(iv2)] All the c-balls $\hat B_{\Sigma}^+(x,r)$ and
				$\hat B_{\Sigma}^-(x,r)$, $r>0$, $x\in M$, are compact.
				\item[(iv3)] All the (open) balls $B_{\Sigma}^+(x,r)$ and
				$B_{\Sigma}^-(x,r)$, $r>0$, $x\in M$, are precompact.
				\item[(iv4)]   $\Sigma$ is forward and backward geodesically complete (according to  Definition~\ref{strongconvex}).
			\end{enumerate}
		\end{enumerate}
	\end{thm}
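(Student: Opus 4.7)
The argument rests on four main pillars available in the earlier sections: the descriptions of $I^\pm(p)$ and $J^\pm(p)$ in Proposition~\ref{bolas2}, the description of $\bar J^\pm(p)$ and reflectivity in Proposition~\ref{lreflect}, the lift of lightlike geodesics through Theorem~\ref{existenceofbolasNO} and Corollary~\ref{horismos}, and the limit curve machinery of Lemma~\ref{limitcurve}. Stable causality is immediate since by Proposition~\ref{psstk} the function $t\colon\R\times M\to\R$ is temporal for $g$ (Remark~\ref{increasing}), so each further step of the ladder only needs to be translated into a Finslerian statement.

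For (i), in a stably causal spacetime causal continuity is equivalent to future and past reflectivity, i.e.\ $I^+(p)\supseteq I^+(q)\Leftrightarrow I^-(p)\subseteq I^-(q)$. Proposition~\ref{lreflect} identifies these two inclusions precisely with $x_1\in\bar B^+_\Sigma(x_0,t_1-t_0)$ and $x_0\in\bar B^-_\Sigma(x_1,t_1-t_0)$, which yields (i). For (ii), causal simplicity is causality plus closedness of $J^\pm(p)$. Comparing the union-of-slices formula for $J^+$ in Proposition~\ref{bolas2} with that for $\bar J^+$ in Proposition~\ref{lreflect}, the closedness of $J^+(t_0,x_0)$ forces the pointwise equality $\hat B^+_\Sigma(x_0,s)=\bar B^+_\Sigma(x_0,s)$ for every $s\geq 0$, and symmetrically in the past; conversely, if every c-ball is closed then Proposition~\ref{pclosurecballs} gives $\hat B^\pm_\Sigma=\bar B^\pm_\Sigma$, so $J^\pm(p)$ and $\bar J^\pm(p)$ coincide slice by slice. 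The delicate point is that pointwise equality of slices does not automatically imply closedness of the union in $\R\times M$; this is handled by taking a sequence of causal curves from $p$ converging to an accumulation point of $J^+(p)$ and extracting a causal limit curve via Lemma~\ref{limitcurve}(i), which places the accumulation point in $J^+(p)$.

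For (iii), the implication (iii1)$\Rightarrow$(iii2) is obtained by intersecting the compact diamond $J^+((0,x))\cap J^-((r_1+r_2,y))$ with the slice $S_{r_1}$: Proposition~\ref{bolas2} identifies it with $\{r_1\}\times(\hat B^+_\Sigma(x,r_1)\cap\hat B^-_\Sigma(y,r_2))$. Since global hyperbolicity implies causal simplicity, part (ii) gives w-convexity, so every c-ball is closed, $\hat B^\pm_\Sigma=\bar B^\pm_\Sigma$, and (iii2)$\Leftrightarrow$(iii3) follows. The remaining (iii3)$\Rightarrow$(iii1) is handled by a limit curve argument: given $p$, $q$ and a sequence $z_n\in J^+(p)\cap J^-(q)$ with connecting causal curves through $z_n$, the precompactness hypothesis supplies the compactness of slices needed to run Lemma~\ref{limitcurve}(ii), producing a limit causal curve from $p$ through the accumulation point to $q$.

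For (iv), we establish the cycle (iv1)$\Rightarrow$(iv2)$\Rightarrow$(iv3)$\Rightarrow$(iv4)$\Rightarrow$(iv1). A Cauchy slice makes the spacetime globally hyperbolic, and $\{t_0+r\}\times\hat B^+_\Sigma(x,r)=J^+(t_0,x)\cap S_{t_0+r}$ is compact in any globally hyperbolic spacetime, giving (iv1)$\Rightarrow$(iv2); (iv2)$\Rightarrow$(iv3) then follows because $\hat B^\pm_\Sigma$ compact implies closed, hence $\hat B^\pm_\Sigma=\overline{\hat B^\pm_\Sigma}=\overline{B^\pm_\Sigma}=\bar B^\pm_\Sigma$ by Proposition~\ref{pclosurecballs}, which is compact. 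For (iv3)$\Rightarrow$(iv4), a unit geodesic $\gamma\colon[a,b)\to M$ satisfies $\gamma(s)\in\hat B^+_\Sigma(\gamma(a),s-a)\subseteq\bar B^+_\Sigma(\gamma(a),b-a)$ by Definition~\ref{windgeodesic}, so under (iv3) its image lies in a compact set and $\gamma$ has an accumulation point at $b$; combined with the lift to a lightlike geodesic of $(\R\times M,g)$ provided by Theorem~\ref{existenceofbolasNO}, this extends $\gamma$ past $b$, contradicting inextendibility (the backward case is symmetric). The main obstacle is the final step (iv4)$\Rightarrow$(iv1): forward and backward completeness of $\Sigma$ must be upgraded, via Theorem~\ref{existenceofbolasNO} and Corollary~\ref{lightgeo2}, to the statement that every inextendible lightlike geodesic of $(\R\times M,g)$ has $t$-range equal to $\R$, with special attention to the exceptional geodesics and to the isolated zeros of $\dot\gamma$ listed in Theorem~\ref{existenceofbolasNO}(iii); once this is in place, a sandwiching of an arbitrary inextendible causal curve between complete null geodesics, together with Lemma~\ref{limitcurve}, forces the $t$-range of such a curve to be all of $\R$ and thus every slice $S_t$ to be Cauchy.
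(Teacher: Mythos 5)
Your overall strategy (translate each rung of the causal ladder through Propositions~\ref{bolas2} and~\ref{lreflect}, and use Lemma~\ref{limitcurve} and Theorem~\ref{existenceofbolasNO} for the upper rungs) is the paper's, and parts (i), (ii), (iii1)$\Rightarrow$(iii2), (iii3)$\Rightarrow$(iii1) and (iv1)$\Rightarrow$(iv2)$\Rightarrow$(iv3) are essentially correct. However, there is a logical gap in part (iii): you establish (iii2)$\Leftrightarrow$(iii3) only \emph{under the hypothesis} (iii1), by passing through causal simplicity and w-convexity. Together with (iii1)$\Rightarrow$(iii2) and (iii3)$\Rightarrow$(iii1) this yields (iii1)$\Leftrightarrow$(iii3) and (iii1)$\Rightarrow$(iii2), but it never shows that (iii2) implies either of the other two statements, so the three-way equivalence is not proved. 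What is needed is an unconditional proof of (iii2)$\Rightarrow$(iii3); the paper gets it by showing that compactness of all intersections of c-balls forces every c-ball to be closed: if $z\in\bar B^+_\Sigma(x,r_1)\setminus\hat B^+_\Sigma(x,r_1)$, then $z$ lies in some \emph{open} ball $B^-_\Sigma(y,r_2)$, so a sequence in $B^+_\Sigma(x,r_1)\cap B^-_\Sigma(y,r_2)\subset \hat B^+_\Sigma(x,r_1)\cap\hat B^-_\Sigma(y,r_2)$ converges to a point outside that intersection, contradicting its compactness; then Proposition~\ref{pclosurecballs} gives $\hat B=\bar B$.

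The second genuine problem is your step (iv3)$\Rightarrow$(iv4), which uses the inclusion $\hat B^+_\Sigma(\gamma(a),s-a)\subseteq\bar B^+_\Sigma(\gamma(a),b-a)$ for $s<b$, i.e.\ monotonicity of the wind balls in the radius. This fails in the region of strong wind: for the wind Riemannian structure on $\R^2$ obtained by displacing the Euclidean unit circle by the constant field $W=(2,0)$, one has that $\hat B^+_\Sigma(0,r)$ is the closed disc of radius $r$ centred at $(2r,0)$, and these discs are pairwise non-nested (for instance $(1,0)\in\hat B^+_\Sigma(0,1)\setminus\hat B^+_\Sigma(0,2)$). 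Hence (iv3) does not confine the image of an inextendible unit geodesic $\gamma\colon[a,b)\to M$ to a single compact set, and your extension argument does not start. The paper avoids this by proving (iv3)$\Rightarrow$(iv1) directly (an inextendible timelike curve missing $S_0$, reparametrized by $t$ and perturbed to have non-vanishing spatial velocity, would produce a non-precompact ball $B^\pm_\Sigma$), and then (iv1)$\Leftrightarrow$(iv4) via the standard criterion that a closed spacelike acausal slice is Cauchy iff it is met by every inextendible lightlike pregeodesic, which through Theorem~\ref{existenceofbolasNO} (writing the pregeodesic as $s\mapsto(s,x(s))$ with $x$ a unit geodesic of $\Sigma$) is exactly forward and backward geodesic completeness; your ``sandwiching'' sketch for (iv4)$\Rightarrow$(iv1) should be replaced by this argument.
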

	\begin{proof}
		$(i)$ As the natural projection $t:\R\times M\rightarrow
		\R$  is a temporal function,   Remark~\ref{increasing},   the spacetime is
		stably  causal.  Thus,  causal continuity becomes equivalent to
		past and future  reflectivity. However, by Proposition~\ref{lreflect},
		past reflectivity (i.e., $I^+(p)\supset I^+(q)$ implies $I^-(p)\subset
		I^-(q)$) becomes equivalent to the property $$x_1\in \bar
		B^+_{\Sigma}(x_0,t_1-t_0) \Rightarrow x_0\in
		\bar{B}^-_{\Sigma}(x_1,t_1-t_0),$$ and future reflectivity is
		equivalent to the converse.
		
		$(ii)$  Assuming that  $(\R\times M,g)$ is causally simple, for
		any  $x_0\in M$ and $t_0<t_1$, the intersections $S_{t_1} \cap
		J^+(t_0,x_0)$ and $S_{2t_0-t_1}\cap J^-(t_0,x_0)$ must be closed.
		By Proposition~\ref{bolas2}, these intersections are equal to
		$\{t_1\}\times \hat{B}_\Sigma^+(x_0,t_1-t_0)$ and
		$\{2t_0-t_1\}\times \hat{B}_\Sigma^-(x_0,t_1-t_0)$, respectively,
		which means that $\Sigma$ is w-convex. For the converse, just
		apply the last assertion of Proposition~\ref{lreflect}, plus
		Proposition~\ref{pclosurecballs} and Proposition~\ref{bolas2}.

		$(iii)$ ($(iii1)\Rightarrow (iii2)$) Assume that $(\R\times M,g)$ is globally hyperbolic and consider the points
		$(r_1,x)$ and $(-r_2,y)$. By Proposition~\ref{bolas2},
		$$
		\{0\}\times
		\left(\hat{ B}_{\Sigma}^+(x,r_1)\cap \hat{
			B}_{\Sigma}^-(y,r_2)\right)= (\{0\}\times M) \cap J^+(-r_1,x)\cap
		J^-(r_2,y),$$ and the right-hand side is compact by global
		hyperbolicity.

		($(iii2)\Rightarrow (iii3)$) By Proposition~\ref{pclosurecballs},
		it is enough to prove that the property of compactness of the
		intersections implies  the closedness of the c-balls. Reasoning by
		contradiction, if, say, $z\in \bar{B}_{\Sigma}^+(x,r_1)\setminus
		\hat{B}_{\Sigma}^+(x,r_1)$, as $z$ \bw always belongs \ew to some (open)
		ball $ {B}_{\Sigma}^-(y,r_2)$ (recall that one can take any
		$\Sigma-$ admissible curve through $z$ in order to choose appropriate $y$
		and $r_2$), necessarily
		$\hat{B}_{\Sigma}^+(x,r_1)\cap
		\hat{B}_{\Sigma}^-(y,r_2)$ cannot be compact.
		
		($(iii3)\Rightarrow (iii1)$)
		For any  $(t_0,x_0)$, $(t_1,x_1)$ in $\R\times M$,  recalling
		Propositions~\ref{bolas2} and \ref{pclosurecballs}, we get 
		\begin{align*}
			\lefteqn{J^+(t_0,x_0)\cap J^-(t_1,x_1) \subset}&\\
			&\cup_{s\in(0,t_1-t_0)}\{t_0+s\}\times\left(\bar{B}_{\Sigma}^+(x_0,s)\cap\bar{B}_{\Sigma}^-(x_1,t_1-t_0-s)\right)\,\cup\,\{(t,x_0), (t_1,x_1)\}
		\end{align*}
		and we have to check that the \bw left-hand \ew side is compact. Indeed,
		for any sequence $\{z_n\}\subset J^+(t_0,x_0)\cap J^-(t_1,x_1)$ we can
		take a  sequence of causal curves  $\gamma_n$ from $(t_0,x_0)$ to $(t_1,x_1)$
		passing through $z_n$. By the hypothesis on the closures, part $(ii)$ of Lemma
		\ref{limitcurve} is applicable, and there exists a limit curve
		$\gamma$ of $\{\gamma_n\}$ with the same endpoints. So, some
		subsequence $\{\gamma_{n_k}\}$ converges in the $C^0$ topology to
		$\gamma$ (see \cite[Proposition 3.34]{BeEhEa96}) and, thus,
		$\{z_{n_k}\}$ lies in a compact subset, admitting so a convergent
		subsequence to a point in the image of $\gamma$.
		
		$(iv)$ ($\it (iv1) \Rightarrow (iv2)$) By Proposition
		\ref{bolas2},
		$$ J^+(0,x) \cap S_r = \{r\} \times \hat B_{\Sigma}^+(x,r), \quad
		J^-(0,x) \cap S_{-r} = \{-r\} \times \hat B_{\Sigma}^-(x,r)
		$$
		and the \bw left-hand \ew sides are compact as $S_r, S_{-r}$ are Cauchy
		hypersurfaces (otherwise, the limit curve of the
		sequence of causal curves obtained by connecting $(0,x)$ with a
		diverging sequence of points would not cross the corresponding
		Cauchy hypersurface).
		
		($\it (iv2) \Rightarrow (iv3)$) Just apply  Proposition~\ref{pclosurecballs}.
		
		($\it (iv3) \Rightarrow (iv1)$) 
		By using the one-parameter group of isometries generated by the complete Killing field $\partial_t$, one easily sees that if a slice $S_{t_0}$ is a Cauchy hypersurface then all the slices $S_t$ are Cauchy hypersurfaces. 
		Thus, by contradiction, let us assume that $S_0$ is not Cauchy.  Hence, there will
		exist some inextendible timelike curve $\rho:[0,t_0)\rightarrow \R\times M$, $\rho(s)=(s-t_0,x_\rho(s))$ or $\rho:(-t_0,0]\rightarrow \R\times M$, $\rho(s)=(s+t_0,x_\rho(s))$, which does not cross it.  We recall that any 
		timelike vertical line $s\mapsto (s, \bar x)$, $\bar x\in M$, always crosses once $S_0$ and that, for any subinterval $[s_1,s_2]\subset [0,t_0)$ (or $\subset(-t_0,0]$)  containing   some point $\bar s$ where $\dot x_{\rho}(\bar s)=0$,
		$\rho_{|[s_1,s_2]}$ can be replaced by a timelike future-pointing curve $\tilde \rho(s)=(s,x_{\tilde \rho}(s))$,
		such that $\dot x_{\tilde \rho}(s)\neq 0$, for each $s\in [s_1,s_2]$  (see footnote~\ref{foot4.1}).  Summing up, we can assume, without losing generality, that $\dot x_\rho(s)\neq 0$, for all $s$.
		Thus, being  $\rho$ timelike and future-pointing, we have (recall \eqref{controltime}) $\ell_F(x_\rho)<t_0<\ell_{F_l}(x_\rho)$.  Then if $\epsilon>0$ is small enough, $x_\rho(t_0-\epsilon)\in B_\Sigma^+(x_\rho(0),t_0)$  or
		$x_\rho(-t_0+\epsilon)\in B_\Sigma^-(x_\rho(0),t_0)$. As $\rho$ cannot remain in a
		compact region of the spacetime (otherwise it would be extendible), either $B_{\Sigma}^+(x_\rho(0),t_0)$ in the first case, or $B_{\Sigma}^-(x_\rho(0),t_0)$ in the second one, is not precompact.

		($\it (iv1) \Leftrightarrow (iv4)$)
		As the slices $S_t$ are
		closed, spacelike  and acausal,  each one  will be Cauchy if and only if it is crossed by any  future-pointing 
		inextendible lightlike pregeodesic $\rho$  (see \cite[Lemma 14.42 and Corollary 14.54]{O'neill}). So, let $\rho$ be any inextendible future-pointing lightlike pregeodesic  and let us parametrize it  as $\rho(s)=(s,x_{\rho}(s))$ (recall Lemma~\ref{tparam}). 
		Hence, $S_t$ will be Cauchy
		if and only if $\rho$ is defined on $\R$.
		From the first part of Theorem~\ref{existenceofbolasNO},
		this property is equivalent to saying that  $(M,\Sigma)$ 
		is forward and backward geodesically  complete. 
	\end{proof}
	\begin{rem}
		(1) As suggested by   the equivalence $(i)$  above,  wind Finslerian
		structures where    $x_1\in \bar B^+_{\Sigma}(x_0,r)$ does not
		imply that $x_0\in \bar B^-_{\Sigma}(x_1,r)$ do exist  (in
		contrast with the Randers-Kropina case);  in fact, it is   not
		difficult to construct explicit examples, as the one in
		Fig.~\ref{dis6}.
		
		(2) In the  comparison with the Randers-Kropina case,  notice   that Theorem~\ref{kropinaLadder} was
		stated by using only balls for the Finslerian separation $d_F$, as
		this notion  had  familiar similarities with a distance.  However,   the
		results stated here in terms of  c-balls   are more accurate and refine   those
		in that theorem.
		\begin{figure}[h]
			\includegraphics[scale=1,center]{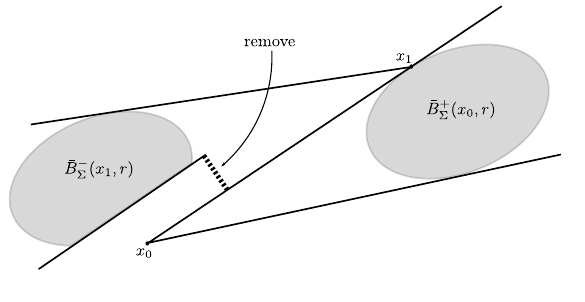}
			\caption{A wind Finslerian structure on  $\R^2$ minus the bold dashed segment,  where $x_1\in \bar B^+_{\Sigma}(x_0,r)$ but $x_0\not \in \bar B^+_{\Sigma}(x_1,r)$}\label{dis6}
		\end{figure}
	\end{rem}
	\section{Applications to wind Riemannian structures and navigation}\label{causaltoRiemann}
	As emphasized in Section~\ref{windFermat}, every wind Riemannian
	structure can be seen as the Fermat structure of a certain \sstk  splitting  (canonically chosen in a conformal class,
	see  Theorem~\ref{tfermatSSTK}), which
	will be referred  to  as the {\em \sstk splitting associated with the
		wind Riemannian structure}.
	
	Along this section, some results on wind Riemannian structures will be obtained by using the associated \sstk splitting.  Some of these properties \soutE{might} \bw can \ew  be generalizable to any wind Finslerian structure, \soutE{but this might be not 
	straightforward and  would require the extension of  the techniques of classical (Lorentzian) spacetimes to more general Lorentz-Finsler
	spacetimes (about this possibility, see \cite{JavSan13})} \bw see Section~\ref{conclusions}. \ew
	
	\subsection{Characterization of geodesics and Hopf-Rinow properties}
	\label{ss6.1}  One of the most relevant difficulties of arbitrary wind
	Finslerian structures in comparison with standard Finsler (or
	Riemannian) metrics is that the exponential map is not
	necessarily defined in all directions. Indeed, the second order
	differential equations defining the extremizing geodesics of the
	conic  non-degenerate pseudo-Finsler metrics $F$ and $F_l$
	associated with $\Sigma$  have coefficients $\Gamma^k_{\,\,ij}$ defined
	on a  subset ($A$ or $A_l$) of $TM$ that, in general, does not
	contain a punctured neighborhood of the zero section (the most we could say
	was Theorem~\ref{extregeo}).
	In wind Riemannian structures, however, this difficulty can be
	overcome easily by using the associated \sstk  splitting.  Indeed, recall  that  a triple $(g_0,\omega,\Lambda)$ is associated with  any wind Riemannian structure $(M,\Sigma)$    (Proposition~\ref{windyfermat} and Definition~\ref{ctriple}) and then  an \sstk spacetime  with Fermat structure $\Sigma$ (Theorem~\ref{tfermatSSTK}).  
	As a first result in this direction we have the following.
	\begin{prop}
		Let $(M,\Sigma)$ be a wind Riemannian manifold, then its  wind balls are open.
	\end{prop}
	\begin{proof} From Proposition~\ref{bolas2}, 
		$ B_{\Sigma}^\pm (x_0,s)$ is homeomorphic to  $I^\pm (t_0,x_0) \cap  S_{t_0+s}$ and, in  any spacetime,  $I^\pm (t_0,x_0)$ is open. 
	\end{proof}
	Notice that    the  Lorentzian result \cite[Proposition 10.46]{O'neill}) claimed in the proof of 
	Proposition~\ref{bolas2}  has been  crucial. 
	
	We pass now to study geodesics of a wind Riemannian structure. 
	As any lightlike geodesic is locally horismotic, Corollary~\ref{horismos} plus the crucial Lemma~\ref{lightgeo} on lightlike geodesics of an \sstk splitting imply:
	\begin{prop}\label{rappendix}
		All the non-exceptional  geodesics of a wind Riemannian manifold  $(M,\Sigma)$ are regular.   Moreover, 
		if $x$ is a   non-exceptional  geodesic of $\Sigma$  then it is a smooth curve, its velocity can be zero only at isolated points and its acceleration  (for one auxiliary Riemannian metric and, then, for any of them) does not vanish at those zeroes.  
	\end{prop}	
	
	The following theorem characterizes  wind Riemannian geodesics and refines Theorem~\ref{extregeo}.  
	\begin{thm}\label{minimizers}
		Let $(M,\Sigma)$ be a wind Riemannian structure. For any
		$\Sigma$-admissible  (piecewise smooth) curve $x:[a,b]\rightarrow M$,
		the following conditions are equivalent: 
		\begin{enumerate}[(i)]
			\item $x$ is a  unit  geodesic of $\Sigma$,
			\item
			$x$ satisfies one of the following three exclusive possibilities: 
			
			(a) $x$ is  a unit  $F$-admissible geodesic of $(M,F)$ and, then, locally, it  minimizes  the $F$-length  %(in the sense of Prop. \ref{casesextrgeo})
			
			(b) $x$ is  a unit $F$-admissible geodesic of $(M_l,F_l)$  and, then, locally, it  maximizes  the $F_l$-length,
			
			(c) $x$ is  a smooth curve  contained in $\overline{M}_l$  and  either (c1)  $x$ is constant,  $\Lambda(x)=0$ and $d\Lambda({\rm ker}\, \omega_x)\equiv 0$   or (c2) whenever it remains included in $M_l$, $x$ is 
			a lightlike  pregeodesic of the Lorentzian metric $-h$  in \eqref{eh}  parametrized with $F(\dot x)\equiv  F_l(\dot x)\equiv 1$ ($x$ is a boundary geodesic), and $x$ can reach $\partial M_l$ only at  isolated points $s_j\in I, j=1,2...$, where  
			$\dot x(s_j)=0$,  $d\Lambda({\rm ker}\, \omega_{x(s_j)})\neq 0$  and its second derivative (in one and then any coordinates)\footnote{With natural identifications; this condition can be also formulated in terms of the 2-jet of $x$ at each $s_j$.} 
			does not vanish.  
			%\soutE{is   included in $A_{x(s_j)}$ (in particular, it does not vanish).} 
		\end{enumerate}
	\end{thm}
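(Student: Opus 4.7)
The plan is to lift the curve $x$ to the associated \sstk splitting and invoke the established spacetime--Fermat dictionary, namely Theorem~\ref{existenceofbolasNO} together with Corollary~\ref{lightgeo2} and Lemma~\ref{lightgeo}. Concretely, I would let $(\R\times M,g)$ be the \sstk splitting whose Fermat structure is $\Sigma$ (Theorem~\ref{tfermatSSTK}), and consider the lift $\rho(s)=(s,x(s))$. Theorem~\ref{existenceofbolasNO} asserts that $x$ is a unit geodesic of $(M,\Sigma)$ if and only if $\rho$ is a future-pointing lightlike pregeodesic of $(\R\times M,g)$. This gives $(i)\Leftrightarrow$ (lift is lightlike pregeodesic), so the task reduces to translating the three possibilities in Theorem~\ref{existenceofbolasNO} (according to the sign of the Killing-conserved quantity $C_\rho=g(\partial_t,\dot\rho)$) into the Finslerian statements (a), (b), (c).

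For the cases $C_\rho<0$ and $C_\rho>0$, Theorem~\ref{existenceofbolasNO}(i)--(ii) give directly that $x$ is a unit $F$-geodesic or a unit $F_l$-geodesic respectively, and these are $F$-admissible; the local minimization/maximization of the length functional among wind variations is then exactly the content of Proposition~\ref{casesextrgeo}(i)--(ii), so (a) and (b) are obtained. For the third case $C_\rho=0$, Theorem~\ref{existenceofbolasNO}(iii) distinguishes the exceptional case (c1), where $\rho$ is an integral line of $K$ over a point $x_0$ with $\Lambda(x_0)=0$ and $d\Lambda|_{\ker\omega_{x_0}}\equiv 0$, from the non-constant projection case (c2). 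In (c2), Corollary~\ref{lightgeo2}(iii) gives $x\subset\overline{M_l}$, smoothness of $x$, the identity $F(\dot x)\equiv F_l(\dot x)\equiv 1$ (so $x$ is a boundary geodesic in the sense of Proposition~\ref{casesextrgeo}(iii)), and the fact that on $M_l$ it is a lightlike pregeodesic of $h/\Lambda$ (equivalently, of $-h$ up to reparametrization). The mutual exclusivity of (a), (b), (c) is immediate from the sign trichotomy of $C_\rho$ combined with the fact that $F$-admissibility forces $h(\dot x,\dot x)\neq 0$ away from the boundary.

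The remaining content in case (c2) concerns the behaviour of $x$ at an isolated boundary point $s_j\in\partial M_l$. Here I would invoke Lemma~\ref{lightgeo}: at such $s_j$ the lightlikeness of $\rho$ together with $C_\rho=0$ forces $\dot x(s_j)=0$, and the geodesic ODE in \sstk-coordinates yields $\left(D^{g_0}\dot x/ds\right)(s_j)=-\tfrac12\dot\zeta(s_j)^2\nabla^{g_0}\Lambda(x(s_j))$ modulo a multiple of $\omega^\sharp_{x(s_j)}$; the argument in Lemma~\ref{lightgeo} then shows $d\Lambda_{x(s_j)}$ is not proportional to $\omega_{x(s_j)}$ (otherwise $x$ would reduce to an integral curve of $K$, contradicting non-constancy), which is precisely the condition $d\Lambda(\ker\omega_{x(s_j)})\neq 0$; and the nonvanishing of the $g_0$-acceleration at $s_j$ translates to the nonvanishing of the second derivative in any coordinate chart.

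The main obstacle I anticipate is the careful bookkeeping at boundary points $s_j\in\partial M_l$ in case (c2): one must verify both that only isolated contacts with $\partial M_l$ can occur and that the local geometric conditions ($\dot x(s_j)=0$, $d\Lambda(\ker\omega_{x(s_j)})\neq 0$, nonvanishing second derivative) are consistent with the smoothness of $\rho$ as a spacetime geodesic but not derivable from $F$- or $F_l$-admissibility alone (since at these points the fundamental tensors of $F$ and $F_l$ degenerate). The resolution goes through the spacetime geodesic equation rather than through any Finslerian connection, which is precisely why the correspondence with the \sstk splitting is essential and why this refinement is genuinely stronger than Theorem~\ref{extregeo}.
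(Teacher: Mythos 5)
Your treatment of $(i)\Rightarrow(ii)$ is essentially the paper's: lift $x$ to the associated \sstk splitting, apply Theorem~\ref{existenceofbolasNO} (with Corollary~\ref{lightgeo2} and Lemma~\ref{lightgeo} for case (c)), and get the extremal properties from Proposition~\ref{casesextrgeo}. The genuine gap is the converse $(ii)\Rightarrow(i)$, which your proposal never proves. The equivalences (i)--(iii) inside Theorem~\ref{existenceofbolasNO} hold under the standing hypothesis that $\rho(s)=(s,x(s))$ is \emph{already} a lightlike pregeodesic (equivalently, that $x$ is already a unit $\Sigma$-geodesic): they sort such an $x$ into one of three types according to the sign of $C_\rho$. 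They do not say that a curve solving the geodesic ODE of the conic metric $F$ (or of $F_l$), or a lightlike pregeodesic of $-h$ with the stated boundary behaviour, is a unit geodesic of $(M,\Sigma)$ in the sense of Definition~\ref{windgeodesic}; that is the nontrivial half, since ``$\Sigma$-geodesic'' is defined through local extremizing properties of c-balls while ``$F$-geodesic'' is defined through an ODE, and Theorem~\ref{extregeo} only gives the implication from $\Sigma$-geodesic to $F$- or $F_l$-geodesic, not back.

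To close this you need the paper's converse argument. In cases (a) and (b): fix $s_0$, take the lightlike geodesic of $(\R\times M,g)$ with initial condition $\big((s_0,x(s_0)),(1,\dot x(s_0))\big)$, write it as $(s,x_\rho(s))$; by Theorem~\ref{existenceofbolasNO} the projection $x_\rho$ is a unit $\Sigma$-geodesic with $C_\rho\neq 0$ (since $\dot x(s_0)\in A$), hence a unit geodesic of $F$ or $F_l$, and uniqueness of solutions of the conic pseudo-Finsler geodesic equation forces $x_\rho=x$. In case (c2) one must show directly that the lift $(s,x(s))$ is a lightlike pregeodesic: on $M_l$ this follows because $\pi:(\R\times M_l,g)\to(M_l,h/\Lambda)$ is a semi-Riemannian submersion (Lemma~\ref{lolo2}) and lightlike pregeodesics are conformally invariant, and the smoothness of $x$ together with $\dot x(s_j)=0$ at the boundary contacts is what lets the pregeodesic pieces match across $\partial M_l$; the exceptional case (c1) is handled by the characterization of lightlike integral curves of $K$ in Lemma~\ref{lightgeo}. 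As written, your proposal reproves only the forward half of the theorem.
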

	\begin{proof}
		The implication $(i) \Rightarrow (ii)$ follows   by applying  Theorem~\ref{existenceofbolasNO}  to the \sstk splitting $(\R\times M,g)$ associated with $(M,\Sigma)$.  Moreover,     
		as in  Theorem~\ref{extregeo},  it can be proved that the extremal properties hold for any variation. 
		For the converse,   in the cases $(a)$ and $(b)$, choose $s_0\in [a,b]$  and   take the   future-pointing lightlike  geodesic  $\gamma$ of the associated \sstk spacetime $(\R\times M,g)$, 
		satisfying the  initial conditions  $\big((s_0, x(s_0)), (1,\dot x(s_0))\big)$. 
		% or  $\big((t_0,x(s_0)),(F_l(\dot x(s_0)),\dot x(s_0))\big)$,  depending on the case of $x$ (in the last possibility, both initial conditions agree).  
		Let  us reparametrize $\gamma$ as 
		$\rho(s)=(s,x_\rho(s))$. From Theorem~\ref{existenceofbolasNO},
		$x_\rho$ is a  unit geodesic of $(M,\Sigma)$  and, as the vector $\dot x(s_0)\in A_{x(s_0)}$,
		$C_\rho\neq 0$. Thus, $x_\rho$ is, according to the sign of $C_\rho$, a unit  $F$-admissible geodesic  for $F$ or  $F_l$ 
		which  coincides with $x$ by existence and uniqueness of geodesic of a conic pseudo-Finsler metric. 
		%when parametrized with constant speed equal to $F(\dot x(s_0))$ or   to $F_l(\dot x(s_0))$, respectively. 
		%(recall that this reparametrizaton is affine by \eqref{geoeq}).  
		In the remaining case 
		$(c)$, whenever $x$ is not constant, 
		% $x_\rho$ is a lightlike pregeodesic of $-h$   except in some isolated points \bv $s_j$, 
		$j=1,2,\ldots,$ such that $\Lambda(x_\rho(s_j))=0$ and $\dot x_\rho(s_j)=0$,  
		the curve $\rho(s)=(s,x(s))$ is  orthogonal to the Killing vector field (recall part $
		(iii)$ of Proposition~\ref{plightvectorsSSTK}) and then   a lightlike pregeodesic 
		whenever $x$ is contained in $M_l$ (recall,  from Proposition~\ref{gh}, that 
		$\pi:(\R\times M_l,g)\rightarrow (M_l,\frac{1}{\Lambda}h)$ is  a   semi-Riemannian
		submersion, and also that  
		lightlike pregeodesics  were preserved by conformal changes of the metric); moreover, as $x$ 
		is smooth and its derivative vanishes at the points where it touches the boundary $\partial 
		M_l$, we conclude that $\rho$ is globally a lightlike pregeodesic. Then, from Theorem~\ref{existenceofbolasNO}-(iii), 
		$x$ is a unit geodesic of $(M,\Sigma)$. Finally, if 
		$x\equiv x_0\in M$, $\Lambda(x_0)=0$ 
		and $d\Lambda({\rm ker}\, \omega_{x_0})= 0$, then $x$ is an exceptional geodesic of $(M,\Sigma)$. 
	\end{proof}
	\begin{lemma}\label{existenceofbolas}
		For every neighborhood $W_0$ of  $x_0\in M$, there exists
		another neighborhood $U_0\subset W_0$ and some $\varepsilon>0$ such that
		$\hat{B}^\pm_\Sigma(x,r)$ is compact and contained in $W_0$ for
		every $r<\varepsilon$ and $x\in U_0$.
	\end{lemma}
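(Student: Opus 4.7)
The plan is to translate the claim into the associated \sstk splitting $(\R\times M,g)$ and use local causality tools. By Proposition~\ref{bolas2},
\[
\{r\}\times\hat B^+_\Sigma(x,r)=J^+((0,x))\cap S_r,\qquad \{-r\}\times\hat B^-_\Sigma(x,r)=J^-((0,x))\cap S_{-r},
\]
so it suffices to show the following spacetime version: after replacing $W_0$ by a smaller neighborhood of $x_0$ with compact closure contained in $W_0$, there exist a neighborhood $\tilde V$ of $(0,x_0)$ with $\pi(\tilde V)\subset W_0$ and some $\varepsilon>0$ such that, for every $z\in\tilde V\cap S_0$ and every $r\in(0,\varepsilon)$, the sets $J^+(z)\cap S_r$ and $J^-(z)\cap S_{-r}$ are compact and contained in $\{\pm r\}\times W_0$. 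Set $U_0:=\pi(\tilde V\cap S_0)$. Both properties (compactness and inclusion in $W_0$) then transfer to $\hat B^\pm_\Sigma(x,r)$ through the projection $\pi$.

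The first step is to apply Lemma~\ref{stronly} (and its past analogue, obtained by time-reversal, which is allowed since $-\partial_t$ is also Killing and the analogous transverse splitting holds) to produce a convex neighborhood $U$ of $(0,x_0)$ with compact closure contained in $(-\varepsilon,\varepsilon)\times W_0$, a sub-neighborhood $\tilde V\subset U$, and an $\varepsilon>0$ with the property that every future- or past-directed causal curve starting at a point of $\tilde V\cap S_0$ and reaching time in $(-\varepsilon,\varepsilon)$ stays in $U$. Consequently, for $z\in\tilde V\cap S_0$ and $|r|<\varepsilon$, the ambient causal future/past coincides with the one computed inside $U$, i.e.\ $J^\pm_g(z)\cap S_{\pm r}=J^\pm_{U}(z)\cap S_{\pm r}$.

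Second, I would obtain containment in a compact set by Minkowski majorization, exactly as in the proof of Lemma~\ref{stronly}: on the coordinate chart $(U,y^0,y^1,\dots,y^m)$ adapted to the splitting, choose a flat Minkowski metric $g^{\mathrm{flat}}$ whose light cones are strictly wider than those of $g$ throughout $U$ (possible after shrinking). Then $J^+_g((0,x))\subset J^+_{g^{\mathrm{flat}}}((0,x))$ within $U$; since $J^+_{g^{\mathrm{flat}}}((0,x))\cap S_r$ is the Euclidean closed ball of radius $r$ centered at $x$, for $x\in U_0$ and $r<\varepsilon$ sufficiently small this is contained in a fixed compact set $K\subset\{r\}\times W_0$. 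The analogous statement holds for pasts.

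Finally, closedness follows because $U$ is a convex normal neighborhood: the causal future $J^+_U((0,x))$ is the image under $\exp_{(0,x)}$ of the closed future causal cone in $T_{(0,x)}U$, hence closed in $U$ (and likewise for $J^-_U$). Therefore $J^+_g((0,x))\cap S_r=J^+_U((0,x))\cap S_r$ is closed in $U\cap S_r$; being contained in the compact set $K$, it is compact, and so is its projection $\hat B^+_\Sigma(x,r)\subset W_0$. The main obstacle is purely technical: arranging in a single step that $U$ is simultaneously convex, has compact closure inside $\R\times W_0$, and enjoys the causal confinement property of Lemma~\ref{stronly} in both time directions. This is guaranteed by strong causality of the stably causal \sstk splitting (Theorem~\ref{generalK}) together with the existence of arbitrarily small convex normal neighborhoods around any point.
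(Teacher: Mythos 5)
Your proposal is correct and follows essentially the same route as the paper: both refine the convex neighborhood $U$ from Lemma~\ref{stronly} so that it is precompact, contained in $\R\times W_0$, and confines $J^\pm(z)\cap([-\varepsilon,\varepsilon]\times M)$ for $z$ in a smaller neighborhood, then deduce that $J^\pm(0,x)\cap S_{\pm r}$ is closed (by normality/convexity of $U$) and contained in a compact set, hence compact, and project via Proposition~\ref{bolas2}. Your extra Minkowski-majorization and exponential-map details are just explicit versions of the mechanisms already implicit in Lemma~\ref{stronly} and in the normality of $U$.
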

	\begin{proof}
		The proof is a refinement of Lemma~\ref{stronly} obtained by taking into account that,  given $W_0$ and considering the  \sstk splitting $(\R\times M,g)$
		associated with the wind Riemannian structure, then  the convex neighborhood $U$ provided by that lemma around $z_0=(0,x_0)$ can be easily chosen satisfying the following properties:  (i) $U$ is included in $\R\times W_0$, (ii) $U$ is precompact and (iii) 
		$U$ contains the intersections $J^\pm(z)\cap ([-\varepsilon,\varepsilon]\times M)$ for some $\varepsilon>0$  and all $z$ in a smaller neighborhood $V\subset U$.  Then, put $U_0:=V\cap S_0 \subset W_0$ 
		and observe that, for any $r\in (0,\varepsilon)$, the set $J^+(0,x)\cap (\{\pm r\}\times M)$
		is compact for all $x\in U_0$ (observe that it is a closed set, as $U$ is normal, included in a compact set, also by hypothesis on $U$) and,   by Proposition~\ref{bolas2},   it projects homeomorphically into $\hat B^\pm_{\Sigma}(x_0, r)$
		and this projection is contained in $W_0$ as required.
	\end{proof}
	The following local properties can also be  proved by using the spacetime viewpoint.  They are equivalent to saying that the exponential  maps  of the conic pseudo-Finsler metrics associated with a wind Riemannian metric are defined in some small  cone. 
	\begin{prop}
		Let $(M,\Sigma)$ be a wind Riemannian structure and $x_0\in M$, then there exists $\varepsilon >0$ such that 
		the geodesics of $F$  (resp. $F_l$)  departing from
		$x_0$  and  parametrized by the  arc length  are defined on
		$[0,\varepsilon)$  and they are  extremizing unit geodesic and therefore minima (resp. maxima) of $\ell_F$ (resp. $\ell_{F_l}$) with respect to any variation. 
	\end{prop}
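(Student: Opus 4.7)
The plan is to reduce both assertions to well-known facts about lightlike geodesics of the associated \sstk splitting $(\R\times M,g)$ given by Theorem~\ref{tfermatSSTK}, using the correspondences established in Theorem~\ref{existenceofbolasNO}, Corollary~\ref{horismos} and Theorem~\ref{minimizers}.

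First I would establish existence on a uniform interval $[0,\varepsilon)$. For every $v\in A_{x_0}$ with $F(v)=1$ (resp.\ $v\in (A_l)_{x_0}$ with $F_l(v)=1$), the vector $(1,v)$ is future-pointing lightlike at $(0,x_0)$ by Proposition~\ref{plightvectorsSSTK}. The set of such initial conditions is a subset of the compact indicatrix $\{1\}\times \Sigma_{x_0}\subset T_{(0,x_0)}(\R\times M)$; since the geodesic flow of $g$ is smooth and the lightlike cone is closed, continuous dependence on initial conditions yields a common parameter interval $[0,\delta)$ on which the lightlike geodesic $\rho_v$ with initial velocity $(1,v)$ is defined. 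Because $t$ is a temporal function (see Remark~\ref{increasing}), Lemma~\ref{tparam} lets us reparametrize $\rho_v$ as $\rho_v(s)=(s,x_v(s))$ on some $[0,\varepsilon)$, with $\varepsilon>0$ uniform in $v$. By Theorem~\ref{existenceofbolasNO}, $x_v$ is a unit geodesic of $(M,\Sigma)$, and the sign discussion there (together with $v\in A$) shows that $C_{\rho_v}\ne 0$, so $x_v$ is $F$-admissible and, respectively, a unit $F$- or unit $F_l$-geodesic. Since $F$ and $F_l$ are non-degenerate conic pseudo-Finsler metrics, uniqueness of their geodesics with prescribed initial conditions identifies $x_v$ with the required $F$-geodesic (resp.\ $F_l$-geodesic).

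Second, I would argue the local extremality via a convex neighborhood $U$ of $(0,x_0)$ in $(\R\times M,g)$ as in the proof of Lemma~\ref{existenceofbolas}. Shrinking $\varepsilon$ if necessary, the image of $\rho_v|_{[0,\varepsilon)}$ lies in $U$. In a convex neighborhood of a Lorentz manifold, two points joined by a lightlike geodesic contained in $U$ are horismotically related, i.e.\ $\rho_v(s)\in J^+(0,x_0)\setminus I^+(0,x_0)$ for $s\in[0,\varepsilon)$. By Corollary~\ref{horismos} this translates into $x_v(s)\in \hat B^+_\Sigma(x_0,s)\setminus B^+_\Sigma(x_0,s)$, so that $x_v|_{[0,s]}$ is a unit extremizing geodesic of $\Sigma$. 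Since $x_v$ is $F$-admissible and, respectively, $F(\dot x_v)\equiv1$ or $F_l(\dot x_v)\equiv1$, Proposition~\ref{casesextrgeo} places us in case (i) or (ii); combined with Remark~\ref{both}(1), this yields local minimization of $\ell_F$ (resp.\ maximization of $\ell_{F_l}$) among $F$-admissible variations, and in particular that $x_v$ is an extremizing geodesic in the sense of Definition~\ref{extremizing}.

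The main obstacle I anticipate is the uniformity of $\varepsilon$ across directions, since when $\Lambda(x_0)\le0$ the set $\{v\in A_{x_0}:F(v)=1\}$ is not compact (its closure meets the common boundary $S_0^{m-2}$ where $F=F_l$ and one falls into the boundary case of Theorem~\ref{minimizers}(c)). This is handled by passing to the closure $\overline{S_F^{m-1}}\subset\Sigma_{x_0}$, which is compact, and invoking smoothness of the spacetime geodesic flow on this compact family of lightlike initial data; the boundary directions give rise to boundary geodesics, which however need not enter the statement for $F$-geodesics (parametrized by $F$-arc length) because they admit simultaneously both parametrizations.
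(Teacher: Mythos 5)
Your proposal is correct and follows the same overall strategy as the paper: pass to the associated \sstk splitting, identify arc-length $F$- and $F_l$-geodesics with projections of lightlike geodesics via Theorem~\ref{existenceofbolasNO}, and derive the extremizing property from horismos via Corollary~\ref{horismos} inside the neighborhood supplied by Lemma~\ref{existenceofbolas}. The one place where you genuinely diverge is the uniform interval of definition. The paper takes an arbitrary arc-length geodesic $x$ of $F$ or $F_l$, lifts it to the lightlike pregeodesic $s\mapsto(s,x(s))$, and observes that this curve lies in $J^+(0,x_0)\cap([0,\varepsilon]\times M)$, which by Lemma~\ref{existenceofbolas} is contained in a precompact convex neighborhood; hence the pregeodesic, and with it $x$, cannot be inextendible before $t=\varepsilon$ (a causal trapping argument). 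You instead run the construction forwards from the compact family of lightlike initial data $(1,v)$, $v\in\overline{S_F^{m-1}}\subset\Sigma_{x_0}$, and invoke lower semicontinuity of the maximal existence time of the geodesic flow; this is equally valid, and your fix for the non-compactness of $\{F=1\}\cap A_{x_0}$ in the strong-wind case (passing to the closure inside the compact indicatrix) is exactly right, although you still need a uniform positive lower bound for $dt(\dot\rho_v)$ over that compact family to convert a common affine-parameter interval into a common $t$-interval. One point to state more carefully: a lightlike geodesic contained in a convex neighborhood $U$ only gives horismos \emph{relative to} $U$; to conclude $\rho_v(s)\in J^+(0,x_0)\setminus I^+(0,x_0)$ in the whole spacetime, which is what Corollary~\ref{horismos} requires, you must also use the trapping property $J^+(0,x_0)\cap([0,\varepsilon]\times M)\subset U$ built into Lemmas~\ref{stronly} and~\ref{existenceofbolas}. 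You cite the right lemma, but the sentence as written suggests convexity of $U$ alone suffices, which it does not.
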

	\begin{proof}
		Let $\varepsilon>0$ as in Lemma~\ref{existenceofbolas} and consider the \sstk splitting associated with $(M,\Sigma)$. Any  geodesic $x\colon [0,a_x]\to M$, $a_x>0$ of $F$ or $F_l$, starting at $x_0$ and parametrized by the arc length, defines  
		a lightlike pregeodesic $(s, x(s))$ (recall Theorems~\ref{existenceofbolasNO}  and \ref{minimizers})  which must be defined on $[0,r]$, for any $r<\varepsilon$. In fact,   $(s,x(s))\in J^+(0, x_0)\cap (\{s\}\times M)$ and, for 
		$s\in [0,r]$,  $J^+(0, x_0)\cap (\{s\}\times M)$  is contained in a precompact convex neighborhood of $(0,x_0)$ (see the proof of  Lemma~\ref{existenceofbolas}).  Therefore, $(s,x(s))\in J^+(0,x_0)\setminus I^+(0,x_0)$ and  $x$
		must also be an extremizing unit geodesic for
		$\Sigma$  by Corollary~\ref{horismos}  and then, being   $F$-admissible,   it must be
		a local minimum of $\ell_F$ or  a local maximum of $\ell_{F_l}$  (recall Theorem~\ref{minimizers}). 
	\end{proof}
	Finally, the following result becomes straightforward from
	Theorem~\ref{generalK} and plays the role of Hopf-Rinow theorem for wind Finslerian structures (then generalizing Corollary~\ref{cRandersKropinaHopfRinow}).
	\begin{prop}\label{c63}
		Let $(M,\Sigma)$ be a wind Riemann structure.
		\begin{enumerate}[(i)]
			\item The following properties are equivalent:
			\begin{enumerate}[(a)]
				\item $\Sigma$ is geodesically complete, \item $B^+_\Sigma(x,r)$
				and $B^-_\Sigma(x,r)$ are precompact for every $x\in M$ and $r>0$.
				\item $\hat B^+_\Sigma(x,r)$ and $\hat B^-_\Sigma(x,r)$ are
				compact for every $x\in M$ and $r>0$.
			\end{enumerate}
			In particular, if $M$ is compact then $\Sigma$ is  geodesically  complete.
			\item The following properties are equivalent and imply the
			w-convexity   (Definition~\ref{strongconvex}) of   $(M,\Sigma)$:
			\begin{enumerate}[(a)]
				\item  $\hat{ B}^+_{\Sigma}(x,r_1)\cap \hat{B}^-_{\Sigma}(y,r_2)$
				is compact for every $x,y\in M$ and $r_1,r_2>0$.
				\item   $\bar{ B}^+_{\Sigma}(x,r_1)\cap \bar{B}^-_{\Sigma}(y,r_2)$
				is compact for every $x,y\in M$ and $r_1,r_2>0$.
			\end{enumerate}
			Moreover, these conditions  hold whenever the previous ones in $(i)$
			are satisfied.
			
			\item If $(M,\Sigma)$ is  w-convex, then   $x_1\in
			\bar{B}^+_{\Sigma}(x_0,r)$ if and only if $x_0\in
			\bar{B}^-_{\Sigma}(x_1,r)$ for any  $x_0,x_1 \in M$ and $r>0$.
		\end{enumerate}
	\end{prop}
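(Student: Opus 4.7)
The plan is to derive Proposition~\ref{c63} as a dictionary translation of Theorem~\ref{generalK}, applied to the \sstk splitting $(\R\times M,g)$ canonically associated with $(M,\Sigma)$ by Theorem~\ref{tfermatSSTK}(i). Since that theorem already furnishes a complete causal-ladder correspondence for \sstk spacetimes in terms of the Fermat structure, the work here reduces to identifying each item of the proposition with the appropriate block of Theorem~\ref{generalK}, with Proposition~\ref{pclosurecballs} and the inclusions $B^\pm_\Sigma\subset \hat B^\pm_\Sigma\subset \bar B^\pm_\Sigma$ handling any bookkeeping between open, $c$-- and closed balls.

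For part (i), I would observe that conditions (a), (b), (c) coincide literally with (iv4), (iv3), (iv2) of Theorem~\ref{generalK}(iv), whose equivalence is already proved there via the identification of the slices $S_t$ with spacelike Cauchy hypersurfaces. For the final assertion, if $M$ is compact, then every subset of $M$ is relatively compact, so in particular (b) holds without further hypotheses, and the equivalence chain yields geodesic completeness of $\Sigma$.

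For parts (ii) and (iii), I would match conditions in the same way. Conditions (ii)(a) and (ii)(b) are exactly (iii2) and (iii3) of Theorem~\ref{generalK}(iii), whose equivalence (both being equivalent to global hyperbolicity of $(\R\times M,g)$) is already established; the only mildly delicate point, namely that compactness of $\hat B$--intersections forces the $\hat B$--balls themselves to be closed, is absorbed into the contradiction argument for (iii2)$\Rightarrow$(iii3) inside the proof of Theorem~\ref{generalK} and does not need to be redone. The implication from (ii) to $w$--convexity uses the standard spacetime fact that global hyperbolicity $\Rightarrow$ causal simplicity, combined with Theorem~\ref{generalK}(ii) which identifies causal simplicity with $w$--convexity. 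The chain (i)$\Rightarrow$(ii) is immediate since a Cauchy slice forces global hyperbolicity, so (b) holds. Finally, for (iii), $w$--convexity is equivalent to causal simplicity of $(\R\times M,g)$ by Theorem~\ref{generalK}(ii), and causal simplicity implies causal continuity in any stably causal spacetime; by Theorem~\ref{generalK}(i), causal continuity is precisely the symmetry property $x_1\in \bar B^+_\Sigma(x_0,r) \Leftrightarrow x_0\in \bar B^-_\Sigma(x_1,r)$. Thus there is no substantive obstacle beyond careful bookkeeping: the heavy lifting is entirely contained in Theorem~\ref{generalK} and the standard implications of the Lorentzian causal ladder.
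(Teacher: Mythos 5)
Your proposal is correct and is essentially the paper's own proof: the paper simply says to apply Theorem~\ref{generalK} to the associated \sstk splitting from Theorem~\ref{tfermatSSTK} and invoke the causal ladder implications (Cauchy hypersurface $\Rightarrow$ global hyperbolicity $\Rightarrow$ causal simplicity $\Rightarrow$ causal continuity). Your version just spells out the item-by-item matching that the paper leaves implicit.
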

	\begin{proof}
		Apply Theorem~\ref{generalK} to the  associated \sstk splitting (see Theorem~\ref{tfermatSSTK}),
		and use the causal implications: existence of a Cauchy
		hypersurface $\Rightarrow$ global hyperbolicity $\Rightarrow$
		causal simplicity $\Rightarrow$ causal continuity.
	\end{proof}
	The relations between lightlike geodesics on an \sstk  splitting  and geodesics of the associated Fermat structure are summarized in Fig.~\ref{geos}. 
	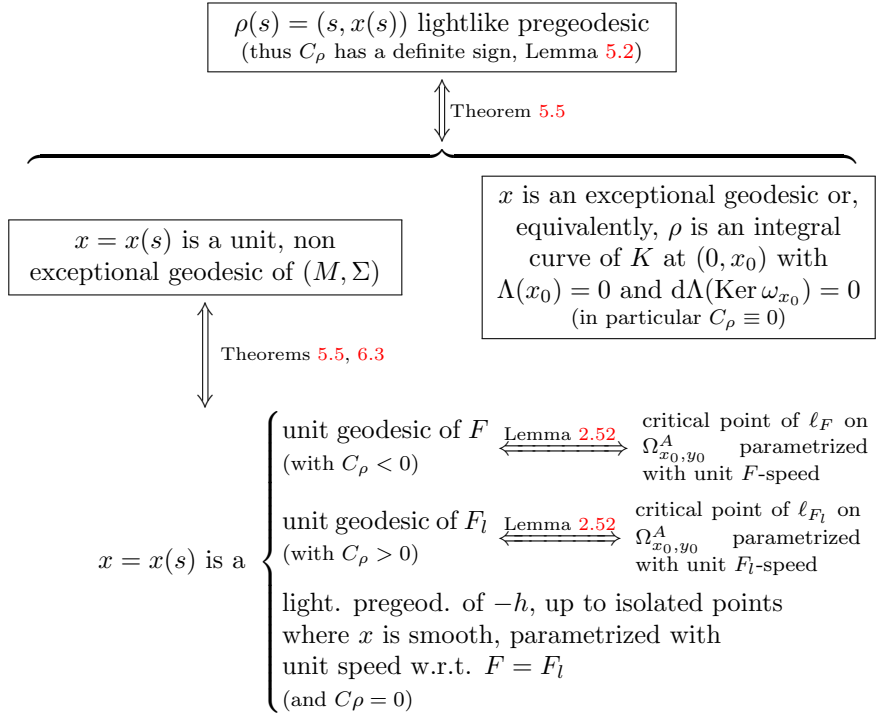
\begin{figure}[h]
		\[
		\xymatrix{\fbox{\begin{minipage}{6cm}\centering $\rho(s)=(s,x(s))$ lightlike pregeodesic \\ \footnotesize (thus $C_\rho$ has a definite sign, Lemma~\ref{tparam})\end{minipage}}\ar@{<=>}[d]^-{\text{Theorem~\ref{existenceofbolasNO}}}\\
			\protect\overbrace{\makebox[11cm]{}}}\]\vspace{-0.3cm}
		\[
		\xymatrix{\fbox{\begin{minipage}{5cm}\centering
					$x=x(s)$ is a unit,   non exceptional  geodesic of $(M,\Sigma)$\end{minipage}}\ar@{<=>}[d]^-{\text{  Theorems~\ref{existenceofbolasNO}, \ref{minimizers}  }}&
			\fbox{\begin{minipage}{5cm}\centering $x$ is an exceptional geodesic or, equivalently, $\rho$  is an   integral curve of $K$  at  $(0,x_0)$ with $\Lambda(x_0)=0$ and $\de \Lambda(\mathrm{Ker}\,\omega_{x_0})=0$\\
					\footnotesize(in particular $C_\rho\equiv 0$)
			\end{minipage}}\\
			&}\]\vspace{-0.3cm}
		\[\quad\quad\quad\quad \quad x=x(s) \text{ is a }\left\{\mbox{\begin{minipage}{11cm}\mbox{\begin{minipage}{2.75cm}unit geodesic of $F$\\ {\footnotesize (with $C_\rho<0$)}\end{minipage}}
				$\xLeftrightarrow{\text{Lemma~\ref{criticallength} 
				}}\,$
				\mbox{\begin{minipage}{3cm} \footnotesize critical point of $\ell_F$  on $\Omega^A_{x_0,y_0}$  parametrized with unit $F$-speed \end{minipage}} \vspace{0.2cm} \\ \mbox{\begin{minipage}{2.75cm}unit geodesic of $F_l$\\ {\footnotesize (with $C_\rho>0$)}\end{minipage}}
				$\xLeftrightarrow{\text{Lemma~\ref{criticallength}}}\,$
				\mbox{\begin{minipage}{3cm} \footnotesize critical point of $\ell_{F_l}$  on $\Omega^A_{x_0,y_0}$  parametrized with unit  $F_l$-speed \end{minipage}} \vspace{0.2cm} \\ light. pregeod. of $-h$, up to isolated points \\  where $x$ is smooth,   
				parametrized with \\unit speed w.r.t. $F=F_l$
				\\  {\footnotesize (and  $C\rho=0$)}\end{minipage}}\right.\]
		\caption{The relations between future-pointing lightlike pregeodesics of an \sstk  splitting and geodesics of the associated
			Fermat structure. Here $\rho$ is a $t$-parametrized curve in the \sstk splitting  ($s\in [a,b], \ x_0=x(a),\ y_0=x(b)$).
			Moreover, maximizing lightlike
			pregeodesics (i.e., whose points are horismotically related) which are not
			reparametrizations of flow lines of $K$ correspond to extremizing geodesics of the
			Fermat structure, Corollary~\ref{horismos}.}\label{geos}
	\end{figure}
	
	\subsection{The role of the different splittings of an \sstk spacetime.}\label{ss6.2}
	Observe that
	given an \sstk  spacetime, for every spacelike hypersurface which
	intersects all the orbits of the Killing  field there  will exist
	a different splitting \eqref{lorentz} as an \sstk   (with the same
	Killing vector field). Let us characterize when a transversal
	hypersurface is spacelike in terms of the  Fermat structure.
	\begin{lemma}\label{fsplitting}
		Let $(\R\times M,g)$ be an \sstk  splitting with $g$  given by
		\eqref{lorentz} and $f:M\rightarrow \R$, a smooth function. Then
		$S^f=\{(f(x),x)\in \R\times M: x\in M\}$ is a spacelike
		hypersurface if and only if one of the following two
		exclusive possibilities occurs: either
		\begin{equation}\label{spaceineq}
			\df f(v)<F(v) \text{ for every $v\in  A \cup  A_E $,}
		\end{equation}
		or the Killing field $K=\partial_t$ is spacelike everywhere
		and
		\begin{equation}\label{spaceineq2}
			\df f(v)>F_l(v) \text{ for every $v\in   A\cup  A_E$}.
		\end{equation}
	\end{lemma}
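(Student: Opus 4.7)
The plan is to translate spacelikeness into a pointwise inequality on $df$, analyse it fibrewise via the lightcone description of Proposition~\ref{plightvectorsSSTK}, and conclude globally by a connectedness argument.

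First, tangent vectors to $S^f$ at $(f(x),x)$ have the form $(df_x(v),v)$, so $S^f$ is spacelike iff
\[
Q_x(v):=g\big((df_x(v),v),(df_x(v),v)\big)=-\Lambda(x)\,df_x(v)^2+2\omega(v)\,df_x(v)+g_0(v,v)
\]
is positive for every nonzero $v\in T_xM$. Regarded as a quadratic in $a=df_x(v)$, its roots are exactly the null heights over $\pm v$ identified in \eqref{tau} and Proposition~\ref{plightvectorsSSTK}, namely a subset of $\{F(v),F_l(v),-F(-v),-F_l(-v)\}$ chosen according to whether $\pm v$ lie in $A_E$; whenever neither $v$ nor $-v$ belongs to $A_E$, the discriminant $\omega(v)^2+\Lambda(x)g_0(v,v)$ is negative and, since $-\Lambda(x)>0$ in this case, $Q_x(v)>0$ holds automatically. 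Thus the spacelike condition is nontrivial only on $v\in\pm(A_x\cup (A_E)_x)$.

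Next, I would introduce, at each $x\in M$,
\[
U_x=\{v\in A_x\cup (A_E)_x:\,df_x(v)<F(v)\},\quad V_x=\{v\in A_x\cup (A_E)_x:\,df_x(v)>F_l(v)\},
\]
and observe that $S^f$ is spacelike iff $U_x\cup V_x=A_x\cup (A_E)_x$ at every $x$: the future-causal constraint over $v\in A_x\cup (A_E)_x$ is precisely the complement of $U_x\cup V_x$ in that fibre, while the past-causal constraint over $v$ with $-v\in A_x\cup(A_E)_x$ translates, via $\tilde F(v)=F(-v)$, $\tilde F_l(v)=F_l(-v)$ (Proposition~\ref{reversewind}) and the linearity of $df$, into the partition condition applied at $-v$. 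Since $F<F_l$ on $A_l$ and $F=F_l$ only on the common boundary $A_E\setminus A_l$, the sets $U_x$ and $V_x$ are disjoint; by the continuity of $F,F_l$ on $A\cup(A_E\setminus\mathbf{0})$ (Convention~\ref{caestar}) they are open in $A_x\cup(A_E)_x$; and the connectedness of $A_x$---a punctured vector space, an open half-space, or the interior of a proper convex cone according to the sign of $\Lambda(x)$---forces one of them to be empty on $A_x$. The convention $F_l\equiv +\infty$ on the mild and critical regions moreover makes $V_x=\emptyset$ automatic whenever $\Lambda(x)\geq 0$; hence at points where $K$ is non-spacelike, only alternative (a) can hold in that fibre.

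Third, I would globalise via
\[
M^{(a)}=\{x\in M:\, df_x(v)<F(v)\ \forall v\in A_x\cup (A_E)_x\},\quad M^{(b)}=\{x\in M_l:\, df_x(v)>F_l(v)\ \forall v\in A_x\cup (A_E)_x\}.
\]
By the previous paragraph, $S^f$ spacelike is equivalent to $M=M^{(a)}\sqcup M^{(b)}$, with $M\setminus M_l\subseteq M^{(a)}$. Both sets are open: if (a) holds at $x_0$, pick any test direction $v_0\in A_{x_0}$ and a local vector field $V$ with $V(x_0)=v_0$ and $V(x)\in A_x$ nearby (possible as $A\subset TM$ is open, Proposition~\ref{p2.8}); then $df_x(V(x))<F(V(x))$ for $x$ near $x_0$, so $U_x\neq\emptyset$, and the fibrewise dichotomy forces $U_x=A_x\cup (A_E)_x$. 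The analogous argument works for $M^{(b)}$. Connectedness of $M$ then leaves only two possibilities: either $M=M^{(a)}$, giving (a), or $M=M^{(b)}\subseteq M_l$, which forces $M=M_l$ (i.e., $K$ spacelike everywhere) and gives (b). The converse direction---that either (a) or (b) forces $Q_x(v)>0$---is a direct fibrewise check using the formulas \eqref{randers-kropina}--\eqref{rk2}. The main obstacle I anticipate is the openness of $M^{(a)}$ and $M^{(b)}$: extra care is needed at boundary directions in $A_E\setminus A$, where $F$ and $F_l$ merge, and at critical-wind zeros in $A_E$, where the values of $F$ and $F_l$ are fixed by convention rather than by continuity.
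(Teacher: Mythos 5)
Your proposal is correct and follows essentially the same route as the paper's proof: reduce spacelikeness to positivity of the quadratic $-\Lambda\,df(v)^2+2\omega(v)\,df(v)+g_0(v,v)$, analyse it fibrewise according to the sign of $\Lambda$ using the null heights $F$, $F_l$ (and their reverses for $-v$), and rule out mixing of the two alternatives by a continuity/connectedness argument, which you merely make more explicit via the open disjoint sets $U_x,V_x$ and $M^{(a)},M^{(b)}$. The only cosmetic slip is the justification that $Q_x(v)>0$ automatically off $\pm(A_x\cup(A_E)_x)$: at points with $\Lambda(x)=0$ and $v\in\ker\omega$ the discriminant is zero rather than negative and the quadratic degenerates, but there $Q_x(v)=g_0(v,v)>0$ trivially, exactly as the paper notes.
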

	\begin{proof}
		Observe that the tangent space to $S^f$ at $(f(x),x)\in S^f$ is given by
		\[T_{(f(x),x)}S^f=\{(\df f(v),v):v\in T_xM\},\]
		and, then,
		\[g((\df f(v),v),(\df f(v),v))=g_0(v,v)+2\omega(v) \df f(v)-\Lambda \df f(v)^2,\]
		so that $S^f$ is spacelike if and only if
		\begin{equation}\label{spacecond}
			g_0(v,v)+2\omega(v) \df f(v)-\Lambda \df f(v)^2>0
		\end{equation}
		for every  $v\in TM\setminus  \mathbf{0}$.  Now,  if $\Lambda(x)>0$,   \eqref{spacecond} is equivalent to
		$-\tilde{F}(v)<\df f(v)<F(v)$ for every $v\in T_xM \setminus\{0\}$, and this is
		equivalent to $\df f(v)<F(v)$ (because $-\tilde F(v)=-F(-v)$ for
		every $v\in T_xM\setminus\{0\}$, recall  Remark~\ref{rformalreverse}).  If $\Lambda(x)=0$ then,  \eqref{spacecond} is
		equivalent to  $\df f (v)<F(v)=-\frac{g(v,v)}{2\omega(v)}$,  for all
		$v\in A_x=\{v\in T_xM: -\omega(v)>0\}$, 
		since it is satisfied for each $v\neq 0$ belonging to the kernel of
		$\omega$,  while on
		$-A_x$ it becomes $\df f (v)> -\frac{g(v,v)}{2\omega(v)}=-F(-v)$.
		Hence, we  conclude that,  when $\Lambda(x)\geq 0$,
		\eqref{spacecond} is satisfied if and only if \eqref{spaceineq}
		holds on $A$.
		
		If $\Lambda(x)<0$, \eqref{spacecond} is satisfied
		away from $\overline{A_x\cup (-A_x)}\setminus\{0\}$, while on $(A_E)_x(=\{v\in T_x M:
		-\omega(v)>0,\ \Lambda(x)g_0(v,v)+\omega(v)^2\geq 0\})$ it is
		equivalent either  to
		\begin{equation}\label{espatial}
			\df f(v)<F(v) \qquad \hbox{or  to } \qquad \df f(v)>F_l(v). \end{equation}
		On $-(A_E)_x$,  the required  conditions are satisfied iff they  are
		satisfied on  $(A_E)_x$,  so that \eqref{espatial} suffices.
		
		By a simple continuity argument, it follows that both conditions
		in \eqref{espatial} cannot hold for different tangent vectors (at
		the same or at different points). Then, if the second inequality
		holds, $\Lambda<0$ in $M$ and the Killing field is spacelike
		everywhere.
	\end{proof}
	\begin{rem}  Geometrically, the meaning of the two
		possibilities in the lemma is the following. When the tangent
		space $T_xS^f$ is naturally included in   $T_{(f(x),x)}L$, 
		$(L=\R\times M)$ the latter is divided in two open half spaces. If
		\eqref{spaceineq} holds, then the future-pointing vectors and the
		Killing $\partial_t$ lie in the same half space, but when
		\eqref{spaceineq2} holds they lie in different ones (see Fig.~\ref{semispazio}).
		In the latter case, we can follow the proof of Proposition~\ref{psstk} and
		choose $S^f$ as the spacelike hypersurface $S$ which allows us  to
		write the spacetime as an \sstk splitting. Then the corresponding
		projection $t^f: L\rightarrow \R$ still satisfies that $-\nabla
		t^f$ is timelike, but we cannot assume that it is future-pointing 
		(as the time-orientation had already been prescribed). Indeed,
		$-t^f$ (no $t^f$) is a temporal function now.
	\end{rem}
	\begin{figure}[h]
		\includegraphics[scale=1,center]{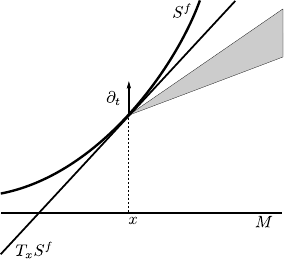}
		\caption{A hypersurface $S^f$ in an \sstk splitting $L=\R\times M$ satisfying condition \eqref{spaceineq2}.  The shaded region represents the future-pointing causal vectors in $T_{(f(x),x)} L$}\label{semispazio}
	\end{figure}
	Now, let $(\R\times M,g)$ be an \sstk  splitting and fix some
	$f:M\rightarrow\R$ under the hypotheses  \eqref{spaceineq} or
	\eqref{spaceineq2} of the lemma. Define the spacelike hypersurface
	$S^f$ of $(\R\times M,g)$ and denote the new \sstk  splitting
	as $(\R\times M,g^f)$, where
	\begin{equation}\label{gf}
		g^f((\tau,v),(\tau,v))=g((df(v)+\tau,v),(df(v)+\tau,v))
	\end{equation}
	for $(\tau,v)\in \R\times TM$.
	\begin{convention}\label{sigmaf}
		According to the remark above, $\Sigma^f$ will denote the
		Fermat structure associated with $(\R\times M,g^f)$ when $f$
		lies in the case \eqref{spaceineq} and its reverse Fermat
		structure (see comment before Corollary~\ref{rtimereversal})  when $f$ lies in the case \eqref{spaceineq2}.
	\end{convention}
	\begin{prop}\label{changedf}
		With the above notation,
		\begin{enumerate}[(i)]
			\item if $F$ and $F_l$ are the conic pseudo-Finsler  metrics associated with
			$\Sigma$, then $F^f=F-df$ and $F_l^f=F_l-df$ are the  conic pseudo-Finsler  metrics
			associated with $\Sigma^f$,
			\item $\Sigma^f$ has the same geodesics
			as $\Sigma$ up to parametrization.
		\end{enumerate}
	\end{prop}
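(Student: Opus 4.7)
The plan is to use the isometry $\Phi\colon (\R\times M, g^f)\to (\R\times M,g)$ defined by $\Phi(t,x)=(t+f(x),x)$. A direct computation using \eqref{gf} gives $\Phi_*(\tau,v)=(\tau+df(v),v)$, so $\Phi^*g=g^f$; note that $\Phi$ fixes the Killing field $\partial_t$ but sends the slice $\{0\}\times M$ in $(\R\times M,g^f)$ onto the graph $S^f\subset(\R\times M,g)$. Under hypothesis \eqref{spaceineq}, $-\nabla t$ is future-pointing for $g^f$, so $\Phi$ is orientation-preserving for the time-orientations; under \eqref{spaceineq2}, $-\nabla t$ is past-pointing for $g^f$, which is precisely the reason for the reverse convention in Convention~\ref{sigmaf}.

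For part $(i)$, I would argue: by definition, $v\in \Sigma^f$ iff $(1,v)$ is $g^f$-future-pointing lightlike (in case \eqref{spaceineq}) or $g^f$-past-pointing lightlike (in case \eqref{spaceineq2}, where $\Sigma^f$ denotes the reverse of the Fermat structure of $g^f$). Since $\Phi$ is an isometry which preserves the selected time-orientation in both cases, this is equivalent to $\Phi_*(1,v)=(1+df(v),v)$ being $g$-future-pointing lightlike. By Proposition~\ref{plightvectorsSSTK}, this happens precisely when $1+df(v)=F(v)$ or $1+df(v)=F_l(v)$. Using the $1$-homogeneity of $F,F_l$ and $df$, one checks that the conic Finsler (convex) part of $\Sigma^f$ corresponds to $F(v)-df(v)=1$, while the Lorentzian Finsler (concave) part corresponds to $F_l(v)-df(v)=1$; hence $F^f=F-df$ and $F^f_l=F_l-df$. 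The correspondence between convex/concave pieces is preserved because, along each ray from the origin in $T_xM$, the shift $\tau\mapsto\tau-df(v)$ is a constant translation, so the second fundamental form of the indicatrix with respect to the position vector has the same sign (case \eqref{spaceineq2} uses Remark~\ref{rformalreverse} to transfer to the reverse structure).

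For part $(ii)$, the cleanest path is via lightlike pregeodesics. Since $\Phi$ is an isometry, it maps future-pointing lightlike pregeodesics of $(\R\times M,g^f)$ bijectively onto those of $(\R\times M,g)$, and because $\Phi$ acts trivially on the $M$-factor, the $M$-projections of corresponding pregeodesics coincide as unparametrized curves. By Theorem~\ref{existenceofbolasNO}, these projections are exactly the geodesics of $\Sigma^f$ and $\Sigma$ respectively (up to reparametrization), so the two families of geodesics agree. Case \eqref{spaceineq2} is handled by combining this with the fact that the reverse of a Fermat structure has, up to reversal of parametrization, the same geodesics.

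The main technical obstacle is the orientation/convexity book-keeping in part $(i)$, in particular ensuring that the identification of the convex piece of $\Sigma^f$ with $\{F^f=1\}$ (and not with $\{F^f_l=1\}$) is correct uniformly across the three regions $\Lambda\gtreqless 0$, and that hypothesis \eqref{spaceineq}/\eqref{spaceineq2} indeed guarantees that $(1+df(v),v)$ lies in the correct causal cone for every $v$ in the domain. The boundary case in Theorem~\ref{minimizers}$(c2)$ in part $(ii)$ is then automatic because the metric $-h$ and the class of $K$-orbits with $\Lambda=d\Lambda|_{\ker\omega}=0$ are intrinsic to the conformal class of $g$ and preserved by $\Phi$.
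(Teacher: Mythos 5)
Your argument is correct and reaches the conclusion by a genuinely different route. The paper never writes down the isometry $\Phi(t,x)=(t+f(x),x)$ explicitly (it is implicit in \eqref{gf}): for part (i) it extracts the new triple $(g_0^f,\omega^f,\Lambda^f=\Lambda)$ from \eqref{gf}, notes via \eqref{eauxi} that the radicand $h$ is unchanged while $\omega^f/\Lambda=\omega/\Lambda-\de f$, and substitutes into the first identity of \eqref{tau}; for part (ii) it runs through the classification of Theorem~\ref{minimizers}, observing that the length functionals of $F^f=F-\de f$ and $F$ (resp.\ $F_l^f$ and $F_l$) on curves with fixed endpoints differ by the constant $f(x_1)-f(x_0)$ and hence have the same critical points, while the boundary and exceptional cases are governed by $h$ and $\de\Lambda|_{\ker\omega}$, both invariant. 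Your isometry-based argument is shorter and makes the invariance conceptually transparent, and your treatment of (ii) via lightlike pregeodesics and Theorem~\ref{existenceofbolasNO} avoids the paper's case analysis; what the paper's version buys in exchange is a purely Finslerian explanation of (ii) --- adding an exact one-form to the Lagrangian is a trivial perturbation of the action --- which does not pass through the spacetime at all for the $F$/$F_l$ cases. Two spots to tighten. First, your ``convexity book-keeping'' in (i) is settled most cleanly by noting that $\Phi_*\partial_t=\partial_t$, so the sign of $g(\partial_t,\cdot)$ on lightlike vectors --- precisely what separates the $F$-sheet from the $F_l$-sheet in Proposition~\ref{plightvectorsSSTK}(iii) --- is preserved; no second-fundamental-form argument is needed. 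Second, in case \eqref{spaceineq2} the reverse Fermat structure consists of the $v$ such that $(-1,v)$ is past-pointing lightlike for the standard orientation of $g^f$ (your set of $v$ with $(1,v)$ past-pointing is empty, since $t$ is temporal); chasing the signs then yields $\de f-F_l$ and $\de f-F$ as the positive representatives, so the identities $F^f=F-\de f$ and $F^f_l=F_l-\de f$ must be read through the formal-reverse conventions of Remark~\ref{rformalreverse}. This is a convention-level wrinkle that the paper's proof also leaves implicit, so it does not undercut the substance of your argument.
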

	\begin{proof}
		Observe that
		$g^f((\tau,v),(\tau,v))=g_0^f(v,v)+2\omega^f(v)\tau-\Lambda^f
		\tau^2$, where
		\begin{equation}\label{eauxi0} \begin{array}{rll} g_0^f(v,v) & = & g((df(v),v),(df(v),v))=g_0(v,v)+2\omega(v)
				df(v)-\Lambda df(v)^2,
				\\
				\omega^f(v) & = & g((df(v),v),(1,0))=\omega(v)-\Lambda
				df(v)\end{array}
		\end{equation} and $\Lambda^f=\Lambda$; in particular, the metric
		$h$ in \eqref{eh} remains invariant:
		\begin{equation}\label{eauxi} \Lambda
			g_0^f(v,v)+\omega^f(v)^2=\Lambda g_0(v,v)+\omega(v)^2.
		\end{equation}
		
		$(i)$ When $\Lambda(x)=0$, the equality $F^f=F-df$ follows
		directly from \eqref{eauxi0} (recall \eqref{fermat-kropina}). For
		the case $\Lambda\not=0$, just notice that the expressions of $F,
		F_l$ in \eqref{randers-kropina}, \eqref{rk2} can be rewritten as
		in the first  identity of \eqref{tau}, and use \eqref{eauxi}.
		
		$(ii)$ By Theorem~\ref{minimizers}, the geodesics
		of $\Sigma^f$ are either geodesics of $F^f$,  geodesics of
		$F_l^f$, lightlike pregeodesics of $\Lambda
		g_0^f(v,v)+\omega^f(v)^2$  (except at isolated points) or constant curves with $\Lambda^f(x)=0$ and $d\Lambda^f({\rm Ker}\, \omega^f_x)\equiv 0$. In the two last cases,  they are pregeodesics
		of $\Sigma$  by  \eqref{eauxi}  and because $d\Lambda({\rm Ker}\, \omega_x)\equiv 0$ if and only if $d\Lambda^f({\rm Ker}\, \omega^f_x)\equiv 0$ in the points where $\Lambda^f(x)  (=\Lambda(x))=0$.  In the other cases, the
		length functionals of $F^f=F-df$ and $F$ (resp. $ F_l^f$ and
		$F_l$), when defined on the space of $F$-admissible  (or equivalently $F^f$-admissible)  curves connecting
		two prescribed points, are the same up to a
		constant. Therefore, both functionals % $F^f$ and $F$ have
		have the same critical points and these critical points are
		pregeodesics of  $(M,\Sigma)$ by Theorem~\ref{minimizers}.
	\end{proof}
	Previous interpretations allow  us  to refine   the conclusions of
	Proposition~\ref{c63} in the Randers-Kropina case.
	\begin{cor}\label{conectedness}
		Let $(M,F)$ be a Randers-Kropina metric. If
		the intersection of any closed forward ball and any closed backward one is compact 
		then there exists a new Randers-Kropina metric $F^f$ which is
		geodesically complete and has the same
		pregeodesics of $F$.
	\end{cor}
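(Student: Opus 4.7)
The plan is to translate the hypothesis into a spacetime condition and then use Bernal--S\'anchez's theorem on the existence of Cauchy temporal functions. By Theorem~\ref{kropinaLadder}$(ii)$, the compactness of $\bar B_F^+(x,r_1)\cap \bar B_F^-(y,r_2)$ for all $x,y,r_1,r_2$ is equivalent to global hyperbolicity of the \sstk splitting $(\R\times M,g)$ associated with $F$. So it suffices to produce a function $f\colon M\to\R$ such that: $(\alpha)$ the graph $S^f=\{(f(x),x):x\in M\}$ is a spacelike Cauchy hypersurface of $(\R\times M,g)$, and $(\beta)$ $f$ satisfies \eqref{spaceineq} (so that the new conic Finsler metric $F^f$ is a genuine Fermat Randers--Kropina metric rather than a reverse one, recall Convention~\ref{sigmaf}).

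To construct such $f$, apply Bernal--S\'anchez's theorem to the globally hyperbolic spacetime $(\R\times M,g)$ in order to obtain a smooth Cauchy temporal function $T\colon \R\times M\to\R$, whose level sets are smooth spacelike Cauchy hypersurfaces. Since $\Lambda\geq 0$, the Killing vector $\partial_t$ is causal and future-pointing, so $dT(\partial_t)>0$ everywhere; moreover, by the Cauchy property of $T$ and the completeness of the flow of $\partial_t$, $T$ is a monotone bijection $\R\to\R$ along each integral curve $\{(t,x):t\in\R\}$. Hence the level set $T^{-1}(0)$ meets each such integral curve exactly once, defining a smooth function $f\colon M\to\R$ through the implicit relation $T(f(x),x)=0$, so that $S^f=T^{-1}(0)$. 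Since $S^f$ is spacelike and $\partial_t$ is causal future-pointing, $S^f$ falls in the case \eqref{spaceineq} of Lemma~\ref{fsplitting}, so $\Sigma^f$ is the ordinary (not reversed) Fermat structure of $(\R\times M,g^f)$.

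To conclude, observe that the slices of the new \sstk splitting $(\R\times M,g^f)$ are precisely the images of $S^f$ under the Killing flow of $\partial_t$; since $\partial_t$ acts by isometries preserving the time orientation, these translates are again Cauchy hypersurfaces. By Theorem~\ref{generalK}$(iv)$, the Fermat structure $\Sigma^f$ is forward and backward geodesically complete. Moreover, since $\Lambda^f=\Lambda\geq 0$, the associated wind Riemannian structure remains Randers--Kropina, and by Proposition~\ref{changedf} its conic Finsler metric is $F^f=F-df$ and its pregeodesics coincide with those of $F$, as required. The only subtle point in the argument is the construction of $f$, which rests on the interplay between the causal character of $\partial_t$ (guaranteeing that $T$ is strictly increasing along its integral curves) and the Cauchy property of $T$ (guaranteeing surjectivity along those curves); everything else is a direct application of the dictionary set up in Sections~\ref{windFermat}--\ref{generalcase}.
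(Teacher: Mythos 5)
Your proof is correct and follows essentially the same route as the paper: global hyperbolicity via Theorem~\ref{kropinaLadder}, a smooth spacelike Cauchy hypersurface from Bernal--S\'anchez realized as a graph $S^f$, and then Proposition~\ref{changedf} together with the Cauchy-slice characterization of completeness. The extra care you take (deriving the graph property from the causal character of $\partial_t$ and checking that $S^f$ lies in case \eqref{spaceineq} of Lemma~\ref{fsplitting}) only makes explicit steps the paper leaves implicit.
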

	\begin{proof}
		As the associated \sstk  splitting $(\R\times M,g)$ is globally
		hyperbolic by part  $(ii)$ of Theorem~\ref{kropinaLadder},   there exists  a
		smooth  spacelike Cauchy hypersurface $S$ (see \cite{BerSan05}).
		As the integral curves of $\partial_t$ are causal, they must 
		intersect $S$ transversely  and, so, $S$ can be written as a
		graph $S^f$.  By Proposition~\ref{changedf}, the associated
		splitting $(\R\times M,g^f)$ has a Fermat structure $\Sigma^f$
		which must be associated with some Randers-Kropina metric
		$F^f=F-df$, and will have the same pregeodesics as $F$. By part
		$(iii)$ of Theorem~\ref{kropinaLadder},  $\Sigma^f$ is geodesically
		complete.
	\end{proof}
	\begin{rem}
		In the stationary case $\Lambda>0$, the function $f$ can be
		explained physically as a (new) synchronization of the
		``observers'' travelling along the integral curves of
		$\partial_t$.  In the case $\Lambda\geq 0$,   Corollary~\ref{conectedness}  extends
		\cite[Theorem 5.10]{CapJavSan10}  valid  for Randers manifolds (namely, if
		$R$ is a Randers metric and the closed symmetrized balls are
		compact then there exists a function $f$ such that $R-df$ is also
		Randers, plus complete and with the same pregeodesics as  $R$). As
		suggested in that reference and proved in \cite{Matvee13}, such
		a result can be extended  from Randers to  any Finsler   manifold. \soutE{It would be
		interesting to know if Corollary~\ref{conectedness} could also be
		extended to more general wind Finslerian  
		structures. In this direction,} \bw Anyway, notice that \ew in the case of strong wind an additional difficulty occurs in the proof of Corollary~\ref{conectedness}: as the integral curves of $\partial_t$ become spacelike, it is not guaranteed that they will cross the Cauchy hypersurface $S$.
	\end{rem}
	
	\subsection{ Precedence relation and solution to Zermelo problem}\label{ss6.3}
	In \cite{JavSan11}, the authors introduced a notion of
	{\em precedence, $\prec$,} for a conic pseudo-Finsler metric (see Section~\ref{Fsepa}).  This can be
	easily extended  to any  wind Finslerian  structure $(M,
	\Sigma)$   and, moreover, also a less restrictive relation $\preceq$ appears naturally, so that $\prec$ and $\preceq$ resemble, respectively, the chronological and the causal  relations  in a Lorentzian manifold. Namely,   
	for  any $x,y\in M$, we say that $x\prec y$ (resp. $x\preceq y$) if there exists an  $F$-wind (resp. wind)  curve connecting $x$ to $y$ (i.e $C^A_{x,y}\neq\emptyset$, resp. $C^\Sigma_{x,y}\neq\emptyset$).  Moreover,  for any $x\in M$,
	the {\em $F$-future} (resp. {\em $\Sigma$-future}, {\em $F$-past}, {\em $\Sigma$-past}) of $x$ is
	the set  $I_\Sigma^+(x)$ (resp. $J_\Sigma^+(x)$, $I_\Sigma^-(x)$, $J_\Sigma^-(x)$) defined  as
	$I_\Sigma^+(x)=\{y\in M:  x\prec y\}$ (resp. $J_\Sigma^+(x)=\{y\in M:  x\preceq y\}\cup\{x\}$,
	$I_\Sigma^-(x)=\{y\in M:  y\prec x \}$, $J_\Sigma^-(x)=\{y\in M:  y\preceq x \}\cup\{x\}$).
	The following result summarizes the relations between the above-defined $I^\pm_\Sigma, J^\pm_\Sigma$, and the corresponding future or past sets for the Lorentzian metrics $g$ in $\R\times M$ and $-h$ in $M_l$.
	\begin{prop}\label{esalva}
		For a wind Riemannian structure with associated  \sstk splitting
		$(\R\times M,g)$ and natural projection $\pi: \R \times
		M\rightarrow M$:
		\begin{align*}
			&I^+_\Sigma(x) = \pi\left(I^+(0,x)\right), &I^-_\Sigma(x)=\pi\left(I^-(0,x)\right),\\
			&J^+_\Sigma(x) = \pi\left(J^+(0,x)\right), &J^-_\Sigma(x)=\pi\left(J^-(0,x)\right),
		\end{align*}
		where  $x\in M, (0,x)\in \R\times M$. In particular,
		$I^\pm_\Sigma(x)$
		are open subsets.
		
		In the case of strong wind  ($M=M_l$), $I^+_\Sigma(x)$ and
		$I^-_\Sigma(x)$ coincide, resp., with the chronological future and
		past of $x$ for the Lorentzian metric $-h$
		on $M$ endowed with a natural time-orientation.
	\end{prop}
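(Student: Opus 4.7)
The plan is to reduce the four identities directly to Proposition~\ref{bolas2}, which already translates the causal and chronological futures of a point $(0,x)\in\R\times M$ into unions of $c$-balls and wind balls based at $x\in M$. Concretely, $y\in\pi(I^+(0,x))$ holds iff there exists $s>0$ with $y\in B^+_\Sigma(x,s)$, and $y\in\pi(J^+(0,x))$ iff there exists $s\geq 0$ with $y\in \hat B^+_\Sigma(x,s)$; so I only need to match these with the precedence relations $\prec$ and $\preceq$. For the $J^\pm$ case, the equivalence is immediate: the sets $\hat B^+_\Sigma(x,s)$ are defined precisely by the existence of wind curves from $x$ of parameter length $s$, which is the definition of $x\preceq y$. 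For the $I^\pm$ case I would use that, by Remark~\ref{raclarations} combined with \eqref{controltime}, a future-pointing timelike vector $(\tau,v)$ necessarily satisfies $F(v)<\tau<F_l(v)$, forcing $v\in A$; thus the projection of any future-pointing timelike curve (perturbed to have non-vanishing speed as in the proof of Proposition~\ref{bolas2}) is automatically an $F$-admissible wind curve, i.e.\ an $F$-wind curve.

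For the converse inclusion $I^+_\Sigma(x)\subset\pi(I^+(0,x))$, given an $F$-wind curve $\gamma:[a,b]\to M$ from $x$ to $y$, I would construct a future-pointing timelike lift as follows. Since $\dot\gamma\in A$, Proposition~\ref{windConseq} combined with Convention~\ref{caestar} gives $F(\dot\gamma(s))<F_l(\dot\gamma(s))$ pointwise (with $F_l=+\infty$ allowed on $M\setminus M_l$). Choose any smooth $\tau(s)$ with $F(\dot\gamma(s))<\tau(s)<F_l(\dot\gamma(s))$, and define $\Gamma(s)=(\int_a^s\tau(u)\,du,\gamma(s))$. Then $\dot\Gamma=(\tau,\dot\gamma)$ is future-pointing timelike in $(\R\times M,g)$ by \eqref{controltime}, giving $y\in\pi(I^+(0,x))$. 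Openness of $I^\pm_\Sigma(x)$ then follows from openness of $I^\pm(0,x)$ (standard for any spacetime) together with the fact that $\pi$ is an open map.

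For the last assertion, in the case $M=M_l$ the tensor $-h$ is genuinely Lorentzian by Corollary~\ref{hcases}, and Remark~\ref{htimeoriented} identifies $A_l$ with the set of future-pointing timelike vectors of $-h$ (and $A_E\setminus A_l$ with its future lightlike cone) under a natural time-orientation. Consequently an $F$-wind curve is, up to reparametrization, nothing but a future-pointing timelike curve of $-h$: indeed $F$-admissibility means $\dot\gamma\in A=A_l$ pointwise, and conversely any curve with $\dot\gamma\in A_l$ can be rescaled by some smooth positive $\lambda(s)\in(F_l(\dot\gamma(s))^{-1},F(\dot\gamma(s))^{-1})$ (non-empty because $F<F_l$ on $A_l$) to satisfy the strict wind inequalities. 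This identifies $I^+_\Sigma(x)$ with the $-h$-chronological future of $x$, and symmetrically for the past.

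The main obstacle is the careful handling of the parametrization when passing from wind curves to causal lifts and vice versa, especially because a wind curve need not have strict inequalities and the $F$-unit reparametrization provided by Proposition~\ref{prop220} need not remain a wind curve unless one selects the scaling factor in the open interval $(F_l^{-1},F^{-1})$; this is precisely where $F$-admissibility (as opposed to mere $\Sigma$-admissibility) is essential, and where the distinction between $\prec$ and $\preceq$ parallels the distinction between $\ll$ and $\leq$ in the spacetime.
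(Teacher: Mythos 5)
Your proposal is correct and follows essentially the same route as the paper: both reduce the identities to Proposition~\ref{bolas2}, use the characterization of timelike/causal vectors via the strict/non-strict inequalities between $F$, $F_l$ and the $t$-component to pass between $F$-wind (resp.\ wind) curves and timelike (resp.\ causal) curves, and invoke Remark~\ref{htimeoriented} for the strong-wind identification with the $-h$-chronology. Your direct construction of the timelike lift $\Gamma(s)=(\int_a^s\tau,\gamma(s))$ with $F(\dot\gamma)<\tau<F_l(\dot\gamma)$ is just an explicit version of the paper's step ``$y\in B^+_\Sigma(x,r)$ for $r\in(\ell_F(\sigma),\ell_{F_l}(\sigma))$, hence $(r,y)\in I^+(0,x)$'', so the difference is purely cosmetic.
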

	\begin{proof}
		We shall prove the proposition only for the future sets $I^+_{\Sigma}$,  being the proof for $I^-_{\Sigma}$ completely analogous.   Let $y\in I^+_{\Sigma}(x)$,  $\sigma \in C^A_{x,y}$.  As
		$\dot\sigma^\pm(t)\in A$,  then
		$F\big(\dot\sigma^{\pm}(t)\big)<F_l\big(\dot\sigma^{\pm}(t)\big)$  and $y\in
		B^+_{\Sigma}(x,r)$, $r\in (\ell_F(\sigma),\ell_{F_l}(\sigma))$. Thus, Proposition~\ref{bolas2} ensures that $(r,y)\in
		I^+(0,x)$. For the converse inclusion, just  recall that if $(0,x)\ll (r,y)\in \R\times M$ then there exists a future-pointing timelike curve $\gamma(s)=(t(s),\sigma(s))$ between $(0,x)$ and $(r,y)$ with
		$\dot\sigma(s)\neq 0$ for all $s$, see footnote~\ref{foot4.1}.   Reparameterizing $\gamma$ with respect to $t$ gives an $F$-wind curve  between
		$x$ and $y$ and $y\in I^+_{\Sigma}(x)$. 
		
		The proof of the inclusions for  $J^{\pm}_\Sigma$  are analogous except for a slight difference. In fact,  for the inclusion $\pi\left(J^+(0,x)\right)\subset J^+_{\Sigma}(x)$, observe  that if $(0,x)< (r,y)$, the existence of a causal curve 
		between $(0,x)$ and $(r,y)$, such that $\dot\sigma(s)\neq 0$ for all $s$, is guaranteed except when $x=y$ and $\gamma$ is  a lightlike pregeodesic, but in that case $y\in J^+_{\Sigma}(x)$, by definition. 
		For the last assertion,
		recall Proposition~\ref{htimeoriented}. 
	\end{proof}
	\begin{exe}
		Even though  $\pi: \R\times M \rightarrow M$ is an open map (in agreement with the fact that $I^\pm_\Sigma(x)$ must be open as so is $I^\pm(0,x)$) if,  say, $J^+(0,x)$ is closed, then $J^+_\Sigma(x)$ is not necessarily closed. 
		Typically, this happens for ``black hole regions'' (recall Section~\ref{ss6.5})  even in globally hyperbolic \sstk splittings. Namely, the causal future of a point $(0,x)$ inside the black hole is closed (as the spacetime is globally hyperbolic) but its causal future may approach the horizon and never touch it. So, the projection  $J^+_\Sigma(x)$ of $J^+(0,x)$ admits as boundary points the projection of points of the horizon, but these points do not belong to $J^+_\Sigma(x_0)$.
	\end{exe}
	
	One could define the $F$-separation $d_F$ in an analogous   way  as
	in the Randers-Kropina case, by using the infimum of the lengths of the
	$F$-admissible (or $\Sigma$-admissible) curves connecting each two
	points, as well as a Lorentzian separation $d^{F_l}$ by taking the
	supremum of the $F_l$-lengths. In fact, if $F$-admissible curves are
	taken, then $d_F$ lies again in the case of the $F$-separation
	defined for any conic Finsler structure in \cite{JavSan11}.
	However, as a difference with the Randers-Kropina case, now
	discontinuities of $d_F$ may appear in non-trivial cases.
	\soutE{(typically,  when $x\preceq y$ but $x\not\prec y$; such
	discontinuities would remain if
	$\Sigma$-admissible curves were used instead of $F$-admissible ones). As this would affect possible results involving boundary geodesics, we prefer not to
	follow this approach here.  On the contrary,} \bw Even though some properties can be proven in this case by using  a kind of extended Finsler separation (see Section 9.2), we will \ew ensure  directly the
	existence of geodesics and their extremizing properties by using
	the previously introduced notions.

	%In the next theorem, recall that the hypothesis of w-convexity is
	%fulfilled whenever one of the  conditions in parts $(i)$ and $(ii)$ of  Proposition~\ref{c63} holds.
	
	\begin{thm}\label{compactcase}
		Let $(M,\Sigma)$ be a w-convex  wind Riemannian structure and  let  $x_0,y_0 \in M $ such that $y_0\in
		J_\Sigma^+(x_0)\setminus{\{x_0\}}$.  Then:
		\begin{enumerate}[(i)]
			\item  There exists a  global minimum $\sigma$ 
			on $C^\Sigma_{x_0,y_0}$ of the length functional $\ell_F$ which is a pregeodesic of $(M,\Sigma)$,   and,  when
			$y_0\not\in I_\Sigma^+(x_0)$,  it is  a lightlike
			pregeodesic of the Lorentzian metric $-h$   in \eqref{eh},   up to isolated points where its derivative vanishes. 
			\item If $ R(x_0,y_0):= \sup \{ r>0:  y_0\in  \hat B_\Sigma^+(x_0,r) \}<+\infty$,    
			there exists a global maximum $\sigma$ on $C^\Sigma_{x_0,y_0}$ of the length
			functional $\ell_{F_l}$ which is a pregeodesic of $(M,\Sigma)$,  and, when
			$y_0\not\in I_\Sigma^+(x_0)$,   it is  a lightlike pregeodesic of the Lorentzian metric
			$-h$  (with non-vanishing  derivative). 
		\end{enumerate}
	\end{thm}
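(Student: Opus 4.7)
The plan is to pass to the associated \sstk splitting $(\R\times M,g)$ via Theorem~\ref{tfermatSSTK}, in which w-convexity of $\Sigma$ is equivalent to causal simplicity by Theorem~\ref{generalK}(ii); in particular, all $J^{\pm}(z_0)$ are closed. Proposition~\ref{esalva} translates $y_0\in J^+_\Sigma(x_0)\setminus\{x_0\}$ into the existence of some $r>0$ with $(r,y_0)\in J^+(0,x_0)$, and a wind curve from $x_0$ to $y_0$ corresponds (by Proposition~\ref{bolas2}) to a future-pointing causal curve in the spacetime.

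\textbf{Part (i).} I would set $r_0:=\inf\{r>0:y_0\in \hat B^+_\Sigma(x_0,r)\}$. Picking $r_n\searrow r_0$ with $(r_n,y_0)\in J^+(0,x_0)$, causal simplicity forces $(r_0,y_0)\in J^+(0,x_0)$, so Proposition~\ref{bolas2} produces a wind curve $\sigma\colon[0,r_0]\to M$ from $x_0$ to $y_0$ with $\ell_F(\sigma)\le r_0\le \ell_{F_l}(\sigma)$. Next I would show $r_0=\inf_{\gamma\in C^\Sigma_{x_0,y_0}}\ell_F(\gamma)$: any wind curve $\gamma$ on $[0,L]$ satisfies $\ell_F(\gamma)\le L$ and $y_0\in \hat B^+_\Sigma(x_0,L)$, so $L\ge r_0$; conversely, using Proposition~\ref{prop220} any admissible $\gamma$ reparametrizes as a wind curve with parameter length arbitrarily close to $\ell_F(\gamma)$, so $\ell_F(\gamma)<r_0$ would contradict the definition of $r_0$. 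Hence $\ell_F(\sigma)=r_0$ realizes the infimum. By definition of $r_0$, $y_0\notin B^+_\Sigma(x_0,r_0)$, so $\sigma$ is a unit extremizing geodesic satisfying \eqref{mini}; Proposition~\ref{disjunction}(i)(a) applied to subintervals promotes this to a unit $\Sigma$-geodesic (Definition~\ref{windgeodesic}), and Theorem~\ref{minimizers} then classifies it as a unit $F$-geodesic or a boundary geodesic. When additionally $y_0\notin I^+_\Sigma(x_0)$, Proposition~\ref{esalva} gives $(r_0,y_0)\in J^+(0,x_0)\setminus I^+(0,x_0)$; the lift $s\mapsto(s,\sigma(s))$ connects horismotically related points, hence is a lightlike pregeodesic (Remark~\ref{rll}), and Theorem~\ref{existenceofbolasNO}(iii) together with Lemma~\ref{lightgeo} identifies $\sigma$ in the boundary subcase as a lightlike pregeodesic of $-h$ up to the isolated zeros.

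\textbf{Part (ii).} The analogous argument uses $R_0:=R(x_0,y_0)<+\infty$: take $r_n\nearrow R_0$ with $(r_n,y_0)\in J^+(0,x_0)$, invoke closedness of $J^+(0,x_0)$ to get $(R_0,y_0)\in J^+(0,x_0)$, and extract a wind curve $\sigma\colon[0,R_0]\to M$ with $\ell_F(\sigma)\le R_0\le\ell_{F_l}(\sigma)$. To show $R_0=\sup_{\gamma\in C^\Sigma_{x_0,y_0}}\ell_{F_l}(\gamma)$, first note that finiteness of $R_0$ rules out any $\gamma$ with $\ell_{F_l}(\gamma)=+\infty$ (Proposition~\ref{prop220} would yield wind reparametrizations of arbitrarily large parameter length, contradicting the supremum); for each remaining $\gamma$, reparametrize as a wind curve with $L=\ell_{F_l}(\gamma)$ to obtain $\ell_{F_l}(\gamma)\le R_0$. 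Combined with $R_0\le \ell_{F_l}(\sigma)$ this yields $\ell_{F_l}(\sigma)=R_0$. Since $y_0\notin B^+_\Sigma(x_0,R_0)$, $\sigma$ satisfies \eqref{maxi} and is a unit $\Sigma$-geodesic, classified by Theorem~\ref{minimizers} as an $F_l$-geodesic or boundary geodesic; the subcase $y_0\notin I^+_\Sigma(x_0)$ is handled by the same horismotic-lift argument as in part~(i).

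\textbf{Main obstacle.} The delicate point is the double identification of the variational quantities $\inf\ell_F$ and $\sup\ell_{F_l}$ with the "spacetime-computed" radii $r_0$ and $R_0$. The reparametrization flexibility of Proposition~\ref{prop220} is tailored to strictly regular or $F$-admissible curves, but wind curves in $C^\Sigma_{x_0,y_0}$ may have velocities touching zeros of the critical-wind region, where the relation between parameter length $L$ and the pair $(\ell_F,\ell_{F_l})$ degenerates; controlling minimizing/maximizing sequences near such points is precisely where the spacetime translation --- and the closedness of $J^{\pm}$ supplied by causal simplicity --- is indispensable, substituting for a compact-support hypothesis that is unavailable in the Finslerian setting.
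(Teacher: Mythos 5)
Your proposal follows essentially the same route as the paper: translate to the associated \sstk splitting, use that w-convexity is causal simplicity (Theorem~\ref{generalK}(ii)) to show $(r_0,y_0)$, resp.\ $(R_0,y_0)$, lies on $\partial J^+(0,x_0)\subset J^+(0,x_0)\setminus I^+(0,x_0)$, and read off the extremizing pregeodesic from the resulting horismotic lightlike geodesic via Corollary~\ref{horismos} and Theorem~\ref{existenceofbolasNO}; the paper's proof is exactly this, stated more tersely.

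The one step where your argument over-reaches is the identification $r_0=\inf_{\gamma}\ell_F(\gamma)$ (and $R_0=\sup_{\gamma}\ell_{F_l}(\gamma)$): you invoke Proposition~\ref{prop220} to reparametrize an \emph{arbitrary} competitor $\gamma\in C^{\Sigma}_{x_0,y_0}$ down to parameter length close to $\ell_F(\gamma)$, but that proposition requires $F\circ\dot\gamma$ continuous (and a condition on $F_l\circ\dot\gamma$) on each smooth piece, which can fail precisely at the critical-wind zeroes of $\dot\gamma$ — the degeneracy you name in your ``main obstacle'' paragraph but do not resolve. The repair stays inside the spacetime picture you already set up and needs no reparametrization at all: for any wind curve $\gamma:[0,L]\to M$, the lift $t\mapsto\bigl(\int_0^t F(\dot\gamma)\,ds,\;\gamma(t)\bigr)$ has velocity $(F(\dot\gamma),\dot\gamma)$, which is future-pointing lightlike for every admissible velocity (Proposition~\ref{plightvectorsSSTK} together with Convention~\ref{caestar}, including the critical zeroes where $F=1$), so it is a future-pointing causal curve from $(0,x_0)$ to $(\ell_F(\gamma),y_0)$ and Proposition~\ref{bolas2} yields $\ell_F(\gamma)\ge r_0$ directly; the analogous lift with $F_l$ (after observing that a competitor with $\ell_{F_l}=+\infty$ would force $R_0=+\infty$, e.g.\ by Proposition~\ref{pleche}(i)) gives $\ell_{F_l}(\gamma)\le R_0$ for part (ii). With that substitution your proof is complete; the paper itself leaves this verification implicit in the phrase ``gives the required global minimum'', so this is a point worth making explicit rather than a divergence from its method.
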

	\begin{proof} $(i)$
		Consider the associated \sstk splitting $(\R\times M, g)$ and recall that
		w-convexity implies its causal simplicity  (see Theorem
		\ref{generalK}). Given $x_0, y_0$ as above, denote $ r(x_0,y_0): =\inf \{ r>0:
		y_0\in \hat B_\Sigma^+(x_0,r)\}$;
		notice that $0< r(x_0,y_0)<+\infty$ (the first inequality follows from  Proposition~\ref{bolas2} and the acausality of $S_0$,  the second one trivially follows from  $y_0\in J^+_{\Sigma}(x_0)$). 
		The definition of $ r(x_0,y_0)$ and Proposition~\ref{bolas2} imply that
		$(r_n,y_0)\in J^+(0,x_0)$ for some sequence $r_n\rightarrow  r(x_0,y_0)$
		with  $r_n\geq  r(x_0,y_0)$ and, moreover, $(r',y_0)\not\in J^+(0,x_0)$ if  $r'< r(x_0,y_0)$.
		So, $( r(x_0,y_0),y_0)$ lies in the boundary of  $J^+(0,x_0)$ and
		causal simplicity implies that this boundary is contained in
		$J^+(0,x_0)$. Thus, $(0,x_0)$ and $(r(x_0,y_0),y_0)$ are horismotically related  and
		any connecting causal curve from $(0,x_0)$ to $(r(x_0,y_0),y_0)$ must be a
		lightlike pregeodesic. The projection $\sigma$ on $M$ of such a
		pregeodesic  is an extremizing  pregeodesic of $(M,\Sigma)$ (recall
		Corollary~\ref{horismos});  moreover, $\sigma$ must be a global minimum of $\ell_F$ on $C^\Sigma_{x_0,y_0}$ otherwise a curve $\sigma_1\in C^\Sigma_{x_0,y_0}$ should exist such that $\ell_F(\sigma_1)<\ell_F(\sigma)=r(x_0,y_0)$. As $\ell_F(\sigma_1)\leq \ell_{F_l}(\sigma_1)$, $y_0\in \hat B_\Sigma^+(x_0,\ell_F(\sigma_1))$ in contradiction with the definition of $r(x_0,y_0)$.     In the case $y_0\not\in I_\Sigma^+(x_0)$, 
		the velocity of any
		connecting $\Sigma$-admissible curve must lie in $A_E\setminus A$
		at some point and, thus,   $\sigma$ must belong to case (ii)-(c) of  Theorem~\ref{minimizers} that  implies the  last conclusion. 
		
		$(ii)$  Notice that the additional hypothesis $R(x_0,y_0)<+\infty$
		allows us  to use the same technique as in the previous part in order to obtain a maximizing pregeodesic $\sigma$ of $(M,\Sigma)$. However, now the velocity of $\sigma$ cannot vanish at some (isolated) point because, in this case, $K$ would be lightlike at that point and one could concatenate an arbitrary segment of integral curve of $K$ at that point. Thus, $R(x_0,y_0)=+\infty$, a contradiction. 
	\end{proof}
	\begin{exe}
		Let us observe that if $y_0\in I^+_\Sigma (x_0)$ in the part $(i)$ of last theorem, this does not necessarily imply that the solution is a pregeodesic of $F$. This can happen for example  in  an \sstk whose associated $-h$ is Lorentzian and the slice is compact, as $(\R\times T^2,g)$ with $g= dt^2+dx^2+dy^2-\sqrt{2}(dxdt+dtdx)$ (we consider the  torus  $T^2$ as the quotient of $\R^2$ with the identifications $(x,y)\sim(x+1,y)$ and $(x,y)\sim(x,y+1)$). In this case, $h=dx^2-dy^2$ and $(T^2,-h)$ is totally vicious, so we have that $y_0\in I^+_\Sigma (x_0)$ for all $x_0,y_0\in T^2$. But the spacetime $(\R\times T^2,g)$ is globally hyperbolic and there always exists solution to the associated Zermelo problem, given by a boundary geodesic  in some cases as when we consider $x_0=(0,0)\in T^2$ and $y_0=(s,s)\in T^2$,  for small $s>0$.  
	\end{exe}
	From Proposition~\ref{c63}-$(ii)$, we have that w-convexity is satisfied if $M$ is compact, and then  we get immediately:
	\begin{cor}\label{compactcase2}
		Let $\Sigma$ be the wind Riemannian structure determined by a compact Riemannian manifold $(M,g_R)$ and  a vector field $W$ on $M$.  Then for any couple of points $x_0, y_0\in M$, $x_0\neq y_0$, there exists a curve  in $C^\Sigma_{x_0,y_0}$ which is a minimum of $\ell_F$  provided that at least a  wind curve from $x_0$ to $y_0$ exists.
	\end{cor}

	%\begin{rem}\label{zermelosolu} 
	Theorem~\ref{compactcase}   gives the last step in the solution to Zermelo's
	navigation theorem under any type of  (time-independent) wind $W$ in a Riemannian background $(M,  g_R )$. 
	As far as we know, the description of this problem as a Finslerian  geodesic connectedness problem appeared first in \cite{Sh03}, under the assumption that  the wind is  mild (apart \soutE{of} \bw from \ew its time-independence). The case of a wind which is everywhere critical  was considered \soutE{recently} in \cite{YosSab12}.
	Because of its importance, we summarize and discuss the general solution now.

	\begin{cor}[Summary of the solution to Zermelo's problem] \label{rsummaryZ}   Let $\Sigma$ be the wind Riemannian structure determined by a Riemannian manifold $(M,g_R)$ and  a vector field $W$ on $M$ and  let  $x_0\neq y_0 \in M $: 
		%such that $y_0\in
		%J_\Sigma^+(x_0)\setminus{\{x_0\}}$.  Then
		\begin{enumerate}[(i)]
			\item  If there exists a solution $\sigma$ to Zermelo navigation connecting $x_0$ to $y_0$, that is,  
			a regular
			wind curve from $x_0$ to $y_0$ which is a
			global minimum of the length functional $\ell_F$ 
			on %the set of connecting wind curves 
			$C^\Sigma_{x_0,y_0}$, then $x_0\preceq y_0$ and $\sigma$ is a pregeodesic of $(M,\Sigma)$. 
			
			Moreover,  $\sigma$ is  either a pregeodesic for the conic Finsler metric $F$ (and, thus, $x_0\prec y_0$), or 
			a lightlike
			pregeodesic of the Lorentzian metric $-h$   in \eqref{eh},   up to isolated points where its derivative vanishes. 
			\item  If $x_0\preceq y_0$ and the wind Finslerian structure is w-convex,  then there exists a solution to Zermelo navigation from $x_0$ to $y_0$. 
		\end{enumerate}
	\end{cor}
	\begin{proof} 
		Recall that, by Definition~\ref{sigmadmissible}--(iii), $\sigma$ is a piecewise smooth, $\Sigma$-admissible curve whose left and right derivatives can vanish only at a finite number of instants $\sigma(s_j)$ (being also $0_{\sigma(s_j)}\in \Sigma_{\sigma(s_j)}$ and, by Convention~\ref{caestar}, $F(0_{\sigma(s_j)}=1$). Since $\ell_F(\sigma)<+\infty$, we    can assume that  $\sigma$ is reparametrized by using its $F$-length. 
		Then, $\sigma$ must be a unit extremizing $\Sigma$-geodesic (according to Definition~\ref{extremizing}) because, otherwise, there would be a connecting wind curve of strictly smaller length. So,  the result follows from the classification of $\Sigma$-geodesics in Theorem~\ref{minimizers}.
		For (ii), just use Theorem~\ref{compactcase}--(i).  
	\end{proof}
	In order to apply these results in a practical way, the following comments are in order (see also Figure~\ref{geos}): 
	
	\begin{enumerate}
		\item An obvious necessary condition for the existence of a Zermelo solution between $x_0, y_0$ $(x_0\neq y_0)$ is the possibility to travel from the first to the second point, that is, $y_0 \in J^+_\Sigma(x_0)$. This is a vacuous condition  when the wind is mild.  When the wind is strong (on all $M$), the question is
		reduced to study the causal future of $x_0$ for the Lorentzian metric $-h$; this  is a non-trivial but typical computation in spacetimes (see, e.g., Example~\ref{ex_con} and Proposition~\ref{always} below). When there are points with critical wind, then  the precedence relation must be studied specifically there; moreover, the possibility of travelling between two critical points by moving freely in the region of mild wind must be also taken into account. Even though, in principle, this may be done directly, from the spacetime viewpoint it becomes  equivalent to the existence of a future-pointing causal curve  
		connecting $(0,x_0)$ to $l_{y_0}=\R\times \{y_0\}$). 
		\item In the case that the trip is possible,   the possible solutions to Zermelo's navigation must be found in the set of geodesics for the conic Finsler metric $F$ and in the set of  piecewise smooth lightlike pregeodesics for $-h$ with $C^1$ zero velocity at the breaks  and non-vanishing second derivative there.

		The possibility of the existence of these last geodesics was pointed out by 
		Caratheodory  in \cite[\S 282]{Carath67}, who studied a time-independent wind on a plane.  Indeed, he  mentioned  the possibility of the existence of solutions which are limits  of maximal and minimal ones and called ``anomalous'' their possible velocity  vector fields.    In a more modern language, these solutions are called {\em abnormal extremals } (compare, e.g., with \cite[p.54-55]{userres}, where the Zermelo navigation problem on a plane is analysed). 
		
		We  have interpreted  abnormal extremals in three equivalent ways: (a) boundary geodesics of $(M,\Sigma)$, (b) lightlike pregeodesics (up to a finite number of instants where the velocity vanishes) of  $-h$, and (c) projections of certain lightlike geodesics in the associated SSTK spacetime.  
		%they are the vectors in $A_E\setminus A$ and ).   
		
		From the spacetime viewpont, all  Zermelo solutions are projections of first arriving future-pointing lightlike pregeodesics  
		connecting $(0,x_0)$ to $l_{y_0}=\R\times \{y_0\}$. 
		
		\item  For the existence of maximizing geodesics,  assuming the obvious necessary condition $R(x_0,y_0)<+\infty$ (apart from  $y_0 \in J^+_\Sigma(x_0)$), 
		the possible maximizing curves must be found in the set of geodesics for the Lorentz-Finsler metric $F_l$ and in the set of  (smooth) lightlike pregeodesics of $-h$. 
		
		Notice that the maximizing geodesics, if they exist, must be entirely contained in the region of strong wind: otherwise, when one crosses a point of non-strong wind, one can concatenate a wind curve segment  so that the curve remains close to this point along an arbitrarily long time, before arriving at $y_0$.   
		
		From the spacetime viewpoint, all  maximizing curves are projections of last arriving future-pointing lightlike pregeodesics  
		connecting $(0,x_0)$ to $l_{y_0}=\R\times \{y_0\}$.

		\item In order to ensure the existence of extremals, the condition of  w-convexity  becomes the natural one: (a) it holds when $\Sigma$ is complete or any of the  conditions in parts $(i)$ and $(ii)$ of  Proposition~\ref{c63} holds, (b) it generalizes the classical notion of convexity for domains of Riemannian and Finslerian manifolds, and 
		(c) as in these geometries, it is related to the convexity of the boundary of the domain (see Theorem~\ref{lake} below).

		From the spacetime viewpoint, w-convexity becomes equivalent to the  causal simplicity of the spacetime, a standard causality condition. As in the previous cases, the interpretation in the SSTK has a double interest: (i) it may be easier to check, and (ii) it provides the 
		arrival times of the extremal geodesics (namely,  the $t$-coordinate at  boundary points in $l_{y_0}$ of $J^+(0,x_0)\cap l_{y_0}$).
		
		\item As in Riemannian Geometry, one can wonder at what extent all the $\Sigma$-pregeodesics from $x_0$ to $y_0$ are critical points for some length functional. The answer to this question (Theorem~\ref{fp}) is postponed to the study of a new general Fermat's principle for spacetimes and its adaptation for SSTK spacetimes in Section~\ref{further1}. 
	\end{enumerate}

	\begin{exe}\label{ex_con} The obvious connectivity condition $y_0\in
		J_\Sigma^+(x_0)\setminus{\{x_0\}}$ in Zermelo's problem may fail even if $M$ is compact. Indeed, consider a sphere $S^2$ with the natural metric induced by the Euclidean one and a smooth vector field which is given in spherical coordinates $(\theta, \phi)$ by 
		\[
		W(\phi,\theta)=\begin{cases}0&\theta \in [0,\pi/6]\cup[\pi/2,\pi]\\
			f(\theta)e_\theta&\theta\in(\pi/6, \pi/2)
		\end{cases}
		\]  
		where $e_\theta$ is the unit vector field associated with the latitude coordinate $\theta$ and the function $f\colon [\pi/6, \pi/2]\to [0,+\infty)$ is smooth, non-negative, equal to $0$ at $\theta=\pi/6,\pi/2$,  strictly increasing in $[\pi/6,\pi/4]$,  strictly decreasing otherwise and such that $f(\pi/4)>1$. Any wind curve from each point $x_0$ in the hemisphere containing the south pole cannot connect points in the open region containing the north pole and having as boundary the parallel of latitude $\bar\theta$, where $\bar\theta\in (\pi/4,\pi/2)$ is equal to $f^{-1}(1)$. In fact, along the line $p_\theta=\{(\phi,\theta):\theta=\bar\theta\}$, the set of admissible velocities for wind curves is included in  the tangent half-space containing $e_\theta$ plus the zero vectors and therefore any wind curve starting on the region $\Omega_S=\{(\phi,\theta):\theta>\bar\theta\}$ must turn back into $\Omega_S$ when arriving to a point on $p_\theta$ (compare with Proposition~\ref{always} below).  
	\end{exe}
	
	\subsection{ Further  results on existence of geodesics }
	Our methods can also be applied to find a solution of Zermelo navigation problem in quite a few interesting cases. 
	Let us start   considering an open subset of a wind Riemannian manifold $(M, \Sigma)$ whose boundary satisfies a convexity assumption. 
	%Assume that $D\subset M$ is an open subset and redefine the precedence relations, wind balls and c-balls  by considering only $F$-wind and wind curves whose support is contained in $\bar D$. Let us denote by $J^+_{\Sigma\bar D}(x_0)$, $x_0\in D$,  the set of the points in $D$ that can be reached from $x_0$ by wind curves contained in $\bar D$. Analogously, the c-balls defined by considering the above type of curves are denoted by $\hat B^{\pm}_{\Sigma\bar D}(x_0,r)$ and the set of the wind curves between $x_0$ and $y_0$ with support in $\bar D$ is denoted by $C^{\Sigma \bar D}_{x_0,y_0}$. We will consider also the causal future and past of a point $(t_0, x_0)\in\R\times D$ in $\bar D$, denoted with $J^{\pm}_{\bar D}(t_0,x_0)$; these are defined by considering only causal curves of $(\R\times M, g)$ which are contained in $\R\times \bar D$.  
	%  
	We recall first  some notions and results about convexity of the boundary of an open subset which have been studied in  \cite{BaCaGS11}. Let $D$ be  an open subset of a Finsler manifold $(M,F)$  with smooth boundary.  We say that $D$  has {\em locally  convex} boundary if  for each $x\in \partial D$ there exists some neighborhood of
	$0$ in $T_x\partial D$ whose images by both  the exponential maps of $F$ and of its reverse Finsler metric $\tilde F(v):=F(-v)$ do not intersect $D$.   This condition is equivalent to the infinitesimal convexity of $\partial D$    (related to its normal curvature   at any point $x\in\partial D$)   and will be referred here just as the (extrinsic) {\em convexity} of $\partial D$.  
	If $\partial D$ is   convex   and $x\in \partial D$ then, \cite[Lemma 3.5]{BaCaGS11}, there exists   a small enough convex ball (of the metric $F$) $B^+(x,\delta)$ such that  for each $x_1,x_2\in D\cap B^+(x,\delta)$ the (unique) geodesic in $B^+(x,\delta)$ which connects $x_1$ with $x_2$ is included in $D$. The following lemma is a refinement of that result.
	\begin{lemma}\label{refine}
		Let $D$ be  an open subset of a Finsler manifold $(M,F)$  with smooth,   convex boundary  and   $x\in \partial D$. Then for all  $x_1,x_2\in \bar D\cap B^+(x,\delta)$, $\delta>0$ small enough,  the (unique) geodesic in $B^+(x,\delta)$ which connects $x_1$ with $x_2$ is included in $\bar D$. In particular, if $x_1,x_2\in \partial D$ then it is either contained in $\partial D  \cap B^+(x,\delta)$ or it is contained in  $D\cap B^+(x,\delta)$, except for its endpoints,   and it is not tangent to $\partial D$ in the endpoints.   This last case always happens if at least   one of the points $x_1$, $x_2$   belongs to $D$. 
	\end{lemma}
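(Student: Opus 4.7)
The plan is to extend \cite[Lemma 3.5]{BaCaGS11}, which gives the statement for points in $D\cap B^+(x,\delta)$, to points in the closure $\bar D\cap B^+(x,\delta)$ by approximation, and then exploit infinitesimal convexity to analyse the boundary behaviour of the connecting geodesic.

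First I would choose $\delta>0$ so small that $B^+(x,\delta)$ is a totally normal (convex) ball for $F$ (and for its reverse $\tilde F$) and so that \cite[Lemma 3.5]{BaCaGS11} applies. Given $x_1,x_2\in\bar D\cap B^+(x,\delta)$, pick sequences $x_1^n,x_2^n\in D\cap B^+(x,\delta)$ with $x_j^n\to x_j$ (possible because points of $\partial D$ can be approached from $D$ using the smooth structure of $\partial D$ and a transverse inward vector field). By the cited lemma, the unique geodesic $\gamma^n\subset B^+(x,\delta)$ from $x_1^n$ to $x_2^n$ stays in $D$. By smoothness of the exponential map on the convex ball, $\gamma^n$ converges uniformly to the unique geodesic $\gamma$ in $B^+(x,\delta)$ from $x_1$ to $x_2$; hence $\gamma\subset\bar D$, proving the first assertion.

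For the second assertion, suppose $x_1,x_2\in\partial D$ and let $\gamma\colon[0,l]\to\bar D\cap B^+(x,\delta)$ be the connecting geodesic. I would analyse the points $t_0\in[0,l]$ where $\gamma(t_0)\in\partial D$. If $t_0\in(0,l)$ and $\dot\gamma(t_0)\notin T_{\gamma(t_0)}\partial D$, then $\gamma$ would cross $\partial D$ transversally at $t_0$, hence would enter $M\setminus\bar D$ on one side of $t_0$, contradicting $\gamma\subset\bar D$. Thus at every interior contact point $\dot\gamma(t_0)\in T_{\gamma(t_0)}\partial D$, and the convexity assumption (which says that geodesics tangent to $\partial D$ do not enter $D$ near the tangency point, for both $F$ and $\tilde F$) forces $\gamma$ to stay in $M\setminus D$ in a neighbourhood of $t_0$; combined with $\gamma\subset\bar D$, this gives $\gamma\subset\partial D$ near $t_0$. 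Therefore the set $T:=\{t\in[0,l]:\gamma(t)\in\partial D\}$ is closed in $[0,l]$ and its intersection with $(0,l)$ is open in $(0,l)$; by connectedness, either $T=[0,l]$ (so $\gamma\subset\partial D\cap B^+(x,\delta)$) or $T\cap(0,l)=\emptyset$ (so $\gamma((0,l))\subset D$).

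For the non-tangency at the endpoints in the second case, I would run the same convexity argument at $t=0$: if $\dot\gamma(0)\in T_{x_1}\partial D$, then $\gamma(t)\notin D$ for all small $t>0$, contradicting $\gamma((0,l))\subset D$; symmetrically at $t=l$. Finally, the last sentence of the statement is then immediate, since if $x_1\in D$ (or $x_2\in D$) the alternative $\gamma\subset\partial D$ is excluded. I expect the main subtlety to be the case-analysis at interior contact points: one must ensure that the convexity hypothesis (which is purely infinitesimal and bilateral, since it is stated for both $F$ and $\tilde F$) truly prevents both a transversal crossing and a one-sided tangential excursion into $D$; once this is pinned down carefully, the open/closed dichotomy on $T$ delivers the conclusion.
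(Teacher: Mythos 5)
Your proposal is correct and follows essentially the same route as the paper's proof: approximate $x_1,x_2$ from inside $D$, pass to the limit geodesic using smooth dependence on endpoints in the convex ball (so $\gamma\subset\bar D$), and then use infinitesimal convexity at tangency points to settle the dichotomy. The only difference is that you spell out the open/closed/connectedness argument for the contact set, which the paper compresses into ``by the definition of local convexity, this easily implies...''; that elaboration is accurate and welcome.
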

	\begin{proof}
		Choosing $\delta$ as in the discussion above, only the case when at least one of the points $x_1, x_2$ belongs to $\partial D$ must be taken into account. 
		Take two sequences of points $\{x^1_k\},\ \{x^2_k\}\subset D\cap B^+(x,\delta) $ converging resp. to $x_1$ and $x_2$. Consider the geodesics $\gamma_k$ connecting $x^1_k$ with $x_k^2$ which are contained in $D\cap B^+(x,\delta)$,  \cite[Lemma 3.5]{BaCaGS11}.  By smooth dependence of geodesics in a convex neighborhood from endpoints, $\gamma_k$ converges (in the $C^2$-topology) to the geodesic $\gamma$ connecting   $x_1$ and $x_2$   in $B^+(x,\delta)$. Thus,  $\gamma$ is contained in $\bar D$   and it is tangent to $\partial D$ when it touches the boundary away from the endpoints. By the definition of local convexity, this easily implies that $\gamma$ is  either  contained in the boundary or it touches the boundary transversally at most in the endpoints.  
		%Assume that it is not contained in $\partial D$ neither in $D$ (except for its endpoints). Let then $s_0\in (a,b)$ such that $\gamma(s_0)\in D$.  Take then $\bar s=\sup\{s\in [s_0,s_1]:\gamma(s)\in D\}$ if there exist $s_1\in (s_0,b)$ such that $\gamma(s_1)\in \partial D$, otherwise  define $\underline{s}=\max\{s\in [a,s_0]:\gamma(s)\in \partial D\}$. In the first case, $\gamma(s)\in \partial D$, for all $s\in[\bar s, s_1]$,   $\dot\gamma(\bar s)\in T_{\gamma(\bar s)} \partial D$ and $\gamma(s)\in D$ in a left neighborhood of $\bar s$. By \cite[Prop. 3.2]{BaCaGS11}  applied to the opposite curve $-\gamma$  which is a geodesic for the reverse metric $\tilde F$, we get that $-\dot\gamma(s)\not\in T_{\gamma(\bar s)}\partial D$, a contradiction. An analogous  argument gives a contradiction also in the second case. 
	\end{proof}
	By using the above lemma and the correspondence between \sstk spacetimes and wind Riemannian structures, we can prove the existence of a  solution to Zermelo navigation problem in an open subset $D\subset M$ such that $\partial D$ is compact. To this end, we need to consider wind curves whose image is contained in $\bar D$ and we will recall this  by  using the symbol $\Sigma|_{\bar D}$. For example, given $x_0,y_0\in \bar D$, $C^{\Sigma|_{\bar D}}_{x_0,y_0}$ denotes the subset of wind curves from $x_0$ to $y_0$ with image in $\bar D$. 
	\begin{thm}\label{lake}
		Let $(M,\Sigma)$ be a  wind Riemannian structure, $D\subset M$ be a precompact, open subset with   smooth  boundary $\partial D$, and let $x_0, y_0 \in \bar D$ such that $y_0\in
		J_{\Sigma|_{\bar D }}^+(x_0)\setminus{\{x_0\}}$. Assume that the wind is mild on $\partial D$ and that this boundary is convex for $F$.    Then  there exists a global minimum $\sigma$
		on $C^{\Sigma|_{\bar D}}_{x_0,y_0}$ of the length functional $\ell_F$   and it fulfils one of the following two possibilities:
		\begin{enumerate}[(a)]
			\item $\sigma$ is fully contained in $\partial D$
			and, thus, it is a geodesic of both, $F$ and the Finsler metric induced by $F$ on $\partial D$; 
			\item $\sigma$ is contained in $D$ except, at most, its endpoints. 
		\end{enumerate}
		In particular, this last case happens when   one of the points $x_0$, $y_0$   belongs to $D$.   Moreover if (b) occurs: (i) $\sigma$ is  a pregeodesic of $(M,\Sigma)$ and  (ii) when $y_0\not\in I_{\Sigma|_{\bar D}}^+(x_0)$, then $\sigma$ is also  
		a lightlike pregeodesic of the Lorentzian metric $-h$   in \eqref{eh},    up to isolated points where its derivative vanishes.  
	\end{thm}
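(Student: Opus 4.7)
The plan is to lift the problem to the \sstk splitting $(\R\times M,g)$ associated with $(M,\Sigma)$ via Theorem~\ref{tfermatSSTK} and to work inside the manifold‐with‐boundary $\R\times \bar D$, whose $t$-slices $\{t\}\times \bar D$ are compact by precompactness of $D$. By Proposition~\ref{bolas2}, any wind curve $\gamma\in C^{\Sigma|_{\bar D}}_{x_0,y_0}$, parametrized with $\dot t\equiv 1$, lifts to a future-pointing causal curve from $(0,x_0)$ to $(T,y_0)$ in $\R\times \bar D$ with $T\in[\ell_F(\gamma),\ell_{F_l}(\gamma)]$, so $T_0:=\inf_\gamma \ell_F(\gamma)$ coincides with the infimum of arrival times on the vertical line over $y_0$. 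First I would build a minimizer by taking a sequence $\gamma_n$ of causal curves in $\R\times\bar D$ from $(0,x_0)$ to $(T_n,y_0)$ with $T_n\searrow T_0$, and invoking Lemma~\ref{limitcurve}(ii); its compactness hypothesis is automatic from compactness of $\bar D$. The resulting causal limit curve $\gamma_\infty$ joins $(0,x_0)$ to $(T_0,y_0)$, stays in the closed set $\R\times \bar D$, and its projection $\sigma:=\pi\circ\gamma_\infty$ is a wind curve in $\bar D$ achieving $\ell_F(\sigma)=T_0$.

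Next I would establish the dichotomy by studying the closed set $C:=\sigma^{-1}(\partial D)\subset[a,b]$. By the mild-wind hypothesis on $\partial D$ together with continuity, the wind is mild on an open neighborhood $U$ of $\partial D$, where $F$ is a genuine Randers metric by Proposition~\ref{Riemannclass} and Lemma~\ref{refine} applies. For any $t_0\in C\cap (a,b)$ I would pick a convex $F$-ball $B^+(\sigma(t_0),\delta)\subset U$ and $\epsilon>0$ with $\sigma([t_0-\epsilon,t_0+\epsilon])\subset B^+(\sigma(t_0),\delta)$; by Lemma~\ref{refine}, the unique $F$-minimizing geodesic $\tilde\sigma$ inside this ball joining $\sigma(t_0\pm\epsilon)$ stays in $\bar D$ and either lies wholly in $\partial D$ or is contained in $D$ except at its endpoints. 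In the first alternative, uniqueness of minimizers in the convex ball forces $\sigma|_{[t_0-\epsilon,t_0+\epsilon]}=\tilde\sigma\subset\partial D$, so $t_0$ is interior to $C$; in the second, $\tilde\sigma$ is a strictly shorter competitor in $C^{\Sigma|_{\bar D}}_{\sigma(t_0-\epsilon),\sigma(t_0+\epsilon)}$ than $\sigma|_{[t_0-\epsilon,t_0+\epsilon]}$ (which is constrained to pass through $\sigma(t_0)\in\partial D$), contradicting the global minimality of $\sigma$. Hence $C\cap(a,b)$ is both open and closed in $(a,b)$, yielding either $C\cap(a,b)=\emptyset$ (case (b)) or $C=[a,b]$ (case (a)); the claim that only (b) can occur when $x_0$ or $y_0$ lies in $D$ is then immediate.

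Lastly I would derive the geodesic statements. In case (a), the same local comparison with ambient $F$-geodesics in small convex $F$-balls makes $\sigma$ a pregeodesic of $(M,F)$, and its minimality among wind curves staying in $\partial D$ turns it into a pregeodesic of the Finsler metric induced by $F$ on $\partial D$. In case (b), $\sigma$ lies in the open set $D$ off its endpoints, so every short enough subarc is an unconstrained $\ell_F$-minimizer in $(M,\Sigma)$, and Theorem~\ref{minimizers} identifies $\sigma$ with a pregeodesic of $(M,\Sigma)$. Moreover, if $y_0\notin I^+_{\Sigma|_{\bar D}}(x_0)$, then $\sigma$ cannot be $F$-admissible (otherwise it would itself witness $x_0\prec y_0$ in $\bar D$), so $\sigma$ falls in neither case (a) nor case (b) of Theorem~\ref{minimizers} and must be a boundary geodesic (case (c)), i.e., a lightlike pregeodesic of $-h$ away from the isolated points where its velocity vanishes.

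The main obstacle is the dichotomy step: excluding intermediate contact patterns of $\sigma$ with $\partial D$ hinges on the extrinsic convexity of $\partial D$ forcing ambient $F$-geodesics between $\bar D$-points to enter $D$, which is precisely where the mild-wind hypothesis on $\partial D$ is essential, since it makes $F$ a genuine Finsler metric near $\partial D$ so that convex-ball uniqueness together with Lemma~\ref{refine} apply.
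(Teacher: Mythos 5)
Your overall strategy --- lifting to the associated \sstk splitting, extracting a minimizer as a limit curve via Lemma~\ref{limitcurve}(ii) (with the compactness hypothesis supplied by $\bar D$), and using Lemma~\ref{refine} together with the mild-wind hypothesis near $\partial D$ to force the contained-in-$\partial D$ versus contained-in-$D$ dichotomy --- is essentially the paper's proof. Your open-and-closed argument on $\sigma^{-1}(\partial D)$ is a cosmetic variant of the paper's ``first contact/departure point'' argument, and your existence step differs only in that the paper lifts each competitor directly to the lightlike curve $s\mapsto\bigl(\int_0^s F(\dot\sigma_k)\,d\tau,\ \sigma_k(s)\bigr)$ rather than appealing to arrival times of causal lifts (the latter implicitly needs the reparametrization of Proposition~\ref{prop220} to make the arrival time equal to $\ell_F$).

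The one step I would not accept as written is your derivation of (i) in case (b). You assert that short enough subarcs of $\sigma$ are unconstrained $\ell_F$-minimizers and that ``Theorem~\ref{minimizers} identifies $\sigma$ with a pregeodesic''. Theorem~\ref{minimizers} characterizes what the geodesics of $(M,\Sigma)$ look like; it does not convert a local minimizer into one. To pass from ``wind curve satisfying \eqref{mini}'' to ``extremizing pregeodesic'' you need Proposition~\ref{disjunction}(ii), which requires $\sigma$ to be strictly regular or at least to satisfy the reparametrization hypotheses of Proposition~\ref{prop220}; but your $\sigma$ is the projection of a limit curve and you have not established any such regularity (a priori limit curves are only locally Lipschitz, and their velocities may vanish or hit $A_E\setminus A$ non-trivially). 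The paper closes this entirely inside the spacetime: the lifted limit curve must be a lightlike pregeodesic, since otherwise a timelike curve from $(0,x_0)$ to $(T_0,y_0)$ arbitrarily close to it would exist, whose projection stays in $\bar D$ and, once $t$-parametrized, is a wind curve of $F$-length strictly less than $T_0$; then Theorem~\ref{existenceofbolasNO} yields both (i) and (ii) at once, including the smoothness of $\sigma$ and the $-h$-lightlike-pregeodesic description. Since you already have the lift $\gamma_\infty$ in hand, this is the repair to make; with (i) so established, your argument for (ii) (non-$F$-admissibility forcing case (c) of Theorem~\ref{minimizers}) goes through.
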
 
	\begin{proof}
		As $y_0\in
		J_{\Sigma|_{\bar D }}^+(x_0)\setminus{\{x_0\}}$, the set of wind curves between $x_0$ and $y_0$ whose image is contained in $\bar D$ is not empty. We want to find a curve $\sigma\in  C^{\Sigma|_{\bar D}}_{x_0,y_0}$ which attains the infimum 
		\[T_0(x_0,y_0):=\inf_{\sigma\in C^{\Sigma|_{\bar D}}_{x_0,y_0} }\ell_F(\sigma).\] 
		Consider an \sstk spacetime $(\R\times M,g)$ associated with $(M,\Sigma)$ (see part $(i)$ of  Theorem~\ref{tfermatSSTK}) and the subset  of curves
		\begin{multline*} C^{cc|_{\bar D}}_{x_0,y_0}=\{\gamma:[a_\gamma,b_\gamma]\rightarrow \R\times \bar D\subset\R\times M: \text{$\gamma$ future-pointing causal continuous,}\\ \quad\quad\quad \quad\quad\quad \gamma(a_\gamma)=(0,x_0),\ \pi\big(\gamma(b_\gamma)\big)=y_0,\ a_\gamma<b_\gamma\}
		\end{multline*}
		(recall Definition~\ref{limitcurvedef} and the paragraph below it for the notion of causal continuous curve). Now define 
		\[T_1(x_0,y_0):=\inf_{\gamma\in  C^{cc|_{\bar D}}_{x_0,y_0} }T(\gamma),\] 
		where $T(\gamma)$ is the arrival time, namely, the first coordinate of $\gamma(b)$. Observe that $T_1(x_0,y_0)\leq T_0(x_0,y_0)$, since each curve in $C^{\Sigma|_{\bar D}}_{x_0,y_0}$ can be lifted to a future-pointing lightlike curve $\gamma(t)=(t,\sigma(t))$ such that $\ell_F(\sigma)=T(\gamma) $ (see the proof of Proposition~\ref{bolas2}). It is enough to prove that the infimum $T_1(x_0,y_0)$ is attained by a future-pointing lightlike pregeodesic which, by Theorem~\ref{existenceofbolasNO}, projects into a pregeodesic $\sigma$ of $(M,\Sigma)$ with $\ell_F(\sigma)=T_1(x_0,y_0)\leq T_0(x_0,y_0)$. Take a sequence of curves $\{\gamma_k\}$ in $C^{cc|_{\bar D}}_{x_0,y_0} $ such that $\lim_k T(\gamma_k)=T_1(x_0,y_0)$, which can be assumed parametrized by the first coordinate, namely, $\gamma_k(t)=(t,\sigma_k(t))$. Then by Lemma  
		\ref{limitcurve}, there exists a limit curve $\gamma(t)=(t,\sigma(t))$ defined on $[0,T_1(x_0,y_0)]$ which is future-pointing causal continuous. Let us see that $\gamma$ is a future-pointing lightlike geodesic: 
		
		Case (i): let us first consider  an instant $t_0\in (0,T_1(x_0,y_0))$ such that $\sigma(t_0)\in D$. Then there exists $\varepsilon>0$ such that $\gamma([t_0-\varepsilon,t_0+\varepsilon])\subset D$. Moreover, if $\gamma|_{[t_0-\varepsilon,t_0+\varepsilon]}$ is not a future-pointing lightlike pregeodesic, there exists a smooth causal curve $\beta$ from $\gamma(t_0-\varepsilon)$  to $(t_0+\varepsilon-\epsilon,\sigma(t_0+\varepsilon))$ for some $\epsilon>0$. In order to prove the existence of $\beta$ recall that by definition of causal continuous curve we can find a piecewise smooth causal curve close to $\gamma|_{[t_0-\varepsilon,t_0+\varepsilon]}$. Then by \cite[Proposition 10.46]{O'neill}, if it is not a future-pointing lightlike pregeodesic, we can find a future-pointing timelike curve from $\gamma(t_0-\varepsilon)$  to $\gamma(t_0+\varepsilon)$ and the conclusion follows using that the chronological relation is open.  Now consider the concatenation $\tilde\gamma=\gamma|_{[0,t_0-\varepsilon]}\ast\beta\ast \bar\gamma$, where $\bar\gamma(t)=(t-\epsilon,\sigma(t))$ is defined in $[t_0+\varepsilon,T_1(x_0,y_0)]$. It turns out that $T(\tilde\gamma)=T_1(x_0,y_0)-\epsilon$, a contradiction.   
		
		Case (ii): assume now that $\gamma(t_0)\in \partial D$ for $t_0\in (0,T_1(x_0,y_0))$ and consider a ball $B^+(\gamma(t_0),\delta)$ as in Lemma~\ref{refine}. There exists $\varepsilon>0$ such that $\gamma|_{[t_0-\varepsilon,t_0+\varepsilon]}$ is contained in $B^+(\gamma(t_0),\delta)$. Moreover, $\sigma|_{[t_0-\varepsilon,t_0+\varepsilon]}$ minimizes the $F$-length from $\sigma(t_0-\varepsilon)$ to $\sigma(t_0+\varepsilon)$, because otherwise if there exists  a shorter curve $\tilde\sigma$, one can construct a causal continuous curve by concatenation as in case (i), using $\beta(t)=(t,\tilde\sigma(t))$ and $\epsilon=\ell_F(\sigma)-\ell_F(\tilde\sigma)$. Being $\sigma|_{[t_0-\varepsilon,t_0+\varepsilon]}$ $F$-minimizing, we conclude by Lemma~\ref{refine} that it is a geodesic contained in $\partial D$. 
		
		Case (ii) also implies  conditions (a) and (b), and the final statement follows from part (i) of Theorem~\ref{compactcase}.

	\end{proof} 
	% Consider 
	% s if it is contained in $\partial D$ then it must be  Otherwise, let $t_0\in (0,t_1)$ the first point where $\gamma$ touches the boundary of $D$, let $\tilde{x}_0=x(t_0)$,  and consider a small enough convex ball $B^+(\tilde{x}_0,\delta)$ (of the metric $F$, which is Randers in a neighborhood of the boundary, since the wind is weak there) such that  for each $x_1,x_2\in D\cap B^+(x,\delta)$ the (unique) geodesic in $B^+(\tilde{x}_0,\delta)$ which connects $x_1$ with $x_2$ is included in $D$ (see \cite[Lemma 3.5 and Corollary 1.2]{BaCaGS11}).
	% Then there exists $\epsilon>0$ small enough such that $x_n(\tilde{t}_0-\epsilon)$ and $x_n(\tilde{t}_0+\epsilon)$  belong to $B^+(\tilde{x}_0,\delta)\cap \bar D$
	%
	%
	%
	%Then there exists $\epsilon>0$ small enough such that $x_n(\tilde{t}_0-\epsilon)$ and $x_n(\tilde{t}_0+\epsilon)$  belong to $B^+(\tilde{x}_0,\delta)\cap \bar D$ and up to a subsequence converge to $x_1$ and $x_2$, respectively. We can also assume that $x_n([\tilde{t}_0-\epsilon,\tilde{t}_0+\epsilon])\subset D$.  Now consider the $F$-geodesics $\sigma_n$ from  $x_n(t_0-\epsilon)$ to $x_n(t_0-\epsilon)$, which are contained in $D$. They have to converge to an $F$-geodesic from $x_1$ to $x_2$, but as the $F$-lengths of the curves $x_n$ converges to the global minimum, also the curves $x_n|_{[t_0-\epsilon,t_0+\epsilon]}$ converge to the geodesic from $x_1$ to $x_2$, which is a contradiction, because this geodesic cannot touch the boundary and, by Lemma \ref{limitcurve}, $x_n(t_0)$ converges to $\tilde{x}_0$.
	\begin{rem}\label{unboundomain}
		The last result of convexity can be extended  with the same technique to more general cases. For example,  when    the wind is mild outside a compact subset $K\subset D$, $\partial D$ is  convex, and $D\setminus K$ is forward (or backward) complete, in the sense that any $F$-geodesic $\gamma:[0,b)\rightarrow D\setminus K$  which is  inextendible to $b$ in $D\setminus K$, either converges to some point in $\partial D \cup K$ or satisfies $b=\infty$. 
		Recall also that another characterizations of this property can be easily obtained from Theorem  2.1 (a),(b) and (e) in \cite{CapJavSan10}   (this  includes the compactness of the closed forward balls,  which makes possible  to reduce the non-compact case to the solved one in Theorem~\ref{lake}). 
		
	\end{rem}

	Now, let us focus on the case of strong wind. i.e., $M=M_l$. In the compact case, the unique condition for the existence of Zermelo solutions is 
	the assumption of precedence, and it is easy to find conditions ensuring   it for any two points.  
	
	\begin{prop}\label{always} 
		Let $(M, \Sigma)$ be a wind Riemannian structure with strong wind and compact $M$. If the Lorentzian metric $-h$  is endowed with a timelike conformal Killing vector field then $y_0\in I^{\pm}_\Sigma(x_0)$, for any couple of points $x_0, y_0\in M$. Therefore, Zermelo navigation problem has always a solution for any couple of points.  
		
		In particular, such a vector field exists  if the strong wind data $(g_R,W)$ satisfy that $W$ is Killing for $g_R$. 
	\end{prop}
	\begin{proof}
		By  \cite[Th. 1]{sanche06} a compact Lorentzian manifold $M$ with a timelike conformal Killing vector field is totally vicious, i.e. the chronological future and past of every point $x_0\in M$ are equal to $M$. Since for a strong wind  the chronological relation on $(M, -h)$ coincides with the precedence relation on $(M, \Sigma)$ (recall last part of Proposition~\ref{esalva}) the result  follows (the  assertion on Zermelo follows from Theorem~\ref{compactcase}-(i)).   
		For the last assertion,
		just notice that $W$ must be then also timelike and Killing for $-h$ (use \eqref{eh} with $g_R=g_0$,  $\Lambda=1-g_R(W,W)$ and $\omega=-g_R(\cdot, W)$). %and Proposition~\ref{always} is directly applicable.
	\end{proof}
	
	% \begin{rem}\label{mancato}
	% Notice that when the strong wind data $(g_R, W)$ of $\Sigma$ satisfies that 
	% \end{rem}

	% One could further wonder about the existence of minimizers of
	% $l_F$ which are $F$-geodesics (and not lightlike pregeodesics of
	% $-h$). For this question, the natural hypothesis is to assume that
	% $\bar B^+_\Sigma(x,r)\cap \bar B^-_\Sigma(y,r)$ is compact for all
	% $x,y\in M, r>0$, i.e., the global hyperbolicity of the associated
	% \sstk . In fact, global hyperbolicity ensures the compactness of
	% each class of causally homotopic curves connecting two fixed
	% points. Moreover, it also ensures the existence of minimizers of
	% the arrival time for each class of causally homotopic curves
	% connecting $(0,x_0)$ to $l_{y_0}$. This fact was exploited in
	% \cite{MinSan06} in order to obtain
	% solutions to the Lorentz force equation, and can be used here too
	% in order to prove that each class of homotopy for  $F$- admissible
	% curves  connecting $x_0$ and $y_0$ will admit a minimum
	% $\tilde\sigma$ of $\ell_F$ (which will be a pregeodesic of the
	% conic Finsler metric $F$).\footnote{M. A.: how do we know that the minimum is $F$-admissible?}
	%\end{rem}
	
	For the existence of  maximizing
	geodesics,  the  (obviously necessary) hypothesis on $R(x_0,y_0)$  in  Theorem~\ref{compactcase}(ii), never can hold  if $y_0\not\in M_l$.  However, the
	next lemma provides a  natural sufficient condition. 
	\begin{lemma}\label{lolo}
		Let $x_0,y_0\in M$ and  assume that there exists $\bar r>0$ such that $y_0\in  \hat B_\Sigma^+(x_0,\bar r)$. Then $R(x_0,y_0)<+\infty$ 
		%\soutE{holds  for any  $x\in M$} 
		whenever (i)
		the wind is strong (i.e. $M=M_l$), and 
		(ii) the metric $-h$ is globally hyperbolic.
	\end{lemma}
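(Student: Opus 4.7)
The plan is to argue by contradiction. Assume $R(x_0,y_0)=+\infty$; by definition of the c-balls (Definition~\ref{sigmaballs}) this produces a sequence $r_n\to+\infty$ and wind curves $\sigma_n\colon[0,r_n]\to M$ from $x_0$ to $y_0$ satisfying $F(\dot\sigma_n)\le 1\le F_l(\dot\sigma_n)$. Since the wind is strong everywhere ($M=M_l$), Proposition~\ref{esalva} together with Remark~\ref{htimeoriented} identifies wind curves with future-pointing $-h$-causal curves in the Lorentzian manifold $(M,-h)$; in particular, each $\sigma_n$ is such a curve joining $x_0$ to $y_0$. By the global hyperbolicity assumption, $K:=J^+_{-h}(x_0)\cap J^-_{-h}(y_0)$ is compact and contains the image of every $\sigma_n$, the Lorentzian time separation $d_{-h}(x_0,y_0)$ is finite, and by continuity there exist constants $\lambda_1>0$ and $C_\omega>0$ such that $|\Lambda|\ge\lambda_1$ and $\|\omega\|_{g_0}\le C_\omega$ on $K$.

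The crux is to prove a uniform bound $L_{g_0}(\sigma_n)\le L_{\max}<+\infty$ on the $g_0$-arclengths of these $-h$-causal curves. This is the classical fact that, in a globally hyperbolic Lorentzian manifold, the set of future-pointing causal curves joining two fixed points has uniformly bounded length in any auxiliary Riemannian metric. I would establish it by a limit-curve argument in the spirit of Lemma~\ref{limitcurve} applied to $(M,-h)$: reparametrize each $\sigma_n$ by its $g_0$-arclength so they become $1$-Lipschitz curves contained in the compact set $K$, and if their parameter intervals were to grow to infinity, a diagonal Arzelà-Ascoli extraction would yield a $1$-Lipschitz limit $\sigma_\infty\colon[0,+\infty)\to K$ which is $-h$-causal (being a $C^0$-limit of $-h$-causal curves in the strongly causal spacetime $(M,-h)$), future-inextendible, and yet trapped in the compact set $K$, contradicting strong causality.

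With $L_{\max}$ in hand, one uses the formula $F_l(v)=(|\omega(v)|+\sqrt{h(v,v)})/|\Lambda|$ on $A_E$ (valid since $-\omega(v)>0$ there, cf.~Proposition~\ref{plightvectorsSSTK}) and the pointwise bound $|\omega(v)|\le C_\omega\sqrt{g_0(v,v)}$. Together with the wind condition $F_l(\dot\sigma_n)\ge 1$ they give
\[
r_n\le\ell_{F_l}(\sigma_n)=\int_0^{r_n}\frac{|\omega(\dot\sigma_n)|+\sqrt{h(\dot\sigma_n,\dot\sigma_n)}}{|\Lambda(\sigma_n)|}\,ds\le\frac{C_\omega L_{g_0}(\sigma_n)+d_{-h}(x_0,y_0)}{\lambda_1}\le\frac{C_\omega L_{\max}+d_{-h}(x_0,y_0)}{\lambda_1},
\]
a finite constant independent of $n$, contradicting $r_n\to+\infty$. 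The principal obstacle is precisely the uniform Riemannian-length bound on the $\sigma_n$; once it is in place, all other steps reduce to pointwise algebraic manipulations of the Zermelo-type formula for $F_l$ and to the compactness of $K$.
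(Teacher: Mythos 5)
Your proof is correct and follows essentially the same route as the paper: both bound $\ell_{F_l}$ by splitting $F_l$ into the $\omega/\Lambda$ part (controlled by a uniform Riemannian arc-length bound for causal curves between two points in a globally hyperbolic spacetime) and the $\sqrt{h}/|\Lambda|$ part (controlled by the finiteness of the Lorentzian time separation). The only differences are cosmetic: the paper works with the conformal representatives $-h/\Lambda^2$ and $-g_0/\Lambda$, which makes the pointwise bounds on $\Lambda$ and $\omega$ unnecessary via the identity $\omega^2/\Lambda^2 = h/\Lambda^2 - g_0/\Lambda$, and it cites the uniform Riemannian-length bound from Beem--Ehrlich--Easley where you reprove it by the standard no-imprisonment/limit-curve argument.
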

	\begin{proof} As global hyperbolicity is preserved by conformal
		changes, $(M,\frac{1}{\Lambda}h)$ and $(M,-\frac{1}{\Lambda^2}h)$
		are also globally hyperbolic  (recall that $\Lambda<0$ if $M=M_l$). 
		Since $y_0\in \hat B^+_\Sigma(x_0,\bar r)$, $x_0$ and $y_0$ are causally related in $(M,-h)$. Thus, by global hyperbolicity,  the  length w.r.t. the Lorentzian metric $-\frac{1}{\Lambda^2}h$
		of all  the future-pointing $-h$-causal curves between them is  bounded  (see \cite[Lemma 4.5]{BeEhEa96}). Moreover, since   they  are causal curves,  the length w.r.t. the Riemannian metric  $-\frac{1}{\Lambda}g_0$  is bounded as well
		(see \cite[p. 76]{BeEhEa96}). 
		As, 
		$\frac{1}{\Lambda^2}\omega(v)^2=\frac{1}{\Lambda^2}
		h(v,v)-\frac{1}{\Lambda}g_0(v,v)$, then
		\[ \big|\frac{1}{\Lambda}\omega(v)\big|\leq \sqrt{\frac{1}{\Lambda^2} h(v,v)}+\sqrt{-\frac{1}{\Lambda}g_0(v,v)}\]
		for every causal $v\in TM$, which implies that the $F_l$-length of  all the 
		$\Sigma$-admissible curves between those two points is bounded  (recall that $F_l=\sqrt{h/\Lambda^2}+\omega/\Lambda$, Fig.~\ref{rela})  and
		consequently $R(x_0,y_0)<+\infty$.
	\end{proof}
	
	\begin{exe}\label{rolo} The global hyperbolicity of $-h$ is {\em not} implied by the global
		hyperbolicity of the \sstk  splitting:  %This can be easily checked %by
		%taking 
		a counterexample is any \sstk splitting with compact slices such that $K$ is spacelike; indeed,  the SSTK spacetime is  globally hyperbolic (apply part $(iii)$ of Theorem~\ref{generalK}), but
		$-h$ can never be globally hyperbolic (as  compactness implies that it admits closed timelike curves, i.e., $-h$ is not chronological). An explicit counterexample is the Lorentzian  cylinder $\R\times S^1$, $g=dt^2
		-4dtd\theta +d\theta^2$. 
		However, the next
		lemma shows that the converse holds.
	\end{exe}
	
	\begin{lemma}\label{lolo2}
		In the case of strong wind, if $-h$ is globally hyperbolic then
		the associated \sstk  splitting is also globally hyperbolic.
	\end{lemma}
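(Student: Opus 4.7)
The plan is to verify the characterization of global hyperbolicity given in Theorem~\ref{generalK}$(iii)$, namely that $\hat B^+_\Sigma(x_0,r_1)\cap \hat B^-_\Sigma(y_0,r_2)$ is compact for every $x_0,y_0\in M$ and $r_1,r_2>0$. The tools are the semi-Riemannian submersion $\pi\colon(\R\times M,g)\to(M,h/\Lambda)$ (recall Lemma~\ref{lightgeo} and Proposition~\ref{gh}) and the limit curve Lemma~\ref{limitcurve}. Since $\Lambda<0$ everywhere, $h/\Lambda$ is Lorentzian and conformally equivalent to $-h$ with factor $-1/\Lambda>0$; global hyperbolicity being conformally invariant, $(M,h/\Lambda)$ is globally hyperbolic if and only if $(M,-h)$ is.

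The first step is to show that $\pi$ sends future-pointing $g$-causal curves to future-pointing $(h/\Lambda)$-causal curves. Given $V\in T_{(t,x)}(\R\times M)$, decompose $V=V^{\mathrm{ver}}+V^{\mathrm{hor}}$ with $V^{\mathrm{ver}}\in\R\partial_t$ and $V^{\mathrm{hor}}$ $g$-orthogonal to $\partial_t$. Because $g(\partial_t,\partial_t)=-\Lambda>0$ one has $g(V^{\mathrm{ver}},V^{\mathrm{ver}})\geq 0$, so $g(V,V)\leq 0$ forces $g(V^{\mathrm{hor}},V^{\mathrm{hor}})\leq 0$; the submersion identity then gives $(h/\Lambda)(d\pi V,d\pi V)=g(V^{\mathrm{hor}},V^{\mathrm{hor}})\leq 0$. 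Consistency of time-orientations follows from Remark~\ref{htimeoriented}: the future-pointing $(-h)$-causal vectors are precisely the elements of $A_E$, i.e.\ the $\pi$-projections of future-pointing $g$-causal vectors.

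Fix now $x_0,y_0\in M$ and $r_1,r_2>0$, and let $\{p_n\}\subset \hat B^+_\Sigma(x_0,r_1)\cap \hat B^-_\Sigma(y_0,r_2)$. By Proposition~\ref{bolas2}, for each $n$ there exists a future-pointing $g$-causal curve $\gamma_n\colon[0,r_1+r_2]\to\R\times M$, parametrized by $t$, joining $(0,x_0)$ to $(r_1+r_2,y_0)$ and crossing $S_{r_1}$ at $(r_1,p_n)$. For each $t_0\in(0,r_1+r_2)$ the projection of $\gamma_n\cap S_{t_0}$ to $M$ lies in $J^+_{-h}(x_0)\cap J^-_{-h}(y_0)$, which is compact by global hyperbolicity of $(M,-h)$. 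Hence the slice intersections of the $\gamma_n$ stay in a common compact subset of $S_{t_0}$, so the hypothesis of Lemma~\ref{limitcurve}$(ii)$ is satisfied: applying it (to inextendible extensions of the $\gamma_n$, together with $z_n\equiv(0,x_0)$, $w_n\equiv(r_1+r_2,y_0)$) provides a causal limit curve $\gamma$ from $(0,x_0)$ to $(r_1+r_2,y_0)$, and part $(i)$ of the same lemma yields a subsequence with $\gamma_{n_k}(r_1)\to\gamma(r_1)=:(r_1,p^\ast)$. Proposition~\ref{bolas2} applied to $\gamma$ then gives $p^\ast\in \hat B^+_\Sigma(x_0,r_1)\cap \hat B^-_\Sigma(y_0,r_2)$, proving compactness and hence global hyperbolicity of $(\R\times M,g)$ via Theorem~\ref{generalK}$(iii)$. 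The main technical point is the preservation of causality under $\pi$, which crucially exploits that $\partial_t$ is spacelike throughout $M_l$; it is precisely what allows the compactness of the causal diamonds of $(M,-h)$ to be imported to the spacetime, despite Remark~\ref{rolo} showing that the converse implication fails in general.
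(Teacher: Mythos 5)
Your proof is correct, but it follows a genuinely different route from the paper's. The paper's argument is a one-liner: since $\pi\colon(\R\times M,g)\to(M,h/\Lambda)$ is a semi-Riemannian submersion with spacelike fibres, the lift $\pi^{-1}(S)=\R\times S$ of any Cauchy hypersurface $S$ of $(M,h/\Lambda)$ is a Cauchy hypersurface of the \sstk splitting, and global hyperbolicity follows from the existence of a Cauchy hypersurface. You instead verify criterion \emph{(iii2)} of Theorem~\ref{generalK} directly: you project the connecting causal curves to future-pointing $(-h)$-causal curves (your vertical/horizontal decomposition, which is exactly where the strong-wind hypothesis $g(\partial_t,\partial_t)=-\Lambda>0$ enters, and which also guarantees $d\pi V\neq 0$, is the same mechanism that makes the paper's lifted hypersurface work), trap the slice intersections inside the compact causal diamonds $J^+_{-h}(x_0)\cap J^-_{-h}(y_0)$, and close with Lemma~\ref{limitcurve} and Proposition~\ref{bolas2}. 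Your version only needs compactness of the causal diamonds of $(M,-h)$ and sidesteps the nontrivial point hidden in the paper's ``one can easily check'', namely that the projection of an inextendible $g$-causal curve is an inextendible $(h/\Lambda)$-causal curve (which requires an imprisonment argument in the strongly causal manifold $(M,-h)$, since the projected curve has $F_l(\dot x)\geq 1$ and hence locally infinite $g_0$-length towards any would-be endpoint). The cost is length: your argument essentially re-runs the proof of $(iii3)\Rightarrow(iii1)$ of Theorem~\ref{generalK} with the compactness input supplied by the global hyperbolicity of $(M,-h)$ instead of by the closed balls. Both proofs are valid; the paper's is shorter, yours is more self-contained.
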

	\begin{proof}
		Since $\pi:(\R\times M,g)\rightarrow (M,\frac{1}{\Lambda}h)$ is a
		Lorentzian submersion,   one can easily check that  a  lift of any Cauchy hypersurface on
		$(M,\frac{1}{\Lambda}h)$ is also a Cauchy hypersurface of
		$(\R\times M,g)$.
	\end{proof}
	\noindent  The previous lemmas yield a nice result on the existence of maximizing geodesics. 
	\begin{thm}\label{maxZermelo}
		Let $(M,\Sigma)$ be a wind Riemannian structure with strong wind
		such that the Lorentzian metric $-h$ is globally hyperbolic. For
		any $x_0\in M$, if  $y_0\in J_\Sigma^+(x_0)\setminus \{x_0\}$  then there exists a
		global maximum on $C^\Sigma_{x_0,y_0}$ of the length functional
		$\ell_{F_l}$.
	\end{thm}
	\begin{proof}
		Apply Theorem~\ref{compactcase} by taking into account that  Lemma
		\ref{lolo2} ensures w-convexity  and Lemma~\ref{lolo} ensures that
		the hypothesis in the last part of that theorem is fulfilled.
	\end{proof}

	Finally, let us give an application to the existence of closed geodesics\footnote{As in the classical Riemannian case, {\em closed} is understood here in the sense of  smooth {\em periodic}. In the Lorentzian case, 
		closed non-periodic geodesics can exist (they are necessarily lightlike and incomplete) but, clearly, this possibility cannot occur for wind Riemannian structures.}.  
	The differences between the causal properties of the Lorentzian metric $-h$ on all the manifold $M$, and the metric $g$ of the associated \sstk splitting, were stressed in Example~\ref{rolo}.   They are exploited now to prove the following result.
	\begin{thm}\label{prop:closedgeo}
		Let $(M,\Sigma)$ be a wind Riemannian structure with strong wind. If $M$ is compact then $(M,\Sigma)$ admits a closed 
		(non-constant)  geodesic.
	\end{thm}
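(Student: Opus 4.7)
The plan is to produce the closed geodesic as a minimizer of the $F$-length functional over the set of wind loops of $(M,\Sigma)$, by transferring the question to the associated \sstk splitting and exploiting its causal structure. By Theorem~\ref{tfermatSSTK} realize $\Sigma$ as the Fermat structure of an \sstk splitting $(\R\times M,g)$. Since $M$ is compact, every c-ball $\hat B^\pm_\Sigma(x,r)$ is automatically a closed subset of $M$ and hence compact; Theorem~\ref{generalK}-(iii) then gives the global hyperbolicity of $(\R\times M,g)$, and Theorem~\ref{generalK}-(ii) gives the w-convexity of $(M,\Sigma)$. The strong-wind hypothesis means $M=M_l$ and $\MC=\emptyset$, so by Corollary~\ref{hcases} $-h$ is a genuine Lorentzian metric on the compact manifold $M$. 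Any compact Lorentzian manifold is non-chronological: a finite subcover $\{I^+_{-h}(p_1),\ldots,I^+_{-h}(p_k)\}$ of $\{I^+_{-h}(p)\}_{p\in M}$ yields, by chasing the relations $p_i\in I^+_{-h}(p_{j(i)})$ until indices repeat, a closed timelike curve for $-h$, and Proposition~\ref{esalva} translates this into an $F$-wind loop of $(M,\Sigma)$ based at some $x_0\in M$.

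Set
\[
T_\ast:=\inf\{\ell_F(\tau):\tau\text{ is a wind loop in }(M,\Sigma)\}.
\]
By the previous paragraph $T_\ast<+\infty$, and $T_\ast>0$: the strong causality of the globally hyperbolic spacetime, combined with the fact that $\partial_t$ is spacelike (as $\Lambda<0$ on $M$), rules out locally causal loops $(0,x)\leq(T,x)$ of arbitrarily small $T$, and compactness of $M$ upgrades this local bound to a uniform positive one. Choose a minimizing sequence of wind loops $\sigma_n\colon[0,T_n]\to M$, based at $x_n\in M$, with $T_n=\ell_F(\sigma_n)\searrow T_\ast$, and, up to a subsequence, $x_n\to x_\ast$. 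Lifting each $\sigma_n$ to a future-pointing causal curve in the spacetime from $(0,x_n)$ to $(T_n,x_n)$ and applying Lemma~\ref{limitcurve}-(ii), one extracts a limit causal curve from $(0,x_\ast)$ to $(T_\ast,x_\ast)$ whose projection $\sigma_\ast$ on $M$ is a wind loop at $x_\ast$ with $\ell_F(\sigma_\ast)=T_\ast$ --- the inequality $\ell_F(\sigma_\ast)\leq T_\ast$ coming from the $C^0$-lower semicontinuity of $\ell_F$, the reverse from the defining infimum.

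By the minimality of $T_\ast$ together with the openness of the chronological relation, $x_\ast\notin B^+_\Sigma(x_\ast,T_\ast)$, so $(0,x_\ast)$ and $(T_\ast,x_\ast)$ are horismotically related. Corollary~\ref{horismos} then produces a connecting lightlike pregeodesic whose $M$-projection may, up to reparametrization, be taken equal to $\sigma_\ast$, and Theorem~\ref{minimizers} forces $\sigma_\ast$ to be either a unit $F$-geodesic or a boundary geodesic of $(M,\Sigma)$ --- the exceptional case is ruled out by $\MC=\emptyset$. In either case $F(\dot\sigma_\ast)\equiv 1$, so $\sigma_\ast$ is non-constant.

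The main obstacle is to show that $\sigma_\ast$ is smoothly closed, i.e.\ $v_0:=\dot\sigma_\ast(T_\ast)=\dot\sigma_\ast(0)=:v_1$. Suppose for contradiction that the unit tangent vectors $v_0,v_1\in\{F=1\}\subset T_{x_\ast}M$ are distinct. Extending $\sigma_\ast$ by $T_\ast$-periodicity to a broken wind curve and performing a cut-the-corner at $x_\ast$: for small $\delta>0$, replace the arc with parameter in $[T_\ast-\delta,T_\ast+\delta]$ by a short wind curve from $p_\delta:=\sigma_\ast(T_\ast-\delta)$ to $q_\delta:=\sigma_\ast(\delta)$, obtained as the reparametrization of a chart-straight segment. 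Its minimal wind-parameter length is, to leading order in $\delta$, equal to $\delta\,F_{x_\ast}(v_0+v_1)$. The strict convexity of the unit ball $\{F\leq 1\}\cup\{0\}\subset T_{x_\ast}M$ --- a consequence of the strong convexity of the smooth part of the $F$-indicatrix, together with the observation that two distinct unit $F$-vectors on distinct rays of $\partial A_{x_\ast}$ cannot have their sum on the same boundary ray --- yields $F_{x_\ast}(v_0+v_1)<2$ as soon as $v_0\neq v_1$. The resulting perturbed wind loop, based at $p_\delta$, therefore has $F$-length strictly smaller than $T_\ast$, contradicting the definition of $T_\ast$ and forcing $v_0=v_1$. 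The hardest part is to ensure that the shortcut can indeed be realized as a wind curve with uniform control as $\delta\to 0$, especially in the boundary-geodesic case where $v_0$ or $v_1$ may lie in $A_E\setminus A$.
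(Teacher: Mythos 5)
Your overall strategy coincides with the paper's: pass to the associated \sstk splitting, minimize the period of causal loops over base points, extract a limit curve, identify it as a lightlike pregeodesic via horismoticity, and finally rule out a corner at the base point. Where you genuinely diverge is in the last step. The paper rounds the corner inside the Lorentzian manifold: the lifted limit curve is a \emph{broken} null geodesic, hence deformable to a causal curve with a timelike piece, which after translation by the flow of $K$ produces a loop of period strictly smaller than $T_\ast$. You instead cut the corner Finslerianly via strict convexity of the indicatrix. Your version does work, including in the boundary case you worry about: since $F(\dot\sigma_\ast)\equiv 1$, both one-sided velocities $v_0\neq v_1$ lie on the ellipsoid $\Sigma_{x_\ast}$ (on $\Sigma^+\cup\Sigma^0$), so by strict convexity their midpoint lies in the \emph{open} enclosed ball $B_{x_\ast}\subset A_{x_\ast}$, where $F<1<F_l$ strictly. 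Continuity of $F$ and $F_l$ on the open set $A$ then gives exactly the uniform control you ask for: for small $\delta$ the chart-straight segment from $p_\delta$ to $q_\delta$, affinely parametrized over $[0,2\delta]$, has velocity in a compact neighborhood of $\bigl(x_\ast,(v_0+v_1)/2\bigr)$ on which $F\leq 1-\eta$ and $F_l\geq 1+\eta$, so it is a wind curve of $F$-length at most $2\delta(1-\eta)$. The step you flag as ``the hardest part'' therefore closes without difficulty, and the two corner-cutting arguments are really the Finslerian and Lorentzian faces of the same fact.

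Two intermediate claims need repair. First, ``since $M$ is compact, every c-ball is automatically closed'' is false as stated: closedness of the c-balls is precisely w-convexity, and it is not a topological triviality even though the ambient space is compact (Remark~\ref{contained}, Fig.~\ref{eee}). What compactness of $M$ gives for free is precompactness of the open balls; Proposition~\ref{c63}(i) (via Theorem~\ref{generalK}(iv)) then upgrades this to compactness of the c-balls, whence global hyperbolicity and w-convexity — cite that instead. Second, your justification of $T_\ast>0$ is too quick: strong causality of $(\R\times M,g)$ constrains loops in the spacetime, not causal curves from $(0,x)$ to $(T,x)$ with $T>0$, and no appeal to it yields a uniform lower bound. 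The working argument (the paper's) is that a wind loop in the strong-wind region is a closed \emph{causal curve for the Lorentzian metric $-h$ on $M$}, hence cannot remain inside a causally convex $-h$-neighborhood $W_0$ of its base point; by Lemma~\ref{existenceofbolas} the c-balls $\hat B^+_\Sigma(x,r)$ with $r<\varepsilon$ and $x$ near $x_\ast$ are contained in $W_0$, so every wind loop based near $x_\ast$ has $F$-length at least $\varepsilon$. With these two points fixed, and the shortcut made explicit as above, your proof is complete and is a legitimate variant of the paper's.
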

	\begin{proof}
		Consider the associated \sstk splitting and define:
		$$T_0=\inf_{x\in M}\{T_0(x)\} \qquad \hbox{where} \qquad
		T_0(x)=\inf\{T>0: (0,x)\ll (T,x)\}.$$
		Notice that $T_0<+\infty$. In fact, any closed timelike curve $\sigma:[0,1]\rightarrow M$  for $-h$ provides an \sstk-timelike curve 
		$\tilde\sigma(s)=(\eta(s),\sigma(s)), s\in [0,1]$, $2\eta(s)=\int_0^s(F(\dot\sigma(\bar s))+F_l(\dot\sigma(\bar s)) )d\bar s$ from $(0,\sigma(0))$ to $(T=\eta(1),\sigma(0))$, so that $T_0(\sigma(0))<+\infty$.
		Notice also that,   as the associated \sstk splitting is strongly causal,  
		%\soutE{$S_0$ is acausal} 
		$T_0(x)>0$ for each $x\in M$ and, whenever    $T_0(x)<+\infty$,   $(0,x)$ and $(T_0(x),x)$ can be joined by a  lightlike geodesic $\gamma_x$
		(since the associated \sstk splitting is  globally hyperbolic,   $J^+(0,x)$ is  closed, so  $(T_0(x),x)\in J^+(0,x)\setminus I^+(0,x)$). 
		Now, consider a sequence  $\{x_n\} \subset M$ such that $T_0(x_n)\rightarrow T_0$ and, with no loss of generality, assume that $x_n\rightarrow x_0$. We claim that
		$T_0>0$  (see below). Thus, a  limit curve  $\gamma_0$ (not reduced to a point) of the corresponding sequence of curves $\{\gamma_{x_n}\}$ will exist and connect $(0,x_0)$ to $(T_0, x_0)$ (so that $T_0=T_0(x_0)$)   
		and it must be a lightlike pregeodesic too. Hence, being $\partial_t$ spacelike,  its image   cannot be  $l_{x_0}$  and  by Theorem~\ref{existenceofbolasNO},   its projection $\sigma_0=\pi\circ \gamma_0$   must be a  pregeodesic  of $(M,\Sigma)$
		with endpoints equal to $x_0$.
		In order to check that $\sigma_0$  must be {\em closed}, assume that $\sigma_0$ and $ \gamma_0$ are parametrized on $[0,1]$, extend $\sigma_0$ periodically, and extend $\gamma_0$ accordingly by making it invariant under the 
		translation $(t,x)\mapsto (t+T_0,x)$. If the velocities  $\dot\sigma_0(0), \dot\sigma_0(1)$  do not match, then the points $x_-=\sigma_0(1-\varepsilon), x_+=\sigma_0(\varepsilon)$ on $\sigma_0$ for some $0<\varepsilon<1/2$  satisfy, 
		by well known local causality properties of a Lorentzian manifold (applied to $(M,-h)$)
		$x_-\preceq x_0 \preceq x_+$. Accordingly, $\gamma_0(1-\varepsilon)\leq \gamma_0(1)\leq \gamma_0(1+\varepsilon)$. As $\gamma_0$ is a  lightlike geodesic broken at $\gamma_0(1)$, we can modify it into a causal curve $\rho$ by putting   
		a timelike segment from $\gamma_0(1-\varepsilon)$ to $ \gamma_0(1+\varepsilon)$ and making $\rho$ invariant under the translation $(t,x)\mapsto (t+T_0,x)$ too. Then,  there exists a point $\bar x\in M$ such that
		$\rho(0)=(0,\bar x)$,   $\rho(1)=(T_0,\bar x)$ and $\rho_{|[0,1]}$ is not a lightlike pregeodesic. Therefore, for $\varepsilon$ small enough $(0,\bar x)\ll (T_0-\varepsilon, \bar x)$, in   contradiction with the definition of $T_0$.
		
		\smallskip
		{\em Claim}:  $T_0>0$. 
		
		Assume, by contradiction, that  $T_0(x_n)\rightarrow 0$, $x_n\rightarrow x_0$, and  choose a neighborhood $W_0\subset M$  of $x_0$ such that $-h$ restricted to $W_0$ is causal. By Lemma~\ref{existenceofbolas}, there exists a
		new neighborhood $U_0\subset W_0$ and some $\varepsilon>0$ such that any  $\Sigma$-admissible closed curve  starting at any $y\in U_0$ and leaving $W_0$ will leave $\hat B^+_{\Sigma}(y, r )$, for each $r\in [0,\varepsilon)$. 
		Nevertheless, for large $n$ one has $x_n\in U_0$ and $T_0(x_n)<\varepsilon$. So, the projection $\sigma_n$ of the lightlike geodesic $\gamma_{x_n}$ will be a $\Sigma$-admissible loop based at $x_n$  (thus leaving $W_0$) with $F$-length  smaller than $\varepsilon$.
		As  
		%\soutE{$\ell_{F_l}(\sigma_n)\leq \ell_F(\sigma)$}  
		$\ell_{F}(\sigma_n)\leq \ell_{F_l}(\sigma_n)$,   each point of $\sigma_n$ must belong to $B^+_{\Sigma}(x_n, r)$ for some  $r\in [0,\varepsilon)$, a contradiction.
	\end{proof}
	\begin{rem}
		The closed geodesic in Theorem~\ref{prop:closedgeo} corresponds, in the associated \sstk splitting, to  a  future-pointing, lightlike geodesic  which has  closed component $x$ and connects the points $(0,x_0), (T_0, x_0)\in \R\times M$, 
		for each $x_0$ belonging to the  support of $x$ (these  geodesics are called {\em $T_0$-periodic trajectories} in \cite{Candel96, sanche99, BiJa11}).  This  extends to the case of a spacelike Killing vector field $K$ (using the only 
		topological assumption that $M$ is compact) results on the existence of at least one geometrically non-trivial,  lightlike, $T$-periodic trajectory, when $K$ is timelike,  obtained in \cite{Candel96,MasPic98}. Indeed, the proof of 
		Theorem~\ref{prop:closedgeo} is inspired by a well known result by Tipler   \cite{Tipler79}  as well as results on $T$-periodic trajectories in \cite{sanche99} --even though the reader will find quite a few  non-trivial differences.
		Notice that if the wind were not strong at some point (non-spacelike  $K$), the previous proof would fail as  $T_0$ would be equal to $0$. Nevertheless, in the case of mild wind (timelike $K$) a closed (non-constant) geodesic must exist. 
		In fact, this is known for any compact Finsler manifold \cite{LyuFet51} and, thus, for Randers ones.  \soutE{The question} \bw Whether \ew a closed geodesic must exist when $K$ is allowed to be lightlike at some point may deserve a further study.
		\bw The case when $K$ is lightlike everywhere has been recently studied in \cite{CGMS}. \ew
	\end{rem}
	
	%\begin{rem}
	To end this section, let us digress on some links to other variational problems on curves in the literature and possible prospects. 
	
	(1)  It is worth to stress that  the dynamics of Zermelo's navigation is  {\em not}  equivalent to the one of a system  defined by a Riemannian metric $g_R$ and a one-form $\omega$ on a manifold $M$ (a particular case of the so-called {\em magnetic geodesic flow}) except,  obviously,  if $\|\omega\|_{g_R}<1$ because   in this case   both problems are then described by standard Finsler geometry.\footnote{In fact, at the points $p\in M$ where $\|\omega_p\|_{g_R}>1$, the  Lagrangian   for magnetic trajectories   $L(v)=\sqrt{g_R(v,v)}+ \omega(v)$, $v\in TM$, has ``indicatrix'' $\{v\in T_pM: L(v)=1\}$ which is an unbounded hypersurface.     Observe also that the Lagrangian $F$ in the Zermelo navigation problem is formally identical to  $L$ in the regions of  mild and strong wind,  but the
		metric under the square root in $F$, namely $\tilde h$, is a signature-changing metric: specifically, it changes from
		Riemannian to Lorentzian (with signature
		$(+,-,\dots,-)$).  }   
	
	(2)  Nevertheless,   one can find  a parallelism between Zermelo navigation problem  and the problem of the existence of timelike or causal curves $\rho$ connecting two events $z_0, w_0$ in a spacetime $L$ which are critical for the action functional associated with an electromagnetic field for some prescribed charge-to-mass ratio $q/m$ (so that the timelike critical curves are solutions to the corresponding  Euler-Lagrange  equation, i.e., the Lorentz force equation for $q/m$,  see e.g. \cite{CapMin, min, MinSan06}).  These curves can be interpreted, say, in the case when $q/m>0$, 
	as those lightlike geodesics for a Kaluza-Klein spacetime $L\times \R$ 
	which locally minimize the natural arrival coordinate at $\{w_0\}\times \R$, see \cite[Theorem 4.1]{MinSan06}.  Even though the natural projection $y:L\times \R\rightarrow \R$ is not a  temporal function for the Kaluza Klein metric, the natural vector field $\partial_y$ is Killing. The similarities  between the \sstk approach for 
	Zermelo navigation and the Kaluza-Klein for electromagnetism suggests \soutE{to  pose}   the following navigation problem, whose electromagnetic analogous was solved in \cite{MinSan06}:
	
	{\em  Consider  classes of wind curves from $x_0$ to $y_0$ which are homotopic through $F$- (resp. $\Sigma$-) wind  curves, and  determine when such a class admits  an  $F$-length  minimizing (or $F_l$-length  maximizing) curve.} 
	
	Even though, in principle,  our techniques would allow one to ensure the existence of $\Sigma$-wind curves (under background hypotheses such as completeness or $w$-convexity), subtleties would appear for the existence of critical $F$-wind curves when $x_0 \prec y_0$. This  problem has a  parallelism with the existence of critical points for the electromagnetic action in timelike or causal homotopy classes, where  very precise results (which ensure the existence of timelike critical curves, not only lightlike ones) have been obtained, \cite[Theorems 4.2 and 5.1]{MinSan06}. 
	Although such techniques seem translatable here, they would require the developing of further notions on wind geodesics (such as cut points)
	and, so, this will not be studied here. 
	%\end{rem}

	\section{Fermat's principle and generalized Zermelo navigation problem.}\label{further1}
	
	\subsection{A new problem: Fermat's principle for arbitrary arrival curves}\label{Fermatprinciple1}
	In optics, Fermat's  principle  is a variational principle for light rays.  Its  formulations in general relativity, as e.g. in \cite{Kovner90, Pe90, GiMaPi02, Perlic04}, involve a prescribed event (a point $p$ on the Lorentzian manifold $L$
	modeling the spacetime, which represents the event of light   reception), the world-line
	of a  light source  (a timelike  injective curve  $\alpha\colon (a,b)\to L$), intersecting the causal past of $p$,  an {\em ``arrival time functional''} (the proper time in which the light source emitted the light signals). \bw If \ew some matter
	distribution (one or more galaxies, dark matter, etc., encoded in the spacetime metric $g$) \bw is present \ew between the source and the observer, \soutE{responsible for} \bw it can cause \ew the bending of light \soutE{and, thus,} \bw producing then the so-called {\em gravitational lensing} \ew \soutE{for the multiple images  effect}.\footnote{Some causality conditions, 
		as  global hyperbolicity or stable causality are also usually assumed for Morse theory of light rays, \bw often used in the study of gravitational lensing by variational methods, \ew compare e.g. \cite{Uhlenb75, GiMaPi98}.}  From a geometric point of view,
	this configuration is equivalent to the case to be considered here when the light source emits the signals at a given event $p$ and the timelike curve  $\alpha$, intersecting the causal future of $p$, is  the world line of an observer
	(so that, the name ``arrival time functional'' is justified).  From the properties of chronological and causal futures, it is easy to check that
	if there exists a lightlike (or causal) curve where the absolute maximum  or minimum arrival time to $\alpha$ is achieved, this curve  must be a lightlike pregeodesic. This is a consequence of the fact that  the arrival point would be 
	horismotically related to $p$ and this would hold even if $\alpha$ is not timelike. However, Fermat's principle states that the set of critical points of the arrival time is equal to the set of lightlike pregeodesics connecting $p$ and $\alpha$ 
	and the timelike character of $\alpha$ is required then.  As emphasized by V. Perlick, no external notion of time is necessary, but just the arrival   instant   $\bar t$  with respect to the proper time parametrization of $\alpha$. 
	
	Nevertheless, in an \sstk splitting, it is natural to consider the  temporal function $t$ 
	and then to ask if  Fermat's  principle holds  when considering a point and an integral line of the Killing field $\partial_t$  (we recall that   an  integral  curve  $l_{x_1}$, $x_1\in M$, of $\partial_t$  can be parametrized
	with the  temporal function  $t$ of the \sstk splitting,  so that the  arrival time functional makes still sense).  We emphasize 
	that $l_{x_1}$ {\em can be   spacelike or lightlike}  now 
	(the case when the Killing vector field is timelike is well known, see  \cite{FoGiMa95,CaJaMa})  and these possibilities have a clear interpretation in  
	Zermelo's navigation problem  since the  travel time can be identified, up to an initial constant,  with the arrival time functional of the associated \sstk splitting. Indeed, Fermat's principle  can be viewed as a 
	variational principle for a generalized Zermelo's navigation  problem, namely:  look not only for the quickest  navigation paths but also for any path  that makes  critical the time of navigation between two given points $x_0,x_1\in M$. 
	These  paths  will be the projections on $M$ of the  paths in  
	$\R\times M$  which are critical for   the arrival
	time
	functional at  $l_{x_1}$  defined on the set of the piecewise smooth future-pointing lightlike curves connecting $p_0=(t_0,x_0)$ to $l_{x_1}$.
	%However, the previous physically well-motivated 
	%problem 
	
	Summing up, {\em a Fermat's principle in SSTK spacetimes valid for lightlike curves  from $p_0$ to any (timelike, spacelike or lightlike) $l_{x_1}$  would allow an interpretation of all the geodesics of a wind Riemannian structure as critical points for the arrival functional}. However, this problem poses a   general mathematical question for Fermat's principle:  
	when one considers the critical lightlike curves  starting at a point $p_0$ and arriving at a curve $\alpha$ in an arbitrary spacetime, {\em is it necessary to impose a prescribed causal character or any other hypotheses on $\alpha$?}
	
	At first glance, two  natural  restrictions appear: (a) a connecting lightlike curve $\gamma$ that is orthogonal  to $\alpha$ at the arrival point $z_\alpha$ would not be permitted (otherwise, $\gamma$ could be a lightlike  pregeodesic  which coincided with $\alpha$ in a left neighborhood of the end-point  and, thus, $\gamma$ would not be a critical point) and (b) $\alpha$ must be an embedded curve, that is, on the one hand its velocity must not be zero (in order to avoid bothering  requirements on the variations); on the other, 
	self-intersections or cases where the induced topology on the image is coarser than the one coming from an immersion  should be excluded (in order to define properly the Fermat functional as  the value of the parameter of $\alpha$   at the arrival point). Recall that these conditions are  assumed  in  previous   results, %as, e.g.,   \cite{Pe90}  
	in fact,  the requirements  (a)  and (b), the latter at least locally,   hold  classically as $\alpha$ is assumed to be a timelike curve;  nevertheless in all the  proofs we are aware of, the   timelike character of $\alpha$ appears to be a fundamental assumption (compare \cite[Lemma 2.5]{Uhlenb75}, \cite[Lemma 3]{Pe90}, \cite[Lemma 2.1]{AntPic96}, \cite[Remark 3.3]{GiMaPi02}). 
	
	In the next subsection we will check that, amazingly, these two conditions are enough for a consistent general Fermat's principle in spacetimes  (Theorem~\ref{lema:fermatprinc}). Moreover, in the last subsection we will  prove   that the particular structure of \sstk spacetimes makes unnecessary  assumption (a)  when $\alpha$ is a line $l_{x_1} (\not\ni p_0)$, (of course, in this case, (b) is satisfied) and,  even more,  sharper conclusions can be obtained  (Theorem~\ref{fp}).
	\subsection{A general Fermat's Principle}\label{Fermatprinciple2}
	Our aim is to  establish  a  Fermat's principle between a point $p_0$ and an arbitrary smooth embedded  curve $\alpha$ for an arbitrary spacetime $(L,g)$. %%%

	Given a vector field $\xi =\xi(s)$ along a curve $\gamma$, we will denote by $\xi'$  its covariant derivative $\frac{D\xi}{d s}$ with respect to the Levi-Civita connection of $g$.  Let us introduce the variational approach for Fermat's principle.
	
	\begin{defi}\label{def:var_fer}
		Let $(L,g)$ be any spacetime,  $\alpha:(\bar{a},\bar{b})\rightarrow L$ a smooth embedded curve in $L$, $p_0\in L$. Fix an interval $[a,b]$ (eventually normalized to $[0,1]$), and put 
		\begin{multline} \label{mline}
			\mathcal N_{p_0, \alpha}=\{ \gamma:[a,b]\rightarrow L: \text{ $\gamma$ piecewise smooth,  future-pointing  lightlike,}\\ \text{ and $\gamma(a)=p_0,\ \gamma(b)\in {\rm Im} (\alpha)$}\}
		\end{multline}
		%%% (notice that, as $\gamma$ is piecewise smooth, one could find that $\gamma$ is future-pointing in some pieces and past-pointing in other ones). 
		
		(i)  The {\em arrival  functional} $T: \mathcal N_{p_0, \alpha}\rightarrow \R$ is defined as 
		\begin{equation}
			\label{earrivalf} T(\gamma)=\alpha^{-1}(\gamma(b))
		\end{equation} 
		
		(ii) An {\em admissible variation} $\chi$  
		%(or $\gamma_w$) 
		of $\gamma\in \mathcal N_{p_0, \alpha}$ is
		a  $C^1$   map $\chi \colon (-\varepsilon,\varepsilon)\times [a,b]\to L$ which has,  at least,  continuous second order mixed derivatives $\frac{\partial^2\chi}{\partial w\partial s},\ \frac{\partial^2 \chi}{\partial s\partial w}$  on $(-\varepsilon, \varepsilon)\times [s_{j}, s_{j+1}]$ (so that, there, $\frac{D}{d w}\frac{\partial \chi}{\partial s}=\frac{D}{d s}\frac{\partial \chi}{\partial w}$),  where
		$a=s_1<s_2<\ldots <s_n=b$ is a subdivision of the interval $[a,b]$, and
		such that $\gamma_w:=  \chi_w= \chi (w,\cdot)\in  \mathcal N_{p_0,\alpha}$, for all $w\in (-\varepsilon,\varepsilon)$, and $\chi_0=\gamma$. 
		The  {\em variational vector field} $Z$  associated with an admissible variation is     
		$Z(s)=\frac{\partial \chi}{\partial w}(0,s)$. 
		
		(iii)  A curve $z\in \mathcal N_{p_0, \alpha}$  is a {\em critical point of the arrival functional} if, for every  variation  $\gamma_w$, we have that $\frac{\de}{\de w}T(\gamma_w)|_{w=0}=0$.
	\end{defi}
	A basic fact in the proof of the next Lemma is that  any $m$-dimensional spacetime can be locally described 
	as a  product manifold $(c,d)\times \Omega$, where  $\Omega$ is   an open,  $m-1$ dimensional,  smooth manifold  
	%(eventually, one can think in  $\Omega$ as an open subset of some manifold $M$), 
	endowed with the metric
	\begin{equation}\label{one}
		g((\tau,v),(\tau,v))=-\Lambda^t(x)\tau^2+2 \omega^t(v)\tau+g_0^t(v,v)
	\end{equation}
	where $(\tau,v)\in T_{(t,x)}((c,d)\times \Omega)\equiv \R\times T_x\Omega$ and for every $t\in (c,d)$, $\Lambda^t$, $\omega^t$ and $g^t$ are respectively a function, a one-form and a Riemannian metric  in $\Omega$ (under the analogous to \eqref{lorentzian}).   Observe that the coordinate $t\in(c,d)$   also determines a  temporal  function of $((c,d)\times \Omega,g)$ as  for an \sstk spacetime.  Thus, 
	this time-orientation and a notation consistent with the previously introduced  one  for the \sstk case
	will be used;   for example,  the line $l_{x_1}=\{ (s,x_1): s\in (c,d)\} $  for every $x_1\in \Omega$.
	In particular, a   vector field $Z$  along  a curve $\gamma$ with image in  $(c,d)\times \Omega  $  will be denoted by $Z(s)=(Y(s),W(s))$.  
	
	\begin{lemma}
		\label{claim}
		Let $(L,g)$  be a spacetime, $p_0\in L$ and 
		$\alpha:(\bar a,\bar b)\rightarrow L$ be any smooth,  embedded  curve in $L$. 
		Assume that   $\gamma: [a,b]\rightarrow L$ 
		%($\gamma=\gamma(s)$)  
		is a piecewise smooth  future-pointing  lightlike curve from $p_0$ to  ${\rm Im}(\alpha)$, such that $\dot \gamma(b)$ is not  orthogonal  to $ \dot\alpha(T(\gamma))$. Then there exists a partition $s_0=a<s_1<\ldots<s_n=b$ of the interval $[a,b]$ and $n$  
		open subsets $U_1,U_2,\ldots, U_n\subset L$   such that $\gamma$ is smooth 
		%%% (and, thus, either future- or past-pointing)  
		in $[s_j,s_{j+1}]$ with $\gamma([s_j,s_{j+1}])\subset U_{i+1}$ for $i=0,\ldots,n-1$.  Moreover,	each $U_j$ is of the type $(-\varepsilon_j, \varepsilon_j)\times \Omega_j$ and the metric $g$ is   written as in  \eqref{one}  with   $\partial_t$  nowhere orthogonal to $\gamma$,  and $\alpha$  an integral curve for $\partial_t$ or $-\partial_t$ in $U_n$.  %such that  $\mathcal U\cap \mathrm{Im}(l_{x_1})$   is equal to $\mathcal U\cap \mathrm{Im}(\alpha)$.\eb 
	\end{lemma}
	\begin{proof}
		As a first step, recall that   there exists a vector field $X$ defined on all $M$ 
		%\footnote{If you prefer, a neighborhood of Im$(\gamma )$} 
		such that $X$ is not orthogonal (thus, neither tangent)  to $\gamma$ at any point and $\alpha$ is the integral curve of $X$ on a neighborhood $W$ of $q=\gamma(b)=\alpha(T(\gamma))$. 
		Indeed, take any future-pointing timelike vector field $X_1$ on $M$,  which is necessarily  non-orthogonal to $\dot\gamma$, as the latter is lightlike,  and a second vector field $X_2$ on a neighborhood $W$ of $q$, so that $\alpha$ is the integral curve  of $X_2$ through $q$  (this can be done for example by considering adapted coordinates to $\alpha$).  Assume that $g(\dot\gamma(b),X_2)<0$ (otherwise, replace $X_2$ with $-X_2$) and reduce the neighborhood $W$ where $X_2$ is defined in such a way that $g(\dot\gamma(s),X_2)<0$ whenever $\gamma(s)\in W$.   Then, choose a bump 
		function $\mu$ with support 
		in $W$ and $\mu=1$ on a smaller neighborhood $W'$ of $q$, and 
		put $X=(1-\mu)X_1+\mu X_2$ 
		on all $M$. 
		
		For every $s\in [a,b]$, choose a spacelike hypersurface $\Omega_s$ transverse to $X$ and containing $\gamma(s)$ with $X$ and $\dot\gamma(s)$ in the same side.  Moving $\Omega_s$ with the flow of 
		$X$ one obtains a 
		neighborhood $U_s$ of $
		\gamma(s)$. Now, taking a Lebesgue number for the covering $U_s$ of ${\rm Im}(\gamma)$  (using an auxiliary Riemannian metric in $L$) one obtains the required sequence $a=s_0<\dots <s_n=b$.  Finally, we can add the possible breaks of $\gamma$ to the subset  $s_0,s_1,\ldots,s_n$, dividing every subinterval $[s_j,s_{j+1}]$ in a finite number of intervals and considering the subset $U_{j+1}$ in all of them. 
	\end{proof}
	\begin{lemma}\label{lem:exist}
		Let $(L,g)$, $p_0$ and $\alpha$ as in Lemma~\ref{claim}. 
		Let  $Z$  be  a piecewise smooth  vector field along $\gamma$ with $Z(a)=0$ and $Z(b)$ proportional to $\dot\alpha(T(\gamma))$.   Then $Z$ is the variational vector field of a  variation    by lightlike curves from $p_0$ to ${\rm Im}(\alpha)$ if and only if $g(\dot \gamma,Z')=0$.
	\end{lemma}
	\begin{proof} 
		The implication to the right follows observing that  
		$g(\dot\gamma_w,\dot\gamma_w)=0$, for all $w$,  and  then differentiating both sides of this equality w.r.t. $w$, using that $\frac{D}{d w}\frac{\partial \chi}{\partial s}=\frac{D}{d s}\frac{\partial \chi}{\partial w}$ and evaluating in $w=0$,  we get that $g(\dot \gamma,Z')=0$.  
		For the converse,  let  us make some previous considerations for the case  of a smooth curve $\gamma:[ a_1, b_1]\rightarrow L$ contained in one of  the  local splittings $(c,d)\times  \Omega $ in  Lemma~\ref{claim}. 
		Setting then $\gamma=(\theta,\sigma)$,    $Z=(Y,W)$   and given a smooth curve $x:[a_1, b_1]\rightarrow \Omega$,  
		one has  that 
		$(t,x):[a_1, b_1]\rightarrow (c,d)\times  \Omega $ is a lightlike curve if and only if 
		\[-\Lambda^t(x) \dot t^2+2\omega^t(\dot x) \dot t+g_0^t(\dot x,\dot x)=0.\]
		It follows that  $x$  can be lifted  to a   smooth   lightlike curve in  $((c,d)\times \Omega,g)$  whenever  one of the differential equations  
		\begin{equation}\label{diffeq}
			\dot t=\frac{g_0^t(\dot x,\dot x)}{- \omega^t(\dot x)\pm  \sqrt{\omega^t(\dot x)^2+\Lambda^t(x) g^t_0(\dot x,\dot x)}}
		\end{equation}
		has a solution in $[a_1,  b_1]$ which takes values in $(c,d)$.  Observe that,  as   the initial curve 
		$\gamma(s)=(\theta(s),\sigma (s))$ is  %%%, say, 
		future-pointing, i.e., $\dot{\theta}>0$, 
		%the expression in the right of \eqref{diffeq} has to be positive. In particular, if   $\omega^{t(s)}(\dot x(s))+\Lambda^{t(s)}(x(s)) g^{t(s)}_0(\dot x(s),\dot x(s))\not=0$, for all $s\in[a,b]$   then \eqref{diffeq} has a unique solution for given initial conditions and the solutions have a smooth dependence of the initial conditions. 
		and nowhere  orthogonal to $\partial_t$, i.e., $\omega^{\theta(s)}(\dot{\sigma}(s))-\Lambda^{\theta(s)}(\sigma(s)) \dot{\theta}(s)\not=0$, for all $s\in [a_1, b_1]$,  
		the same  properties hold for nearby curves
		and we will assume them for the curve constructed from  $x(s)$ and the corresponding $t(s)$ whenever $x(s)$ is a longitudinal curve for some variation of $\sigma$.  
		In particular,  whenever $\Lambda^t$ vanishes,  $\dot x$ cannot vanish and, moreover, \eqref{diffeq} holds with  $\omega^t(\dot x)<0$ and the choice of sign $+$ in the denominator.
		Recall that neither $\dot x$ can vanish when $\Lambda^t \neq 0$ 
		(otherwise $\partial_t$ would be non-null but proportional to the velocity  $(\dot t, \dot x)$ of the lifted lightlike curve).
		%(use $\dot t>0$ and  that   at every tangent space Remark~\ref{raclarations} is applicable).   %a formula as the first one in \eqref{tau} applies). 
		In conclusion, 
		the  right-hand side of \eqref{diffeq} is \bw well-defined \ew  and smooth on $[a_1, b_1]$,  
		so that the lift can be carried out on all $[a_1, b_1]$ and its $t$-component remains $C^1$ close to  the  $t$-component of $\gamma$ if $x$ is $C^1$ close enough to $\sigma$ (in particular,  the $t$-component  is contained in $(c,d)$).
		%Thus, standard arguments about continuous dependence of solutions on data ensure that \eqref{diffeq} has a solution defined on the whole interval $[\bar a, \bar b]$ and nearby $t=t(s)$ in the $C^1$ topology for each longitudinal curve, %$x_w$, 
		%(in particular,  the $t$-component  is contained in $(c,d)$).  
		%which is sufficiently close to  $x$ in the $C^1$ topology.  
		
		Consider now  a piecewise smooth vector field $Z$ along $\gamma$ such that $g(Z',\dot\gamma)=0$.  Assume that $\gamma$   has  breaks at most  
		at $a=s_0<s_1<s_2<\ldots< s_n =b$ and   
		let us consider $n$ open subsets $U_j$, $j\in\{1,\ldots,n\}$ as in Lemma~\ref{claim}.  Let us denote     $\gamma|_{[a,s_1]}$ as $(\theta^{(1)},\sigma^{(1)})$ on $U_1$ and write consistently $Z=(Y_1,W_1)$ along $[a,s_1]$. Consider  a variation  $\bar\chi_1:(-\varepsilon,\varepsilon)\times [a,s_1] \rightarrow \Omega_1$ of $\sigma$  with variational vector field $W_1$  and fixed initial point (the latter can be imposed  as necessarily $W_1(a)=0$). 
		Thus, up to reducing $\varepsilon$,  $\bar\chi_1:  (-\varepsilon,\varepsilon)\times [a,s_1]\rightarrow \Omega_1$  can be  lifted  to  a  (unique)  
		variation $\tilde{\chi}_1:(-\varepsilon,\varepsilon)\times [a,s_1]\rightarrow  (c,d) \times \Omega_1$  by  lightlike curves in $[a,s_1]$  with fixed initial point 
		$p_0$,  and which has  $\tilde Z{_1}=(\tilde {Y}_1,W_1)$ as variational vector field.  The fact that this variation is given by lightlike curves 
		departing from $p_0$ implies that 
		\begin{equation}\label{diffdeeta}
			g( \tilde{Z}_1'  ,\dot \gamma)=0   \text{  with } \;  \tilde{Z_1}(a)=0, \qquad  \mbox{on} \; 
			[a,s_1].
		\end{equation}
		In particular, as $W_1$ was prescribed, the function $\tilde{Y}_1$ is determined by the differential equation  \eqref{diffdeeta}.  In fact, denoting the components of $\tilde Z'_1$ by $(\tilde Y'_1,  W'_1)$, this is the equation
		\begin{equation}
			\big(-\Lambda(\sigma^{(1)})\dot\theta^{(1)}+\omega^{\theta^{(1)}}(\dot \sigma^{(1)})\big)\tilde Y'_1+ \dot\theta^{(1)}\omega^{\theta^{(1)}}( W'_1)+g_0^{\theta^{(1)}}(\dot \sigma^{(1)}, W'_1)=0.\label{diffdeeta2}
		\end{equation}
		%Observe that $\tilde Y'_1=\frac{d\tilde Y_1}{ds}+\Gamma^1_{11}\tilde Y_1\dot\theta^{(1)}+2\Gamma^1_{1k}\tilde Y_1\dot \sigma^{(1)k}+\Gamma^1_{hk}W^{h}_{1} \dot \sigma^{(1)k}$, for $h,k$ running from $2$ to $m=\dim L$, and  
		%$( W'_1)^{i}=\frac{d W^{i}_{1}}{ds}+\Gamma^i_{11}\tilde Y_1\dot\theta^{(1)}+2\Gamma^i_{1k}\tilde Y_1\dot \sigma^{(1)k}+\Gamma^i_{hk}W^{h}_{1}\dot \sigma^{(1)k}$, for $i=2,\ldots m$ and $h,k$ running from $2$ to $m$, where $W^{k}_{1}$,  $\dot\sigma^{(1)k}$, $k=2,\ldots, m$, and $\Gamma^{\beta}_{\delta\rho}$, $\beta,\delta,\rho=1,\ldots, m$ denote respectively the components of $W_1$, $\dot\sigma^{(1)}$ on $\Omega_1$ and the Christoffel symbols of the Levi-Civita connection of $g$ in $U_1$. 
		As $-\Lambda(\sigma^{(1)})\dot\theta^{(1)}+\omega^{\theta^{(1)}}(\dot \sigma^{(1)})=g(\dot\gamma, \partial_t)|_{[a,s_1]}\neq 0$, for all $s\in [a, s_1]$  and taking into account the expression of the covariant derivative $\tilde{Z}_1'=(\tilde{Y}'_1,W'_1)$,  \eqref{diffdeeta2} can be put in normal form.  As $Y_1$ is also a solution of \eqref{diffdeeta},  we conclude that $\tilde{Y}_1=Y_1$  on  $[a,s_1]$,  and therefore, $Z|_{[a,s_1]}$ is the variational vector of a variation by lightlike curves, as required. 
		Finally, proceed inductively by considering analogously
		$\bar \chi_i:(-\varepsilon,\varepsilon)\times [s_{i-1},s_i]\rightarrow \Omega_i$ with $\bar \chi_i(w,s_i)=\bar \chi_{i-1}(w,s_i)$ and, when $i=n$, with fixed endpoint at $ \sigma(b)$. Recall that these variations are lifted to variations $\tilde{\chi}_i:(-\varepsilon,\varepsilon)\times[s_{i-1},s_i]\rightarrow (c,d)
		\times \Omega_i$ by lightlike curves which match  in the required  one   $\chi:(-\varepsilon,\varepsilon)\times[a,b]\rightarrow L$  %%%
		after $n$ steps, as $\tilde{\chi}_i(w,s_i)=\tilde{\chi}_{i-1}(w,s_i)$ for $w\in(-\varepsilon,\varepsilon)$.
	\end{proof}
	
	%\begin{rem} 
	%{\em Proof of the claim.}
	%\label{rprevio}
	Now, we are ready to give the  extension of Fermat's principle; our approach here  has been  inspired by \cite{AntPic96}.  
	\begin{thm}[Generalized Fermat's principle] \label{lema:fermatprinc}
		Let $(L,g)$ be any spacetime 
		%%%Lorentzian manifold 
		and $\alpha:(\bar a,\bar b)\rightarrow L$ a smooth embedded  curve. Assume that $\gamma:[a,b]\rightarrow L$ is a piecewise smooth  future-pointing  %%%
		lightlike curve from $p_0$ to  ${\rm Im}(\alpha)$, such that $\dot \gamma(b)$ is not  orthogonal  to $\alpha$.  
		Then, $\gamma:[a,b]\rightarrow L$   is a critical point of the  arrival functional  $T$  if and only if it is a pregeodesic.
	\end{thm}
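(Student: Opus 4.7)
The plan is to identify the critical points of $T$ via an integration-by-parts identity along $\gamma$, translating criticality into an algebraic statement on the acceleration $\gamma''$, and then to use Lemma~\ref{lem:exist} to realize the vector-field data as honest variations. First, I would observe that for any variation $\chi_w$ of $\gamma$ by future-pointing lightlike curves from $p_0$ to $\mathrm{Im}(\alpha)$, differentiating $\chi_w(b)=\alpha(T(\chi_w))$ at $w=0$ gives $Z(b)=T'(0)\,\dot\alpha$, so $T'(0)$ is precisely the scalar $c$ defined by $Z(b)=c\,\dot\alpha$. By Lemma~\ref{lem:exist}, the admissible $Z$ are exactly those with $Z(a)=0$, $Z(b)=c\dot\alpha$ for some $c\in\R$, and $g(Z',\dot\gamma)=0$ on each smooth piece of $\gamma$. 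Thus $\gamma$ is critical if and only if every such $Z$ has $c=0$.

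Next, on each smooth piece the constraint yields $(g(Z,\dot\gamma))'=g(Z',\dot\gamma)+g(Z,\gamma'')=g(Z,\gamma'')$; summing over the pieces of a break partition $a=s_0<\cdots<s_n=b$ and using $Z(a)=0$ gives the fundamental identity
$$c\,g(\dot\alpha,\dot\gamma(b))=g(Z(b),\dot\gamma(b))=\int_a^b g(Z,\gamma'')\,\de s+\sum_{j=1}^{n-1} g\bigl(Z(s_j),\,\dot\gamma(s_j^+)-\dot\gamma(s_j^-)\bigr).$$
Since $g(\dot\alpha,\dot\gamma(b))\neq 0$ by the non-orthogonality hypothesis, criticality of $\gamma$ amounts to the vanishing of the right-hand side for every admissible $Z$.

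The forward implication is then quick: if $\gamma$ is a pregeodesic, it is smooth (no corner contributions) and $\gamma''=\mu\,\dot\gamma$ for some smooth $\mu$, so $f(s):=g(Z(s),\dot\gamma(s))$ solves the linear ODE $f'=\mu f$ with $f(a)=0$, whence $f\equiv 0$ and $c=0$. For the converse I would assume $\gamma$ is not a pregeodesic and construct an admissible $Z$ with $c\neq 0$. Along each smooth piece introduce a null frame $\{\dot\gamma,N,e_1,\ldots,e_{m-1}\}$ with $g(\dot\gamma,N)=1$, $g(N,N)=0$, $g(e_i,e_j)=\delta_{ij}$, $g(e_i,\dot\gamma)=g(e_i,N)=0$, and decompose $Z=\xi\dot\gamma+\eta N+\sum\zeta^i e_i$. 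The constraint $g(Z',\dot\gamma)=0$ then becomes the scalar first-order linear ODE
$$\eta'+g(N',\dot\gamma)\,\eta=-\sum_i\zeta^i\,g(e_i',\dot\gamma),$$
whose driving coefficients $g(e_i',\dot\gamma)=-g(e_i,\gamma'')$ vanish identically iff $\gamma''\in\langle\dot\gamma\rangle$. If $\gamma$ is smoothly non-pregeodesic on some piece, at least one such coefficient is non-zero on an open subinterval; by localising a perturbation of $\zeta^{i_0}(\cdot)$ there while imposing the endpoint conditions $\xi(b)=c\,g(\dot\alpha,N(b))$, $\zeta^i(b)=c\,g(\dot\alpha,e_i(b))$, and the induced $\eta(b)=c\,g(\dot\alpha,\dot\gamma(b))$, I can produce admissible data with $c\neq 0$. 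If instead $\gamma$ has a break at some $s_j$ with $\dot\gamma(s_j^+)\neq\dot\gamma(s_j^-)$, the corresponding corner term in the identity can be made non-zero by choosing $Z(s_j)$ not orthogonal to $\dot\gamma(s_j^+)-\dot\gamma(s_j^-)$, yielding the same conclusion.

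The main obstacle will be this explicit construction in the converse direction: the component $\eta=g(Z,\dot\gamma)$ is not freely prescribed but is determined by the ODE, so matching the required value $\eta(b)=c\,g(\dot\alpha,\dot\gamma(b))$ while simultaneously honouring the endpoint constraints on $\zeta^i$ forces the affine map $\zeta^i(\cdot)\mapsto\eta(b)$ to have a non-trivial linear part. This is exactly where the non-pregeodesic character of $\gamma$ (or a non-trivial corner jump) is used, and once the algebraic data are in place, Lemma~\ref{lem:exist} lifts them to an honest variation by lightlike curves, completing the argument.
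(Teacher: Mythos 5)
Your argument is correct and follows essentially the same route as the paper: both reduce criticality, via Lemma~\ref{lem:exist}, to the constraint $g(Z',\dot\gamma)=0$ with the stated endpoint conditions, and then extract the pregeodesic equation from a first-variation/integration-by-parts identity (the paper parametrizes the admissible fields as $Z_W=W+f_W U$ for an auxiliary nowhere-orthogonal field $U$, while you use a null frame with $\eta=g(Z,\dot\gamma)$ slaved to the ODE --- the same computation in different clothing). The only point to tighten is the corner case: when $\dot\gamma(s_j^+)=\lambda\dot\gamma(s_j^-)$ with $\lambda>0$ the jump term equals $(\lambda-1)\eta(s_j^-)$ and cannot be made non-zero by choosing $Z(s_j)$ (consistently, $\gamma$ is then still a pregeodesic), so your break argument should be restricted to genuinely non-proportional one-sided velocities, keeping in mind that only the components of $Z(s_j)$ other than $\eta(s_j^-)$ are freely prescribable.
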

	\begin{proof}
		By definition,  $\gamma$ is a critical point of  $T$ if and only if  $\frac{d}{dw}T(\gamma_w)_{|w=0}=0  $   for every    admissible variation   $\gamma_w$.    
		%before Lemma~\ref{lem:exist}). 
		By  Lemma~\ref{lem:exist}, this is equivalent to  $Z(b)=0$ for any  variational vector field $Z$ along $\gamma$   since 
		\[Z(b)= \frac{d}{dw} \gamma_w(b)\mid_{w=0}=\frac{d}{dw}\alpha(T(\gamma_w))\mid_{w=0}  =
		\frac{d}{dw} T(\gamma_w)\mid_{w=0}  \cdot \dot\alpha(T(\gamma)).\]  % This last property and the fact that the  tangent spaces to the slices  $t={\rm const.}$ are orthogonal to    $\nabla t$, imply that  $z$  is a critical point of $T$ if and only if $\xi(b)=0$ for any  admissible  $\xi$.  
		Let  $U$ be a vector field along $\gamma$ which is not orthogonal to $\gamma$  at each point  $\gamma(s)$ and it is proportional to $\dot\alpha(T(\gamma))$ at $s=b$  (recall the  proof of Lemma~\ref{claim}). 
		%Such a vector always exists and it can be easily obtained with a parallel frame along $z$.  
		Observe that, given  any  vector field  $W$ along $\gamma$ with $W(a)=W(b)=0$, we can obtain a vector field 
		corresponding to an admissible variation as
		\begin{equation}\label{xiW}
			Z_W(s)=W(s)+f_W(s) U(s),
		\end{equation}
		where 
		\begin{equation}\label{efw} f_W(s)=-  e^{-\rho(s)}\int_a^s \frac{g(W',\dot \gamma)}{g(U,\dot \gamma)} e^\rho d\mu \quad \hbox{and} \quad \rho(s)=\int_a^s \frac{g(U',\dot \gamma )}{g(U,\dot \gamma)} d\mu \, ,\quad \forall s\in [a,b].\end{equation} 
		%for $s\in [a,b]$.
		Moreover, all the admissible vector fields can be obtained in this way. Indeed, assume that $Z$ is admissible and consider $W(s)=Z(s)- \left(c(s-a)/(b-a)\right)  U$, where $c$ is the constant that satisfies $Z(b)=c U(b)$. Now observe that the difference  $Z_W(s)-Z(s)=(f_W(s) -  c(s-a)/(b-a))U$ is also admissible, but it has to be zero.  The reason is that any  admissible vector field $p\cdot U$, with $p:[a,b]\rightarrow \R$ a smooth function such that $p(a)=0$, has to be zero, since it  must satisfy 
		\[\dot p \cdot g(U,\dot \gamma)+p\cdot g(U',\dot \gamma)=0 \]
		and $g(U,\dot \gamma)$ cannot vanish. So,  $Z=Z_W$ follows,  and $\gamma$ will be  a critical point if and only if $Z_W(b)=0$ for every $W$ as above. This is equivalent to $f_W(b)=0$ or, from the explicit formula \eqref{efw}, 
		\begin{equation}\label{firstEulang}
			\int_a^b \frac{g(W',\dot \gamma)}{g(U,\dot \gamma)} e^\rho d\mu=0.
		\end{equation}
		Now, if $\gamma$ is a critical point, we can choose 
		$W$ such that  $W(s_i)=0$ in all the breaks. Applying integration by parts, 
		\begin{equation}\label{firstEulang2}
			\int_a^b g(W, (\varphi\dot \gamma)') d\mu=0\end{equation}
		where $\varphi =e^\rho/g(U,\dot \gamma)$. Then $(\varphi\dot \gamma)'=0$ outside the breaks, which implies directly that $\gamma$ is a piecewise pregeodesic. %(the acceleration of $\gamma$ would be pointwise proportional to its velocity). 
		Moreover, if there were a break  
		at some  $s_i\in(a,b)$ then, for every $w\in T_{\gamma(s_i)}L$, one could choose a vector field $W$ along $\gamma$ such that $W(s_i)=w$ and $W$ is zero in the other breaks (as well as in the endpoints). Then, consider $Z_W$ given by  \eqref{xiW} and apply integration by parts to \eqref{firstEulang} again in order to obtain
		\[g\left(w, e^\rho(s_i)\left(\frac{\dot \gamma(s_i^+)}{g(U(s_i),\dot \gamma(s_i^+))}-\frac{\dot \gamma(s_i^-)}{g(U(s_i),\dot \gamma(s_i^-))}\right)\right)=0\]
		for all $w\in T_{\gamma(s_i)}L$.   That is,  $\dot \gamma(s_i^+)$ and $\dot \gamma(s_i^-)$ are proportional and $\gamma$ could be reparametrized as a smooth lightlike pregeodesic. 
		
		Conversely, if $\gamma$ is a pregeodesic, we can reparametrize it as a geodesic (with no breaks in the parametrization) as the value of the arrival time functional would remain unchanged. However, for a geodesic the function  $\varphi$   is clearly a constant, 
		which allows us to obtain \eqref{firstEulang2} and, finally, \eqref{firstEulang}. %, i.e., $\gamma$ is a critical point. 
		
		%Then, for any $W$ as above,   $(h\dot \gamma)'=\big(  \dot \gamma  e^\rho/g(U,\dot \gamma)\big)'$ which, up to a factor is equal to $g(\dot \gamma,U)\ddot \gamma-g(\ddot \gamma,U)\dot \gamma$. This vanishes as $\gamma$ is a pregeodesic, obtaining \eqref{firstEulang2} and, then, \eqref{firstEulang}, as required.  
	\end{proof}

	%\begin{rem}\label{rnotimeorient}  

	Some extensions of the Generalized Fermat's Principle  are still possible  
	%  \sout{A first one (in principle, of purely mathematical interest) is that Theorem \ref{lema:fermatprinc} holds even for  non time-orientable Lorentzian manifolds. Indeed, one can drop first the requirement of being {\em future-pointing} for the lightlike curves in the space $\mathcal N_{p_0, \alpha}$ defined in \eqref{mline} (even allowing  the lateral velocities of the lightlike curves at the breaks to belong to different cones). The crux of the proof would be to extend Lemma \ref{lem:exist} for this general setting. To check this, just choose the time orientation in  the local splitting \eqref{one} (at each neighborhood $U_i$ provided by the Claim) so that the corresponding piece of $\gamma$ becomes future-pointing. }
	%  %\end{rem}
	as for example when one considers timelike curves rather than lightlike ones. 
	This case becomes meaningful if one prescribes a fixed length $c$ for all the timelike curves from $p_0$ to $\alpha$.   
	%Similar variational principles hold also for timelike geodesics in a spacetime $(L,g)$.  
	In order to reduce this case to the lightlike one,  consider the extended  spacetime $(L\times \R, \tilde g)$, $\tilde g=\pi_{L}^*g+ \de u^2$, where $\pi_{L}$ is the 
	canonical projection of $L\times\R$ onto $L$.  It is straightforward to check  that,  for any lightlike geodesic $\tilde \gamma:[a,b]\to L\times \R$, $\tilde \gamma(s)=(\gamma(s),u(s))$, of the metric $\tilde g$, the component $\gamma$  is a causal 
	geodesic of $(L,g)$ and the component $u$ satisfies $\dot u^2\equiv \mathrm{const.}:= c^2/(b-a)^2$.  Thus, for any point $p_0\in L$ and any smooth embedded curve $\alpha$,   Theorem~\ref{lema:fermatprinc} applied to the spacetime $(L\times \R, \tilde g)$, the point $(p_0,0)$ and the curve $\tilde \alpha(s)= (\alpha(s),c)$, $c>0$, gives: 
	\begin{cor}\label{timelikefermat}
		Let $(L,g)$ be a spacetime 
		%%%Lorentzian manifold 
		and $\alpha:(\bar a,\bar b)\rightarrow L$ be a smooth embedded  curve. Assume that $\gamma:[a,b]\rightarrow L$ is a piecewise smooth  future-pointing  %%%
		timelike  curve from $p_0\in L$ to  ${\rm Im}(\alpha)$, with Lorentzian length $\int_a^b\sqrt{-g(\dot\gamma,\dot\gamma)}ds=c$, such  that $\dot \gamma(b)$ is not  orthogonal to $\alpha$.  
		Then, $\gamma:[a,b]\rightarrow L$   is a critical point of the  arrival functional $T$ defined on the set of the piecewise smooth timelike   curves   joining $p_0$ to ${\rm Im}(\alpha)$ and having fixed Lorentzian length $c$ if and only if it is a pregeodesic.
	\end{cor}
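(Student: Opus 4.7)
\smallskip

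The plan is to reduce the timelike case to the lightlike one already handled by Theorem~\ref{lema:fermatprinc}, by exploiting the extended spacetime $(L\times\R,\tilde g)$, $\tilde g = \pi_L^*g + \de u^2$, together with the curves $\tilde p_0:=(p_0,0)$ and $\tilde\alpha(\bar t):=(\alpha(\bar t),c)$. I would first set up a canonical bijection between piecewise smooth future-directed timelike curves $\gamma\colon[a,b]\to L$ from $p_0$ to $\mathrm{Im}(\alpha)$ of $g$-length $c$, and piecewise smooth future-directed lightlike curves $\tilde\gamma\colon[a,b]\to L\times\R$ from $\tilde p_0$ to $\mathrm{Im}(\tilde\alpha)$. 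In one direction, send $\gamma$ to $\tilde\gamma(s)=(\gamma(s),u(s))$ with $u(s):=\int_a^s\sqrt{-g(\dot\gamma,\dot\gamma)}\,d\tau$; note $u(a)=0$, $u(b)=c$, and $\tilde g(\dot{\tilde\gamma},\dot{\tilde\gamma})=g(\dot\gamma,\dot\gamma)+\dot u^2=0$. In the other direction, given $\tilde\gamma=(\gamma,u)$ lightlike and future-directed (with the time orientation chosen so that $\dot u>0$), the projection $\gamma$ is timelike and its $g$-length equals $u(b)-u(a)=c$ because $\dot u = \sqrt{-g(\dot\gamma,\dot\gamma)}$.

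Next, under this bijection the arrival functional is preserved, since $\tilde\alpha^{-1}(\tilde\gamma(b))=\alpha^{-1}(\gamma(b))$, and a variation of $\gamma$ through timelike curves of $g$-length $c$ lifts (by the same integral formula, applied longitudinally) to a variation of $\tilde\gamma$ through lightlike curves with endpoints $\tilde p_0$ and $\mathrm{Im}(\tilde\alpha)$; conversely any such lightlike variation of $\tilde\gamma$ projects to a timelike variation of $\gamma$ with constant $g$-length $c$. Moreover, since $\partial_u$ is a parallel vector field for $\tilde g$, the function $\tilde g(\dot{\tilde\gamma},\partial_u)=\dot u$ is constant along any geodesic of $\tilde g$; consequently $\tilde\gamma$ is a lightlike pregeodesic of $(L\times\R,\tilde g)$ if and only if $\gamma$ is a timelike pregeodesic of $(L,g)$ (the geodesic equation decouples into the $L$-component plus $\ddot u=0$, and a pregeodesic can be reparametrized affinely).

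Finally, the non-orthogonality hypothesis transfers: because $\dot{\tilde\alpha}=(\dot\alpha,0)$, one has $\tilde g(\dot{\tilde\gamma}(b),\dot{\tilde\alpha}(T(\gamma)))=g(\dot\gamma(b),\dot\alpha(T(\gamma)))\neq 0$ by assumption, so $\dot{\tilde\gamma}(b)$ is not $\tilde g$-orthogonal to $\tilde\alpha$. Applying Theorem~\ref{lema:fermatprinc} in $(L\times\R,\tilde g)$ to $\tilde\gamma$, $\tilde p_0$ and $\tilde\alpha$ then yields the desired conclusion.

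The main obstacle I anticipate is a clean handling of the correspondence of variations together with the time-orientation on $L\times\R$: one has to choose the orientation so that future-directed lightlike vectors $(v,a)$ in $L\times\R$ project exactly onto future-directed timelike vectors of $L$ (equivalently, $\dot u$ has a definite sign along every admissible variation), and then verify piecewise smoothness and the matching of left/right derivatives at the breaks. Once this is done, the remaining steps are routine, since the extra coordinate $u$ decouples the dynamics and makes the length constraint on $\gamma$ equivalent to the endpoint condition $u(b)=c$, which is precisely the condition encoded in $\tilde\alpha$.
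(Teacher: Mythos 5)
Your proof is correct and follows essentially the same route as the paper: the corollary is obtained there by applying Theorem~\ref{lema:fermatprinc} in the product spacetime $(L\times\R,\tilde g)$, $\tilde g=\pi_L^*g+\de u^2$, to the point $(p_0,0)$ and the lifted curve $\tilde\alpha$, exactly as you do. You merely spell out the details the paper leaves as ``easily seen'' (the correspondence of curves and variations via $u(s)=\int_a^s\sqrt{-g(\dot\gamma,\dot\gamma)}\,d\tau$, the transfer of the pregeodesic condition through $\ddot u=0$, and of the non-orthogonality hypothesis), and you correctly flag the only delicate point, namely that the inverse correspondence is only needed for variations near $\tilde\gamma$, where $\dot u>0$ persists.
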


	As a final application, observe that the generalized Fermat's principle can be \bw even applied \ew    in a purely Riemannian setting.   Given a Riemannian manifold $(M,h)$,  $x_0\in M$ and $\alpha:(\bar a,\bar b)\rightarrow M$ a smooth  embedded  curve, let us introduce the following two spaces of paths between $x_0$ and ${\rm Im}(\alpha)$:
	\begin{align*}
		&\mathcal L_{x_0, \alpha,c}=\{ x:[a,b]\rightarrow M: \text{ $x$ piecewise smooth}\\
		&\quad\quad\quad\quad\quad\quad \text{ and $x(a)=x_0,\ x(b)\in {\rm Im} (\alpha)$ with $\ell_h(x)=c$}\},\\
		&\mathcal F_{x_0, \alpha}=\{ x:[a,b]\rightarrow M: \text{ $x$ piecewise smooth}\\ &\quad \quad \quad\quad\quad\quad\text{ and $x(a)=x_0,\ x(b)\in {\rm Im} (\alpha)$ with $\ell_h(x)=\alpha^{-1}(x(b))$}\},
	\end{align*}
	where $\ell_h(x)=\int_a^b \| \dot x\|_h ds$, $\| \dot x\|_h= \sqrt{h(\dot x,\dot x)}$,  and  $c$ is any constant greater than the distance between $x_0$ and ${\rm Im}(\alpha)$.  Let the {\em arrival functional} $T$ defined as $
	T(x)= \alpha^{-1}(x(b))$. 
	\begin{cor}\label{cor:appRieFermat}
		Let $(M,h)$ be a Riemannian manifold and $\alpha:(\bar a,\bar b)\rightarrow M$ a smooth  embedded  curve. Let $x:[a,b]\rightarrow M$ be a piecewise smooth curve with  $x(a)=x_0$, $x(b)\in {\rm Im}(\alpha)$.  
		\begin{itemize}
			\item[(i)]  If  $x\in \mathcal L_{x_0, \alpha,c}$ and $\dot x(b)$ is not orthogonal to $\alpha$, then $x$ is a critical curve of the arrival functional  $T$ on the space $\mathcal L_{x_0, \alpha, c}$ if and only if $x$ is a pregeodesic of $(M,h)$.
			\item[(ii)]   If  $x\in \mathcal F_{x_0, \alpha}$ and 
			\begin{equation}\label{eqh} 
				h\left(\frac{\dot  x(b)}{\| \dot x(b)\|_h} ,\dot\alpha(T(x))\right)\not= 1,  
			\end{equation}  
			then $x$ is a critical curve of the arrival functional $T$ on the space $\mathcal F_{x_0, \alpha}$ if and only if $x$ is a  pregeodesic of $(M,h)$.
		\end{itemize}
	\end{cor}
	\begin{proof}
		It follows from Theorem~\ref{lema:fermatprinc} by considering the spacetime $(\R\times M,g)$ with $g((\tau,v),(\tau,v))=-\tau^2+h(v,v)$. Then any curve $x:[a,b]\rightarrow M$ lifts to a unique future-pointing lightlike curve $(t,x):[a,b]\rightarrow \R\times M$ with $t(s)=\int_a^s \sqrt{h(\dot x,\dot x)}d\mu$. 
		Moreover, lift $\alpha$ to the curve $(\bar a,\bar b)\ni  \bar t\rightarrow \tilde{\alpha}(\bar t)=(c,\alpha(\bar t))\in \R\times M$ for statement  (i) and to  $(\bar a,\bar b)\ni  \bar t\rightarrow \tilde{\alpha}(\bar t)=(\bar t,\alpha(\bar t))\in \R\times M$, for $(ii)$.    The conclusions are obtained by observing that $ (\dot t(b),\dot x(b))$ is not  orthogonal to $\tilde{\alpha}$ if and only if,  (i),  $\dot x(b)$ is not orthogonal to $\alpha$ and  (ii) \eqref{eqh} holds.
		%$h(\dot x(b),\dot\alpha(s_0))\not=\sqrt{h(\dot x(b),\dot %x(b))}$.
	\end{proof}
	\begin{rem}\label{eriemann}
		While the result in (i) is  immediately seen as a variational principle for geodesics  with fixed length between a point and a curve,  (ii) might require more explanation. In fact, it    can be interpreted in the following (non-relativistic) way. The curve $\alpha$ is parametrized by a classical time $t$ and then $\alpha$ describes the motion of some target vehicle with arbitrary (but non-vanishing) speed.  The curves in $\mathcal F_{x_0, \alpha}$ are the trajectories followed by some tracker starting at $x_0$. As the length of the trajectories is independent of the parametrization, one can assume (neglecting the curves with speed vanishing at non-isolated points) that the tracker moves at constant speed. As a first approach, this speed can be assumed to be equal to 1 so that  each trajectory $\gamma$ is parametrized by $t$ in the interval $[0,\ell_h(\gamma)]$. 
		Now, the space $\mathcal F_{x_0, \alpha}$ contains all the trajectories such that the tracker catches the target, being the arrival functional $T$ just the exact time (or length of $\gamma$)  necessary for this aim. The corollary asserts that the geodesics coincide with the critical points of $T$ whenever the inequality \eqref{eqh} holds. As an interpretation of this inequality, notice that, if the component of the velocity of the target at the instant of the meeting in the direction of $\dot x(b)$ were equal exactly to $\dot x(b)$ then, even if $x$ were a geodesic, variations in the trajectory of the tracker might remain catching the target in subsequent instants. In particular, when the velocity of $\alpha$ were equal to $\dot x(b)$, these variations could be obtained simply by prolonging $x$ with (a reparametrization of) $\alpha$ beyond $b$.
	\end{rem}
	\subsection{Fermat's principle for \sstk spacetimes}\label{Fermatprinciple3} 
	For \sstk spacetimes, the curve $\alpha$ will be taken just equal to  a line $l_{x_1}$ parametrized with the global time function $t:  \R \times M \rightarrow \R$ so that the space 
	$\mathcal N_{p_0, \alpha}$ in \eqref{mline}
	%above Lemma \ref{lem:exist} 
	is written now 
	$\mathcal N_{p_0, l_{x_1}}$ with $p_0=(t_0,x_0)$. Moreover, the arrival functional \eqref{earrivalf} becomes now a true arrival {\em  time} functional
	\begin{equation}\label{functiemp}
		T(\gamma) = t(\gamma(b))
	\end{equation}
	for future-pointing lightlike curves $\gamma \in \mathcal N_{p_0, l_{x_1}}$  ($\gamma\colon[a,b]\rightarrow L=\R\times M$). Notice that $\gamma$ is a critical point for $T$ on ${\mathcal N}_{p_0,l_{x_1}}$ if and only if 
	\begin{equation}\label{ezzz}  \de t_{\gamma(b)}(Z(b))(=Y(b))=0 
	\end{equation} for the variational
	vector field    $Z(s)=(Y(s),W(s))$ of any variation $\gamma_w$ of $\gamma$ in 
	$\mathcal N_{p_0,l_{x_1}}$. We will assume  the non-triviality assumption $p_0\not\in {\rm Im}(\alpha )$  i.e. $x_0\neq x_1$ (see Remark~\ref{rneq}). 
	Now, we are  ready for the general version of Fermat's principle for $\sstk$ spacetimes.
	\begin{thm}\label{fp}
		Let $(\R\times M,g)$ be an $\sstk$ as in \eqref{lorentz}, $x_0,x_1\in M$, $x_0\neq x_1$, $p_0=(t_0, x_0)$ and $\gamma\in \mathcal N_{p_0, l_{x_1}}$, $\gamma(s)=(\zeta(s),x(s))$. Then,
		\begin{itemize}
			\item[(1)]  if  $\gamma$ is a critical point of the
			arrival time functional $T$ on $\mathcal N_{p_0, l_{x_1}}$, then it is a lightlike pregeodesic of $(\R\times M,g)$;
			\item[(2)]   if  $\gamma$ is a
			lightlike geodesic of $(\R\times M,g)$ and $C_\gamma=g(\partial_t, \dot\gamma)$
			then one of the following three
			exclusive
			possibilities occurs:
			\begin{itemize}
				\item[(i)] $C_\gamma<0$, $\dot x$ lies in $A$, $x$ is a
				pregeodesic of $F$ parametrized with $h(\dot x,\dot x)=\mathrm{const.}$,  $\gamma$ is a critical point of $T$  and
				\[
				\zeta(s)=\zeta(a)+\int_a^s F(\dot x)\de \tau.
				\]
				\item[(ii)] $C_\gamma>0$, $\dot x$ lies in $A_l$ (so that $\Lambda < 0$
				on   all   $x$), $x$ is a pregeodesic of $F_l$ parametrized with $h(\dot x,\dot x)=\mathrm{const.}$,   $\gamma$ is a critical point of $T$  and
				\[
				\zeta(s)=\zeta(a)+\int_a^s  F_l(\dot x)\de \tau.
				\]
				
				\item[(iii)] $C_\gamma=0$,  $\dot x$ lies in $A_E\setminus A$ (so that $\Lambda\leq  0$
				on  all  $x$),    whenever it remains in $M_l$, $x$  is a lightlike  geodesic of $h/\Lambda$  such that 
				$-\omega(\dot x)>0$ and $x$ lies in $M_{crit}$ only at the isolated points where $\dot x$ vanishes; 
				moreover,  $\zeta$
				satisfies 
				\[
				\zeta(s)=\zeta(a)-\int_{a}^{s} \frac{g_0(\dot x,\dot x)}{\omega(\dot x)}\de\tau,
				\]
				for all $ s\in [a,b]$.
			\end{itemize}
		\end{itemize}
	\end{thm}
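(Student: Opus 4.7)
The plan is to treat Theorem~\ref{fp} in its two natural halves. For Part~(1), the aim is to verify the hypotheses of the general Fermat's principle (Theorem~\ref{lema:fermatprinc}) for the arrival curve $\alpha=l_{x_1}$, which is smooth, embedded, parametrized by the temporal function $t$, and disjoint from $p_0=(t_0,x_0)$ because $x_0\neq x_1$. Since $\dot\alpha=\partial_t$, the non-orthogonality hypothesis $g(\dot\gamma(b),\dot\alpha)\neq 0$ translates into $C_\gamma(b)\neq 0$. Whenever this is the case, Theorem~\ref{lema:fermatprinc} directly gives that $\gamma$ is a lightlike pregeodesic.

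The main obstacle is the residual case $C_\gamma(b)=0$, where Theorem~\ref{lema:fermatprinc} is not applicable. I would treat it with an \sstk{}-specific first-variation argument: parametrize $\gamma$ by time as $\gamma(s)=(\zeta(s),x(s))$ and construct admissible variations by first perturbing the projection $x$ through a wind-curve variation with fixed endpoints and then lifting to a lightlike curve through the pointwise ODE $-\Lambda\dot\zeta^2+2\omega(\dot x)\dot\zeta+g_0(\dot x,\dot x)=0$. For the corresponding variational field $Z=Y\partial_t+W$ with $W(a)=W(b)=0$, the Killing identity $g(\dot\gamma,D_s\partial_t)=0$ reduces the admissibility constraint $g(\dot\gamma,D_s Z)=0$ to $Y'C_\gamma=-g(\dot\gamma,D_s W)$. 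An integration by parts in the criticality condition $Y(b)=0$ then yields the pregeodesic identity $D_s(\dot\gamma/C_\gamma)=0$ on every subinterval where $C_\gamma\neq 0$; on the complementary set $\{C_\gamma=0\}$ admissibility forces $g(\dot\gamma,D_s W)=0$, and by perturbing $\dot x\in A_E\setminus A$ transversally into the two Finsler sheets of the Fermat structure (pushing $\dot x_w$ into $A$ on the $F$- and on the $F_l$-side separately) one extracts the null-parallel behavior that identifies $\gamma$ with case~(iii). The most delicate step will be to guarantee the existence of enough such two-sided wind-curve variations at the boundary $A_E\setminus A$ to recover the full geodesic equation; this is the analogue in our setting of the construction used in Lemma~\ref{lem:exist}, which fails exactly when $\dot\gamma(b)\perp\partial_t$.

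For Part~(2), one starts from a lightlike geodesic $\gamma$ and uses that $C_\gamma=g(\partial_t,\dot\gamma)$ is constant along $\gamma$ (by the Killing property of $\partial_t$), which makes the trichotomy exclusive. The three explicit characterizations are then read off Proposition~\ref{plightvectorsSSTK} and Lemma~\ref{lightgeo}: in cases~(i)--(ii), the representation $\dot\gamma=(F(\dot x),\dot x)$, respectively $\dot\gamma=(F_l(\dot x),\dot x)$, with $\dot x\in A$ (resp.\ $A_l$) immediately produces the integral formula for $\zeta$, and identifies $x$ as a pregeodesic of $F$ (resp.\ $F_l$) parametrized so that $h(\dot x,\dot x)=C_\gamma^2$ is constant (by Lemma~\ref{lightgeo}); the criticality of $\gamma$ for $T$ then follows from Part~(1), since $C_\gamma(b)=C_\gamma\neq 0$. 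Case~(iii) uses Lemma~\ref{lightgeo} to conclude that $x$ is contained in $\bar M_l$, is a lightlike pregeodesic of $h/\Lambda$ wherever it lies in $M_l$, and touches $\partial M_l$ only at isolated points where $\dot x$ vanishes (the alternative of $x$ being constant and $\gamma$ an integral curve of $\partial_t$ is precluded by $x_0\neq x_1$). The identity $\dot\zeta=-g_0(\dot x,\dot x)/\omega(\dot x)$ is then forced by the lightlike condition together with $C_\gamma=0$, and criticality of $\gamma$ for $T$ is verified by the same first-variation computation that resolves the obstacle in Part~(1), now applied intrinsically along the $h$-lightlike locus.
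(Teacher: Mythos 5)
Your Part (2) follows essentially the paper's route (Lemma~\ref{tparam}, Lemma~\ref{lightgeo}, Corollary~\ref{lightgeo2} and Proposition~\ref{plightvectorsSSTK}) and is sound, up to two small points: the criticality of $\gamma$ in cases (i)--(ii) should be deduced from the biconditional Theorem~\ref{lema:fermatprinc} (whose non-orthogonality hypothesis holds since $C_\gamma\neq 0$), or, as the paper does, from Lemma~\ref{criticallength} applied to $\zeta_w(s)=t_0+\ell_F\big((x_w)|_{[a,s]}\big)$ --- not from ``Part (1)'', which only gives the implication critical $\Rightarrow$ pregeodesic; and case (iii) of the statement does not assert criticality, so you should not claim it without proof (the paper deliberately omits it).

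The genuine gap is in Part (1), in the case where $\gamma$ is everywhere $g$-orthogonal to $\partial_t$, i.e.\ $C_\gamma\equiv 0$ and $\dot x\in A_E\setminus A$ along the whole curve. Your plan is to perturb $x$ by a wind-curve variation, lift pointwise through the lightlike condition, and integrate by parts. But on $A_E\setminus A$ the two lifts coincide, $F(\dot x)=F_l(\dot x)$, and for any deformation $x_w$ pushing $\dot x_w$ into $A_l$ the lift $\zeta_w(s)=\int_a^s F(\dot x_w)\,d\tau$ (or $F_l$) contains the term $\sqrt{\tilde h(\dot x_w,\dot x_w)}$, which vanishes at $w=0$ and is only of order $\sqrt{w}$ there because $\partial_w\tilde h(\dot x_w,\dot x_w)|_{w=0}=2\tilde h(\xi',\dot x)$ is generically nonzero. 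Hence $w\mapsto\zeta_w(s)$ is not differentiable at $w=0$, so the variational vector field you want to manipulate does not exist for the naive parametrization; moreover the admissibility constraint $Y'C_\gamma=-g(\dot\gamma,W')$ degenerates to $g(\dot\gamma,W')=0$ and no longer determines $Y$ from $W$, so $Y(b)=0$ cannot be extracted from admissibility alone. The paper resolves exactly this: it takes a one-sided family $x_w$ of $-\tilde h$-timelike curves with $\tilde h(\xi',\dot x)>0$ (available whenever $x$ is not a smooth $-\tilde h$-lightlike pregeodesic, including the broken case), lifts it by $F$ for $\theta\le 0$ and by $F_l$ for $\theta\ge 0$, and reparametrizes by $w(\theta)$ with $\dot w(0)=0$ (defined through $\theta(w)=\int_0^w d\bar w/\sqrt{\tilde h(\dot x_{\bar w}(s_0),\dot x_{\bar w}(s_0))}$), proving via dominated convergence that the glued family is an admissible variation with variational field $(Y,0)$ and $Y(b)>0$, contradicting criticality. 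You flag this as ``the most delicate step'' but supply no mechanism for it, so the argument does not close; you also leave untreated the sub-case in which $\dot x$ vanishes on a whole subinterval, where criticality is refuted by the explicit variation $Z=f\,\partial_t$.
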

	\begin{proof}(1) 
		Let us distinguish three cases:
		
		Case (a).  Assume that there is an instant $s_0\in (a,b)$ such that $x$ is  $F$-admissible. We can also assume that  $\gamma$ is smooth in $s_0$, otherwise just choose a close instant to $s_0$ where $\gamma$ is still $F$-admissible and smooth.  
		Now, notice that the restriction $\gamma|_{[a,s_0]}$ must be a critical point of the arrival time functional on $\mathcal N_{p_0, l_{x(s_0)}}$. In fact, otherwise, take a variation $\gamma^{(s_0)}_w$ which contradicts the critical character of $\gamma|_{[a,s_0]}$ 
		and put 
		$$t^{(r)}(w)= t(\gamma^{(s_0)}_w(r))-t(\gamma (r)), \quad a\leq r\leq s_0$$
		Taking into account that $\partial_t$ is a Killing vector field, each curve $\gamma^{ (s_0)}_w$ 
		can be concatenated with the curve $s\mapsto (\zeta(s)+ t^{(s_0)}(w), x(s)), s\in[s_0,b]$, in 
		contradiction with the critical character of $\gamma$. 
		Thus, Theorem~\ref{lema:fermatprinc} 
		is applicable to $\gamma|_{[a,s_0]}$, and this piece of $\gamma$ must be a pregeodesic.    
		Now observe that we can consider the reverse problem because, reversing the parametrization of $\gamma$, it becomes a critical curve for the  arrival time functional   between $(t(\gamma(b)),x_1)$ and the line $l_{x(s_0)}$ (notice that all the longitudinal curves of a variation $\gamma_w$ of $\gamma$ can be shifted in $-t^{(b)}(w)$ by the flow of $\partial_t)$. Then one also has that $\gamma|_{[s_0,b]}$ must be a pregeodesic (which matches smoothly with the first piece), as required.

		Case (b).   Assume that $x$ is constant in an open interval $(s_0-\delta,s_0+\delta)$ (so that $\partial_t$ is lightlike at $x(s_0)$). In this case, $(\zeta,x)$ cannot be a critical point of the arrival time. In fact, consider a variational 
		vector field which is  $Z(s)=f(s)\partial_t$ with $f(s)=0$ for every $s\in [a,s_0-\delta]$ and $f(s)=1$ for every $s\in [s_0+\delta,b]$ and the associated variation $x_w=x$ and
		$\zeta_w(s)=\zeta(s)+w f(s)$ for every $w\in (-\varepsilon,\varepsilon)$, being $\varepsilon>0$ small enough in such a way that $\dot\zeta_w(s)=\dot\zeta(s) +w \dot f(s)>0$ for every $s\in (s_0-\delta,s_0+\delta)$. 
		Then ${\rm d}T(Z)={\rm d}t_{\gamma(b)}(Z(b))=1\not=0$.

		Case (c).  The only case left is when $\dot x(s)\in (A_E\setminus A)$ for every $s\in [a,b]$ and it is not zero in any subinterval.  Moreover, we can also assume that $\dot x(s)$ never vanishes and, so, $x(s)$ lies in $M_l$ and it is a piecewise smooth lightlike curve of the Lorentzian metric $-h$. Indeed, if this case is solved, then, 
		for any interval $J=[\bar{a},\bar{b}]\subset [a,b]$ such that $x|_J$ is smooth and strictly regular ($\dot x(s)\not=0$ for every $s\in J$) then  $ \gamma_{|J}$ will be  a lightlike pregeodesic. 
		%or its projection on $M$ is a piecewise lighlike pregeodesic of $(M,\tilde h) h/\lambda^2)$. 
		As by Case (b), the set of zeroes does not contain intervals, then,  the claimed case  implies that $\gamma$ fulfils  the equation of the pregeodesics $D^g \dot\gamma/ds=f\cdot \dot \gamma$ in an open dense subset $D$ of $[a,b]$, for some smooth function $f$ on $D$. Being $\gamma$ piecewise smooth and $\dot \gamma$ non-vanishing,  $f$ can be smoothly extended to all $[a,b]$ except at most to the breaks, and $\gamma$ becomes a piecewise smooth lightlike pregeodesic. Moreover, if a break $s_0\in(a,b)$ appeared, the case $\dot x(s_0^+)=0$ (or $\dot x(s_0^-)=0$) could not hold.
		Indeed, otherwise
		$\Lambda(x(s_0))=0$, and this implies $(A_E\setminus A)\cap T_{x(s_0)}M=\{0\}$. Thus, $\dot x(s_0^-)=0$ and $\dot\gamma(s_0^-)$ and $\dot\gamma(s_0^+)$ become proportional, which implies  that $\gamma$ admits a reparametrization as a smooth geodesic.  Of course, the case  when
		$\dot x(s_0^+),\ \dot x(s_0^-)$ are both different from $0$  can hold and will be taken into account (indeed, the solution in the smooth case would imply that $\gamma$ is a piecewise pregeodesic with $C=0$ and, thus, its projection $x(s)$ would be a  piecewise lightlike pregeodesic of $(M, -h)$, recall Corollary~\ref{lightgeo2}).
		%in particular  $\dot x(s)$ cannot vanish.   
		As a technical detail,  the conformal Lorentzian metric    
		$ -\tilde h:=-h/\Lambda^2$ will be used in the remainder (consistently with \eqref{eab}). This is 
		equivalent to the usage of $-h$ as  only lightlike curves and pregeodesics will be 
		concerned, and allows  us   to express easily the associated
		Fermat metrics 
		\begin{equation}\label{effl}
			F=\frac{\omega}{\Lambda} +\sqrt{\tilde{h}} \qquad \qquad  F_l=\frac{\omega}{\Lambda} -\sqrt{\tilde{h}} 
		\end{equation}
		(recall Proposition~\ref{plightvectorsSSTK} and equation \eqref{tau}), where $\Lambda<0$.

		So, assume that $x$ is a piecewise smooth lightlike curve in $(M_l,-\tilde h)$. 
		In particular,  the lightlike curve $\gamma$ is univocally reconstructed from  $x$ plus its initial point (Corollary~\ref{raclarations} (c2) is applicable to $\dot x(s)$) and $g(\dot \gamma, \partial_t)\equiv 0$ (from the interpretation of $h$, see \eqref{eextra}).
		In the case that $x$ is also a (smooth) pregeodesic, Corollary~\ref{lightgeo2} implies that $\gamma$ is a lightlike pregeodesic too. Otherwise, 
		%\bb ... OTHERWISE    So let us assume that we are in one of the above cases, namely, $x|_J$ is smooth and regular or a piecewise lightlike pregeodesic of $(M,\tilde{h})$, but it is not a lightlike pregeodesic. Then  
		%%...
		we can find   a variation $x_w$ of $x$ by means of timelike curves  of $-\tilde h$ for every $w\in (0,\varepsilon)$ with variational vector field $\xi$ such that
		%\footnote{See O'Neill's \cite[Proposition 10.46]{O'neill} and its proof. Notice that this author  is worried  only by the existence of such a vector field in a subinterval, say $[\bar a, \bar b]\subset [a,b]$, but his proof can be refined to a variation in all the interval. In any case, we  could work here only with a variation in a subinterval $[\bar a, \bar b]$ with no relevant changes in the proof (for example, the point $s_0$ below should be chosen in $(\bar a, \bar b)$).}  
		$\tilde{h}(\xi',\dot x)>0$ in two cases: when $x$ is smooth but not a pregeodesic and when $x$ is a piecewise pregeodesic   (see  case 2 and last part of the proof of \cite[Proposition 10.46]{O'neill}). 
		Our aim is to lift this variation (up to a subtle choice $\theta$ of the parameter) to a variation of $\gamma$ in the spacetime. Specifically, the variation $\eta_\theta=\gamma_{w(\theta)}$ for $\theta\in (-\varepsilon',\varepsilon')$ will be written as 
		
		%$ \tilde h:=h/\Lambda^2$ for every $w\in (0,\varepsilon)$ with variational vector field $\xi$ such that  $\tilde{h}(\xi',\dot x)|_J>0$ (see  case (2) and the last paragraph of the proof in \cite[Proposition 10.46]{O'neill}).   Our aim is to lift this variation to the spacetime as a variation $\eta_\theta=\gamma_{w(\theta)}$ for $\theta\in (-\varepsilon',\varepsilon')$,
		%where

		%\begin{equation}\label{gammaw}
		%\gamma_{w(\theta)}(s)= (\zeta_{w(\theta)}(s),x_{w(\theta)}(s))
		%\end{equation}
		%and
		%$\zeta_{w(\theta)}$ is defined on $[a,b]$  as
		%\begin{equation}\label{zetaw}
		%\zeta_{w(\theta)}(s)=
		%\begin{cases}
		%\zeta(s) & \mbox{ if }  s\in [a,\bar{a}) ,\\
		%\zeta(\bar{a})+\int_{\bar{a}}^{s} F(\dot x_{w(\theta)}) \de\tau %& \mbox{ if } s\in [\bar{a},\bar{b}],\\
		%\zeta(\bar{a})-\zeta(\bar{b})+\int_{\bar{a}}^{\bar{b}} F(\dot %x_{w(\theta)}) \de\tau+\zeta(s)  & \mbox{ if } s\in (\bar{b},b],
		%\end{cases}
		%\end{equation}
		%if $\theta<0$ and  using $F_l$ instead of $F$ in \eqref{zetaw} %if $\theta>0$.  
		\begin{equation}\label{gammaw}
			\gamma_{w(\theta)}(s)= (\zeta_{w(\theta)}(s),x_{w(\theta)}(s))
		\end{equation}
		and $\zeta_{w(\theta)}$ is defined on $[a,b]$  as
		\begin{equation}\label{zetaw}
			\zeta_{w(\theta)}(s)=
			\begin{cases}
				\zeta(a)+\int_{a}^{s} F(\dot x_{w(\theta)}) \de\tau & \mbox{ if } \theta \in [-\varepsilon',0],\\
				\zeta(a)+\int_{a}^{s} F_l(\dot x_{w(\theta)}) \de\tau & \mbox{ if } \theta \in [0,\varepsilon'],
			\end{cases}
		\end{equation}
		for all $s\in [a,b]$. Notice that both expressions agree for $\theta=0$, and the longitudinal curves at constant  $\theta$ are lightlike. 
		
		The reparametrization $w(\theta)$ will be crucial because otherwise $\frac{\partial}{\partial w}\zeta_{w}(s_0)|_{w=0}$ might make no sense. 
		%Observe that if $x|_J$ is not a lightlike pregeodesic of $(M,-\tilde h)$, but it is either smooth and lightlike, or a broken lightlike pregeodesic, then  a variation  $x_w$ by timelike curves such that  $\tilde{h}(\xi',\dot x)|_J>0$ does exist, where $\xi'$ denotes the covariant derivative of the variational vector field $\xi$ along $x$ with respect to the Levi-Civita connection of $-\tilde{h}$  (see the proof  of case (2) and the final paragraph in  \cite[Proposition 10.46]{O'neill}). 
		Indeed,  choose any $s_0\in (a,b)$ and, for small $w\geq 0$,  put: 
		\[\theta(w)=\int_0^w \frac{d\bar w}{\sqrt{\tilde{h}(\dot x_{\bar w}(s_0),\dot x_{\bar w}(s_0))}}\]
		whenever $w\in(0,\varepsilon)$, which is well-defined  and it can  be extended continuously at $w=0$ since  $\tilde{h}(\dot x_w(s_0),\dot x_w(s_0))>0$, for $w>0$, and 
		\[\frac{\partial}{\partial w}\tilde{h}(\dot x_w(s_0),\dot x_w(s_0))|_{w=0}=2\tilde{h}(\xi'(s_0),\dot x(s_0))>0.\] 
		In fact,  the latter implies 
		\[\sqrt{\tilde{h}(\dot x_w(s_0),\dot x_w(s_0))}\geq c\sqrt{w},\]
		for some constant $c>0$  and $0<w\leq \varepsilon$,   and consequently,
		%for $w>0$,
		\[\theta(w)=\int_0^w \frac{d\bar w}{\sqrt{\tilde{h}(\dot x_{\bar w}(s_0),\dot x_{\bar w}(s_0))}}\leq \int_0^w \frac{d\bar w}{ c\sqrt{\bar w}}= \frac{2}{c}  \sqrt{w} .\]
		So, put $\theta(0)=0$ and  let $w(\theta)$, $\theta\in [0, \varepsilon')$, be the inverse function of $\theta(w)$, $w\in [0,\varepsilon)$. Observe that  $\dot w(\theta)=\sqrt{\tilde{h}(\dot x_{w(\theta)}(s_0),\dot x_{w(\theta)}(s_0))}$, for $\theta>0$, and $\lim_{\theta\rightarrow 0^+}\dot w(\theta)=0$. Thus,  $w$ can be $C^1$-extended evenly, that is, we write $w(-\theta)=w(\theta)$,  on $(-\varepsilon',\varepsilon')$. 
		%(as well as $\lim_{\theta\rightarrow 0^+}w(\theta)=0$).  

		Once defined this (non-injective) function $w(\theta)$, our aim is to check the appropriate smoothness of the variation as well as to compute its variational  vector field. 
		As a previous technical computation, let us check that the function 
		\begin{equation}\label{edominada}
			(0, \varepsilon']\times [a,b] \ni (\theta,s)\mapsto  \frac{\dot w(\theta)}{\sqrt{\tilde{h}(\dot x_{w(\theta)}(s ),\dot x_{w(\theta)}(s))}} =
			\sqrt{\frac{\tilde{h}(\dot x_{w(\theta)}(s_0),\dot x_{w(\theta)}(s_0))}{\tilde{h}(\dot x_{w(\theta)}(s),\dot x_{w(\theta)}(s))}}, 
		\end{equation}
		is bounded 
		%for any $0<\varepsilon''<\varepsilon'$, 
		so that  Lebesgue's theorem of dominated convergence can be used in the integrals  below.  Indeed,  taking into account that, by assumption, $\tilde h(\dot x,\xi')>0$ on $[a,b]$, consider the smooth function 
		\begin{equation*}
			u(s):=
			\sqrt{\frac{\tilde{h}(\dot x(s_0),\xi'(s_0))}{\tilde{h}(\dot x(s) ,\xi'(s))}} (>0), \qquad \forall s\in [a,b]. 
			%\tilde %u(\theta,s):=
			%\sqrt{\frac{\tilde{h}\big(\dot x(s_0),\frac{\tilde D \dot x_{w}}{\de w}|_{w=w(\theta)}(s_0)
			%\big)}{\tilde{h}\big(\dot x(s),\frac{\tilde D \dot x_{w}}{\de w}|_{w=w(\theta)}(s)\big)}},
		\end{equation*}
		Now, applying L'Hopital's rule for fixed $s\in [a,b]$ in the radicand of \eqref{edominada}: 
		\begin{equation}\label{us} 
			\lim_{\theta\rightarrow 0^+}\frac{\dot w(\theta)}{\sqrt{\tilde{h}(\dot x_{w(\theta)}(s),\dot x_{w(\theta)}(s))}}=
			\lim_{\theta\rightarrow 0^+}\sqrt{\frac{\tilde{h}\big(\dot x_{w(\theta)}(s_0),\frac{\tilde D \dot x_{w}}{\de w}|_{w=w(\theta)}(s_0)\big)\dot w(\theta)}{\tilde{h}\big(\dot x_{w(\theta)}(s),\frac{\tilde D  \dot x_{w}}{\de w}|_{w=w(\theta)}(s)\big)\dot w(\theta)}}=u(s) 
		\end{equation}
		and, up to consider a smaller $\varepsilon'$,  the boundedness of \eqref{edominada} follows easily. Indeed, observe that the assumption $\tilde h(\dot x,\xi')|_J>0$ implies that the  function $(\theta,s)\in [0, \varepsilon')\times J\mapsto \frac{\tilde{h}\big(\dot x(s_0),\frac{D \dot x_{w}}{\de w}|_{w=w(\theta)}(s_0)\big)}{\tilde{h}\big(\dot x(s),\frac{D \dot x_{w}}{\de w}|_{w=w(\theta)}(s)\big)}$ is  bounded on $[0,\varepsilon'']\times J$, for $0<\varepsilon''$ small enough. 
		% and continuous except along the images of finitely many segment $\theta \mapsto (\theta, s_j)$,   thus by uniform continuity   on each rectangle $[0,\varepsilon'']\times [s_j, s_{j+1}]$,   $0<\varepsilon''<\varepsilon'$, 
		Thus, by Cauchy's mean value theorem 
		we also have that  the function 
		$(\theta,s)\in [0,\varepsilon'']\times J\mapsto \frac{\dot w(\theta)}{\sqrt{\tilde{h}(\dot x_{w(\theta)}(s ),\dot x_{w(\theta)}(s))}}$ is  bounded. 
		
		Consider now $\eta_\theta=\gamma_{w(\theta)}$ with $\theta \in (-\varepsilon',\varepsilon')$  as defined in \eqref{gammaw}, \eqref{zetaw},
		with $F, F_l$ as in \eqref{effl},  recalling that  $\Lambda(x(s))\neq 0$, for  all $s\in [a,b]$. Then, 
		\begin{align*}\lefteqn{\lim_{\theta\rightarrow 0^+}\frac{\zeta_{w(\theta)}(s)-\zeta_{w(0)}(s)}{\theta}}&\\
			&\quad\quad=\lim_{\theta\rightarrow 0^+}\int_{a}^s \left.\frac{\partial}{\partial w}F_l(\dot x_w)\right|_{w=w(\theta)}\dot w(\theta)\de \tau\nonumber\\
			&\quad\quad=\lim_{\theta\rightarrow 0^+}\int_{a}^s\Big(\left.\frac{\partial}{\partial w}\frac{\omega(\dot x_w)}{\Lambda(x_{w})}\right|_{w=w(\theta)}+\left.\frac{\tilde{h}( \frac{\tilde D \dot x_w}{\de w} ,\dot x_w)}{\sqrt{\tilde{h}(\dot x_w,\dot x_w)}}\right|_{w=w(\theta)}\Big)\dot w(\theta)\de \tau\nonumber\\
			&\quad\quad=\int_{a}^s\tilde{h}(\xi',\dot x)\, u\de \tau%=\sqrt{\tilde h(\xi'(s_0),\dot x(s_0))}\int_{a}^s\sqrt{\tilde{h}(\xi',\dot x)}\de \tau
		\end{align*}
		where  $\lim_{\theta\to 0} \dot w(\theta)=0$ and  %\eqref{uss}, 
		\eqref{us} are used in the last equality.  Analogously, 
		\begin{align*}\lefteqn{\lim_{\theta\rightarrow 0^-}\frac{\zeta_{w(\theta)}(s)-\zeta_{w(0)}(s)}{\theta}}&\nonumber\\
			&\quad\quad=\lim_{\theta\rightarrow 0^-}\int_{a}^s \left.\frac{\partial}{\partial w}F(\dot x_w)\right|_{w=w(\theta)}\dot w(\theta)\de \tau\nonumber\\
			&\quad\quad=-\lim_{\theta\rightarrow 0^+}\int_{a}^s \left.\frac{\partial}{\partial w}F(\dot x_w)\right|_{w=w(\theta)}\dot w(\theta)\de \tau\nonumber \\
			&\quad\quad=-\lim_{\theta\rightarrow 0^+}\int_{a}^s\Big(\left.\frac{\partial}{\partial w}\frac{\omega(\dot x_w)}{\Lambda(x_{w})}\right|_{w=w(\theta)}-\left.\frac{\tilde{h}( \frac{\tilde D \dot x_w}{\de w} ,\dot x_w)}{\sqrt{\tilde{h}(\dot x_w,\dot x_w)}}\right|_{w=w(\theta)}\Big)\dot w(\theta)\de \tau\nonumber\\
			&\quad\quad=\int_{a}^s\tilde{h}(\xi',\dot x)\, u\de \tau.%=\sqrt{\tilde h(\xi'(s_0),\dot x(s_0))}\int_{a}^s\sqrt{\tilde{h}(\xi',\dot x)}\de \tau
		\end{align*}
		That is, it follows: 
		\[
		\left.\frac{\de \zeta_{w(\theta)}}{\de \theta}\right|_{\theta=0}(s)=
		\int_{a}^s\tilde{h}(\xi',\dot x)\, u\de \tau \qquad \qquad \forall s\in [a, b]
		\]
		Moreover, as $\left.\frac{\de x_{w(\theta)}}{\de \theta}\right|_{\theta=0}(s)=\dot w(0)\xi(s)=0$, we conclude that the variational vector field defining $\eta_\theta$ is
		$Z=\big (\left.\frac{\de \zeta_{w(\theta)}}{\de \theta}\right|_{\theta=0}, 0\big)$.  Notice also that the corresponding variation $\gamma_{w(\theta)}$ (recall \eqref{gammaw}) has continuous  second order mixed derivatives  on $(-\varepsilon,\varepsilon)\times [a,b]$.
		Thus, it is an admissible variation but 
		\[dt_{\gamma(b)}(Z(b))= r \int_{a}^{b}\tilde{h}(\xi',\dot x)\, u\de \tau= \sqrt{\tilde h(\xi'(s_0),\dot x(s_0))}\int_{a}^{b}\sqrt{\tilde{h}(\xi',\dot x)}\de \tau>0, 
		\]
		in  contradiction with \eqref{ezzz}. 
		
		Therefore, at the  interval $J$ where $x$ is $(-\tilde h)$-lightlike, $x|_J$ cannot be neither a smooth curve that is not a $-\tilde{h}$-lightlike pregeodesic nor a broken  lightlike  pregeodesic of $-\tilde{h}$, i.e., $x|_J$ has to be a  lightlike pregeodesic of $\tilde{h}$ and this concludes Case (c). 

		(2) Recall that, from Corollary~\ref{lightgeo2},  if $\gamma$ is a future-pointing
		lightlike geodesic  with $C_\gamma\neq 0$, then  $x$ is a pregeodesic  of $(M, F)$ or $(M, F_l)$ according to $C_\gamma<0$ or $C_\gamma>0$. Now, any  variation $\gamma_w=(\zeta_w,x_w)$ of $\gamma$, must satisfy  $g(\partial_t,\dot \gamma_w)<0$,
		in the first case, and $g(\partial_t,\dot \gamma_w)>0$,  in the second one, on all the  interval $[a,b]$ and for $w$ small enough. Hence,   $x_w$ defines  an $F$-admissible  variation of $x$.
		As $\zeta_w(s)=t_0+\ell_F\big((x_w)|_{[a,s]}\big)$ (resp. $\zeta_w(s)=t_0+\ell_{F_l}\big((x_w)|_{[a,s]}\big)$) we get that  $\gamma$ is a critical point of  $T$  (recall Lemma~\ref{criticallength}).  Finally, the case when $C_\gamma=0$ follows from 
		Corollary~\ref{lightgeo2}-(iii). %  Assume now that $\gamma$ is a lightlike pregeodesic and $C=0$. Now let $U$ be a vector field along $\gamma$ such that $g(\dot\gamma,U)\not=0$ except in the last instant, where $U(b)=\partial_t$. Assume that $\xi$ is the variational vector field of a variation of $\gamma$ by lightlike curves from $(t_0,x_0)$ to the line $s\mapsto (s,x_1)$. Then necessarily, $\xi(a)=0$, $\xi(b)=c \partial_t$ for some $c\in \R$ and $g(\xi',\dot\gamma)=0$. Define $W(s)=\xi(s)-c \frac{s-a}{b-a}U$. Observe that $W(a)=W(b)=0$ 
	\end{proof}

	\begin{rem}\label{rneq}
		(1) Comparing Theorems~\ref{lema:fermatprinc} and  \ref{fp}, one realizes that the more restrictive ambient of the latter makes possible  both, an accurate description of the critical points and also to remove the condition of non-orthogonality at the endpoint in Theorem~\ref{lema:fermatprinc}. Nevertheless, a condition of non-triviality $x_0\neq x_1$ was assumed in Theorem~\ref{fp}. The role of this condition is apparent because if $x_0\in l_{x_1}$ and this line is lightlike, then the  case (b) in the proof of Theorem~\ref{fp} shows that even if this curve is a geodesic it will not be a critical point. If $l_{x_1}$ is a lightlike curve then last-point non-orthogonality should be assumed as in Theorem~\ref{lema:fermatprinc}  and if it is not lightlike then the hypothesis can be removed.     
		
		(2)   Observe that the variation obtained in Case (c) of the above proof is not necessarily $C^2$.  Indeed, if one tries  to compute the second partial derivative with respect to $\theta$, some denominators tending to $0$ appear. In any case,  it is an admissible variation (according to Definition~\ref{def:var_fer})  because the second order mixed derivatives exist and are continuous. 
		
		Moreover, even though we have used just the first derivative of $w(\theta)$,  one can check that \soutE{it} \bw $w$ \ew is $C^2$.  As a matter of fact,  denoting by $\frac{\tilde D \dot x_{w}}{\de w}(s)$ the covariant derivative along the curve 
		$w\mapsto x_w(s)$ associated with the Levi-Civita connection of  $\tilde h$ on $M_l$,  we have   for the chosen $s_0\in (a,b)$ and  each $\theta>0$:
		\[\ddot w(\theta)=\frac{\tilde h\big (\dot x_{w(\theta)}(s_0),\frac{\tilde D \dot x_{w}}{\de w}|_{w=w(\theta)}(s_0)\big)}{\sqrt{\tilde{h}\big (\dot x_{w(\theta)}(s_0),\dot x_{w(\theta)}(s_0)\big)}}\dot w(\theta)=\tilde h\big (\dot x_{w(\theta)}(s_0),\tfrac{\tilde D \dot x_{w}}{\de w}|_{w=w(\theta )}(s_0)\big).\]
		As the limit at $\theta=0$ of the right-hand side is well-defined,
		L'Hopital's rule  
		ensures that $w(\theta)$ is a $C^2$ function on all $(-\varepsilon',\varepsilon')$ (recall that $w(-\theta)=w(\theta)$ and then $\ddot w(-\theta)=\ddot w(\theta)$). 
		%Moreover, being $\dot w(0^+)=0$, $w$ can be $C^2$-extended evenly, that is, we write $w(-\theta)=w(\theta)$,  on $(-\varepsilon',\varepsilon')$.   
		
		%\begin{comment}
		%(2)  It is also worth pointing out that the lightlike geodesics $\gamma=(\zeta,x)$ with $C_\gamma=0$ (case (2) (iii) of Theorem \ref{fp}) are {\em not} necessarily critical points of the arrival time functional. In fact, notice that the projection $x$ must be a lightlike pregeodesic of $-\tilde h$ and the existence of conjugate points along $x$ is independent of its reparametrization or the metric in the conformal class \cite[Th. 2.36]{MinSan08}. Nevertheless, the proof of the part (1), case (c), of this theorem shows that {\em whenever $x$ has a conjugate point before its endpoint, then it cannot be a critical point of the arrival time functional}. To check this, simply notice that a variation $x_w$ by timelike curves of $-\tilde h$ is possible and, thus, the process to construct a variation as in that part of the proof works.
		%
		%However,  in this case (iii) of part (2), no (non-trivial) smooth variation of $\gamma$ by lightlike curves will exist (i.e., the space $\mathcal{N}_{p_0,l_{x_1}}$ will contain only $\gamma$ and its reparametrizations)   when there is no a cut point of   $x_0$ along $x$ viewed as a lightlike geodesic of $-\tilde h$ (recall \cite[Th. 2.1]{MinSan06}). 
		%Obviously, this possibility cannot occur in classical Fermat's principle. 
		%\end{comment}
	\end{rem}
	
	%\begin{comment} Some non trivial problems appeared regarding variations of boundary geodesics in the proof. In fact, even in the smooth case of  part (1), we have to find an appropriate variation of the critical curve $\gamma$ in order to ensure 
	%that it was a lightlike geodesic. But when the projection of $\dot \gamma$ lied in $A_E\setminus A$ the construction of a regular variation, defined on all $(-\epsilon,\epsilon)\times [a,b]$ (not only on  $[0,\epsilon)\times [a,b]$) involved to 
	%consider both conic pseudo-Finsler metrics, $F$ and $F_l$. Of course, one could simplify this step if $\gamma$ were required to be a critical point also for variations defined only in  $[0,\epsilon)\times [a,b]$ (which would be trivially 
	%equivalent in a classical Riemannian or Finslerian setting). Nevertheless, we think that it is interesting to maintain our original definition not only because it is more classical but also because the sharper proof allows one to understand 
	%better the subtleties of the wind Finslerian case. 
	%\end{comment}
	
	Since the lightlike geodesics in $(\R\times M, g)$, that connect  a point $x_0$ with a line $l_{x_1}$ when at least one of the two points $x_0, x_1$ belongs to the region of  mild wind, \soutE{where $\Lambda>0$}  are those projecting on 
	pregeodesics of $F$ (recall Corollary~\ref{lightgeo2}), from Theorem~\ref{fp} we immediately get: 
	\begin{cor}
		Let $x_0,x_1\in M$ be such that at least one of the two points belongs  to the mild wind region.  Then the critical points of the arrival time functional on $\mathcal N_{(t_0,x_0),l_{x_1}}$ are all and only the  future-pointing lightlike 
		curves connecting  $(t_0,x_0)$ to $l_{x_1}$, whose projections on $M$ are pregeodesics of $(M, F)$ and, vice versa, all the  pregeodesics of $(M,F)$ connecting $x_0$ to $x_1$, when lifted  to  $(\R\times M, g)$ as lightlike curves 
		starting at $(t_0, x_0)$, are critical points of $T$ on $\mathcal N_{(t_0,x_0),l_{x_1}}$.
	\end{cor}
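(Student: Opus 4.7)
My plan is to combine Theorem~\ref{fp} with the classification of lightlike geodesics in Corollary~\ref{lightgeo2}; as the paragraph preceding the corollary already suggests, the argument is essentially a bookkeeping of which branches in those two trichotomies are compatible with the hypothesis ``$\Lambda(x_0)>0$ or $\Lambda(x_1)>0$''.

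For the forward implication, suppose $\gamma=(\zeta,x)\in\mathcal N_{(t_0,x_0),l_{x_1}}$ is a critical point of the arrival time $T$. By part~(1) of Theorem~\ref{fp}, $\gamma$ is a lightlike pregeodesic of $(\R\times M,g)$; after an affine reparametrization (which does not alter $\mathcal N_{(t_0,x_0),l_{x_1}}$), it is a geodesic, so the constant $C_\gamma=g(\partial_t,\dot\gamma)$ is well-defined. Corollary~\ref{lightgeo2} then places $\gamma$ in exactly one of the three mutually exclusive cases (i), (ii), (iii). In case~(ii) one has $\dot x\in A_l$ everywhere, which forces $\Lambda(x(s))<0$ along the whole curve; in case~(iii) one has $x\subset\overline{M}_l$, which forces $\Lambda(x(s))\le 0$ along the whole curve. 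Both conclusions contradict the mild-wind hypothesis on $x_0$ or $x_1$. Hence case~(i) must hold: $x$ is a pregeodesic of $F$, and $\gamma$ is its canonical lightlike lift.

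For the converse, let $x\colon[a,b]\to M$ be a pregeodesic of $(M,F)$ from $x_0$ to $x_1$; reparametrize so that $F(\dot x)\equiv 1$ and define $\gamma(s):=(t_0+s-a,x(s))$. By Theorem~\ref{existenceofbolasNO} (the projection of a lightlike pregeodesic onto a unit geodesic of $(M,\Sigma)$ characterization), $\gamma$ is a future-pointing lightlike pregeodesic of $(\R\times M,g)$, with $\gamma(a)=(t_0,x_0)$ and $\gamma(b)=(t_0+b-a,x_1)\in l_{x_1}$, so $\gamma\in\mathcal N_{(t_0,x_0),l_{x_1}}$. The sign of the conserved $C_\gamma$ is determined by Corollary~\ref{lightgeo2}: we cannot be in case~(ii) because $F$ and $F_l$ are distinct pregeodesic equations on $A_l$, and we cannot be in case~(iii) because that would again require $x\subset\overline{M}_l$, contradicting the mild-wind endpoint. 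So $C_\gamma<0$ and part~(2)(i) of Theorem~\ref{fp} concludes that $\gamma$ is a critical point of $T$.

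The only genuinely non-routine point is handling the degenerate (Kropina) subregion $\{\Lambda=0\}$ that $x$ may cross: there $F$ is still defined as a conic Finsler metric on $A$, and a pregeodesic of $F$ that traverses such points lifts consistently because the expression $\dot\zeta=F(\dot x)$ in \eqref{randers-kropina} extends continuously from $\Lambda>0$ to $\Lambda=0$. Thus I expect no real obstacle; the main subtlety is just tracking that the mild-wind endpoint suffices to eliminate the $F_l$-branch and the $-h$-boundary branch of Corollary~\ref{lightgeo2}, both of which would require $\Lambda\le 0$ along the entire projection $x$.
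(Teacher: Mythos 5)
Your proposal is correct and takes essentially the same route as the paper: the authors likewise obtain the corollary immediately by combining Theorem~\ref{fp} with the trichotomy of Corollary~\ref{lightgeo2}, noting that cases (ii) and (iii) force $\Lambda<0$ (resp.\ $\Lambda\le 0$) along the entire projection and are therefore excluded by the mild-wind endpoint. The only cosmetic remark is that in your converse the cleanest way to exclude case (ii) is again the mild-wind endpoint (or the sign of $C_\gamma$ forced by the $F$-lift via Proposition~\ref{plightvectorsSSTK}), rather than the informal appeal to ``distinct pregeodesic equations,'' but this does not affect the validity of the argument.
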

	As in Corollary~\ref{timelikefermat}, we can obtain a result for timelike geodesics by considering the extended spacetime $(\R\times M\times \R_u, \tilde g)$. 
	Notice that \eqref{lorentzian} is enough to ensure that 
	$(\R\times M\times \R_u, \tilde g)$ is also an \sstk splitting and  the canonical projection $t\colon\R\times M\times \R_u\to \R$ is  a temporal function.
	The  Fermat structure $\Sigma_1$ on $M\times \R_u$  carries  two  pseudo-Finsler metrics $F_1$ and $(F_1)_l$  given by \eqref{randers-kropina} 
	and \eqref{rk2} with $g_0$ replaced by the Riemannian metric on $M\times\R_u$, $g_1:= \pi_{M,u}^* g_0+\de u^2$, where $\pi_{M,u}$ is the canonical projection of $M\times \R_u$ on $M$. Clearly also the  domains $A_1$ and  $(A_1)_E$ 
	follow trivial modifications according to Proposition~\ref{plightvectorsSSTK}.
	Then,  Theorem~\ref{fp} applied to $(\R\times M\times \R_u, \tilde g )$ with its  Fermat structure $\Sigma_1$ and  the arrival  time functional  $T$, from a  point $(p_0,0)\in  \R\times M\times \R_u$  to a    line $l_{(x_1,\eta)}=\R\times \{(x_1,\eta)\}$ provides:
	\begin{cor}\label{fptimelike}
		If the curve  $\gamma_1$ is a critical point of the
		arrival time functional $T$ on $\mathcal N_{(p_0,0), l_{(x_1,\eta)}}$, then, its projection $\pi_{\R\times M}\circ\gamma_1$ is a timelike  pregeodesic of $(\R\times M,g)$ with length $\eta$. 
		Conversely,    if  $\gamma=(\zeta,x)\colon [0,1]\to  \R\times M$ is a
		timelike  geodesic of $(\R\times M,g)$ (of length $\eta=\sqrt{-g(\dot \gamma, \dot \gamma)}$) then, the lightlike geodesic of $(\R\times M\times \R_u, \tilde g)$, $\gamma_1(s)=(\gamma(s),\eta s)$, satisfies  
		$C=g(\partial_t, \dot\gamma)=\tilde g(\partial_t,\dot\gamma_1)$ and one of the following three
		exclusive cases holds
		\begin{itemize}
			\item[(i)] $C<0$, $\dot x$ lies in $A$, $(x(s),\eta s)$ is a
			pregeodesic of $F_1$,  $\gamma_1$ is a critical point of $T_1$  and
			\[
			\zeta(s)=\zeta(a)+\int_a^s F_1\big((\dot x(\bar s),\eta )\big)\de \bar s.
			\]
			\item[(ii)] $C>0$, $\dot x$ lies in $A_l$ (so that $\Lambda < 0$
			on   all   $x$), $(x(s),\eta s)$ is a pregeodesic of $(F_1)_l$, $\gamma_1$ is a critical point of $T_1$  and
			\[
			\zeta(s)=\zeta(a)+\int_a^s  (F_1)_l\big((\dot x(\bar s),\eta)\big)\de \bar s.
			\]
			\item[(iii)] $C=0$,   $x$ remains in $\bar M_l$ 
			and, whenever $\Lambda<  0$ on  $x$, necessarily $(x(s),\eta s)$ is a lightlike geodesic
			of the Lorentzian metric $h_1/\Lambda$ on $M_l\times \R_u$, where $h_1=\pi_{M,u}^* h+\de u^2$
			$-\omega(\dot x)>0$ and $\zeta$
			satisfies
			\[
			\zeta(s)=\zeta(a)-\int_a^s \frac{g_1\big((\dot x(\bar s),\eta),(\dot x(\bar s),\eta)\big)}{\omega(\dot x(\bar s))}\de \bar s.
			\]
		\end{itemize}
	\end{cor}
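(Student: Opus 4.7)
\medskip

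The plan is to lift the problem to the extended \sstk splitting $(\R\times M\times \R_u, \tilde g)$ and reduce everything to Theorem~\ref{fp}, via the classical trick that timelike geodesics of $(\R\times M,g)$ of length $\eta$ correspond to lightlike geodesics of $\tilde g = g + \de u^2$ whose $u$-component grows linearly with slope $\eta$. The first preliminary step is therefore to verify that $(\R\times M\times\R_u,\tilde g)$ is indeed an \sstk splitting with Killing field $\partial_t$: the decomposition triple is $(\Lambda,\pi_{M,u}^*\omega, g_1)$ with $g_1 = \pi_{M,u}^* g_0 + \de u^2$, and the admissibility condition \eqref{lorentzian} is preserved because $\|\omega\|_{g_1} = \|\omega\|_{g_0}$ (as $\omega$ does not involve $\de u$). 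Hence $t$ remains a temporal function and Theorem~\ref{fp} is applicable to the arrival time functional on $\mathcal N_{(p_0,0),l_{(x_1,\eta)}}$.

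For the first implication, suppose $\gamma_1\in\mathcal N_{(p_0,0),l_{(x_1,\eta)}}$ is critical for $T$. By Theorem~\ref{fp}(1) it is a lightlike pregeodesic of $\tilde g$. Since $\partial_u$ is a parallel (Killing and gradient) vector field for $\tilde g$, $\tilde g(\partial_u,\dot\gamma_1) = \dot u$ is constant along $\gamma_1$; together with the boundary conditions $u(a)=0$, $u(b)=\eta$, this forces (after reparametrizing on $[0,1]$ if needed) $u(s)=\eta s$. Writing $\gamma_1(s)=(\gamma(s),\eta s)$, the geodesic equation for $\tilde g$ decouples into the geodesic equation for $\gamma$ in $(\R\times M,g)$ plus $\ddot u=0$, and the lightlike condition gives $g(\dot\gamma,\dot\gamma)+\eta^2=0$, i.e.\ $\gamma$ is timelike with $\sqrt{-g(\dot\gamma,\dot\gamma)}=\eta$.

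For the converse, given a timelike geodesic $\gamma=(\zeta,x)$ of length $\eta$, define $\gamma_1(s)=(\gamma(s),\eta s)$; then $\tilde g(\dot\gamma_1,\dot\gamma_1)=g(\dot\gamma,\dot\gamma)+\eta^2=0$ and $\gamma_1$ is a $\tilde g$-geodesic by the same decoupling argument. The conserved quantity is
\[
\tilde C := \tilde g(\partial_t,\dot\gamma_1) = g(\partial_t,\dot\gamma) = C,
\]
since $\partial_t$ has no $\partial_u$-component. Now apply Theorem~\ref{fp}(2) to $\gamma_1$ inside the extended \sstk spacetime: the three exclusive possibilities for the sign of $\tilde C$ translate verbatim into the three cases in the statement, with the lifted projection $(x(s),\eta s)$ playing the role of ``$x$'' in that theorem. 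In case (i) (resp.\ (ii)) one obtains a pregeodesic of $F_1$ (resp.\ $(F_1)_l$), and reading off the $\zeta$-formula from Theorem~\ref{fp} yields the expressions in the statement (here $(\dot x(s),\eta)\in A_1$ or $(A_1)_l$ precisely when $\dot\gamma$ is future-timelike and $C$ has the corresponding sign). In case (iii), $\gamma_1$ is orthogonal to $\partial_t$, the lift $(x(s),\eta s)$ stays in $\bar M_l\times\R_u$ and, whenever $\Lambda<0$ on $x$, is a lightlike geodesic of the conformal metric $h_1/\Lambda$; the $\zeta$-formula follows from the null relation $-\Lambda \dot\zeta^2+2\omega(\dot x)\dot\zeta+g_1\big((\dot x,\eta),(\dot x,\eta)\big)=0$ with $\Lambda\dot\zeta-\omega(\dot x)=-\tilde C=0$.

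The main obstacle is purely bookkeeping: one must check that the domain $A_1$ (and its boundary) for the Fermat structure $\Sigma_1$ really corresponds, under the substitution $v\mapsto (v,\eta)$, to the timelike (resp.\ causal) cones of $g$ in $TM\times\R$, so that each of the three cases (i)--(iii) in Theorem~\ref{fp} applied on the extended spacetime pulls back to the claimed three alternatives for the original timelike geodesic. This is essentially a restatement of Remark~\ref{raclarations} with the extra flat direction $\de u^2$; no new analytic input is needed beyond what Theorem~\ref{fp} already provides.
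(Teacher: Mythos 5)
Your proposal is correct and follows essentially the same route as the paper: lift to the product spacetime $(\R\times M\times \R_u,\tilde g)$, check it is still an \sstk splitting satisfying \eqref{lorentzian}, and apply Theorem~\ref{fp} together with the standard correspondence between timelike geodesics of length $\eta$ and lightlike geodesics of $\tilde g$ with $u(s)=\eta s$. The extra detail you supply (decoupling of the geodesic equations, constancy of $\dot u$, and the identification of the cones under $v\mapsto(v,\eta)$) is exactly the bookkeeping the paper leaves implicit.
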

	\section{Further applications}\label{further2}
	Next, we give some simple applications to spacetimes (which, eventually, could be developed further in concrete cases of physical interest) in Subsections~\ref{develops} and \ref{ss6.5}. An application to the differentiability of the Randers-Kropina separation in Subsection~\ref{s8.2} is also provided. 
	\subsection{Cauchy developments}\label{develops}
	The description of  the causal properties of an \sstk  splitting in terms of  its  Fermat structure  allows us to obtain also information about   Cauchy developments
	(see \cite[Ch. 14]{O'neill} for background and conventions used here). The notion of Cauchy development makes sense for any subset $\mathcal A$ of a
	spacetime $L$ that is {\em achronal} i.e. no $x,y\in \mathcal A$ are chronologically related (we will only consider  subsets included in a slice of an \sstk splitting that are always acausal  too, see footnote~\ref{acausal}).
	For such an $\mathcal A$, the {\it future (resp. past)
		Cauchy development of $\mathcal A$}, denoted by $D^+(\mathcal A)$
	(resp. $D^-(\mathcal A)$) is defined as the subset of the  points $y$
	such that every past-inextendible (resp. future-inextendible)
	causal curve through $y$ meets $\mathcal A$. The union of both
	$D^+({\mathcal A})\cup D^-({\mathcal A})$ is simply called the
	{\it Cauchy development of $\mathcal A$}  and it will be denoted by  $D(\mathcal
	A)$. The {\em future
		(resp. past) Cauchy horizon} $H^+({\mathcal A})$ (resp. $H^-({\mathcal A})$) is defined as
	\[H^{\pm}({\mathcal A})=\{  q  \in \overline{D}^\pm ({\mathcal A}):  I^\pm(q )\cap D^\pm ({\mathcal A}) =\emptyset\}.\]
	It is helpful to think that $D({\mathcal A})$ is the region of the spacetime predictable from data in $\mathcal A$ (in fact, the interior of $D(\mathcal A)$ is globally hyperbolic when non-empty)
	and the horizon $H({\mathcal A})=
	H^+({\mathcal A})\cup H^-({\mathcal A})$ can be thought as the boundary of this region.   As an immediate consequence of the definition,
	$H^{+}(\mathcal A)=\overline{D}^+  (\mathcal A)\setminus I^-(D^+(\mathcal A))$  and analogously for  $H^{-}(\mathcal A)$.
	\begin{prop}\label{cauchydevhor} Let $(\R\times M,g)$ be an
		\sstk splitting as in \eqref{lorentz},  ${\mathcal A}\subset M$
		and ${\mathcal A}_{t_0}=\{t_0\}\times {\mathcal A}$ the
		(necessarily achronal) subset of $S_{t_0}$. Then 
		\begin{align*}
			D^+({\mathcal A}_{t_0})& \subset\,   \big\{(t,y)\in \R\times M: t \geq  t_0  \text{ and }\hat{B}^-_\Sigma(y,t-t_0)\subset {\mathcal A} \big\},\\
			D^-({\mathcal A}_{t_0})& \subset\,   \{(t,y)\in \R\times M: t \leq  t_0  \text{ and }\hat{B}^+_\Sigma(y,t_0-t)\subset {\mathcal A} \}.
		\end{align*}
		Moreover, if the spacetime is globally hyperbolic with $S_0$
		a Cauchy hypersurface, then the reverse inclusions hold  and: 
		\begin{align*}
			H^+({\mathcal A}_{t_0}) = &\big\{(t,y)\in \R\times M:  t\geq  t_0,\ y\in \bigcup_{x\in \partial\mathcal A}\hat{B}^+_\Sigma(x,t-t_0)\setminus \bigcup_{x\notin {\mathcal A}} B^+_\Sigma(x,t-t_0)\big\},\\
			H^-({\mathcal A}_{t_0})= &\big\{(t,y)\in \R\times M:t\leq  t_0, \ y\in
			\bigcup_{x\in \partial\mathcal
				A}\hat{B}^-_\Sigma(x,t_0-t)\setminus \bigcup_{x\notin {\mathcal
					A}} B^-_\Sigma(x,t_0-t)\big\}.
		\end{align*}
	\end{prop}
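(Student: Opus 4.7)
The plan is to split the proof into three parts corresponding to the three statements: the general inclusions for $D^\pm$, the reverse inclusions under global hyperbolicity, and the explicit description of the horizons.

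For the first inclusions, I would start with a point $(t,y)\in D^+(\mathcal{A}_{t_0})$. The fact $t\geq t_0$ follows immediately because $t$ is a temporal function, so any past-inextendible causal curve through a point with $t<t_0$ stays in $\{t<t_0\}$ and misses $S_{t_0}$. Now for $x\in \hat{B}^-_\Sigma(y,t-t_0)$, Proposition~\ref{bolas2} produces a future-pointing causal curve $\gamma$ from $(t_0,x)$ to $(t,y)$. Extending $\gamma$ to the past through $(t_0,x)$ by following the integral curve of $K=\partial_t$ to arbitrarily negative times yields a past-inextendible causal curve through $(t,y)$, which must therefore meet $\mathcal{A}_{t_0}$. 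Because $S_{t_0}$ is acausal ($t$ temporal, hence strictly increasing on any causal curve), the extended curve crosses $S_{t_0}$ only once, necessarily at $(t_0,x)$. Thus $x\in \mathcal{A}$, yielding the inclusion; the dual argument handles $D^-$.

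For the reverse inclusions under global hyperbolicity, I would use that $S_0$ being Cauchy forces every slice $S_{t_0}$ to be Cauchy (transport by the flow of $K$). Take $(t,y)$ with $t\geq t_0$ and $\hat{B}^-_\Sigma(y,t-t_0)\subset \mathcal{A}$. Any past-inextendible causal curve through $(t,y)$ meets $S_{t_0}$ at a single point $(t_0,x)$; Proposition~\ref{bolas2} identifies $x\in \hat{B}^-_\Sigma(y,t-t_0)\subset\mathcal{A}$, so the curve meets $\mathcal{A}_{t_0}$ and hence $(t,y)\in D^+(\mathcal{A}_{t_0})$.

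For the Cauchy horizons I would work from the standard identity $H^+(\mathcal{A}_{t_0})=\overline{D^+(\mathcal{A}_{t_0})}\setminus I^-(D^+(\mathcal{A}_{t_0}))$ (and symmetric for $H^-$), and translate both pieces into Finslerian ball conditions via the equivalences $x\in\hat{B}^-_\Sigma(y,s)\Leftrightarrow y\in\hat{B}^+_\Sigma(x,s)$ and $x\in B^-_\Sigma(y,s)\Leftrightarrow y\in B^+_\Sigma(x,s)$ coming from Proposition~\ref{bolas2}. The closure $\overline{D^+(\mathcal{A}_{t_0})}$ I would identify, by a limit-curve argument based on Lemma~\ref{limitcurve} together with Proposition~\ref{lreflect}, as the set of $(t,y)$ with $\hat{B}^-_\Sigma(y,t-t_0)\subset\overline{\mathcal{A}}$, equivalently $y\in\bigcup_{x\in\overline{\mathcal{A}}}\hat{B}^+_\Sigma(x,t-t_0)$. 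Excluding points of $I^-(D^+(\mathcal{A}_{t_0}))$ I would translate, using the openness of wind balls (Remark~\ref{ropenwindballs}) and Proposition~\ref{bolas2}, into the requirement $B^-_\Sigma(y,t-t_0)\subset\mathcal{A}$, i.e. $y\notin\bigcup_{x\notin\mathcal{A}}B^+_\Sigma(x,t-t_0)$. The two conditions combined force any witness $x$ with $y\in\hat{B}^+_\Sigma(x,t-t_0)$ to lie in $\overline{\mathcal{A}}\setminus \mathcal{A}^\circ=\partial\mathcal{A}$ (after excluding those witnesses that would lie in $\mathcal{A}^\circ$, since such witnesses produce chronological extensions violating the second condition), yielding exactly the displayed expression for $H^+$; the argument for $H^-$ is symmetric.

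The main obstacle I anticipate is the horizon identification, specifically the careful characterization of $\overline{D^+}$ and of $I^-(D^+)$ in terms of c-balls and wind balls. The subtlety arises precisely at points where the causal relation holds but the chronological one does not, that is, where c-balls and closures of open balls fail to coincide; the combination of the limit-curve lemmas, the reflectivity description in Proposition~\ref{lreflect}, and the openness of wind balls should bridge this gap, but the bookkeeping separating $\partial\mathcal{A}$ from $\mathcal{A}^\circ$ in the final expression must be done with care.
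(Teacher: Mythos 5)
Your treatment of the two inclusions for $D^\pm(\mathcal A_{t_0})$ and of their converses under global hyperbolicity is essentially the paper's argument, with one slip: in a general \sstk splitting $K=\partial_t$ may be spacelike, so the integral curve $l_x$ is not in general a causal curve and cannot be used to extend $\gamma$ to a past-inextendible \emph{causal} curve. This is harmless — extend $\gamma$ by an arbitrary past-inextendible causal curve; monotonicity of the temporal function $t$ still forces the unique crossing of $S_{t_0}$ to be $(t_0,x)$ — but as written the step is invalid off the region where $K$ is causal.

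The horizon part, however, has a genuine gap. First, your ``equivalently'' is false: $\hat B^-_\Sigma(y,t-t_0)\subset\overline{\mathcal A}$ is a universal condition on the backward c-ball, while $y\in\bigcup_{x\in\overline{\mathcal A}}\hat B^+_\Sigma(x,t-t_0)$ is existential; they are not the same set. Second, the assignments of ball conditions to the two pieces of $H^+=\overline{D^+}\setminus I^-(D^+)$ are essentially swapped. One can check that $\overline{D^+(\mathcal A_{t_0})}\cap\{t>t_0\}$ is exactly $\{(t,y): B^-_\Sigma(y,t-t_0)\subset\mathcal A\}$ (if some $x\notin\mathcal A$ satisfies $(t_0,x)\ll(t,y)$, then $I^+(t_0,x)$ is an open neighbourhood of $(t,y)$ disjoint from $D^+$); whereas the existence of such an $x$ automatically \emph{implies} $I^+(t,y)\cap D^+=\emptyset$, so the condition $B^-_\Sigma(y,t-t_0)\subset\mathcal A$ cannot characterize $(t,y)\notin I^-(D^+)$ as you claim, and neither does $\hat B^-_\Sigma(y,t-t_0)\subset\overline{\mathcal A}$ characterize the closure (take $\mathcal A=(0,1)\cup(1,2)$ in $\LL^2$: the apex $(1,1)$ of the large triangle satisfies your closure condition but is not in $\overline{D^+}$). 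Third, and decisively, the final ``forcing of witnesses onto $\partial\mathcal A$'' fails: in $\LL^2$ with $\mathcal A=(0,2)$, $t_0=0$, the point $(0.2,\,0.5)$ satisfies both of your conditions (it admits a witness $x\in\mathcal A^\circ$, and no $x'\notin\mathcal A$ chronologically precedes it), yet it lies in the interior of $D^+$ and is not on the horizon; nothing in your second condition excludes witnesses lying in $\mathcal A^\circ$. The missing idea is the limit-curve construction: for $(t,y)\in H^+$ one takes $q_n\in I^+(t,y)\to(t,y)$, uses $I^+(t,y)\cap D^+=\emptyset$ to obtain past-inextendible causal curves through $q_n$ missing $\mathcal A_{t_0}$, hence crossing $S_{t_0}$ at points $x_n\notin\mathcal A$, and then Lemma~\ref{limitcurve} produces a limit crossing at $x=\lim x_n\in\overline{M\setminus\mathcal A}$ with $y\in\hat B^+_\Sigma(x,t-t_0)$; membership in $\overline{D^+}$ then forces $x\in\partial\mathcal A$. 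This is what manufactures a witness on $\partial\mathcal A$, and your proposal contains no substitute for it.
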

	\begin{proof}
		Reasoning always for the $^+$ case, let $(t,y)\in D^+(\mathcal A_{t_0})$. As the   time function of the \sstk splitting is decreasing on past-pointing causal curves, $t\geq t_0$. If $x\in \hat{B}^-_\Sigma(y,t-t_0)$, 
		from Proposition~\ref{bolas2}, $(t_0,x)\in J^-(t,y)$ and there exists a  past-pointing causal curve from $(t,y)$ to $(t_0, x)$.  Again by monotonicity of the time function, $(t_0,x)$ is its unique  point 
		of intersection with $\mathcal A_{t_0}$, so that $x\in \mathcal A$.
		
		Now let us assume that $S_{t_0}$ is a Cauchy hypersurface. 
		
		Let $(t,y)\in \R\times M$ such that $\hat B^-_\Sigma(y,t-t_0)\subset \mathcal A$. From Proposition ~\ref{bolas2}, any  past-inextendible causal curve through $(t,y)$  intersects $S_{t_0}$ in a  point $(t_0,x)$  with $x\in \hat B^-_\Sigma(y,t-t_0)$,  so that,  $x\in
		\mathcal A$  and consequently $(t,y)\in D^+(\mathcal A_{t_0})$. 
		
		For the Cauchy horizons, consider
		the case $t>t_0$ and let us show  the inclusion $\subset$.
		Let $(t,y)\in H^+(\mathcal A_{t_0})$.  Assume that there exists
		$x\not\in \mathcal A$ such that $y\in B^+_{\Sigma}(x, t-t_0)$.
		From Proposition~\ref{bolas2}   $(t_0,x)\ll (t,y)$; thus,
		$I^+(t_0,x)$ is a neighborhood of $(t,y)$ which does not intersect
		$D^+(\mathcal A_{t_0})$, which is absurd. Therefore, $y\not\in
		\cup_{x\not\in\mathcal A}B^+_{\Sigma}(x,t-t_0)$.
		Now, let $\{(t^+_n,y^+_n)\}$ be a sequence in $I^+(t,y)$
		converging to $(t,y)$ such that, for  each $n\in\N$,  there exists a
		future-pointing causal curve $\gamma_n$ which does not cross
		$\mathcal{A}_{t_0}$. However, as $S_{t_0}$ is Cauchy, $\gamma_n$
		will cross $S_{t_0}$ at some $(t_0, x_n)$ with  $x_n\not \in
		\mathcal A$.
		The  limit  curve $\gamma$  of the sequence $\{\gamma_n\}$ passing through $(t,y)$ will also cross $S_{t_0}$ at some point $(t_0, x)$ and,  by $(i)$ in Lemma~\ref{limitcurve}, $x=\lim_n x_n$. 
		Therefore, $x\in \overline{M\setminus \mathcal A}$ and  $y\in \hat B^+_{\Sigma}(x, t-t_0)$.
		Moreover,   $x$ cannot belong to the interior of $M\setminus {\mathcal A}$. Otherwise, if $V$ is a neighborhood of $x$ in $M\setminus {\mathcal A}$, then,  recalling that $J^-(t,y)= \bar I^-(t,y)$,  $I^-(t,y)$ 
		would intersect $\{t_0\}\times V$ and, reasoning as above, $(t,y)\not\in\overline{D}^+(\mathcal{A}_{t_0})$.
		
		For the inclusion $\supset$, notice first that, if $(t,y)\not\in \overline{D}^+(\mathcal{A}_{t_0})$, then there exists
		$(t',y')$, with $t_0<t'<t$, such that $\overline{
			D}^+(\mathcal{A}_{t_0}) \not\ni (t',y')\ll (t,y)$. Taking an
		inextendible past-pointing causal  curve starting at $(t',y')$
		which does not cross $\mathcal{A}_{t_0}$ (but which will  cross
		$S_{t_0}$ necessarily), there exists $\bar x \in M\setminus
		\mathcal{A}$ such that $(t_0,\bar x)\le (t',y')\ll (t,y)$ and, so,
		$y\in B^+_\Sigma(\bar x,t-t_0)$. Thus, one has just to prove only
		for points $(t,y)\in \overline{ D}^+(\mathcal{A}_{t_0})$ that
		$(t,y)$ belongs to $H^+(\mathcal{A}_{t_0})$ whenever $y\in
		\hat{B}^+_\Sigma(\tilde x,t-t_0)$, for some $\tilde x\in
		\partial \mathcal A$.  This hypothesis implies that  $(t,y)\in J^+(t_0,\tilde x)$ and, so, any
		$(t',x')\gg (t,y)$ also satisfies $(t',x')\gg (t_0,\tilde x)$. As
		$\tilde x$ lies in $\partial\mathcal A$, necessarily  $(t',x')\in I^+(S_{t_0}\setminus \mathcal{A}_{t_0})$, i.e., $(t',x')\not\in
		D^+(\mathcal{A}_{t_0})$, as required.
		
		For the case $t=t_0$, the inclusion $\subset$ is straightforward (the balls $B_\Sigma^+(x,0)$ are empty and a simple local comparison with Lorentz Minkowski shows that (a) if $x$ belongs to the interior of $\mathcal{A}$ then $I^+(t_0,x)$ 
		intersects $D^+(\mathcal{A}_{t_0})$ while (b) if $x$ belongs to the interior of $M\setminus \mathcal{A}$ then $(t_0,x)$ does not belong to the closure of $D^+(\mathcal{A}_{t_0})$). The converse inclusion follows because, clearly, $(t_0,x)$
		belongs to  $\bar D^+(\mathcal{A}_{t_0})$ if $x\in \partial \mathcal A$ (as $\mathcal{A}_{t_0}\subset \bar D^+(\mathcal{A}_{t_0})$) and, if some $(t,y)\in I^+(t_0,x)\cap  D^+(\mathcal{A}_{t_0})$ then a contradiction follows as above.
	\end{proof}
	%\begin{rem} 
	The previous result extends  \cite[Prop. 4.7]{CapJavSan10}.  When global hyperbolicity is assumed, one can use  indistinctly closed or c-balls, even though  some  extensions in the sense of \cite[Remark 4.8]{CapJavSan10} could be explored.  The following example stresses the role of the Cauchy hypersurface. 
	\begin{exe}
		$\R\times (0,+\infty)\subset \LL^2$ as \sstk spacetime shows that if $S_t$ is not Cauchy, then none on the conclusions for $H^\pm(\mathcal{A}_t)$ holds if one chooses $\mathcal{A}=(0,2)$ (see Fig.~\ref{hor}). Notice that this example is even causally simple. 
		\begin{figure}[h]
			\includegraphics[scale=1.2, center]{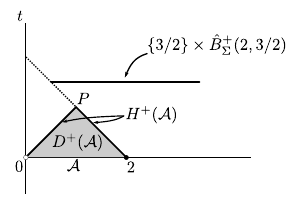}
			\caption{The points of the dashed line belong to $\cup_{t>0}\{t\}\times \hat B^+_{\Sigma}(2,t) \setminus B^+_{\Sigma}(2,t)$ but not to $H^+(\mathcal A)$; the ones of the segment $\overline{0P}$, $0$ excluded, are in  $H^+(\mathcal A)$ 
				but not in $\cup_{t>0}\{t\}\times \hat B^+_{\Sigma}(2,t)$.}\label{hor}
		\end{figure}
	\end{exe}
	\subsection{Differentiability of the Randers-Kropina separation}\label{s8.2}
	For an \sstk with a causal Killing vector field, we can use the Finslerian separation $d_F$ of the associated Randers-Kropina metric  to  describe the future or the past Cauchy horizon of  $\mathcal A_{t_0}=\{t_0\}\times \mathcal A$, at 
	least when $S_0$ is Cauchy.
	Indeed, in this case, from Proposition~\ref{cauchydevhor}, the set:
	\begin{multline*}
		\{(t,y)\in \R\times M : \not\exists
		x'\in M\setminus {\mathcal A}:  d_F(x',y)<t-t_0\text{ and }\\
		\text{either $y\in \partial \mathcal A$, $t\geq t_0$ and  $\Lambda(y)=0$ or }\exists x\in \partial \mathcal A\text{ s. t. }
		d_F(x,y)=t-t_0\}\\
		=\{(t,y)\in \R\times  \bar{\mathcal A} : \, \hbox{inf}_{x\not\in \mathcal{A}}
		d_F(x,y)=t-t_0\}\\ 
		\cup \{(t,x): x\in \partial \mathcal{A},  \Lambda(x)=0,\  t\geq t_0 \, \hbox{and} \not\exists x'\in M\setminus \mathcal{A}: d_F(x',x)<t-t_0\}
	\end{multline*}
	is  equal to 
	the horizon $H^+({\mathcal A}_{t_0})$.
	Notice that,  differently from the stationary case,   $H^+({\mathcal A}_{t_0})$ might also contain achronal  arcs, included in integral curves of $\partial_t$, such that $x\in \partial \mathcal A$, $\Lambda(x)=0$ and $\de\Lambda( \hbox{Ker}\, \omega_x)=0$.  
	\begin{exe}  Consider $\R^2$   endowed with the flat metric $g=\de x^2- (\de x\de t+\de t \de x)/2$  and $\mathcal A=(0,1)\subset \R$,  so that $S_0=\{0\}\times \R$ is spacelike and  the segment connecting the points having coordinates  $(0,1)$ 
		and $(1,1)$ (the first coordinate is $t$) is contained in $H^+(\mathcal A_{0})$. Clearly,   this arc  cannot be described as  made of  points belonging to  the graph
		(in $\R\times M$) of the function $$\varphi:=x\in \bar{\mathcal A} \mapsto d_F(M\setminus \mathcal A, x),$$
		where $d_F(M\setminus \mathcal A, x):=\inf_{y\in M\setminus \mathcal A} d_F(y,x)$.  Modifying this example one can also easily check  that, differently from the Riemannian or the Finslerian case,  $\varphi$ 
		is not continuous on $\partial \mathcal A$, in general. 
	\end{exe}
	
	Following \cite{ChFuGH02}, we  introduce the notion of {\em future horizon} which  encompasses   some of the essential properties possessed by a Cauchy horizon.  A future horizon is a  topological, closed, achronal hypersurface ruled by future 
	inextendible lightlike geodesics. This notion  allows us to remove the assumption that $S_0$ must be a Cauchy hypersurface and, then,  to extend to the Randers-Kropina separation $d_F$ a result about differentiability of the distance function 
	from a closed subset valid for a Riemannian distance \cite[Proposition 11]{ChFuGH02} and for a  distance associated with a Randers metric \cite[Theorem 5.12]{CapJavSan10}.
	
	Let $(M, F)$ be a Randers-Kropina space and $C\subset M$ a closed subset. 
	Let $(\R\times M, g)$ be the   \sstk splitting associated  with   $(M,F)$ and $((-\infty,0)\times (M\setminus C), g)$ the spacetime obtained by considering the open subset $(-\infty,0)\times (M\setminus C)\subset \R\times M$. Let us define 
	$\rho\colon M\setminus C\to [-\infty,0)$, $\rho(x):=-d_F(x,C)$, where, now, $d_F(x, C):=\inf_{y\in C}d_F(x,y)$. 
	Notice that, similarly to Proposition~\ref{ctf},  $\rho$ is equal to the function
	\begin{align*}
		\tau(x)&:=\sup\{t \in \R : \exists y\in C \text{ such that } (t,x) \ll (0,y) \}\\
		&= \sup\{ t \in \R : (\{0\}\times C) \cap I^+(t,x) \neq \emptyset\}\\
		&= \sup\{ t \in \R : (t,x) \in I^-(\{0\}\times C) \}.\end{align*}
	Let us prove that $\rho$ is a continuous function ($[-\infty, 0)$ is endowed with  its natural order topology). Indeed, being $\rho$ defined as minus the infimum of continuous functions, it is lower semi-continuous. Moreover, the following holds too:
	\begin{prop}\label{rocont}
		The function  $\rho\colon M\setminus C\to [-\infty, 0)$ is  upper semi-continuous. 
	\end{prop}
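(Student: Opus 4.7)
The plan is to argue by contradiction using the spacetime characterization $\rho(x)=\sup\{t\in\R:(t,x)\in I^-(\{0\}\times C)\}$ established just before the statement. If $\rho$ failed to be upper semicontinuous at some $x_0\in M\setminus C$, then, after passing to a subsequence, there would exist $x_n\to x_0$ and $r\in\R$ with $\rho(x_n)>r>\rho(x_0)$ for every $n$; I aim for the contradiction $\rho(x_0)\geq r'$ for some $r'\in(\rho(x_0),r)$. From $\rho(x_n)>r$ and Proposition~\ref{bolas} one picks $y_n\in C$ together with a future-directed timelike curve $\gamma_n(t)=(t,\sigma_n(t))$ on $[r,0]$ joining $(r,x_n)$ to $(0,y_n)$.

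In the favourable case, some subsequence $y_n\to y\in C$ (by closedness of $C$), and $y\neq x_0$ because $x_0\notin C$. Then Theorem~\ref{tcontdf} gives $d_F(x_0,y)=\lim d_F(x_n,y_n)\leq -r$, equivalently $(r,x_0)\leq(0,y)$ causally. Fixing $r'\in(\rho(x_0),r)$, I concatenate the Killing orbit segment $l_{x_0}|_{[r',r]}$ with a causal curve from $(r,x_0)$ to $(0,y)$; the result is a causal curve from $(r',x_0)$ to $(0,y)$, which I would upgrade to a chronological one so that $(r',x_0)\in I^-(\{0\}\times C)$, yielding $\rho(x_0)\geq r'$ and the desired contradiction. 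If $K$ is timelike at $x_0$, the first segment is already timelike and this is automatic; if $K$ is null at $x_0$, the concatenation fails to be chronological only when it forms a single null pregeodesic, which by uniqueness of null pregeodesics would force the endpoint to equal $x_0$, contradicting $y\in C$ and $x_0\notin C$.

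The substantive work is the escape case, where $\{y_n\}$ admits no cluster point in $M$. Here I would extend each $\gamma_n$ to a past- and future-inextendible causal curve in $\R\times M$ by prepending $l_{x_n}$ and appending $l_{y_n}$ (both causal since $K$ is causal), and apply Lemma~\ref{limitcurve}(i) at the accumulation point $(r,x_0)$ to obtain a future-inextendible causal limit curve $\gamma$ through $(r,x_0)$, with the property that $\gamma_{n_k}(t_0)\to\gamma(t_0)$ whenever $\gamma$ meets $S_{t_0}$. If $\gamma$ reaches $S_0$ at some $(0,y_\infty)$, then $y_{n_k}\to y_\infty\in C$, reducing us to the favourable case. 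Otherwise $\gamma$ is future-inextendible with $\sup t\leq 0$, so its projection $\sigma=\pi\circ\gamma$ must leave every compact set of $M$ in finite $t$-time. The hard part will be ruling out this escape: for it, I would invoke the Riemannian lower bound $F\geq\sqrt{h_0}$ from Remark~\ref{riemlowerbound} together with a complete auxiliary Riemannian metric on $M$, locally comparable to $h_0$ on compact sets, so that the projections $\sigma_n$ are uniformly Lipschitz with controlled speed on nested compact neighbourhoods of $x_0$, confining $\sigma$ to a compact ball up to $t=0$ and forcing it to accumulate at a point of $M$, in contradiction with the inextendibility of $\gamma$.
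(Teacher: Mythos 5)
Your overall skeleton (argue by contradiction, connect $(r,x_n)$ to $(0,y_n)\in\{0\}\times C$ by timelike curves, extend them to inextendible causal curves and pass to a limit curve through $(r,x_0)$) is the same as the paper's, and your ``favourable'' case is essentially correct: once $y_n\to y\in C$ with $y\neq x_0$, Theorem~\ref{tcontdf} gives $d_F(x_0,y)\leq -r<-r'$, and Proposition~\ref{bolas} then yields $(r',x_0)\ll(0,y)$ directly, so $\rho(x_0)\geq r'$. (Your intermediate step is slightly off: $d_F(x_0,y)\leq -r$ only places $(0,y)$ in $\bar J^+(r,x_0)$ by Proposition~\ref{lreflect}, not in $J^+(r,x_0)$, so the causal curve from $(r,x_0)$ to $(0,y)$ you propose to concatenate need not exist; but the direct route just indicated makes this harmless.)

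The genuine gap is the escape case. The confinement argument you sketch cannot work: Remark~\ref{riemlowerbound} provides \emph{some} Riemannian lower bound for $F$, but that metric need not be complete, and a complete auxiliary metric that is merely ``locally comparable on compact sets'' gives no uniform speed bound, since the comparison constants degenerate along an exhaustion of $M$. When the Randers--Kropina metric is forward incomplete, future-inextendible causal curves of the associated \sstk\ splitting really can leave every compact subset of $M$ in finite coordinate time, so the limit curve may fail to reach $S_0$ and no contradiction with inextendibility is available. The paper's proof never tries to make the limit curve reach $C$; it only needs $\gamma$ to leave the line $l_{x_0}$. This it must do: if $\gamma\subset l_{x_0}$, future-inextendibility forces it to meet some slice $S_t$ with $t>0$, and Lemma~\ref{limitcurve}\,(i) would give $y_{n_k}\to x_0$, hence $x_0\in C$ (as $C$ is closed), a contradiction. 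Then one picks $Q$ on $\gamma$ off $l_{x_0}$ and $T_1\in(\rho(x_0),r)$; the concatenation of the causal segment of $l_{x_0}$ from $(T_1,x_0)$ to $(r,x_0)$ with the arc of $\gamma$ up to $Q$ cannot be a lightlike pregeodesic (uniqueness of geodesics at the first point where $\gamma$ leaves $l_{x_0}$), so $(T_1,x_0)\ll Q$ by Remark~\ref{rll}; since $\ll$ is open and $Q$ is accumulated by points of the $\gamma_{n_k}$, some $Q_{\bar n}$ on an actual $\gamma_{\bar n}$ satisfies $(T_1,x_0)\ll Q_{\bar n}\leq(0,y_{\bar n})$, whence $\rho(x_0)\geq T_1>\rho(x_0)$. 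Replacing your compactness step by this argument closes the proof and in fact subsumes both of your cases.
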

	\begin{proof}
		Assume by contradiction that there exist $x \in M\setminus C$ and two  sequences $\{y_n \} \subset C$ and $\{x_n\}\subset M\setminus C$ such that $x_n \rightarrow x$ 
		and 
		$t_n:=-d_F(x_n,y_n)$ satisfy
		$$
		T_0 := \lim_n t_n > \rho(x).
		$$
		Then, take future-pointing timelike curves $\gamma_n$ from $(t_n-1/n,x_n)$ to $(0,y_n)$ and the limit curve $\gamma$ of the sequence starting at $(T_0,x)$. 
		As $\gamma$ is inextendible and $x\in M\setminus C$, by a reasoning analogous to  that   in the proof of Theorem~\ref{tcontdf}, we deduce that its support cannot be included in the line $l_x=\R\times \{x\}$. So, take a point $Q$ of $\gamma$ 
		away from $l_x$ and  $T_1\in (\rho(x), T_0)$. As the line $l_{x}$ is causal, the segment with endpoints $(T_1, x)$ and $(T_0,x)$ glued with the arc of $\gamma$ between $(T_0,x)$ and $Q$ gives a causal curve which cannot be a lightlike 
		pregeodesic (otherwise the line $l_x$ would be also a lightlike pregeodesic and, at the first point where $\gamma$ leaves $l_x$, uniqueness of geodesics would be violated). Hence, from
		Remark~\ref{rll}, $(T_1,x)\ll Q$. Being $\gamma$ a limit curve and the relation $\ll$ open, there exists  $Q_{\bar n}$,  $\bar n\in \N$,  belonging  to the support of $\gamma_{\bar n}$  such that $(T_1, x)\ll Q_{\bar n}$. Therefore,
		$(T_1,x)\ll (0,y_{\bar n})$ too, and then $T_1\leq \rho(x)$, a contradiction.
	\end{proof}
	For each closed set $C\subset M$ we can construct a future horizon by using the function $\rho$ associated with $C$ as follows.
	\begin{prop}\label{hruled}
		Let $C\subset M$ be a closed  set. Then the hypersurface $H=\{(\rho(x),x): x\in M\setminus C, \rho(x)\neq -\infty\}$  is a future horizon in the spacetime $((-\infty,0)\times (M\setminus C), g)$.
	\end{prop}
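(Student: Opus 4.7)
The plan is to recognize $H$ as the portion of the achronal boundary $\partial I^-(\{0\}\times C)$ (computed in the ambient \sstk splitting $(\R\times M,g)$) which lies inside the sub-spacetime $(-\infty,0)\times (M\setminus C)$, and then to read off the four defining properties of a future horizon (closed, topological hypersurface, achronal, and ruled by future-inextendible null geodesics) from this identification together with the continuity of $\rho$ and the classical Penrose structure theorem for achronal boundaries.

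First I would combine Proposition \ref{ctf} with the definition of $\rho$ to observe that, for $(t,x)\in(-\infty,0)\times(M\setminus C)$, $(t,x)\in I^-(\{0\}\times C)$ in $\R\times M$ if and only if $d_F(x,C)<-t$, i.e.\ $t<\rho(x)$. The lower semi-continuity of $\rho$ (immediate from the definition) together with Proposition \ref{rocont} makes $\rho\colon M\setminus C\to[-\infty,0)$ continuous in the order topology, so $U:=\{x\in M\setminus C:\rho(x)>-\infty\}$ is open and $\rho|_U\colon U\to(-\infty,0)$ is continuous. Hence $H=\mathrm{graph}(\rho|_U)$ is a closed topological hypersurface of the sub-spacetime, and moreover (using that the supremum defining $\rho$ is never attained because $I^-$ is open)
\[
H=\partial I^-(\{0\}\times C)\cap\bigl((-\infty,0)\times(M\setminus C)\bigr).
\]
From this identification, achronality of $H$ is standard: if $p,q\in H$ satisfied $p\ll q$, openness of $\ll$ combined with $q\in\overline{I^-(\{0\}\times C)}$ would produce $q'\in I^-(\{0\}\times C)$ with $p\ll q'$, forcing $p\in I^-(\{0\}\times C)$ by transitivity, against $p\in\partial I^-(\{0\}\times C)\setminus I^-(\{0\}\times C)$.

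For the ruling by future-inextendible lightlike geodesics I would invoke Penrose's theorem on achronal boundaries (cf.\ \cite[Ch.~14]{O'neill}) applied in the ambient $(\R\times M,g)$ to the closed set $S=\{0\}\times C$: through every $p\in\partial I^-(S)\setminus S$ there passes a future-directed null generator of $\partial I^-(S)$ that is either future-inextendible in $\R\times M$ or has a future endpoint on $S$. Each $p\in H$ belongs to $\partial I^-(S)\setminus S$, so such a generator $\gamma$ through $p$ exists. The main obstacle is to verify that the portion of $\gamma$ lying in the sub-spacetime is contained in $H$ and is future-inextendible there: the first claim is immediate because $\gamma\subset\partial I^-(S)$ and the intersection of $\partial I^-(S)$ with the open sub-spacetime equals $H$; for the second, any putative future endpoint of $\gamma$ inside the sub-spacetime would have to be an ambient future endpoint of $\gamma$ lying on $S$ (which is disjoint from $(-\infty,0)\times (M\setminus C)$), or the point at which $\gamma$ first leaves the sub-spacetime through $\{0\}\times M$, $(-\infty,0)\times\partial C$, or spatial infinity, none of which is an interior point of the sub-spacetime.
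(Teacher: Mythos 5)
Your proof is correct, but it takes a genuinely different route from the one in the paper. You identify $H$, via Proposition \ref{bolas} and the equality $\rho=\tau$, with the trace of the achronal boundary $\partial I^{-}(\{0\}\times C)$ of the ambient \sstk splitting on the open sub-spacetime $(-\infty,0)\times(M\setminus C)$, and then import the Penrose structure theorem for achronal boundaries to obtain closedness, achronality and the null ruling all at once; the only remaining work is the (correct) observation that each generator, truncated at its first exit from the sub-spacetime, stays in $H$ and is future-inextendible there, since its only possible future limit point lies on $\{0\}\times M$ or over $C$. The paper argues by hand instead: achronality is checked directly from $\rho=\tau$ by pushing a hypothetical chronological relation up to $\{0\}\times C$, and the generators are built as limit curves of the almost-minimizing timelike curves from $(\rho(x)-\tfrac{1}{n},x)$ to $(0,y_n)$, recognized as lightlike pregeodesics through horismoticity and Corollary \ref{horismos}, and shown to satisfy $\rho(\sigma(t))=t$ pointwise. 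The payoff of that explicit construction is exactly what is harvested immediately after the proposition: the generators are identified as lifts of unit minimizing $F$-geodesics with $d_F(\sigma(t),C)=t$, which is the corollary feeding into Proposition \ref{smoothdF}. Your route recovers this with one extra step (points on a null generator of an achronal boundary are horismotically related, so Corollary \ref{horismos} still applies and yields the minimizing $F$-geodesics), so nothing is lost, but that step should be made explicit if the subsequent results are to rest on your version. A minor wording point: ``spatial infinity'' is not a point of the manifold, so the clean statement at the end is simply that the truncated generator either has no future limit point at all or its future limit point lies outside the open sub-spacetime.
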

	\begin{proof}
		Since $\rho: M\setminus C\to [-\infty,0)$ is continuous we get that $H$ is a topological closed  hypersurface in $((-\infty,0)\times M\setminus C, g)$. Moreover, it is achronal, otherwise a timelike future-pointing curve would connect  
		$(\rho(x_1), x_1)$ to  $(\rho(x_2), x_2)$ and, by taking a sequence of points $\{y_n\}\subset C$ such that  $(\rho(x_2)-1/n, x_2)\ll (0, y_n)$ we would get $(\rho(x_1),x_1)\ll (0,y_n)$, for $n$ big enough. Then,  for $\varepsilon>0$ small enough, 
		$(\rho(x_1)+\varepsilon, x_1)\ll (0,y_{\bar n})$,  for some $\bar n\in \N$,  which, recalling that $\rho(x_1)=\tau(x_1)$, gives a contradiction.

		Let us now prove the existence of a future-pointing, future-inextendible, geodesic $\gamma\colon [0,a)\to (-\infty,0)\times M$ through any point of $H$ and contained in $H$.  Consider a sequence $\{\gamma_n\}$ of timelike future-pointing 
		curves connecting $(\rho(x)-\frac{1}{n}, x)$ to $(0,y_n)$, with $y_n\in C$,  which we can assume parametrized by the time function  $t$.   Such a sequence admits a future-pointing limit curve $\gamma\colon [0,a)\to \R\times M$, 
		such that  $\gamma(0)=(\rho(x), x)$, and we can assume, by taking a smaller $a$ if necessary, that the image of $\gamma$ is contained in $\R\times (M\setminus C)$. 
		Reasoning as in the proof of Theorem~\ref{tcontdf}  and Proposition~\ref{rocont}, $\gamma$ cannot be contained in the line $l_x$ (as $x\in M\setminus C$) and all the points in $\gamma$ are horismotically related (otherwise, a point $Q$ on 
		$\gamma$ would lie in the chronological  future  of $(\rho(x),x)$  and, being $\gamma$ a limit curve, this also would  imply  that $(\rho(x),x)$ is in the chronological past of $C$). 
		% , which is not contained in the line $l_{x}$ (this follows as in the proof of Proposition~\ref{rocont}, since $x\in M\setminus C$). We claim that the points in $\gamma$ are horismotically related. Indeed, otherwise, a  point $Q\in \gamma([0,a))$ exists such that $(\rho(x), x)\ll Q$. Being $\gamma$ a limit curve,  there exists a neighborhood of $Q$ which intersect all but a finite number of the curves $\gamma_n$. Hence  $(\rho(x), x)\ll y_n$ and \soutE{ as in the 
		% proof of Proposition~\ref{rocont},} \bv then there exists $\varepsilon>0$ small enough such that $(\rho(x)+\varepsilon, x)\ll y_n$, which gives \ev a contradiction. 
		From Corollary~\ref{horismos}, $\gamma$ is a (future 
		inextendible) lightlike pregeodesic which can be parametrized with $t$,  i.e., $\gamma(t)=(t,\sigma(t))$,  $t\in [\rho(x),t_1)$ for some $t_1>0$ and some unit minimizing $F$-geodesic $\sigma$. To check that $\gamma$ is included in $H$, 
		parametrize also the converging curves as $\gamma_n(t)=(t,x_n(t))$. Then,  for each $t\in [\rho(x), t_1)$,   $t\leq  \tau(x_n(t))= \rho(x_n(t))$  and by the  continuity of $\rho$  and Lemma~\ref{limitcurve},  $\rho(x_n(t))\rightarrow \rho(\sigma(t))$, 
		hence  $t\leq \rho(\sigma(t))$. If $t< \rho(\sigma(t))$, there would exist a timelike future-pointing curve connecting $\big (t+\rho(\sigma(t))/2, \sigma(t)\big)$ to $C$ and then, being the line $l_{\sigma(t)}$ causal, $(\rho(x),x)$ 
		would be in the chronological past of $C$, which is impossible. \soutE{Hence} \bw Thus, \ew  
		$\rho(\sigma(t))=t$ so that $(t,\sigma(t))\in H$.
		% and its parametrization with $t$,  $t\in [\rho(x), t_1)\mapsto (t,\sigma(t))$, where $\sigma$ is a unit minimizing geodesic of $F$, \bv and by the definition of $\sigma$ as a limit curve and the continuity of $\rho$, it \ev is contained in $H$. \bv Indeed, for every $s\in [\rho(x), t_1)$, observe that $\gamma_n(s)=(s,x_n(s))$ converges to $(s,\sigma(s))$ (recall Lemma \ref{limitcurve}) and $s\leq \rho(x_n(s))\leq s+1/n$. By the continuity of $\rho$, $\rho(\sigma(s))=s$. \ev
	\end{proof}
	
	Actually  the proof of Proposition~\ref{hruled} shows also that the following extension to Randers-Kropina  metrics of \cite[Proposition 9]{ChFuGH02} and \cite[Proposition 5.11]{CapJavSan10} holds. 
	\begin{cor}
		Let $(M,F)$ be a Randers-Kropina space and $C\subset M$ a closed subset. Then every point $x\in M\setminus C$, such that $d_F(x,C)<+\infty$, belongs to at least one  geodesic segment  $\sigma$ which is minimizing, i.e. $d_F(\sigma(t),C)=t$, 
		for all $t$ in a certain interval $(t_0, t_1]$ with $\sigma(t_1)=x$. 
	\end{cor}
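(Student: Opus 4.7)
The strategy is to extract the result directly from the construction already carried out in the proof of Proposition~\ref{hruled}. The key point is that Proposition~\ref{hruled} builds, through each point of the horizon $H$, a future-inextendible lightlike pregeodesic of the associated \sstk splitting whose projection is precisely a unit minimizing $F$-geodesic realizing the distance to $C$. So the corollary is essentially a restatement of that construction, with a suitable change of parameter.

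Concretely, first I would note that the assumption $d_F(x,C)<+\infty$ means $\rho(x)=-d_F(x,C)\in(-\infty,0)$, so the point $(\rho(x),x)$ lies on the horizon $H$ of Proposition~\ref{hruled}. The argument there (applied to the closed set $C$ and the point $x$) produces a future-inextendible lightlike pregeodesic $\gamma(t)=(t,\sigma(t))$, $t\in[\rho(x),t_1)$, in the spacetime $((-\infty,0)\times(M\setminus C),g)$, with $\sigma(\rho(x))=x$ and $\rho(\sigma(t))=t$, i.e.\ $d_F(\sigma(t),C)=-t$, for every $t\in[\rho(x),t_1)$. By Theorem~\ref{existenceofbolasNO} (or Corollary~\ref{lightgeo2}), such a projection $\sigma$ is a unit minimizing pregeodesic of $F$; in particular the image of $\sigma$ is a genuine geodesic segment of $(M,F)$ passing through $x$.

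Next I would perform the trivial reparametrization that rewrites the distance-to-$C$ as the curve parameter: set $\tilde t=-t$, $\tilde\sigma(\tilde t)=\sigma(-\tilde t)$, so that $\tilde\sigma$ is defined on $(t_0,t_1']$ with $t_0:=-t_1$ and $t_1':=-\rho(x)=d_F(x,C)$. Then $\tilde\sigma(t_1')=x$ and
\[
d_F(\tilde\sigma(\tilde t),C)=d_F(\sigma(-\tilde t),C)=\tilde t, \qquad \tilde t\in(t_0,t_1'],
\]
which is exactly the statement of the corollary (the image of $\tilde\sigma$ coincides with that of $\sigma$, hence it is a geodesic segment of $F$).

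There is no real obstacle here, since all the analytical work — existence of the limit curve, the fact that its points are horismotically related, the identification $\rho(\sigma(t))=t$, the unit and minimizing character of $\sigma$ — has already been carried out inside Proposition~\ref{hruled}. The only thing worth double-checking is that the limit curve can be taken so that its image lies in $M\setminus C$ (which is done in the proof of Proposition~\ref{hruled} by shrinking the domain), so that the function $\rho$, and hence $d_F(\cdot,C)$, is continuous along $\sigma$ and the identification $d_F(\sigma(t),C)=-t$ is valid on the whole interval. Once this is noted, the corollary follows.
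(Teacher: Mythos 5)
Your proposal is correct and follows exactly the route the paper itself takes: the paper introduces the corollary with the sentence ``Actually the proof of Proposition~\ref{hruled} shows also that the following extension\dots holds'', i.e.\ it is read off from the ruling lightlike pregeodesic $\gamma(t)=(t,\sigma(t))$ with $\rho(\sigma(t))=t$ constructed there, plus the trivial sign-flip reparametrization you describe. Nothing further is needed.
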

	The correspondence between   lightlike geodesics ruling $H$ and $d_F(\cdot, C)$ - minimizing geodesics    allows us to use \cite[Theorem 3.5]{BeeKr98}, which states that the differentiable points of a horizon are the points which are crossed 
	by one and only one lightlike geodesic ruling the horizon, so that we immediately obtain the following.
	\begin{prop}\label{smoothdF}
		Let $(M,F)$ be a Randers-Kropina space and $C$ a closed subset of $M$. A point $x\in M\setminus C$, such that $d_F(x, C)<+\infty$, is  a
		differentiable point of the function $d_F(\cdot, C)$  if and only if 
		there exists a unique minimizing geodesic segment of $(M,F)$ through $x$. 
	\end{prop}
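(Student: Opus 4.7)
The plan is to reduce the statement to the Beem--Kr\'olak theorem \cite{BeeKr98} on the differentiability of future horizons, applied to the horizon $H$ constructed in Proposition~\ref{hruled}. Recall that $H=\{(\rho(y),y): y\in M\setminus C,\ \rho(y)\neq -\infty\}$ is, by Proposition~\ref{rocont}, the graph of the continuous function $\rho=-d_F(\cdot,C)$ over the open set $U:=\{y\in M\setminus C: d_F(y,C)<+\infty\}\subset M\setminus C$; in particular, for any $x\in U$, the graph $H$ is a $C^{1}$ (or smooth) hypersurface at $(\rho(x),x)$ if and only if $\rho$ (equivalently, $d_F(\cdot,C)$) is differentiable at $x$ in the usual sense.

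First I would fix $x\in U$ and observe that $(\rho(x),x)\in H$. By Beem--Kr\'olak's theorem, $H$ is differentiable at $(\rho(x),x)$ if and only if there is a \emph{unique} future-directed, future-inextendible lightlike geodesic generator of $H$ passing through $(\rho(x),x)$. So the whole task is to set up a bijection between such generators at $(\rho(x),x)$ and the minimizing $F$-geodesic segments of $(M,F)$ through $x$ (minimizing in the sense of the Corollary following Proposition~\ref{hruled}, i.e., segments $\sigma$ with $d_F(\sigma(t),C)=t$ on an interval ending at $\sigma(t_1)=x$, $t_1=-\rho(x)$).

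Next I would establish this bijection in two directions, using Corollary~\ref{horismos} and the ``forward--backward'' analysis done in the proof of Proposition~\ref{hruled}. In one direction: any lightlike generator of $H$ through $(\rho(x),x)$ can be parametrized with the time coordinate as $\gamma(t)=(t,\sigma(t))$ on $[\rho(x),t_1)$, and the argument in the proof of Proposition~\ref{hruled} (comparing $t$ with $\rho(\sigma(t))$ and using that $(\rho(x),x)\notin I^{-}(\{0\}\times C)$) shows $\rho(\sigma(t))=t$; equivalently $d_F(\sigma(t),C)=-t$. Reparametrizing by arc length, $\sigma$ becomes a unit $F$-pregeodesic through $x$ realizing the distance to $C$, hence a minimizing $F$-geodesic segment through $x$. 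Conversely, given any minimizing $F$-geodesic segment $\sigma$ with $d_F(\sigma(t),C)=t$ and $\sigma(t_1)=x$ (with $t_1=-\rho(x)$), its natural lift $t\mapsto(-t,\sigma(t))$ in $(-\infty,0)\times(M\setminus C)$ is a future-pointing lightlike curve (recall Proposition~\ref{bolas}), it lies on $H$ because $\rho(\sigma(t))=-t$, and by Corollary~\ref{horismos} (the horismotic characterization) any two of its points are horismotically related in the spacetime, so it is in fact a lightlike pregeodesic, i.e., a generator of $H$.

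Having the bijection, the uniqueness of generators through $(\rho(x),x)$ becomes equivalent to the uniqueness of minimizing $F$-geodesic segments through $x$, and combined with Beem--Kr\'olak plus the opening remark on $H$ being the graph of $\rho$, the claim follows. The main obstacle I expect is the verification that a minimizing $F$-geodesic segment through $x$ lifts to a lightlike \emph{pregeodesic} rather than merely a lightlike curve: here one must invoke Corollary~\ref{horismos} carefully, noting that one has horismotic pairs $(\rho(\sigma(s_1)),\sigma(s_1))$ and $(\rho(\sigma(s_2)),\sigma(s_2))$ on $H$ (since $H$ is achronal and the segment is distance-realizing), and then conclude pregeodicity from \cite[Proposition 10.46]{O'neill}. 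The remaining verifications (that $\sigma$ can be chosen to terminate at $x$ from both sides when the generator is allowed to be extended in both directions, and the handling of possible Kropina critical points where $\Lambda=0$) are routine via the refined statements in Proposition~\ref{plightvectorsSSTK} and Theorem~\ref{existenceofbolasNO}.
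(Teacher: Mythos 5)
Your proposal is correct and follows essentially the same route as the paper: the paper also obtains the statement by applying the Beem--Kr\'olak theorem to the future horizon $H$ of Proposition~\ref{hruled} (the graph of $\rho=-d_F(\cdot,C)$) and using the correspondence between the lightlike generators of $H$ and the $d_F(\cdot,C)$-minimizing geodesic segments. The only difference is that you spell out the two directions of that correspondence (in particular the lift of a minimizing segment to a generator via the horismotic characterization), which the paper leaves implicit as an "immediate" consequence of Proposition~\ref{hruled} and its corollary.
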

	\begin{rem}
		Another consequence of the results in \cite{BeeKr98} is that  the set of the points in $M\setminus C$ where $d_F(x,C)$ fails to be differentiable is
		included in the set of the {\em $F$-cut points} of $C$, i.e. the points $x\in M\setminus C$ such that a minimizing geodesic through $x$ cannot be extended beyond its beginning at  $x$ as a minimizing geodesic. The $F$-cut points of $C$ correspond 
		to the {\em endpoints} of $H$  i.e. the points in $H$ where the lightlike geodesics ruling $H$ cease to belong to $H$. Moreover, the set of the points where $d_F(\cdot, C)$ is not differentiable corresponds  with the {\em crease set} of $H$ i.e.
		the subset of the endpoints of $H$ belonging to  two or more ruling lightlike geodesics.
	\end{rem}
	\subsection{K-horizons}  \label{ss6.5}
	When the Killing vector field $K$ of an \sstk   spacetime  is
	timelike (i.e., in the stationary case), there are no restriction
	for the admissible curves on the associated Finsler structure; so,
	each point $p\in \R\times M$ and each integral curve of $K$ can be
	joined by means of a timelike curve $\gamma$. Physically, this
	prevents the existence of horizons. In fact, if the spacetime
	admits a sensible notion of future infinity $\mathcal{J}^+$
	(namely, by means of a conformal embedding \cite{Wald} or by using
	the causal boundary \cite{FlHeSa11}), the existence of the
	connecting curves $\gamma$, the invariance of the metric with the
	flow of $K$, and the fact that this flow is composed by timelike
	curves, would imply
	$I^-(\mathcal{J}^+)=\R\times M$. Nevertheless, the situation is different when $K$
	changes from timelike to spacelike. This situation is natural in Mathematical Relativity; recall that this happens, for example, in the
	extension of the Schwarzschild spacetime through its {\em event horizon}
	(the hypersurface $r=2m$, which coincides with the vanishing of $g(K,K)$ for
	its natural Killing $K=\partial_t$)\footnote{Notice that the usual description of Schwarzschild spacetime in coordinates $(t,r,\theta,\phi)$ fails in the hypersurface  $r=2m$ because the slices $t=$constant are forced to be orthogonal to 
		$K=\partial_t$ (i.e., the spacetime is being described as {\em static}); however, such a description can be extended beyond that hypersurface by regarding the spacetime as stationary.} as well as in Kerr spacetime, through the stationary limit
	hypersurface $H$ that serves as a boundary for the ergosphere (the spacetime
	event horizon appears beyond  $H$)\footnote{It is well-known that the usual Boyer-Lindquist coordinates for Kerr spacetime fail for some values $r_-\leq r_+$ of the radial coordinate $r$ (specifically, two distinct values in the case of slow Kerr. i.e., $a^2<m^2$, a single value for extreme Kerr, $a^2=m^2$, and none for fast Kerr $a^2>m^2$). Our approach is directly applicable in the region $r_+<r$, which contains the ergosphere of slow Kerr, as $t$ is a temporal function therein (see \cite[Prop. 2.4.6]{O'neillkerr}). Notice also that the reference \cite{JavSan17}, summarized in \S 9.2, also includes additional simple examples.}.
	
	In general,  an embedded  hypersurface $H$
	invariant by the flow of a Killing vector field $K$ which is tangent and lightlike on $H$ is called a {\em Killing horizon}. The regularity of $H$ depends on the context, typically, $0$ would be a regular  value   of $g(K,K)$, and $H$ would be 
	a connected component  of the  preimage, but one may admit non-smooth $H$ 
	(see \cite[\S 2.5]{ChCoHe12}, \cite{CaMars} and references
	therein).  In the simple case that $H$ is the preimage of a
	regular value, a naive justification of the name  horizon  goes as follows.
	As all the future causal cones must lie on one side of $H$, given
	$(t_0,x_0)\in H$ (regarded as $H=\R \times N_H) (\subset \R\times
	M)$ for some submanifold $N_H$) there is a neighborhood $U$ of
	$x_0$ such that no points $p, q\in \R\times U$ with
	$g(K_p,K_p)>0, g(K_q,K_q)<0$, can be joined by means of a
	future-pointing (or past-pointing, depending \soutE{of} \bw on \ew the time-orientations) timelike curve $\gamma$ from
	$p$ to $q$ entirely contained in $\R\times U$, see Fig.~\ref{khorizon} ($\gamma$ could not cross $H$ maintaining its timelike character).
	\begin{figure}[h]
		\includegraphics[scale=1.2,center]{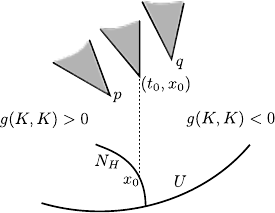}
		\caption{A Killing horizon $H=\R\times N_H$ with the lightlike cones at $(t_0,x_0)\in H$, $p$ and $q$}\label{khorizon}
	\end{figure}
	
	However, even in this case, one can wonder if $p$ and $q$ could be joined by future-pointing causal curves which leave $\R\times U$.
	
	By using  the associated Fermat structure $\Sigma$,
	these considerations can be formulated from a global viewpoint, with independence of the existence of Killing horizons, by means of the following notion.
	
	\begin{defi}  Let $M$ be a manifold endowed with a wind Finslerian structure $\Sigma$, and let ${\mathcal A}\subset M$. The  $K$-{\em horizon} for $\mathcal A$ is the boundary
		$H_\Sigma({\mathcal A})$ of the set  $\{y\in M: \exists x\in \mathcal A \text{ s. t. } C^A_{y,x}\neq\emptyset\}$.
	\end{defi}
	
	\begin{rem} When the  wind  Finslerian structure is a  Randers-Kropina metric $F$ with associated separation $d_F$, the $K$-horizon for $\mathcal A$ can be written as
		$$H_F({\mathcal A}) =  \partial (\{y\in M : d_F(y,{\mathcal A})<+\infty\}).$$
	\end{rem}
	When applied to \sstk   spacetimes, this yields a natural concept:
	
	\begin{defi} \label{dkhorizonsstk} Let $(\R\times M,g)$ be an
		\sstk  splitting and ${\mathcal A}\subset M$.
		The  $K$-{\em horizon} for $\R\times \mathcal A$, is the boundary
		$H({\mathcal A})$ of the set of the points $(t,y)\in \R\times M$ such that there exists a future-pointing timelike curve from $(t,y)$ to $\R\times {\mathcal A}$.
	\end{defi}
	Notice that the $K$-horizons for $\mathcal{A}$ (for both $\Sigma$ and the \sstk splitting) are controlled by the admissible curves and, these, by the open domain $A$ for $\Sigma$.
	The following results ensure the consistency of  the  previous two definitions.
	\begin{lemma} Let $L$  be an \sstk  spacetime   that splits in two different ways $(\R\times M,g),\ (\R\times M^f,g^f)$
		as in Lemma~\ref{fsplitting} and formula~\eqref{gf} for the same  Killing vector field
		$K$, and let $\Sigma$, $\Sigma^f$ be the corresponding wind Finslerian structures (according to Convention~\ref{sigmaf} and Proposition~\ref{changedf}).
		Consider a subset $\mathcal{A}_L\subset L$ invariant by the flow of $K$, and let $\mathcal{A}$ be its  projection on $M$ (by using either of the previous two splittings). Then, $H_\Sigma (\mathcal{A})= H_{\Sigma^f} (\mathcal{A})$.
	\end{lemma}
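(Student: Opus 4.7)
The plan is to show that the subset
\[
\mathcal{U}_\Sigma := \{y\in M:\ \exists\, x\in \mathcal A,\ C^A_{y,x}\neq \emptyset\},
\]
whose boundary is, by definition, $H_\Sigma(\mathcal A)$, depends only on the ambient spacetime $L$ (with its prescribed time-orientation) and on the $K$-invariant subset $\mathcal{A}_L$, not on the chosen splitting. Once this is established, $H_\Sigma(\mathcal A)=\partial\mathcal{U}_\Sigma=\partial\mathcal{U}_{\Sigma^f}=H_{\Sigma^f}(\mathcal A)$ is immediate.

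The key step is to rewrite $\mathcal{U}_\Sigma$ in intrinsic spacetime terms by means of Proposition~\ref{esalva}:
\[
\mathcal{U}_\Sigma \;=\; \pi\!\left(I^-(\mathcal{A}_L)\right),
\]
where $\pi\colon L\to M$ is the projection to the orbit space of $K$ (which for our fixed splitting coincides with the natural projection $\R\times M\to M$), and $I^-(\mathcal{A}_L)=\bigcup_{p\in\mathcal{A}_L}I^-(p)$ is the chronological past in $L$. Indeed, by Proposition~\ref{esalva}, $C^A_{y,x}\neq\emptyset$ is equivalent to the existence of a future-pointing timelike curve from $(0,y)$ to some $(t,x)$, i.e., to the $K$-orbit of $x$ meeting $I^+((0,y))$; using $K$-invariance of $\mathcal{A}_L$ and of $I^\pm$, and taking unions over $x\in\mathcal A$, this is equivalent to $y\in\pi(I^-(\mathcal{A}_L))$. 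Now the right-hand side is manifestly intrinsic to $L$: the chronological relation depends only on the conformal class of $g$ together with the time-orientation, the set $\mathcal{A}_L$ is the same in both descriptions (being $K$-invariant), and the two projections $\R\times M\to M$ and $\R\times M^f\to M$ are both the quotient by the flow of $K$. Hence $\mathcal{U}_\Sigma=\mathcal{U}_{\Sigma^f}$.

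The main subtlety, which I would address carefully, is the case \eqref{spaceineq2} of Lemma~\ref{fsplitting}: the \emph{natural} time-orientation of $(\R\times M,g^f)$ (the one making $\partial_{t^f}=K$ induce temporal function $t^f$) is then opposite to the original time-orientation of $L$. This is exactly why Convention~\ref{sigmaf} defines $\Sigma^f$ as the \emph{reverse} of the Fermat structure of $(\R\times M,g^f)$: under this convention, an $F$-wind curve for $\Sigma^f$ is the reverse parametrization of the projection of a past-pointing (w.r.t.\ the natural orientation of $g^f$) timelike curve, which is the same as a future-pointing timelike curve for $L$'s original time-orientation. Thus the formula $\mathcal{U}_{\Sigma^f}=\pi(I^-(\mathcal{A}_L))$ holds in both cases \eqref{spaceineq} and \eqref{spaceineq2}, and the equality of $K$-horizons follows. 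The hard part is keeping the time-orientation bookkeeping straight through the two cases; once that is handled, the result is essentially a tautology from Proposition~\ref{esalva}.
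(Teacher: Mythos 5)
Your proof is correct and rests on the same observation as the paper's one-line argument: the set of $F$-admissible directions (equivalently, the $F$-wind/precedence relation) is exactly the projection of the future causal structure of $L$, which is independent of the chosen splitting. The paper states this directly as the equality of the conic domains $A$ for $\Sigma$ and $\Sigma^f$ (both being projections of causal vectors), whereas you package the same fact via Proposition~\ref{esalva} and the identity $\mathcal{U}_\Sigma=\pi(I^-(\mathcal{A}_L))$, with the time-orientation bookkeeping in case \eqref{spaceineq2} correctly absorbed by Convention~\ref{sigmaf}.
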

	\begin{proof}
		Notice that the $K$-horizons for $\Sigma$ and $\Sigma^f$ depends exclusively on the set $A$ of $F$-admissible directions, and these directions are equal for $\Sigma$ and $\Sigma^f$,  since they are the projections of causal vectors to $M$ and $M^f$ (recall that \soutE{you can see} $M^f\equiv S^f$ \bw can be seen \ew as a hypersurface in $\R\times M$ obtained as a graph, see \eqref{gf}).  %(they depend on the future-pointing timelike directions of the 
		%Lorentzian metric $-h$ on $M$, and this metric is equal for both, see \eqref{eauxi}.
	\end{proof}
	\begin{prop}
		Let $(\R\times M,g)$ be an
		\sstk  splitting and ${\mathcal A}\subset M$. Then:
		\begin{enumerate}[(i)]
			\item $H({\mathcal A})= \R\times H_\Sigma({\mathcal A})$.
			
			\item  The K-horizon $H({\mathcal A})$ is included in the  region $g(K,K)\geq 0$.
		\end{enumerate}
	\end{prop}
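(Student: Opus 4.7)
For part (i), the strategy is to exploit the Killing invariance of the construction. Set $\mathcal{T}:=I^-(\R\times\mathcal{A})$, which by definition is the set whose boundary is $H(\mathcal{A})$. Since the flow of $K=\partial_t$ is by isometries of $g$ and preserves $\R\times\mathcal{A}$, the set $\mathcal{T}$ is invariant under $t$-translations, so $\mathcal{T}=\R\times\mathcal{S}$ for $\mathcal{S}:=\pi(\mathcal{T})\subset M$, where $\pi:\R\times M\to M$ is the projection. Applying Proposition~\ref{esalva} at each $(0,x)$ with $x\in\mathcal{A}$, and using invariance once more, one checks
\[
\mathcal{S}=\{y\in M:\exists x\in\mathcal{A}\text{ with }C^A_{y,x}\neq\emptyset\}.
\]
Because $\partial(\R\times\mathcal{S})=\R\times\partial\mathcal{S}$ (the factor $\R$ has no boundary), it follows that $H(\mathcal{A})=\R\times\partial\mathcal{S}=\R\times H_\Sigma(\mathcal{A})$, which gives (i).

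For part (ii), thanks to (i) it is enough to show that no point $x_0$ with $\Lambda(x_0)>0$ lies in $\partial\mathcal{S}$. Fix such an $x_0$ and pick a connected open neighborhood $U\ni x_0$ on which $\Lambda>0$. On $U$ the wind Finslerian structure reduces to a (smooth) Randers metric with domain $A\cap TU=TU\setminus\mathbf{0}$ and with $F_l\equiv+\infty$ (Proposition~\ref{Riemannclass}(i) and Convention~\ref{caestar}). Consequently, between any two points $y_1,y_2\in U$ and for any interval length $r>0$, there exists a smooth curve $\sigma:[0,r]\to U$ from $y_1$ to $y_2$ with $\dot\sigma$ nowhere vanishing; after reparametrizing at sufficiently small constant $F$-speed one obtains an $F$-wind curve (the second inequality in \eqref{ewindcurve} is automatic since $F_l=+\infty$).

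With this local reachability in $U$ in hand, the argument splits in two parallel cases. If $x_0\in\mathcal{S}$, pick an $F$-wind curve $\gamma$ from $x_0$ to some $x\in\mathcal{A}$; for each $y\in U$, concatenating a short $F$-wind curve in $U$ from $y$ to $x_0$ with $\gamma$ produces an $F$-wind curve from $y$ to $x$ (the concatenation is $F$-admissible because the one-sided derivatives at the break point lie in $A$ by construction), whence $U\subset\mathcal{S}$. If $x_0\notin\mathcal{S}$, any putative $y\in U\cap\mathcal{S}$ would, by an analogous concatenation of an $F$-wind curve from $x_0$ to $y$ in $U$ with an $F$-wind curve from $y$ to $\mathcal{A}$, force $x_0\in\mathcal{S}$, a contradiction; hence $U\cap\mathcal{S}=\emptyset$. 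In either case $x_0$ is an interior point of $\mathcal{S}$ or of $M\setminus\mathcal{S}$, so $x_0\notin\partial\mathcal{S}=H_\Sigma(\mathcal{A})$, proving (ii).

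The only mildly technical point is the reparametrization and concatenation step in $U$: one must verify that piecewise smooth $F$-admissible wind curves can be glued without introducing a break with vanishing one-sided derivative, but this is immediate from the fact that both pieces have $F\leq 1$ away from zero and the matching endpoint admits any non-zero initial velocity in the mild wind region.
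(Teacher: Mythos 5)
Your proof is correct and follows essentially the same route as the paper: part (i) is the translation-invariance of $I^-(\R\times\mathcal A)$ combined with the correspondence between timelike curves and $F$-wind curves (Proposition~\ref{esalva}/\ref{bolas2}), and part (ii) is the observation that in a neighborhood where $K$ is timelike any two points are mutually reachable, so no such point can lie on the boundary. The paper phrases (ii) upstairs in the spacetime (two-way timelike connectability of integral curves of $K$ in a stationary region) while you work downstairs with the local Randers metric, but these are the same argument.
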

	\begin{proof}
		The part $(i)$  is a straightforward consequence of the definitions (recall Proposition~\ref{bolas2}).
		For
		$(ii)$, if  $p\in H({\mathcal A})$, $g(K_p,K_p)$ cannot be negative as, otherwise, one would have a stationary region $\R\times U$   around $p$ ($U\ni p$ open and connected) and all pairs of  integral curves of $K$ in this region can be 
		connected by both, future-pointing and past-pointing timelike curves.
	\end{proof}
	\begin{exe}
		({\em Asymptotic flatness}). A natural choice of  a subset  $\mathcal A$ for an \sstk  splitting is the region $\Lambda>0$, so that $H({\mathcal A})$ can be understood as the limit of the region from which one can access to the stationary 
		part $\R\times {\mathcal A}$. In fact, the standard situation of horizons (including Kerr or Schwarzschild spacetimes) is the following: one considers an \sstk  splitting
		which is asymptotically flat, in the sense that, away from a
		compact subset, $M$ has one or more {\em ends}, each one diffeomorphic to $\R^m$ with a ball removed, and the associated wind Finslerian structure becomes a Randers metric $F$
		approaching asymptotically to the natural Euclidean
		metric (see \cite[Sect. 2.4]{CapGerSan12}). In this case, it is
		natural to take $\mathcal A$ as the exterior of a large ball in one of
		the ends. Notice that  Definition~\ref{dkhorizonsstk} gives a natural notion
		of {\em horizon} for that end (in the presence of a Killing field), extendible even
		when  the standard  notions of asymptotic flatness cannot be applied.\footnote{Recall that the classical notion of asymptotic flatness relies on the existence of a Penrose conformal embedding which makes possible to define
			the null infinity $\mathcal{J}^+$; see  \cite[Sect.
			3.4]{FlHeSa11} for extensions of this approach.}
	\end{exe}
	\begin{rem}({\em $K$ horizons vs. Killing horizons}). Notice that  the strict inequality may hold in the case $(ii)$ of the previous proposition. In fact, this happens in Kerr spacetime for the $K$-horizon $H({\mathcal A})$ of the asymptotic  
		region with $\Lambda>0$ (recall that $K$ is spacelike in  Kerr's ergosphere up to the poles). Remarkably,  Kerr's horizon is a $K$-horizon but not a Killing horizon (the stationary limit surface is neither a $K$-horizon nor a Killing horizon).
		
		But even when the equality to 0 holds in $(ii)$, $H({\mathcal A})$ can be strictly included in the region $g(K,K)=0$
		because of several reasons. First, this region may be not a hypersurface  and, for example, it may contain  an open subset; this happens in the case of pp-waves, recall Example~\ref{ex_ppwave}. Moreover, when the zero level of $g(K,K)$ is a 
		Killing horizon, the global behavior of the metric may prevent even the existence of a $K$-horizon, see Fig.~\ref{cylinder}.
		\begin{figure}[h]
			\includegraphics[scale=1,center]{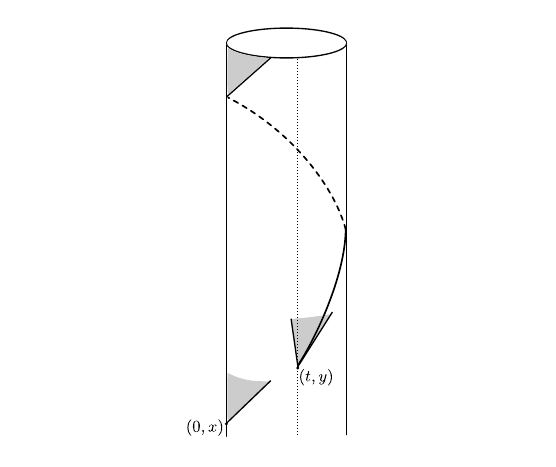}
			\caption{An \sstk cylinder $\R\times S^1$ with a Killing vector field which is timelike everywhere except at the points of the line passing through $(0,x)$ where it is lightlike; the $K$-horizon  $H(\{x\})$ is empty}\label{cylinder}
		\end{figure}
		Summing up, the physical interpretation of the $K$-horizon for $\mathcal{A}$  becomes apparent: $H( \mathcal{A})$ is the limit of the region $R$ so that $\R\times \mathcal{A}$ is not accessible for  particles  starting beyond $R$. So,
		$K$-horizons are always associated  with some concrete $\mathcal{A}$, and are clearly distinct to Killing horizons. Nevertheless, results on spacetimes and Killing horizons may be applicable to $K$-horizons (for example,  to ensure that the 
		Kropina part of a wind Finslerian structure appears on a
		smooth surface  and, eventually, may be a $K$-horizon,
		see Lemma 7 or Proposition 3 in \cite{CaMars}).
	\end{rem}

	\section{Conclusions and  further  developments}\label{conclusions}
	
	\noindent  To conclude, first we spotlight the distinguishing features of our approach. 	
	
	When strong wind is considered, there is an obvious failure of the description of Zermelo problem by means of a Finslerian metric of Randers type. The notion of wind-Riemannian structure or, with more generality, wind Finslerian one $\Sigma$, appears then as a  natural model even though, as far as we know, the only precedents of  such geometric structures are the Kropina-metrics, a particular type of singular Finslerian metrics which would correspond to Zermelo problem with critical wind.
	
	In order to develop the properties of $\Sigma$, a number of new concepts are defined (wind curve, c-ball,  w-convexity, etc.), and  more  standard \bb elements appear: a conic Finsler metric $F$ and a Lorentz-Finsler metric $F_l$ on a cone domain. \eb 
	Notably, geodesics in $\Sigma$  split  in three \bb types \eb of geodesics: locally minimizing $F$-geodesics, locally maximizing $F_l$-geodesics and abnormal geodesics.
	
	For the case of a wind Riemannian structure of dimension  $m$,  a correspondence with the conformal structure for an \sstk spacetime (a general class of Lorentzian manifolds) of dimension  $m+1$  appears. In particular, the cone \bb domain associated with \eb  $\Sigma$ \bb can also be \eb  interpreted \bb in terms of the (future) causal cone \eb for a Lorentz metric  in the region of strong wind (obtained from a sort of projection of the metric in the \sstk  splitting), and the abnormal geodesics for $\Sigma$ become  lightlike geodesics for this \bb Lorentz metric. \eb   
	
	The applications of the correspondence work in both directions. On the one hand,    known results in Lorentzian Geometry can be applied to \sstk spacetimes, and they yield a full description of the geodesics of  $\Sigma$. In particular,  the properties of  Kropina and Randers-Kropina  metrics can be widely developed.
	On the other hand, the conformal geometry of an \sstk spacetime can be characterized by means of a wind  Riemannian structure  in a very precise way. This includes  the \sstk causal ladder and the existence of horizons, as well as other properties of more purely mathematical interest such as the existence of some closed geodesics.

	This  correspondence  allows us to obtain  a fully satisfactory solution of  Zermelo navigation problem with arbitrary wind: roughly, any solution is a geodesic for $\Sigma$, and solutions must exist under w-convexity.

	What is more,   Zermelo  navigation suggests   our general version of Fermat's principle, with independent interest in General Relativity.   %\sout{Moreover,} 
	This  general  version  leads to  a unified global variational description of all the $\Sigma$-geodesics connecting two prescribed points  of an SSTK spacetime as the critical points of  the  time  arrival functional.
	
	%Let us finally report some recent developments about  wind Finslerian, and in particular Riemannian, structures. 
	
	Direct   applications  of our approach include the modelization of practical situations \cite{Markvo16}, examples of Analogue Gravity \cite[\S 2]{BLV05} and links with fields where cone structures become relevant, such as Finsler spacetimes \cite{JavSan13} or Hamilton-Jacobi equation \cite{FatSic12}.  What is more, further developments  have been carried out by the authors  since the first version of the present article.  Next, we will mention some of them \bb following four main references, \eb hoping that this may serve as a motivation and guide for readers interested in a variety of related topics. 
	
	\subsection{Randers and wind Riemannian manifolds of constant flag curvature }  (Ref. \cite{JavSan17b}.)  The  study  of Randers  manifolds of  constant flag curvature (CFC) is a substantial topic in Finslerian Geometry, which includes as a milestone the local and global classification  by Bao, Robles and Shen \cite{BaRoSh04}.  Essentially, they proved that, for all  such metrics,  the local Zermelo data $(g_R,W)$ on $M$ consist of a Riemannian metric $g_R$ of constant curvature and a homothetic vector field $W$. Globally, however, a striking difference appears  in comparison with the Riemannian case. In the latter, the model spaces (Euclidean, spherical, hyperbolic) appear under the natural assumptions of simple connectedness and {\em completeness}. However, in the Randers case, the fact that the $g_R$-norm of  a  homothetic $W$ may be greater and smaller than 1  in different regions of $M$ leads naturally to {\em incomplete} examples.

	This is solved in \cite{BaRoSh04} by considering   {\em inextensible} CFC Randers spaces;   however, a solution  can be given within our more general framework.  Indeed, following  \cite{JavSan17b}, all the incomplete Randers models can be extended as  {\em complete} wind Riemannian structures by taking into account our notions of geodesic and completeness developed in Sections~\ref{section:windFinsler} and \ref{ss6.1}. Moreover, the notion of CFC can be extended to wind Riemannian structures, being   these spaces  naturally classifiable now. 
	
	Going into detail, we say that a wind Riemannian structure $(M,\Sigma)$ has CFC %constant flag curvature 
	if its associated conic metrics $F$ and $F_l$ have constant flag curvatures for all the  flagpoles in $A$ and $A_l$, respectively. As a first step, the techniques in \cite{BaRoSh04} remain applicable proving that $(M,\Sigma)$  has constant flag curvature $\kappa \in \R$ if and only if its Zermelo data $(g_R,W)$ satisfy  the following two conditions:
	\begin{itemize}
		%\begin{enumerate} %[(i)]
		\item there exists $\mu\in \R$ such that the wind $W$  is $\mu$-homothetic for $g_R$ (i.e., ${\mathcal L}_W g_R=2\mu g_R$, where $\mathcal L$ is the Lie derivative), and
		\item the Riemannian metric $g_R$ has constant curvature $\kappa+\frac{1}{4}\mu^2$,
		% \end{enumerate}
	\end{itemize}
	see \cite[Theorem 3.8]{JavSan17b}.  Using this local result, the following  global classification of CFC spaces can be proved, see \cite[Theorem 3.12]{JavSan17b}: 
	\begin{quote}
		{\em  A complete simply connected  wind Riemannian structure $(M,\Sigma)$ of constant flag curvature   with Zermelo data $(g_R,W)$   satisfies  either

			(i) $(M,g_R)$ is a  model space of constant curvature  and $W$ is any of its Killing vector fields, or 
			
			(ii)~$(M,g_R)$ is isometric to 
			$\R^n$  and $W$ is  a properly homothetic (i.e. non-Killing) 
			vector field.}
	\end{quote}	
 Along the proof, the following criterion  on completeness for  $(M,\Sigma)$  is used:  {\em if $g_R$ is complete and $W$ is homothetic, then $\Sigma$ is complete} (moreover, if $W$ is 
	properly homothetic, %i.e., non-Killing,
	then $g_R$ is flat), see \cite[Theorem 3.10-(ii)]{JavSan17b}.

	\subsection{Sharp criteria on %and applicable criteria 
		completeness %of wind Riemannian structures 
		and  Cauchy hypersurfaces }\label{sub2}  (Ref. \cite{JavSan17}.) 
	% in SSTK spacetimes} 
	From the viewpoint of classical Riemannian or Finslerian Geometries, it was not surprising
	that completeness was essential for  problems of classification; in particular, the   criterion  cited above goes in this standard direction.  However, a major goal in our approach has been to translate  properties of the wind Riemannian structure $(M,\Sigma)$ into  properties of causality of the associated SSTK spacetime  as  in Section~\ref{generalcase}, which opened the possibility of a pletora of relativistic interpretations, including those  in  Section~\ref{further2}. In this setting, it is especially relevant  the equivalence (proven in Theorem~\ref{generalK}-(iv)) between the completeness of $\Sigma$ and the fact that the slices of its SSTK spacetime are Cauchy hypersurfaces.  In \cite{JavSan17},  completeness  is studied  motivated by applications to these hypersurfaces. Indeed,  a natural characterization of completeness is proven   first  and,  then,  criteria to ensure  the Cauchy character of the slices   are  provided,  as well as  applications to explicit spacetimes. 
	In a nutshell, the idea is that, taking into account only the conic Finsler metric $F$  {\em including its continuous extension  to the closure of $A$ in  $TM\backslash \mathbf{0} $},  Hopf-Rinow type properties will characterize completeness. 
	
	More precisely, one considers first an extension of the notion of Finslerian separation  in Definition~\ref{Fsepa}, obtained by considering  the set of wind curves between two given points $C^\Sigma_{x_0,x_1}$ and the lengths  w.r.t. the extension $\bar F$ of $F$ (expressed as in \eqref{randerskropina}) to  $A\cup A_E$ (recall Definition~\ref{ae} and Remark~\ref{rformalreverse}).  The notion of           {\em geodesic}  can be quite naturally extended to  the conic Finsler manifold $(M, \bar F)$ (see \cite[Definition 3.19]{JavSan17}) and, then, also  forward and backward completeness are naturally defined for $(M,\bar F)$. Then, it is proved that   $(M,\Sigma)$ is geodesically complete if and only if $(M,\bar F)$ is  geodesically  complete. Furthermore,   forward (resp. backward) Cauchy completeness of $(M,d_{\bar F})$ becomes equivalent to  its  forward (resp. backward)  geodesic  completeness which is in turn equivalent to the fact that each 
	closed and forward (resp. backward) bounded subset of $(M,d_{\bar F})$ is compact (see \cite[Theorem 3.23]{JavSan17}). %As a consequence,  
	%from 
	Summing up, applying Theorem~\ref{generalK}-(iv): {\em 
		the  slices   $S_t$ in the SSTK spacetime $(\R\times M, g)$ are Cauchy hypersurfaces iff the associated conic Finsler space $(M, \bar F)$ is geodesically complete. }
	
	Starting at this characterization, it is easy to give sufficient conditions for the completeness of $(M,\Sigma)$ arising from the completeness of  some Riemannian or Finslerian metrics whose indicatrix  at each $p\in M$ encloses $\Sigma_p$,  see \cite[Props. 4.3 and  4.4, Example 4.6]{JavSan17}.  Some more refined applications, both relativistic and purely geometric, are also provided. The former include examples of ergospheres and Killing horizons which enhance our study in Section~\ref{ss6.5}. Among the latter, it is worth pointing out an example of  a Randers manifold $(M, R)$ satisfying the following property: even though not all the closed symmetrized balls of $(M, R)$ are
	compact, its universal covering $(\tilde M, \tilde R)$ satisfies that all its closed symmetrized balls are
	compact,  see \cite[Example 2.11]{{JavSan17}}.   We emphasize that, for reversible Finsler metrics, the compactness of the closed symmetrized balls is equivalent to its completeness, as a difference with the non-reversible case.  Such an example  becomes the Finslerian translation of a notable static spacetime obtained by Harris \cite{Harris}; the  underlying ideas in its interpretation come from Section~\ref{ss6.2}.

	\subsection{From wind Finslerian to cones  and Lorentz-Finsler metrics}  (Ref. \cite{JavSan20}.) 
Recall than a wind Finslerian structure $\Sigma$ can always be interpreted using the Zermelo problem, which is now anisotropic in a more general sense that the wind Riemannian case. Indeed, at each point $p\in M$, the subset $\bar B_p$ enclosed by $\Sigma_p=\Sigma\cap T_pM$  determines the velocities that a certain moving object can attain at $p$ (see Remark~\ref{remlength}). 
%\br In particular, in the case of strong wind, there are directions which are not allowed and in the allowed directions there is a maximum and a minimum speed, namely, the moving object cannot stay quiet. \er 
If we add a non-relativistic time $t$ as a first coordinate the (non-relativistic) spacetime $\R\times M$, the (boundary of the)  velocities  of the allowed trajectories determine a cone structure $\mathcal C$ which at each point $(t,p)$ is given by 
\begin{equation}\label{windcone}
{\mathcal C}_p=\{ \lambda(1,v)\in \R\times T_pM: \lambda>0, v\in \Sigma_p\}.
\end{equation}	
Observe that this cone can be described by the triple $(dt, \partial_t,\Sigma)$,  being $\Sigma_p$  defined in $\ker(dt)\equiv T_pM$,   (see Fig.~\ref{dis1} for an enlightening picture). 
%This suggests the triples $(\Omega,T,\Sigma)$

%A second issue appears when one considers time-dependence, that is, if the wind Finslerian structure of the triple depends on the coordinate $t$ which, in terms of Zermelo problem, would mean that the constraints for velocity would depend on time. 
In the above setting, it is very natural to consider a time dependent Zermelo problem, which leads to wind Finslerian structures of the triple $(dt,\partial_t,\Sigma)$ which depend on the coordinate $t$. 
Even if we restrict to the wind Riemannian case, an important difference with our approach in terms of SSTK spacetimes would appear, namely,
 now one cannot expect that the conformal  geometry of the spacetime is codified in a single $t$-slice and, thus, a tidy result such as Theorem~\ref{generalK} cannot hold\footnote{Notice, for example,  that hypotheses as those in \cite[Section 3]{Sa97} (which involve the whole cone structure) are the natural ones to ensure that the $t$-slices are Cauchy.}.
	However, the spacetime viewpoint introduced here will be very fruitful to handle  these $t$-dependent structures in a unified way.   Indeed, to achieve
a generalized correspondence for wind Finslerian structures and also time-dependent,
%	 the generalization to both wind Finslerian structures and time-dependence, 
we will consider strongly convex {\em cone structures}   and Lorentz-Finsler metrics \cite[Defs. 2.7, 3.5]{JavSan20} as the natural generalization of \sstk spacetimes.
%their relativistic counterparts.

	As a first observation on cone structures $\mathcal{C}$, they can always be  obtained from a  class of (anisotropically conformal) Lorentz-Finsler metrics sharing the same pregeodesics, which turn out  to be  the intrinsic cone geodesics of $\mathcal{C}$  \cite[Th. 1.1]{JavSan20}.  To establish the searched   generalization,     start at a wind Finslerian structure $(M,\Sigma)$  and construct the cone structure $\mathcal C$ as in \eqref{windcone}.
	Then $\mathcal{C}$ can also be regarded as the cone structure of a Lorentz-Finsler  metric  $L$ on $\R\times M$ which admits $K=\partial_t$ as a Killing vector field (see some details below). So, we arrive at the natural Finsler generalization of SSTK spacetimes. Recall that 
	the notion of   a {\em smooth}  standard stationary Finsler spacetime has been developed in  \cite[Def. 4.6]{JavSan20}. As in the relativistic case, these spacetimes are also endowed with a standard splitting such that $K=\partial_t$ is  timelike and Killing.\footnote{However, Finslerian stationary and static spacetimes present some subtleties  in comparison with the relativistic ones;  these are related essentially  to the possible lack of smoothness of the Lorentz-Finsler metric along the timelike  Killing  field $K$,  see 
		\cite[Sect. 4.2]{JavSan20} and the study of  non-smooth  stationary and static Finsler spacetimes in \cite{CapSta18, CapSta16}. } Thus, Finslerian SSTK spacetimes arise just dropping the timelike restriction. Summing up:

	\begin{enumerate}
		\item \label{(1)}   There is a natural   Finslerian generalization of SSTK spacetimes, also endowed with a Killing $K=\partial_t$. 
		\item \label{(2)} The  further generalization  which also permits $t$-dependence is obtained by dropping the Killing character of $\partial_t$.
	\end{enumerate}	
	
	 Using the generalization  \eqref{(1)}, the extension of our full approach is also straightforward. 
	Indeed, by using  the intrinsic causal properties of  cone structures, the whole setting of Sections~\ref{kroran} to \ref{further2} and  most of their results  are directly transferable.  In particular,  
	when $K$ is causal, a notion of Finsler-Kropina metric emerges as a natural extension of Randers-Kropina ones and all the results in Section~\ref{kroran} can be extended to this setting.\footnote{ Another type of generalization appears when $K$ is timelike but  the slices $t=0$ are not necessarily spacelike, as studied in \cite{HeJa21}. If the timelike character of  $K$ were dropped, further wind pseudo-Finsler structures would arise.  } 
		About \eqref{(2)}, some applications will be explained in the next subsection, but the following discussion is convenient first.

	 In contrast with the Lorentz case, many issues on  Lorentz-Finsler metrics and their cone structures  have been  developed only recently. A detailed study of this general setting as well as  of the tools to establish links with the  present article is carried out  in  \cite{JavSan20}. 
	Indeed, cone structures are studied both intrinsically and by using a (highly non-unique) cone triple 
	$(\Omega, T, F)$ in a manifold $N$  composed by a 1-form $\Omega$ a   {\em timelike}  vector field $T$ such that $\Omega(T)\equiv 1$  and a Finsler metric $F$ on the kernel of $\Omega$, the latter canonically determined by $\mathcal{C}$ once  $\Omega$ and $T$ have been chosen.  Observe that at each tangent space of $N$, one can obtain a decomposition $\R\times \ker(\Omega)\equiv T_pN$ using the vector field $T$ and then an infinitesimal Zermelo problem using the ball of $F$ in $\ker(\Omega)$ as admissible velocities (compare with \eqref{windcone}). Moreover,  these elements provide a Lorentz-Finsler metric (which  is everywhere smooth but on $T$, where it is smoothable anyway) $L=\Omega^2-F^2$   and the cone structure $\mathcal{C}$ emerges as the tangent vectors $v$ which are lightlike ($L(v)=0$) and future-directed ($ \Omega(v)>0$), see \cite[Th. 1.2]{JavSan20}. When the manifold splits,  the setting of standard stationary spacetimes is naturally  reobtained  by taking $\Omega=dt$, $T=\partial_t$ and $F$ independent of the slice
	 (moreover, a simple description in terms of  non-smooth  static Finsler spacetimes as in \cite{CapSta16} also emerges); obviously, F can also be chosen t-dependent to deal with \eqref{(2)}.
	% (obviously, $F$ can also be chosen  $t$-dependent to deal with \eqref{(2)}).  
	
	 Notice, however, that the above description from \cite{JavSan20} uses a  Finsler metric $F$ rather than a wind Finsler  structure $\Sigma$.
	Nevertheless,  there would not be any problem to describe  $\mathcal{C}$ by using a triple where $F$ is replaced by $\Sigma$: the latter would  also  be canonically determined when the restriction of being timelike for  $T$ is dropped  (recall again \eqref{windcone}).  
	%Indeed, we will see below that this is useful in some applications. Given  $\mathcal{C}$, 
	The restriction to a  timelike $T$ (or a Finsler $F$) can always be done and  gives a  simple  expression for the Lorentz-Finsler metric $L$, but it is not especially relevant. 
	% done by simplicity in the construction of $L$ , but there would not be an
	%Recall that a  wind Riemannian structure determines univocally the Zermelo data $(g_R, W)$ but, in principle, this does not hold in the general wind Finslerian case. This fact underlies the possibility to choose cone triples $(\Omega, T, F)$ with both $T$ everywhere $L$-timelike  and $F$ a classical Finsler metric on each slice. 
	Indeed, there are cases where a non-necessarily timelike vector field $Z$ with $\Omega(Z)\equiv 1$ appears with independence of $\mathcal{C}$  (for example, when $Z$ is Killing  as  in the SSTK case  or when  it represents a priviledged field of observers as in the generalized Zermelo problem below). In these cases, $Z$ may play the role of $\partial_t$ and one can use a wind Finslerian structure $\Sigma$ for the description of $\mathcal{C}$. 
			
	\subsection{Generalizations of Zermelo's and applications to wave propagation}  (Ref. \cite{JPS21}.) 
	In accordance with the items \eqref{(1)} and \eqref{(2)} 
	above, the Zermelo navigation problem and Fermat's principle considered  
	in Sections~\ref{causaltoRiemann} and \ref{further1} admit two natural extensions, namely, when the velocity of propagation is {\em anisotropic} (i.e., depending on the direction, beyond  the  existence of the wind) and time-dependent. Indeed, in \cite[Sect. 6]{JavSan20}, the authors also considered the above Lorentz-Finsler setting, showing that  the  Zermelo problem reduces to Fermat's principle and, then, that the cone geodesics of $\mathcal{C}$ provide the solutions for Zermelo's  (notice that, because of time-dependence, the used techniques are different and many of our sharp conclusions for the projections on slices are dropped).
	Clearly,  this setting is  applicable to interesting situations as, for example, when the flight time of an airship is affected by a time-dependent wind,  eventually strong.  
	
	 More  subtlety, the following application to wave propagation and wildfires, developed in \cite{JPS21} after the work by Markvorsen in \cite{Markvo16, Markvo17}, holds. %\enote{maybe do not start a new paragraph here}	
	Consider  a wave  which propagates in an anisotropic medium, the latter moving with respect to an observer. For example, when a sound wave propagates in the air, the variations of the properties of this medium such as  pressure or temperature may yield an anisotropic speed of propagation with respect to this medium. Moreover,  the wind may move the latter and, so, one  should add the velocity of the air with respect to Earth to the previous velocity with respect to the medium (one assumes that the observer of the wave would remain at rest on Earth). Of course, the wind, as well as the pressure or temperature, might vary with the space point and the time. 
	From an abstract viewpoint, this is modelled with a cone structure $\mathcal{C}$, eventually described by a cone triple   $(\Omega=dt, \partial_t, \Sigma)$,   as explained at the previous subsection.  Moreover, in this setting, $\Sigma$ is obtained as the translation of the indicatrix of a Finsler metric $F$ (velocities of the wave without perturbations) by a vector field $W$ (the wind). 
 In this case, $T=W+\partial_t$ 
	represents a sort of comoving field of 
	observers,\footnote{ When the wind is strong, comoving observers  move at a speed bigger than the wave  (recall that our waves propagate in a material medium, and  the relativistic situation of the light propagating in vacuum is different). So, wind Finsler structures are relevant for the computation of the actual arrival time of the wave measured by the rest observers at $Z$, see \cite[Sect. 6]{JPS21}.
	The modelling of wildfires has some particular subtleties, anyway, wind Finsler structures  can be  useful to compute  the evolution of the {\em active} firefront \cite[Section 6.3]{JPS21}. }
%	 [49,..]	
%		 \sout{ In the case of wildfire propagation, the wind is never ``strong'' (in the sense used here) as %it is assumed that 
%		the fire can propagate  in all the directions, even if the wind slows down the speed in some of them. However, wind Finsler structures are still useful for the computation of the {\em active} firefront, }\cite[Section 6.3]{JPS21}. \eb }  
	%$T=W+\partial_t$ 
	which may be useful for modelling.

	When the Zermelo problem is time-dependent,   it is no longer possible to reduce the study of its solutions to classical Finsler metrics. A  way to solve this problem was firstly introduced by Markvorsen.  Motivated by the problem of wildfire propagation, this author introduced a  Finsler approach for the time-independent case \cite{Markvo16} and, then, he considered  rheonomic Lagrange manifolds and frozen metrics for the time-dependent one \cite{Markvo17}. The latter would correspond to  study   $t$-dependent Finsler metrics in the spirit of (2'), but dropping the spacetime viewpoint. This viewpoint is taken into account in  \cite{JPS21} so that the  ``fastest trajectories'' followed by the wave (which will yield the frontwave at each instant of time) will become lightlike geodesics of the Lorentz-Finsler metric $L$. 
	
	More precisely, the wavefront propagation 
	relies on a  {\em Zermelo problem starting from a submanifold} rather than from a point. Indeed, in order to  compute the evolution of a  wave or wildfire, one needs to solve the Zermelo problem from the present wavefront, which  will be a codimension 2 submanifold $S_0$ (a  hypersurface embedded in a slice $t=t_0$) of the Finsler spacetime. 
	%Observe that the application of Finsler Geometry to wildfire models has been developed in \cite{Markvo16,Markvo17} with further results in \cite{JPS21}. 
	In particular, Theorem~4.8 in \cite{JPS21} shows that  the solutions to this Zermelo problem are given by the lightlike geodesics departing orthogonally from $S_0$, at least locally.
	When the wind is time-independent such geodesics can be computed with a wind Finslerian structure  (which is a Finsler metric in the region of mild wind), see \cite[Section 6]{JPS21}. 
	
	%When the latter  is strong,  then the wavefront is basically computed with the conic Finsler metric $F$, having $F_l$ no role (see \cite[Section 6.3]{JPS21}). \er

   \newpage
	\section*{Appendix: List of some symbols  and conventions}
	%{\footnotesize \printindex }
	In order to avoid heavy notations, we have made some abuses of notation when there was no possibility of confusion. For the convenience of the reader, the main ingredients of the notation are listed here.
	
	\medskip
	\renewcommand{\arraystretch}{1.2}
	\begin{tabular}{ lll } \toprule
		\bf Symbol&\bf Stays for&  \bf Reference\\
		%\multicolumn{3}{ |c| }{Team sheet} \\
		\midrule 
		\multirow{2}{*}{$\Sigma$} & \begin{minipage}{7cm}wind Minkowskian structure on a vector space $V$;\smallskip\end{minipage} & Def.~\ref{min2} \\
		& \begin{minipage}{7cm}wind Finslerian structure  on a manifold $M$ ($\Sigma= \cup_p \Sigma_p$)\end{minipage} &Def.~\ref{windStruct}\\ \hline
		\multirow{2}{*}{$\Sigma_F$} & indicatrix of a  Minkowski norm; & below Def.~\ref{min}\\
		& indicatrix of a Finsler manifold & Remark~\ref{r2.7}\\ \hline
		\begin{minipage}[b][0.02\textheight]{1cm}$\tilde \Sigma$\end{minipage}&reverse wind Finsler structure & Def.~\ref{dreversewind}\\ \hline
		$\MM$& region of mild wind&\multirow{3}{*}{Def.~\ref{ae}}\\ 
		$\MC$& region of critical wind&\\
		$M_l$& region of strong wind &\\ \hline
		\multirow{5}{*}{$F$} & (positively homogeneous) Minkowski norm; & Def.~\ref{min}\\
		&\begin{minipage}{7cm}\smallskip conic  pseudo-Minkowski norm, conic Minkowski norm, Lorentzian  norm;\smallskip\end{minipage}& Def.~\ref{defA}\\
		&\begin{minipage}{7cm} \smallskip Finsler metric; conic  pseudo-Finsler metric, Lorentzian Finsler metric;\end{minipage}& above Def.~\ref{windStruct}\\
		&\begin{minipage}{7cm}\smallskip conic Finsler metric associated 
			to a wind Finslerian manifold;\smallskip\end{minipage}&Prop.~\ref{windConseq}\\ 
		&\begin{minipage}{7cm}\smallskip conic Finsler metric associated 
			to a wind Riemannian manifold\smallskip\end{minipage}&Eq. \eqref{randersone}\\ 
		\hline
		\multirow{3}{*}{$F_l$}&\begin{minipage}{7cm}\smallskip Lorentzian norm associated with a strong wind Minkowskian structure;\smallskip\end{minipage}&Prop.~\ref{possibwind}\\
		&\begin{minipage}{7cm}\smallskip Lorentzian Finsler metric associated to a wind Finslerian manifold;\smallskip\end{minipage}&Prop.~\ref{windConseq}\\
		&\begin{minipage}{7cm}\smallskip Lorentzian Finsler metric associated to a wind Riemannnian manifold\smallskip\end{minipage}&Eq. \eqref{randersone2}\\
		\hline
		\multirow{3}{*}{$A$}&(open) conic domain in a vector space;& Def.~\ref{defA}\\
		&\begin{minipage}{7cm}\smallskip domain of a conic  Minkowski  and a Lorentzian  norm  associated with a wind Minkowskian structure;\smallskip\end{minipage}&Prop.~\ref{possibwind}\\ 
		&\begin{minipage}{7cm}\smallskip (open) domain of a wind  Finslerian 
			structure  ($A=\cup_{p\in M} A_p$)\smallskip\end{minipage}& Def.~\ref{windStruct}\\\hline
		$A_l=\cup_{p\in M_l}A_p$&\begin{minipage}{7cm}\smallskip open  domain  of the conic Finsler metric $F$ and the  Lorentzian Finsler metric $F_l$ of a strong wind Finslerian structure\smallskip\end{minipage}&Def.~\ref{ae} \\ \hline
		$A_E$
		&\begin{minipage}{7cm}\smallskip extended domain of wind Finslerian structures\smallskip
		\end{minipage}&Def.~\ref{ae}\\\hline
		&\begin{minipage}{7cm}{\smallskip extended definition of $F$ and $F_l$ to $A\cup A_E$}\end{minipage}&Conv.~\ref{caestar}\\
		\bottomrule
	\end{tabular}
	
	\begin{tabular}{ lll }\toprule
		\bf Symbol&\bf Stays for&  \bf Reference\\ 
		\midrule
		$B$&\begin{minipage}{7cm}\smallskip (open) unit ball for a wind Minkowskian\\ structure\smallskip\end{minipage}&Def.~\ref{min2}\\ \hline
		$B_p$&unit ball for $\Sigma_p$& Def.~\ref{windStruct}\\ \hline
		\begin{minipage}[b][0.02\textheight]{1cm}$B^\pm_{\Sigma}(x_0,r)$\end{minipage}&forward/backward wind balls;&\multirow{3}{*}{Def.~\ref{sigmaballs}}\\
		\begin{minipage}[b][0.02\textheight]{1cm}$\hat{B}^\pm_{\Sigma}(x_0,r)$\end{minipage}&forward/backward c-balls;&\\
		\begin{minipage}[b][0.02\textheight]{1cm} $\bar{B}^\pm_{\Sigma}(x_0,r)$\end{minipage}& closed forward/backward wind balls&\\\hline
		&$\Sigma$-admissible curve;&\multirow{2}{*}{Def.~\ref{sigmadmissible}-(i)}\\
		&$F$-admissible curve;&\\ 
		&wind curve;&Def.~\ref{sigmadmissible}-(ii)\\
		&(strictly) regular curve&Def.~\ref{sigmadmissible}-(iii)\\ \hline
		\begin{minipage}[b][0.02\textheight]{1cm}$C^{\Sigma}_{x_0,x_1}$\end{minipage}&set of wind curves;&\multirow{3}{*}{below Ex.~\ref{exazero}}\\
		\begin{minipage}[b][0.02\textheight]{1cm}$C^{A}_{x_0,x_1}$\end{minipage}&set of $F$-wind curves;&\\ 
		\begin{minipage}[b][0.02\textheight]{1cm} $\Omega^{A}_{x_0,x_1}$ \end{minipage}&\begin{minipage}{7cm}\smallskip  set of $F$-admissible curves  \smallskip\end{minipage}&\\ 
		\begin{minipage}[b][0.02\textheight]{1.5cm}$C^{\Sigma}_{x_0,x_1}[a,b]$\end{minipage}&set of wind curves with domain $[a,b]$& Def.~\ref{variations}\\ \hline
		$\psi$& wind variation, $F$-wind variation & Def.~\ref{variations} \\ \hline
		$\ell_{F}$&\multirow{2}{*}{wind lengths}&\multirow{2}{*}{Def.~\ref{sigmadmissible}-(iv)}\\
		$ \ell_{F_l}$&&\\ \hline
		$d_{F}$ & Finslerian separation &  Def.~\ref{from41} \\ \hline
		&unit extremizing (pre)geodesic&Def.~\ref{extremizing}\\ \hline
		% \multicolumn{2}{c}{\begin{minipage}{5cm}\smallskip unit extremizing (pre)geodesic\smallskip\end{minipage}}&Def.\ref{extremizing}\\ \hline
		&\begin{minipage}{7cm}\smallskip minimizing, maximizing, boundary (unit) (pre)geodesic\smallskip\end{minipage}&Def.~\ref{ex3}\\ 
		\hline
		&(pre)geodesic&Def.~\ref{windgeodesic}\\ \hline
		\multirow{2}{*}{$\gamma_{x_0}$}&\multirow{2}{*}{(extremizing) exceptional geodesic at $x_0$}&Prop.~\ref{disjunction}\\
		&&Def.~\ref{windgeodesic}\\ \hline
		&w-convex&Def.~\ref{strongconvex}\\ \hline
		&geodesically convex&Prop.~\ref{pgc}\\ \hline
		$(L,g)$& Lorentzian   ($m+1$)-manifold  or  spacetime&Subsect.~\ref{prelimin}\\ \hline
		$g$& metric of an \sstk&Def. \ref{dsstk}\\ \hline
		$g_R$&Riemannian metric on $M$&\\\hline
		\multirow{2}{*}{ $(g_0,\omega,\Lambda)$}& associated triple to a wind Riemannian structure;&Def.~\ref{ctriple}\\
		& spacelike metric, shift and lapse of an \sstk&Prop.~\ref{psstk}\\ \hline
		%\multirow{2}{*}{$\Lambda$}&\begin{minipage}{7cm}\smallskip function associated to a Riemannian manifold $(M, g_R)$ and a vector field $W$;\smallskip\end{minipage}&Def.~\ref{ctriple}\\ &lapse of an \sstk spacetime;&Prop.~\ref{psstk} \\\hline
		%\multirow{2}{*}{$\omega$}&\begin{minipage}{7cm}\smallskip one-form associated to a Riemannian manifold $(M, g_R)$ and a vector field $W$;\smallskip\end{minipage}&Def.~\ref{ctriple}\\
		%& shift of an \sstk spacetime&Prop.~\ref{psstk}\\\hline
		\begin{minipage}{1.2cm}\smallskip $S_t =$ $\{t\}\times M$\smallskip \end{minipage}&slice of any \sstk spacetime&below Eq. \eqref{elor}\\ \hline
		$\R\times TM$&\begin{minipage}{7cm}\smallskip tangent space to $\R\times M$ when the $t$-component becomes irrelevant\end{minipage}&Conv.~\ref{convention4}\\
		\bottomrule
	\end{tabular}
	
	\begin{tabular}{ lll }\toprule
		\bf Symbol&\bf Stays for&  \bf Reference\\ 
		\midrule
		
		&\multirow{2}{*}{limit curve}&Def.~\ref{limitcurvedef}\\ &&Lem.~\ref{limitcurve}\\ \hline
		&causal spacetime&Th.~\ref{kropinaLadder}\\\hline 
		&strongly causal spacetime;&\multirow{3}{*}{Rem.~\ref{increasing}}\\ 
		&stably causal spacetime;&\\ 
		&temporal function&\\ \hline
		&causally simple spacetime;&\multirow{3}{*}{Th.~\ref{kropinaLadder}}\\ 
		&globally hyperbolic spacetime;&\\ 
		&Cauchy hypersurface&\\ \hline
		$h$& signature changing metric on $M$& Eq. \eqref{eh}\\ \hline
		\begin{minipage}[b][0.02\textheight]{1cm}$\tilde h$\end{minipage}&\begin{minipage}{7cm}\smallskip Lorentzian metric (of index $m-1$) on $M_l$ equal to $h/\Lambda^2$\smallskip\end{minipage}&Eq. \eqref{eab}\\ \hline
		$M_{\Lambda\neq 0}$&region where $\Lambda$ does not vanish&below Eq. \eqref{eh}\\ \hline
		$\mathcal N_{p_0, \alpha}$&space of lightlike curves 
		from $p$ to the curve $\alpha$&Eq. \eqref{mline}\\ \hline
		$\mathcal N_{p_0, l{_{x_1}}}$&\begin{minipage}{7cm}\smallskip case when $\alpha$ is the line 
			$l{_{x_1}}=\{(t,x_1): t\in\R\}$\smallskip\end{minipage}&below Eq. \eqref{functiemp} \\ \hline
		\multirow{2}{*}{$T$}&arrival functional;&Eq. \eqref{earrivalf}\\
		&arrival time functional&above Eq. \eqref{functiemp}\\ \bottomrule
	\end{tabular}
	

\begin{thebibliography}{00}
		%\addcontentsline{toc}{section}{References}
		
		\bibitem{AbrMar78}
		{\sc R.~Abraham and J.~E. Marsden}, {\em Foundations of Mechanics},
		Benjamin/Cummings Publishing Co., Inc., Reading,
		Mass., 1978.
		%\newblock Second edition, revised and enlarged, With the assistance of Tudor
		% Ra{\c{t}}iu and Richard Cushman.
		
		\bibitem{agrachev}{\sc A.~ A.~Agrachev and Y.~L.~Sachkov}, {\em Control Theory from the Geometric Viewpoint},
		Encyclopaedia of Mathematical Sciences, 87. Control Theory and Optimization, II. Springer-Verlag, Berlin, 2004. 
		
		\bibitem{AntPic96}
		{\sc F.~Antonacci and P.~Piccione}, {\em A {F}ermat principle on {L}orentzian
			manifolds and applications}, Appl. Math. Lett., 9 (1996),  91--95.
		

		
		\bibitem{BaRoSh04}
		{\sc D.~Bao, C.~Robles and Z.~Shen}, {\em {Zermelo navigation on {R}iemannian
				manifolds}}, J. Differential Geom., 66 (2004),  377--435.
		
		\bibitem{BLV05}
		{\sc C.~Barcel\'o, S.~Liberati and M.~Visser}, {\em Analogue Gravity}, Living Rev. Relativity, 8 (2005), 12.
		
		
		\bibitem{BaCaFl} {\sc R.~Bartolo, A.~M. Candela and J.~L. Flores}, {\em Connectivity by geodesics on globally
			hyperbolic spacetimes with a lightlike Killing vector field}, Rev. Mat. Iberoamericana, \soutE{in press, arXiv:1405.0804} \bw  33 (2017), 1--28.\ew
		
		\bibitem{BaCaGS11}
		{\sc R.~Bartolo, E. Caponio, A.V.~Germinario and M. S{\'a}nchez,}
		{\em Convex domains of {F}insler and {R}iemannian manifolds}, Calc. Var. Partial Differential Equations, 40 (2011), 335--356.
		
		
		\bibitem{BeEhEa96}
		{\sc J.~K. Beem, P.~E. Ehrlich and K.~L. Easley}, {\em {Global {L}orentzian
				Geometry}},  Marcel Dekker Inc., New York, second~ed., 1996.
		
		\bibitem{BeeKr98}
		{\sc J.~K. Beem and A.~Kr{\'o}lak}, {\em {Cauchy horizon end points and
				differentiability}}, J. Math. Phys., 39 (1998),  6001--6010.
		
		\bibitem{BeCh}
		{\sc R.~Beig and P.~T. Chru{\'s}ciel}, {\em {Killing initial data}}, Classical
		Quantum Gravity, 14 (1997),  A83--A92.
		% \newblock Geometry and physics.
		
		\bibitem{BerSan05}
		{\sc A.~N. Bernal and M.~S{\'a}nchez}, {\em {Smoothness of time functions and
				the metric splitting of globally hyperbolic spacetimes}}, Comm. Math. Phys.,
		257 (2005),  43--50.
		
		\bibitem{BerSan08}
		{\sc A.~N. Bernal and M.~S{\'a}nchez}, {\em Globally hyperbolic spacetimes can
			be defined as `causal' instead of `strongly causal'}, Classical Quantum
		Gravity, 24 (2007),  745--749.
		
		\bibitem{BiJa11}
		{\sc L.~Biliotti and M.~A. Javaloyes}, {\em {{$t$}-periodic light rays in
				conformally stationary spacetimes via {F}insler geometry}}, Houston J. Math.,
		37 (2011),  127--146.
		
		\bibitem{BlSaSe13}
		{\sc O.~F. Blanco, M.~S{\'a}nchez and J.~M.~M. Senovilla}, {\em Structure of
			second-order symmetric {L}orentzian manifolds}, J. Eur. Math. Soc., 15
		(2013),  595--634.
		
		\bibitem{Bryant02}
		{\sc R.~L. Bryant}, {\em Some remarks on {F}insler manifolds with constant flag
			curvature}, Houston J. Math., 28 (2002),  221--262.
		
		\bibitem{Candel96}
		{\sc A.~M. Candela}, {\em Lightlike periodic trajectories in space-times}, Ann.
		Mat. Pura Appl. (4), 171 (1996),  131--158.
		
		\bibitem{CFS08} 
		{\sc A.~M. Candela,  J.~L. Flores and M.~S{\'a}nchez},
		{\em Global hyperbolicity and Palais-Smale condition for action functionals in stationary spacetimes}, Adv. Math.  218 (2008), 515--536. 
		
		\bibitem{CapGerSan12}
		{\sc E.~Caponio, A.~Germinario and M.~S{\'a}nchez}, {\em {Convex regions of
				stationary spacetimes and Randers spaces. Applications to lensing and
				asymptotic flatness}}, \soutE{J. Geom. Anal., (2015),}   J. Geom. Anal., 26 (2016), 791--836.  
			
		\bibitem{CGMS}\bw {\sc E.~Caponio, F. Giannoni, A.~ Masiello, and S. Suhr}, { \em Connecting and closed geodesics of a Kropina metric},  Adv. Nonlinear Stud.
		21 (2021), 683--695.\ew
		
		\bibitem{CaJaMa10}
		{\sc E.~Caponio, M.~A. Javaloyes and A.~Masiello}, {\em {Morse theory of
				causal geodesics in a stationary spacetime}}, Ann. Inst. H. Poincar{\'e} Anal. Non Lin{\'e}aire, 27
		(2010),  857--876.
		
		\bibitem{CaJaMa}
		{\sc E.~Caponio, M.~A. Javaloyes and A.~Masiello}, {\em {On the energy
				functional on {F}insler manifolds and applications to stationary
				spacetimes}}, Math. Ann., 351 (2011),  365--392.
		
		\bibitem{CaJaMa13}
		{\sc E.~Caponio, M.~A. Javaloyes and A.~Masiello}, {\em Addendum to
			``{M}orse theory of causal geodesics in a stationary spacetime via {M}orse
			theory of geodesics of a {F}insler metric'' [{A}nn. {I}. {H}.
			{P}oincar\'e---{AN} 27 (3) (2010) 857--876]}, Ann. Inst. H.
		Poincar\'e Anal. Non Lin\'eaire, 30 (2013),  961--968.
		
		\bibitem{CapJavSan10}
		{\sc E.~Caponio, M.~A. Javaloyes and M.~S{\'a}nchez}, {\em {On the interplay
				between {L}orentzian causality and {F}insler metrics of {R}anders type}},
		Rev. Mat. Iberoamericana, 27 (2011),  919--952.
		
		
		\bibitem{CapMin}  {\sc E.~Caponio and  E.~Minguzzi,} 
		{\em Solutions to the Lorentz force equation with fixed charge-to-mass ratio
			in globally hyperbolic spacetimes}, J. Geom. Phys., 49 (2004),  176--186. 
		
			\bibitem{CapSta16}\br 
		\bw
		{\sc E. Caponio and G. Stancarone}, 
		{\em Standard static {F}insler spacetimes},
		 Int. J. Geom. Methods Mod. Phys., 13 (2016), 1650040.
		\ew \er
		
		\bibitem{CapSta18} \br 
		\bw {\sc E. Caponio and G. Stancarone},
		{\em On Finsler spacetimes with a timelike Killing vector field},
		Classical Quantum Gravity, 35 (2018), 085007.
		\ew \er
		
		\bibitem{Carath67}
		{\sc C.~Carath{\'e}odory}, {\em {Calculus of Variations and Partial
				Differential Equations of the First Order}}, Holden-Day Inc., San Francisco,
		Calif., 1967.
		
		\bibitem{CaMars}
		{\sc A.~Carrasco and M.~Mars}, {\em {On marginally outer trapped surfaces in
				stationary and static spacetimes}}, Classical Quantum Gravity, 25 (2008),
		055011, 19.
		
		\bibitem{CheShe12}
		{\sc X. Cheng and Z. Shen}, {\em Finsler Geometry. An approach via Randers spaces},
		{Science Press Beijing, Beijing; Springer, Heidelberg}, 2012,
		
		\bibitem{ChCoHe12}
		{\sc P.~Chru{\'s}ciel, J.~Costa and M.~Heusler}, {\em {Stationary black holes:
				uniqueness and beyond}}, Living Rev. Relativity, 15 (2012).
		
		
		\bibitem{ChFuGH02}
		{\sc P.~T. Chru{\'s}ciel, J.~H.~G. Fu, G.~J. Galloway and R.~Howard}, {\em {On
				fine differentiability properties of horizons and applications to
				{R}iemannian geometry}}, J. Geom. Phys., 41 (2002),  1--12.
		
		\bibitem{ChrGra12}
		{\sc P.~T. Chru{\'s}ciel and J.~D.~E. Grant}, {\em On {L}orentzian causality
			with continuous metrics}, Classical Quantum Gravity, 29 (2012),  145001,
		32. 
		
		\bibitem{CosFlo}
		{\sc I.~P. Costa e Silva, J.~L. Flores}, {\em On the splitting problem for Lorentzian manifolds with an $\R$-action with causal orbits}, \soutE{\href{https://arxiv.org/abs/1605.02345}{arXiv:1605.02345 [math-ph]}, (2016).}
		\bw Ann. Henri Poincar\'e, 18 (2017), 1635–167.\ew
		
		\bibitem{FatSic12}
		{\sc A.~Fathi, A.~Siconolfi},
		{\em On smooth time functions}, Math. Proc. Cambridge Philos. Soc., 2 (2012), 303--339.
		
		\bibitem{FlHeSa11}
		{\sc J.~L. Flores, J.~Herrera and M.~S{\'a}nchez}, {\em {On the final
				definition of the causal boundary and its relation with the conformal
				boundary}}, Adv. Theor. Math. Phys., 15 (2011),  991--1057.
		
		\bibitem{FlHeSa13}
		{\sc J.~L. Flores, J.~Herrera and M.~S{\'a}nchez}, {\em Gromov, {C}auchy and
			causal boundaries for {R}iemannian, {F}inslerian and {L}orentzian manifolds},
		Mem. Amer. Math. Soc., 226 (2013),  vi+76.
		
		\bibitem{FlSan08}
		{\sc J.~L. Flores and M.~S{\'a}nchez}, {\em {The causal boundary of wave-type
				spacetimes}}, J. High Energy Phys.,  (2008), \bw JHEP03(2008)036.\ew \soutE{no.3, 036, 43~pp.}
		
		\bibitem{ijgmmp}
		{\sc J.~L. Flores and M.~S{\'a}nchez}, {\em {Geodesics in stationary spacetimes. Application to Kerr spacetime}}, Int. J. Theor. Phys., 8 (2002),  319-336.
		
		
		\bibitem{FoGiMa95}
		{\sc D.~Fortunato, F.~Giannoni and A.~Masiello}, {\em {A {F}ermat principle
				for stationary space-times and applications to light rays}}, J. Geom. Phys.,
		15 (1995),  159--188.
		
		\bibitem{GiMaPi98}
		{\sc F.~Giannoni, A.~Masiello and P.~Piccione}, {\em {A {M}orse theory for
				light rays on stably causal {L}orentzian manifolds}}, Annales de l'I.H.P.,
		section A, 69 (1998),  359--412.
		
		\bibitem{GiMaPi02}
		{\sc F.~Giannoni, A.~Masiello and P.~Piccione}, {\em {The {F}ermat
				principle in {G}eneral {R}elativity and applications}}, J. Math. Phys., 43
		(2002),  563--596.
		
		\bibitem{GHWW} {\sc G.~Gibbons, C.~Herdeiro, C.~Warnick and M.~Werner: {\em Stationary metrics
				and optical Zermelo-Randers-Finsler geometry}}, Phys. Rev. D, 79
		(2009), 044022.
		
		\bibitem{GW10} {\sc G.~Gibbons,  and  C.~Warnick}, {\em Traffic noise and the hyperbolic plane}, 
		Ann. Phys., 325  (2010), 909--923.
		
		\bibitem{GW11} {\sc G.~Gibbons,  and  C.~Warnick}, {\em The geometry of sound rays in a wind}, 
		Contemporary Physics, 52  (2011), 197--209.
		
		
		
		\bibitem{Harris} \br
		{\sc S.~G. Harris},  {\em Static- and Stationary-complete spacetimes: algebraic and causal structures},  \bw Classical \ew 
		Quantum Gravity, 32   (2015), 135026. \er
		
		\bibitem{HeJa21} \br
		{\sc J. Herrera and M. A. Javaloyes, }
		{\em  Stationary-complete spacetimes with non-standard splittings and pre-Randers metrics. }
		{J. Geom. Phys.,} 163 (2021),  104120.  	\er
		
		\bibitem{Jav13}
		{\sc M.~A. Javaloyes}, {\em {Chern connection of a pseudo-Finsler metric as a
				family of affine connections}}, Publ. Math. Debrecen,
		84 (2014), 29--43.
		
		
		\bibitem{JaLiPi11}
		{\sc M.~A. Javaloyes, L.~Lichtenfelz and P.~Piccione}, {\em Almost isometries
			of non-reversible metrics with applications to stationary spacetimes}, J. Geom. Phys., 89 (2015), 38--49. 
		
		\bibitem{JPS21}\br 
		{\sc M.A.~Javaloyes, E.~Pend\'as-Recondo and M.~S\'anchez.}
		\newblock Applications of cone structures to the anisotropic rheonomic Huygens' principle,
		\newblock {\em Nonlinear Analysis,} 209  \bw (2021), 112337. \ew \er
		
		
		\bibitem{JaSan08}
		{\sc M.~A. Javaloyes and M.~S{\'a}nchez}, {\em {A note on the existence of
				standard splittings for conformally stationary spacetimes}}, Classical
		Quantum Gravity, 25 (2008),  168001, 7.
		
		\bibitem{JavSan11}
		{\sc M.~A. Javaloyes and M.~S{\'a}nchez}, {\em {On the definition and examples
				of Finsler metrics}}, Ann. Scuola Norm. Sup. Pisa Cl. Sci. (5), 13 (2014), 813--858.
		
		\bibitem{JavSan13}
		{\sc M.~A. Javaloyes and M.~S\'anchez}, {\em Finsler metrics and relativistic
			spacetimes}, Int. J. Geom. Methods Mod. Phys., 11 (2014), 1460032. \soutE{15~pp.} 
		
		
		\bibitem{JavSan17b} \br
		{\sc M.~A. Javaloyes and M.~S{\'a}nchez},
		{\em Wind {R}iemannian spaceforms and {R}anders-{K}ropina metrics of constant flag curvature},
		{Eur. J. Math.}, 3 (2017), 1225--1244. \er
		
		\bibitem{JavSan17} \br 
		{\sc M.~A. Javaloyes and M.~S{\'a}nchez},
		{\em 	Some criteria for wind {R}iemannian completeness and existence of {C}auchy hypersurfaces}
		in Lorentzian Geometry and Related Topics, vol. 211 of 	Springer Proc. Math. Stat., Springer, Cham, 2017,  117--151. \er
		
		
		\bibitem{JavSan20} \br
		{\sc M.~A. Javaloyes and M.~S\'{a}nchez}, {\em On the definition and examples
			of cones and {F}insler spacetimes},  RACSAM, 114 (2020),  \bw 30.\ew \er
		
		\bibitem{JavSoa13}
		{\sc M.~A. Javaloyes and B.~Soares}, {\em {G}eodesics and {J}acobi fields of
			pseudo-{F}insler manifolds},   Publ. Math. Debrecen, 87 (2015), \bw 5.\ew 
		
		\bibitem{Kovner90}
		{\sc I.~Kovner}, {\em {Fermat principles for arbitrary space-times}},
		Astrophysical Journal, 351 (1990),  114--120.
		
		\bibitem{Levi-C31}
		{\sc T.~Levi-Civita}, {\em {\"Uber Zermelo's Luftfahrtproblem.}}, Z. Angew.
		Math. Mech., 11 (1931),  314--322.
		
		
		\bibitem{ludvig}
		{\sc M. Ludvigsen}, {\em General Relativity. A Geometric Approach.}, Cambridge Univ. Press, Cambridge, 2004.
		
		
		\bibitem{LyuFet51}
		{\sc L.~A. Lyusternik and A.~I. Fet}, {\em {Variational problems on closed
				manifolds}}, Doklady Akad. Nauk SSSR (N.S.), 81 (1951),  17--18.
		
		\bibitem{Mae}
		{\sc D.~Maerten}, {\em {Killing initial data revisited}}, J. Math. Phys., 45
		(2004),  2594--2599.
		
		\bibitem{Mania37}
		{\sc B.~Mani{\`a}}, {\em Sopra un problema di navigazione di {Z}ermelo}, Math.
		Ann., 113 (1937),  584--599.
		
		\bibitem{Markvo16}
		{\sc S.~Markvorsen},
		{\em A Finsler geodesic spray paradigm for wildfire spread
			modelling}, Nonlinear Anal. Real World Appl., 28 (2016), 208--228.
		
		\bibitem{Markvo17}\br 
		{\sc S.~Markvorsen.}
		\newblock {\em Geodesic sprays and frozen metrics in rheonomic Lagrange manifolds,}
		\href{https://arxiv.org/abs/1708.07350}{arXiv:1708.07350 [math.DG]}, (2017). 	\er
		
		
		\bibitem{MarsReiris}
		{\sc M.~Mars and M.~Reiris}, {\em Global and uniqueness properties of
			stationary and static spacetimes with outer trapped surfaces}, \soutE{Communications
		in Mathematical Physics} \bw Comm. Math. Phys., \ew 322 (2013),  633--666.
		
		\bibitem{MasPic98}
		{\sc A.~Masiello and P.~Piccione}, {\em Shortening null geodesics in
			{L}orentzian manifolds. {A}pplications to closed light rays}, Differential
		Geom. Appl., 8 (1998),  47--70.
		
		\bibitem{Matthi80}
		{\sc H.-H.~Matthias}, {\em Zwei {V}erallgemeinerungen eines {S}atzes von {G}romoll und {M}eyer},
		{Dissertation, Rheinische Friedrich-Wilhelms-Universit{\"a}t}, 
		{Bonner Mathematische Schriften}, {Bonn}, 1980.
		
		\bibitem{Matvee13}
		{\sc V.~S. Matveev}, {\em Can we make a {F}insler metric complete by a trivial
			projective change?}, in Recent trends in {L}orentzian geometry, vol.~26 of
		Springer Proc. Math. Stat., Springer, New York, 2013,  231--242.
		
		
		\bibitem{min} {\sc E. Minguzzi,}  {\em On the existence of maximizing curves for the charged-particle action}, \bw Classical  Quantum Gravity, \ew  20 (2003)  4169--4175.
		
		
		\bibitem{MinSan06}
		{\sc E.~Minguzzi and M.~S{\'a}nchez}, {\em Connecting solutions of the
			{L}orentz force equation do exist}, Comm. Math. Phys., 264 (2006),
		349--370.
		
		\bibitem{MinSan08}
		{\sc E.~Minguzzi and M.~S{\'a}nchez}, {\em {The causal hierarchy of
				spacetimes}}, in {Recent developments in pseudo-{R}iemannian geometry}, {ESI
			Lect. Math. Phys.}, Eur. Math. Soc., Z{\"u}rich, 2008,  299--358.
		
		\bibitem{O'neill}
		{\sc B.~O'Neill}, {\em {Semi-{R}iemannian Geometry}},  Academic Press Inc. , New York, 1983.
		%\newblock With applications to relativity.
		
			\bibitem{O'neillkerr}
		{\sc B.~O'Neill}, {\em  The geometry of Kerr black holes.}
		 A K Peters, Ltd., Wellesley, MA, 1995. 
		
		\bibitem{Pe90}
		{\sc V.~Perlick}, {\em {On {F}ermat's principle in general relativity. {I}.\
				{T}he general case}}, Classical Quantum Grav., 7 (1990),  1319--1331.
		
		\bibitem{Perlic04}
		{\sc V.~Perlick}, {\em {Gravitational lensing from a spacetime perspective}},
		\bw Living Rev. Relativity,\ew   (2004).
		
		\bibitem{Rad04b}
		{\sc H.-B. Rademacher}, {\em Nonreversible {F}insler metrics of positive flag
			curvature}, in A sampler of {R}iemann-{F}insler geometry, vol.~50 of Math.
		Sci. Res. Inst. Publ., Cambridge Univ. Press, Cambridge, 2004,  261--302.
		
		\bibitem{Sa97} \br
		{\sc M.~S{\'a}nchez}, {\em Some remarks on causality theory and variational methods in Lorentzian manifolds},
		Conf. Semin. Mat. Univ. Bari,  265 (1997). \er		
		
		\bibitem{sanche99}
		{\sc M.~S{\'a}nchez}, {\em Timelike periodic trajectories in spatially compact
			{L}orentz manifolds}, Proc. Amer. Math. Soc., 127 (1999),  3057--3066.
		
		
		\bibitem{sanche06}
		{\sc M.~S{\'a}nchez}, {\em On causality and closed geodesics of compact Lorentzian
			manifolds and static spacetimes}, Differential Geom. Appl., 24 (2006), 21--32.
		
		\bibitem{userres}{\sc U.~Serres}, {\em Geometry and feedback classification of low-dimensional non-linear control systems.} Mathematics [math]. Universit\'e de Bourgogne, 2006. English. $\langle$tel-00172902$\rangle$. Available at \href{https://tel.archives-ouvertes.fr/tel-00172902/document}{https://tel.archives-ouvertes.fr/tel-00172902}
		
		\bibitem{Shen01}
		{\sc Z.~Shen}, {\em {Lectures on {F}insler Geometry}}, World Scientific
		Publishing Co., Singapore, 2001.
		
		\bibitem{Sh03}
		{\sc Z.~Shen}, {\em {Finsler metrics
				with {$\mathbf K=0$} and {$\mathbf S=0$}}}, Canad. J. Math., 55 (2003),
		112--132.
		
		\bibitem{TanSab12}
		{\sc M.~Tanaka and S.~V. Sabau}, {\em The cut locus and distance function from
			a closed subset of a {F}insler manifold}, \soutE{arXiv:1207.0918 [math.DG],  (2012)} \bw Houst. J. Math., 42 (2016), 1157--1197. \ew
		
		\bibitem{Tipler79}
		{\sc F.~J. Tipler}, {\em Existence of closed timelike geodesics in {L}orentz
			spaces}, Proc. Amer. Math. Soc., 76 (1979),  145--147.
		
		\bibitem{Uhlenb75}
		{\sc K.~Uhlenbeck}, {\em {A Morse theory for geodesics on a Lorentz
				manifold.}}, Topology, 14 (1975),  69--90.
		
		\bibitem{Mises31}
		{\sc R.~von Mises}, {\em {Zum Navigationsproblem der Luftfahrt.}}, Z. f. angew.
		Math., 11 (1931),  373--381.
		
		\bibitem{Wald}
		{\sc R.~M. Wald}, {\em {General Relativity}}, University of Chicago Press,
		Chicago, IL, 1984.
		
		\bibitem{Whiteh32}
		{\sc J.~Whitehead}, {\em {Convex regions in the geometry of paths}}, Q. J.
		Math., Oxf. Ser., 3 (1932),  33--42,
		\newblock with an addendum.
		
		\bibitem{YosSab12}
		{\sc R.~Yoshikawa and S.~V. Sabau}, {\em {K}ropina metrics and {Z}ermelo
			navigation on {R}iemannian manifolds},   Geom. Dedicata,
		171 (2014),  119-148. 
		
		\bibitem{Ze31}
		{\sc E.~Zermelo}, {\em {{\"U}ber das Navigationsproblem bei ruhender oder
				ver{\"a}nderlicher Windverteilung.}}, Z. Angew. Math. Mech., 11 (1931),
		114--124.
		
	\end{thebibliography}
\end{document}